\DeclareMathAlphabet{\mathbb}{U}{msb}{m}{n}
\renewcommand\nompreamble{\begin{multicols}{2}\small\raggedright}
\renewcommand\nompostamble{\end{multicols}}
\newcommand{\pagetarget}[1]{%
  \phantomsection%
  \label{#1}%
  \hypertarget{#1}{}
}
\theoremstyle{plain}
\newtheorem{thm}{\protect\theoremname}[section]
\theoremstyle{definition}
\newtheorem{defn}[thm]{\protect\definitionname}
\theoremstyle{remark}
\newtheorem{rem}[thm]{\protect\remarkname}
\theoremstyle{plain}
\newtheorem{prop}[thm]{\protect\propositionname}
\theoremstyle{plain}
\theoremstyle{plain}
\newtheorem{lem}[thm]{\protect\lemmaname}
\newcommand{\cref}[1]{\text{\zcref{#1}}} %\ifmmode\text{\zcref{#1}}\else\zcref{#1}\fi}
\numberwithin{equation}{section}
\numberwithin{figure}{section}
\numberwithin{table}{section}
\newlist{thmstepnv}{enumerate}{4}
\setlist[thmstepnv]{leftmargin=*,align=left,wide,labelwidth=!,itemindent=!,labelindent=0pt}
\setlist[thmstepnv,1]{label={\itshape {\thmstepname} \arabic*.},ref=\arabic*}
\setlist[thmstepnv,2]{label={\itshape {\thmstepname} {\thethmstepnvi\alph*}.},ref=\thethmstepnvi\alph*}
\setlist[thmstepnv,3]{label={\itshape {\thmstepname\ \alph*.}},ref=\alph*}
\setlist[thmstepnv,4]{label={\itshape {\thmstepname} \arabic*.},ref=\arabic*}
\theoremstyle{plain}
\newtheorem{cor}[thm]{\protect\corollaryname}
\newlist{casenv}{enumerate}{4}
\setlist[casenv]{leftmargin=*,align=left,widest={iiii}}
\setlist[casenv,1]{label={{\itshape\ \casename} \arabic*.},ref=\arabic*}
\setlist[casenv,2]{label={{\itshape\ \casename} \roman*.},ref=\roman*}
\setlist[casenv,3]{label={{\itshape\ \casename\ \alph*.}},ref=\alph*}
\setlist[casenv,4]{label={{\itshape\ \casename} \arabic*.},ref=\arabic*}
\let\save@mathaccent\mathaccent
\newcommand*\myif@single[3]{%
  \setbox0\hbox{${\mathaccent"0362{#1}}^H$}%
  \setbox2\hbox{${\mathaccent"0362{\kern0pt#1}}^H$}%
  \ifdim\ht0=\ht2 #3\else #2\fi
  }
\newcommand*\myrel@kern[1]{\kern#1\dimexpr\macc@kerna}
\newcommand*\wideaccent[2]{\@ifnextchar^{{\wide@accent{#1}{#2}{0}}}{\wide@accent{#1}{#2}{1}}}
\newcommand*\wide@accent[3]{\myif@single{#2}{\wide@accent@{#1}{#2}{#3}{1}}{\wide@accent@{#1}{#2}{#3}{2}}}
\newcommand*\wide@accent@[4]{%
  \begingroup
  \def\mathaccent##1##2{%
%Enable nesting of accents:
    \let\mathaccent\save@mathaccent
%If there's more than a single symbol, use the first character instead (see below):
    \if#42 \let\macc@nucleus\first@char \fi
%Determine the italic correction:
    \setbox\z@\hbox{$\macc@style{\macc@nucleus}_{}$}%
    \setbox\tw@\hbox{$\macc@style{\macc@nucleus}{}_{}$}%
    \dimen@\wd\tw@
    \advance\dimen@-\wd\z@
%Now \dimen@ is the italic correction of the symbol.
    \divide\dimen@ 3
    \@tempdima\wd\tw@
    \advance\@tempdima-\scriptspace
%Now \@tempdima is the width of the symbol.
    \divide\@tempdima 10
    \advance\dimen@-\@tempdima
%Now \dimen@ = (italic correction / 3) - (Breite / 10)
    \ifdim\dimen@>\z@ \dimen@0pt\fi
%The bar will be shortened in the case \dimen@<0 !
    \myrel@kern{0.6}\kern-\dimen@
    \if#41
      #1{\myrel@kern{-0.6}\kern\dimen@\macc@nucleus\myrel@kern{0.4}\kern\dimen@}%
      \advance\dimen@0.4\dimexpr\macc@kerna
%Place the combined final kern (-\dimen@) if it is >0 or if a superscript follows:
      \let\final@kern#3%
      \ifdim\dimen@<\z@ \let\final@kern1\fi
      \if\final@kern1 \kern-\dimen@\fi
    \else
      #1{\myrel@kern{-0.6}\kern\dimen@#2}%
    \fi
  }%
  \macc@depth\@ne
  \let\math@bgroup\@empty \let\math@egroup\macc@set@skewchar
  \mathsurround\z@ \frozen@everymath{\mathgroup\macc@group\relax}%
  \macc@set@skewchar\relax
  \let\mathaccentV\macc@nested@a
%The following initialises \macc@kerna and calls \mathaccent:
  \if#41
    \macc@nested@a\relax111{#2}%
  \else
%If the argument consists of more than one symbol, and if the first token is
%a letter, use that letter for the computations:
    \def\gobble@till@marker##1\endmarker{}%
    \futurelet\first@char\gobble@till@marker#2\endmarker
    \ifcat\noexpand\first@char A\else
      \def\first@char{}%
    \fi
    \macc@nested@a\relax111{\first@char}%
  \fi
  \endgroup
}
\newcommand\mybartilde[1]{\overline{\widetilde{#1}}}
\newcommand\widebartilde{\wideaccent\mybartilde}
\DeclareFontFamily{U}{matha}{\hyphenchar\font45}
\DeclareFontShape{U}{matha}{m}{n}{
      <5> <6> <7> <8> <9> <10> gen * matha
      <10.95> matha10 <12> <14.4> <17.28> <20.74> <24.88> matha12
      }{}
\DeclareSymbolFont{matha}{U}{matha}{m}{n}
\DeclareFontFamily{U}{mathx}{\hyphenchar\font45}
\DeclareFontShape{U}{mathx}{m}{n}{
      <5> <6> <7> <8> <9> <10>
      <10.95> <12> <14.4> <17.28> <20.74> <24.88>
      mathx10
      }{}
\DeclareSymbolFont{mathx}{U}{mathx}{m}{n}
\DeclareMathDelimiter{\vvvert}{0}{matha}{"7E}{mathx}{"17}
\providecommand{\casename}{Case}
\providecommand{\conjecturename}{Conjecture}
\providecommand{\corollaryname}{Corollary}
\providecommand{\definitionname}{Definition}
\providecommand{\lemmaname}{Lemma}
\providecommand{\propositionname}{Proposition}
\providecommand{\remarkname}{Remark}
\providecommand{\theoremname}{Theorem}
\providecommand{\thmstepname}{Step}
\newcommand*{\R}{\mathbb{R}}
\newcommand*{\N}{\mathbb{N}}
\newcommand*{\Z}{\mathbb{Z}}
\newcommand*{\E}{\mathbb{E}}
\renewcommand*{\P}{\mathbb{P}}
\newcommand*{\al}{\alpha}
\newcommand*{\h}[1]{\hat{#1}}
\newcommand*{\s}[1]{\mathscr{#1}}
\newcommand*{\m}[1]{\mathcal{#1}}
\newcommand*{\dn}{\mathrm{d}}
\newcommand*{\ds}{\,\mathrm{d}}
\renewcommand*{\d}{\;\mathrm{d}}
\newcommand*{\der}[2]{\frac{\dn #1}{\dn #2}}
\newcommand*{\abs}[1]{\left |#1 \right|}
\newcommand*{\abss}[1]{|#1|}
\newcommand*{\absb}[1]{\bigl|#1 \bigr|}
\newcommand*{\norm}[1]{\left \|#1 \right\|}
\newcommand*{\op}[1]{\operatorname{#1}}
\newcommand*{\ti}[1]{\tilde{#1}}
\newcommand*{\tbf}[1]{\textup{\textbf{#1}}}
\newcommand*{\anon}{\,\cdot\,}
\renewcommand*{\And}{\quad \text{and} \quad}
\newcommand*{\For}{\quad \text{for }}
\newcommand*{\ForAll}{\quad \text{for all }}
\DeclareMathOperator*{\argmin}{arg\,min}
\DeclareMathOperator*{\argmax}{arg\,max}
\let\bar\smallbar
\begin{document}
\global\long\def\e{\mathrm{e}}%
\global\long\def\cc{\mathrm{c}}%

\global\long\def\ii{\mathrm{i}}%

\global\long\def\RR{\mathbb{R}}%
\global\long\def\Pr{\mathbb{P}}%
\global\long\def\EE{\mathbb{E}}%
\global\long\def\NN{\mathbb{N}}%
\global\long\def\ZZ{\mathbb{Z}}%
\global\long\def\PP{\mathbb{P}}%

\global\long\def\sfL{\mathsf{L}}%
\global\long\def\sfS{\mathsf{S}}%
\global\long\def\sfT{\mathsf{T}}%
\global\long\def\sfU{\mathsf{U}}%
\global\long\def\sfR{\mathsf{R}}%

\global\long\def\clG{\mathcal{G}}%
\global\long\def\clH{\mathcal{H}}%
\global\long\def\clU{\mathcal{U}}%
\global\long\def\clV{\mathcal{V}}%
\global\long\def\clW{\mathcal{W}}%
\global\long\def\clC{\mathcal{C}}%
\global\long\def\clT{\mathcal{T}}%
\global\long\def\clM{\mathcal{M}}%
\global\long\def\clX{\mathcal{X}}%
\global\long\def\clZ{\mathcal{Z}}%
\global\long\def\clQ{\mathcal{Q}}%
\global\long\def\clK{\mathcal{K}}%
\global\long\def\clS{\mathcal{S}}%
\global\long\def\clY{\mathcal{Y}}%
\global\long\def\clA{\mathcal{A}}%
\global\long\def\clL{\mathcal{L}}%
\global\long\def\clI{\mathcal{I}}%
\global\long\def\clJ{\mathcal{J}}%
\global\long\def\clR{\mathcal{R}}%
\global\long\def\scrF{\mathscr{F}}%
\global\long\def\scrG{\mathscr{G}}%
\global\long\def\scrX{\mathscr{X}}%

\global\long\def\ttA{\mathtt{A}}
\global\long\def\ttB{\mathtt{B}}
\global\long\def\ttE{\mathtt{E}}
\global\long\def\ttH{\mathtt{H}}
\global\long\def\ttR{\mathtt{R}}
\global\long\def\ttS{\mathtt{S}}
\global\long\def\ttXi{\mathtt{\Xi}}

\global\long\def\suchthat{:}

\global\long\def\FBSDE{\mathrm{FBSDE}}%
\global\long\def\SPDE{\mathrm{SPDE}}%
\global\long\def\stab{\mathrm{stab}}%
\global\long\def\Frob{\mathrm{F}}%
\global\long\def\op{\mathrm{op}}%
\global\long\def\Id{\mathrm{Id}}%

\global\long\def\Qbar{\overline{Q}}%

\global\long\def\setto{\mapsfrom}%

\global\long\def\anon{\cdot}%

\global\long\def\Law{\operatorname{Law}}%

\global\long\def\dist{\operatorname{dist}}%

\global\long\def\tr{\operatorname{tr}}%

\global\long\def\sgn{\operatorname{sgn}}%

\global\long\def\argmin{\operatorname*{argmin}}%

\global\long\def\argmax{\operatorname*{argmax}}%

\global\long\def\supp{\operatorname{supp}}%

\global\long\def\diam{\operatorname{diam}}%

\global\long\def\spn{\operatorname{span}}%

\global\long\def\Vol{\operatorname{Vol}}%

\global\long\def\Re{\operatorname{Re}}%

\global\long\def\Lip{\operatorname{Lip}}%

\global\long\def\Im{\operatorname{Im}}%

\global\long\def\esssup{\operatorname*{ess\,sup}}%

\global\long\def\dif{\mathrm{d}}%

\global\long\def\Dif{\mathrm{D}}%

\global\long\def\eps{\varepsilon}%

\global\long\def\Im{\operatorname{Im}}%

\newcommand*{\lipset}{\Lambda}
\newcommand*{\quadset}{\Sigma}

\newcommand{\CG}[1]{{\color{green!60!black}\textbf{CG:} #1}}
\newcommand{\AD}[1]{{\color{magenta}\textbf{AD:} #1}}
\newcommand{\ad}[1]{{\color{magenta}{#1}}}

\title{Renormalization flow for the 2D nonlinear stochastic\texorpdfstring{\\}{} heat equation: pointwise statistics and universality}
\author{Alexander Dunlap\thanks{Department of Mathematics, Duke University, Durham, NC 27708 USA. Email: \protect\url{alexander.dunlap@duke.edu}.}\and Cole Graham\thanks{Department of Mathematics, University of Wisconsin--Madison, Madison, WI
    53706 USA. Email: \protect\url{graham@math.wisc.edu}.}}

\maketitle
\begin{abstract}
  We consider a two-dimensional stochastic heat equation with noise correlated at scale $\rho \ll 1$ and of strength $\abs{\log \rho}^{-1/2} \sigma(v)$ depending nonlinearly on the solution $v$.
  Under certain conditions, the first author and Gu have shown that the one-point statistics of $v$ converge in law as $\rho \to 0$ to the terminal value of an associated forward-backward SDE.
  Here, we show that the 2D stochastic heat equation is stable under renormalization with a new effective nonlinearity tied to the decoupling function of the forward-backward SDE.
  This allows us to extend the pointwise results to a much broader class of nonlinearities.
  We also show that these limiting pointwise statistics are insensitive to the fine details of the noise, a form of universality.
\end{abstract}
\tableofcontents{}

\section{Introduction}

We consider the semilinear stochastic heat equation
\begin{equation}
  \label{eq:u-SPDE}
  \dif u_{t}^{\rho}=\frac{1}{2}\Delta u_{t}^{\rho}\,\dif t+\gamma_{\rho}\sigma(u_{t}^{\rho})\,\dif W_{t}^{\rho},\qquad t>0,\, x\in\R^{2}.
\end{equation}
Here $\sigma$ is a Lipschitz nonlinearity and $\dn W_t^\rho(x)$ is a Gaussian noise that is white in time and correlated in space at scale $\rho^{1/2}\ll 1$.
We are interested in the pointwise behavior of $u_t^\rho(x)$ as $\rho \to 0$, which calls for an attenuation factor $\gamma_{\rho}\sim|\log\rho|^{-1/2}$ due to critical scaling in two dimensions.

The linear case $\sigma(v)=\beta v$ was first studied in \cite{BC98} and has been the focus of significant recent activity, much of which relies on a connection with directed polymers~\cite{CSZ17,CSZ19,GQT19,CSZ21,CZ21,CZ23}.
It is also closely related to the logarithmically attenuated $2$D
KPZ equation studied in \cite{CD20,CSZ20,Gu20,Tao23}.
In this linear problem, $u_t^{\rho}(x)$ has log-normal pointwise statistics in the $\rho\to0$ limit provided $\beta$ lies below an explicit critical value.
At and above the critical value, the pointwise values converge to zero in probability, but at criticality a field-valued limit is obtained~\cite{CSZ21}.

Together with Gu \cite{DG22}, the first author initiated the study of the semilinear problem \cref{eq:u-SPDE}.
In analogy with the linear setting, $u^{\rho}$ has nontrivial limiting pointwise statistics provided $\Lip(\sigma)$ is below the same critical value.
These statistics are not log-normal in general, but rather are characterized by a forward-backward SDE (FBSDE) associated to $\sigma$.

In this work, we show that coarse-grained averages of \cref{eq:u-SPDE} nearly solve their own 2D stochastic heat equations with effective nonlinearities arising from the FBSDE.
That is, \cref{eq:u-SPDE} is stable under renormalization modulo a change of nonlinearity.
We thereby conclude that the Lipschitz threshold from \cite{DG22} need not be sharp: there is a broad class of nonlinearities with large Lipschitz constants that nonetheless produce nontrivial limiting pointwise statistics.
Moreover, these statistics are universal in the sense that they do not depend on the fine structure of the noise.
In fact, we devote most of our attention to a variation on \cref{eq:u-SPDE} in which we first multiply and then smooth the noise:
\begin{equation}
  \label{eq:SPDE}
  \dif v_{t}^{\rho} = \frac{1}{2}\Delta v_{t}^{\rho}\,\dif t + \gamma_{\rho} \m{G}_\rho[\sigma(v_{t}^{\rho})\,\dif W_{t}].
\end{equation}
This proves more convenient for multiscale analysis than \cref{eq:u-SPDE}, so we prove our results first for \cref{eq:SPDE} and then show that the limiting behavior is the same as that for \cref{eq:u-SPDE}.

\subsection{Setup}

We now describe the objects in \cref{eq:u-SPDE} and \cref{eq:SPDE} in detail.
Let $(\Omega,\mathcal{F},\mathbb{P})$ be a probability space and fix a target dimension $m\in\mathbb{N}$.
The solutions $u^\rho,v^\rho \colon \Omega \times \R \times \R^{2} \to \R^{m}$ are random vector-valued functions parametrized by the correlation parameter $\rho>0$.
We suppress the dependence of $v^p$ on $\omega\in\Omega$ in the notation.

Because the solutions are vector-valued, our nonlinearity $\sigma\colon\R^{m}\to\R^{m}\otimes\R^{m}$ is matrix-valued.
Let $\mathcal{H}_{+}^{m}$ denote the set of nonnegative-definite symmetric real $m\times m$ matrices.
We equip $\mathcal{H}_{+}^{m}$ with the metric induced by the Frobenius norm
\nomenclature[zzznormFrob]{$\lvert \cdot\rvert _\Frob$}{Frobenius norm}
\begin{equation*}
  |A|_{\Frob}^2\coloneqq \tr(AA^{\mathrm{T}}) = \tr(A^{2}).
\end{equation*}
Let $\sigma$ belong to the space $\Lip(\R^{m};\mathcal{H}_{+}^{m})$\nomenclature[Lip]{$\Lip(\cdot;\cdot)$}{space of uniformly Lipschitz functions between two spaces}
of uniformly Lipschitz functions $\R^{m}\to\mathcal{H}_{+}^{m}$.

Next, let $\dif W=(\dif W_{t}(x))_{t\in\R,x\in\R^{2}}$ be a standard $\R^{m}$-valued space-time white noise
generating a temporal filtration $\{\mathscr{F}_{t}\}_{t\in\R}$.
Writing $\dif W=(\dif W^{(1)},\ldots,\dif W^{(m)})$ in components,
we formally have
\begin{equation*}
  \EE\left[\dif W_{t}^{(i)}(x)\dif W_{t'}^{(i')}(x')\right]=\delta_{i,i'}\delta(t-t')\delta(x-x').
\end{equation*}

Given $\tau\ge0$, we define the heat operator
\nomenclature[G cltau]{$\clG_\anon$}{convolution with the heat kernel}
\begin{equation*}
  \mathcal{G}_{\tau}v=G_{\tau}*v,
\end{equation*}
where $G_{\tau}(x)=(2\pi \tau)^{-1}\e^{-|x|^{2}/(2\tau)}$
\nomenclature[G tau]{$G_\tau$}{two-dimensional heat kernel $(2\pi\tau)^{-1}\exp[-\lvert  x\rvert ^2/(2\tau)]$}%
denotes the standard heat kernel and $*$ denotes spatial convolution.
In \cref{eq:u-SPDE}, we define the spatially-smoothed noise $\dif W_{t}^{\rho}=G_{\rho}*\dif W_{t}$.
In components, we formally have
\begin{equation*}
  \EE\left[\dif W_{t}^{\rho,(i)}(x)\dif W_{t'}^{\rho,(i')}(x')\right]=\delta_{i,i'}\delta(t-t')G_{2\rho}(x-x').
\end{equation*}
  This choice of spatial smoothing is a matter of convenience.
  Thanks to the universality in \cref{thm:universal-informal} below, one could work equally well with a smooth, compactly-supported mollifier, or one that decays sufficiently rapidly at infinity.%

As $\rho\to0$, $\dif W^{\rho}$ approaches a space-time white noise.
In general, stochastic heat equations with multiplicative space-time white noise are not well-posed in dimension greater than $1$.
In spatial dimension $2$, weakening the noise by a logarithmic factor yields interesting limits as $\rho\to0$.
This reflects the self-similarity, or ``criticality,'' of the equation in two dimensions.
We define
\begin{equation}
\sfL(\tau)=\log(1+\tau) \For \tau \geq 0\label{eq:Ldef}
\end{equation}
\nomenclature[L  ]{$\sfL(\tau)$}{$\log(1+\tau)$, \cref{eq:Ldef}}
and set
\nomenclature[zzzgreek γ]{$\gamma_\rho$}{renormalization $\sqrt{4\pi/\sfL(1/\rho)}$, \cref{eq:gammarhodef}}
\begin{equation}
\gamma_{\rho} = \sqrt{\frac{4\pi}{\mathsf{L}(1/\rho)}}\,.\label{eq:gammarhodef}
\end{equation}
Thus, $\gamma_{\rho}^{2}$ vanishes logarithmically as $\rho\to0$.

We have now fully described the SPDEs \cref{eq:u-SPDE} and \cref{eq:SPDE}.
We emphasize that we first smooth the noise $\dn W$ and then multiply by $\sigma(u)$ in \cref{eq:u-SPDE}, while we do the opposite in \cref{eq:SPDE}.
We informally refer to these as the ``pre-'' and ``post-smoothed'' problems, respectively.
For most of the sequel, we focus on \cref{eq:SPDE}, but we prove a universality result (\zcref{thm:universal-informal} below) that allows our main results to apply equally well to \zcref{eq:u-SPDE} and \zcref{eq:SPDE}.
We interpret the latter in a mild sense: a predictable random field $v^{\rho}$ solves \cref{eq:SPDE} if for all $s<t$,
\begin{equation}
  v_{t}^{\rho}(x) = \mathcal{G}_{t-s}v_{s}^{\rho}(x) + \gamma_{\rho} \int_{s}^{t} \mathcal{G}_{t + \rho - r}[\sigma(v_{r}^{\rho})\,\dif W_{r}](x).\label{eq:mildsoln}
\end{equation}
We interpret this stochastic integral in the Itô--Walsh sense \cite{Wal86,Dal99,Kho14}.
This formulation displays the advantage of post-smoothing: the noise regularizer $\m{G}_\rho$ neatly joins the linear heat evolution $\m{G}_{t-r}$.

We look for solutions $v_t^\rho$ in the spaces $\mathscr{X}_{t}^{\ell}$\nomenclature[Xscr]{$\scrX_t^\ell$}{random fields on $\R^2$ that are $\scrF_t$-measurable and have uniformly bounded $L^\ell(\Omega)$ norm}
of $\R^{m}$-valued random fields $z$ on $\R^{2}$ that are $\mathscr{F}_{t}$-measurable and satisfy
\nomenclature[zzznormtriple]{$\vvvert \cdot\vvvert_\ell$}{maximum $L^\ell(\Omega)$ norm of a random field}
\begin{equation}
\vvvert z\vvvert_{\ell}\coloneqq\sup_{x\in\R^{2}}\left(\EE|z(x)|^{\ell}\right)^{1/\ell}<\infty.\label{eq:triplenormdef}
\end{equation}

Standard fixed-point arguments (similar to those discussed in, e.g.,\ \cite{Kho14}) show that for any $\ell\ge2$, there is a
family of random operators $(\clV_{s,t}^{\sigma,\rho})_{-\infty<s<t<\infty}$ such that if $v_{s}\in\mathscr{X}_{s}^{\ell}$, then $v_t^\rho = \clV_{s,t}^{\sigma,\rho}v_{s}$ is a mild solution of \cref{eq:u-SPDE} for $t \geq s$.
\nomenclature[V cal st]{$\clV_{s,t}^{\sigma,\rho}$}{propagator for the SPDE}
We often write $\clV_{t}^{\sigma,\rho}\coloneqq\clV_{0,t}^{\sigma,\rho}$.
\medskip

In~\cite{DG22}, the first author and Gu related the limiting statistics of $v^\rho$ to the following system of stochastic differential equations:
\nomenclature[Jsigma]{$J_\sigma$}{root decoupling function for the FBSDE \zcref[range]{eq:FBSDE,eq:Jeqn}}
\begin{align}
  \dif\Gamma_{a,Q}^{\sigma}(q) & =J_{\sigma}\bigl(Q-q,\Gamma_{a,Q}^{\sigma}(q)\bigr)\,\dif B(q), &  & a\in\RR^{m},\,0<q<Q;\label{eq:FBSDE}   \\
  \Gamma_{a,Q}^{\sigma}(0)     & =a,                                                 &  & a\in\RR^{m},\,Q\ge0;\label{eq:FBSDEic} \\
  J_{\sigma}(q,b)              & =\bigl[\EE \, \sigma^{2}\bigl(\Gamma_{b,q}^{\sigma}(q)\bigr)\bigr]^{1/2},  &  & q\ge0,\,b\in\RR^{m}.\label{eq:Jeqn}
\end{align}
Here $B$ is a standard $\RR^{m}$-valued Brownian motion and we let $A^{1/2}$ denote the unique positive-definite matrix square root of $A \in \m{H}_+^m$. We refer to \zcref{subsec:MGexptimescale} below, as well as the introduction of~\cite{DG22}, for some intuition on the origin of this FBSDE. Briefly, the parameter $q$ should be thought of as a ``scale'' parameter, and the FBSDE should be thought of as expressing the dynamics at the larger scales (smaller $q$) in terms of an average (expectation) of $\sigma^2$ of the solution on the smallest scale ($q=Q$).

The problem \zcref[range]{eq:FBSDE,eq:Jeqn} is a forward-backward SDE (FBSDE).
Given $J_{\sigma}$, \zcref[range]{eq:FBSDE,eq:FBSDEic} is simply a vector-valued stochastic differential equation.
So the crux of the complete problem \zcref[range]{eq:FBSDE,eq:Jeqn} is to find a sufficiently regular $J_{\sigma}$ such that \zcref[range]{eq:FBSDE,eq:FBSDEic} and \cref{eq:Jeqn} hold simultaneously.
Alternatively, $J_{\sigma}$ can be characterized via a deterministic nonlinear PDE as described in \cref{subsec:decouplingfunctionPDE} below.
We call $J_\sigma$ the \emph{root decoupling function}, as $J_\sigma^2$ is termed the \emph{decoupling function} in the FBSDE literature.

The SDE \zcref[range]{eq:FBSDE,eq:FBSDEic} has a good solution theory provided $J_{\sigma}$ is uniformly Lipschitz in $b$, and this condition influences much of our analysis.
We therefore make the following definition.
\begin{defn}
  \nomenclature[QbarFBSDE]{$\Qbar_\FBSDE(\sigma)$}{maximal existence time for the FBSDE \zcref[range]{eq:FBSDE,eq:Jeqn}, \cref{def:QbarFBSDE}}
  \label{def:QbarFBSDE}
  Given $\sigma\in\Lip(\RR^{m};\clH_{+}^{m})$,\vspace{2pt} let $\Qbar_{\FBSDE}(\sigma)\in[0,\infty]$ be the supremum of all $Q\ge0$ such that there is a continuous function $J_{\sigma}\colon[0,Q]\times\RR^{m}\to\clH_{+}^{m}$ satisfying \zcref[range]{eq:FBSDE,eq:Jeqn} and
  \begin{equation}
    \sup_{q\in[0,Q]}\Lip\bigl(J_{\sigma}(q,\anon)\bigr)<\infty.
    \label{eq:Juniflipschitz}
  \end{equation}
\end{defn}
In Section~\ref{sec:FBSDE}, we show that $\Qbar_{\FBSDE}(\sigma) > 0$ and that $J_\sigma\colon \big[0,\Qbar_\FBSDE(\sigma)\big)\times\RR^m\to\clH^m_+$ is unique.
In fact, we generalize Theorem~1.1 from \cite{DG22} to show that
\begin{equation}
  \label{eq:FBSDEwellposednessgeneralizes}
  \Qbar_{\FBSDE}(\sigma)\ge\Lip(\sigma)^{-2}.
\end{equation}
\cref{thm:PDEthm} below provides stronger lower bounds on $\Qbar_\FBSDE(\sigma)$ in the scalar case $m = 1$.
We now introduce a space of functions with a specified linear growth bound.
Given $M,\beta \in (0, \infty)$, let
\nomenclature[zzzgreek σplus]{$\quadset(M,\beta)$}{space of Lipschitz functions satisfying a quantitative growth bound, \cref{eq:Sigmaplusdef}}
\begin{equation}
 \quadset(M,\beta) \coloneqq \bigl\{\sigma\in\Lip(\RR^{m};\clH_{+}^{m})\suchthat|\sigma(u)|_{\Frob}^{2}\le M+\beta^{2}|u|^{2}\text{ for all }u\in\RR^m\bigr\}.\label{eq:Sigmaplusdef}
\end{equation}
In rough analogy with subcritical linear equations, we focus on nonlinearities of the following type.
\begin{defn}
  \label{def:subcrit}
  A nonlinearity $\sigma$ is \emph{$L^2$-subcritical} if $\sigma \in \Sigma(M,\beta)$ for some $M\in (0, \infty)$ and $\beta \in (0, 1)$, and $\Qbar_\FBSDE(\sigma) > 1$.
\end{defn}

\subsection{Main results}

As indicated earlier, our improvement over~\cite{DG22} relies on the observation that \cref{eq:SPDE} is approximately closed under renormalization, up to a change in nonlinearity.
We state this informally here; for a precise formulation, see \cref{cor:approx-SPDE} below.
\begin{thm}[Informal]
  \label{thm:renorm}
  Let $\sigma$ be $L^2$-subcritical.
  Fix $Q \in (0, 1)$ and define $\widetilde{\sigma} \coloneqq (1 - Q)^{1/2} J_\sigma(Q, \anon)$ and $\widetilde{\rho} \coloneqq \rho^{1 - Q}$.
  Then there is a new white noise $\dn\widetilde{W}$ such that $\m{G}_{\widetilde\rho} v$ is an approximate mild solution of \cref{eq:SPDE} with $(\widetilde\rho,\widetilde\sigma,\dn \widetilde W)$ in place of $(\rho,\sigma,\dn W)$.
\end{thm}
Thus modulo a scaling factor, we can interpret the decoupling function $J_\sigma(Q,\anon)$ as an effective nonlinearity driving \cref{eq:SPDE} at scale $\rho^{1 - Q}$.
Next, we identify the limiting one-point statistics of $v_t^\rho(x)$.
Let $\clW_{2}$ denote the Wasserstein-$2$ metric\nomenclature[W2]{$\clW_2$}{Wasserstein-$2$ metric} and $\langle a \rangle \coloneqq (\abs{a}^2 + 1)^{1/2}$ the Japanese bracket.
\begin{thm}
  \label{thm:mainthm-singlepoint}
  For each $L^2$-subcritical $\sigma$ and $\bar{T} \in [1, \infty)$, there is a constant $C(\sigma,\overline{T}) \in (\e, \infty)$ such that for all $v_0\in L^\infty(\R^2;\R^m)$, $t\in\bigl[\overline{T}^{-1},\overline{T}\bigr]$, $x\in\RR^{2}$, and $\rho\in(0,C^{-1})$, the solution $v^{\rho}$ of \cref{eq:SPDE} satisfies
  \begin{equation}
    \clW_{2}\bigl(v_{t}^{\rho}(x),\Gamma_{a,1}^{\sigma}(1)\bigr)\le C \langle\norm{v_0}_{L^\infty}\rangle \sqrt{\frac{\log|\log\rho|}{|\log\rho|}}\,,\label{eq:mainthm-singlepoint-bound}
  \end{equation}
  where $\bigl(\Gamma_{a,1}^{\sigma}(q)\bigr)_{q \in [0,1]}$ solves the FBSDE \textup{\zcref[range]{eq:FBSDE,eq:Jeqn}} with $Q = 1$ and $a=\clG_{t}v_{0}(x)$.
\end{thm}
This improves on \cite[Theorem 1.2]{DG22} in a number of ways.
First, we relax the assumption $\Lip(\sigma)<1$ to $\Qbar_{\FBSDE}(\sigma)>1$, which is less stringent by \cref{eq:FBSDEwellposednessgeneralizes}.
(This Lipschitz condition is stated as $\Lip(\sigma)<\sqrt{2\pi}$ in~\cite{DG22} due to different normalization.)
This is the most conceptually significant contribution of the present paper.
It uses the renormalization from \cref{thm:renorm} to bootstrap the results of \cite[Theorem 1.2]{DG22} to larger Lipschitz constants.
In the scalar case $m=1$, we use the results of a companion paper \cite{DG25a} to provide easy-to-check conditions ensuring $\Qbar_{\FBSDE}(\sigma)>1$; see \cref{thm:PDEthm} below.

Despite this improvement, we are still in the subcritical regime in the sense that for $\sigma(b)=\beta b$, \cref{def:subcrit} restricts us to $\beta<1$.
In fact, using the explicit solution to the FBSDE given in \cite[Section 1.3]{DG22} and \cref{subsec:examples} below, one can check that $\Qbar_{\FBSDE}(\sigma)=\beta^{-2}$ for $\sigma(b)=\beta b$.
So $\Qbar_{\FBSDE}(\sigma) > 1$ if and only if $\beta < 1$.
This restriction is natural, as there is a phase transition at $\beta=1$ at which, among other phenomena, the FBSDE blows up.
Because we can handle larger Lipschitz constants subject to $\beta$-growth bounds (see \cref{cor:simple-PDE} below), our results indicate that this phase transition is driven by the \emph{growth} rather than the \emph{regularity} of the nonlinearity.
  
Second, in contrast to \cite[Theorem 1.2]{DG22}, we treat vector-valued solutions and remove the hypothesis $\sigma(0)=0$.
This requires a new moment bound (\cref{prop:momentbd}) for solutions of \cref{eq:SPDE}; see \cref{rem:momentbdvswhatwehadbefore}.
We also rely on a ``matrix-valued reverse triangle inequality'' (\cref{prop:matrixvaluedreversetriangleinequality}) that follows from a deep result of Lieb~\cite{Lie73}.

Lastly, \cref{eq:mainthm-singlepoint-bound} gives a quantitative  rate of convergence in the Wasserstein-$2$ distance.
Previous works \cite{CSZ17,DG22} only prove convergence in distribution without a rate, even in the linear case.
We expect the rate in \cref{eq:mainthm-singlepoint-bound} to be tight up to a $\log\log$ factor.
Indeed, if $v_{0} \equiv 1$, the quantity
\begin{equation*}
  |\log\rho|^{1/2} (\m{G}_1v_{t}^{\rho} - 1)
\end{equation*}
converges in distribution to a nontrivial Gaussian random field.
This was known in the linear case \cite[Theorem 2.17]{CSZ17} and for nonlinearities with sufficiently small Lipschitz constant~\cite{Tao22}.
In a companion paper~\cite{DG25b}, we extend these results to the entire $L^2$-subcritical regime.
Thus at the pointwise level, we expect additional contributions to $v^\rho$ of order $|\log\rho|^{-1/2}$.

Strictly speaking, \cite{DG22} treats the pre-smoothed equation \cref{eq:u-SPDE}.
We conflate the pre- and post-smoothed noise in the discussion above because we will show there is little difference between the two.
This is a consequence of a broad stability result that we state informally here:
\begin{thm}[Informal]
  \label{thm:universal-informal}
  Let $\sigma$ be $L^2$-subcritical.
  If $u^\rho$ is an approximate mild solution of \cref{eq:SPDE}, then $u^\rho \approx v^\rho$ on bounded time intervals.
\end{thm}
\noindent
For a precise statement, see \cref{defn:stable} and \cref{thm:stable} below.

Because the mild formulation \cref{eq:mildsoln} does not involve any derivatives, the stability in \cref{thm:universal-informal} is very flexible.
For example, we expect it can treat discrete approximations of \cref{eq:u-SPDE}.
Here, we use it to show agreement between \cref{eq:u-SPDE} and \cref{eq:SPDE}.
\begin{cor}
  \label{cor:pre-post}
  For each $L^2$-subcritical $\sigma$ and $\bar{T} \in (0, \infty)$, there is a constant $C(\sigma,\overline{T}) \in (\e, \infty)$ such that the following holds for all $\rho\in(0,C^{-1})$.
  Let $u_t^\rho$ and $v_t^\rho$ solve \cref{eq:u-SPDE} and \cref{eq:SPDE}, respectively, with the same initial data $v_0\in L^\infty(\R^2;\R^m)$.
  Then
  \begin{equation*}
    \sup_{(t, x) \in [0, \bar{T}] \times \R^2} \E \abs{u_t^\rho(x) - v_t^\rho(x)}^2 \leq C \langle\norm{v_0}_{L^\infty}^2\rangle \frac{\log|\log\rho|}{|\log\rho|}.
  \end{equation*}
  In particular, $u_t^\rho$ also satisfies \cref{eq:mainthm-singlepoint-bound} and \cref{eq:mainthm-multipoint-conclusion} below.
\end{cor}
\cref{thm:mainthm-singlepoint} concerns one-point statistics.
It follows from the more general \cref{thm:thebigtheorem} concerning multiple points and renormalizations of the field.
This result is unwieldy to state, so we present a simpler form here.
If we consider a pair of points separated by a macroscopic space-time difference, then their solution values will be asymptotically independent.
To uncover interesting multipoint statistics, we need to consider points separated by a positive power of $\rho$.
Similarly, one can study the field coarse-grained by different powers of $\rho$.
To illustrate the phenomenology, we state a qualitative version of the multipoint theorem, analogous to \cite[Theorem 1.3]{DG22}.

We define the parabolic distance
\begin{equation}
  d\bigl((t,x),(t',x')\bigr)=|t-t'|^{1/2}\vee|x-x'|
  \label{eq:dparabolicdistancedef}
\end{equation}
between two space-time points $(t,x),(t',x')\in\RR\times\RR^{2}$.
\begin{thm}
  \label{thm:mainthm-multipoint}
  Let $\sigma$ be $L^2$-subcritical and fix $N\in\NN$, $\overline{T}\in[1,\infty)$.
  Let
  $\big\{(t_{\rho}^{(i)},R_{\rho}^{(i)}, x_{\rho}^{(i)})\big\}_{i=1}^N $ %
  be a family of triples in $[\overline{T}^{-1},\overline{T}] \times [0,\overline{T}] \times \RR^{2}$
  such that for each $i,j\in\{1,\ldots,N\}$, the following limits exist:
  \begin{gather}
	  p^{(i,j)}\coloneqq\lim_{\rho\searrow0}\log_{\rho}\bigl[\rho\vee d\bigl((t_{\rho}^{(i)},x_{\rho}^{(i)}),(t_{\rho}^{(j)},x_{\rho}^{(j)})\bigr)\bigr],\qquad q^{(i)}=\lim_{\rho\searrow0}\log_{\rho}[\rho\vee R_{\rho}^{(i)}],\label{eq:dijqilimits}\\
    x^{(i)}\coloneqq\lim_{\rho\searrow0}x_{\rho}^{(i)},\qquad\text{and}\qquad T^{(i)}\coloneqq\lim_{\rho\searrow0}[t_{\rho}^{(i)}+R_{\rho}^{(i)}].\label{eq:xiandtilimits}
  \end{gather}
  Let $B^{(1)},\ldots,B^{(N)}$ be a family of $N$ standard
  $\RR^m$-valued Brownian motions with covariance
  \begin{equation}
    \dif[B^{(i)},B^{(j)}](q)=\mathbf{1}_{[0,p^{(i,j)}]}(q)\Id_m.\label{eq:Bijcorrelations}
  \end{equation}
  Let $v_0 \in L^\infty(\R^2)$ and let $\Psi^{(1)},\ldots,\Psi^{(N)}$ solve the system of SDEs
  \begin{align}
    \dif\Psi^{(i)}(q) & =J_{\sigma}\bigl(1-q,\Psi^{(i)}(q)\bigr)\,\dif B^{(i)}(q),\label{eq:upsiloniintroeqn}\\
    \Psi^{(i)}(0)     & =\clG_{T^{(i)}}v_{0}(x^{(i)}).\label{eq:upsiloniintroic}
  \end{align}
  Then if $(v^{\rho}_t)_{t\ge0}$ solves \cref{eq:SPDE} with initial condition $v_0$, we have
  \begin{equation}
    \lim_{\rho\searrow0}\clW_{2}\left(\bigl(\clG_{R_{\rho}^{(i)}}v_{t_{\rho}^{(i)}}^{\rho}(x_{\rho}^{(i)})\bigr)_{i=1}^{N},\, \bigl(\Psi^{(i)}(q^{(i)})\bigr)_{i=1}^{N}\right)=0.\label{eq:mainthm-multipoint-conclusion}
  \end{equation}
\end{thm}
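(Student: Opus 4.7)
The plan is to deduce \cref{thm:mainthm-multipoint} from a fully quantitative statement (\cref{thm:thebigtheorem} below) that gives an explicit Wasserstein-$2$ rate for arbitrary finite configurations of points; the qualitative form stated here will then follow by specializing to configurations whose log-scale parameters $p^{(i,j)}$ and $q^{(i)}$ converge. I expect \cref{thm:mainthm-multipoint} to play the same role for \cref{thm:thebigtheorem} as \cref{thm:mainthm-singlepoint} does for its one-point counterpart, so I will focus on the additional ingredients beyond the single-point proof.

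First I would set up a multiscale decomposition of the noise and of each propagator $\clV^{\sigma,\rho}_{s,t}$. Each time interval $[0,t_\rho^{(i)}+R_\rho^{(i)}]$ splits into a macroscopic piece where $\gamma_\rho$ renders the noise asymptotically inactive, and a layer of length $\approx 1$ preceding $T^{(i)}$ in which nontrivial stochastic contributions accumulate. Applying the parabolic rescaling $(t,x)\mapsto(\rho t,\rho^{1/2}x)$ to this layer converts the renormalized SPDE into an unrescaled nonlinear SHE run for a long rescaled time, exactly the regime governed by the FBSDE. The deterministic heat flow on the macroscopic piece sends the initial data to $\clG_{T^{(i)}}v_0(x^{(i)})$, which becomes the starting point of $\Psi^{(i)}$.

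Second, and most importantly, I would track how noise is shared between points. After parabolic rescaling, two points at parabolic distance $\rho^{p^{(i,j)}}$ are separated by $\rho^{p^{(i,j)}-1/2}$. A Littlewood--Paley-type decomposition of the rescaled noise into dyadic scales, combined with heat-kernel-level estimates on the propagator, shows that noise scales coarser than the separation act essentially identically on both points, while finer scales act asymptotically independently. Translating back through the FBSDE gives two driving Brownian motions that coincide on $[0,p^{(i,j)}]$ and decouple afterwards; stopping $\Psi^{(i)}$ at ``time'' $q^{(i)}$ instead of $1$ reflects the further averaging by $\clG_{R_\rho^{(i)}}$, which suppresses noise at dyadic scales finer than $R_\rho^{(i)}$. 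The consistency of this covariance structure is automatic from the ultrametric inequality $p^{(i,k)}\ge\min(p^{(i,j)},p^{(j,k)})$ inherited from the parabolic distance, which is the content of \cref{rem:itsatree}.

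The main analytical difficulty is producing a single joint coupling that simultaneously realizes all these pairwise correlations with vanishing Wasserstein error. I would proceed by organizing $\{1,\dots,N\}$ into a hierarchical cluster tree dictated by $\{p^{(i,j)}\}$, then inductively coupling a joint SPDE--FBSDE system scale by scale: within each active cluster, the evolution is driven by a shared Brownian motion, and whenever a cluster splits at scale $q=p^{(i,j)}$, fresh independent Brownian increments are introduced. At each scale the one-point coupling machinery---supported by the Lipschitz control from $\Qbar_{\FBSDE}(\sigma)>1$ through \cref{thm:FBSDEwellposed}, by joint moment bounds generalizing \cref{prop:momentbd}, and by the matrix-valued reverse triangle inequality \cref{prop:matrixvaluedreversetriangleinequality}---provides the per-step error estimate; the hierarchical structure keeps the errors from exploding in $N$. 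Enforcing the exact covariance $\dif[B^{(i)},B^{(j)}](q)=\mathbf{1}_{[0,p^{(i,j)}]}(q)\Id_m$ through each cluster-splitting event, while propagating the per-scale coupling bounds through the joint FBSDE, is the step I expect to be the main obstacle.
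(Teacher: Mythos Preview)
Your high-level strategy is correct: the paper indeed deduces \cref{thm:mainthm-multipoint} from a fully quantitative multipoint statement (\cref{thm:thebigtheorem}), and the reduction is then just a matter of checking that the various $\rho$-dependent parameters in \cref{thm:thebigtheorem} converge to $p^{(i,j)}$, $q^{(i)}$, and the limiting initial data, after which SDE well-posedness finishes the job.

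Your sketch of how \cref{thm:thebigtheorem} itself is established, however, diverges from the paper's method. There is no parabolic rescaling of the SPDE and no Littlewood--Paley or dyadic frequency decomposition. Instead the paper works throughout with the martingale $V_t^{\rho,T_1^{(i)}}(x^{(i)})=\clG_{T_1^{(i)}-t}v_t(x^{(i)})$ in the original coordinates, applies the exponential time change $q=\sfU^{(i)}_\rho(t)$, and uses a single coupling result (\cref{prop:coupling}) to replace $\sigma(v_r)\,\dif W_r$ by $(\clS_{\zeta_r}[\sigma^2\circ v_r])^{1/2}\,\dif\widetilde W_r$. Independence across indices after the split time $T_*^{(i,j)}$ is obtained not by separating frequency scales but geometrically: one restricts each $\widetilde W^{(i)}$ to a parabolic cone $\ttS^{(i)}$ around $(T_1^{(i)},x^{(i)})$ and fills in the complement with fresh independent noise; \cref{lem:Sisseparated} shows the cones $\ttS^{(i)}_t$ and $\ttS^{(j)}_t$ are disjoint for $t\ge T_*^{(i,j)}$, and \cref{prop:dBQVub} then yields independent increments. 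The pre-split agreement is handled directly via continuity of $V^{\rho,T}$ (\cref{lem:approxtheVijs}), and the triangular structure $j_*(i)=\argmax_{j\le i}T_*^{(i,j)}$ replaces your full hierarchical-tree induction. Your rescaling remark is also fragile: after undoing $\gamma_\rho$ one does not get a well-posed ``unrescaled nonlinear SHE'' in two dimensions, which is precisely why the paper stays with the attenuated equation and the martingale formulation.
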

The derivation of \cref{thm:mainthm-multipoint} from \cref{thm:thebigtheorem} is given on p.~\pageref{proofofmainthmmultipoint}.
The rate of convergence in \cref{eq:mainthm-multipoint-conclusion} can be quantitatively bounded if suitable rates are imposed on the limits \zcref[range]{eq:dijqilimits,eq:xiandtilimits}.
This is essentially the content of \cref{thm:thebigtheorem} below; to adjust the quantitative statement to match the form \cref{eq:mainthm-multipoint-conclusion}, one can use \cref{prop:Jsigmatimereg-1}.

By \cref{eq:Bijcorrelations}, the Brownian motions $B^{(i)}$ and $B^{(j)}$ are identical until time $p^{(i,j)}$, after which their increments are independent.
This is consistent due to the ultrametric property $p^{(i,j)}\le \max \{p^{(i,k)}, p^{(k,j)}\}$, which follows from \cref{eq:dijqilimits}.
The tree in \cref{fig:treestructure} represents this correlation structure.
Heuristically, $\dn B^{(i)}(q)$ represents the noise acting near $(t_\rho^{(i)},x_\rho^{(i)})$ at scale $\rho^{1-q}$.
If $(t_\rho^{(i)},x_\rho^{(i)})$ and $(t_\rho^{(j)},x_\rho^{(j)})$ are separated by much less than $\rho^{1 - q}$ in parabolic distance, they effectively experience the same noise.
But once their separation exceeds $\rho^{1 - q}$, the main noise contributions come from disjoint regions, and are thus independent.
\medskip

Because \cref{thm:mainthm-multipoint} also treats renormalizations of $v$, it contains new results even in the one-point case.
Indeed, for $q \in (0, 1)$, it shows that $\m{G}_{\rho^{1 - q}} v_t^\rho(x)$ converges in law to $\Gamma_{a,1}^\sigma(q)$ with $a = \m{G}_t v_0(x)$.
This extends \cref{thm:mainthm-singlepoint}, which effectively treats the case $q = 1$.
Thus the SDE \cref{eq:FBSDE} captures the behavior of the coarse-grained field $\m{G}_{\rho^{1 - q}} v_t^\rho(x)$ as we vary the scale of renormalization.

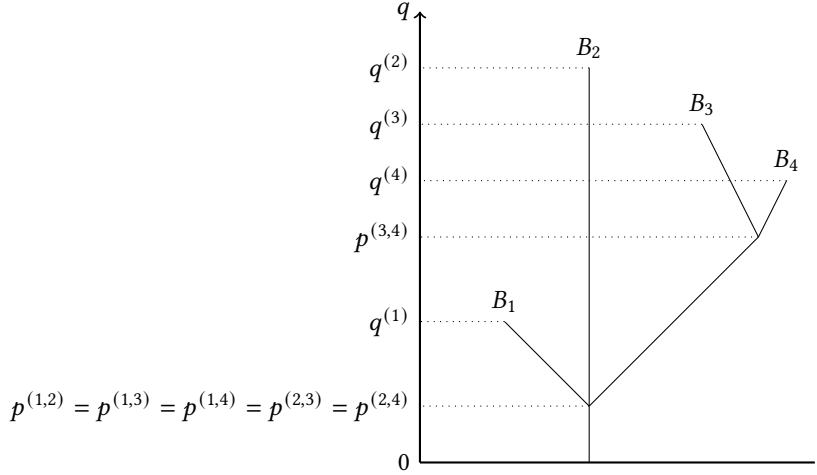
\begin{figure}[t]
	\centering
\begin{tikzpicture}[grow'=up,scale=1.5]
\begin{scope}
\clip (-3,1) -- (-3,2.25)-- (-0.5,2.25) -- (-0.5,5) -- (0.5,5) -- (0.5,4) -- (1.5,4) -- (1.5,3.5) -- (3,3.5) -- (3,1) --cycle;
\node [color=black,inner sep=0pt,outer sep=0pt] {}
child {
child {child { }}
child {child { }}
    child {
    child {}
    child {} 
    }};
\end{scope}
 \draw[->,thick] (-1.5,1)--(-1.5,5) node[left]{$q$};
 \draw[dotted,thin] (1.5,3)--(-1.5,3) node[left]{$p^{(3,4)}$};
 \draw[dotted,thin] (1.75,3.5)--(-1.5,3.5) node[left]{$q^{(4)}$};
 \draw[dotted,thin] (1,4)--(-1.5,4) node[left]{$q^{(3)}$};
\draw[dotted,thin] (0,4.5)--(-1.5,4.5) node[left]{$q^{(2)}$};
\draw[dotted,thin] (-0.75,2.25)--(-1.5,2.25) node[left]{$q^{(1)}$};
\draw[dotted,thin] (0,1.5)--(-1.5,1.5) node[left]{$p^{(1,2)} = p^{(1,3)} = p^{(1,4)} = p^{(2,3)}=p^{(2,4)}$};
\draw (-1.5,1.1) node [left] {$0$};
\draw (0,4.5) node [above] {$B_2$};
\draw (1.75,3.5) node [above] {$B_4$};
\draw (1,4) node [above] {$B_3$};
\draw (-0.75,2.25) node [above] {$B_1$};
\end{tikzpicture}
\caption{The tree structure of the Brownian motion correlations in \cref{thm:mainthm-multipoint}: $B^{(i)}$ and $B^{(j)}$ have identical increments until time $p^{(i,j)}$, after which they are independent.\label{fig:treestructure}}
\end{figure}

For simplicity, we take our initial data $v_0$ in $L^\infty(\R^2)$.
However, in principle our results should hold whenever $v_0\in L^\infty_{\mathrm{loc}}(\R^2)$ and the heat flow $\m{G}_t v_0$ is well-defined.
The stochastic effects in \cref{thm:mainthm-singlepoint} come from the noise in a small time layer preceding $t$.
The effect of the noise before this layer is negligible, which is why the initial condition only enters the statement of \cref{thm:mainthm-singlepoint} through the solution $\clG_tv_0$ to the deterministic heat equation.

One may also be interested in the SPDE \cref{eq:SPDE} posed with $\sigma$ that is not nonnegative-definite, not symmetric, or even not square.
If we replace $\sigma$ by $(\sigma\sigma^\mathrm{T})^{1/2}\in\clH^m_+$, the two SPDEs agree in law.
One can thus apply the results of this paper to a broader class of matrix-valued nonlinearities.
By Theorem VII.5.7 in \cite{Bha97}, the operation $\sigma \mapsto (\sigma\sigma^{\mathrm{T}})^{1/2}$ preserves the Lipschitz property with the bound $\Lip[(\sigma\sigma^{\mathrm{T}})^{1/2}]\le\sqrt2\Lip(\sigma)$.
The constant $\sqrt2$ cannot be improved in general (see \cite[Example~VII.5.8]{Bha97}), so some care is required in applying quantitative statements like \cref{eq:FBSDEwellposednessgeneralizes} involving $\Lip(\sigma)$.

\subsection{The decoupling flow\label{subsec:decouplingfunctionPDE}}

As noted in \cite[(1.12)\textendash (1.13)]{DG22}, the decoupling function $J_{\sigma}^{2}$ formally satisfies the (possibly degenerate) parabolic partial differential equation
\begin{align}
  \partial_{q}H(q,b) & =\frac{1}{2}[H(q,b):\nabla_{b}^{2}] H(q,b),\label{eq:HPDE-1} \\ %
  H(0,b)             & =\sigma^{2}(b).\label{eq:Hic-1} %
\end{align}
Here, we use the notation
\nomenclature[zzzz colon]{$\cdot:\cdot$}{trace of the product of two (symmetric) matrices, \cref{eq:colondef}}
\begin{equation}
  A:B = \tr[AB]\label{eq:colondef}
\end{equation}
for the trace of the product of two symmetric matrices $A$ and $B$, so
\begin{equation*}
  \left([H(q,b):\nabla_{b}^{2}]H(q,b)\right)_{ij}=\tr[H(q,b)\nabla_{b}^{2}]H_{ij}(q,b)=\sum_{k,\ell=1}^{m}H_{k\ell}(q,b)\frac{\partial^{2}H_{ij}}{\partial b_{k}\partial b_{\ell}}(q,b).
\end{equation*}

The analysis of \zcref[range]{eq:HPDE-1,eq:Hic-1} is complicated by the fact that \cref{eq:HPDE-1} is not uniformly parabolic when $H$ is not full-rank.
This is unavoidable if $\sigma$ has zeros, as $H$ preserves the zeros of $\sigma$ (\cref{prop:zerosstayzero}).
We do not expect the solution to necessarily be $\clC^{2}$ at these points of degeneracy.
We therefore introduce a notion of ``almost classical'' solution (\cref{defn:almost-classical}) to the PDE \zcref[range]{eq:HPDE-1,eq:Hic-1}.
We then show that almost classical solutions satisfying certain Lipschitz and growth bounds coincide with the decoupling function $J_\sigma^2$ (\cref{prop:satisfiesPDE}).
We therefore term \zcref[range]{eq:HPDE-1,eq:Hic-1} the ``decoupling flow.''

The relationship between the FBSDE and the decoupling flow allows us to derive lower bounds on $\Qbar_\FBSDE(\sigma)$ by studying \zcref[range]{eq:HPDE-1,eq:Hic-1}.
In the scalar case $m=1$, this is the topic of our companion paper \cite{DG25a}.
We leave the vector-valued setting to future investigation.
For simplicity, we assume the ``nondegenerate'' set $\{b\in\R\suchthat\sigma(b)>0\}$ is connected.
This entails no real loss of generality, as zeros of $\sigma$ remain zeros of $J_\sigma$ for all time (\cref{prop:zerosstayzero}), so the behaviors of the FBSDE on different components of $\{b\in\R\suchthat\sigma(b)>0\}$ do not interact with one another.
For a PDE perspective on this phenomenon, see \cite[Section~2.4]{DG25a}.

The following is a consequence of the results of \cite{DG25a}.
% We use the notation $\langle x \rangle = (1 + x^2)^{1/2}$.
\begin{thm}\label{thm:PDEthm}
  Suppose $\sigma\in\Lip(\RR;\RR_{\ge 0})$ and $I\coloneqq \{b\in\R\suchthat\sigma(b)>0\}$ is an open interval.
  Suppose also that there exists $\beta\in(0,\infty)$ such that the appropriate following condition holds.
  \begin{enumerate}[label = \textup{(\roman*)}]
  \item If $I$ is a bounded interval, then there is a constant $K\in[1,\infty)$ such that for all $x\in I$, we have 
    \[
      K^{-1}\dist(x,I^{\cc})\le \sigma(x)\le \beta \dist(x,I^{\cc}).
    \]
    
  \item If $I =\R$, then there are constants $K\in [1,\infty)$ and $\gamma\in [0,2]$ such that for all $x\in\RR$, we have
    \[
      K^{-1}\langle x\rangle^{\gamma/2}\le \sigma(x)\le (\beta^2|x|^2+K)^{1/2}\wedge(K\langle x\rangle^{\gamma/2}).
    \]
  \item If $I$ is the half-line $(a,\infty)$ or $(-\infty,a)$, then there are constants $K\in [1,\infty)$ and $\gamma \in [0,2]$ such that for all $x\in I$, we have
    \[
      K^{-1}(|x-a|\wedge|x-a|^{\gamma/2})\le \sigma(x)\le (\beta|x-a|)\wedge (K|x-a|^{\gamma/2}).
    \]
  \end{enumerate}
  Then we have $\overline{Q}_\FBSDE(\sigma)\ge\beta^{-2}$.
\end{thm}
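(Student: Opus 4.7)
The plan is to combine the PDE construction of the companion paper \cite{DG23a} with \cref{prop:satisfiesPDE} and \cref{thm:FBSDEwellposed}. Specifically, I would first obtain from \cite{DG23a} an almost-classical solution $H$ to \crefrange{eq:HPDE-1}{eq:Hic-1} on $[0,\beta^{-2})\times\R$; identify $\sqrt{H}$ with $J_\sigma$ via \cref{prop:satisfiesPDE}; and use the extension criterion of \cref{thm:FBSDEwellposed} to rule out $\Qbar_\FBSDE(\sigma)<\beta^{-2}$.

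The PDE input is the main obstacle, and lives entirely in \cite{DG23a}. What I need in the scalar case $m=1$, under each of hypotheses (1)--(3), is a continuous function $H\colon [0,\beta^{-2})\times\R\to\R_{\ge 0}$ with $H(0,\cdot)=\sigma^2$ that is an almost-classical solution in the sense of \cref{defn:almost-classical}, that satisfies the polynomial growth bound \eqref{eq:polybd} on each compact $\m K\subset(0,\beta^{-2})$, and that obeys the uniform Lipschitz estimate
\[
  \sup_{q\in[0,Q]}\Lip\bigl(\sqrt{H(q,\cdot)}\bigr)<\infty \qquad \text{for every } Q\in(0,\beta^{-2}).
\]
Heuristically, this control up to time $\beta^{-2}$ reflects a parabolic comparison with the explicit linear solution $H_{\mathrm{lin}}(q,b)=\beta^2 b^2/(1-\beta^2 q)$, whose square root has Lipschitz constant $\beta/\sqrt{1-\beta^2 q}$ and blows up exactly at the critical time $q=\beta^{-2}$. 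The three cases in the statement are tailored so that $\sigma^2$ is comparable from above to this linear initial data on each component of its support, with matching lower bounds preventing spurious interior collapse of $H$; these are the conditions under which \cite{DG23a} can actually perform the required comparison despite the degeneracy of \eqref{eq:HPDE-1} where $H=0$.

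Given $H$, the conclusion follows from a short continuation argument. Suppose, for contradiction, that $Q^*\coloneqq\Qbar_\FBSDE(\sigma)<\beta^{-2}$. For any $Q_0\in(0,Q^*)$, \cref{prop:satisfiesPDE} applies on $[0,Q_0]$ and yields $\sqrt{H(q,\cdot)}=J_\sigma(q,\cdot)$ for every $q\in[0,Q_0]$; letting $Q_0\nearrow Q^*$ gives this identity on $[0,Q^*)\times\R$. Fix $Q_1\in(Q^*,\beta^{-2})$ and set $L\coloneqq\sup_{q\in[0,Q_1]}\Lip\sqrt{H(q,\cdot)}<\infty$; then $\Lip(J_\sigma(Q,\cdot))\le L$ for every $Q\in[0,Q^*)$. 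Choosing $Q\in(Q^*-L^{-2},Q^*)$, \cref{thm:FBSDEwellposed} gives $\Qbar_\FBSDE(\sigma)\ge Q+L^{-2}>Q^*$, contradicting the definition of $Q^*$. Hence $\Qbar_\FBSDE(\sigma)\ge\beta^{-2}$, as claimed.
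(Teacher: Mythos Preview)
Your proposal is correct and follows essentially the same route as the paper: invoke \cite{DG23a} to produce an almost-classical solution $H$ on $[0,\beta^{-2})$ with $\sup_{q\in[0,Q]}\Lip\sqrt{H(q,\cdot)}<\infty$ for each $Q<\beta^{-2}$, identify $\sqrt{H}=J_\sigma$ via \cref{prop:satisfiesPDE} on $[0,\Qbar_\FBSDE(\sigma)\wedge\beta^{-2})$, and then derive a contradiction from \cref{thm:FBSDEwellposed} if $\Qbar_\FBSDE(\sigma)<\beta^{-2}$. Your continuation step, picking $Q\in(Q^*-L^{-2},Q^*)$ with a uniform bound $L$, is in fact slightly cleaner than the paper's use of a $\liminf$.
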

The above conditions amount to \cite[Hypothesis~\textup{H}($\beta^2$,$\gamma$)]{DG25a} for $\sigma^2$.
We derive \cref{thm:PDEthm} from the results of \cite{DG25a} in \cref{sec:PDE}.
We highlight a particularly simple case:
\begin{cor}
  \label{cor:simple-PDE}
  Let $\sigma\in\Lip(\RR;\RR_{\ge 0})$ and suppose there exist $\beta \in (0, \infty)$ and $k \in (0, \beta]$ such that ${k b_+ \leq \sigma(b) \leq \beta b_+}$ for all $b \in \R.$
  Then $\overline{Q}_\FBSDE(\sigma)\ge\beta^{-2}$.
\end{cor}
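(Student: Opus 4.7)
The plan is to deduce Corollary \ref{cor:simple-PDE} directly from case (3) of Theorem \ref{thm:PDEthm}. First I would identify the nondegenerate set $I \coloneqq \{b \in \R \suchthat \sigma(b) > 0\}$. The two-sided bound $k b_+ \leq \sigma(b) \leq \beta b_+$ immediately gives $\sigma(b) = 0$ for $b \leq 0$ and $\sigma(b) \geq k b > 0$ for $b > 0$, so $I = (0, \infty)$. This is an open half-line with $a = 0$, placing us in case (3) of Theorem \ref{thm:PDEthm}.

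Next I would specialize the auxiliary parameters $\gamma$ and $K$. The simplest choice is $\gamma = 2$, so that the control functions $|x - a|^{\gamma/2}$ and $|x - a|$ coincide on $I$. With this choice, and since $|x - a| = x$ for $x \in I$, the hypotheses of case (3) reduce to
\[
K^{-1} x \leq \sigma(x) \leq (\beta x) \wedge (K x) \qquad \text{for all } x > 0.
\]
Setting $K = \max(k^{-1}, \beta)$ makes the upper bound equal to $\beta x$, matching the hypothesis $\sigma(x) \leq \beta x$, and enforces $K^{-1} \leq k$, which combined with $\sigma(x) \geq k x$ yields the lower bound.

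The one small thing to check is that $K \in [1, \infty)$. The constraint $k \leq \beta$ gives $k^{-1} \geq \beta^{-1}$, so if $\beta \geq 1$ then $K \geq \beta \geq 1$, while if $\beta < 1$ then $k^{-1} > 1$ and $K \geq k^{-1} > 1$. Either way $K \geq 1$. With all hypotheses of Theorem \ref{thm:PDEthm} case (3) verified, the conclusion $Q_\FBSDE(\sigma) \geq \beta^{-2}$ follows immediately. There is essentially no obstacle here: Corollary \ref{cor:simple-PDE} is a transparent repackaging of Theorem \ref{thm:PDEthm} in the half-line setting under linear control, and the entire task reduces to choosing $\gamma$ and $K$ appropriately to match the linear Lipschitz-type hypothesis to the $\gamma$-dependent statement of the theorem.
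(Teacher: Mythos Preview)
Your proposal is correct and matches the paper's intended approach: the paper presents Corollary~\ref{cor:simple-PDE} as a ``particularly simple case'' of Theorem~\ref{thm:PDEthm} without giving a separate proof, and your derivation via case~(3) with $\gamma=2$ and $K=\max(k^{-1},\beta)$ is exactly the intended specialization.
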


\subsection{Examples\label{subsec:examples}}

In this section we examine some nonlinearities for which the FBSDE \zcref[range]{eq:FBSDE,eq:Jeqn} admits explicit solutions.
\nomenclature[Hm]{$\clH^m$}{space of symmetric $m\times m$ matrices}
Writing $\clH^m$ for the space of symmetric $m\times m$ matrices, let $g_2\colon\clH^m\to\clH^m$ and $g_1\colon\RR^m\to\clH^m$ be linear maps and $g_0\in \clH_+^m$.
Assume $g_2(b\otimes b)+g_1(b)+g_0 \in \clH_+^m$ for all $b \in \R^m$ and define
\begin{equation*}
  \sigma(b) = [g_2(b\otimes b)+g_1(b)+g_0]^{1/2}.
\end{equation*}
We make the ansatz $H(q, b) = F_q \circ \sigma^2(b)$ for some linear map $F_q\colon\clH^m\to\clH^m$ depending on $q$.
Using this in \cref{eq:HPDE-1} and writing $\dot{F}_q = \partial F_q/\partial q$, one can compute
\begin{equation*}
  \dot F_q \circ \sigma^2 = \partial_qH = \frac12(H:\nabla^2_b)H = F_q \circ g_2 \circ H = F_q \circ g_2 \circ F_q \circ \sigma^2.
\end{equation*}
One can check that $F_q = (\Id-qg_2)^{-1}$ solves the matrix-valued ODE $\dot F_q = F_q\circ g_2\circ F_q$ with $F_0 = \Id$ on the interval $q\in [0,|g_2|^{-1}_{\mathrm{op}})$.
The corresponding PDE solution is
\begin{equation}
  \label{eq:vvlinearcasesoln}
  H(q,b)=(\Id-qg_2)^{-1}[g_2(b\otimes b)+g_1(b)+g_0]\qquad\text{for }q\in [0,|g_2|^{-1}_{\mathrm{op}}).
\end{equation}
If $\sqrt{H}$ is Lipschitz in $b$, then $H$ is a decoupling function for the FBSDE \zcref[range]{eq:FBSDE,eq:Jeqn} by \cref{prop:satisfiesPDE}.
\medskip

Let us now focus on the scalar case $m = 1$.
We first note that $g_2(X) = \beta^2 X$ and $g_1=g_0 = 0$ recovers the linear setting $\sigma(b) = \beta b$.
Then \cref{eq:vvlinearcasesoln} specializes to \cite[(1.34)]{DG22} (with different normalization).

The scalar setting allows us to relax the condition $g_2(b^2)+g_1(b)+g_0 \geq 0$ by taking absolute values, so that $\sigma(b) = |g_2(b^2)+g_1(b)+g_0|^{1/2}$.
The above calculation remains formally valid for $H = \sigma^2$ because the diffusivity $H/2$ vanishes precisely where $\partial_b^2$ hits the ``kink'' in the absolute value.
Two interesting cases correspond to previous work, though $\sigma$ is non-Lipschitz in both, so the results of the present paper do not apply.

First, if $g_2 = g_0 = 0$, $g_1(b) = b,$ then $\sigma(b) = \abs{b}^{1/2}$ and $J_\sigma(q,b)=\sigma(b)$ is independent of $q$.
With such square-root noise, the FBSDE is a Feller diffusion $\dif \Gamma = \sqrt{\Gamma}\,\dif B$ and \cref{eq:SPDE} is closely related to super-Brownian motion.
Although \cref{thm:mainthm-singlepoint} does not apply due to the non-Lipschitz noise, analogous results were obtained for super-Brownian motion in \cite{Kle97}.

Second, if $g_2(X) = -\al^2 X$, $g_1(b) = \al^2 b$, and $g_0 = 0$, we obtain $\sigma(b) = \alpha\sqrt{\abs{b(1-b)}}$.
Then
\begin{equation*}
  J_\sigma(q,b)=\sqrt{\frac{\abs{b(1-b)}}{\alpha^{-2}+q}}
\end{equation*}
and the FBSDE is a time-changed Fisher--Wright diffusion.
Notably, $J_\sigma$ converges as $\al \to \infty$.
This suggests the logarithmic attenuation in \cref{eq:SPDE} may be unnecessary in this setting, as $J_\sigma$ converges even if $\al \sim \sqrt{\sfL(1/\rho)} \to \infty$.
Results consistent with this picture were obtained for the voter model in \cite{CG86}, and \cite{MT95} shows that a long-range version of the voter model converges to the stochastic heat equation with Fisher--Wright noise.
To our knowledge, no rigorous link has been established between the voter model and an SPDE in two dimensions, but the above computations suggest some relationship.

Next, let $\alpha,\beta\ge0$ and take $g_2(X) = \beta^2X$, $g_1=0$, and $g_0 = \alpha^2\beta^2$.
In \cref{eq:u-SPDE},
\begin{equation*}
  \sigma(b)=\sigma_{\alpha,\beta}(b)\coloneqq\beta\sqrt{\alpha^{2}+b^{2}}
\end{equation*}
represents a combination of independent additive and multiplicative noise.
Using \cref{eq:vvlinearcasesoln}, we find
\begin{equation*}
  \dif\Gamma_{a,1}^{\sigma_{\alpha,\beta}}(q) =\left[\frac{\alpha^{2}+\Gamma_{a,1}^{\sigma_{\alpha,\beta}}(q)^{2}}{\beta^{-2}-(1-q)}\right]^{1/2} \dif B(q), \quad \Gamma_{a,1}^{\sigma_{\alpha,\beta}}(0) = a.
\end{equation*}
Through the time-change $X(r) \coloneqq \Gamma_{a,1}^{\sigma_{\alpha,\beta}}\bigl((\beta^{-2}-1)(\e^{r}-1)\bigr)$, this becomes the autonomous SDE
\begin{equation}
  \label{eq:time-change}
  \d X(r) = [\al^2 + X(r)^2]^{1/2} \dif\widetilde{B}(r)
\end{equation}
for a new Brownian motion $\widetilde{B}$.
This SDE has adjoint generator $\clL_{X}^{*}\mu=\frac{1}{2}\partial_{b}^{2}[(\alpha^{2}+b^{2})\mu]$, so for $\alpha>0$, \cref{eq:time-change} admits an invariant probability measure $\mu_{\alpha}$ with Cauchy density
\begin{equation*}
  \mu_\al(\dif b) = \frac{\alpha}{\pi(\alpha^{2}+b^{2})} \, \dif b.
\end{equation*}
In the degenerate case $\al = 0$, $\delta_0$ is invariant.
One can then check that $\Law X(r) \to \mu_\al$ as $r \to \infty$.
Noting that $r \to \infty$ as $\beta \nearrow 1$, we see that $\Law \Gamma_{a,1}^{\sigma_{\alpha,\beta}}(1) \to \mu_{\alpha}$ in the same limit.
After the first version of this paper was posted, the first author and Mukherjee \cite{DM24} showed that the $\rho \searrow 0$ and $\beta \nearrow 1$ limits can be exchanged: the limiting pointwise statistics of \cref{eq:u-SPDE} are Cauchy when $\beta \ge 1$.
This was known in the linear case $\al = 0$ \cite[Theorem 2.15]{CSZ17}.

\subsection{Ideas of the proof\label{subsec:MGexptimescale}}

\paragraph{The martingale.}

Our approach revolves around the following martingale, which is analogous to a discrete Markov chain used in~\cite{DG22}.
Given $0\leq T_0 < T$, let $v^\rho$ solve \cref{eq:SPDE} on $[T_{0},T]$ and define
\nomenclature[V ]{$V_t^{\rho,T}(x)$}{$\clG_{T-t}v_{t}^{\rho}(x)$, martingale in $t$, \cref{eq:Vdef}}
\begin{equation}
  V_{t}^{\rho,T}(x)=\clG_{T-t}v_{t}^{\rho}(x)\qquad\text{for }t\in[T_{0},T]\text{ and }x\in\RR^{2}.\label{eq:Vdef}
\end{equation}
Using the mild solution formula \cref{eq:mildsoln}, we can rewrite
this as
\begin{equation}
  V_{t}^{\rho,T}=\clG_{T-T_{0}}v_{T_{0}}^{\rho}+\gamma_{\rho}\int_{T_{0}}^{t}\clG_{T+\rho-r}[\sigma(v_{r}^{\rho})\,\dif W_{r}]=V_{T_{0}}^{\rho,T}+\gamma_{\rho}\int_{T_{0}}^{t}\clG_{T+\rho-r}[\sigma(v_{r}^{\rho})\,\dif W_{r}].\label{eq:Vtintegral}
\end{equation}
From \cref{eq:Vtintegral} it is clear that for fixed $T$ and $x$,
the process $\bigl(V_{t}^{\rho,T}(x)\bigr)_{t\in[T_{0},T]}$ is an $\{\scrF_{t}\}$-martingale.
Also, \cref{eq:Vdef} yields $V_{T}^{\rho,T}=v_{T}^{\rho}$, so $V^{\rho,T}$ has our quantity of interest $v_{T}^{\rho}(x)$ as its terminal value.
The quadratic variation matrix is given by
\begin{align}
  [V^{\rho,T}(x)]_{t} & = \gamma_\rho^2 \int_{T_{0}}^{t} G_{T+\rho-r}^{2} \ast (\sigma^{2} \circ v_r^\rho)(x)\,\dif r =\frac{1}{\sfL(1/\rho)}\int_{T_{0}}^{t}\frac{\clG_{(T+\rho-r)/2}[\sigma^{2}\circ v_{r}^{\rho}](x)}{T+\rho-r}\,\dif r.\label{eq:VQV}
\end{align}
We would like to approximate $\dif[V^{\rho,T}(x)]_{t}/\dif t$ by a function of $V_{t}^{\rho,T}(x)$, for then $V^{\rho,T}$ will approximately solve an SDE.

As $\rho\searrow0$, contributions to the quadratic variation become concentrated in a thin layer just before $T$.
Indeed, the attenuation $\sfL(1/\rho)^{-1}$ kills the integral over $[T_0,r]$ for any fixed $r < T$.
To account for this concentration, we make the $\rho$-dependent time-change
\begin{equation}
  q(r)=\frac{1}{\sfL(1/\rho)}\log\frac{T-T_{0}+\rho}{T-r+\rho}, \quad \text{so }\enspace \dif q=\frac{\dif r}{\sfL(1/\rho)(T+\rho-r)} \And r \approx T-\rho^q.
  \label{eq:introchgvar} 
\end{equation}
This exponential time scale is fundamental to the phenomenology of the problem.
It appears also in \cite{CSZ17,DG22}, and \cite[Section 1.1]{DG22} contains further heuristic discussion.

\paragraph{Relating the quadratic variation to the martingale.}
In the following, we use ``scale $\eps$'' to indicate time-scale $\eps$ and space-scale $\eps^{1/2}$, in keeping with parabolic scaling.

After the time-change \cref{eq:introchgvar} and some minor approximations, we obtain a martingale
\begin{equation}
  \Upsilon_q = V_{T-\rho^{q}}^{\rho,T}(x) =\clG_{\rho^{q}}v^\rho_{T-\rho^{q}}(x) \quad \text{with} \quad \dif[\Upsilon]_q \approx \clG_{\rho^{q}}[\sigma^2\circ v^{\rho}_{T-\rho^{q}}](x) \ds q.
  \label{eq:upsilonapproxqv}
\end{equation}
Because $v_t^\rho$ is spatially smooth at scale $\rho$, $\Upsilon_1 \approx v_T^\rho(x)$.
We wish to show that $\Upsilon$ approximately satisfies an SDE.
Equivalently, we want to approximate the rightmost side of \cref{eq:upsilonapproxqv} by a function of $\Upsilon_q$.
In general, there need not be any relationship between the average of a field (like $\Upsilon_q$) and the average of a nonlinear function of the field (like $\dn [\Upsilon]_q$).
However, we can use the specific structure of \cref{eq:SPDE} to establish such a link.

Following \cite{DG22}, we imagine turning off the noise on a ``quiet interval'' of duration $\rho^{q + o(1)}$ preceding time $T - 2\rho^q$.
In the absence of noise, \cref{eq:SPDE} becomes the deterministic heat equation for time $\approx \rho^q$, and the solution approximately coincides with the Gaussian average $\Upsilon_q$ at time $T - 2 \rho^q$.
We then turn the noise back on from time $T - 2\rho^q$ to $T - \rho^q$.
We can choose the duration of the quiet interval so that it is negligible on the exponential time-scale.
Then the temporary noise suppression has little effect on $v_{T - \rho^q}^\rho(x)$.
As a consequence, the field $v_{T - \rho^q}^\rho$ in \cref{eq:upsilonapproxqv} can be approximated by the solution $\ti{v}_{T - \rho^q}^\rho$ of \cref{eq:SPDE} started from data $\Upsilon_q$ at time $T - 2 \rho^q$ (\cref{prop:turnoffnoise}).
The fluctuations in $\ti{v}_{T - \rho^q}^\rho$ from the noise in \cref{eq:SPDE} decorrelate at finer scales than $\rho^q$, so the spatial average $\m{G}_{\rho^q}$ in $\dn [\Upsilon]_q$ approximates an expectation conditioned on $\Upsilon_q$.
Therefore $\dn [\Upsilon]_q$ is close to an implicit but deterministic function of the data $\Upsilon_q$ of $\ti v$ (\cref{thm:mainapproxthm}).

We next relate this function to the FBSDE \zcref[range]{eq:FBSDE,eq:Jeqn}.
% We therefore approximate the differential quadratic variation $\clG_{\rho^{q}}[\sigma^2\circ v^{\rho}_{T-\rho^{q}}](x)$ as follows.
% We solve \cref{eq:SPDE} from the constant value $\Upsilon_q$ for time $\rho^{q}$, apply $\sigma^2$, and take an expectation.
We have approximated $\dn[\Upsilon]_q$ by solving \cref{eq:SPDE} from data $\Upsilon_q$ for time $\rho^{q}$, applying $\sigma^2$, and taking an expectation.
% the differential quadratic variation $\clG_{\rho^{q}}[\sigma^2\circ v^{\rho}_{T-\rho^{q}}](x)$
In time $\rho^q$, the noise in \cref{eq:SPDE} induces fluctuations from scales $\rho^q$ down to $\rho^1$.
Hence this operation is analogous to starting the limiting SDE \cref{eq:FBSDE} from $\Upsilon_q$ at time $q$, running to time $1$ (matching the induced scales), and then applying $\sigma^2$ and an expectation.
% In terms of the limiting SDE, we start from time $q$, run to time $1$, and then apply $\sigma^2$ and an expectation.
This is simply \cref{eq:Jeqn} with $1 - q$ in place of $q$ due to the substitution in \cref{eq:FBSDE}.
It follows that
\begin{equation}
  \label{eq:intro-key-approx}
  \clG_{\rho^{q}}[\sigma^2\circ v^{\rho}_{T - \rho^q}] \approx J_\sigma\bigl(1 - q, \Upsilon_q\bigr)^2
\end{equation}
(\cref{prop:Japprox}), and hence $V^{\rho,T}$ approximately satisfies \cref{eq:FBSDE} in the exponential time-scale.

\paragraph{Extension to longer scales.}

In \cite{DG22}, the above approximations were proved on time scales of length $\rho^{1-\Lip(\sigma)^{-2}}$, so order-$1$ time can only be treated when $\Lip(\sigma)<1$.
The most important contribution of the present paper is to extend the analysis to longer time scales. %
We accomplish this through renormalization, a form of multiscale analysis.

Given an exponent $q \in (0, 1)$, let $w_t^\rho \coloneqq \m{G}_{\rho^q} v_t^\rho$ denote the ``coarse-grained'' field at scale $\rho^q$.
Applying $\m{G}_{\rho^q}$ to \cref{eq:SPDE}, we find
\begin{equation}
  \label{eq:renorm-1}
  \dif w_{t}^{\rho} \approx \frac{1}{2}\Delta w_{t}^{\rho} \, \dif t + \gamma_{\rho} \clG_{\rho^q}[\sigma(v_{t}^{\rho})\,\dif W_{t}].
\end{equation}
By \cref{eq:intro-key-approx}, the noise term is close in law to $\gamma_\rho \m{G}_{\rho^q}[J_\sigma(1 - q, w_t^\rho) \ds W_t]$ provided $q > 1 - \Lip(\sigma)^{-2}$.
These noise fields need not be close in \emph{probability}, but we show that we can replace $\dn W$ by a new white noise $\dn\widetilde{W}$ in the second expression so that the fields themselves are close in a weak sense (\cref{prop:coupling}).
Then \cref{eq:renorm-1} becomes
\begin{equation}
  \label{eq:renorm}
  \dif w_{t}^{\rho} \approx \frac{1}{2}\Delta w_{t}^{\rho} \, \dif t + \gamma_{\rho} \m{G}_{\rho^q}[J_\sigma(1 - q, w_t^\rho) \ds \widetilde{W}_t].
\end{equation}
If we set $\ti\rho \coloneqq \rho^q$, $\ti\sigma \coloneqq q^{1/2} J_\sigma(1 - q,\anon)$, and index $w$ by $\ti \rho$ rather than $\rho$, we obtain
\begin{equation*}
  \dif w_{t}^{\ti\rho} \approx \frac{1}{2}\Delta w_{t}^{\ti\rho} \, \dif t + \gamma_{\ti\rho} \m{G}_{\ti\rho}[\ti\sigma(w_t^{\ti\rho}) \ds \widetilde{W}_t].
\end{equation*}
This is simply \cref{eq:SPDE} with larger $\rho$, new noise, and an effective nonlinearity $q^{1/2}J_\sigma(1 - q, \anon)$.
Thus \cref{eq:SPDE} is approximately closed under renormalization (\cref{cor:approx-SPDE}), and the root decoupling function $J_\sigma$ serves as an effective nonlinearity after renormalization.

Initially, our approximation \cref{eq:intro-key-approx} was only valid when $q > 1 - \Lip(\sigma)^2$.
Now, the renormalized equation \cref{eq:renorm} allows us to extend it to $q > 1 - \Lip(\sigma)^2 - \Lip[J(1 - q,\anon)]^2$ (\cref{prop:extendQSPDE-1}).
Inducting, we see that \cref{eq:intro-key-approx} holds for all $q \in [0, 1]$ provided $J_\sigma$ remains Lipschitz for $q \in [0, 1]$ (\cref{thm:QSPDEgreater}).
In turn, this ensures that $V^{\rho,T}$ approximately solves \cref{eq:FBSDE} and implies \cref{thm:mainthm-singlepoint}.
This explains our \cref{def:subcrit} of subcriticality, which generalizes the condition in~\cite{DG22} that $\Lip(\sigma) < 1$.
We also show that the stability of \cref{eq:SPDE} is preserved under renormalization, which implies a form of universality (\cref{thm:stable}/\cref{thm:universal-informal}).

\subsection{Organization}

We analyze the FBSDE \zcref[range]{eq:FBSDE,eq:Jeqn} in \cref{sec:FBSDE} and relate it to the decoupling flow \cref{eq:HPDE-1} in \cref{sec:PDE}.
We combine this with results from a companion paper \cite{DG25a} to prove \cref{thm:PDEthm}.
We establish basic properties of the SPDE in \cref{sec:basic-properties}.
In \cref{sec:local-SPDE-estimates}, we prove ``local'' estimates for the SPDE, namely results that hold on small scales depending on $\Lip(\sigma)$.
The heart of the paper is \cref{sec:extendtheestimates}, in which we carry out the renormalization.
We use its results in \cref{sec:proofofmainresults} to prove \cref{thm:renorm,thm:mainthm-singlepoint,thm:mainthm-multipoint}.
Finally, in \cref{sec:universal} we establish the universality in \cref{thm:universal-informal} and \cref{cor:pre-post}.

There are two appendices.
We collect well-known auxiliary results in \cref{appendix:auxiliary} and prove a general coupling result about space-time white noises in \cref{appendix:coupling}.
The second is crucial for our renormalization, but we place it in an appendix as it is not specific to our SPDE.

\subsection{Notational conventions\label{subsec:notationalconventions}}

Throughout the paper, $\abs{a}$ denotes the Euclidean norm\nomenclature[zzznormabs]{$\lvert \anon\rvert $}{Euclidean norm} and we write $\langle a\rangle=\sqrt{|a|^{2} + 1}$ for $a \in \R^m$.\nomenclature[zzznorm JB]{$\langle\anon\rangle$}{Japanese bracket $\sqrt{1+\lvert \anon\rvert ^2}$}
We write $\Id_m$ for the $m\times m$ identity matrix.\nomenclature[Idm]{$\Id_m$}{$m\times m$ identity matrix} In addition to the Frobenius norm $|\cdot|_{\Frob}$, we use the operator norm $|\cdot|_{\op}$\nomenclature[zzznormop]{$\lvert \anon\rvert _\op$}{operator norm} (based on the
Euclidean norm on the domain and range $\RR^{m}$) and the
nuclear norm $|\anon|_{*}$\nomenclature[zzznormnuclear]{$\lvert \anon\rvert _*$}{nuclear norm} on the space of $m\times m$ matrices.
See \cref{subsec:matrix-norms} for some facts about these norms that we use throughout the paper.
For vector-valued adapted processes $(X_q^{(i)})_q$ with $i = 1,2$, we let $[X^{(1)},X^{(2)}]_q$
denote the matrix-valued quadratic covariation process with $k_1,k_2$ entry equal to the quadratic covariation of the $k_1$ entry of $X^{(1)}$ and the $k_2$ entry of $X^{(2)}$, and also write $[X^{(i)}]_q = [X^{(i)},X^{(i)}]_q$.

When we apply a result with certain parameter choices, we use the notation $x\setto x'$ to indicate that the parameter $x$ in the original statement should be taken equal to $x'$ in the present application.\nomenclature[zzzz setto]{$\setto$}{parameter substitution}
An index of symbols is provided at the end of the paper for reference.

\subsection{Acknowledgments}

We warmly thank Yu Gu, Jean-Christophe Mourrat, and Nikos Zygouras for interesting
and helpful conversations. The authors were supported by the National Science Foundation
under grants~DMS-2002118 and~DMS-2346915
(A.D.) and DMS-2103383 (C.G.). Much of the work was completed while A.D. was at NYU Courant and C.G. was at Brown University.

\section{Well-posedness of the FBSDE\label{sec:FBSDE}}

In this section, we prove various properties of the FBSDE \zcref[range]{eq:FBSDE,eq:Jeqn}.
Recalling \cref{def:QbarFBSDE}, our main result is the following:
\begin{thm}
  \label{thm:FBSDEwellposed}
  Let $\sigma\in\Lip(\RR^{m};\clH_{+}^{m})$.
  For any $Q\in\bigl[0,\Qbar_{\FBSDE}(\sigma)\bigr)$, we have
  \begin{equation*}
    \Qbar_{\FBSDE}(\sigma)\ge Q+\Lip\bigl(J_{\sigma}(Q,\cdot)\bigr)^{-2}.
  \end{equation*}
\end{thm}
\noindent
Thus we can extend the well-posedness of \zcref[range]{eq:FBSDE,eq:Jeqn} so long as $J_\sigma$ remains Lipschitz.
In addition, we establish several growth and regularity bounds for $J_\sigma$.
\medskip

We let $\clX$ denote the Banach space of $\clH^m_+$-valued continuous functions on $\RR^{m}$ with at most linear growth at infinity.
That is, $\clX$ is the space of continuous functions $\sigma$ such that the norm\nomenclature[X]{$\clX$}{space of continuous $\clH^m_+$-valued functions with at most linear growth at infinity}
\begin{equation}
  \|\sigma\|_{\clX}\coloneqq\sup_{x\in\RR^m}\frac{|\sigma(x)|_\Frob}{\langle x\rangle}\label{eq:Xnorm}
\end{equation}
is finite. The space $\clX$ is not separable, but this will
not concern us.
As sets, $\Lip(\RR^{m};\clH_{+}^{m})\subset\clX$.

\subsection{SDE solution theory\label{subsec:SDE-solution-theory}}

In this section we recall the solution theory for stochastic differential equations with Lipschitz coefficients.
This is essentially standard; see, e.g., \cite[\S 5.1]{SV06}.
We work with a standard $\RR^{m}$-valued Brownian
motion $\bigl(B(q)\bigr)_{q\ge0}$ adapted to a filtration $\{\scrG_{q}\}_{q\ge0}$.
For an adapted process $Y$ on $[0,Q]$, a function $g\colon[0,Q]\times\RR^{m}\to\clH_{+}^{m}$,
and a constant $a\in\RR^{m}$, we define a new adapted process
$\clR_{a,Q}^{g}Y$ on $[0,Q]$ by
\nomenclature[R]{$\clR^g_{a,Q}$}{solution operator for an SDE with diffusivity $g(Q-\cdot,\cdot)$ and initial condition $a$, \cref{eq:Rdef}}
\begin{equation}
  \clR_{a,Q}^{g}Y(q)\coloneqq a+\int_{0}^{q}g\bigl(Q-p,Y(p)\bigr)\,\dif B(p),\label{eq:Rdef}
\end{equation}
whenever this stochastic integral is defined. For $Q>0$, define
\nomenclature[AQ]{$\clA_Q$}{space of continuous functions on $[0,Q]\times\RR^m$, uniformly Lipschitz in the second argument, \cref{eq:AQdef}}
\begin{equation}
    \clA_{Q}\coloneqq\Bigl\{ J\colon[0,Q]\times\RR^{m}\to\clH_{+}^{m}\text{ continuous}\suchthat \sup_{q\in[0,Q]}\Lip\bigl(J(q,\cdot)\bigr)<\infty\Bigr\} .\label{eq:AQdef}
\end{equation}
This condition matches \cref{eq:Juniflipschitz}.
\begin{prop}
  \label{prop:SDEwellposed}Fix $L<\infty$ and $Q\in(0,\infty)$ and
  suppose that $g\in\clA_{Q}$ satisfies
  \begin{equation}
    \sup_{q\in[0,Q]}\Lip\bigl(g(q,\cdot)\bigr)\le L.\label{eq:gLip}
  \end{equation}
  Then, for any $a\in\RR^m$, there is a unique strong solution $\Theta_{a,Q}^{g}$
  to the SDE
  \nomenclature[zzzgreek θ]{$\Theta_{a,Q}^g$}{solution to SDE \zcref[range]{eq:thetaSDE,eq:thetaIC}}
  \begin{align}
    \dif\Theta_{a,Q}^{g}(q) & =g\bigl(Q-q,\Theta_{a,Q}^{g}(q)\bigr)\dif B(q),\qquad q\in[0,Q];\label{eq:thetaSDE} \\
    \Theta_{a,Q}^{g}(0)     & =a.\label{eq:thetaIC}
  \end{align}
  The solution $\Theta_{a,Q}^{g}$ satisfies the moment bound
  \begin{equation}
    \sup_{q\in[0,Q]}\EE |\Theta_{a,Q}^{g}(q)|^{\ell}<\infty\qquad\text{for all }\ell\in[1,\infty).\label{eq:finite-moment}
  \end{equation}
  Moreover, there exists a constant $C=C(L,Q)<\infty$ such that for
  any $Q'\in[0,Q]$ and any adapted process $\Gamma$ on $[0,Q']$,
  we have
  \begin{equation}
    \sup_{q\in[0,Q']}\EE |\Gamma(q)-\Theta_{a,Q}^{g}(q)|^{2}\le C\cdot\sup_{q\in[0,Q']}\EE |\Gamma(q)-\clR_{a,Q}^{g}\Gamma(q)|^{2}\label{eq:SDEclose}
  \end{equation}
  and
  \begin{equation}
    \EE |\Theta_{a,Q}^{g}(q)-\Theta_{\widetilde{a},Q}^{g}(q)|^{2}\le C|a-\widetilde{a}|^{2}.\label{eq:SDEctsininitialconditions}
  \end{equation}
\end{prop}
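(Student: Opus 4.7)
The plan is to use Picard iteration combined with Gronwall-type arguments based on the Itô isometry; with $g$ uniformly Lipschitz on $[0,Q]\times\RR^{m}$, the proof is essentially routine, and I expect each of the four claims to follow from variants of the same template.

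For existence and uniqueness, I would first establish that on any interval $[0,\delta]$ with $\delta<L^{-2}$, the map $Y\mapsto \clR^{g}_{a,Q}Y$ is a strict contraction on the space of $\{\scrG_{q}\}$-adapted $\RR^{m}$-valued processes $Y$ with $\sup_{q\in[0,\delta]}\EE|Y(q)|^{2}<\infty$, since the Itô isometry together with \cref{eq:gLip} gives
\begin{equation*}
  \sup_{q\in[0,\delta]}\EE|\clR^{g}_{a,Q}Y_{1}(q)-\clR^{g}_{a,Q}Y_{2}(q)|^{2}\le L^{2}\delta\sup_{q\in[0,\delta]}\EE|Y_{1}(q)-Y_{2}(q)|^{2}.
\end{equation*}
Banach's fixed-point theorem produces a unique fixed point on $[0,\delta]$, which by a standard concatenation over $\lceil Q/\delta\rceil$ intervals extends to a unique strong solution $\Theta^{g}_{a,Q}$ on $[0,Q]$.

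The moment bound \cref{eq:finite-moment} follows by combining the linear growth estimate $|g(q,b)|_{\Frob}\le |g(q,0)|_{\Frob}+L|b|$ (a consequence of \cref{eq:gLip}) with the Burkholder--Davis--Gundy inequality applied to \crefrange{eq:thetaSDE}{eq:thetaIC}. Continuity of $g$ ensures $\sup_{q\in[0,Q]}|g(q,0)|_{\Frob}<\infty$, so for any $\ell\ge 2$,
\begin{equation*}
  \EE\sup_{p\le q}|\Theta^{g}_{a,Q}(p)|^{\ell}\le C\bigl(|a|^{\ell}+1\bigr)+C_{\ell,L}\int_{0}^{q}\EE\sup_{r\le p}|\Theta^{g}_{a,Q}(r)|^{\ell}\dif p;
\end{equation*}
truncation at the stopping times $\tau_{n}\coloneqq\inf\{q:|\Theta^{g}_{a,Q}(q)|\ge n\}$ provides the a priori finiteness needed to close the argument, after which Gronwall and monotone convergence yield the bound uniformly on $[0,Q]$. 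Lower moments follow by Hölder's inequality.

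For the stability estimate \cref{eq:SDEclose}, I would exploit that $\Theta\coloneqq\Theta^{g}_{a,Q}$ is itself a fixed point of $\clR^{g}_{a,Q}$ to decompose
\begin{equation*}
  \Gamma(q)-\Theta(q)=\bigl(\Gamma(q)-\clR^{g}_{a,Q}\Gamma(q)\bigr)+\bigl(\clR^{g}_{a,Q}\Gamma(q)-\clR^{g}_{a,Q}\Theta(q)\bigr),
\end{equation*}
and write $\eps\coloneqq\sup_{q\in[0,Q']}\EE|\Gamma(q)-\clR^{g}_{a,Q}\Gamma(q)|^{2}$. The elementary inequality $(x+y)^{2}\le 2x^{2}+2y^{2}$ combined with the Itô isometry applied to the second summand produces
\begin{equation*}
  \EE|\Gamma(q)-\Theta(q)|^{2}\le 2\eps+2L^{2}\int_{0}^{q}\EE|\Gamma(p)-\Theta(p)|^{2}\dif p,
\end{equation*}
whence Gronwall yields \cref{eq:SDEclose} with $C=2e^{2L^{2}Q}$. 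The bound \cref{eq:SDEctsininitialconditions} follows from exactly the same Itô-isometry/Gronwall template applied directly to the difference $\Theta^{g}_{a,Q}-\Theta^{g}_{\widetilde{a},Q}$, whose initial value is $a-\widetilde{a}$. There is no real conceptual obstacle; the only point requiring care is the a priori finiteness needed to initiate the Gronwall step in the moment bound, which is handled by the standard localization via $\tau_n$.
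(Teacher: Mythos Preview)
Your argument is correct and uses the same basic ingredients (Picard iteration, It\^o isometry, Gr\"onwall, localization), but the execution differs from the paper's in two places worth noting. For existence and uniqueness, the paper avoids concatenation by introducing the weighted norm
\[
  \clK_{L,Q'}(\Gamma)=\sup_{q\in[0,Q']}\e^{-L^{2}q}\bigl(\EE|\Gamma(q)|^{2}\bigr)^{1/2},
\]
under which $\clR^{g}_{a,Q}$ is a contraction with factor $2^{-1/2}$ on the whole interval $[0,Q']$ at once. This has the pleasant side effect that \cref{eq:SDEclose} falls out immediately from the standard fixed-point estimate $\|x-x^{*}\|\le(1-\theta)^{-1}\|x-\Phi(x)\|$ together with the equivalence of $\clK_{L,Q'}$ and the unweighted $\sup$-norm, whereas you run a separate Gr\"onwall argument to get it. For the moment bound, the paper computes $\dif|\Theta|^{\ell}$ via It\^o's formula (with the explicit Hessian of $|\cdot|^{\ell}$) rather than invoking BDG; both routes localize with the same stopping times $\tau_{N}$ and close by Gr\"onwall. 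Your approach is perfectly valid and arguably more elementary; the paper's is slightly slicker in that the stability estimate comes for free.
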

The solution $\Theta$ is the unique fixed point of $\clR$.
This explains \cref{eq:SDEclose}: if $\Gamma$ is almost a fixed point then it is close to $\Theta$.
For the convenience of the reader, we provide a complete proof of \cref{prop:SDEwellposed} in \cref{appendix:SDE-solution-theory}.

\subsection{Local solution theory for the FBSDE\label{subsec:FBSDE-local}}

Our first main step towards proving \cref{thm:FBSDEwellposed} is essentially
a generalization of \cite[Theorem 1.1]{DG22} to the vector-valued case. (Even in the scalar case, \cite{DG22} imposes the assumption $\sigma(0)=0$, which we do not assume here.)
We think of this as a ``local'' solution
theory for the FBSDE \zcref[range]{eq:FBSDE,eq:Jeqn}, since in this
section (\cref{prop:localwellposed} below) we will construct solutions
only up to time $\Lip(\sigma)^{-2}$, thus showing that $\Qbar_{\FBSDE}(\sigma)\ge\Lip(\sigma)^{-2}$.
In \cref{subsec:extendingFBSDE}, we will extend solutions to longer
times.

Recall the definition \cref{eq:AQdef} of $\clA_{Q}$.
\begin{defn}
  \label{def:decouplingfn}Let $Q>0$ and let $\sigma\in\Lip(\RR^{m};\clH_{+}^{m})$.
  We say that $J\in\clA_{Q}$ is a \emph{root decoupling function}
  for \zcref[range]{eq:FBSDE,eq:Jeqn} on $[0,Q]$ if, for all $q\in[0,Q]$
  and all $a\in\RR^m$, we have
  \begin{equation}
    J(q,a)=\bigl[\EE \sigma^{2}\bigl(\Theta_{a,q}^{J}(q)\bigr)\bigr]^{1/2}.\label{eq:root decouplingholds}
  \end{equation}
  In this case, we also call $J^2$ the \emph{decoupling function}.
  In \cref{eq:root decouplingholds}, the process $\Theta_{a,q}^J$ is as in \zcref[range]{eq:thetaSDE,eq:thetaIC} (with $g\mapsfrom J$).
\end{defn}
In other words, $J$ is a root decoupling function for \zcref[range]{eq:FBSDE,eq:Jeqn}
if and only if taking $\Gamma_{a,Q}^{\sigma}\setto\Theta_{a,Q}^{J}$ and
$J_{\sigma}\setto J$ yields a solution to the FBSDE \zcref[range]{eq:FBSDE,eq:Jeqn}.
Moreover, it is clear that for any solution to \zcref[range]{eq:FBSDE,eq:Jeqn}
with $J_{\sigma}\in\clA_{Q}$, $J_{\sigma}$ is a root decoupling
function.

\subsubsection{An \emph{a priori} bound}

We begin by proving an \emph{a priori} bound on the Lipschitz constant
of the root decoupling function.
For $\lambda\in(0,\infty)$, we define the set of functions
\nomenclature[zzzgreek σlambda]{$\lipset(\lambda)$}{space of $\lambda$-Lipschitz functions, \cref{eq:Siglamdef}}
\begin{equation}
  \lipset(\lambda)\coloneqq\{\sigma\in\Lip(\RR^{m};\clH_{+}^{m})\suchthat\Lip(\sigma)\le\lambda\}.\label{eq:Siglamdef}
\end{equation}
\begin{prop}
  \label{prop:apriorilipschitzbd}Suppose that $\lambda\in(0,\infty)$,
  $\sigma\in\lipset(\lambda)$, $Q_{0}>0$, and that $J$ is
  a root decoupling function for \textup{\zcref[range]{eq:FBSDE,eq:Jeqn}} on $[0,Q_{0}]$.
  Then for all $Q\in[0,Q_{0}\wedge\lambda^{-2})$, we have
  \begin{equation}
    \Lip\bigl(J(Q,\anon)\bigr)\le(\lambda^{-2}-Q)^{-1/2}.\label{eq:apriorilipschitzbd}
  \end{equation}
\end{prop}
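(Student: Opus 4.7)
My plan is to derive a differential inequality for $L(Q) := \Lip(J(Q, \cdot))$ and solve it explicitly. The key ingredients are the root decoupling identity \cref{eq:root decouplingholds}, a matrix-valued reverse triangle inequality, and an It\^o-isometry-plus-Gronwall estimate. Since $J \in \clA_{Q_0}$ by hypothesis, $L$ is bounded on $[0, Q_0]$, so $F(Q) := \int_0^Q L(s)^2 \dif s$ is Lipschitz (and thus differentiable a.e.\ with $F'(Q) = L(Q)^2$).

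First, I would couple the two SDE solutions by driving them with a common Brownian motion, and set $\phi(q) := \EE |\Theta^J_{a, Q}(q) - \Theta^J_{\tilde a, Q}(q)|^2$. The identity \cref{eq:root decouplingholds} expresses $J(Q, \cdot)^2$ as the expectation of $\sigma^2(\Theta^J_{\cdot, Q}(Q))$. Invoking the matrix-valued reverse triangle inequality (the vector-valued analog of the $L^2$ triangle inequality, which follows from a deep result of Lieb, as alluded to in \cref{rem:FBSDEwellposednessgeneralizes}) together with $\Lip(\sigma) \le \lambda$ then yields
\begin{equation*}
|J(Q, a) - J(Q, \tilde a)|_\Frob \le \bigl(\EE |\sigma(\Theta^J_{a, Q}(Q)) - \sigma(\Theta^J_{\tilde a, Q}(Q))|_\Frob^2\bigr)^{1/2} \le \lambda \sqrt{\phi(Q)}.
\end{equation*}
Separately, It\^o's isometry applied to the difference of the SDEs \crefrange{eq:thetaSDE}{eq:thetaIC}, combined with the Lipschitz bound $\Lip(J(Q-s, \cdot)) = L(Q-s)$, gives
\begin{equation*}
\phi(q) \le |a - \tilde a|^2 + \int_0^q L(Q-s)^2\, \phi(s)\, \dif s.
\end{equation*}
Gronwall's lemma and the change of variables $u = Q-s$ then produce $\phi(Q) \le |a - \tilde a|^2 e^{F(Q)}$, and substituting back into the displayed bound gives $L(Q)^2 \le \lambda^2 e^{F(Q)}$.

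This last bound is the differential inequality $F'(Q) \le \lambda^2 e^{F(Q)}$, equivalent to $\bigl(e^{-F(Q)}\bigr)' \ge -\lambda^2$. Since $F(0) = 0$, integrating yields $e^{-F(Q)} \ge 1 - \lambda^2 Q$ for all $Q < \lambda^{-2}$, and therefore $L(Q)^2 \le \lambda^2 e^{F(Q)} \le (\lambda^{-2} - Q)^{-1}$ on $[0, Q_0 \wedge \lambda^{-2})$, which is the claimed bound. The main obstacle is the first step: in the scalar case it reduces to the elementary $L^2$ triangle inequality, but for matrix-valued $\sigma$ it requires the nontrivial matrix-valued reverse triangle inequality based on Lieb's concavity theorem, which is one of the technical tools needed to extend the scalar framework of \cite{DG22} to the present vector-valued setting.
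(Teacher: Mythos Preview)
Your proof is correct and follows essentially the same route as the paper: the matrix-valued reverse triangle inequality (the paper's \cref{prop:matrixvaluedreversetriangleinequality}) plus It\^o isometry and Gronwall give the integral inequality $L(Q)^2 \le \lambda^2 \exp\bigl(\int_0^Q L(q)^2\,\dif q\bigr)$, which is then resolved by the Bihari--LaSalle-type argument you carry out inline (the paper packages this as a separate lemma, \cref{lem:fQrecursive}, and applies it to $f = \lambda^2 \vee L^2$, but the content is the same).
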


\begin{proof}
  Let $Q\in[0,Q_{0}\wedge\lambda^{-2})$.
  Using \cref{eq:root decouplingholds,prop:matrixvaluedreversetriangleinequality},
  we obtain for any $b_{1},b_{2}\in\RR^{m}$ that
  \begin{align}
    |J(Q,b_{1})-J(Q,b_{2})|_{\Frob}^{2} & =\left\lvert  \bigl[\EE \sigma^{2}\bigl(\Theta^J_{b_1,Q}(Q)\bigr)\bigr]^{1/2}-\bigl[\EE \sigma^{2}\bigl(\Theta^J_{b_2,Q}(Q)\bigr)\bigr]^{1/2}\right\rvert  _{\Frob}^{2}\nonumber                                                     \\
                                                          & \le\EE \bigl\lvert \sigma\bigl(\Theta^J_{b_1,Q}(Q)\bigr)-\sigma\bigl(\Theta^J_{b_2,Q}(Q)\bigr)\bigr\rvert _{\Frob}^{2}\le\lambda^{2}\EE |\Theta^J_{b_1,Q}(Q)-\Theta^J_{b_2,Q}(Q)|^{2}.\label{eq:Jsqbd}
  \end{align}
  We also have, for all $r\in[0,Q]$, that
  \begin{align*}
    \EE |\Theta^J_{b_1,Q}(r)-\Theta^J_{b_2,Q}(r)|^{2} & =|b_{1}-b_{2}|^{2}+\int_{0}^{r}\EE \bigl\lvert J\bigl(Q-q,\Theta^J_{b_1,Q}(q)\bigr)-J\bigl(Q-q,\Theta^J_{b_2,Q}(q)\bigr)\bigr\rvert _{\Frob}^{2}\,\dif q \\
                                                                        & \le|b_{1}-b_{2}|^{2}+\int_{0}^{r}\Lip\bigl(J(Q-q,\anon)\bigr)^{2}\EE |\Theta^J_{b_1,Q}(q)-\Theta^J_{b_2,Q}(q)|^{2}\,\dif q.
  \end{align*}
  By Gr\"onwall's inequality, this implies that
  \begin{equation}
    \EE |\Theta^J_{b_1,Q}(Q)-\Theta^J_{b_2,Q}(Q)|^{2}\le|b_{1}-b_{2}|^{2}\exp\left\{ \int_{0}^{Q}\Lip\bigl(J(q,\anon)\bigr)^{2}\,\dif q\right\} .\label{eq:Gammadiffbd}
  \end{equation}
  Now let $f(q)\coloneqq\lambda^{2}\vee\Lip\bigl(J(q,\anon)\bigr)^{2}$.
  Using \cref{eq:Gammadiffbd} in \cref{eq:Jsqbd} and taking the supremum
  over all $b_{1},b_{2}\in\RR^m$, we obtain
  \[
    f(Q)\le\lambda^{2}\exp\left\{ \int_{0}^{Q}f(q)\,\dif q\right\} .
  \]
  By \cref{lem:fQrecursive} below, we obtain \cref{eq:apriorilipschitzbd}.
\end{proof}
\begin{lem}
  \label{lem:fQrecursive}Suppose that $c\in(0,\infty)$, $\Qbar<c^{-2}$,
  and $f:[0,\Qbar]\to[c^{2},\infty)$ satisfies
  \begin{equation}
    f(Q)\le c^{2}\exp\left\{ \int_{0}^{Q}f(q)\,\dif q\right\} \label{eq:fQrecursive}
  \end{equation}
  for all $Q\in[0,\Qbar]$. Then
  \begin{equation*}
    f(Q)\le(c^{-2}-Q)^{-1}\qquad\text{for all }Q\in[0,\Qbar].
  \end{equation*}
\end{lem}

\begin{proof}
  This is a consequence of the Bihari--LaSalle inequality (see, e.g., \cite[Theorem 1.8.2]{Mao08}).
  For the reader's convenience, we give a direct proof here, following the argument on \cite[p.~46]{Mao08}.
  Define ${g(Q) = \int_0^Q f(q)\,\dif q}$. By the chain rule and the fundamental theorem of calculus, we have
\begin{equation}\label{eq:chainrule}\e^{-g(Q)} = 1 - \int_0^Q \e^{-g(q)}f(q)\,\dif q\overset{\cref{eq:fQrecursive}}{\ge} 1-c^2 Q,\end{equation}
which means that 
\[ f(Q) \overset{\cref{eq:fQrecursive}}{\le} c^2 \e^{g(Q)}\overset{\cref{eq:chainrule}}{\le} \frac{c^2}{1-c^2Q}.\qedhere\]
\end{proof}

\subsubsection{Fixed-point argument}

We now use a fixed-point argument to construct a root decoupling function.
Define
\nomenclature[Q cl]{$\clQ_\sigma$}{operator for which the root decoupling function is a fixed point, \cref{eq:Qdef}}
\begin{equation}
  \clQ_{\sigma}g(Q,a)=\bigl[\EE \sigma^{2}\bigl(\Theta_{a,Q}^{g}(Q)\bigr)\bigr]^{1/2},\label{eq:Qdef}
\end{equation}
where $\bigl(\Theta_{a,Q}^{g}(q)\bigr)_{q\in[0,Q]}$ solves \zcref[range]{eq:thetaSDE,eq:thetaIC}.
We note that a fixed point of $\clQ_{\sigma}$ is a root decoupling
function for \zcref[range]{eq:FBSDE,eq:Jeqn}. We also note that
\begin{equation}
  |\clQ_{\sigma}g(Q,a)|_{\Frob}^{2}=\bigl\lvert \bigl[\EE \sigma^{2}\bigl(\Theta_{a,Q}^{g}(Q)\bigr)\bigr]^{1/2}\bigr\rvert _{\Frob}^{2}=\EE \tr\sigma^{2}\bigl(\Theta_{a,Q}^{g}(Q)\bigr)=\EE \bigl\lvert \sigma\bigl(\Theta_{a,Q}^{g}(Q)\bigr)\bigr\rvert _{\Frob}^{2}.\label{eq:normofQ}
\end{equation}
Define the set of functions
\nomenclature[zzzgreek σMlambda]{$\lipset(M,\lambda)$}{space of $\lambda$-Lipschitz functions satisfying a quantitative growth bound, \cref{eq:SigMlamdef}}
\begin{equation}
  \lipset(M,\lambda)\coloneqq\bigl\{\sigma\in\Lip(\RR^{m};\clH_{+}^{m})\suchthat\Lip(\sigma)\le\lambda\text{ and }|\sigma(u)|_{\Frob}^{2}\le M+\lambda^{2}|u|^{2}\text{ for all }u\in\RR^{m}\bigr\}.\label{eq:SigMlamdef}
\end{equation}
For $Q_{0}<\lambda^{-2}$, define the set of functions $\clZ_{Q_{0},M,\lambda}$
by\nomenclature[Z scr]{$\clZ_{Q_0,M,\lambda}$}{function space, see \cref{eq:ZQMlambdadef}}
\begin{equation}
  \label{eq:ZQMlambdadef}
  \begin{aligned}
      \clZ_{Q_{0},M,\lambda}=\Big\{ g \colon [0,Q_{0}]& \times\RR^{m}\to \clH_{+}^{m}\text{ continuous}\suchthat\\
    &g(q,\cdot)\in\lipset\bigl((1-\lambda^{2}q)^{-2}M,(\lambda^{-2}-q)^{-1/2}\bigr)\text{ for all }q\in[0,Q_{0}]\Big\}.
  \end{aligned}
\end{equation}
The bounds $(q-\lambda^2q)^{-2}$ and $(\lambda^{-2}-q)^{-1/2}$ imposed in \zcref{eq:ZQMlambdadef} are chosen to to be compatible with some Grönwall-type arguments below; notice that they allow $g(q,\cdot)$ to blow up as $q$ approaches $\lambda^{-2}$.
We will construct the root decoupling function $J_{\sigma}$ as a fixed
point of $\clQ_{\sigma}$ in a certain $\clZ_{Q_{0},M,\lambda}$.
Recall $\Qbar_{\FBSDE}$ from \cref{def:QbarFBSDE}.
\begin{prop}
  \label{prop:localwellposed}Fix $\lambda,M\in(0,\infty)$ and $Q_{0}\in[0,\lambda^{-2})$.
  For any $\sigma\in\lipset(M,\lambda)$, there is a unique root decoupling
  function $J_{\sigma}\in\clA_{Q_{0}}$. In particular, we have
  \begin{equation*}
    \Qbar_{\FBSDE}(\sigma)\ge\Lip(\sigma)^{-2}.
  \end{equation*}
  Moreover, there is a $C=C(Q_0,M,\lambda)<\infty$ such that for
  any $g\in\clZ_{Q_{0},M,\lambda}$, we have
  \begin{equation}
    \sup_{q\in[0,Q_{0}]}\|(g-J_{\sigma})(q,\anon)\|_{\clX}\le C\cdot\sup_{q\in[0,Q_{0}]}\|(g-\clQ_{\sigma}g)(q,\anon)\|_{\clX}\label{eq:onestepclose}
  \end{equation}
  and indeed
  \begin{equation}\label{eq:nfoldlimit}
    \adjustlimits\lim_{n\to\infty} \sup_{q\in [0,Q_0]}\|(\clQ_\sigma^n g-J_\sigma)(q,\cdot)\|_\clX = 0,
\end{equation}
where $\clQ_\sigma^n$ denotes the $n$-fold iterated application of $\clQ_\sigma$.
\end{prop}

\begin{proof}
  \cref{prop:apriorilipschitzbd} tells us that any root decoupling function
  in $\clA_{Q_{0}}$ must actually lie in $\clZ_{Q_{0},M',\lambda}$
  for some $M'\in(0,\infty)$. Since the sets $\clZ_{Q_{0},M',\lambda}$
  increase as $M'$ increases, it is sufficient to show the existence
  of a unique root decoupling function $J_{\sigma}\in\clZ_{Q_{0},M',\lambda}$
  for all sufficiently large $M'$. Indeed, if there were another root decoupling
  function $\widetilde{J}_{\sigma}\in\clA_{Q_{0}}$, then we
  would have $J_{\sigma},\widetilde{J}_{\sigma}\in\clZ_{Q_{0},M'',\lambda}$
  for some sufficiently large $M''$, contradicting the uniqueness
  in $\clZ_{Q_{0},M'',\lambda}$. Since $\sigma\in\lipset(\lambda,M)$ implies that $\sigma\in\lipset(\lambda,M')$ for all $M'\ge M$, it suffices to show that that under the hypotheses of the theorem, there is a 
  unique root decoupling
  function $J_{\sigma}\in\clZ_{Q_{0},M,\lambda}$.
  We proceed in several steps.
  \begin{thmstepnv}
    \item \emph{$\clQ_{\sigma}$ maps $\clZ_{Q_{0},M,\lambda}$
      to itself.} Let $g\in\clZ_{Q_{0},M,\lambda}$. We need to check
    several properties of $\clQ_{\sigma}g$.
    \begin{thmstepnv}
      \item \emph{The upper bound on $\clQ_{\sigma}g$.} We have, using
      \zcref[range]{eq:thetaSDE,eq:thetaIC} and the fact that $g\in\clZ_{Q_{0},M,\lambda}$,
      that whenever $0\le q\le Q\le Q_{0}$,
      \begin{align*}
        \EE |\Theta_{a,Q}^{g}(q)|^{2} & =|a|^{2}+\int_{0}^{q}\EE |g\bigl(Q-r,\Theta_{a,Q}^{g}(r)\bigr)|_{\Frob}^{2}\,\dif r                                                                  \\
                                      & \le|a|^{2}+M\int_{0}^{q}\frac{\dif r}{(1-\lambda^{2}[Q-r])^{2}}+\int_{0}^{q}\frac{\EE |\Theta_{a,Q}^{g}(r)|^{2}}{\lambda^{-2}-Q+r}\,\dif r \\
                                      & =|a|^{2}+\frac{Mq}{[1-\lambda^{2}(Q-q)](1-\lambda^{2}Q)}+\int_{0}^{q}\frac{\EE |\Theta_{a,Q}^{g}(r)|^{2}}{\lambda^{-2}-Q+r}\,\dif r.
      \end{align*}
      By Grönwall's inequality, this means that
      \begin{align}
        \EE |\Theta_{a,Q}^{g}(q)|^{2} & \le\left(|a|^{2}+\frac{Mq}{[1-\lambda^{2}(Q-q)](1-\lambda^{2}Q)}\right)\exp\left\{ \int_{0}^{q}\frac{\dif r}{\lambda^{-2}-Q+r}\right\} \nonumber \\
				      & =\frac{|a|^{2}[1-\lambda^{2}(Q-q)]}{1-\lambda^{2}Q}+\frac{Mq}{(1-\lambda^{2}Q)^{2}} \le \frac{|a|^2}{1-\lambda^2Q}+\frac{Mq}{(1-\lambda^2Q)^2}.\label{eq:midexpectation}
      \end{align}
      Therefore, by using \cref{eq:normofQ} and the fact that $\sigma\in\lipset(M,\lambda)$,
      we obtain
      \begin{align}
        |\clQ_{\sigma}g(Q,a)|_{\Frob}^{2} & \le M+\lambda^{2}\EE |\Theta_{a,Q}^{g}(Q)|^{2}\le M+\frac{|a|^{2}}{\lambda^{-2}-Q}+\frac{\lambda^{2}QM}{(1-\lambda^{2}Q)^{2}}\nonumber                                                                    \\
                                          & =\frac{|a|^{2}}{\lambda^{-2}-Q}+M\left(\frac{1-\lambda^{2}Q+\lambda^{4}Q^{2}}{(1-\lambda^{2}Q)^{2}}\right)\le\frac{|a|^{2}}{\lambda^{-2}-Q}+\frac{M}{(1-\lambda^{2}Q)^{2}},\label{eq:upperboundpreserved}
      \end{align}
      where in the last inequality we used that $\lambda^{2}Q<1$ and so
      $\lambda^{2}Q>\lambda^{4}Q^{2}$.
      
      \item \emph{The Lipschitz bound on $\clQ_{\sigma}g(Q,\cdot)$.} Using
      the Itô isometry we have, for all $a,b\in\RR^m$, that whenever $0\le q\le Q\le Q_{0}$,
      \begin{align*}
        \EE |\Theta_{a,Q}^{g}(q)-\Theta_{b,Q}^{g}(q)|^{2} & =|a-b|^{2}+\int_{0}^{q}\EE \bigl\lvert g\bigl(Q-r,\Theta_{a,Q}^{g}(r)\bigr)-g\bigl(Q-r,\Theta_{b,Q}^{g}(r)\bigr)\bigr\rvert _{\Frob}^{2}\,\dif r       \\
                                                          & \le|a-b|^{2}+\int_{0}^{q}\frac{\EE |\Theta_{a,Q}^{g}(r)-\Theta_{b,Q}^{g}(r)|^{2}}{\lambda^{-2}-Q+r}\,\dif r.
      \end{align*}
      The inequality follows from the assumption $g\in\clZ_{Q_{0},M,\lambda}$, for then $\Lip\bigl(g(Q-r,\anon)\bigr)\le(\lambda^{-2}-Q+r)^{-1/2}$.
      By Grönwall's
      inequality, we therefore obtain
      \[
        \EE |\Theta_{a,Q}^{g}(Q)-\Theta_{b,Q}^{g}(Q)|^{2}\le|a-b|^{2}\exp\left\{ \int_{0}^{Q}\frac{\dif q}{\lambda^{-2}-Q+r}\right\} =\frac{|a-b|^{2}}{1-\lambda^{2}Q}.
      \]
      Therefore, we have (using \cref{prop:matrixvaluedreversetriangleinequality}
      and the fact that $\Lip(\sigma)\le\lambda$) that
      \begin{align}
        |\clQ_{\sigma}g(Q,a)-\clQ_{\sigma}g(Q,b)|_{\Frob}^{2} & \le\EE |\sigma(\Theta_{a,Q}^{g}(Q))-\sigma(\Theta_{b,Q}^{g}(Q))|_{\Frob}^{2}\nonumber                                     \\
                                                              & \le\lambda^{2}\EE |\Theta_{a,Q}^{g}(Q)-\Theta_{b,Q}^{g}(Q)|^{2}\le\frac{|a-b|^{2}}{\lambda^{-2}-Q}.\label{eq:Qglipschitz}
      \end{align}
      \item \emph{\label{step:spacetimecontinuity}The space-time continuity of
        $\clQ_{\sigma}g$.} Suppose that $0\le Q_{1}\le Q_{2}\le Q_{0}$.
      We note that
      \[
        \bigl(\Theta_{a,Q_{2}}^{g}(Q_{2}-Q_{1}+q)\bigr)_{q\in[0,Q_{1}]}
      \]
      satisfies the same SDE problem as $(\Theta_{a,Q_{1}}^{g}(q))_{q\in[0,Q_{1}]}$.
      Therefore, we have
      \begin{align*}
        [\clQ_{\sigma}g(Q_{2},a)]^{2} & =\EE \left[\EE \bigl[\sigma^{2}\bigl(\Theta_{a,Q_{2}}^{g}(Q_{2})\bigr)\;\big|\;\Theta_{a,Q_{2}}^{g}(Q_{2}-Q_{1})\bigr]\right]=\EE \left[\clQ_{\sigma}g\bigl(Q_{1},\Theta_{a,Q_{2}}^{g}(Q_{2}-Q_{1})\bigr)\right]^{2}.
      \end{align*}
      %(We recall that ``$:$'' denotes the trace of the product of two symmetric matrices; see \cref{eq:colondef}.)
      This means that
      \begin{align}
        |\clQ_{\sigma}g(Q_{2},a)-\clQ_{\sigma}g(Q_{1},a)|_{\Frob} & =\left\lvert  \bigl(\EE \bigl[\clQ_{\sigma}g\bigl(Q_{1},\Theta_{a,Q_{2}}^{g}(Q_{2}-Q_{1})\bigr)\bigr]^{2}\bigr)^{1/2}-\clQ_{\sigma}g(Q_{1},a)\right\rvert  _{\Frob}\nonumber   \\
        \overset{\cref{eq:matrixvaluedreversetriangleinequality}} & {\le}\left(\EE \bigl\lvert \clQ_{\sigma}g\bigl(Q_{1},\Theta_{a,Q_{2}}^{g}(Q_{2}-Q_{1})\bigr)-\clQ_{\sigma}g(Q_{1},a)\bigr\rvert _{\Frob}^{2}\right)^{1/2}\nonumber \\
        \overset{\cref{eq:Qglipschitz}}                           & {\le}(\lambda^{-2}-Q_{1})^{-1/2}\left(\EE |\Theta_{a,Q_{2}}^{g}(Q_{2}-Q_{1})-a|^{2}\right)^{1/2}.\label{eq:Qdiff}
      \end{align}
      We also have
      \begin{align}
        \EE & |\Theta_{a,Q_{2}}^{g}(Q_{2}-Q_{1})-a|^{2}=\int_{0}^{Q_{2}-Q_{1}}\EE \bigl\lvert g\bigl(Q_{2}-q,\Theta_{a,Q_{2}}^{g}(q)\bigr)\bigr\rvert _{\Frob}^{2}\,\dif q\nonumber                                                                                  \\
            & \le\frac{Q_{2}-Q_{1}}{1-\lambda^{2}(Q_{2}-Q_{1})}\cdot M+\int_{0}^{Q_{2}-Q_{1}}\frac{\EE |\Theta_{a,Q_{2}}^{g}(q)|^{2}}{\lambda^{-2}-q}\,\dif q\nonumber                                                               \\
            & \le\frac{Q_{2}-Q_{1}}{1-\lambda^{2}(Q_{2}-Q_{1})}\cdot M+\int_{0}^{Q_{2}-Q_{1}}\frac{a^{2}[1-\lambda^{2}(Q_{2}-q)]+(1-\lambda^{2}Q_{2})^{-1}Mq}{(1-\lambda^{2}Q_{2})(\lambda^{-2}-q)}\,\dif q,\label{eq:varboundfbsde}
      \end{align}
      with the first inequality by the fact that $g\in\clZ_{Q_{0},M,\lambda}$
      and the last inequality by \cref{eq:midexpectation}. Now the right
      side of \cref{eq:varboundfbsde} goes to zero as $Q_{2}-Q_{1}\to0$,
      locally uniformly in $a$, so together with \cref{eq:Qdiff,eq:Qglipschitz}
      this implies that $\clQ_{\sigma}g$ is continuous. This, along
      with the bounds \cref{eq:upperboundpreserved,eq:Qglipschitz},
      show that $\clQ_{\sigma}$ maps $\clZ_{Q_{0},M,\lambda}$
      to itself.
    \end{thmstepnv}
    \item \emph{$\clQ_{\sigma}$ is a contraction in an appropriate norm.}
    Suppose that $g_{1},g_{2}\in\clZ_{Q_{0},M,\lambda}$. Then,
    whenever $0\le q\le Q\le Q_{0}$, we have
    \begin{align*}
      \EE & |\Theta_{a,Q}^{g_{1}}(q)-\Theta_{a,Q}^{g_{2}}(q)|^{2}=\int_{0}^{g}\EE \bigl\lvert g_{1}\bigl(Q-r,\Theta_{a,Q}^{g_{1}}(r)\bigr)-g_{2}\bigl(Q-r,\Theta_{a,Q}^{g_{2}}(r)\bigr)\bigr\rvert _{\Frob}^{2}\,\dif r                                                                                       \\
          & \le2\int_{0}^{q}\left(\|(g_{1}-g_{2})(Q-r,\anon)\|_{\clX}^{2}(1+\EE |\Theta_{a,Q}^{g_{1}}(r)|^{2})+\frac{\EE |\Theta_{a,Q}^{g_{1}}(r)-\Theta_{a,Q}^{g_{2}}(r)|^{2}}{\lambda^{-2}-Q+r}\right)\,\dif r                                                    \\
	   \overset{\cref{eq:midexpectation}}&\le2\int_{0}^{q}\Bigg(\|(g_{1}-g_{2})(Q-r,\cdot)\|_{\clX}^{2}\left(1+\frac{|a|^{2}}{1-\lambda^{2}Q}+\frac{MQ_0}{(1-\lambda^{2}Q)^{2}}\right)                                                                     %
	   +\frac{\EE |\Theta_{a,Q}^{g_{1}}(r)-\Theta_{a,Q}^{g_{2}}(r)|^{2}}{\lambda^{-2}-Q+r}\Bigg)\,\dif r,                                                               %
    \end{align*}
    where in the first inequality we used the triangle inequality, the
    definition \cref{eq:Xnorm} of the norm on $\clX$, and the assumption
    that $g_{2}\in\clZ_{Q_{0},M,\lambda}$. %
    By Grönwall's inequality,
    we can therefore derive
    \begin{align*}
      \EE & |\Theta_{a,Q}^{g_{1}}(q)-\Theta_{a,Q}^{g_{2}}(q)|^{2}\le2\left(1+\frac{|a|^{2}}{1-\lambda^{2}Q_{0}}+\frac{MQ_{0}}{(1-\lambda^{2}Q_{0})^{2}}\right)\exp\left\{ \int_{0}^{q}\frac{2}{\lambda^{-2}-Q+r}\,\dif r\right\} \\
          & \hspace{24em}\cdot\int_{0}^{q}\|(g_{1}-g_{2})(Q-r,\anon)\|_{\clX}^{2}\,\dif r                                                                                                                                      \\
          & \le\frac{2}{(1-\lambda^{2}Q_{0})^{2}}\left(1+\frac{|a|^{2}}{1-\lambda^{2}Q_{0}}+\frac{MQ_{0}}{(1-\lambda^{2}Q_{0})^{2}}\right)\int_{0}^{q}\|(g_{1}-g_{2})(Q-r,\anon)\|_{\clX}^{2}\,\dif r.
    \end{align*}
    This means that (using \cref{prop:matrixvaluedreversetriangleinequality}
    and the fact that $\Lip(\sigma)\le\lambda$)
    \begin{align*}
      | & \clQ_{\sigma}g_{1}(Q,a)-\clQ_{\sigma}g_{2}(Q,a)|_{\Frob}^{2}\le\EE \bigl\lvert \sigma\bigl(\Theta_{a,Q}^{g_{1}}(Q)\bigr)-\sigma\bigl(\Theta_{a,Q}^{g_{2}}(Q)\bigr)\bigr\rvert _{\Frob}^{2}\le\lambda^{2}\EE \bigl\lvert \Theta_{a,Q}^{g_{1}}(Q)-\Theta_{a,Q}^{g_{2}}(Q)\bigr\rvert _{\Frob}^{2} \\
        & \le\frac{2\lambda^{2}}{(1-\lambda^{2}Q_{0})^{2}}\left(1+\frac{|a|^{2}}{1-\lambda^{2}Q_{0}}+\frac{MQ_{0}}{(1-\lambda^{2}Q_{0})^{2}}\right)\int_{0}^{Q}\|(g_{1}-g_{2})(r,\anon)\|_{\clX}^{2}\,\dif r.
    \end{align*}
    Therefore, we have
    \begin{align}
      \| & \clQ_{\sigma}g_{1}(Q,\cdot)-\clQ_{\sigma}g_{2}(Q,\cdot)\|_{\clX}^{2}\nonumber                                                                                                                                                      \\
         & \le\sup_{a\in\RR^m}\frac{2\lambda^{2}}{(1-\lambda^{2}Q_{0})^{2}(1+|a|^{2})}\left(1+\frac{|a|^{2}}{1-\lambda^{2}Q_{0}}+\frac{MQ_{0}}{(1-\lambda^{2}Q_{0})^{2}}\right)\int_{0}^{Q}\|(g_{1}-g_{2})(r,\anon)\|_{\clX}^{2}\,\dif r\nonumber \\
         & \le R(Q_{0},M,\lambda)\int_{0}^{Q}\|(g_{1}-g_{2})(r,\anon)\|_{\clX}^{2}\,\dif r,\label{eq:Qsigmag1g2}
    \end{align}
    where we have defined the constant
    \begin{align*}
      R(Q_{0},M,\lambda) & =\frac{2\lambda^{2}}{(1-\lambda^{2}Q_{0})^{2}}\left(1+\frac{1}{1-\lambda^{2}Q_{0}}+\frac{MQ_{0}}{(1-\lambda^{2}Q_{0})^{2}}\right).
    \end{align*}
    Now let $\clY_{Q_{0},M,\lambda}$ be the Banach space of continuous
    functions $g\colon[0,Q_{0}]\times\RR^m\to\clH^m_+$ such that the norm
    \[
      \|g\|_{\clY_{Q_{0},M,\lambda}}\coloneqq\sup_{Q\in[0,Q_{0}]}\left[\e^{-2R(Q_{0},M,\lambda)Q}\|g(Q,\anon)\|_{\clX}\right]
    \]
    is finite. (Recall the definition \cref{eq:Xnorm} of the norm on $\clX$.)
    We note that $\clZ_{Q_{0},M,\lambda}$ is a closed subset of
    $\clY_{Q_{0},M,\lambda}$, so the metric induced on $\clZ_{Q_{0},M,\lambda}$
    by the norm $\|\anon\|_{\clY_{Q_{0},M,\lambda}}$ makes $\clZ_{Q_{0},M,\lambda}$
    a complete metric space. Then \cref{eq:Qsigmag1g2} implies that
    \begin{align*}
      \|\clQ_{\sigma}g_{1}-\clQ_{\sigma}g_{2}\|_{\clY_{Q_{0},M,\lambda}}^{2} & \le\sup_{Q\in[0,Q_{0}]}\left[\e^{-2R(Q_{0},M,\lambda)Q}R(Q_{0},M,\lambda)\|g_{1}-g_{2}\|_{\clY_{Q_{0},M,\lambda}}^{2}\int_{0}^{Q}\e^{2R(Q_{0},M,\lambda)r}\,\dif r\right] \\
                                                                             & \le\frac{1}{2}\|g_{1}-g_{2}\|_{\clY_{Q_{0},M,\lambda}}^{2}.
    \end{align*}
    Hence $\clQ_{\sigma}$ is a contraction on $\clZ_{Q_{0},M,\lambda}$
    (with the metric induced by the norm $\|\anon\|_{\clY_{Q_{0},M,\lambda}}$)
    with contraction factor $2^{-1/2}$. The Banach contraction mapping principle thus implies that $\clQ_{\sigma}$
    has a unique fixed point $J_{\sigma}$ in $\clZ_{Q_{0},M,\lambda}$,
    and moreover (by the quantitative bound coming from the proof of the contraction mapping principle; see e.g.\ \cite[Theorem~1.1(ii)]{Bon62}) we have
    \[
      \|g-J_{\sigma}\|_{\clY_{Q_{0},M,\lambda}}\le\frac{\|g-\clQ_{\sigma}g\|_{\clY_{Q_{0},M,\lambda}}}{1-2^{-1/2}}.
    \]
    This implies \cref{eq:onestepclose} since the norms are equivalent.
    \qedhere
  \end{thmstepnv}
\end{proof}

\subsection{Extending the solution\label{subsec:extendingFBSDE}}

\cref{prop:localwellposed} showed that $\Qbar_{\FBSDE}(\sigma)\ge\Lip(\sigma)^{-2}$,
which is the $Q=0$ case of \cref{thm:FBSDEwellposed}. In this subsection,
we prove \cref{thm:FBSDEwellposed} in its full generality. We start
with a simple lemma.
\begin{lem}
  \label{lem:Jatdifferenttimes}Let $\sigma\in\Lip(\RR^m;\clH^{m}_{+})$.
  Whenever $0\le Q_{1}\le Q_{2}<\Qbar_{\FBSDE}(\sigma)$, we
  have, for any $b\in\RR^m$,
  \begin{equation}
    J_{\sigma}^{2}(Q_{2},b)=\EE \bigl[J_{\sigma}^{2}\bigl(Q_{1},\Gamma_{b,Q_{2}}^{\sigma}(Q_{2}-Q_{1})\bigr)\bigr],\label{eq:Jatdifferenttimes}
  \end{equation}
  where $\Gamma^\sigma_{b,Q_2}$ is as in \zcref[range]{eq:FBSDE,eq:Jeqn}, which is well-defined since $Q_2<\overline{Q}_\FBSDE(\sigma)$.
\end{lem}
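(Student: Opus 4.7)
The plan is to establish \cref{eq:Jatdifferenttimes} by unfolding the definition of $J_\sigma$ and applying the Markov property of the SDE \crefrange{eq:FBSDE}{eq:FBSDEic}. By the definition of $J_\sigma$ (that is, the relation \cref{eq:Jeqn} built into \cref{def:QbarFBSDE}), I can rewrite the left-hand side as
\[
  J_\sigma^2(Q_2, b) = \EE \sigma^2\bigl(\Gamma_{b,Q_2}^\sigma(Q_2)\bigr),
\]
and then condition on the $\sigma$-algebra $\scrG_{Q_2 - Q_1}$ to produce
\[
  J_\sigma^2(Q_2, b) = \EE\Bigl[\EE\bigl[\sigma^2\bigl(\Gamma_{b,Q_2}^\sigma(Q_2)\bigr) \bigm| \scrG_{Q_2 - Q_1}\bigr]\Bigr].
\]
Thus the problem reduces to identifying the inner conditional expectation with $J_\sigma^2\bigl(Q_1, \Gamma_{b,Q_2}^\sigma(Q_2 - Q_1)\bigr)$.

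First I would perform the time shift $\widetilde{q} = q - (Q_2 - Q_1)$ on the segment $q \in [Q_2 - Q_1, Q_2]$. Writing $\widetilde{\Gamma}(\widetilde{q}) = \Gamma_{b,Q_2}^\sigma(Q_2 - Q_1 + \widetilde{q})$ for $\widetilde{q} \in [0, Q_1]$, \cref{eq:FBSDE} becomes
\[
  \dif \widetilde{\Gamma}(\widetilde{q}) = J_\sigma\bigl(Q_1 - \widetilde{q}, \widetilde{\Gamma}(\widetilde{q})\bigr) \dif \widetilde{B}(\widetilde{q}),
\]
where $\widetilde{B}(\widetilde{q}) = B(Q_2 - Q_1 + \widetilde{q}) - B(Q_2 - Q_1)$ is a Brownian motion independent of $\scrG_{Q_2 - Q_1}$. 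This is precisely the SDE \crefrange{eq:FBSDE}{eq:FBSDEic} defining $\Gamma_{c, Q_1}^\sigma$, driven by $\widetilde{B}$ and started from $c = \widetilde{\Gamma}(0) = \Gamma_{b,Q_2}^\sigma(Q_2 - Q_1)$.

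Next I would invoke strong uniqueness for this SDE, provided by \cref{prop:SDEwellposed} with $g \setto J_\sigma$ and $Q \setto Q_1$ (noting that $J_\sigma \in \clA_{Q_1}$ since $Q_1 < \Qbar_\FBSDE(\sigma)$, so the Lipschitz bound \cref{eq:Juniflipschitz} applies). Strong uniqueness implies that the regular conditional law of $\widetilde{\Gamma}$ given $\scrG_{Q_2 - Q_1}$ is, on the event $\{\Gamma_{b,Q_2}^\sigma(Q_2 - Q_1) = c\}$, exactly the law of $\Gamma_{c, Q_1}^\sigma$. Therefore
\[
  \EE\bigl[\sigma^2\bigl(\Gamma_{b,Q_2}^\sigma(Q_2)\bigr) \bigm| \scrG_{Q_2 - Q_1}\bigr] = \EE \sigma^2\bigl(\Gamma_{c, Q_1}^\sigma(Q_1)\bigr) \Bigm|_{c = \Gamma_{b,Q_2}^\sigma(Q_2 - Q_1)} = J_\sigma^2\bigl(Q_1, \Gamma_{b,Q_2}^\sigma(Q_2 - Q_1)\bigr),
\]
where the last equality again uses \cref{eq:Jeqn}. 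Substituting back yields \cref{eq:Jatdifferenttimes}.

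The only potentially delicate point is the measurability step: I need the map $c \mapsto \EE \sigma^2\bigl(\Gamma_{c, Q_1}^\sigma(Q_1)\bigr)$ to be Borel measurable (in fact it is continuous, since \cref{eq:SDEctsininitialconditions} gives $L^2$-continuity of $\Gamma_{c,Q_1}^\sigma(Q_1)$ in $c$ and $\sigma^2$ has at most quadratic growth), and I need to apply a standard regular-conditional-probability argument for SDEs with Lipschitz coefficients. Both of these are routine once \cref{prop:SDEwellposed} is in hand, so no real obstacle remains.
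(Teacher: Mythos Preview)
Your proof is correct and follows essentially the same approach as the paper: unfold $J_\sigma^2(Q_2,b)$ via \cref{eq:Jeqn}, condition on $\scrG_{Q_2-Q_1}$, and use the Markov property of the SDE (via the time-shift and strong uniqueness) to identify the inner conditional expectation as $J_\sigma^2\bigl(Q_1,\Gamma_{b,Q_2}^\sigma(Q_2-Q_1)\bigr)$. The paper's proof is simply a terser version of what you wrote, invoking the ``strong Markov property'' directly rather than spelling out the time-shift and uniqueness argument.
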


\begin{proof}
  We have by \cref{eq:Jeqn} and the tower property of conditional expectation
  that
  \begin{equation}
    J_{\sigma}^{2}(Q_{2},b)=\EE \bigl[\sigma^{2}\bigl(\Gamma_{b,Q_{2}}^{\sigma}(Q_{2})\bigr)\bigr]=\EE \Big[\EE \bigl[\sigma^{2}\bigl(\Gamma_{b,Q_{2}}^{\sigma}(Q_{2})\bigr)\;\big|\;\scrG_{Q_{2}-Q_{1}}\bigr]\Big].\label{eq:Jsqcondition}
  \end{equation}
  We note that, for any $a\in\RR^m$,
  \[
    \EE \bigl[\sigma^{2}\bigl(\Gamma_{b,Q_{2}}^{\sigma}(Q_{2})\bigr)\;\big|\;\Gamma_{b,Q_{2}}^{\sigma}(Q_{2}-Q_{1})=a\bigr]=\EE \bigl[\sigma^{2}\bigl(\Gamma_{a,Q_{1}}^{\sigma}(Q_{1})\bigr)\bigr]=J_{\sigma}^{2}(Q_{1},a).
  \]
  Using this, along with the strong Markov property, in \cref{eq:Jsqcondition},
  we obtain \cref{eq:Jatdifferenttimes}.
\end{proof}
The main idea in this section is illustrated by the following lemma.
\begin{lem}
  \label{lem:chainJs}%
  Let $\sigma\in\Lip(\RR^m;\clH^{m}_{+})$.
  Whenever $0\le Q_{1}\le Q_{2}<\Qbar_{\FBSDE}(\sigma)$ and
  \begin{equation}
    Q_{2}-Q_{1}<\Qbar_{\FBSDE}\bigl(J_{\sigma}(Q_{1},\anon)\bigr),\label{eq:addhyp}
  \end{equation}
  we have
  \begin{equation}
    J_{\sigma}(Q_{2},b)=J_{J_{\sigma}(Q_{1},\anon)}(Q_{2}-Q_{1},b)\qquad\text{for all }b\in\RR^m.\label{eq:Jdiff}
  \end{equation}
\end{lem}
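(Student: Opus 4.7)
The plan is to interpret \cref{eq:Jdiff} as a ``shift-invariance'' of the FBSDE about time $Q_1$. Set $\tilde\sigma \coloneqq J_\sigma(Q_1, \anon)$ and define the shifted function
\[
  \hat J(q, b) \coloneqq J_\sigma(Q_1 + q, b), \qquad (q,b) \in [0, Q_2 - Q_1] \times \RR^m.
\]
I aim to show that $\hat J$ is itself a root decoupling function for $\tilde\sigma$ on $[0, Q_2 - Q_1]$ in the sense of \cref{def:decouplingfn}. Given this, the hypothesis $Q_2 - Q_1 < \Qbar_\FBSDE(\tilde\sigma)$ combined with the uniqueness built into \cref{def:QbarFBSDE} forces $\hat J = J_{\tilde\sigma}$ on $[0, Q_2 - Q_1] \times \RR^m$. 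Evaluating at $q = Q_2 - Q_1$ then yields \cref{eq:Jdiff}.

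For the verification, first observe that since $Q_2 < \Qbar_\FBSDE(\sigma)$, \cref{def:QbarFBSDE} guarantees that $J_\sigma$ is continuous and uniformly Lipschitz in its second argument on $[0, Q_2] \times \RR^m$. Consequently $\hat J \in \clA_{Q_2 - Q_1}$ and $\tilde\sigma \in \Lip(\RR^m; \clH_+^m)$, so the SDE theory of \cref{prop:SDEwellposed} applies to $\hat J$ as a diffusivity. The solution $\Theta_{a,q}^{\hat J}$ satisfies
\[
  \dif \Theta_{a,q}^{\hat J}(s) = \hat J\bigl(q - s, \Theta_{a,q}^{\hat J}(s)\bigr) \dif B(s) = J_\sigma\bigl(Q_1 + q - s, \Theta_{a,q}^{\hat J}(s)\bigr) \dif B(s), \qquad \Theta_{a,q}^{\hat J}(0) = a,
\]
which is exactly the SDE solved by $\Gamma_{a, Q_1 + q}^\sigma$. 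Strong uniqueness in \cref{prop:SDEwellposed} then gives $\Theta_{a,q}^{\hat J} = \Gamma_{a, Q_1 + q}^\sigma$ in law. Applying \cref{lem:Jatdifferenttimes} with $Q_2 \setto Q_1 + q$ and $b \setto a$ yields
\[
  \hat J^2(q, a) = J_\sigma^2(Q_1 + q, a) = \EE\bigl[J_\sigma^2\bigl(Q_1, \Gamma_{a, Q_1 + q}^\sigma(q)\bigr)\bigr] = \EE\bigl[\tilde\sigma^2\bigl(\Theta_{a,q}^{\hat J}(q)\bigr)\bigr],
\]
which is exactly \cref{eq:root decouplingholds} for $\hat J$ with respect to $\tilde\sigma$.

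I do not anticipate a substantive obstacle here: the argument is simply a careful unpacking of definitions, exploiting the intrinsic Markov structure of the FBSDE. The only subtleties are checking that $\hat J$ inherits the right regularity class and that \cref{lem:Jatdifferenttimes} is applicable at every intermediate time $Q_1 + q \in [Q_1, Q_2]$, which it is because $Q_2 < \Qbar_\FBSDE(\sigma)$. The shift interpretation is conceptually natural and will presumably reappear in the multiscale induction of \cref{sec:extendtheestimates}, where $J_\sigma(Q_1, \anon)$ plays the role of an ``effective nonlinearity'' at scale $\rho^{1 - Q_1}$.
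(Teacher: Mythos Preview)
Your proposal is correct and follows essentially the same approach as the paper: both define the shifted candidate $\hat J(q,b)=J_\sigma(Q_1+q,b)$, verify via \cref{lem:Jatdifferenttimes} and the identification $\Theta_{a,q}^{\hat J}=\Gamma_{a,Q_1+q}^\sigma$ that $\hat J$ is a root decoupling function for $\tilde\sigma=J_\sigma(Q_1,\anon)$, and then invoke the uniqueness in \cref{def:QbarFBSDE} under the hypothesis \cref{eq:addhyp}. One cosmetic remark: since $\Theta_{a,q}^{\hat J}$ and $\Gamma_{a,Q_1+q}^\sigma$ solve the same SDE with the same driving Brownian motion, strong uniqueness gives almost-sure equality rather than merely equality in law (though either suffices for the expectation you compute).
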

In \cref{cor:chainJs-nohyp} below, we will strengthen this result by showing that the hypothesis \cref{eq:addhyp} is automatically satisfied.
\begin{proof}
  We have by \cref{lem:Jatdifferenttimes} that, for any $Q\in\bigl[Q_{1},\Qbar_{\FBSDE}(\sigma)\bigr)$
  and any $b\in\RR^m$,
  \begin{equation}
    J_{\sigma}^{2}(Q,b)=\EE \bigl[J_{\sigma}^{2}\bigl(Q_{1},\Gamma_{b,Q}^{\sigma}(Q-Q_{1})\bigr)\bigr].\label{eq:J2CE}
  \end{equation}
  By \cref{eq:addhyp}, there is a unique solution
  to the following FBSDE problem for $Q\in[0,Q_{2}-Q_{1}]$:
  \begin{align}
    \dif\Gamma_{b,Q}^{J_{\sigma}(Q_{1},\anon)}(q) & =J_{J_{\sigma}(Q_{1},\anon)}\bigl(Q-q,\Gamma_{b,Q}^{J_{\sigma}(Q_{1},\anon)}(q)\bigr)\dif B(q),\qquad q\in(0,Q);\label{eq:FBSDEweird} \\
    \Gamma_{b,Q}^{J_{\sigma}(Q_{1},\anon)}(0)     & =b;\label{eq:FBSDEweirdic}                                                                                                  \\
    J_{J_{\sigma}(Q_{1},\anon)}(Q,b)              & = \bigl(\EE \bigl[J_{\sigma}^{2}\bigl(Q_{1},\Gamma_{b,Q}^{J_{\sigma}(Q_{1},\anon)}(Q)\bigr)\bigr]\bigr)^{1/2}.\label{eq:Jweird}
  \end{align}
  Also, given $Q\in[0,\Qbar_{\FBSDE}(\sigma)-Q_{1})$, \zcref[range]{eq:FBSDE,eq:Jeqn} and \cref{eq:J2CE} with $Q\setto Q_{1}+Q$ yield
  \begin{align*}
    \dif\Gamma_{b,Q_{1}+Q}^{\sigma}(q) & =J_{\sigma}\bigl(Q_{1}+Q-q,\Gamma_{b,Q_{1}+Q}^{\sigma}(q)\bigr)\dif B(q),\qquad q\in(0,Q);\\
    \Gamma_{b,Q_{1}+Q}^{\sigma}(0)     & =b;\\
    J_{\sigma}(Q_{1}+Q,b)              & =\bigl[\EE \sigma^{2}\bigl(\Gamma_{b,Q_{1}+Q}^{\sigma}(Q)\bigr)\bigr]^{1/2}=\bigl(\EE \bigl[J_{\sigma}^{2}\bigl(Q_{1},\Gamma_{b,Q_{1}+Q}^{\sigma}(Q)\bigr)\bigr]\bigr)^{1/2}.
  \end{align*}
  This means that
  \begin{equation*}
    \Gamma_{b,Q}^{J_{\sigma}(Q_{1},\anon)}(q)\setto\Gamma_{b,Q_{1}+Q}^{\sigma}(q)\qquad\text{and}\qquad J_{J_{\sigma}(Q_{1},\anon)}(Q,b)\setto J_{\sigma}(Q+Q_{1},b),\qquad Q\in[0,Q_{2}-Q_{1}]
  \end{equation*}
  solve \zcref[range]{eq:FBSDEweird,eq:Jweird}. By the uniqueness of
  the solutions to that problem, this implies that
  \begin{equation*}
    J_{J_{\sigma}(Q_{1},\anon)}(Q,b)=J_{\sigma}(Q+Q_{1},b)\qquad\text{for all }b\in\RR^m\text{ and }Q\in[0,Q_{2}-Q_{1}].
  \end{equation*}
  Taking $Q=Q_{2}-Q_{1}$ yields \cref{eq:Jdiff}.
\end{proof}
\cref{prop:localwellposed} was the base case in the proof of \cref{thm:FBSDEwellposed}. The next proposition
gives the inductive step. %
\begin{prop}
  \label{prop:qbarsigmatqbarjsigma}
  For any $Q'\in\bigl[0,\Qbar_{\FBSDE}(\sigma)\bigr)$, we have
  \begin{equation}
    \Qbar_{\FBSDE}(\sigma)\ge Q'+\Qbar_{\FBSDE}\bigl(J_{\sigma}(Q',\anon)\bigr).\label{eq:qbarsigmagtqbarjsigma}
  \end{equation}
\end{prop}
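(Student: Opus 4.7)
My plan is to extend the root decoupling function $J_\sigma$ from $[0,Q']\times\RR^m$ to $[0,Q'+Q]\times\RR^m$ for any $Q\in\bigl[0,\Qbar_\FBSDE(J_\sigma(Q',\anon))\bigr)$, by concatenating with $J_{J_\sigma(Q',\anon)}$. Explicitly, I will set
\[
  \widetilde J(q,b)\coloneqq
  \begin{cases}
    J_\sigma(q,b),&q\in[0,Q'],\\
    J_{J_\sigma(Q',\anon)}(q-Q',b),&q\in[Q',Q'+Q].
  \end{cases}
\]
Continuity at $q=Q'$ follows because $J_{J_\sigma(Q',\anon)}(0,b)=J_\sigma(Q',b)$: the FBSDE condition \cref{eq:Jeqn} at $q=0$, combined with \cref{eq:FBSDEic}, identifies any root decoupling function with its defining nonlinearity at $q=0$ (cf.\ \cref{rem:FBSDEwellposednessgeneralizes}). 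The uniform Lipschitz bound \cref{eq:Juniflipschitz} transfers directly from the two constituent pieces.

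The key step is verifying that $\widetilde J$ satisfies the FBSDE condition required by \cref{def:QbarFBSDE} at every horizon $Q_*\in[0,Q'+Q]$. The case $Q_*\in[0,Q']$ is immediate. For $Q_*\in(Q',Q'+Q]$, I will split the SDE \crefrange{eq:FBSDE}{eq:FBSDEic} (with $J_\sigma\setto\widetilde J$) at the time $q=Q_*-Q'$. On the initial sub-interval $[0,Q_*-Q']$, the diffusivity $\widetilde J(Q_*-q,\anon)$ equals $J_{J_\sigma(Q',\anon)}(Q_*-Q'-q,\anon)$, so $\Gamma$ realizes the FBSDE trajectory $\Gamma_{a,Q_*-Q'}^{J_\sigma(Q',\anon)}$. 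On $[Q_*-Q',Q_*]$, the diffusivity equals $J_\sigma(Q_*-q,\anon)$, so conditional on $\Gamma(Q_*-Q')$, the subsequent trajectory is an FBSDE trajectory for $\sigma$ with horizon $Q'$. Applying the strong Markov property and the FBSDE condition for $J_\sigma$ at horizon $Q'$ gives
\[
  \EE\bigl[\sigma^2(\Gamma(Q_*))\bigm|\scrG_{Q_*-Q'}\bigr]=J_\sigma^2\bigl(Q',\Gamma(Q_*-Q')\bigr).
\]
Taking a further expectation and using the FBSDE condition for $J_{J_\sigma(Q',\anon)}$ at horizon $Q_*-Q'$ yields
\[
  \EE\sigma^2(\Gamma(Q_*))=\EE\bigl[J_\sigma^2(Q',\Gamma(Q_*-Q'))\bigr]=J_{J_\sigma(Q',\anon)}^2(Q_*-Q',a)=\widetilde J^2(Q_*,a),
\]
as required.

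For the uniqueness required by \cref{def:QbarFBSDE}, I will run the same splitting in reverse. Any candidate extension $\hat J\in\clA_{Q'+Q}$ must agree with $J_\sigma$ on $[0,Q']$ by the uniqueness part of $\Qbar_\FBSDE(\sigma)>Q'$, and the shifted restriction $\hat J(\,\anon+Q',\anon)$ on $[0,Q]$ then satisfies the FBSDE conditions for $J_\sigma(Q',\anon)$ via the same argument, so it must coincide with $J_{J_\sigma(Q',\anon)}$ by the uniqueness part of $\Qbar_\FBSDE(J_\sigma(Q',\anon))>Q$. This gives $\Qbar_\FBSDE(\sigma)\ge Q'+Q$, and letting $Q\nearrow\Qbar_\FBSDE(J_\sigma(Q',\anon))$ yields \cref{eq:qbarsigmagtqbarjsigma}. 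I expect the main delicacy to be formalizing the splitting at $q=Q_*-Q'$ so that the strong Markov property and adaptedness apply cleanly; the rest is essentially a repackaging of the arguments already used in \cref{lem:chainJs,lem:Jatdifferenttimes}, inverted so as to \emph{construct} the extension rather than manipulate a given one.
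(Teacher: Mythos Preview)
Your proposal is correct and follows essentially the same approach as the paper: both construct the concatenated extension $\widetilde J$, verify the root decoupling condition at horizons $Q_*>Q'$ by splitting the SDE at time $Q_*-Q'$ and invoking the Markov property, and handle uniqueness via the same reversal (the paper packages this as an invocation of \cref{lem:chainJs}). Your write-up is slightly more explicit about continuity at the junction $q=Q'$ and about the uniqueness argument, but the structure is identical.
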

\begin{proof}
  Since $Q'<\Qbar_{\FBSDE}(\sigma)$, there is a unique
  root decoupling function $J_{\sigma}\in\clA_{Q'}$ for \zcref[range]{eq:FBSDE,eq:Jeqn}
  on $[0,Q']$. Let $P\in\bigl[0,\Qbar_{\FBSDE}\bigl(J_{\sigma}(Q',\anon)\bigr)\bigr)$,
  so there is a unique root decoupling function $J_{J_{\sigma}(Q',\anon)}\in\clA_{P}$
  for \zcref[range]{eq:FBSDE,eq:Jeqn} with $\sigma\setto J_{\sigma}(Q',\anon)$.

  We wish to extend the function $J_{\sigma}$ to the time interval
  $[0,Q'+P]$ by putting
  \begin{equation}
    J_{\sigma}(q,b)=J_{J_{\sigma}(Q',\anon)}(q-Q',b)\qquad\text{for all }b\in\RR^m\text{ and }q\in[Q',Q'+P],\label{eq:extendJ}
  \end{equation}
  and retaining the original definition of $J_{\sigma}(q,b)$ for $q\in[0,Q'${]}.
        We note that, with this extension, $J_{\sigma}\in\clA_{Q'+P}$.
        We need to check \cref{eq:root decouplingholds} with $J\setto J_{\sigma}$
        and $q\setto Q\in[0,Q'+P]$. This is clear for $Q\in[0,Q']$
        since $J_{\sigma}$ was assumed to be a root decoupling function for \zcref[range]{eq:FBSDE,eq:Jeqn}
        on $[0,Q']$. So suppose that $Q\in(Q',Q'+P]$. We note that, for all $b\in\RR^m$,
  \begin{equation}
    \text{conditional on }\Theta_{a,Q}^{J_{\sigma}}(Q-Q')=b,\qquad \bigl(\Theta_{a,Q}^{J_{\sigma}}(Q-Q'+q)\bigr)_{q\in[0,Q']}\overset{\mathrm{law}}{=}\bigl(\Theta_{b,Q'}^{J_{\sigma}}(q)\bigr)_{q\in[0,Q']}\label{eq:restartSDE}
  \end{equation}
  since both sides satisfy the same SDE \cref{eq:thetaSDE} (with $Q\setto Q'$).
  Applying \cref{eq:restartSDE} with $q\setto Q'$ and using %
  \cref{eq:Jeqn} with $q\setto Q'$ 
 (which we can do %
  by the assumption $Q'<\Qbar_{\FBSDE}(\sigma)$),
  we conclude that
  \begin{equation*}
    \EE \bigl[\sigma^{2}\bigl(\Theta_{a,Q}^{J_{\sigma}}(Q)\bigr)\;\big|\;\Theta_{a,Q}^{J_{\sigma}}(Q-Q')\bigr]=J_{\sigma}^{2}\bigl(Q',\Theta_{a,Q}^{J_{\sigma}}(Q-Q')\bigr).
  \end{equation*}
  Taking expectations and using the tower property of conditional expectation,
  we obtain
  \begin{equation}
      \EE \bigl[\sigma^{2}\bigl(\Theta_{a,Q}^{J_{\sigma}}(Q)\bigr)\bigr]=\EE \bigl[J_{\sigma}^{2}\bigl(Q',\Theta_{a,Q}^{J_{\sigma}}(Q-Q')\bigr)\bigr].\label{eq:sigmasquaredQthing}
  \end{equation}

  Now we note that, for $q\in[0,Q-Q']$, we have
  \begin{equation}
    \dif\Theta_{a,Q}^{J_{\sigma}}(q)\overset{\cref{eq:thetaSDE}}{=}J_{\sigma}\bigl(Q-q,\Theta_{a,Q}^{J_{\sigma}}(q)\bigr)\dif B(q)\overset{\cref{eq:extendJ}}{=}J_{J_{\sigma}(Q',\anon)}\bigl(Q-Q'-q,\Theta_{a,Q}^{J_{\sigma}}(q)\bigr)\dif B(q).\label{eq:rewriteSDEintermsofJJ}
  \end{equation}
  Comparing \cref{eq:rewriteSDEintermsofJJ} and \cref{eq:FBSDE}, we see that $\bigl(\Theta_{a,Q}^{J_{\sigma}}(q)\bigr)_{q\in[0,Q-Q']}$
  and $\bigl(\Gamma_{a,Q-Q'}^{J_{\sigma}(Q',\anon)}(q)\bigr)_{q\in[0,Q-Q']}$ satisfy
  the same SDE (with the same initial conditions), and so
  \begin{equation*}
    \bigl(\Theta_{a,Q}^{J_{\sigma}}(q)\bigr)_{q\in[0,Q-Q']}=\bigl(\Gamma_{a,Q-Q'}^{J_{\sigma}(Q',\anon)}(q)\bigr)_{q\in[0,Q-Q']}.
  \end{equation*}
  In particular, we have
  \begin{equation}
    \Theta_{a,Q}^{J_{\sigma}}(Q-Q')=\Gamma_{a,Q-Q'}^{J_{\sigma}(Q',\anon)}(Q-Q').\label{eq:gammabargammaQQprime}
  \end{equation}
  We conclude that
  \begin{align*}
    J_{\sigma}^{2}(Q,a)\overset{\cref{eq:extendJ}}{=}J_{J_{\sigma}(Q',\anon)}^{2}(Q-Q',a) \overset{\cref{eq:Jeqn}} & {=}\EE \bigl[J_{\sigma}^{2}\bigl(Q',\Gamma_{a,Q-Q'}^{J_{\sigma}(Q',\anon)}(Q-Q')\bigr)\bigr]                                                                            \\
    \overset{\cref{eq:gammabargammaQQprime}}                                                                       & {=}\EE \bigl[J_{\sigma}^{2}\bigl(Q',\Theta_{a,Q}^{J_{\sigma}}(Q-Q')\bigr)\bigr]\overset{\cref{eq:sigmasquaredQthing}}{=}\EE \bigl[\sigma^{2}\bigl(\Theta_{a,Q}^{J_{\sigma}}(Q)\bigr)\bigr],
  \end{align*}
  which is \cref{eq:root decouplingholds} with $J\setto J_{\sigma}$
  and $q\setto Q$.

  Thus we have proved that, with the definition \cref{eq:extendJ}, $J_{\sigma}\in\clA_{Q'+P}$
  is a root decoupling function for \zcref[range]{eq:FBSDE,eq:Jeqn} on the
  time interval $[0,Q'+P]$. We claim that the choice of $J_{\sigma}$
  is unique. We know that $J_{\sigma}|_{[0,Q']\times\RR^m}$ is uniquely
  determined by the assumption $Q'<\Qbar_{\FBSDE}(\sigma)$,
  so it remains to check that the extension \cref{eq:extendJ} is unique.
  But this is true by \cref{lem:chainJs}: the assumption \cref{eq:addhyp}
  is satisfied by the assumption that $P<\Qbar_{\FBSDE}\bigl(J_{\sigma}(Q',\anon)\bigr)$.
  This completes the proof.
\end{proof}
Now we can conclude the proof of \cref{thm:FBSDEwellposed}.
\begin{proof}[Proof of \cref{thm:FBSDEwellposed}.]
  We have
  \begin{equation*}
    \Qbar_{\FBSDE}(\sigma)\ge Q+\Qbar_{\FBSDE}\bigl(J_{\sigma}(Q,\anon)\bigr)\ge Q+\Lip\bigl(J_{\sigma}(Q,\anon)\bigr)^{-2},
  \end{equation*}
  with the first inequality by \cref{prop:qbarsigmatqbarjsigma} and the
  second by \cref{prop:localwellposed}.
\end{proof}
The approach we have taken to \cref{thm:FBSDEwellposed} is representative of most of our coming arguments.
We first establish ``local'' results that hold up to $\Lip(\sigma)^{-2}$.
We then show that these can be extended through a shift of scale provided $J_\sigma$ remains Lipschitz.

With \cref{thm:FBSDEwellposed} proved, it follows immediately that, for any $Q\in [0,\overline Q_\FBSDE(\sigma))$ and any $a\in\RR^m$, we have \begin{equation}\label{eq:GammaisTheta}\Gamma^\sigma_{a,Q} = \Theta^{J_\sigma}_{a,Q}.\end{equation}
We also prove that in fact equality holds in \cref{eq:qbarsigmagtqbarjsigma}.
\begin{cor}
  \label{cor:Qbarsagree}%
  Let $\sigma\in\Lip(\RR^m;\clH^{m}_{+})$.
  For any $Q'\in\bigl[0,\Qbar_{\FBSDE}(\sigma)\bigr)$, we have
  \begin{equation}
    \Qbar_{\FBSDE}(\sigma)=Q'+\Qbar_{\FBSDE}\bigl(J_{\sigma}(Q',\anon)\bigr).\label{eq:Qbarsagree}
  \end{equation}
\end{cor}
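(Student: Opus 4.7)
The plan is straightforward once one recognizes that \cref{prop:qbarsigmatqbarjsigma} already provides the inequality $\Qbar_{\FBSDE}(\sigma) \ge Q' + \Qbar_{\FBSDE}(J_\sigma(Q', \anon))$, so only the reverse inequality requires work. Equivalently, I must show that whenever $P < \Qbar_{\FBSDE}(\sigma) - Q'$, one has $\Qbar_{\FBSDE}(J_\sigma(Q', \anon)) \ge P$. The natural strategy is to exhibit an explicit root decoupling function for the nonlinearity $J_\sigma(Q', \anon)$ on $[0, P]$, namely
\[
  \widetilde J(q, b) \coloneqq J_\sigma(Q' + q, b), \qquad q \in [0, P],\ b \in \RR^m.
\]

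Since $Q' + P < \Qbar_{\FBSDE}(\sigma)$, the function $J_\sigma$ is defined and belongs to $\clA_{Q' + P}$, so by restriction $\widetilde J \in \clA_P$. The defining identity \cref{eq:root decouplingholds} for $\widetilde J$ then reduces to
\[
  J_\sigma^2(Q' + q, a) = \EE\bigl[ J_\sigma^2\bigl(Q', \Theta_{a,q}^{\widetilde J}(q)\bigr) \bigr].
\]
To prove this, I first observe that $\Theta_{a,q}^{\widetilde J}$ satisfies the SDE $\dif \Theta_{a,q}^{\widetilde J}(p) = J_\sigma(Q' + q - p, \Theta_{a,q}^{\widetilde J}(p))\dif B(p)$ with $\Theta_{a,q}^{\widetilde J}(0) = a$, which is precisely the SDE solved by the restriction of $\Gamma_{a, Q'+q}^\sigma$ to the time interval $[0, q]$. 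Strong uniqueness for SDEs with Lipschitz coefficients (\cref{prop:SDEwellposed}) then gives $\Theta_{a,q}^{\widetilde J}(q) = \Gamma_{a, Q'+q}^\sigma(q)$ almost surely. Combining this with \cref{lem:Jatdifferenttimes} applied to $Q_2 \setto Q' + q$ and $Q_1 \setto Q'$ yields the desired identity.

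Letting $P \nearrow \Qbar_{\FBSDE}(\sigma) - Q'$ then produces $\Qbar_{\FBSDE}(J_\sigma(Q', \anon)) \ge \Qbar_{\FBSDE}(\sigma) - Q'$, and combining with \cref{prop:qbarsigmatqbarjsigma} closes the proof. I do not expect a serious obstacle: the argument is essentially a bookkeeping dual of the extension performed in the proof of \cref{prop:qbarsigmatqbarjsigma}, now interpreted as a restriction rather than an extension. The only subtlety worth double-checking is that the time-reversal convention in the SDE \cref{eq:thetaSDE} — namely the dependence on $Q - q$ rather than $q$ — aligns correctly between $\widetilde J$ and $J_\sigma$, which is exactly what the definition $\widetilde J(q, b) = J_\sigma(Q' + q, b)$ is designed to achieve.
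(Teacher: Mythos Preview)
Your argument is correct, but it follows a genuinely different route from the paper's. The paper proves the reverse inequality by contradiction via blow-up of Lipschitz constants: assuming $\Qbar_\FBSDE(J_\sigma(Q',\anon))<\infty$, \cref{thm:FBSDEwellposed} forces $\sup_{Q<\Qbar_\FBSDE(J_\sigma(Q',\anon))}\Lip\bigl(J_{J_\sigma(Q',\anon)}(Q,\anon)\bigr)=\infty$, and \cref{lem:chainJs} transfers this to $\sup\Lip\bigl(J_\sigma(\cdot,\anon)\bigr)=\infty$ on the interval $[Q',Q'+\Qbar_\FBSDE(J_\sigma(Q',\anon)))$, contradicting \cref{eq:Juniflipschitz} unless that interval exhausts $[Q',\Qbar_\FBSDE(\sigma))$. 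Your approach instead directly exhibits the root decoupling function $\widetilde J(q,b)=J_\sigma(Q'+q,b)$ for the nonlinearity $J_\sigma(Q',\anon)$ on each $[0,P]$ with $P<\Qbar_\FBSDE(\sigma)-Q'$. This is more constructive and avoids the blow-up argument; in effect you are reproving \cref{lem:chainJs} in the needed direction rather than invoking it.

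One small point you gloss over: \cref{def:QbarFBSDE} requires \emph{uniqueness} of the root decoupling function in $\clA_P$, not merely existence. You should note that any other root decoupling function $J^\dagger\in\clA_P$ for $J_\sigma(Q',\anon)$ would, via the extension construction in the proof of \cref{prop:qbarsigmatqbarjsigma}, yield a root decoupling function for $\sigma$ on $[0,Q'+P]$; since $Q'+P<\Qbar_\FBSDE(\sigma)$, the latter is unique, forcing $J^\dagger=\widetilde J$. With that sentence added, your proof is complete.
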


\begin{proof}
  We have already proved the ``$\ge$'' direction of \cref{eq:Qbarsagree}
  in \cref{eq:qbarsigmagtqbarjsigma}, so it remains to prove that
  \begin{equation}
    \Qbar_{\FBSDE}(\sigma)\le Q'+\Qbar_{\FBSDE}\bigl(J_{\sigma}(Q',\anon)\bigr).\label{eq:qbarsagree-1}
  \end{equation}
  By \cref{lem:chainJs}, we have
  \begin{equation}
    J_{\sigma}(Q_{2},b)=J_{J_{\sigma}(Q',\anon)}(Q_{2}-Q',b)\qquad\text{for any }b\in\RR^m\label{eq:applychainJs}
  \end{equation}
  whenever $Q_{2}-Q'<\Qbar_{\FBSDE}\bigl(J_{\sigma}(Q',\anon)\bigr)$.
  Suppose that $\Qbar_{\FBSDE}\bigl(J_{\sigma}(Q',\anon)\bigr)<\infty$;
  if this is not the case then \cref{eq:qbarsagree-1} holds vacuously.
  By \cref{thm:FBSDEwellposed}, we have
  \begin{equation*}
      \sup\left\{\Lip\bigl(J_{J_{\sigma}(Q',\anon)}(Q,\anon)\bigr)\ :\ Q\in[0,\Qbar_{\FBSDE}(J_{\sigma}(Q',\anon)))\right\}=\infty.
  \end{equation*}
  By \cref{eq:applychainJs}, this implies that
  \begin{equation*}
      \sup\left\{\Lip\bigl(J_{\sigma}(Q,\anon)\bigr)\ :\  Q\in[Q',Q'+\Qbar_{\FBSDE}(J_{\sigma}(Q',\anon)))\right\}=\infty.
  \end{equation*}
  By \cref{eq:Juniflipschitz}, this implies \cref{eq:qbarsagree-1}.
\end{proof}
\cref{cor:Qbarsagree} lets us drop the assumption \cref{eq:addhyp} in
\cref{lem:chainJs}.
\begin{cor}
  \label{cor:chainJs-nohyp}%
  Let $\sigma\in\Lip(\RR^m;\clH^{m}_{+})$.
  Whenever $0\le Q_{1}\le Q_{2}<\Qbar_{\FBSDE}(\sigma)$, we
  have
  \begin{equation}
    J_{\sigma}(Q_{2},b)=J_{J_{\sigma}(Q_{1},\anon)}(Q_{2}-Q_{1},b)\qquad\text{for all }b\in\RR^m.\label{eq:Jsubst-nohyp}
  \end{equation}
\end{cor}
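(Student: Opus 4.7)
The plan is very short: this corollary is almost immediate given the earlier results. Comparing the statement with \cref{lem:chainJs}, the two conclusions are identical; the only difference is that \cref{lem:chainJs} carries the extra hypothesis \cref{eq:addhyp}, namely
\[
  Q_{2}-Q_{1}<\Qbar_{\FBSDE}\bigl(J_{\sigma}(Q_{1},\anon)\bigr).
\]
So the only thing to do is verify that this inequality is a free consequence of $Q_{2}<\Qbar_{\FBSDE}(\sigma)$, and then quote \cref{lem:chainJs}.

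First I would invoke \cref{cor:Qbarsagree} with $Q'\setto Q_{1}$, which gives the exact identity
\[
  \Qbar_{\FBSDE}(\sigma)=Q_{1}+\Qbar_{\FBSDE}\bigl(J_{\sigma}(Q_{1},\anon)\bigr).
\]
Subtracting $Q_{1}$ from both sides of the hypothesis $Q_{2}<\Qbar_{\FBSDE}(\sigma)$ and using this identity, I obtain
\[
  Q_{2}-Q_{1}<\Qbar_{\FBSDE}\bigl(J_{\sigma}(Q_{1},\anon)\bigr),
\]
which is precisely \cref{eq:addhyp}. Then \cref{lem:chainJs} applies directly and yields \cref{eq:Jsubst-nohyp}.

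There is no real obstacle here: the work has already been done in \cref{prop:qbarsigmatqbarjsigma} (the ``$\ge$'' half of \cref{cor:Qbarsagree}) and in the proof of the reverse inequality in \cref{cor:Qbarsagree}, which itself relied on \cref{lem:chainJs} to push the blow-up of the Lipschitz constant of $J_{J_{\sigma}(Q_{1},\anon)}$ back onto $J_{\sigma}$. With the additive identity in hand, the present corollary is just bookkeeping. The only point worth flagging is that one should handle the degenerate case $\Qbar_{\FBSDE}\bigl(J_{\sigma}(Q_{1},\anon)\bigr)=\infty$ (where the inequality is trivial) and the case $Q_{1}=Q_{2}$ (where both sides equal $\sigma$, using $J_{\sigma}(0,\anon)=\sigma$ from \cref{rem:FBSDEwellposednessgeneralizes}) separately if one wants to be fully pedantic, but neither introduces any genuine difficulty.
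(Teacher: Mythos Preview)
Your proposal is correct and follows exactly the same approach as the paper: invoke \cref{cor:Qbarsagree} to deduce that \cref{eq:addhyp} holds automatically, then apply \cref{lem:chainJs}. The paper's proof is essentially the one-line version of what you wrote, without the extra remarks on degenerate cases.
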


\begin{proof}
  By \cref{cor:Qbarsagree}, we have
  \begin{equation*}
    \Qbar_{\FBSDE}\bigl(J_{\sigma}(Q_{1},\anon)\bigr)=\Qbar_{\FBSDE}(\sigma)-Q_{1}>Q_{2}-Q_{1},
  \end{equation*}
  so the hypothesis \cref{eq:addhyp} is satisfied, and the result follows
  from \cref{lem:chainJs}.
\end{proof}

\subsection{Properties of the FBSDE solution}

We now prove some properties of solutions to the FBSDE \zcref[range]{eq:FBSDE,eq:Jeqn}
that will be useful in later arguments.

\subsubsection{Bounding the root decoupling function}

Here we prove a bound on the root decoupling function $J_{\sigma}$. We
recall the definition \cref{eq:Sigmaplusdef} of $\quadset(M,\beta)$
and note that
\begin{equation}
  \bigcup_{M,\beta\in(0,\infty)^{2}}\quadset(M,\beta)=\Lip(\RR^m;\clH^m_+).\label{eq:sigmaplusexhaust}
\end{equation}

\begin{prop}
  \label{prop:Jsigmaub}Fix $M,\beta\in(0,\infty)$ and $\sigma\in\quadset(M,\beta)$.
  Then we have
  \begin{equation}
    J_{\sigma}(Q,\anon)\in\quadset\bigl((1-\beta^{2}Q)^{-2}M,(\beta^{-2}-Q)^{-1/2}\bigr)\qquad\text{for all }Q\in\bigl[0,\beta^{-2}\wedge\Qbar_{\FBSDE}(\sigma)\bigr).\label{eq:Jsigmaub}
  \end{equation}
\end{prop}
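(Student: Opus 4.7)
The plan is to exploit a striking consequence of \cref{lem:Jatdifferenttimes}: along the forward process $\Gamma_{a,Q}^{\sigma}$, the second moment grows \emph{linearly} in time. Once that is established, the growth bound on $\sigma$ closes up a simple algebraic inequality.

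Concretely, fix $Q\in\bigl[0,\beta^{-2}\wedge\Qbar_{\FBSDE}(\sigma)\bigr)$ and $a\in\RR^{m}$, and abbreviate $\Gamma(q)=\Gamma_{a,Q}^{\sigma}(q)$ and $\psi(q)=\EE|\Gamma(q)|^{2}$. Itô's formula applied to $|\Gamma(q)|^{2}$, together with the SDE \cref{eq:FBSDE}, gives
\[
    \psi(q)=|a|^{2}+\int_{0}^{q}\EE\bigl|J_{\sigma}\bigl(Q-r,\Gamma(r)\bigr)\bigr|_{\Frob}^{2}\,\dif r.
\]
Now apply \cref{lem:Jatdifferenttimes} with $Q_{2}\setto Q$, $Q_{1}\setto Q-r$, $b\setto a$, and take the trace; this yields $\EE|J_{\sigma}(Q-r,\Gamma(r))|_{\Frob}^{2}=|J_{\sigma}(Q,a)|_{\Frob}^{2}$, a quantity independent of $r$. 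Consequently,
\[
    \psi(Q)=|a|^{2}+Q\,|J_{\sigma}(Q,a)|_{\Frob}^{2}.
\]

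On the other hand, \cref{eq:Jeqn} and the assumption $\sigma\in\quadset(M,\beta)$ give
\[
    |J_{\sigma}(Q,a)|_{\Frob}^{2}=\EE\bigl|\sigma\bigl(\Gamma(Q)\bigr)\bigr|_{\Frob}^{2}\le M+\beta^{2}\psi(Q).
\]
Substituting the linear formula for $\psi(Q)$ and solving for $|J_{\sigma}(Q,a)|_{\Frob}^{2}$ (which is permissible since $\beta^{2}Q<1$) yields
\[
    |J_{\sigma}(Q,a)|_{\Frob}^{2}\le\frac{M+\beta^{2}|a|^{2}}{1-\beta^{2}Q}=\frac{M}{1-\beta^{2}Q}+\frac{|a|^{2}}{\beta^{-2}-Q}\le\frac{M}{(1-\beta^{2}Q)^{2}}+\frac{|a|^{2}}{\beta^{-2}-Q},
\]
where the last step uses $1-\beta^{2}Q\le 1$. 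This is exactly the membership claim $J_{\sigma}(Q,\anon)\in\quadset\bigl((1-\beta^{2}Q)^{-2}M,(\beta^{-2}-Q)^{-1/2}\bigr)$, since $J_{\sigma}(Q,\anon)$ is already known to be Lipschitz by the definition of $\Qbar_{\FBSDE}(\sigma)$ (see \cref{eq:Juniflipschitz}).

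The only subtle point is the application of \cref{lem:Jatdifferenttimes}, which requires $Q_{1},Q_{2}\in[0,\Qbar_{\FBSDE}(\sigma))$; this holds by our choice of $Q$. Otherwise the argument is entirely mechanical: the linearity of $\psi$ in $q$, driven by the FBSDE's self-consistency relation, is the crux and bypasses any need for a Gr\"onwall-style bootstrap or Lipschitz control on $\sigma$.
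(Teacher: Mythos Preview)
Your proof is correct and takes a genuinely different---and sharper---route from the paper's. The paper runs a two-stage bootstrap: it first defines auxiliary parameters $\beta_Q$, $\overline{M}_Q$ measuring the growth of $J_\sigma(Q,\anon)$, uses Gr\"onwall and \cref{lem:fQrecursive} (with an $\eps$-regularization that is sent to zero at the end) to close an integral inequality for $\beta_Q$, and then repeats a similar Gr\"onwall argument for a refined constant $\widetilde{M}_Q$. Your observation that \cref{lem:Jatdifferenttimes} forces $\EE|J_\sigma(Q-r,\Gamma(r))|_\Frob^2$ to be \emph{constant} in $r$ collapses this entire machinery into a single algebraic step, and in fact yields the stronger bound $|J_\sigma(Q,a)|_\Frob^2\le (1-\beta^2Q)^{-1}M+(\beta^{-2}-Q)^{-1}|a|^2$, which you then relax to match the stated form. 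The paper's argument has the minor advantage of not invoking \cref{lem:Jatdifferenttimes} (and so could in principle be placed earlier in the logical flow), but since that lemma precedes \cref{prop:Jsigmaub} in the text, your use of it is legitimate and the resulting proof is both shorter and tighter.
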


\begin{proof}
  Fix parameters $\Qbar\in[0,\beta^{-2}\wedge\Qbar_{\FBSDE}(\sigma))$
  and $\eps\in\bigl(0,(\Qbar\beta^{2})^{-1/2}-1\bigr]$, so $\Qbar\le[(1+\eps)\beta]^{-2}$.
  For each $Q\in[0,\Qbar]$, we define
  \[
    \beta_{Q}=\inf\bigl\{\beta'\ge\beta\suchthat\exists M'>0\text{ s.t. }J_{\sigma}(Q,\anon)\in\quadset(M',\beta')\bigr\}
  \]
  and
  \[
    \overline{M}_{Q}=\inf\{M'>0\suchthat J_{\sigma}(Q,\anon)\in\quadset(M',(1+\eps)\beta_{Q})\}.
  \]
  We observe that $0<\beta\le\beta_{Q}<\infty$ by \cref{eq:sigmaplusexhaust}
  and that $\overline{M}_{Q}<\infty$ because $(1+\eps)\beta_{Q}>\beta_{Q}$.
  Also, we have
  \begin{equation}
      J_{\sigma}(Q,\anon) \in \quadset(\overline M_Q,(1+\eps)\beta_Q)\qquad\text{for each $Q\in [0,\overline Q]$}.\label{eq:JsigmaQbd}
  \end{equation}
  Recalling the equation \cref{eq:Jeqn} for $J_{\sigma}$ and using the
  assumption that $\sigma\in\quadset(M,\beta)$, we have for each
  $Q\in[0,\Qbar]$ that
  \begin{equation}
    |J_{\sigma}(Q,b)|_{\Frob}^{2}=\EE |\sigma(\Gamma_{b,Q}^{\sigma}(Q))|_{\Frob}^{2}\le M+\beta^{2}\EE |\Gamma_{b,Q}^{\sigma}(Q)|^{2}.\label{eq:Jsigmabd}
  \end{equation}
  Also, Itô's isometry applied to \cref{eq:FBSDE} yields
  \[
      \EE |\Gamma_{b,Q}^{\sigma}(q)|^{2}=b^{2}+\int_{0}^{q}\EE |J_{\sigma}(Q-r,\Gamma_{b,Q}^{\sigma}(r))|_{\Frob}^{2}\,\dif r\overset{\cref{eq:JsigmaQbd}}\le b^{2}+\int_{0}^{q}\left(\overline{M}_{Q-r}+[(1+\eps)\beta_{Q-r}]^{2}\EE |\Gamma_{b,Q}^{\sigma}(r)|^{2}\right)\,\dif r
  \]
  whenever $0\le q\le Q\le\overline Q$.
  By Grönwall's inequality, this implies that
  \begin{equation}
    \EE |\Gamma_{b,Q}^{\sigma}(Q)|^{2}\le\left(b^{2}+\int_{0}^{Q}\overline{M}_{q}\,\dif q\right)\exp\left\{ \int_{0}^{Q}[(1+\eps)\beta_{q}]^{2}\,\dif q\right\} .\label{eq:Gamma-L2}
  \end{equation}
  Combining \cref{eq:Jsigmabd,eq:Gamma-L2}, we find
  \begin{equation}
    |J_{\sigma}(Q,b)|_{\Frob}^{2}\le M+\beta^{2}\left(b^{2}+\int_{0}^{Q}\overline{M}_{q}\,\dif q\right)\exp\left\{ \int_{0}^{Q}[(1+\eps)\beta_{q}]^{2}\,\dif q\right\} .\label{eq:J2-bd}
  \end{equation}
  It follows that
  \[
    \beta_{Q}^{2}\le\beta^{2}\exp\left\{ \int_{0}^{Q}[(1+\eps)\beta_{q}]^{2}\,\dif q\right\} .
  \]
  Multiplying both sides by $(1+\eps)^{2}$ and applying \cref{lem:fQrecursive}
  with $f(q)=[(1+\eps)\beta_{q}]^{2}$, we find
  \begin{equation}
    \beta_{Q}\le(1+\eps)^{-1}\bigl([(1+\eps)\beta]^{-2}-Q\bigr)^{-1/2}\qquad\text{for all }Q\in[0,\Qbar].\label{eq:betaepsbd}
  \end{equation}

  We next define, for all $Q\in[0,\Qbar]$,
  \[
    \widetilde{M}_{Q}\coloneqq\inf\bigl\{M'>0\suchthat J_{\sigma}(Q,\anon)\in\quadset\bigl(M',\bigl([(1+\eps)\beta]^{-2}-Q\bigr)^{-1/2}\bigr)\bigr\},
  \]
  which is finite by \cref{eq:betaepsbd} and the definitions. Mimicking
  the argument that led to \cref{eq:J2-bd}, we have
  \begin{align*}
    |J_{\sigma}(Q,b)|_{\Frob}^{2} & \le M+\beta^{2}\left(b^{2}+\int_{0}^{Q}\widetilde{M}_{q}\,\dif q\right)\exp\left\{ \int_{0}^{Q}\frac{\dif r}{[(1+\eps)\beta]^{-2}-r}\right\} \\
                                  & =M+\frac{\beta^{2}}{1-[(1+\eps)\beta]^{2}Q}\left(b^{2}+\int_{0}^{Q}\widetilde{M}_{q}\,\dif q\right)                                          \\
                                  & \le M+\frac{\beta^{2}}{1-[(1+\eps)\beta]^{2}Q}\int_{0}^{Q}\widetilde{M}_{q}\,\dif q+\frac{b^{2}}{[(1+\eps)\beta]^{-2}-Q},
  \end{align*}
  so
  \begin{equation}
    \widetilde{M}_{Q}\le M+\frac{\beta^{2}}{1-[(1+\eps)\beta]^{2}Q}\int_{0}^{Q}\widetilde{M}_{q}\,\dif q\label{eq:M-pre-Gronwall}
  \end{equation}
  for all $Q\in[0,\Qbar]$. Now let $g(Q)=\bigl([(1+\eps)\beta]^{-2}-Q\bigr)\widetilde{M}_{Q}$,
  so \cref{eq:M-pre-Gronwall} yields the inequality
  \[
    g(Q)\le[(1+\eps)\beta]^{-2}M+\int_{0}^{Q}\frac{g(r)}{[(1+\eps)\beta]^{-2}-r}\,\dif r.
  \]
  Applying Grönwall's inequality, we find
  \[
    \widetilde{M}_{Q}=\bigl([(1+\eps)\beta]^{-2}-Q\bigr)^{-1}g(Q)\le\bigl(1-[(1+\eps)\beta]^{2}Q\bigr)^{-2}M.
  \]
  By the definition of $\widetilde{M}_{Q}$, we get
  \[
    |J_{\sigma}(Q,b)|^2_\Frob\le\bigl(1-[(1+\eps)\beta]^{2}Q\bigr)^{-2}M+\bigl([(1+\eps)\beta]^{-2}-Q\bigr)^{-1}b^{2}\qquad\text{for all }b\in\RR^m\text{ and all }Q\in[0,\Qbar].
  \]
  Taking $\eps\searrow0$, we obtain \cref{eq:Jsigmaub}.
\end{proof}

\subsubsection{Time-regularity of the root decoupling function}

We now control the time-regularity of the root decoupling function
$J_{\sigma}$. This strengthens a result from \cref{step:spacetimecontinuity}
in the proof of \cref{prop:localwellposed}.
We begin with a small-time continuity result.
\begin{lem}
  Let $\sigma\in\Lip(\RR^m;\clH^m_+)$.
  Suppose that $Q<\Qbar_{\FBSDE}(\sigma)$
  and suppose that $M,\beta\in(0,\infty)$ are such that $J_{\sigma}(q,\anon)\in\quadset(M,\beta)$
  for all $q\in[0,Q]$. Then we have, for all $q\in[0,Q]$ and $b\in\RR^m$, that
  \begin{equation}
    \EE |\Gamma_{b,Q}^{\sigma}(q)|^{2}\le(b^{2}+Mq)\e^{\beta^{2}q}\label{eq:gammasecondmomentbd}
  \end{equation}
  and that
  \begin{equation}
    \EE |\Gamma_{b,Q}^{\sigma}(q)-b|^{2}\le (M + M \e^{\beta^2 q} + \beta^{2}b^{2})q.\label{eq:gammavarbd}
  \end{equation}
\end{lem}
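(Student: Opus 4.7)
The plan is straightforward: both bounds follow from applying Itô's isometry to the FBSDE \cref{eq:FBSDE} governing $\Gamma^\sigma_{b,Q}$, together with the quadratic growth hypothesis $J_\sigma(q,\anon)\in\Sigma(M,\beta)$ and a Grönwall-type argument. Since the drift of the SDE vanishes, the process $\Gamma^\sigma_{b,Q}(q) - b$ is a martingale starting at $0$, which is the key structural fact we will exploit.

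For \cref{eq:gammasecondmomentbd}, I would first note that $\Gamma^\sigma_{b,Q}$ solves the driftless SDE
\begin{equation*}
\Gamma^\sigma_{b,Q}(q) = b + \int_0^q J_\sigma\bigl(Q-r,\Gamma^\sigma_{b,Q}(r)\bigr)\,\dif B(r),
\end{equation*}
so Itô's isometry (applied componentwise, summing to obtain the Frobenius norm on the diffusion coefficient) gives
\begin{equation*}
\EE|\Gamma^\sigma_{b,Q}(q)|^2 = |b|^2 + \int_0^q \EE\bigl|J_\sigma\bigl(Q-r,\Gamma^\sigma_{b,Q}(r)\bigr)\bigr|_\Frob^2\,\dif r.
\end{equation*}
Invoking $J_\sigma(Q-r,\anon)\in\Sigma(M,\beta)$, the integrand is at most $M + \beta^2\,\EE|\Gamma^\sigma_{b,Q}(r)|^2$, yielding the integral inequality
\begin{equation*}
\EE|\Gamma^\sigma_{b,Q}(q)|^2 \le |b|^2 + Mq + \beta^2\int_0^q \EE|\Gamma^\sigma_{b,Q}(r)|^2\,\dif r.
\end{equation*}
Since the driving term $|b|^2 + Mq$ is nondecreasing, the standard Grönwall inequality delivers \cref{eq:gammasecondmomentbd}.

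For \cref{eq:gammavarbd}, I would apply Itô's isometry to $\Gamma^\sigma_{b,Q}(q) - b$ directly, which (as a martingale starting at $0$) gives
\begin{equation*}
\EE|\Gamma^\sigma_{b,Q}(q)-b|^2 = \int_0^q \EE\bigl|J_\sigma\bigl(Q-r,\Gamma^\sigma_{b,Q}(r)\bigr)\bigr|_\Frob^2\,\dif r \le Mq + \beta^2\int_0^q \EE|\Gamma^\sigma_{b,Q}(r)|^2\,\dif r.
\end{equation*}
Plugging in the already-established bound $\EE|\Gamma^\sigma_{b,Q}(r)|^2\le(|b|^2+Mr)\e^{\beta^2 r}$ and controlling the resulting integral by crude estimates (bounding $(|b|^2 + Mr)\e^{\beta^2 r}$ by its value at $r = q$ on some terms, and by a bound linear in $r$ on others) produces the stated bound \cref{eq:gammavarbd}.

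The main potential snag is simply bookkeeping: the Grönwall output for the second bound naturally produces expressions with mixed $\e^{\beta^2 q}$ factors, and some care is required to package them into the clean form $(M + M\e^{\beta^2 q} + \beta^2|b|^2)q$. Beyond this, there is no conceptual obstacle, since the argument relies only on Itô isometry, the definition of $\Sigma(M,\beta)$, and elementary integral inequalities—tools already introduced in the paper.
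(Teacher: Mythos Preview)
Your proposal is correct and follows essentially the same approach as the paper's proof: both apply It\^o's isometry to the driftless SDE \cref{eq:FBSDE}, use the quadratic growth bound from $\Sigma(M,\beta)$, apply Gr\"onwall for \cref{eq:gammasecondmomentbd}, and then substitute that bound into the expression for $\EE|\Gamma^\sigma_{b,Q}(q)-b|^2$ to obtain \cref{eq:gammavarbd}. The ``bookkeeping'' you mention is handled in the paper by the crude estimate $\beta^2\int_0^q(b^2+Mr)\e^{\beta^2 r}\,\dif r\le(\beta^2 b^2+M\e^{\beta^2 q})q$.
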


\begin{proof}
  We have by \cref{eq:FBSDE} and Itô's formula that
  \begin{align*}
    \EE |\Gamma_{b,Q}^{\sigma}(q)|^{2} & =b^{2}+\int_{0}^{q}\EE \bigl\lvert J_{\sigma}\bigl(Q-r,\Gamma_{a,Q}^{\sigma}(r)\bigr)\bigr\rvert _{\Frob}^{2}\,\dif r\le b^{2}+Mq+\beta^{2}\int_{0}^{q}\EE |\Gamma_{a,Q}^{\sigma}(r)|^{2}\,\dif r,
  \end{align*}
  so by Grönwall's inequality we obtain \cref{eq:gammasecondmomentbd}.
  Then, again using \cref{eq:FBSDE} and Itô's formula, we obtain
  \begin{align*}
    \EE |\Gamma_{b,Q}^{\sigma}(q)-b|^{2} & =\int_{0}^{q}\EE\bigl\lvert J_{\sigma}\bigl(Q-r,\Gamma_{a,Q}^{\sigma}(r)\bigr)\bigr\rvert _{\Frob}^{2}\,\dif r\le Mq+\beta^{2}\int_{0}^{q}\EE |\Gamma_{a,Q}^{\sigma}(r)|^{2}\,\dif r \\
                                         & \le Mq+\beta^{2}\int_{0}^{q}(b^{2}+Mr)\e^{\beta^{2}r}\,\dif r\le(M+\beta^{2}b^{2} + M \e^{\beta^2 q})q,
  \end{align*}
  which is \cref{eq:gammavarbd}.
\end{proof}
The next proposition is our main time-regularity result for the decoupling function.
\begin{prop}
  \label{prop:Jsigmatimereg-1}For all $M,\beta,\lambda,Q\in(0,\infty)$,
  there is a constant $C=C(M,\beta,\lambda,Q)<\infty$ such that the following
  holds. Suppose $\sigma\in\Lip(\RR^m;\clH^m_+)$ is such that $\Qbar_\FBSDE>Q$ %
  and for all $q \in [0, Q]$, we have $J_{\sigma}(q,\anon)\in\quadset(M,\beta)$ and $\Lip\bigl(J_{\sigma}(q,\anon)\bigr)\le\lambda$.
  Let $Q_{1},Q_{2}\in[0,Q)$ satisfy $|Q_{1}-Q_{2}|\le\beta^{-2}\wedge1$.
  Then for any $q\in[0,Q_{1}\wedge Q_{2}]$ and $a\in\RR^m$, we have
  \begin{equation}
    \clW_{2}\bigl(\Gamma_{a,Q_{1}}^{\sigma}(Q_{1}-q),\Gamma_{a,Q_{2}}^{\sigma}(Q_{2}-q)\bigr)\le C\langle a\rangle|Q_{2}-Q_{1}|^{1/2}\label{eq:changetheendtime}
  \end{equation}
  and
  \begin{equation}
    |J_{\sigma}(Q_{2},a)-J_{\sigma}(Q_{1},a)|_{\Frob}\le C\langle a\rangle|Q_{2}-Q_{1}|^{1/2}.\label{eq:Jtimecontinuity}
  \end{equation}
\end{prop}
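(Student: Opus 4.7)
Without loss of generality, assume $Q_1 \le Q_2$. The plan is to construct an explicit coupling between $\Gamma_{a,Q_1}^\sigma$ and $\Gamma_{a,Q_2}^\sigma$ by shifting the driving Brownian motion of the second process, thereby reducing the time-regularity question to standard SDE stability estimates with differing initial conditions.

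Define $\widetilde X_s \coloneqq \Gamma_{a,Q_2}^\sigma(s + (Q_2-Q_1))$ for $s \in [0, Q_1]$. A direct change of variables in \cref{eq:FBSDE} shows that
\[
    \dif \widetilde X_s = J_\sigma(Q_1 - s, \widetilde X_s)\,\dif B'_s, \qquad \widetilde X_0 = \Gamma_{a,Q_2}^\sigma(Q_2 - Q_1),
\]
where $B'_s \coloneqq B(s+(Q_2-Q_1)) - B(Q_2-Q_1)$ is a Brownian motion independent of the $\scrG_{Q_2-Q_1}$-measurable initial condition $\widetilde X_0$. Let $X$ be the strong solution on $[0, Q_1]$ of the same SDE driven by $B'$ but with deterministic initial condition $X_0 = a$; by weak uniqueness, $(X_s)_{s\in[0,Q_1]}$ has the law of $(\Gamma_{a,Q_1}^\sigma(s))_{s \in [0,Q_1]}$, so the pair $(X, \widetilde X)$ gives a coupling of the two processes we wish to compare.

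Conditioning on $\widetilde X_0$ (which is valid since $B'$ is independent of it) and applying \cref{eq:SDEctsininitialconditions} from \cref{prop:SDEwellposed} with $g \setto J_\sigma$ and $Q \setto Q_1 \le Q$, using that $\Lip(J_\sigma(q,\anon)) \le \lambda$ throughout $[0, Q]$, we obtain
\[
    \sup_{s \in [0, Q_1]}\EE|X_s - \widetilde X_s|^2 \le C(\lambda, Q)\,\EE|a - \widetilde X_0|^2.
\]
Applying \cref{eq:gammavarbd} to $\Gamma_{a,Q_2}^\sigma$ at time $Q_2 - Q_1$, and using the hypothesis $|Q_2 - Q_1|\le\beta^{-2}\wedge 1$ to bound $\e^{\beta^2(Q_2-Q_1)} \le \e$, yields $\EE|a - \widetilde X_0|^2 \le C(M,\beta)\langle a\rangle^2 |Q_2-Q_1|$. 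Taking $s = Q_1 - q$ in the previous display gives a coupled $L^2$ bound that dominates the squared Wasserstein-$2$ distance, establishing \cref{eq:changetheendtime}.

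For \cref{eq:Jtimecontinuity}, specialize the coupling to $s = Q_1$ (i.e., $q = 0$) and note that $\sigma = J_\sigma(0, \anon)$ is $\lambda$-Lipschitz by hypothesis. Then the matrix-valued reverse triangle inequality (\cref{prop:matrixvaluedreversetriangleinequality}), applied exactly as in the derivation of \cref{eq:Jsqbd}, gives
\[
    |J_\sigma(Q_2, a) - J_\sigma(Q_1, a)|_\Frob^2 \le \EE|\sigma(X_{Q_1}) - \sigma(\widetilde X_{Q_1})|_\Frob^2 \le \lambda^2 \EE|X_{Q_1} - \widetilde X_{Q_1}|^2,
\]
which combined with the $L^2$ bound from the previous paragraph yields \cref{eq:Jtimecontinuity}. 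The only mild subtlety is that \cref{prop:SDEwellposed} is stated for deterministic initial data, so one must condition on $\widetilde X_0$ and invoke independence of $B'$ from $\widetilde X_0$ before integrating; beyond that, the argument is a routine chain of stability and moment estimates already developed in this section.
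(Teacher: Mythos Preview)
Your proof is correct and follows essentially the same approach as the paper's: both construct a coupling by time-shifting $\Gamma_{a,Q_2}^\sigma$ so that its shifted version solves the same SDE as $\Gamma_{a,Q_1}^\sigma$ but with random initial condition $\Gamma_{a,Q_2}^\sigma(Q_2-Q_1)$, then bound the resulting discrepancy via initial-condition stability and \cref{eq:gammavarbd}. The only cosmetic difference is that you invoke the packaged estimate \cref{eq:SDEctsininitialconditions} (after conditioning on $\widetilde X_0$), whereas the paper writes out the Gr\"onwall step directly; the derivation of \cref{eq:Jtimecontinuity} via \cref{prop:matrixvaluedreversetriangleinequality} is identical.
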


\begin{proof}
  Assume without loss of generality that $Q_{2}>Q_{1}$. We recall that
  $\Gamma_{a,Q_{2}}^{\sigma}$ satisfies the SDE
  \begin{align*}
    \dif\Gamma_{a,Q_{2}}^{\sigma}(q) & =J_{\sigma}\bigl(Q_{2}-q,\Gamma_{a,Q_{2}}^{\sigma}(q)\bigr)\dif B(q),\qquad q\in(0,Q_{2}); \\
    \Gamma_{a,Q_{2}}^{\sigma}(0)     & =a.
  \end{align*}
  Let $\Upsilon$ satisfy the SDE
  \begin{align*}
    \dif\Upsilon(q)       & =J_{\sigma}\bigl(Q_{2}-q,\Upsilon(q)\bigr)\dif B(q),\qquad q\in(Q_{2}-Q_{1},Q_{2}); \\
    \Upsilon(Q_{2}-Q_{1}) & =a.
  \end{align*}
  Then we have
  \begin{equation}
    \bigl(\Upsilon(Q_{2}-Q_{1}+q)\bigr)_{q\in[0,Q_{1}]}\overset{\mathrm{law}}{=}\bigl(\Gamma_{a,Q_{1}}^{\sigma}(q)\bigr)_{q\in[0,Q_{1}]}.\label{eq:lawsthesame}
  \end{equation}
  Moreover, for all $q\in[Q_{2}-Q_{1},Q_{2}]$ we have
  \begin{align*}
    \EE |\Gamma_{a,Q_{2}}^{\sigma}(q)-\Upsilon(q)|^{2} & =\EE |\Gamma_{a,Q_{2}}^{\sigma}(Q_{2}-Q_{1})-a|^{2} + \int_{Q_{2}-Q_{1}}^{q}\EE \bigl\lvert J_{\sigma}\bigl(Q_{2}-p,\Gamma_{a,Q_{2}}^{\sigma}(p)\bigr)-J_{\sigma}\bigl(Q_{2}-p,\Upsilon(p)\bigr)\bigr\rvert _{\Frob}^{2}\,\dif p \\
                                                       & \le\EE |\Gamma_{a,Q_{2}}^{\sigma}(Q_{2}-Q_{1})-a|^{2} + \lambda^{2}\int_{Q_{2}-Q_{1}}^{q}\EE |\Gamma_{a,Q_{2}}^{\sigma}(p)-\Upsilon(p)|^{2}\,\dif p.
  \end{align*}
  Hence Grönwall's inequality and \cref{eq:gammavarbd} yield
  \begin{align*}
    \EE |\Gamma_{a,Q_{2}}^{\sigma}(q)-\Upsilon(q)|^{2} & \le \e^{\lambda^{2}(q-(Q_{2}-Q_{1}))}\EE |\Gamma_{a,Q_{2}}^{\sigma}(Q_{2}-Q_{1})-a|^{2}                                            \\
                                                       & \le\e^{\lambda^{2}(q-(Q_{2}-Q_{1}))}\left[M + M \e^{\beta^{2}(Q_{2}-Q_{1})} + \beta^{2}a^{2}\right] (Q_{2}-Q_{1}).
  \end{align*}
  Then \cref{eq:changetheendtime} follows from the equality of laws
  \cref{eq:lawsthesame} (meaning that we have a coupling over which the $\mathcal{W}_2$ metric is an infimum) and our assumption $|Q_{1}-Q_{2}|\le\beta^{-2}\wedge1$.

  To prove \cref{eq:Jtimecontinuity}, we note that, by \cref{prop:matrixvaluedreversetriangleinequality},
  we have
  \begin{align*}
    |J_{\sigma}(Q_{2},a)-J_{\sigma}(Q_{1},a)|_{\Frob}^{2} & \le\clW_{2}\bigl(\sigma\bigl(\Gamma_{a,Q_{2}}^{\sigma}(Q_{2})\bigr),\sigma\bigl(\Gamma_{a,Q_{1}}^{\sigma}(Q_{1})\bigr)\bigr) \\
                                                          & \le\lambda\clW_{2}\bigl(\Gamma_{a,Q_{2}}^{\sigma}(Q_{2}),\Gamma_{a,Q_{1}}^{\sigma}(Q_{1})\bigr),
  \end{align*}
  and then conclude using \cref{eq:changetheendtime} with $q\setto0$.
\end{proof}

\subsubsection{Rescaling the FBSDE solution}

We will sometimes need to scale solutions to the FBSDE.
We record a scaling symmetry in the following proposition. A similar rescaling symmetry holds for the SPDEs \zcref{eq:u-SPDE,eq:SPDE}, except that for those equations the spatial rescaling also changes the mollification scale. The FBSDE, as the limiting problem, has no mollification scale and thus enjoys an exact rescaling symmetry.
\begin{prop}
  Let $\sigma\in\Lip(\RR^m;\clH^m_+)$.
  For any $\zeta>0$,
  we have
  \begin{equation}
    \label{eq:scaleexistencetimes}
    \Qbar_{\FBSDE}(\zeta\sigma)=\zeta^{-2}\Qbar_{\FBSDE}(\sigma)
  \end{equation}
  and, for all $q\in\bigl[0,\zeta^{-2}\Qbar_{\FBSDE}(\sigma)\bigr)$
  and $b\in\RR^m$,
  \begin{equation}
    J_{\zeta\sigma}(q,b)=\zeta J_{\sigma}(\zeta^{2}q,b).\label{eq:Jrescale}
  \end{equation}
\end{prop}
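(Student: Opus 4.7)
The plan is to derive \cref{eq:Jrescale,eq:scaleexistencetimes} by a direct rescaling of the FBSDE, using the uniqueness furnished by \cref{def:QbarFBSDE} and \cref{thm:FBSDEwellposed} to match the rescaled object with the FBSDE for $\zeta\sigma$. First I fix any $Q_{0}\in\bigl[0,\Qbar_{\FBSDE}(\sigma)\bigr)$ and, for $Q\in[0,\zeta^{-2}Q_{0}]$ and $a\in\RR^{m}$, introduce the rescaled process $\widetilde\Gamma_{a,Q}(q)\coloneqq\Gamma_{a,\zeta^{2}Q}^{\sigma}(\zeta^{2}q)$ on $[0,Q]$ and the time-changed noise $\widetilde B(q)\coloneqq\zeta^{-1}B(\zeta^{2}q)$, which is a standard $\RR^{m}$-valued Brownian motion relative to the filtration $\{\scrG_{\zeta^{2}q}\}_{q\ge0}$. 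Setting $\widetilde J(q,b)\coloneqq\zeta J_{\sigma}(\zeta^{2}q,b)$ and changing variables in the stochastic integral \cref{eq:FBSDE}, I obtain
\begin{equation*}
\widetilde\Gamma_{a,Q}(q)=a+\zeta\int_{0}^{q}J_{\sigma}\bigl(\zeta^{2}(Q-p),\widetilde\Gamma_{a,Q}(p)\bigr)\dif\widetilde B(p)=a+\int_{0}^{q}\widetilde J\bigl(Q-p,\widetilde\Gamma_{a,Q}(p)\bigr)\dif\widetilde B(p),
\end{equation*}
so $\widetilde\Gamma$ solves the forward SDE \cref{eq:FBSDE} with $\sigma\setto\zeta\sigma$ and $J_{\sigma}\setto\widetilde J$.

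Next I verify the self-consistency relation \cref{eq:Jeqn} for the rescaled data. Because $\sigma(v)\in\clH_{+}^{m}$, we have $(\zeta\sigma)^{2}=\zeta^{2}\sigma^{2}$, so for any $Q\in[0,\zeta^{-2}Q_{0}]$ and $b\in\RR^{m}$:
\begin{equation*}
\widetilde J(Q,b)=\zeta J_{\sigma}(\zeta^{2}Q,b)\overset{\cref{eq:Jeqn}}{=}\zeta\bigl[\EE\sigma^{2}\bigl(\Gamma_{b,\zeta^{2}Q}^{\sigma}(\zeta^{2}Q)\bigr)\bigr]^{1/2}=\bigl[\EE(\zeta\sigma)^{2}\bigl(\widetilde\Gamma_{b,Q}(Q)\bigr)\bigr]^{1/2}.
\end{equation*}
Moreover, $\widetilde J$ is continuous on $[0,\zeta^{-2}Q_{0}]\times\RR^{m}$ and satisfies
\begin{equation*}
\sup_{q\in[0,\zeta^{-2}Q_{0}]}\Lip\bigl(\widetilde J(q,\anon)\bigr)=\zeta\sup_{q\in[0,Q_{0}]}\Lip\bigl(J_{\sigma}(q,\anon)\bigr)<\infty,
\end{equation*}
since $Q_{0}<\Qbar_{\FBSDE}(\sigma)$. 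Hence $\widetilde J$ satisfies every requirement of \cref{def:QbarFBSDE} for $\zeta\sigma$ on $[0,\zeta^{-2}Q_{0}]$. Uniqueness in \cref{def:QbarFBSDE} for $\zeta\sigma$ transfers from the uniqueness for $\sigma$ by applying the same rescaling in reverse to any competitor in $\clA_{\zeta^{-2}Q_{0}}$; this yields $\Qbar_{\FBSDE}(\zeta\sigma)\ge\zeta^{-2}Q_{0}$ and $J_{\zeta\sigma}(q,b)=\widetilde J(q,b)=\zeta J_{\sigma}(\zeta^{2}q,b)$ on $[0,\zeta^{-2}Q_{0}]\times\RR^{m}$. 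Sending $Q_{0}\nearrow\Qbar_{\FBSDE}(\sigma)$ produces \cref{eq:Jrescale} together with $\Qbar_{\FBSDE}(\zeta\sigma)\ge\zeta^{-2}\Qbar_{\FBSDE}(\sigma)$.

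Finally, the reverse inequality $\Qbar_{\FBSDE}(\sigma)\ge\zeta^{2}\Qbar_{\FBSDE}(\zeta\sigma)$ follows by reapplying the argument with $\sigma\setto\zeta\sigma$ and $\zeta\setto\zeta^{-1}$, completing the proof of \cref{eq:scaleexistencetimes}. The whole argument is essentially a bookkeeping exercise in stochastic calculus; the only point requiring care is the correct identification of the time-changed Brownian motion $\widetilde B$ and the matching of the diffusivity dilation with $(\zeta\sigma)^{2}=\zeta^{2}\sigma^{2}$ in the self-consistency condition. No genuine analytic obstacle arises once the scaling bookkeeping is set up, since well-posedness and uniqueness are already provided by \cref{thm:FBSDEwellposed}.
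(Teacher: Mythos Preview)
Your proof is correct and follows essentially the same approach as the paper: both arguments introduce the time-changed Brownian motion $\widetilde B(q)=\zeta^{-1}B(\zeta^{2}q)$ and verify that $\widetilde J(q,b)=\zeta J_{\sigma}(\zeta^{2}q,b)$ together with $\widetilde\Gamma_{a,Q}(q)=\Gamma_{a,\zeta^{2}Q}^{\sigma}(\zeta^{2}q)$ solves the FBSDE for $\zeta\sigma$, then obtain the reverse inequality by symmetry. Your write-up is in fact more detailed than the paper's, which simply notes the rescaling and concludes ``from these considerations.''
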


\begin{proof}
  To study $J_{\zeta\sigma}$, we recall its associated FBSDE:
  \begin{align}
    \dif\Gamma_{a,Q}^{\zeta\sigma}(q) & =J_{\zeta\sigma}\bigl(Q-q,\Gamma_{a,Q}^{\zeta\sigma}(q)\bigr)\dif\widetilde{B}(q),\qquad q\in(0,Q];\label{eq:FBSDE-forrescale} \\
    \Gamma_{a,Q}^{\zeta\sigma}(0)     & =a;\label{eq:FBSDEic-forrescale}                                                                                     \\
    J_{\zeta\sigma}(Q,b)              & =\zeta\bigl[\EE \sigma^{2}\bigl(\Gamma_{b,Q}^{\zeta\sigma}(Q)\bigr)\bigr]^{1/2}\label{eq:Jeqn-forrescale}
  \end{align}
  with a standard Brownian motion $\widetilde{B}$. We couple $\widetilde{B}$
  to another standard $\RR^{m}$-valued Brownian motion $B$
  by $\widetilde{B}(q)=\zeta^{-1}B(\zeta^{2}q)$. Let $(\Gamma_{a,Q}^{\sigma})_{a,Q}$
  and $J_{\sigma}$ solve the FBSDE \zcref[range]{eq:FBSDE,eq:Jeqn}
  on $\bigl[0,\Qbar_{\FBSDE}(\sigma)\bigr)$ with noise $\dif B$.
  Then we can solve \zcref[range]{eq:FBSDE-forrescale,eq:Jeqn-forrescale}
  by setting $J_{\zeta\sigma}(Q,b)=\zeta J_{\sigma}(\zeta^{2}Q,b)$
  and $\Gamma_{a,Q}^{\zeta\sigma}(q)=\Gamma_{a,\zeta^{2}Q}^{\sigma}(\zeta^{2}q)$
  for $Q<\zeta^{-2}\Qbar_{\FBSDE}(\sigma)$. Conversely,
  any solution to \zcref[range]{eq:FBSDE-forrescale,eq:Jeqn-forrescale}
  on $\bigl[0,\Qbar_{\FBSDE}(\zeta\sigma)\bigr)$ yields a solution
  to \zcref[range]{eq:FBSDE,eq:Jeqn} on $\bigl[0,\zeta^{-2}\Qbar_{\FBSDE}(\zeta\sigma)\bigr)$.
  From these considerations we conclude \cref{eq:scaleexistencetimes,eq:Jrescale}.
\end{proof}
\subsubsection{An inductive lemma and zeros of the nonlinearity}

The renormalization flow preserves certain properties of $\sigma$.
A key example is that if $\sigma(b)=0$, then $J(q,b)=0$ for all $q$, as shown in \cref{prop:zerosstayzero} below.
We first prove the following more abstract statement that will be useful in other contexts.
\begin{prop}
  \label{prop:preserve}
  Let $A$ be a closed subset of $\clX$ such that for all ${\sigma\in\Lip(\RR^m;\clH^m_+)\cap A}$, $Q>0$, and $g\in\clA_Q$ such that $g(q,\anon)\in A$ for each $q\in [0,Q]$, we have $\clQ_\sigma g(q,\cdot)\in A$ for each $q\in [0,Q]$ as well.
  Then, for any $\sigma\in\Lip(\RR^m;\clH^m_+) \cap A$, we have $J_\sigma(q,\cdot)\in A$ for all $q\in \bigl[0,\overline Q_\FBSDE(\sigma)\bigr)$.
\end{prop}
\begin{proof}
  Suppose for the sake of contradiction that there exists some $Q\in \bigl[0,\overline Q_\FBSDE(\sigma)\bigr)$ such that $J_\sigma(Q,\cdot)\not\in A$.
  By \cref{eq:Jtimecontinuity}, $q\mapsto J_\sigma(q,\cdot)$ is continuous as a map $[0,Q]\to\clX$.
  Since $A \subset \clX$ is closed and $J_\sigma(0, \anon) = \sigma \in A$, the set $\{q\in [0,Q]\suchthat J_\sigma(q,\cdot)\in A\}$ is closed and nonempty.
  It thus has a maximum $q_0$, which is strictly less than $Q$ by hypothesis.

  By \cref{cor:chainJs-nohyp}, for all $q\in \bigl[q_0,\overline Q_\FBSDE(\sigma)\bigr)$ and $b\in\RR$, we have
  \begin{equation}
    \label{eq:J-semigroup}
    J_\sigma(q,b) = J_{J_\sigma(q_0,\cdot)}(q-q_0,b).
  \end{equation}
  Fix $p\in (0,\Lip[J_\sigma(q_0,\cdot)]^{-2})$ and view $J_\sigma(q_0,\cdot)$ as a $q$-independent function in $\m{A}_p$.
  By \cref{eq:nfoldlimit}, we have
  \begin{equation}
    \label{eq:applynfoldlimit}
    \lim_{n\to\infty} \|(\clQ^n_{J_\sigma(q_0,\cdot)}[J_\sigma(q_0,\cdot)]-J_{J_\sigma(q_0,\cdot)})(p,\cdot)\|_\clX =0.
  \end{equation}
  Inducting on our hypothesis for $A$, $\clQ^n_{J_\sigma(q_0,\cdot)}[J_\sigma(q_0,\cdot)](q,\cdot)\in A$ for each $n \in \N$ and $q\in [0,p]$.
  Since $A$ is closed, \cref{eq:applynfoldlimit} yields $J_{J_\sigma(q_0,\cdot)}(p,\cdot)\in A$ and hence $J_\sigma(q_0+p,\cdot)\in A$ by \cref{eq:J-semigroup}.
  This contradicts the maximality of $q_0$.
\end{proof}

We can use the previous proposition to prove that the decoupling flow preserves the zeros of the nonlinearity.
\begin{prop}\label{prop:zerosstayzero}%
    Let $\sigma\in\Lip(\RR^m;\clH^m_+)$. For each $Q\in \bigl[0,\Qbar_\FBSDE(\sigma)\bigr)$, we have \[\{b\in\RR^m\suchthat J_\sigma(Q,b)=0\} = \{b\in\RR^m\suchthat\sigma(b)=0\}.\]
\end{prop}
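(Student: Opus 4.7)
I plan to establish the two inclusions of the set equality separately.

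For the inclusion $\{b\suchthat\sigma(b)=0\}\subseteq\{b\suchthat J_\sigma(Q,b)=0\}$, I iterate the base case \cref{eq:preservezeros} of \cref{prop:localwellposed} via the composition identity in \cref{cor:chainJs-nohyp}. Fix $b$ with $\sigma(b)=0$ and $Q^*\in[0,\Qbar_\FBSDE(\sigma))$. By the uniform Lipschitz condition \cref{eq:Juniflipschitz} built into \cref{def:QbarFBSDE}, the quantity $L\coloneqq\sup_{q\in[0,Q^*]}\Lip\bigl(J_\sigma(q,\cdot)\bigr)$ is finite. I partition $[0,Q^*]$ as $0=Q_0<Q_1<\cdots<Q_n=Q^*$ with step lengths bounded above by a constant multiple of $L^{-2}$ (small enough that \cref{prop:localwellposed} applies to each intermediate nonlinearity $J_\sigma(Q_k,\cdot)$, after placing it in some $\lipset(M,\lambda)$; a minor inflation of the slope $\lambda$ is needed to obtain the growth bound \cref{eq:SigMlamdef} from the Lipschitz bound). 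I then induct on $k$: the hypothesis $\sigma(b)=0$ is the base case $k=0$, and for the inductive step, applying \cref{eq:preservezeros} to $J_\sigma(Q_k,\cdot)$ gives $J_{J_\sigma(Q_k,\cdot)}(Q_{k+1}-Q_k,b)=0$, which equals $J_\sigma(Q_{k+1},b)$ by \cref{cor:chainJs-nohyp}.

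For the reverse inclusion $\{b\suchthat J_\sigma(Q,b)=0\}\subseteq\{b\suchthat\sigma(b)=0\}$, I will use the variance identity
\begin{equation*}
    \EE\bigl|\Gamma_{b,Q}^{\sigma}(Q)-b\bigr|^2 = Q\,|J_\sigma(Q,b)|_\Frob^2.
\end{equation*}
This follows from Itô's isometry applied to \crefrange{eq:FBSDE}{eq:FBSDEic}, followed by Fubini and the identity $|A|_\Frob^2=\tr(A^2)$ for $A\in\clH^m_+$: together they give
\begin{equation*}
    \EE\bigl|\Gamma_{b,Q}^{\sigma}(Q)-b\bigr|^2 = \int_0^Q \tr\EE\bigl[J_\sigma^2\bigl(Q-r,\Gamma_{b,Q}^{\sigma}(r)\bigr)\bigr]\dif r.
\end{equation*}
The tower-type relation \cref{eq:Jatdifferenttimes} (applied with $Q_1\setto Q-r$ and $Q_2\setto Q$) shows that the inner expectation equals $J_\sigma^2(Q,b)$ independently of $r$, which yields the identity. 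Now if $J_\sigma(Q,b)=0$, the right-hand side vanishes, forcing $\Gamma_{b,Q}^{\sigma}(Q)=b$ almost surely; meanwhile, the defining relation \cref{eq:Jeqn} gives $\sigma\bigl(\Gamma_{b,Q}^{\sigma}(Q)\bigr)=0$ almost surely. Combining the two almost-sure statements gives $\sigma(b)=0$.

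The main obstacle lies in the bookkeeping of Direction 1: the bootstrap must reach arbitrary $Q^*<\Qbar_\FBSDE(\sigma)$, not merely $Q^*<\Lip(\sigma)^{-2}$ as in the base case. The essential structural input is \cref{eq:Juniflipschitz}, which is already encoded in the definition of $\Qbar_\FBSDE$ and guarantees a uniform Lipschitz bound on every compact subinterval of $[0,\Qbar_\FBSDE(\sigma))$; this in turn yields a uniform positive lower bound on the admissible step size, so finitely many steps suffice. Direction 2 becomes clean once one spots the variance identity above.
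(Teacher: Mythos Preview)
Your proof is correct. For Direction~1, your explicit finite-step induction is equivalent to the paper's argument, which is organized as a proof by contradiction: assuming $J_\sigma(Q,b)\neq 0$ for some $Q$, the paper takes $q_0$ to be the maximum of the closed set $\{q\in[0,Q]:J_\sigma(q,b)=0\}$, then applies \cref{cor:chainJs-nohyp} and \cref{eq:preservezeros} at $q_0$ to push the zero a little further, contradicting maximality. Your step-by-step bootstrap and the paper's max-plus-extension are two packagings of the same idea.

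For Direction~2, your route is genuinely different and arguably cleaner. The paper argues the contrapositive: if $\sigma(b)\neq 0$, continuity gives a neighborhood on which $\sigma\neq 0$, and since the diffusion $\Gamma_{b,Q}^\sigma(Q)$ lands in that neighborhood with positive probability, $\EE\sigma^2(\Gamma_{b,Q}^\sigma(Q))\neq 0$. Your approach instead extracts the exact variance identity $\EE|\Gamma_{b,Q}^\sigma(Q)-b|^2 = Q\,|J_\sigma(Q,b)|_\Frob^2$ from \cref{lem:Jatdifferenttimes}, which immediately forces $\Gamma_{b,Q}^\sigma(Q)=b$ a.s.\ when $J_\sigma(Q,b)=0$, and then \cref{eq:Jeqn} finishes. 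This is a nice observation: it yields the stronger conclusion that the diffusion is frozen at $b$, not merely that it visits a neighborhood of $b$, and it avoids any appeal to the continuity of $\sigma$.
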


\begin{proof}
  First suppose $\sigma(b) = 0$ and let $A_b \coloneqq \{\chi \in \m{X} \suchthat \chi(b) = 0\}$, which is closed.
  Given $Q > 0$, suppose $g \in \m{A}_Q$ and $g(q, \anon) \in A_b$ for all $q \in [0, Q]$, so $g(q,b) = 0$.
  From the SDE \zcref[range]{eq:thetaSDE,eq:thetaIC}, we see that $\Theta_{b,q}^{g}(q)=b$ almost surely.
  Then if $\chi \in A_b$, \cref{eq:Qdef} yields $\m{Q}_\chi g(q,b) = 0$.
  That is, $\m{Q}_\chi g(q, \anon) \in A_b$ for each $q \in [0, Q]$.
  Hence by \cref{prop:preserve}, $J_\sigma(q,b) = 0$ for all $q\in\bigl[0,\overline Q_\FBSDE(\sigma)\bigr)$.

  Now consider $b\in\RR^m$ such that $\sigma(b)\ne 0$.
  By continuity, there exists $\eps>0$ such that $\sigma(a)\ne 0$ whenever $|a-b|<\eps$. For any $Q>0$, we have $\PP(|\Gamma^\sigma_{b,Q}(Q)-b|<\eps)>0$, so $\PP\bigl[\sigma^2\bigl(\Gamma^\sigma_{b,Q}(Q)\bigr)\ne 0\bigr]>0$. Since $\sigma^2(b)$ is nonnegative-definite for all $b$, \cref{eq:Jeqn} implies that $J_\sigma(Q,b)\ne 0$.
\end{proof}

\subsubsection{Coupling equations with the same noise}

As an important special case, we consider several equations driven with the same noise but different nonlinearities or initial conditions.
This will prove useful in our study of stability and universality in \cref{sec:universal}, and is thereby a major motivation for our consideration of vector-valued solutions.

Given $j\in\NN$ and $\sigma_1,\ldots,\sigma_j\in\Lip(\RR^m;\clH^m_+)$, we define the block matrix
\begin{equation}
  \label{eq:Sigma0def}
  \hat\sigma_0(b_1,\ldots,b_j)\coloneqq
  \begin{pmatrix}
    \sigma_1(b_1)&0&\cdots&0\\
    \sigma_2(b_2)&0&\cdots&0\\
    \vdots&\vdots&\ddots&\vdots\\
    \sigma_j(b_j)&0&\cdots&0
  \end{pmatrix}
\end{equation}
and then set
\begin{equation*}
  \hat\sigma \coloneqq (\hat\sigma_0\hat\sigma_0^{\mathrm{T}})^{1/2}.
\end{equation*}
By \cref{eq:SPDE}, the former corresponds to $j$ equations with nonlinearities $\sigma_1,\ldots,\sigma_j$ all driven by the same white noise.
Then $\sigma$ is a symmetrization yielding the same law for $v$.
As noted in the introduction, $\hat\sigma$ is Lipschitz by \cite[Theorem VII.5.7]{Bha97}.
We wish to show that the FBSDE for $\hat\sigma$ remains well-posed as long as those for $\sigma_1,\ldots,\sigma_j$.
More precisely, we have the following two propositions:
\begin{prop}\label{prop:multigrowth}
  If $\sigma_i\in \quadset(M_i,\beta_i)$ for each $i \in \{1,\ldots,j\}$, then $\hat\sigma\in\quadset\left(\sum\limits_{i=1}^j M_i,\max\limits_{i\in\{1,\ldots,j\}}\beta_i\right)$.
\end{prop}
\begin{prop}\label{prop:multivarwellposedforsametime}
  We have $\overline{Q}_{\FBSDE}(\hat\sigma) = \min\limits_{i\in\{1,\ldots,j\}}\overline{Q}_{\FBSDE}(\sigma_i)$.
\end{prop}
The first is quite simple:
\begin{proof}[Proof of \cref{prop:multigrowth}]
  For any $b_1,\ldots,b_j\in\RR^m$, we have
  \begin{equation*}
    \lvert\hat\sigma(b_1,\ldots,b_j)\rvert_\Frob^2 = \tr \h \sigma^2(b_1,\ldots,b_j) = \sum_{i=1}^j \tr \sigma_i^2(b_i) \le \sum_{i=1}^j (M_i + \beta_i^2\lvert b_i\rvert^2) \le \sum_{i=1}^j M_i + \left(\max_{i\in \{1,\ldots,j\}}\beta_i\right) |b|^2.\qedhere
  \end{equation*}
\end{proof}
The proof of \cref{prop:multivarwellposedforsametime} is somewhat more involved.
Given $i\in\{1,\ldots,j\}$, let $\pi_i\colon\RR^{jm}\to\RR^m$ denote the projection onto the $i$th block.
We begin with the following lemma.
\begin{lem}
  For any $q\in\bigl[0,\overline{Q}_\FBSDE(\hat\sigma)\bigr)$, $a=(a_1,\ldots,a_j)\in \RR^{jm}$, and $i\in\{1,\ldots,j\}$, we have
  \begin{equation}\label{eq:Jprojection}
    \pi_i J_{\hat\sigma}(q,a)^2\pi_i^{\mathrm{T}} = J_{\sigma_i}(q,a_i)^2.
  \end{equation}
  Moreover,
  \begin{equation}\label{eq:QFBSDEeasydirection}
    \overline Q_\FBSDE(\hat\sigma)\le \overline Q_\FBSDE(\sigma_i),
  \end{equation}
  and for any $Q\in \bigl[0,\overline{Q}_\FBSDE(\hat\sigma)\bigr)$, we have
  \begin{equation}\label{eq:piiGamma}
    \bigl(\pi_i \Gamma^{\hat\sigma}_{a,Q}(q)\bigr)_{q\in [0,Q],a\in\RR^{mj}}\overset{\mathrm{law}} = \bigl(\Gamma^{\sigma_i}_{\pi_i a,Q}(q)\bigr)_{q\in [0,Q],a\in \RR^{jm}}.
  \end{equation}
\end{lem}
\begin{proof}
  Let $A$ be the set of $\chi\in\clX$ such that $b\mapsto\pi_i\chi^2(b)\pi_i^{\mathrm{T}}$ depends only on $\pi_ib$. It is straightforward to see that $A$ is a closed subset of $\clX$.
  If $g \in \m{A}_Q$ and $g(q,\cdot)\in A$ for all $q\in [0,Q]$, then there exists $g_i \in \m{A}_Q$ such that \begin{equation}\label{eq:pig}\pi_i g(q,a)^2\pi_i^{\mathrm{T}} = g_i(q,\pi_i a)^2.\end{equation}
  Applying Itô's formula to \cref{eq:thetaSDE}, we see that $\pi_i\Theta^g_{a,Q}$ satisfies the SDE
  \begin{equation}
    \label{eq:block-projected}
    \begin{aligned}
      \dif \pi_i\Theta^g_{a,Q}(q) &= \pi_i g\bigl(Q-q,\Theta^g_{a,Q}(q)\bigr) \ds B(q),\qquad q\in[0,Q];\\
      \pi_i \Theta^g_{a,Q}(0) &= a_i.
    \end{aligned}
  \end{equation}
  By \cref{eq:pig}, this SDE corresponds to the same martingale problem as $\Theta^{g_i}_{a_i,Q}$, so
  \begin{equation}
    \bigl(\pi_i\Theta^{g}_{a,Q}(q)\bigr)_{q\in [0,Q]}\overset{\mathrm{law}}=\bigl(\Theta^{g_i}_{a_i,Q}(q)\bigr)_{q\in [0,Q]}.\label{eq:thetaseqlaw}
  \end{equation}
  If $\chi\in A$, there is a function $\chi_i \in \m{X}$ such that
  \begin{equation}
    \pi_i\chi^2(a)\pi_i^{\mathrm{T}} = \chi_i(\pi_i a)^2 \ForAll a\in\RR^{jm}.
    \label{eq:pichieqn}
  \end{equation}
  We therefore have
  \begin{equation*}
    \pi_i \clQ_\chi g(q,a)^2 \pi_i^{\mathrm{T}} \overset{\cref{eq:Qdef}}= \EE\bigl[\pi_i \chi^2\bigl(\Theta^g_{a,q}(q)\bigr)\pi_i^{\mathrm{T}}\bigr]\overset{\cref{eq:pichieqn}}=\EE\bigl[\chi_i^2\bigl(\pi_i \Theta^g_{a,q}(q)\bigr)\bigr] \overset{\cref{eq:thetaseqlaw}}= \EE\bigl[\chi_i^2\bigl(\Theta^{g_i}_{\pi_i a,q}(q)\bigr)\bigr],
  \end{equation*}
  which indeed only depends on $\pi_i a$.
  Hence $\clQ_\chi g$ takes values in $A$ as well.

  It is easy to check that $\hat\sigma\in A$, so by \cref{prop:preserve}, $J_{\hat\sigma}(q,\cdot)\in A$ for all $q\in \bigl[0,\overline Q_\FBSDE(\hat\sigma)\bigr)$.
  Hence there is some $h_i$ such that, for all $a\in\RR^{jm}$ and $q\in\overline Q_\FBSDE(\hat\sigma)$,
  \begin{equation*}
    h_i(q,\pi_i a)^2 = \pi_i J_{\hat\sigma}(q,a)^2\pi_i^{\mathrm{T}} \overset{\cref{eq:root decouplingholds}}= \EE\bigl[\pi_i \hat\sigma^2\bigl(\Theta^{J_{\hat\sigma}}_{a,q}(q)\bigr)\pi_i^{\mathrm{T}}\bigr] = \EE\bigl[\sigma^2_i\bigl(\pi_i \Theta^{J_{\hat\sigma}}_{a,q}(q)\bigr)\bigr] \overset{\cref{eq:thetaseqlaw}}= \EE\bigl[\sigma^2_i\bigl(\Theta^{g_i}_{\pi_i a,q}(q)\bigr)\bigr].
  \end{equation*}
  Hence $h_i$ satisfies \cref{eq:root decouplingholds}, so $\overline Q_\FBSDE(\sigma_i)\ge \overline Q_\FBSDE(\hat\sigma)$ and $h_i = J_\sigma$ on $\bigl[0,\overline Q_\FBSDE(\hat\sigma)\bigr)$.
  Thus we have proved \cref{eq:Jprojection,eq:QFBSDEeasydirection}.
  Then \cref{eq:piiGamma} follows from \cref{eq:thetaseqlaw,eq:GammaisTheta}.
\end{proof}
Now we can prove \cref{prop:multivarwellposedforsametime}.
\begin{proof}[Proof of \cref{prop:multivarwellposedforsametime}]
  The ``$\le$'' direction has already been proved as \cref{eq:QFBSDEeasydirection}, so we only need to prove the ``$\ge$'' direction.
  Suppose for the sake of contradiction that $\overline Q_{\FBSDE}(\hat\sigma)<\min_i \overline Q_{\FBSDE}(\sigma_i)$, and take $q \in \bigl[0, \overline Q_\FBSDE(\hat\sigma)\bigr)$.
  Then we can compute
  \begin{align}
    \notag|J_{\hat\sigma}(q,b_2)-J_{\hat\sigma}(q,b_1)|^2_{\Frob}
    \notag \overset{\cref{eq:Jeqn}}&= \Bigl|\bigl[\EE\hat\sigma^2\bigl(\Gamma^{\hat\sigma}_{b_2,q}(q)\bigr)\bigr]^{1/2}-\bigl[\EE\hat\sigma^2\bigl(\Gamma^{\hat\sigma}_{b_1,q}(q)\bigr)\bigr]^{1/2}\Bigr|^2_\Frob\\
    \notag \overset{\cref{eq:matrixvaluedreversetriangleinequality}}&\le \EE\bigl|\hat\sigma\bigl(\Gamma^{\hat\sigma}_{b_2,q}(q)\bigr)-\hat\sigma\bigl(\Gamma^{\hat\sigma}_{b_1,q}(q)\bigr)\bigr|^2_\Frob\\
    \notag&\le \Lip(\hat{\sigma})^2\EE\bigl|\Gamma^{\hat\sigma}_{b_2,q}(q)-\Gamma^{\hat\sigma}_{b_1,q}(q)\bigl|^2\\
    \notag &= \Lip(\hat{\sigma})^2\sum_{i=1}^j\EE\bigl|\pi_i\Gamma^{\hat\sigma}_{b_2,q}(q)-\pi_i\Gamma^{\hat\sigma}_{b_1,q}(q)\bigr|^2\\
    \notag \overset{\cref{eq:piiGamma}}&= \Lip(\hat{\sigma})^2\sum_{i=1}^j\EE\bigl|\Gamma^{\sigma_i}_{\pi_i b_2,q}(q)-\Gamma^{\sigma_i}_{\pi_i b_1,q}(q)\bigr|^2 \notag \overset{\cref{eq:SDEctsininitialconditions}} \leq C |b_2 -b_1|^2,
  \end{align}
  where $C<\infty$ is a constant that may depend on $\sigma_1,\ldots,\sigma_j$ but is uniform over all $q<\overline Q_\FBSDE(\hat\sigma)$, since $\overline Q_\FBSDE(\hat\sigma) < \min_i\overline Q_\FBSDE(\sigma)$ by assumption.
  Taking $q$ sufficiently close to $\overline Q_\FBSDE(\hat\sigma)$, we obtain a contradiction to \cref{thm:FBSDEwellposed}.
\end{proof}
We now specialize to the case in which all $\sigma_i$ and $b_i$ are identical.
Let $D \subset \R^{jm}$ denote the diagonal subspace consisting of vectors $b = (b_0,\ldots,b_0)$ for some $b_0 \in \R^m$.
\begin{lem}
  \label{lem:diagonal}
  Let $\sigma \in \Lip(\R^m;\m{H}_+^m)$ and take $\sigma_1 = \cdots = \sigma_j = \sigma$ in \cref{eq:Sigma0def}.
  For all $q \in \big[0, \Qbar_\FBSDE(\sigma)\big)$ and $b \in D$, $J_{\h \sigma}(q, b) \colon \R^{jm} \to D$.
\end{lem}
\noindent
In words, $J_{\h \sigma}$ sends the diagonal to matrices that project to the diagonal.
\begin{proof}
  The set $A \subset \m{X}$ of matrices sending the diagonal to diagonal projectors is closed.
  Suppose $g \in \m{A}_Q$ satisfies $g(q, \anon) \in A$ for all $q \in [0, Q]$.
  Then $g|_D \colon \R^{jm} \to D$, so if $a \in D$, each component $\pi_i \Theta_{a,Q}^g$ in \cref{eq:block-projected} experiences the same noise and begins from the same data.
  It follows that $\Theta_{a,Q}^g \in D$.
  The diagonal projectors are convex in the space of matrices, so if $\sigma \in A$, then $\m{Q}_\sigma g(q, \anon) \in A$ for all $q \in [0, Q]$.
  Moreover, $\h \sigma \in A$.
  Thus the lemma follows from \cref{prop:preserve,prop:multivarwellposedforsametime}.
\end{proof}
In fact one can say more: by \cref{eq:Jprojection} from \cref{prop:multivarwellposedforsametime}, $J_{\h \sigma}|_D$ is composed of $j^2$ identical blocks, each given by $J_\sigma$.
We will not require this degree of precision.

\section{The PDE for the decoupling function\label{sec:PDE}}

In this section, we relate the decoupling function $J^2_{\sigma}$ to the parabolic PDE \zcref[range]{eq:HPDE-1,eq:Hic-1}.
For $m=1$, we study this PDE in the companion paper \cite{DG25a}.
At the end of this section, we show how \cref{prop:satisfiesPDE} and the results of \cite{DG25a} can be combined to prove \cref{thm:PDEthm}.

As noted in the introduction, the degeneracy of \zcref[range]{eq:HPDE-1,eq:Hic-1} permits solutions that are not $\m{C}^2$.
We therefore adopt a slightly more general notion of solution.
\begin{defn}\label{defn:almost-classical}
  An \emph{almost classical solution} to \zcref[range]{eq:HPDE-1,eq:Hic-1} on a time interval $[0, Q_0)$ is a continuous function $H\colon[0,Q_0)\times\RR^{m}\to\clH_{+}^{m}$
  such that the following conditions hold:
  \begin{enumerate}[label = (\roman*)]
\item For every compact $\mathcal{K} \subset(0,Q_0)$ and bounded open $U\subset\R^m$,
  \begin{equation*}
      H|_{\mathcal{K}\times U}\in L^1\bigl(\mathcal{K};W^{2,\infty}(U;\clH_{+}^{m})\bigr)\cap\clC^{1}(\mathcal{K}\times U;\clH_{+}^{m}).
  \end{equation*}
  Here, $W^{2,\infty}(U;\clH_+^m)$ is the Sobolev space of functions on $U$ taking values in $\clH_+^m$ with weak second derivative measurable and essentially bounded.
\item We have $H(0,b)=\sigma^{2}(b)$ for all $b\in\RR^{m}$.
\item We have \begin{equation}
    \partial_{q}H(q,b)=\frac{1}{2}[H(q,b):\nabla_{b}^{2}]H(q,b)\qquad\text{for almost all } (q, b) \in(0,Q_{0}) \times \RR^{m}.\label{eq:derivalmosteverywhere}
  \end{equation}
\end{enumerate}
\end{defn}
We show that an almost classical solution satisfying suitable growth bounds coincides with the decoupling function.
In the following, let $U_R = \{b\suchthat\abs{b} < R\}$ denote the open ball in $\R^m$ of radius $R$ centered at the origin.
\nomenclature[UR]{$U_R$}{open ball of radius $R$ in $\R^m$ centered at the origin}
\begin{prop}
  \label{prop:satisfiesPDE}Fix $Q_{0}\in\bigl(0,\Qbar_{\FBSDE}(\sigma)\bigr)$.
  Suppose $H$ is an almost classical solution of \textup{\zcref[range]{eq:HPDE-1,eq:Hic-1}} on $[0,Q_0]$ such that $\sup_{q\in[0,Q_0]}\Lip\sqrt{H(q,\anon)}<\infty$.\vspace{2pt}
  Also assume that for each compact $\mathcal{K}\subset(0,Q_0)$, there exists a constant $C(\mathcal{K})<\infty$ such that for all $R > 0,$
  \begin{equation}
    \bigl\|H|_{\m K \times U_R}\bigr\|_{L^1(\mathcal{K};W^{2,\infty}(U_R;\clH_{+}^{m}))\cap\clC^{1}(\mathcal{K}\times U_R;\clH_{+}^{m})}\le CR^C.\label{eq:polybd}
  \end{equation}
  Then $\sqrt{H}=J_{\sigma}$ on $[0, Q_0] \times \R^m$.
\end{prop}
\begin{proof}%[Proof of \cref{prop:satisfiesPDE}]
    Recalling the discussion following \cref{def:decouplingfn}, it suffices to
show that
\begin{equation}
    H(Q,a)=\EE \sigma^{2}\bigl(\Theta_{a,Q}^{\sqrt{H}}(Q)\bigr)\qquad\text{for any }a\in\RR^{m}\text{ and }Q\in[0,Q_{0}].\label{eq:solvesPDEgoal}
\end{equation}
We wish to apply Itô's formula, but since the solution $H$ is not guaranteed to be smooth, we begin with a mollification.
Fix a mollifier $\eta\in\mathcal{C}_{\cc}^{\infty}(\RR^{m})$
with $\int\eta=1$, $\eta \geq 0$, and $\supp\eta\subseteq[-1,1]$.
Given $\eps > 0$, define $\eta^{\eps}(b)=\eps^{-m}\eta(\eps^{-1}b)$ and
\begin{equation*}
  H^{\eps}(q,a)=\int\eta^{\eps}(a-b)H(q,b)\,\dif b.
\end{equation*}
We also let $H^{0}=H$.
Then we can write
\begin{align*}
\frac{1}{2} & [H(q,a):\nabla_{b}^{2}]H^{\eps}(q,a)=\frac{1}{2}\int\eta^{\eps}(a-b)[H(q,a):\nabla_{b}^{2}]H(q,b)\,\dif b\\
 & =\frac{1}{2}\int \eta^{\eps}(a-b)[H(q,b):\nabla_{b}^{2}] H(q,b)\,\dif b+\frac{1}{2}\int\eta^{\eps}(a-b)\left([H(q,a)-H(q,b)]:\nabla_{b}^{2}\right)H(q,b)\,\dif b.
\end{align*}
We can develop the first integral using \cref{eq:derivalmosteverywhere}
as
\[
\frac{1}{2}\int\eta^{\eps}(a-b)[H(q,b):\nabla_{b}^{2}]H(q,b)\,\dif b=\frac{1}{2}\int\eta^{\eps}(a-b)\partial_{q}H(q,b)\,\dif b=\partial_{q}H^{\eps}(q,a)
\]
for almost every $q\in(0,Q_{0})$.
Thus
\begin{equation}
\frac{1}{2}(H(q,a):\nabla_{b}^{2})H^{\eps}(q,a)=\partial_{q}H^{\eps}(q,a)+\frac{1}{2}\int\eta^{\eps}(a-b)\left([H(q,a)-H(q,b)]:\nabla_{b}^{2}\right)H(q,b)\,\dif b.\label{eq:NLH-Heps}
\end{equation}

Now we fix $a\in\RR^{m}$ and $Q\in[0,Q_{0}]$ and define,
for $\eps\ge0$ and $q\in[0,Q]$,
\begin{equation}
    N_{q}^{\eps}=H^{\eps}\bigl(Q-q,\Theta_{a,Q}^{\sqrt{H}}(q)\bigr).\label{eq:Nqdef}
\end{equation}
When $\eps>0$, the function $H^{\eps}$ is smooth in space and continuously
differentiable at positive times. Therefore, we can use Itô's formula
(see, e.g.,\ \cite[Theorem~5.10]{CW14} for a statement at this regularity)
to compute $N_P^\eps - N_0^\eps$ for $\eps>0$ and $P\in(0,Q)$:
\begin{align}
    N_{P}^{\eps}&-N_{0}^{\eps}  =-\int_{0}^{P}\partial_{q}H^{\eps}\bigl(Q-q,\Theta_{a,Q}^{\sqrt{H}}(q)\bigr)\,\dif q+\frac{1}{2}\int_{0}^{P}\nabla_{b}^{2}H^{\eps}\bigl(Q-q,\Theta_{a,Q}^{\sqrt{H}}(q)\bigr):\dif[\Theta_{a,Q}^{\sqrt{H}}]_{q}+R_{P}^{\eps}\nonumber \\
 & =\int_{0}^{P}\left[-\partial_{q}H^{\eps}\bigl(Q-q,\Theta_{a,Q}^{\sqrt{H}}(q)\bigr)+ \frac{1}{2}\left[H\bigl(Q-q,\Theta_{a,Q}^{\sqrt{H}}(q)\bigr):\nabla_{q}^{2}\right]H^{\eps}\bigl(Q-q,\Theta_{a,Q}^{\sqrt{H}}(q)\bigr)\right]\,\dif q+R_{P}^{\eps}\nonumber \\
  \overset{\cref{eq:NLH-Heps}}&{=}\frac{1}{2}\int_{0}^{P}\!\!\!\!\int\eta^{\eps}\bigl(\Theta_{a,Q}^{\sqrt{H}}(q)-b\bigr)\left(\left[H\bigl(Q-q,\Theta_{a,Q}^{\sqrt{H}}(q)\bigr)-H(Q-q,b)\right]:\nabla_{b}^{2}\right)H(Q-q,b)\,\dif b\,\dif q+R_{P}^{\eps},\label{eq:apply-ito}
\end{align}
where we have defined
\begin{align}
R_{p}^{\eps} & =\int_{0}^{p}\partial_{b}H^{\eps}\bigl(Q-q,\Theta_{a,Q}^{\sqrt{H}}(q)\bigr)\,\dif\Theta_{a,Q}^{\sqrt{H}}(q)\nonumber \\
              \overset{\cref{eq:thetaSDE}}&=\int_{0}^{p} \partial_{b}H^{\eps}\bigl(Q-q,\Theta_{a,Q}^{\sqrt{H}}(q)\bigr)H\bigl(Q-q,\Theta_{a,Q}^{\sqrt{H}}(q)\bigr)^{1/2}\,\dif B(q).\label{eq:Repsdef}
\end{align}
It is clear from the definition that $(R_{p}^{\eps})_{p}$ is a local
martingale. We want to show that it is in fact a martingale. From
\cref{eq:Repsdef}, we can bound the trace of the expected quadratic variation
at time $P$ by
\begin{align*}
\tr\EE [R^{\eps}]_{P} & =\int_{0}^{P}\EE \left\lvert  \partial_{b}H^{\eps}\bigl(Q-q,\Theta_{a,Q}^{\sqrt{H}}(q)\bigr)H\bigl(Q-q,\Theta_{a,Q}^{\sqrt{H}}(q)\bigr)^{1/2}\right\rvert  _{\mathrm{F}}^{2}\,\dif q,
\end{align*}
and the right side is finite by \cref{eq:polybd,eq:finite-moment}. %
This implies that $(R_{p}^{\eps})_{p\in[0,P]}$ is in fact a martingale (see, e.g.,\ \cite[IV.1.23]{RY99}).
In particular, $\EE (R_{P}^{\eps})=0$,
so taking expectations in \cref{eq:apply-ito}, we obtain
\begin{align*}
| & \EE N_{P}^{\eps}-N_{0}^{\eps}|\\
 & \le\frac{1}{2}\int_{0}^{P}\!\!\!\!\int\EE \left\lvert  \eta^{\eps}\bigl(\Theta_{a,Q}^{\sqrt{H}}(q)-b\bigr)\left(\left[H\bigl(Q-q,\Theta_{a,Q}^{\sqrt{H}}(q)\bigr)-H(Q-q,b)\right]:\nabla_{b}^{2}\right)H(Q-q,b)\right\rvert  \,\dif b\,\dif q.
\end{align*}
The integral on the right side goes to $0$ as $\eps\searrow0$ by \cref{eq:polybd}.
Therefore, we can take $\eps\searrow0$ to see that
\[
    H(Q,a)\overset{\cref{eq:Nqdef}}=N_{0}^{0}=\EE (N_{P}^{0})\overset{\cref{eq:Nqdef}}=\EE \bigl[H\bigl(Q-P,\Theta_{a,Q}^{\sqrt{H}}(P)\bigr)\bigr].
\]
We also have 
\[
\lim_{P\nearrow Q}\EE \bigl[H\bigl(Q-P,\Theta_{a,Q}^{\sqrt{H}}(P)\bigr)\bigr]=\EE \bigl[H\bigl(0,\Theta_{a,Q}^{\sqrt{H}}(Q)\bigr)\bigr]=\EE \bigl[\sigma^{2}\bigl(\Theta_{a,Q}^{\sqrt{H}}(Q)\bigr)\bigr]
\]
by the continuity of $H$ and \cref{eq:Hic-1}.
Together, the last two displays show that
\[
H(Q,a)=\EE \bigl[\sigma^{2}\bigl(\Theta_{a,Q}^{\sqrt{H}}(Q)\bigr)\bigr],
\]
completing the proof of \cref{eq:solvesPDEgoal}.
\end{proof}

We conclude by showing that \cref{prop:satisfiesPDE} and the results of \cite{DG25a} imply \cref{thm:PDEthm}.
\begin{proof}[Proof of \cref{thm:PDEthm}]
  The hypothesis of the theorem ensures that $\sigma^2$ satisfies Hypothesis~\textup{H}($\beta^2$,$\gamma$) of \cite{DG25a}.
  Therefore, by Theorem~1.2 of \cite{DG25a}, \zcref[range]{eq:HPDE-1,eq:Hic-1} admits a \emph{strong solution} $H : [0,\beta^{-2})\times \RR\to\RR_{\ge 0}$ in the sense of \cite[Definition~1.2]{DG25a}.
  Moreover, for each $Q \in [0, \beta^{-2})$ and compact $\m{K} \subset (0, \beta^{-2})$, we have
  \begin{equation}
    \label{eq:lipbdd}
    \sup_{q\in[0,Q]}\Lip\sqrt{H(q,\anon)}<\infty \quad \text{and} \quad \norm{\partial_b^2H}_{L^\infty(\m{K} \times \R)} <\infty.
  \end{equation}
  In combination with \cite[Proposition~2.10]{DG25a}, these facts imply that $H$ is an almost classical solution in the sense of \cref{defn:almost-classical}.
  Therefore the hypotheses of \cref{prop:satisfiesPDE} are satisfied for any $Q_0<\overline{Q}_\FBSDE(\sigma)\wedge \beta^{-2}$, so
  \begin{equation}
    \label{eq:JissqrtH}
    J_\sigma(q,b) = \sqrt{H(q,b)}\qquad\text{for all $b\in\RR$ and $q\in [0,\overline{Q}_\FBSDE(\sigma)\wedge\beta^{-2})$}.
  \end{equation}

  Now suppose for the sake of contradiction that $\overline{Q}_\FBSDE(\sigma)<\beta^{-2}$.
  Then we can take $Q\leftsquigarrow\overline{Q}_\FBSDE(\sigma)$ in \cref{eq:lipbdd} and use \cref{eq:JissqrtH} to see that $\lambda \coloneqq \liminf\limits_{q\nearrow \overline{Q}_\FBSDE(\sigma)}J_\sigma(q,\anon)<\infty$.
  Then, by \cref{thm:FBSDEwellposed}, we have $\overline{Q}_\FBSDE(\sigma)\ge\overline{Q}_\FBSDE(\sigma)+\lambda^{-2}>\overline{Q}_\FBSDE(\sigma)$, a contradiction.
\end{proof}

\section{Basic properties of the SPDE}\label{sec:basic-properties}
In this section we establish some preliminary facts about solutions to the SPDE \cref{eq:SPDE}.
In \cref{subsec:exp-scale}, we define some parameters to keep track of the exponential scales inherent to the problem.
With this framework in place, we show in \cref{subsec:A-fixed-point-argument} that on short time scales, solutions to \cref{eq:SPDE} can be characterized as fixed points of a certain operator.
We prove moment estimates on the SPDE solution in \cref{subsec:moment-estimates}.
Finally, in \cref{subsec:regularitygaussianaverages} we show regularity results for Gaussian averages of the solutions.

None of the results in this section rely on bounds on the Lipschitz constant of $\sigma$.
This contrasts with \cref{sec:local-SPDE-estimates}, in which the Lipschitz constant plays a pivotal role.

\subsection{Exponential scale parameters}\label{subsec:exp-scale}

As noted in \cref{subsec:MGexptimescale},
an intrinsic feature of the SPDE \cref{eq:SPDE} is the ``exponential timescale'' $t \approx T-\rho^{q}$ for some fixed ``final time'' $T$.
In this subsection, we introduce notation adapted to this scale.
We recall the notation $\sfL(\tau)=\log(1+\tau)$ from \cref{eq:Ldef}, and note for later use that
\begin{equation}
  \sfL(\tau_{1}\tau_{2})=\log(1+\tau_{1}\tau_{2})\le\log(1+\tau_{1})+\log(1+\tau_{2})=\sfL(\tau_{1})+\sfL(\tau_{2})\qquad\text{for all }\tau_{1},\tau_{2}\ge0.\label{eq:Lmult}
\end{equation}
Given $\rho > 0$ and $\tau \geq 0$, define
\nomenclature[L z rho]{$\sfL_\rho(\tau)$}{$\sfL(\tau)/\sfL(1/\rho)\approx \log_{1/\rho}\tau$, \cref{eq:Lrhodef}}
\begin{equation}
  \sfL_{\rho}(\tau)\coloneqq\frac{\sfL(\tau)}{\sfL(1/\rho)}\label{eq:Lrhodef}
\end{equation}
and
\nomenclature[S rho]{$\sfS_\rho(\tau)$}{$\sfL_\rho(\tau/\rho)\approx1-\log_\rho\tau$, \cref{eq:Srhodef}}
\begin{equation}
  \sfS_{\rho}(\tau)\coloneqq\sfL_{\rho}(\tau/\rho)=\frac{\log(\tau/\rho+1)}{\log(1/\rho+1)}=\frac{1}{\sfL(1/\rho)}\int_{0}^{\tau}\frac{\dif r}{\tau+\rho-r}.\label{eq:Srhodef}
\end{equation}
We note that when $\rho \ll \tau \wedge 1$, we have
\begin{equation}
  \sfS_{\rho}(\tau)\approx1-\log_{\rho}\tau.\label{eq:Sapprox}
\end{equation}
Given $q\ge0$, we also define
\nomenclature[T rho]{$\sfT_\rho(q)$}{$\sfS^{-1}_\rho(q)=\rho[(1+1/\rho)^q-1]\approx\rho^{1-q}$, \cref{eq:Tdef}}
\begin{equation}
  \sfT_{\rho}(q)=\sfS_{\rho}^{-1}(q)=\rho[(1+1/\rho)^{q}-1]\label{eq:Tdef}
\end{equation}
and observe that for $\rho\ll1$ and $q > 0$ of order $1$, we have
\begin{equation}
  \sfT_{\rho}(q)\approx\rho^{1-q}.\label{eq:Tapprox}
\end{equation}
We note for future use that, whenever $q_{1}\le q_{2}$, we have
\begin{align}
  \sfL_{\rho}\left(\frac{\sfT_{\rho}(q_{2})-\sfT_{\rho}(q_{1})}{\sfT_{\rho}(q_{1})+\rho}\right) & =q_{2}-q_{1}.\label{eq:Lbdq1q2}
\end{align}

\subsection{A fixed-point argument for the SPDE on short time scales\label{subsec:A-fixed-point-argument}}

We do not present a full well-posedness theory for \cref{eq:SPDE}
because it follows from rather standard techniques. We do, however,
provide a fixed-point theory on short time scales, since we will need
the explicit estimate later.
In the following, let $v=(v_{r}(x))_{r\ge s,x\in\RR^{2}}$ be a random field adapted
to the filtration $\{\scrF_{t}\}_{t}$.
Given $\sigma\in\Lip(\RR^m;\clH^m_+)$ %
and $s\in\RR$, define the operator $\clT_{s}^{\sigma,\rho}$ by
\nomenclature[T cl]{$\clT^{\sigma,\rho}_s$}{map for which the SPDE solution is a fixed point, \cref{eq:clTdef}}
\begin{equation}
  (\clT_{s}^{\sigma,\rho}v)_{t}=\clG_{t-s}v_{s}+\gamma_{\rho}\int_{s}^{t}\clG_{t+\rho-r}[\sigma(v_{r})\,\dif W_{r}].\label{eq:clTdef}
\end{equation}
As we work in a mild framework, solutions of \cref{eq:SPDE} for $t\ge s$ are exactly fixed points of $\clT_{s}^{\sigma,\rho}$.

For $s<t$, we define a norm $\clM_{s,t}$ on random fields
on $[s,t]\times\RR^{2}$ by
\nomenclature[M st]{$\clM_{s,t}$}{$\sup_{r\in[s,t]}\vvvert v_{r}\vvvert_{2}$, \cref{eq:scrMstdef}}
\begin{equation}
  \clM_{s,t}(v)=\sup_{r\in[s,t]}\vvvert v_{r}\vvvert_{2}=\sup_{\substack{r\in[s,t]\\
      x\in\RR^{2}
    }
  }(\EE |v_{r}(x)|^{2})^{1/2},\label{eq:scrMstdef}
\end{equation}
recalling the definition \cref{eq:triplenormdef}.
\begin{lem}
  Let $\sigma\in\Lip(\RR^m;\clH^m_+)$ and $t > s$.
  If $v,\widetilde{v}$ are random fields on $[s,t]\times\RR^{2}$
  adapted to the filtration $(\scrF_{s})$ and satisfying $v_s = \widetilde{v}_s$ as well as
  $\clM_{s,t}(v),\clM_{s,t}(\widetilde{v})<\infty$, then
  \begin{equation}
    \clM_{s,t}(\clT_{s}^{\sigma,\rho}v-\clT_{s}^{\sigma,\rho}\widetilde{v})\le\Lip(\sigma)\sfS_{\rho}(t-s)^{1/2}\clM_{s,t}(v-\widetilde{v}).\label{eq:Mbd}
  \end{equation}
\end{lem}

\begin{proof}
  For all $r\in[s,t]$ and $x \in \R^2$, we have (using the assumption that $v_s = \widetilde{v}_s$) that
  \begin{align*}
    \EE (\clT_{s}^{\sigma,\rho}v-\clT_{s}^{\sigma,\rho}\widetilde{v})_{r}(x)^{2} & =\frac{4\pi}{\sfL(1/\rho)}\int_{s}^{r}\!\!\!\!\int G_{r+\rho-r'}^{2}(x-y)\EE \bigl\lvert \sigma\bigl(v_{r'}(y)\bigr)-\sigma\bigl(\widetilde{v}_{r'}(y)\bigr)\bigr\rvert _{\Frob}^{2}\,\dif y\,\dif r'                                                                \\
                                                                                 & \le\frac{\Lip(\sigma)^{2}}{\sfL(1/\rho)}\int_{s}^{r}\frac{\vvvert v_{r'}-\widetilde v_{r'}\vvvert_{2}^{2}}{r+\rho-r'}\,\dif r'\overset{\cref{eq:Srhodef}}{\le}\clM_{s,t}(v-\widetilde{v})^{2}\Lip(\sigma)^{2}\sfS_{\rho}(r-s),
  \end{align*}
  which implies \cref{eq:Mbd}.
\end{proof}
\begin{prop}
  \label{prop:solnapprox}Suppose $\sigma\in\Lip(\RR^m;\clH^m_+)$ and $s<t$ satisfy
  \begin{equation}
    \Lip(\sigma)\sfS_{\rho}(t-s)^{1/2}<1.\label{eq:llambdalt1}
  \end{equation}
  Let $v$ be a random field on $[s,t]\times\RR^{2}$ adapted to the filtration $(\s F_s)$ with $\clM_{s,t}(v)<\infty$.
  Then we have
  \begin{equation}
    \clM_{s,t}(v-\clV_{s,\cdot}^{\sigma,\rho}v_{s})\le\frac{\clM_{s,t}(v-\clT_{s}^{\sigma,\rho}v)}{1-\Lip(\sigma)\sfS_{\rho}(t-s)^{1/2}}.\label{eq:Mclose}
  \end{equation}
\end{prop}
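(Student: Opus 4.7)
The plan is to recognize this as the standard \emph{a posteriori} error estimate for a fixed point of a contraction. The key observation is that, by the definition of $\clV$ in the introduction, $v^{*} \coloneqq \clV_{s,\cdot}^{\sigma,\rho}v_{s}$ is a fixed point of $\clT_{s}^{\sigma,\rho}$ on the space of adapted random fields on $[s,t]\times\R^{2}$ with finite $\clM_{s,t}$ norm, and \cref{lem:Mbd} asserts that $\clT_{s}^{\sigma,\rho}$ is a $\kappa$-Lipschitz map on this space, where $\kappa \coloneqq \Lip(\sigma)\sfS_{\rho}(t-s)^{1/2}<1$ by the hypothesis \cref{eq:llambdalt1}.

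First I would check that $\clM_{s,t}(v^{*})<\infty$. This uses the standard fixed-point construction alluded to just before the definition of $\clV$: iterating $\clT_{s}^{\sigma,\rho}$ starting from the deterministic profile $\clG_{\anon-s}v_{s}$ yields a Cauchy sequence in the $\clM_{s,t}$ norm (since $\clT_{s}^{\sigma,\rho}$ is a contraction and the initial iterate has finite norm when $v_{s}\in L^{\infty}$, or more generally when $\vvvert v_{s}\vvvert_{2}<\infty$), and its limit must coincide with $v^{*}$ by uniqueness.

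Then I would simply combine the triangle inequality with \cref{lem:Mbd}:
\begin{align*}
\clM_{s,t}(v-v^{*}) & = \clM_{s,t}\bigl(v-\clT_{s}^{\sigma,\rho}v+\clT_{s}^{\sigma,\rho}v-\clT_{s}^{\sigma,\rho}v^{*}\bigr) \\
 & \le \clM_{s,t}(v-\clT_{s}^{\sigma,\rho}v) + \clM_{s,t}(\clT_{s}^{\sigma,\rho}v-\clT_{s}^{\sigma,\rho}v^{*}) \\
 & \le \clM_{s,t}(v-\clT_{s}^{\sigma,\rho}v) + \kappa\,\clM_{s,t}(v-v^{*}),
\end{align*}
using $v^{*}=\clT_{s}^{\sigma,\rho}v^{*}$ in the first equality. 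Since $\kappa<1$ and $\clM_{s,t}(v-v^{*})<\infty$ (as both $v$ and $v^{*}$ have finite $\clM_{s,t}$ norm), we may rearrange to obtain \cref{eq:Mclose}.

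The only potentially delicate point is the finiteness of $\clM_{s,t}(v^{*})$; once that is in hand, the proof is a two-line application of the triangle inequality and the contractivity supplied by \cref{lem:Mbd}. No new ideas are needed here.
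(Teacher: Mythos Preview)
Your proof is correct and matches the paper's approach: the paper simply notes that \cref{lem:Mbd} makes $\clT_{s}^{\sigma,\rho}$ a contraction with fixed point $\clV_{s,\cdot}^{\sigma,\rho}v_{s}$, and concludes that \cref{eq:Mclose} follows. Your explicit triangle-inequality computation is exactly the standard unwinding of this implication.
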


\begin{proof}
  By \cref{eq:Mbd} under the assumption \cref{eq:llambdalt1}, we see that
  $\clT_{s}^{\sigma,\rho}$ is a contraction in the $\clM_{s,t}$
  norm with fixed point $\clV_{s,\anon}^{\sigma,\rho}v_{s}$ and contraction
  factor $\Lip(\sigma)\sfS_\rho(t-s)^{1/2}$.
  This implies \cref{eq:Mclose}.
\end{proof}

\subsection{Moment estimates}\label{subsec:moment-estimates}

We now control the moments of solutions to \cref{eq:SPDE}. %
Recall the definition of $\scrX_{s}^{\ell}$ and its associated
norm $\vvvert\cdot\vvvert_{\ell}$ on \zcpageref{eq:triplenormdef}. Some
estimates in the sequel concern a problem with the
noise ``turned off'' on parts of space-time. We introduce notation for this operation now.
\begin{defn}
  \label{def:turnoffnoise}For a Borel set $\ttB\subseteq\RR^{2}$, let $\dif\mathring{W}_{t}^{\ttB}=\mathbf{1}_{\ttB}\dif W_{t}$,
  so $\dif\mathring{W}_{t}^{\ttB}|_{\ttB}=\dif W_{t}|_{\ttB}$ but $\dif\mathring{W}_{t}^{\ttB}|_{\ttB^{\cc}}\equiv0$.
  We let $\mathring{\clV}^{\sigma,\rho,\ttB}_{s,t}$ be identical to the propagator
  $\clV^{\sigma,\rho}_{s,t}$ for the SPDE \cref{eq:SPDE}, but with the noise
  $\dif W_{t}$ replaced by $\dif\mathring{W}_{t}^{\ttB}$.
\end{defn}
\nomenclature[W circ]{$\dif\mathring W_t^\ttB$}{$\mathbf{1}_\ttB\dif W_t$, \cref{def:turnoffnoise}}
\nomenclature[V ring]{$\mathring{\clV}^{\sigma,\rho,\ttB}_{s,t}$}{propagator for the SPDE with the noise turned off on $\ttB^\cc$ ($\sigma,\rho$ often omitted), \cref{def:turnoffnoise}}

We establish moment estimates that are uniform in $\ttB$.
For $\ell\ge2$, define
\nomenclature[zzzgreek η]{$\eta(\beta,\ell)$}{$[2^{-2/\ell}\ell(\ell-1)\left(\beta^{2}+1-2/\ell\right)]^{1/2}$, \cref{eq:etadef}}
\begin{equation}
  \eta(\beta,\ell)=[2^{-2/\ell}\ell(\ell-1)\left(\beta^{2}+1-2/\ell\right)]^{1/2}.\label{eq:etadef}
\end{equation}
Note that $\eta(\beta,2)=\beta$ and that $\eta(\beta,\ell)$ is continuous and strictly increasing in $\ell$ and $\beta$.
\begin{prop}
  \label{prop:momentbd}Fix $\ell\ge2$, $\beta\in(0,\infty)$, $\ttB\subseteq\RR^{2}$,
  and $s\in\RR$, and let $v_{s}\in\scrX_{s}^{\ell}$ and
  $\sigma\in\quadset(M,\beta)$. For all $T\in\bigl[s,s+\sfT_{\rho}(\eta(\beta,\ell)^{-2})\bigr)$,
  we have
  \begin{equation}
    \vvvert\mathring{\clV}_{s,T}^{\sigma,\rho,\ttB}v_{s}\vvvert_{\ell}^{\ell}\le\frac{\vvvert v_{s}\vvvert_{\ell}^{\ell}+2^{1-2/\ell}(\ell-1)\sfS_{\rho}(T-s)M^{\ell/2}}{1-\eta(\beta,\ell)^{2}\sfS_{\rho}(T-s)}.\label{eq:momentbd}
  \end{equation}
\end{prop}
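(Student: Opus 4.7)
\medskip

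The plan is to derive a Grönwall-type integral inequality for the function $g(t) = \vvvert\mathring{\clV}_{s,t}^{\sigma,\rho,\ttB}v_s\vvvert_\ell^\ell$ via Itô's formula applied to the martingale introduced in \cref{subsec:MGexptimescale}. Fix $t\in[s,T]$ and $x\in\RR^2$, write $u_r = \mathring{\clV}_{s,r}^{\sigma,\rho,\ttB}v_s$, and let $M_r = \clG_{t-r}u_r(x)$ for $r\in[s,t]$, which is an $\{\scrF_r\}$-martingale by the analog of \cref{eq:Vtintegral} with noise killed on $\ttB^\cc$. Its terminal value equals $u_t(x)$, and the initial value $M_s = \clG_{t-s}v_s(x)$ satisfies $\EE|M_s|^\ell\le\clG_{t-s}\EE|v_s|^\ell(x)\le\vvvert v_s\vvvert_\ell^\ell$ by Jensen. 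A direct computation based on \cref{eq:heatkernel} together with the identity $G_\tau^2 = (4\pi\tau)^{-1}G_{\tau/2}$ gives the trace bound
\[
\tr\bigl(\dif[M]_r/\dif r\bigr)\le\frac{1}{\sfL(1/\rho)(t+\rho-r)}\bigl[M+\beta^{2}\clG_{(t+\rho-r)/2}|u_r|^{2}(x)\bigr],
\]
where we have used $\sigma\in\quadset(M,\beta)$.

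Next, I apply Itô's formula to $f(M_r)=|M_r|^\ell$. A short computation gives
\[
\dif|M_r|^\ell = \ell|M_r|^{\ell-2}\langle M_r,\dif M_r\rangle + \tfrac{\ell(\ell-2)}{2}|M_r|^{\ell-4}M_r^T\,\dif[M]_r\,M_r + \tfrac{\ell}{2}|M_r|^{\ell-2}\tr(\dif[M]_r).
\]
Since $\dif[M]_r$ is positive semi-definite and $\ell\geq 2$, bounding $M_r^T\dif[M]_r\,M_r\le|M_r|^2\tr(\dif[M]_r)$ collapses the Hessian terms to the single coefficient $\tfrac{\ell(\ell-1)}{2}$. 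After a standard localization-stopping argument to handle the stochastic-integral term (and to justify taking expectations—we truncate $\sigma$ or stop at $\tau_N=\inf\{r:|M_r|>N\}\wedge t$ and pass $N\to\infty$ using monotone convergence on the right and Fatou on the left), taking expectations yields
\[
\EE|u_t(x)|^\ell\le\vvvert v_s\vvvert_\ell^\ell+\tfrac{\ell(\ell-1)}{2}\int_s^t\EE\bigl[|M_r|^{\ell-2}\tr(\dif[M]_r/\dif r)\bigr]\dif r.
\]

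The bulk of the work is to bound $\EE|M_r|^{\ell-2}\tr(\dif[M]_r/\dif r)$ in terms of $g(r)$ with the correct $\beta^2$ dependence. I would use Young's inequality with conjugate exponents $\ell/(\ell-2)$ and $\ell/2$ applied separately to the additive and multiplicative contributions: for the constant term $M\cdot|M_r|^{\ell-2}$, Young gives $\tfrac{\ell-2}{\ell}|M_r|^\ell+\tfrac{2}{\ell}M^{\ell/2}$, contributing $2^{1-2/\ell}(\ell-1)M^{\ell/2}$ (after absorbing the constant $\tfrac{\ell(\ell-1)}{2}\cdot\tfrac{2}{\ell}=\ell-1$ into a convenient form), while for $\beta^2|M_r|^{\ell-2}\clG_{(t+\rho-r)/2}|u_r|^2(x)$ I would apply Hölder and Jensen (using $[\clG|u_r|^2]^{\ell/2}\le\clG|u_r|^\ell$) together with the bound $\EE|M_r|^\ell\le g(r)$ to obtain a clean estimate proportional to $\beta^2 g(r)$. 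Combining and integrating $\int_s^t\frac{\dif r}{\sfL(1/\rho)(t+\rho-r)}=\sfS_\rho(t-s)$, taking $\sup_{x,t\in[s,T]}$, and solving the resulting affine inequality in $G(T)\coloneqq\sup_{r\in[s,T]}g(r)$ yields \cref{eq:momentbd}.

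The principal obstacle is twofold: first, obtaining $\beta^2$ rather than $\beta^\ell$ dependence in the Grönwall coefficient—this forces careful use of Hölder on the second moment $\clG|u_r|^2(x)$ rather than the naive Young's bound on $(M+\beta^2\clG|u_r|^2)^{\ell/2}$; and second, justifying the Itô calculation and the vanishing of the martingale increment in expectation when the moments are not \emph{a priori} known to be finite. The second issue is routine via truncation, but must be done before we can close the Grönwall loop.
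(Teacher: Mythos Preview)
Your approach is correct and follows the same skeleton as the paper: apply It\^o's formula to $|V_r|^\ell$ for the martingale $V_r=\clG_{t-r}u_r(x)$, collapse the Hessian contribution to $\tfrac{\ell(\ell-1)}{2}\tr(\dif[V]_r)$, and close an affine inequality for $\sup_r\vvvert u_r\vvvert_\ell^\ell$. The only substantive difference lies in how the integrand $\EE\bigl[|V_r|^{\ell-2}\clG(|\sigma|_\Frob^2\circ u_r)(x)\bigr]$ is estimated. The paper keeps $|\sigma|_\Frob^2$ intact, applies Young's product inequality pointwise with a free parameter $A$, uses $|\sigma|_\Frob^\ell\le 2^{\ell/2-1}(M^{\ell/2}+\beta^\ell|v|^\ell)$, and then optimizes over $A$; this optimization is what produces the factors $2^{-2/\ell}$ in $\eta(\beta,\ell)^2$ and $2^{1-2/\ell}$ in the numerator. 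You instead split $|\sigma|_\Frob^2\le M+\beta^2|u_r|^2$ first, handle the $M$-term by Young and the $\beta^2|V_r|^{\ell-2}\clG|u_r|^2$ term by H\"older on the probability space. Your route is in fact slightly sharper: it yields the coefficient $\tfrac12\ell(\ell-1)(\beta^2+1-2/\ell)\le\eta(\beta,\ell)^2$ in the denominator and $(\ell-1)M^{\ell/2}\le 2^{1-2/\ell}(\ell-1)M^{\ell/2}$ in the numerator, so the stated inequality certainly follows. (Your parenthetical ``contributing $2^{1-2/\ell}(\ell-1)M^{\ell/2}$'' does not match your own calculation, which gives $(\ell-1)M^{\ell/2}$; but since the latter is smaller this is harmless.)
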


\begin{rem}
  \label{rem:momentbdvswhatwehadbefore}A result similar to \cref{prop:momentbd}
  was proved for the linear problem $\sigma(u)=\beta u$ using hypercontractivity
  in \cite[(5.11)]{CSZ20}. In \cite[Proposition 3.1]{DG22}, this was
  applied to a nonlinear problem using the moment comparison principle
  of \cite{CK20}. This comparison principle is not available in the vector-valued
  case (nor, indeed, in the scalar-valued case without the assumption $\sigma(0)=0$).
  The proof of \cref{prop:momentbd}
  we give here is self-contained and uses only It\^o calculus and simple
  convexity inequalities.
\end{rem}

\begin{rem}
  If $s=0$ and $T$ is positive and independent
  of $\rho$, then $\sfS_{\rho}(T)\to1$ as $\rho\searrow0$.
  This means that if $\beta<1$ and $\ell$ is sufficiently close to
  $2$ but independent of $\rho$, then the hypothesis $T<\sfT_{\rho}(\eta(\beta,\ell)^{-2})$ of \cref{prop:momentbd}
  is satisfied when $\rho$ is sufficiently small. The works 
  \cite{LZ21,CZ21} showed that in the discrete directed polymer model
  (corresponding to our SPDE in the
   scalar-valued linear case $\sigma(u)=\beta u$), the condition
  that $\ell$ is sufficiently close to $2$ is not necessary: all positive
  moments can be controlled, uniformly as $\rho\searrow0$, whenever
  $\sfS_{\rho}(T)\beta^{2}$ is bounded above by a constant less than $1$. (See also \cite{LZ22} for corresponding results on higher multipoint moments.) It seems
  likely that their techniques could be applied to our nonlinear setting
  as well to strengthen \cref{eq:momentbd}.
\end{rem}

\begin{proof}[Proof of \cref{prop:momentbd}]
  We will only consider the case $\ttB=\RR^{2}$; the case when $\ttB$ is
  a proper subset of $\RR^{2}$ is essentially the same but requires
  extra notation, so we leave it to the reader. We also assume, without
  loss of generality, that $s=0$. Then the assumptions on $T$, $\ell$, and $\beta$ imply that
  \begin{equation}
      \sfS_\rho(T)2^{-2/\ell}\ell(\ell-1)(\beta^2+1-2/\ell)<1.\label{eq:ourTcond}
  \end{equation}

  Let $v_{t}=\clV_{t}^{\sigma,\rho}v_{0}$ and
  define $V_{t}=V_{t}^{\sigma,\rho,T}(x)=\clG_{T-t}v_{t}(x)$
  as in \cref{eq:Vdef}. Define $Y_{t}=|V_{t}(x)|^{\ell}$. %
  Using the derivative computations \zcref[range]{eq:lpowergradient,eq:lpowerHessian}, by Itô's formula we obtain
  \begin{equation}
    \dif Y_{t}=\ell|V_{t}|^{\ell-2}V_{t}\cdot\dif V_{t}+\frac{1}{2}\tr\left[\left(\ell(\ell-2)|V_{t}|^{\ell-4}V_{t}^{\otimes2}+\ell|V_{t}|^{\ell-2}\Id_m\right)\dif[V]_{t}\right].\label{eq:ito}
  \end{equation}
  Now we integrate \cref{eq:ito} from $0$ to $T$ and take expectations.
  It follows from standard techniques that the stochastic
  integral in the result contributes zero to the expectation. We also have $\EE Y_{0}\le\vvvert v_{0}\vvvert_{\ell}^{\ell}$.
  Hence \cref{eq:VQV} yields
  \begin{equation}
    \begin{aligned}\EE & |v_{T}(x)|^{\ell}=\EE Y_{T}\le\vvvert v_{0}\vvvert_{\ell}^{\ell}+\frac{1}{2}\EE \left[\int_{0}^{T}\tr\left[\left(\ell(\ell-2)|V_{t}|^{\ell-4}V_{t}^{\otimes2}+\ell|V_{t}|^{\ell-2}\Id_m\right)\dif[V]_{t}\right]\right]                                                    \\
                   & =\vvvert v_{0}\vvvert_{\ell}^{\ell}+\frac{1}{2\sfL(1/\rho)}\int_{0}^{T}\frac{\EE \left(\tr\left[\left(\ell(\ell-2)|V_{t}|^{\ell-4}V_{t}^{\otimes2}+\ell|V_{t}|^{\ell-2}\Id_m\right)\clG_{\frac{1}{2}[T+\rho-t]}(\sigma^{2}\circ v_{t})(x)\right]\right)}{T+\rho-t}\,\dif t.
    \end{aligned}
    \label{eq:pthmomentexpectations}
  \end{equation}

  We now estimate the quantity inside the expectation on the
  right side of \cref{eq:pthmomentexpectations}.
  For the first term, we have
  \begin{align*}
    \tr\left[V_{t}^{\otimes2}\clG_{\frac{1}{2}[T+\rho-t]}(\sigma^{2}\circ v_{t})(x)\right]\overset{\cref{eq:simpleholderschatten}} & {\le}|V_{t}^{\otimes2}|_{\op}\clG_{\frac{1}{2}[T+\rho-t]}(\tr\circ\sigma^{2}\circ v_{t})(x) \\&=|V_{t}|^{2}\clG_{\frac{1}{2}[T+\rho-t]}(|\sigma|_{\Frob}^{2}\circ v_{t})(x),
  \end{align*}
  This means that
  \begin{equation}
    \begin{aligned}
      \tr & \left[\left(\ell(\ell-2)|V_{t}|^{\ell-4}V_{t}^{\otimes2}+\ell|V_{t}|^{\ell-2}\Id_m\right)\clG_{\frac{1}{2}[T+\rho-t]}(\sigma^{2}\circ v_{t})(x)\right] \\&\le\ell(\ell-1)|V_{t}|^{\ell-2}\clG_{\frac{1}{2}[T+\rho-t]}(|\sigma|_{\Frob}^{2}\circ v_{t})(x).
    \end{aligned}\label{eq:combinethetwotraceterms}
  \end{equation}
  Using Young's product inequality and Jensen's inequality, for all $A > 0$ we have
  \begin{align}
      |V_{t}|^{\ell-2} & \clG_{\frac{1}{2}[T+\rho-t]}(|\sigma|_{\Frob}^{2}\circ v_{t})(x)\overset{\cref{eq:Vdef}}=|\clG_{T-t}v_{t}(x)|^{\ell-2}\clG_{\frac{1}{2}[T+\rho-t]}(|\sigma|_{\Frob}^{2}\circ v_{t})(x)\nonumber    \\
                     & \le(1-2/\ell)A^{\frac{\ell}{\ell-2}}|\clG_{T-t}v_{t}(x)|^{\ell}+\frac{2}{\ell A^{\ell/2}}\left[\clG_{\frac{1}{2}[T+\rho-t]}(|\sigma|_{\Frob}^{2}\circ v_{t})(x)\right]^{\ell/2}\nonumber \\
                     & \le(1-2/\ell)A^{\frac{\ell}{\ell-2}}\clG_{T-t}(|v_{t}|^{\ell})(x)+\frac{2}{\ell A^{\ell/2}}\clG_{\frac{1}{2}[T+\rho-t]}(|\sigma|_{\Frob}^{\ell}\circ v_{t})(x)\label{eq:Jensenparty-pre}
  \end{align}
  For the second term, we use the fact that $\sigma\in\quadset(M,\beta)$
  (recalling the definition \cref{eq:Sigmaplusdef}) along with Hölder's
  inequality to write
  \begin{equation}
      |\sigma(v)|_{\Frob}^{\ell}\le[M+\beta^{2}|v|^{2}]^{\ell/2}\le2^{\ell/2-1}[M^{\ell/2}+\beta^{\ell}|v|^{\ell}].\label{eq:holder2}
\end{equation}
Using \cref{eq:holder2} in \cref{eq:Jensenparty-pre}, we see that
  \begin{align*}
    |V_{t}|^{\ell-2} & \clG_{\frac{1}{2}[T+\rho-t]}(|\sigma|_{\Frob}^{2}\circ v_{t})(x)                                                                                                                                       \\
                     & \le(1-2/\ell)A^{\frac{\ell}{\ell-2}}\clG_{T-t}(|v_{t}|^{\ell})(x)+\frac{2^{\ell/2}}{\ell A^{\ell/2}}\left[M^{\ell/2}+\beta^{\ell}\clG_{\frac{1}{2}[T+\rho-t]}(|v_{t}|^{\ell})(x)\right].
  \end{align*}
  Taking expectations, we obtain
  \[
    \EE \left[|V_{t}|^{\ell-2}\clG_{\frac{1}{2}[T+\rho-t]}(|\sigma|_{\Frob}^{2}\circ v_{t})(x)\right]\le(1-2/\ell)A^{\frac{\ell}{\ell-2}}\vvvert v_{t}\vvvert_{\ell}^{\ell}+\frac{2^{\ell/2}}{\ell A^{\ell/2}}\beta^{\ell}\vvvert v_{t}\vvvert_{\ell}^{\ell}+\frac{2^{\ell/2}}{\ell A^{\ell/2}}M^{\ell/2}.
  \]
  We minimize the right side by taking
  \[
    A=2^{(\ell-2)^{2}/\ell^{2}}\left(\beta^{\ell}+\frac{M^{\ell/2}}{\vvvert v_{t}\vvvert_{\ell}^{\ell}}\right)^{2(\ell-2)/\ell^{2}}
  \]
  to obtain
  \begin{align*}
    \EE & \left[|V_{t}|^{\ell-2}\clG_{\frac{1}{2}[T+\rho-t]}(|\sigma|_{\Frob}^{2}\circ v_{t})(x)\right]\le2^{1-2/\ell}\left(\beta^{\ell}\vvvert v_{t}\vvvert_{\ell}^{\ell}+M^{\ell/2}\right)^{2/\ell}\vvvert v_{t}\vvvert_{\ell}^{\ell-2}      \\
        & \le2^{1-2/\ell}\left(\beta^{2}\vvvert v_{t}\vvvert_{\ell}^{\ell}+M\vvvert v_{t}\vvvert_{\ell}^{\ell-2}\right)\le2^{1-2/\ell}\left[\left(\beta^{2}+1-2/\ell\right)\vvvert v_{t}\vvvert_{\ell}^{\ell}+\frac{2}{\ell}M^{\ell/2}\right],
  \end{align*}
  with the last inequality by Young's product inequality. Using this, \cref{eq:combinethetwotraceterms},
  and \cref{eq:Srhodef} in \cref{eq:pthmomentexpectations}, we get
  \begin{equation}
    \EE |v_{T}(x)|^{\ell}  \le\vvvert v_{0}\vvvert_{\ell}^{\ell}+2^{-2/\ell}\ell(\ell-1)\sfS_{\rho}(T)\left[\left(\beta^{2}+1-2/\ell\right)\Upsilon^{\ell}+\frac{2}{\ell}M^{\ell/2}\right],\label{eq:boundwithupsilon}
  \end{equation}
  where we have defined $\Upsilon\coloneqq\sup\limits_{t\in[0,T]}\vvvert v_{t}\vvvert_{\ell}$.
  Since the right side of \cref{eq:boundwithupsilon} is increasing in $T$, and $T$ and $x$ are
  arbitrary, we in fact have
  \begin{align*}
    \Upsilon^{\ell} & \le\vvvert v_{0}\vvvert_{\ell}^{\ell}+2^{-2/\ell}\ell(\ell-1)\sfS_{\rho}(T)\left(\left(\beta^{2}+1-2/\ell\right)\Upsilon^{\ell}+\frac{2}{\ell}M^{\ell/2}\right),
  \end{align*}
  which we can rearrange to obtain
  \[
    \Upsilon^{\ell}\le\frac{\vvvert v_{0}\vvvert_{\ell}^{\ell}+2^{1-2/\ell}(\ell-1)\sfS_{\rho}(T)M^{\ell/2}}{1-2^{-2/\ell}\ell(\ell-1)\sfS_{\rho}(T)\left(\beta^{2}+1-2/\ell\right)}
  \]
  noting that the denominator is positive by \cref{eq:ourTcond}.
  Recalling the definition \cref{eq:etadef} of $\eta(\beta,\ell)$, we see that we have proved
  \cref{eq:momentbd}.
\end{proof}
We now give a name to a quantity that is bounded by \cref{prop:momentbd}, and which will appear frequently in the estimates in the sequel.
\begin{defn}\label{def:momentbdnotation}
For $t\ge s$, $\ell\ge2$, and $v_{s}\in\scrX_{s}^{\ell}$,
we define
\nomenclature[zzzgreek ξ  ]{$\Xi_{\ell;s,t}^{\sigma,\rho}[v_{s}]$}{maximal norm of the SPDE solution, see \cref{eq:momentbdnotation}}
\begin{equation}
  \Xi_{\ell;s,t}^{\sigma,\rho}[v_{s}]=\adjustlimits\sup_{\ttB\subseteq\RR^{2}}\sup_{r\in[s,t]}\left(\vvvert\mathring{\clV}_{s,r}^{\sigma,\rho,\ttB}v_{s}\vvvert_{\ell}\vee\vvvert\sigma(\mathring{\clV}_{s,r}^{\sigma,\rho,\ttB}v_{s})\vvvert_{\ell}\right).\label{eq:momentbdnotation}
\end{equation}
We will often abbreviate $\Xi_{\ell;t}^{\sigma,\rho}=\Xi_{\ell;0,t}^{\sigma,\rho}$.
\nomenclature[zzzgreek ξ ]{$\Xi_{\ell;t}^{\sigma,\rho}[v_{s}]$}{abbreviation for $\Xi_{\ell;0,t}^{\sigma,\rho}[v_{s}]$}
\end{defn}

\subsection{Regularity estimates for Gaussian averages of the solution\label{subsec:regularitygaussianaverages}}
In this section we prove bounds on the differences between spatial averages of the solution centered at different space-time points. These estimates are proved using $L^2$ estimates on the mild solution formula.
\begin{lem}
  \label{lem:VTcontinuity}Suppose that $(v_{t})$ solves \cref{eq:SPDE}
  and define $V_{t}^{\rho,T}$ as in \cref{eq:Vdef}. We have,
  for $t_{1}\le t_{2}\le T$ and $x\in\RR^2$, that
  \[
    \EE |V_{t_{2}}^{\rho,T}(x)-V_{t_{1}}^{\rho,T}(x)|^{2}\le\Xi_{2;t_{1},t_{2}}^{\rho}[v_{t_{1}}]^{2}\sfL_{\rho}\left(\frac{t_{2}-t_{1}}{T+\rho-t_{2}}\right).
  \]
\end{lem}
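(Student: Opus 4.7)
The plan is a direct application of the It\^o isometry to the martingale representation of $V^{\rho,T}$, followed by an elementary computation.

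First, I would apply \cref{eq:Vtintegral} with $T_{0}\setto t_{1}$ to express the increment as a stochastic integral:
\[
V_{t_{2}}^{\rho,T}(x)-V_{t_{1}}^{\rho,T}(x)=\gamma_{\rho}\int_{t_{1}}^{t_{2}}\clG_{T+\rho-r}[\sigma(v_{r}^{\rho})\,\dif W_{r}](x).
\]
Writing the right-hand side as $\int_{t_1}^{t_2}\!\!\int G_{T+\rho-r}(x-y)\sigma(v_r^\rho(y))\,\dif W_r(y)\,\dif r$ and applying the It\^o isometry for vector-valued stochastic integrals (where the Frobenius norm arises from contracting $\sigma^{\mathrm{T}}\sigma$ against the $m$ independent components of $\dif W$), I obtain
\[
\EE|V_{t_{2}}^{\rho,T}(x)-V_{t_{1}}^{\rho,T}(x)|^{2}=\gamma_{\rho}^{2}\int_{t_{1}}^{t_{2}}\!\!\int G_{T+\rho-r}^{2}(x-y)\,\EE|\sigma(v_{r}^{\rho}(y))|_{\Frob}^{2}\,\dif y\,\dif r.
\]

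Next, I would bound $\EE|\sigma(v_{r}^{\rho}(y))|_{\Frob}^{2}\le\Xi_{2;t_{1},t_{2}}^{\rho}[v_{t_{1}}]^{2}$ uniformly in $y$ and $r\in[t_{1},t_{2}]$: this follows from the definition \cref{eq:momentbdnotation} by taking $\ttB\setto\R^{2}$ (which removes the ring, recovering $\clV$) and using $\vvvert\sigma(v_{r}^{\rho})\vvvert_{2}\le\Xi$. To handle the spatial integral, I use the identity $\int G_{\tau}^{2}(y)\,\dif y=(4\pi\tau)^{-1}$, which follows from $G_{\tau}^{2}=(4\pi\tau)^{-1}G_{\tau/2}$.

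Combining these yields
\[
\EE|V_{t_{2}}^{\rho,T}(x)-V_{t_{1}}^{\rho,T}(x)|^{2}\le\frac{\gamma_{\rho}^{2}\,\Xi_{2;t_{1},t_{2}}^{\rho}[v_{t_{1}}]^{2}}{4\pi}\int_{t_{1}}^{t_{2}}\frac{\dif r}{T+\rho-r}=\frac{\gamma_{\rho}^{2}\,\Xi_{2;t_{1},t_{2}}^{\rho}[v_{t_{1}}]^{2}}{4\pi}\log\!\left(1+\frac{t_{2}-t_{1}}{T+\rho-t_{2}}\right).
\]
Recalling the choice $\gamma_{\rho}^{2}=4\pi/\sfL(1/\rho)$ from \cref{eq:gammarhodef} and the definition \cref{eq:Lrhodef} of $\sfL_{\rho}$, the prefactor $\gamma_{\rho}^{2}/(4\pi)$ turns the logarithm into $\sfL_{\rho}\bigl((t_{2}-t_{1})/(T+\rho-t_{2})\bigr)$, giving the claimed bound. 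There is no real obstacle: the only subtle point is identifying the correct vector-valued It\^o isometry for matrix-valued $\sigma$ and recognizing the compatibility of $\gamma_\rho^2$ with the Gaussian integral $\int G_\tau^2$, which together produce exactly the renormalized logarithm $\sfL_\rho$.
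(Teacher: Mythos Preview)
Your proof is correct and follows essentially the same route as the paper: express the increment via \cref{eq:Vtintegral}, apply the It\^o isometry, bound $\EE|\sigma(v_r(y))|_{\Frob}^{2}$ by $\Xi_{2;t_1,t_2}^{\rho}[v_{t_1}]^{2}$, and evaluate the resulting time integral. The only cosmetic difference is that the paper substitutes $\gamma_\rho^2=4\pi/\sfL(1/\rho)$ immediately rather than at the end.
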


\begin{proof}
  By \cref{eq:Vtintegral}, we have
  \[
    V_{t_{2}}^{\rho,T}(x)-V_{t_{1}}^{\rho,T}(x)=\gamma_{\rho}\int_{t_{1}}^{t_{2}}\clG_{T+\rho-r}[\sigma(v_{r})\,\dif W_{r}](x).
  \]
  So, by the Itô isometry,
  \begin{align*}
    \EE & |V_{t_{2}}^{\rho,T}(x)-V_{t_{1}}^{\rho,T}(x)|^{2}=\frac{4\pi}{\sfL(1/\rho)}\int_{t_{1}}^{t_{2}}\!\!\!\!\int G_{T+\rho-r}^{2}(x-y)\EE \bigl\lvert \sigma\bigl(v_{r}(y)\bigr)\bigr\rvert _{\Frob}^{2}\,\dif y\,\dif r                                                     \\
        & \le\frac{\Xi_{2;t_{1},t_{2}}^{\sigma,\rho}[v_{t_{1}}]^{2}}{\sfL(1/\rho)}\int_{t_{1}}^{t_{2}}\frac{\dif r}{T+\rho-r}=\Xi_{2;t_{1},t_{2}}^{\sigma,\rho}[v_{t_{1}}]^{2}\sfL_{\rho}\left(\frac{t_{2}-t_{1}}{T+\rho-t_{2}}\right).\qedhere
  \end{align*}
\end{proof}
\begin{prop}
  \label{prop:continuitybd}There is an absolute constant $C<\infty$
  such that the following holds. Suppose $(v_{t})_t$ solves \cref{eq:SPDE}
  with initial condition $v_{s_{0}}\in\scrX_{s_{0}}^{2}$.
  Then for all $s_{1},s_{2}>s_{0}$, $t_{1},t_{2}\ge0$, and $x_{1},x_{2}\in\RR^{2}$, we have
  \begin{equation}
    \begin{aligned}(\EE & |\clG_{t_{2}}v_{s_{2}}(x_{2})-\clG_{t_{1}}v_{s_{1}}(x_{1})|^{2})^{1/2}                                                                                               \\
                    & \le C\Xi_{2;s_{0},s_1 \vee s_2}^{\sigma,\rho}[v_{s_0}]\Biggl[\sfL_{\rho}\left(\frac{|s_{2}-s_{1}|+|t_{1}-t_{2}|+|x_{1}-x_{2}|^{2}/2}{t_{1}\wedge t_2+\rho}\right)^{1/2}        \\
                    & \qquad\qquad\qquad\qquad\hfill+\left(\frac{|s_{1}-s_{2}|+|t_{1}-t_{2}|+|x_{1}-x_{2}|^{2}/2}{s_{1}\wedge s_{2}-s_{0}}\right)^{1/2}+\sfL_{\rho}\bigl(1+\sfL(1/\rho)\bigr)^{1/2}\Biggr].
    \end{aligned}
    \label{eq:continuitybd}
  \end{equation}
\end{prop}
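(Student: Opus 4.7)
The proof is a sequence of triangle inequalities reducing the difference to pieces controllable by the It\^o isometry and heat-kernel estimates. By the symmetry of the bound under $(s_{1},t_{1},x_{1})\leftrightarrow(s_{2},t_{2},x_{2})$, we may assume $s_{1}\le s_{2}$. Applying the mild formulation \cref{eq:vmildsoln} twice---first to express $v_{s_{2}}$ in terms of $v_{s_{1}}$, and then $v_{s_{1}}$ in terms of $v_{s_{0}}$---yields a decomposition
\[
\clG_{t_{2}}v_{s_{2}}(x_{2})-\clG_{t_{1}}v_{s_{1}}(x_{1})=D+I_{1}+I_{2},
\]
where $D=\clG_{\tau_{2}}v_{s_{0}}(x_{2})-\clG_{\tau_{1}}v_{s_{0}}(x_{1})$ is the deterministic heat-flow difference with $\tau_{i}=t_{i}+s_{i}-s_{0}$, and $I_{1},I_{2}$ are It\^o integrals over $[s_{0},s_{1}]$ and $[s_{1},s_{2}]$, respectively.

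For the stochastic pieces, the It\^o isometry together with the moment bound $\EE|\sigma(v_{r}(y))|_{\Frob}^{2}\le\Xi_{2;s_{0},s_{1}\vee s_{2}}^{\sigma,\rho}[v_{s_{0}}]^{2}$ reduces matters to spatial integrals of squared differences of heat kernels. The integral $I_{2}=\gamma_{\rho}\int_{s_{1}}^{s_{2}}\clG_{t_{2}+s_{2}+\rho-r}[\sigma(v_{r})\dif W_{r}](x_{2})$ involves only a single kernel, and $\int G_{\tau}^{2}=(4\pi\tau)^{-1}$ collapses its bound to a logarithm in $r$, yielding a contribution of the form $\Xi\cdot\sfL_{\rho}\bigl(\tfrac{s_{2}-s_{1}}{t_{2}+\rho}\bigr)^{1/2}$. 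For $I_{1}$, expanding $\bigl|G_{\tau_{2}+\rho-r}(x_{2}-y)-G_{\tau_{1}+\rho-r}(x_{1}-y)\bigr|^{2}$ and using $G_{a}*G_{b}=G_{a+b}$ reduces the spatial integral to an expression of the form $\frac{c}{u}\bigl[1-\e^{-|x_{1}-x_{2}|^{2}/(4u)}\bigr]$, plus analogous contributions encoding the time difference $|\tau_{2}-\tau_{1}|$. Splitting the resulting time integral into the regimes $u\le|x_{1}-x_{2}|^{2}+|\tau_{2}-\tau_{1}|$ and $u\ge|x_{1}-x_{2}|^{2}+|\tau_{2}-\tau_{1}|$ produces a logarithm of $1+(|s_{2}-s_{1}|+|t_{2}-t_{1}|+|x_{1}-x_{2}|^{2}/2)/(t_{1}\wedge t_{2}+\rho)$, which after dividing by $\sfL(1/\rho)$ matches the first term of the RHS.

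For the deterministic piece $D$, I apply Cauchy--Schwarz,
\[
\bigl(\EE|D|^{2}\bigr)^{1/2}\le\vvvert v_{s_{0}}\vvvert_{2}\,\|G_{\tau_{2}}(x_{2}-\anon)-G_{\tau_{1}}(x_{1}-\anon)\|_{L^{1}(\RR^{2})},
\]
combined with the standard parabolic estimate $\|G_{\tau_{2}}(x_{2}-\anon)-G_{\tau_{1}}(x_{1}-\anon)\|_{L^{1}}^{2}\le C(|\tau_{2}-\tau_{1}|+|x_{1}-x_{2}|^{2})/(\tau_{1}\wedge\tau_{2})$. Since $\tau_{1}\wedge\tau_{2}\ge s_{1}\wedge s_{2}-s_{0}$ and $|\tau_{2}-\tau_{1}|\le|s_{2}-s_{1}|+|t_{2}-t_{1}|$, this yields the polynomial second term in the RHS.

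The main technical obstacle is the careful case analysis of the stochastic time integral arising from $I_{1}$, in which one must disentangle the contributions of $|\tau_{2}-\tau_{1}|$ and $|x_{1}-x_{2}|^{2}$ uniformly across regimes to produce the single $\sfL_{\rho}$ expression. The residual $\sfL_{\rho}(1+\sfL(1/\rho))^{1/2}$ term absorbs an unavoidable lower-order contribution from the regime where the kernel time scale reaches the noise cutoff $\rho$, and ensures the bound remains meaningful in edge cases where $t_{1}\wedge t_{2}+\rho$ is tiny and the first term would otherwise blow up while the random variable is still controlled in $L^{2}$ by $\Xi$ alone.
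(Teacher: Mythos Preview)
Your decomposition $D+I_{1}+I_{2}$ and the subsequent estimates are sound and do lead to \cref{eq:continuitybd}, but this is not how the paper argues. The paper does \emph{not} go back to the initial time $s_{0}$; instead it introduces an intermediate time $s\in(s_{0},s_{1}]$ and splits via the triangle inequality into three pieces: two martingale increments $V_{s_{i}}^{\rho,T_{i}}(x_{i})-V_{s}^{\rho,T_{i}}(x_{i})$ (handled by \cref{lem:VTcontinuity}, exactly your $I_{2}$-type computation) and a single deterministic heat-kernel difference $\clG_{T_{1}-s}v_{s}(x_{1})-\clG_{T_{2}-s}v_{s}(x_{2})$ (handled by the $L^{1}$ estimate \cref{eq:L1ofgaussians}). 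The free parameter is then chosen as $s=\max\{s_{0},\,s_{1}-\sfL(1/\rho)[|T_{1}-T_{2}|+|x_{1}-x_{2}|^{2}/2]\}$, which balances the two types of terms; the $\sfL_{\rho}(1+\sfL(1/\rho))^{1/2}$ residual arises precisely from the $\sfL(1/\rho)$ factor in this choice via \cref{eq:Lmult}. By contrast, your route requires a direct analysis of the $L^{2}$ difference of heat kernels in $I_{1}$---the regime-splitting you describe does work, but it is heavier than the paper's approach, which avoids $L^{2}$ kernel differences entirely by trading them for $L^{1}$ differences at a well-chosen intermediate time. In your approach the residual comes from the $O(1)$ tail of the $I_{1}$ integral (the large-$u$ regime where the integrand decays like $u^{-2}$), not from the $\rho$-cutoff as you suggest; and your remark about the first term ``blowing up'' is off---$\sfL_{\rho}$ is always finite, and the residual is simply an artifact of the proof method.
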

\begin{proof}
  Assume without loss of generality that $s_{1}\le s_{2}$. For $i=1,2$,
  define $T_{i}=t_{i}+s_{i}$. Fix $s<s_{1}$, to be chosen later. We
  have, by the triangle inequality, that
  \begin{align}
    (\EE |\clG_{t_{1}}v_{s_{1}}(x_{1})-\clG_{t_{2}}v_{s_{2}}(x_{2})|^{2})^{1/2} & \le(\EE |\clG_{t_{1}}v_{s_{1}}(x_{1})-\clG_{T_{1}-s}v_{s}(x_{1})|^{2})^{1/2}\nonumber                       \\
                                                                                & \qquad+(\EE |\clG_{T_{1}-s}v_{s}(x_{1})-\clG_{T_{2}-s}v_{s}(x_{2})|^{2})^{1/2}\nonumber                     \\
                                                                                & \qquad+(\EE |\clG_{T_{2}-s}v_{s}(x_{2})-\clG_{t_{2}}v_{s_{2}}(x_{2})|^{2})^{1/2}.\label{eq:continuitysplit}
  \end{align}
  For the first and third terms of \cref{eq:continuitysplit}, we have
  by \cref{lem:VTcontinuity} (using that $s<s_{1}\le s_{2}$) that
  \begin{align}
    \EE |\clG_{t_{i}}v_{s_{i}}(x_{i})-\clG_{t_{i}+s_{i}-s}v_{s}(x_{i})|^{2} & =\EE |V_{s_{i}}^{\rho,t_{i}+s_{i}}(x_{i})-V_{s}^{\rho,t_{i}+s_{i}}(x_{i})|^{2}\nonumber                               \\
                                                                            & \le\Xi_{2;s_{0},s_{i}}^{\sigma,\rho}[v_{s_{0}}]^{2}\sfL_{\rho}\left(\frac{s_{i}-s}{t_{i}+\rho}\right).\label{eq:firstandthirdterms}
  \end{align}
  For the second term of \cref{eq:continuitysplit}, we have
  \begin{align}
    \EE & |\clG_{T_{1}-s}v_{s}(x_{1})-\clG_{T_{2}-s}v_{s}(x_{2})|^{2} = \EE \left\lvert  \int[G_{T_{1}-s}(x_{1}-y)-G_{T_{2}-s}(x_{2}-y)]v_{s}(y)\,\dif y\right\rvert  ^{2}\nonumber                                                                                               \\
        & \leq\iint\EE [|v_{s}(y_{1})||v_{s}(y_{2})|]\prod_{j=1}^{2}|G_{T_{1}-s}(x_{1}-y_{j})-G_{T_{2}-s}(x_{2}-y_{j})|\,\dif y_{1}\,\dif y_{2}\nonumber                                                                                                               \\
        & \le\vvvert v_{s}\vvvert_{2}^{2}\|G_{T_{1}-s}(x_{1}-\cdot)-G_{T_{2}-s}(x_{2}-\cdot)\|_{L^{1}}^{2}\nonumber                                                                                                                                                 \\
        & \le\Xi_{2;s_{0},s}^{\sigma,\rho}[v_{s_{0}}]^{2}\frac{|T_{1}-T_{2}|+\frac{1}{2}|x_{1}-x_{2}|^{2}}{(T_{1}-s)\wedge(T_{2}-s)}\le\Xi_{2;s_{0},s}^{\sigma,\rho}[v_{s_{0}}]^{2}\frac{|T_{1}-T_{2}|+\frac{1}{2}|x_{1}-x_{2}|^{2}}{s_{1}-s},\label{eq:middleterm}
  \end{align}
  with the penultimate inequality by \cref{eq:L1ofgaussians}.

  Now take
  \begin{equation}
    s\coloneqq\max\{s_{0},s_{1}-\sfL(1/\rho)[|T_{1}-T_{2}|+|x_{1}-x_{2}|^{2}/2]\}.\label{eq:schoice}
  \end{equation}
  Then \cref{eq:firstandthirdterms} means that
  \begin{align}
    ( & \EE |\clG_{t_{i}}v_{s_{i}}(x_{i})-\clG_{t_{i}+s_{i}-s}v_{s}(x_{i})|^{2})^{1/2}\le\Xi_{2;s_{0},s_{i}}^{\sigma,\rho}[s_0]\sfL_{\rho}\left(\frac{s_{2}-s_{1}+\sfL(1/\rho)[|T_{1}-T_{2}|+|x_{1}-x_{2}|^{2}/2]}{t_{i}+\rho}\right)^{1/2}\nonumber \\
      & \le\Xi_{2;s_{0},s_{i}}^{\sigma,\rho}[s_0]\left[\sfL_{\rho}\left(\frac{s_{2}-s_{1}+|T_{1}-T_{2}|+|x_{1}-x_{2}|^{2}/2}{t_{i}+\rho}\right)^{1/2}+\sfL_{\rho}\bigl(\sfL(1/\rho)\bigr)^{1/2}\right]\nonumber                                                \\
      & \le\Xi_{2;s_{0},s_{i}}^{\sigma,\rho}[s_0]\left[\sfL_{\rho}\left(\frac{2(s_{2}-s_{1})+|t_{1}-t_{2}|+|x_{1}-x_{2}|^{2}/2}{t_{i}+\rho}\right)^{1/2}+\sfL_{\rho}\bigl(\sfL(1/\rho)\bigr)^{1/2}\right]\label{eq:firstthirdtermours}
  \end{align}
  with the second inequality by \cref{eq:Lmult}. Also, with the choice
  \cref{eq:schoice} of $s$, we have by \cref{eq:middleterm} that
  \begin{align}
    \EE |\clG_{T_{1}-s}v_{s}(x_{1})-\clG_{T_{2}-s}v_{s}(x_{2})|^{2} & \le\Xi_{2;s_{0},s}^{\sigma,\rho}[v_{s_{0}}]^{2}\frac{|T_{1}-T_{2}|+\frac{1}{2}|x_{1}-x_{2}|^{2}}{s_{1}-\max\{s_{0},s_{1}-\sfL(1/\rho)[|T_{1}-T_{2}|+|x_{1}-x_{2}|^{2}/2]\}}\nonumber                    \\
                                                                    & =\Xi_{2;s_{0},s}^{\sigma,\rho}[v_{s_{0}}]^{2}\max\left\{ \frac{|T_{1}-T_{2}|+\frac{1}{2}|x_{1}-x_{2}|^{2}}{s_{1}-s_{0}},\frac{1}{\sfL(1/\rho)}\right\} \nonumber                                        \\
                                                                    & \le\Xi_{2;s_{0},s}^{\sigma,\rho}[v_{s_{0}}]^{2}\left(\frac{|t_{1}-t_{2}|+|s_{1}-s_{2}|+\frac{1}{2}|x_{1}-x_{2}|^{2}}{s_{1}\wedge s_{2}-s_{0}}+\frac{1}{\sfL(1/\rho)}\right).\label{eq:middeltermours-1}
  \end{align}
  Using \cref{eq:firstthirdtermours} and \cref{eq:middeltermours-1} in
  \cref{eq:continuitysplit}, we get \cref{eq:continuitybd}.
\end{proof}

\section{Local SPDE estimates\label{sec:local-SPDE-estimates}}

The estimates in this section are largely analogous to those proved
in \cite{DG22}, which focused on the case $\Lip(\sigma)<1$ (in the notation of this paper).
We no longer assume $\Lip(\sigma)<1$, so the estimates here will only hold on a short time scale depending on $\Lip(\sigma)$.
We define this short
time scale as
\nomenclature[T bar]{$\overline{T}_{\rho}(\lambda)$}{$\sfT_{\rho}\bigl(\lambda^{-2}-2/\sfL(1/\rho)\bigr)\approx\rho^{1-\lambda^{-2}}$, \cref{eq:mtbardef}}
\begin{equation}
  \overline{T}_{\rho}(\lambda)\coloneqq\sfT_{\rho}\bigl(\lambda^{-2}-2\sfL(1/\rho)^{-1}\bigr),\label{eq:mtbardef}
\end{equation}
where we will generally take $\lambda$ to be (an upper bound on) $\Lip(\sigma)$.
Recalling \cref{eq:Tapprox}, we have $\overline{T}_{\rho}(\lambda) \approx \rho^{1 - \lambda^{-2}}$ when $\rho \ll 1$.
Naturally, many bounds will blow up as the time scale approaches $\overline{T}_{\rho}(\lambda)$; to facilitate quantitative estimates, we define the parameter
\nomenclature[K lambda rho]{$K_{\lambda,\rho,t}$}{error bound term equal to $[1-\lambda^2(\sfS_\rho(t)+2/\sfL(1/\rho))]^{-1}$, \cref{eq:Kdef-1}}
\begin{equation}
  K_{\lambda,\rho,t}\coloneqq\bigl(1-\lambda^{2}\bigl[\sfS_{\rho}(t)+2\sfL(1/\rho)^{-1}\bigr]\bigr)^{-1}\qquad\text{for }t\in\bigl[0,\overline{T}_{\rho}(\lambda)\bigr).\label{eq:Kdef-1}
\end{equation}
This is order 1 when $t \approx \rho^{1 - q}$ for $q < \lambda^{-2}$, but diverges as $t \nearrow \overline{T}_{\rho}(\lambda)$.

If $\lambda<1$, then $\overline{T}_{\rho}(\lambda)\to\infty$
as $\rho\searrow0$ and the estimates proved in this section will hold on macroscopic time scales;
this is the case considered in \cite{DG22}. When $\lambda>1$, however, $\overline{T}_{\rho}(\lambda)\to0$ as $\rho\searrow0$, so the estimates only work on short time scales.
Thus the results of this section are ``local,'' in analogy with \cref{subsec:FBSDE-local} for the FBSDE.

Throughout, we use \cref{prop:momentbd} to control moments of $v$.
However, \cref{prop:momentbd} is conceptually different from the results of this section, as it only requires $\sigma\in\quadset(M,\beta)$ rather than $\Lip(\sigma)\le\lambda$.
We therefore isolate the moments in subsequent estimates via the $\Xi$ notation defined in \cref{def:momentbdnotation}.
We will not consider multiple $\sigma$s or $\rho$s, so we abbreviate $\clV_{s,t}=\clV_{s,t}^{\sigma,\rho}$, $\Xi_{\ell;s,t}=\Xi_{\ell;s,t}^{\sigma,\rho}$, and $\Xi_{\ell;t}=\Xi_{\ell;0,t}=\Xi_{\ell;0,t}^{\sigma,\rho}$.

\subsection{Turning off the noise and flattening\label{subsec:noise-flatten}}
In this section, we explore the impact of two operations on the solution $v$.
In the first, we turn off the noise on a ``quiet interval,'' causing \cref{eq:SPDE} to reduce to the heat equation.
In the second operation, we replace the solution by a constant (``flatten'' it) at a certain moment in time.
Their combination plays a major role in our proof of \cref{thm:mainthm-singlepoint}.

\subsubsection{Turning off the noise field}

We begin by studying the effect of turning off the noise.
Precisely, we compare the operators $\clV_{\tau_{0},\tau_{2}}$ and $\clV_{\tau_{1},\tau_{2}}\clG_{\tau_{1}-\tau_{0}}$.
The first represents the dynamics \cref{eq:SPDE} on the time interval $[\tau_{0},\tau_{2}]$.
The second runs the deterministic heat equation on the quiet interval $[\tau_{0},\tau_{1}]$, followed by the full random dynamics \cref{eq:SPDE} on $[\tau_{1},\tau_{2}]$.
This is equivalent to solving \cref{eq:SPDE} with the noise $\dif W_t$ replaced by $\tbf{1}_{[\tau_1,\tau_2]}(t)\dif W_t$.

The following proposition is analogous to \cite[Proposition 4.1]{DG22} and is proved in the same way.
\begin{prop}
  \label{prop:turnoffnoise}
  Suppose $\lambda\in(0,\infty)$, $\sigma\in\lipset(\lambda)$, and $\tau_{0}\le\tau_{1} < \tau_{2}$ satisfy $\tau_{2}-\tau_{1} < \overline{T}_{\rho}(\lambda)$.
  For all $v_{\tau_{0}}\in\scrX_{\tau_{0}}^{2}$, we have
  \[
    \vvvert\clV_{\tau_{0},\tau_{2}}v_{\tau_{0}}-\clV_{\tau_{1},\tau_{2}}\clG_{\tau_{1}-\tau_{0}}v_{\tau_{0}}\vvvert_{2}^{2}\le K_{\lambda,\rho,\tau_{2}-\tau_{1}}\Xi_{2;\tau_0,\tau_{2}}[v_{\tau_0}]^{2}\left[\sfL_{\rho}\left(\frac{\tau_{1}-\tau_{0}}{\tau_{2}-\tau_{1}+\rho}\right)+\frac{\sfL(2)K_{\lambda,\rho,\tau_{2}-\tau_{1}}}{\sfL(1/\rho)}\right].
  \]
\end{prop}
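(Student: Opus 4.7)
The plan is to exploit the mild formulations of $v_t \coloneqq \clV_{\tau_0, t}v_{\tau_0}$ and $u_t \coloneqq \clV_{\tau_1, t}\clG_{\tau_1-\tau_0}v_{\tau_0}$ together with the semigroup identity $\clG_{t-\tau_1}\clG_{\tau_1-\tau_0} = \clG_{t-\tau_0}$ to cancel the deterministic contributions, leaving
\[
  v_{\tau_2}(x) - u_{\tau_2}(x) = \gamma_\rho \int_{\tau_0}^{\tau_1}\clG_{\tau_2+\rho-r}[\sigma(v_r)\,\dif W_r](x) + \gamma_\rho \int_{\tau_1}^{\tau_2}\clG_{\tau_2+\rho-r}[(\sigma(v_r) - \sigma(u_r))\,\dif W_r](x),
\]
which I will call $A(x) + B(x)$. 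Because these two It\^o integrals are over disjoint time intervals, they are orthogonal in $L^{2}$, so $\EE|v_{\tau_2}(x)-u_{\tau_2}(x)|^{2} = \EE|A(x)|^{2} + \EE|B(x)|^{2}$.

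The piece $A$ produces the main term: It\^o's isometry combined with $\int G^{2}_{\tau_2+\rho-r}(x-y)\,\dif y = (4\pi(\tau_2+\rho-r))^{-1}$ and the moment control $\EE|\sigma(v_r(y))|_\Frob^{2} \le \Xi_{2;\tau_1}[v_0]^{2}$ yields $\EE|A(x)|^{2} \le \Xi_{2;\tau_1}[v_0]^{2}\,\sfL_\rho((\tau_1-\tau_0)/(\tau_2-\tau_1+\rho))$. For $B$, the Lipschitz estimate $|\sigma(v_r) - \sigma(u_r)|_\Frob \le \lambda|v_r - u_r|$ and It\^o's isometry give
\[
  \EE|B(x)|^{2} \le \frac{\lambda^{2}}{\sfL(1/\rho)}\int_{\tau_1}^{\tau_2}\frac{\vvvert v_r - u_r\vvvert_2^{2}}{\tau_2+\rho-r}\,\dif r.
\]
Running the same decomposition at any intermediate time $t \in [\tau_1,\tau_2]$ shows that $\phi(t) \coloneqq \vvvert v_t - u_t\vvvert_2^{2}$ satisfies a Volterra inequality of the same shape, which can be closed in two steps.

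First, a crude sup-norm Gr\"onwall estimate on $\Psi(\tau_2) \coloneqq \sup_{r \in [\tau_1,\tau_2]}\phi(r)$, using $\int_{\tau_1}^{t} \dif r/(t+\rho-r) = \sfL(1/\rho)\sfS_\rho(t-\tau_1)$ and bounding the inhomogeneous term by its worst case at $r = \tau_1$, gives $\Psi(\tau_2) \le K\,\Xi_{2;\tau_1}[v_0]^{2}\,\sfL_\rho((\tau_1-\tau_0)/\rho)$ with a constant $K$ of the order of $K_{\lambda,\rho,\tau_2-\tau_1}$. This is too crude to serve as the main bound, but it enters only through a small boundary correction. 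Plugging $\phi(r) \le \Psi(\tau_2)$ back into the pointwise inequality at $t = \tau_2$ and splitting $[\tau_1,\tau_2]$ at $\tau_1 + \rho$: on the boundary layer $[\tau_1,\tau_1+\rho]$ the kernel satisfies $(\tau_2+\rho-r)^{-1} \le (\tau_2-\tau_1)^{-1}$, producing a correction of order $(\sfL(2)/\sfL(1/\rho))\Psi(\tau_2)$ and hence the $K^{2}\sfL(2)/\sfL(1/\rho)$ term, while on the bulk $[\tau_1+\rho,\tau_2]$ the inhomogeneous piece is already controlled by the $A$-bound. The main obstacle is this two-step accounting: the naive sup-norm Gr\"onwall carries an $\sfL_\rho((\tau_1-\tau_0)/\rho)$ factor, asymptotically much larger than the desired $\sfL_\rho((\tau_1-\tau_0)/(\tau_2-\tau_1+\rho))$, so the heat-kernel smoothing near $r = \tau_1$ must be exploited explicitly via the boundary-layer split, and the extra $2/\sfL(1/\rho)$ offset built into the definition of $K_{\lambda,\rho,t}$ must be arranged to absorb both iterations of the contraction cleanly.
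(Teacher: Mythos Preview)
Your setup matches the paper exactly: the mild formulas and the semigroup identity give the decomposition $v_t-u_t=A+B$, It\^o isometry plus the Lipschitz bound on $\sigma$ produce the recursive inequality
\[
  \phi(t)\le \Xi_{2;\tau_1}[v_0]^2\,\sfL_\rho\!\left(\frac{\tau_1-\tau_0}{t-\tau_1+\rho}\right)+\frac{\lambda^2}{\sfL(1/\rho)}\int_{\tau_1}^{t}\frac{\phi(r)}{t+\rho-r}\,\dif r
\]
for $\phi(t)=\vvvert v_t-u_t\vvvert_2^2$, and your bound on the $A$-piece is correct.

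The gap is in how you close this inequality. Your crude step gives $\Psi\le K\,\Xi^2\,\sfL_\rho((\tau_1-\tau_0)/\rho)$, and you then insert $\phi(r)\le\Psi$ back at $t=\tau_2$. But on the bulk $[\tau_1+\rho,\tau_2]$ this contributes
\[
  \frac{\lambda^2}{\sfL(1/\rho)}\int_{\tau_1+\rho}^{\tau_2}\frac{\Psi}{\tau_2+\rho-r}\,\dif r\;\approx\;\lambda^2\sfS_\rho(\tau_2-\tau_1)\,K\,\Xi^2\,\sfL_\rho\!\left(\frac{\tau_1-\tau_0}{\rho}\right),
\]
still carrying the large factor $\sfL_\rho((\tau_1-\tau_0)/\rho)$ rather than $\sfL_\rho((\tau_1-\tau_0)/(\tau_2-\tau_1+\rho))$. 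Your claim that on the bulk ``the inhomogeneous piece is already controlled by the $A$-bound'' is not right: it is $\phi(r)$, not the inhomogeneity alone, that sits in the integrand. Each pass through the recursion only multiplies the overshoot by $\alpha=\lambda^2[\sfS_\rho(\tau_2-\tau_1)+2/\sfL(1/\rho)]$, which can be arbitrarily close to $1$ under the hypothesis $\tau_2-\tau_1<\overline{T}_\rho(\lambda)$; reducing $\sfL_\rho((\tau_1-\tau_0)/\rho)$ down to $K\sfL(2)/\sfL(1/\rho)$ therefore cannot be done in two steps.

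The paper closes the recursion via an auxiliary lemma that iterates the structured ansatz $f(\tau)\le B_1\sfL(\zeta/(\tau+\rho))+B_2$ infinitely often: one shows this form is preserved with $B_1^{(n+1)}=A+\alpha B_1^{(n)}$ and $B_2^{(n+1)}=\sfL(2)B_1^{(n)}+\alpha B_2^{(n)}$, and in the limit $B_1\to AK$ and $B_2\to AK^2\sfL(2)$. The essential point you are missing is that the $t$-dependence of the inhomogeneous term must be carried through the iteration rather than replaced by its worst value at $t=\tau_1$; once you swap your sup-norm Gr\"onwall for this ansatz-preserving fixed-point argument, everything goes through with the stated constants.
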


\begin{proof}
  Assume without loss of generality that $\tau_{0}=0$. For $t\ge\tau_{1}$,
  define $v_{t}=\clV_{0,t}v_{0}$ and $\widetilde{v}_{t}=\clV_{\tau_{1},t}\clG_{\tau_{1}}v_{0}$.
  Using \cref{eq:mildsoln}, we have
  \[
    v_{t}-\widetilde{v}_{t}=\gamma_{\rho}\int_{0}^{\tau_{1}}\clG_{t+\rho-r}[\sigma(v_{r})\,\dif W_{r}]+\gamma_{\rho}\int_{\tau_{1}}^{t}\clG_{t+\rho-r}\bigl([\sigma(v_{r})-\sigma(\widetilde{v}_{r})]\,\dif W_{r}\bigr).
  \]
  Taking second moments and using the Itô isometry, we can estimate
  \begin{align*}
    \EE |(v_{t}-\widetilde{v}_{t})(x)|^{2} & =\frac{4\pi}{\sfL(1/\rho)}\int_{0}^{\tau_{1}}\!\!\!\!\int G_{t+\rho-r}^{2}(x-y)\EE \bigl\lvert \sigma\bigl(v_{r}(y)\bigr)\bigr\rvert _{\Frob}^{2}\,\dif y\,\dif r\\
                                           & \qquad+\frac{4\pi}{\sfL(1/\rho)}\int_{\tau_{1}}^{t}\!\!\!\int G_{t+\rho-r}^{2}(x-y)\EE \bigl\lvert \sigma\bigl(v_{r}(y)\bigr)-\sigma\bigl(\widetilde{v}_{r}(y)\bigr)\bigr\rvert _{\Frob}^{2}\,\dif y\,\dif r\\
                                           & \le\frac{\Xi_{2;\tau_1}[v_0]^2}{\sfL(1/\rho)}\log\frac{t+\rho}{t+\rho-\tau_{1}}+\frac{\lambda^{2}}{\sfL(1/\rho)}\int_{\tau_{1}}^{t}\frac{\vvvert v_{r}-\widetilde{v}_{r}\vvvert_{2}^{2}}{t+\rho-r}\,\dif r,
  \end{align*}
  where in the last inequality we used the Lipschitz property of $\sigma$.
  If we define
  \begin{equation}
    f(\tau)\coloneqq\vvvert v_{\tau_{1}+\tau}-\widetilde{v}_{\tau_{1}+\tau}\vvvert_{2}^{2} \quad \text{for } \tau \in [0, \tau_2 - \tau_1],\label{eq:fdef}
  \end{equation}
  this means that
  \[
    f(\tau)\le\frac{\Xi_{2;\tau_1}[v_0]^2}{\sfL(1/\rho)} \sfL\left(\frac{\tau_1}{\tau + \rho}\right) + \frac{\lambda^{2}}{\sfL(1/\rho)}\int_{0}^{\tau}\frac{f(r)}{\tau+\rho -r}\,\dif r.
  \]
  Therefore, by \cref{lem:turnoffnoiselemma} below (with $A\setto\frac{\Xi_{2;\tau_{1}}[v_{0}]^{2}}{\sfL(1/\rho)}$
  and $\zeta\setto\tau_{1}$), we obtain
  \begin{align*}
    \vvvert v_{\tau_{2}}-\widetilde{v}_{\tau_{2}}\vvvert_{2}^{2}\overset{\cref{eq:fdef}}{=}f(\tau_{2}-\tau_{1}) & \le\frac{\Xi_{2;\tau_{1}}[v_{0}]^{2}}{\sfL(1/\rho)}K_{\lambda,\rho,\tau_{2}-\tau_{1}}\left[\sfL\left(\frac{\tau_{1}}{\tau_{2}-\tau_{1}+\rho}\right)+\sfL(2)K_{\lambda,\rho,\tau_{2}-\tau_{1}}\right]       \\
                                                                                                                & =\Xi_{2;\tau_{1}}[v_{0}]^{2}K_{\lambda,\rho,\tau_{2}-\tau_{1}}\left[\sfL_{\rho}\left(\frac{\tau_{1}}{\tau_{2}-\tau_{1}+\rho}\right)+\frac{\sfL(2)K_{\lambda,\rho,\tau_{2}-\tau_{1}}}{\sfL(1/\rho)}\right],
  \end{align*}
  completing the proof.
\end{proof}
\begin{lem}
  \label{lem:turnoffnoiselemma}Fix $A,\zeta,T\in[0,\infty)$. Let $f:[0,T]\to[0,\infty)$
  be a bounded function such that, for all $t\in[0,T]$, we have the
  bound
  \begin{equation}
    f(t)\le A\sfL\left(\frac{\zeta}{t+\rho}\right)+\frac{\lambda^{2}}{\sfL(1/\rho)}\int_{0}^{t}\frac{f(s)}{t+\rho -s}\,\dif s.\label{eq:frecursive}
  \end{equation}
  Then we have for all
  \begin{equation}
    t\in\bigl[0,T\wedge\overline{T}_{\rho}(\lambda)\bigr)\label{eq:tnottoobig}
  \end{equation}
  that
  \[
    f(t)\le AK_{\lambda,\rho,t}\left[\sfL\left(\frac{\zeta}{t+\rho}\right)+\sfL(2)K_{\lambda,\rho,t}\right].
  \]
\end{lem}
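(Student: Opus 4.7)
The statement is a singular Grönwall-type inequality, and the plan is to iterate it and identify a supersolution of the associated integral equation. Write $Lf(t) := \mu \int_0^t f(s)/(t+\rho-s)\,ds$ with $\mu = \lambda^2/\sfL(1/\rho)$, so the hypothesis reads $f \le B + Lf$ for $B(t) := A\sfL(\zeta/(t+\rho))$. The basic calculation $L1(t) = \mu\log((t+\rho)/\rho) = \lambda^2\sfS_\rho(t)$ combined with an easy induction gives $L^n 1(t) \le (\lambda^2\sfS_\rho(t))^n$, which tends to $0$ for $t < \overline{T}_{\rho}(\lambda)$. Hence, since $f$ is bounded by hypothesis, the Neumann expansion $f \le \sum_{n \ge 0} L^n B$ is valid.

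Rather than sum the series directly, the plan is to verify that the proposed upper bound
\[
\phi(t) := AK_{\lambda,\rho,t}\bigl[\sfL(\zeta/(t+\rho)) + \sfL(2)\, K_{\lambda,\rho,t}\bigr]
\]
is a supersolution $\phi \ge B + L\phi$, and conclude $f \le \phi$ by comparison: setting $h = (f - \phi)_{+}$, the two inequalities together with the monotonicity of $L$ give $h \le Lh \le L^n h \to 0$. Using that $K_{\lambda,\rho,s}$ is non-decreasing in $s$, together with the algebraic identity $1 - \lambda^2 \sfS_\rho(t) = 1/K_{\lambda,\rho,t} + 2\lambda^2/\sfL(1/\rho)$ (which encodes the meaning of the $2/\sfL(1/\rho)$ shift in $K$), verifying the supersolution inequality reduces to an upper bound on $LB(t)$ of the form $LB(t) \le \lambda^2 \sfS_\rho(t)\, B(t) + A\sfL(2) + (\text{non-negative subleading terms})$.

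The main obstacle is this estimate on $LB$. The naive bound $LB(t) \le \lambda^2 \sfS_\rho(t)\, B(0)$ is far too weak, since $B$ is decreasing and $B(0)=A\sfL(\zeta/\rho)$ can be much larger than $B(t)$. The plan is to pass to log-coordinates $U = \log((t+\rho)/\rho)$, $V = \log((s+\rho)/\rho)$, in which $L$ takes the cleaner form $\mu\int_0^U \tilde f(V)\, e^V/(e^U - e^V + 1)\,\dif V$, and then substitute $w = e^U - e^V + 1$ to obtain $\mu \int_1^{e^U} \tilde f^{*}(w)/w\,\dif w$ for the appropriate pullback $\tilde f^{*}$. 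Splitting the $w$-integral at $w=3$ (equivalently $s = t - 2\rho$ in the original coordinates) separates a near-endpoint contribution of kernel weight $\int_1^3 \dif w/w = \log 3 = \sfL(2)$ from a far-range contribution of weight $U - \log 3$; the former is the natural source of the constant $\sfL(2)$ in the bound. Integration by parts on $LB(t)$, using the explicit form of $B$, then reveals a cancellation between the leading $\lambda^2 \sfS_\rho(t)\, B(0)$ term and a compensating correction, yielding the required diagonal estimate with an $A\sfL(2)$ remainder. The $2/\sfL(1/\rho)$ shift in the definition of $K_{\lambda,\rho,t}$ is precisely what is needed to absorb the remaining subleading terms and close the geometric iteration.
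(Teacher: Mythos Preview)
Your supersolution-and-comparison framework is sound and is essentially the closed-form endpoint of the paper's argument: the paper makes the ansatz $f(t)\le B_1\sfL(\zeta/(t+\rho))+B_2$, substitutes it into the recursion, and iterates the resulting map $(B_1,B_2)\mapsto(A+\alpha B_1,\ \sfL(2)B_1+\alpha B_2)$ with $\alpha=\lambda^2[\sfS_\rho(t)+2/\sfL(1/\rho)]$ to its fixed point, which is exactly your $\phi$. Your reduction is also correct: once one uses the monotonicity $K_{\lambda,\rho,s}\le K_{\lambda,\rho,t}$, the whole lemma reduces to the single integral bound
\[
LB(t)\;=\;\frac{A\lambda^2}{\sfL(1/\rho)}\int_0^t\frac{\sfL\!\left(\frac{\zeta}{s+\rho}\right)}{t+\rho-s}\,\dif s\;\le\;\alpha\bigl[B(t)+A\sfL(2)\bigr],
\]
after which either your comparison or the paper's iteration closes at once.

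The gap is in your plan for this integral bound. Splitting at $w=3$ (i.e.\ $s=t-2\rho$) does not isolate anything useful: on the far range $s\in[0,t-2\rho]$ neither the kernel nor the integrand is controlled, and the ``near-endpoint kernel weight $\sfL(2)$'' multiplies $B(t-2\rho)$, not $A$, so it is not the source of the additive $A\sfL(2)$. Integration by parts (with antiderivative of the kernel chosen to vanish at $s=0$) does produce the correct leading term $\lambda^2\sfS_\rho(t)B(t)$, but the natural bound on the remainder gives constant $2\log 2$ rather than $\sfL(2)=\log 3$; when $\alpha$ is near $1$ and $\rho$ is small this is too large for $\phi$ to be a supersolution. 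Your description also does not explain where the $+2$ in $[\sfL(t/\rho)+2]$---equivalently, the $2/\sfL(1/\rho)$ shift built into $K_{\lambda,\rho,t}$---would arise.

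The paper's device is to split at $s=t/2$, not $s=t-2\rho$. On $[0,t/2]$ the kernel is bounded by $(t/2+\rho)^{-1}$ and $\sfL(\zeta/(s+\rho))$ is integrated explicitly, producing the $+2$; on $[t/2,t]$ the integrand is bounded via $\sfL(\zeta/(t/2+\rho))\le\sfL(\zeta/(t+\rho))+\sfL(2)$ and the kernel is integrated, producing the $\sfL(2)$. Together these give exactly $\int_0^t\sfL(\zeta/(s+\rho))/(t+\rho-s)\,\dif s\le[\sfL(\zeta/(t+\rho))+\sfL(2)][\sfL(t/\rho)+2]$, which is precisely $LB(t)\le\alpha[B(t)+A\sfL(2)]$. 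Plugging this into your supersolution scheme (or the paper's iteration) finishes the proof.
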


\begin{proof}
  The proof is essentially the same as that of \cite[Lemma 4.3]{DG22}.
  We make the \emph{ansatz}
  \begin{equation}
    f(t)\le B_{1}\sfL\left(\frac{\zeta}{t+\rho}\right)+B_{2}\qquad\text{for all }t\in\bigl[0,T\wedge\overline{T}_{\rho}(\lambda)\bigr).\label{eq:fansatz}
  \end{equation}
  Suppose for now that \cref{eq:fansatz} holds for some values of $B_{1}$
  and $B_{2}$. We plug \cref{eq:fansatz} into \cref{eq:frecursive} to
  get
  \begin{align}
    f(t) & \le A\sfL\left(\frac{\zeta}{t+\rho}\right)+\frac{\lambda^{2}}{\sfL(1/\rho)}\int_{0}^{t}\frac{B_{1}\sfL\bigl(\zeta(s+\rho)^{-1}\bigr)+B_{2}}{t+\rho-s}\,\dif s\nonumber                                            \\
         & =A\sfL\left(\frac{\zeta}{t+\rho}\right)+\frac{B_{1}\lambda^{2}}{\sfL(1/\rho)}\int_{0}^{t}\frac{\sfL\bigl(\zeta(s+\rho)^{-1}\bigr)}{t+\rho-s}\,\dif s+B_{2}\lambda^{2}\sfS_{\rho}(t).\label{eq:readyformiddleterm}
  \end{align}
  We divide the integral in \cref{eq:readyformiddleterm} into two parts.
  For the first half of the integration domain we write
  \begin{align*}
    \int_{0}^{t/2} & \frac{\sfL\bigl(\zeta(s+\rho)^{-1}\bigr)}{t+\rho-s}\,\dif s\le\frac{1}{t/2+\rho}\int_{0}^{t/2}\log\left(\frac{s+\rho+\zeta}{s+\rho}\right)\,\dif s\le\frac{1}{t/2+\rho}\int_{0}^{t}\log\left(\frac{t+\rho+\zeta}{s+\rho}\right)\,\dif s \\
                   & =\frac{1}{t/2+\rho}\left(t\left[1+\sfL\left(\frac{\zeta}{t+\rho}\right)\right]-\rho\sfL(t/\rho)\right)\le2+2\sfL\left(\frac{\zeta}{t+\rho}\right).
  \end{align*}
  For the second half we write
  \[
    \int_{t/2}^{t}\frac{\sfL\bigl(\zeta(s+\rho)^{-1}\bigr)}{t+\rho-s}\,\dif s\le\sfL\left(\frac{\zeta}{t/2+\rho}\right)\int_{t/2}^{t}\frac{\dif s}{t+\rho-s}\le\left[\sfL\left(\frac{\zeta}{t+\rho}\right)+\sfL(2)\right]\sfL(t/\rho),
  \]
  with the last inequality by \cref{eq:Lmult}. Combining the last two
  displays (and using that $\sfL(2)\ge1$) we obtain
  \begin{equation}
    \label{eq:two-halves}
    \int_{0}^{t}\frac{\sfL\bigl(\zeta(s+\rho)^{-1}\bigr)}{t+\rho-s}\,\dif s \le\left[\sfL\left(\frac{\zeta}{t+\rho}\right)+\sfL(2)\right]\left[\sfL(t/\rho)+2\right].
  \end{equation}
  Define
  \begin{equation}
    \alpha\coloneqq\lambda^{2}\left[\sfS_{\rho}(t)+\frac{2}{\sfL(1/\rho)}\right],\label{eq:alphadef}
  \end{equation}
  which satisfies $\al < 1$ by \cref{eq:tnottoobig} (recalling \cref{eq:mtbardef}).
  Using \cref{eq:two-halves} in \cref{eq:readyformiddleterm}, we then obtain
  \begin{align*}
    f(t) & \le A\sfL\left(\frac{\zeta}{t+\rho}\right)+\al B_{1}\left[\sfL\left(\frac{\zeta}{t+\rho}\right)+\sfL(2)\right]+B_{2}\lambda^{2}\sfS_{\rho}(t) \le(A+\alpha B_{1})\sfL\left(\frac{\zeta}{t+\rho}\right)+\sfL(2)B_1+\alpha B_{2}.
  \end{align*}
  Thus if \cref{eq:fansatz} holds with $B_{1}\setto B_{1}^{(n)}$
  and $B_{2}\setto B_{2}^{(n)}$, then it also holds with
  \begin{align}
    B_{1}\setto B_{1}^{(n+1)} & \coloneqq A+\alpha B_{1}^{(n)},\label{eq:B1recursion}  \\
    B_{2}\setto B_{2}^{(n+1)} & \coloneqq\sfL(2)B_{1}^{(n)}+\alpha B_{2}^{(n)}.\label{eq:B2recursion}
  \end{align}
  Since we assumed that $f$ is bounded, it is clearly possible to
  choose $B_{2}^{(0)}$ finite such that \cref{eq:fansatz} holds with
  $B_{1}\setto0$ and $B_{2}\setto B_{2}^{(0)}$.
  Then from \cref{eq:B1recursion} we see that
  \begin{equation}
    B_{1}^{(n+1)}=A\sum_{k=0}^{n+1}\alpha^{k}<AK_{\lambda,\rho,t},\label{eq:B1bd}
  \end{equation}
  recalling the definitions \cref{eq:alphadef} and \cref{eq:Kdef-1}. Using
  \cref{eq:B1bd} in \cref{eq:B2recursion}, we get
  \[
    B_{2}^{(n+1)}\le AK_{\lambda,\rho,t}\sfL(2)+\alpha B_{2}^{(n)},
  \]
  so (again summing a geometric series) we see that
  \begin{equation}
    \limsup_{n\to\infty}B_{2}^{(n)}\le AK_{\lambda,\rho,t}^{2}\sfL(2).\label{eq:B2to0}
  \end{equation}
  Using \cref{eq:B1bd} and \cref{eq:B2to0} in \cref{eq:fansatz} completes
  the proof.
\end{proof}

\subsubsection{Flattening}

Given $X\in\RR^{2}$, we define the operator $\clZ_{X}$ by
\nomenclature[Z cl]{$\clZ_X$}{``flattens'' a field to its value at $X \in \R^2$, \cref{eq:Zcaldef}}
\begin{equation}
  (\clZ_{X}v)(x)=v(X)\qquad\text{for all }x\in\RR^{2}.\label{eq:Zcaldef}
\end{equation}
So $\clZ_{X}$ ``flattens'' the field $v$ to its value at the spatial location $X$.

We now compare the two operators $\clV_{\tau_{1},\tau_{2}}\clG_{\tau_{1}-\tau_{0}}$ and $\clV_{\tau_{1},\tau_{2}}\clZ_{X}\clG_{\tau_{1}-\tau_{0}}$.
The first turns off the noise on the quiet interval $[\tau_0,\tau_1]$, as discussed in the previous section.
In the second operator, the field is moreover flattened to its value at $X$ after the quiet dynamics.
For appropriate parameter choices, this should have little effect on the solution close to $X$, because the heat equation smooths the small scales of $v$.

The following estimate is an analogue of \cite[Proposition 5.1]{DG22}.
\begin{prop}
  \label{prop:spatialbd}
  Suppose $\lambda\in[0,\infty)$, $\sigma\in\lipset(\lambda)$, and $\tau_{0}\le\tau_{1}<\tau_{2}$ satisfy $\tau_{2}-\tau_{1}<\overline{T}_{\rho}(\lambda)$.
  Then, for all $x,X\in\RR^{2}$ and all $v_{\tau_{0}}\in\scrX_{\tau_{0}}^{2}$,
  we have
  \begin{equation}
    \begin{aligned}\EE |(\clV_{\tau_{1},\tau_{2}}\clG_{\tau_{1}-\tau_{0}}v_{\tau_{0}}&-\clV_{\tau_{1},\tau_{2}}\clZ_{X}\clG_{\tau_{1}-\tau_{0}}v_{\tau_{0}})(x)|^{2}                                                                                                                \\
                   & \le K_{\lambda,\rho,\tau_{2}-\tau_{1}}\vvvert v_{\tau_0}\vvvert_{2}^{2}\left[\frac{\tau_{2}-\tau_{1}+\frac{1}{2}|x-X|^{2}}{\tau_{1}-\tau_{0}}+\frac{\lambda^{2}(\tau_{2}-\tau_{1})K_{\lambda,\rho,\tau_{2}-\tau_{1}}}{2(\tau_{1}-\tau_{0})\sfL(1/\rho)}\right].
    \end{aligned}
    \label{eq:spatialbd}
  \end{equation}

\end{prop}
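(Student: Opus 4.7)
The plan is to exploit the mild formulation and a two-parameter Gronwall-type iteration analogous to \cref{lem:turnoffnoiselemma}. Without loss of generality take $\tau_{0} = 0$, and set $\widetilde{v}_{t} = \clV_{\tau_{1},t}\clG_{\tau_{1}}v_{0}$ and $\widehat{v}_{t} = \clV_{\tau_{1},t}\clZ_{X}\clG_{\tau_{1}}v_{0}$; write $w \coloneqq \clG_{\tau_{1}}v_{0}$. Subtracting the two mild formulations on $[\tau_1, \tau_1+\tau]$ gives
\[
    (\widetilde{v}-\widehat{v})_{\tau_{1}+\tau}(y)
    = \underbrace{\clG_{\tau}\bigl[w-w(X)\bigr](y)}_{=:A_{\tau}(y)}
    + \underbrace{\gamma_{\rho}\int_{\tau_{1}}^{\tau_{1}+\tau}\clG_{\tau_{1}+\tau+\rho-r}\bigl[(\sigma(\widetilde{v}_{r})-\sigma(\widehat{v}_{r}))\,\dif W_{r}\bigr](y)}_{=:M_{\tau}(y)}.
\]
Since $A_{\tau}$ is $\scrF_{\tau_{1}}$-measurable and the stochastic integral $M_{\tau}$ has mean zero given $\scrF_{\tau_{1}}$, the cross term vanishes after taking expectations, so $\EE|(\widetilde{v}-\widehat{v})_{\tau_{1}+\tau}(y)|^{2} = \EE|A_{\tau}(y)|^{2}+\EE|M_{\tau}(y)|^{2}$.

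Next I would bound each piece. For the initial-data piece, writing $A_{\tau}(y) = \int[G_{\tau+\tau_{1}}(y-z)-G_{\tau_{1}}(X-z)]v_{0}(z)\,\dif z$, Cauchy--Schwarz against $\vvvert v_{0}\vvvert_{2}$ combined with the $L^{1}$-distance bound on Gaussian kernels (the same \cref{eq:L1ofgaussians} used to prove \cref{prop:continuitybd}) yields
\[
    \EE|A_{\tau}(y)|^{2} \le \vvvert v_{0}\vvvert_{2}^{2}\cdot\frac{\tau + |y-X|^{2}/2}{\tau_{1}}.
\]
For the martingale piece, Itô's isometry, the Lipschitz bound on $\sigma$, and the identity $G_{s}^{2} = (4\pi s)^{-1}G_{s/2}$ give
\[
    \EE|M_{\tau}(y)|^{2} \le \frac{\lambda^{2}}{\sfL(1/\rho)}\int_{0}^{\tau}\frac{\clG_{(\tau+\rho-s)/2}[\phi(\anon,s)](y)}{\tau+\rho-s}\,\dif s,
    \qquad \phi(y,s)\coloneqq \EE|(\widetilde{v}-\widehat{v})_{\tau_{1}+s}(y)|^{2}.
\]

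To close the recursion, I would make the ansatz
\[
    \phi(y,\tau) \le B_{1}(\tau) + B_{2}(\tau)\cdot|y-X|^{2}/2.
\]
The crucial identity is $\clG_{s}[|\anon-X|^{2}/2](y) = |y-X|^{2}/2 + s$ in two dimensions, which keeps the ansatz stable under the iteration. Matching coefficients in $|y-X|^{2}/2$ and constants separately yields the decoupled-upper-triangular recursions
\[
    B_{2}^{(n+1)}(\tau) \le \frac{\vvvert v_{0}\vvvert_{2}^{2}}{\tau_{1}} + \frac{\lambda^{2}}{\sfL(1/\rho)}\int_{0}^{\tau}\frac{B_{2}^{(n)}(s)}{\tau+\rho-s}\,\dif s,
\]
\[
    B_{1}^{(n+1)}(\tau) \le \frac{\vvvert v_{0}\vvvert_{2}^{2}\tau}{\tau_{1}} + \frac{\lambda^{2}}{\sfL(1/\rho)}\int_{0}^{\tau}\left(\frac{B_{1}^{(n)}(s)}{\tau+\rho-s} + \frac{B_{2}^{(n)}(s)}{2}\right)\dif s.
\]
The $B_{2}$ inequality is handled directly by the same geometric-series argument as in \cref{lem:turnoffnoiselemma}, yielding $B_{2}(\tau_{2}-\tau_{1})\le \vvvert v_{0}\vvvert_{2}^{2}K_{\lambda,\rho,\tau_{2}-\tau_{1}}/\tau_{1}$. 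Feeding this into the $B_{1}$ recursion and iterating again produces $B_{1}(\tau_{2}-\tau_{1}) \le K_{\lambda,\rho,\tau_{2}-\tau_{1}}\vvvert v_{0}\vvvert_{2}^{2}\bigl[(\tau_{2}-\tau_{1})/\tau_{1} + \lambda^{2}(\tau_{2}-\tau_{1})K_{\lambda,\rho,\tau_{2}-\tau_{1}}/(2\tau_{1}\sfL(1/\rho))\bigr]$. Evaluating $\phi(x,\tau_{2}-\tau_{1})$ with the ansatz gives \cref{eq:spatialbd}.

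The assumption $\tau_{2}-\tau_{1} < \overline{T}_{\rho}(\lambda)$ is exactly what makes the multiplicative factor $\alpha \coloneqq \lambda^{2}[\sfS_{\rho}(\tau_{2}-\tau_{1})+2/\sfL(1/\rho)]$ strictly less than $1$, so the geometric series for both iterations converge. The main technical obstacle is bookkeeping in the coupled $(B_{1},B_{2})$ iteration: the $B_{1}$ forcing now contains an integral of $B_{2}$, and one needs to verify that after substituting the converged bound on $B_{2}$, the $B_{1}$-iteration still closes with the same contraction constant $\alpha$, giving an extra factor of $K_{\lambda,\rho,\tau_{2}-\tau_{1}}$ rather than divergence. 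This is entirely parallel to how the $B_{2}^{(n)}$ term appears in \cref{eq:B2recursion}, so no genuinely new idea beyond \cref{lem:turnoffnoiselemma} is required.
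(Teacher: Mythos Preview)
Your proposal is correct and follows essentially the same approach as the paper's proof. The paper isolates the ansatz iteration into a separate \cref{lem:fspatialbd} (rather than reusing \cref{lem:turnoffnoiselemma}) and uses constant coefficients $B_1,B_2$ in the ansatz by first bounding the $\tau$-dependent forcing at the terminal time $\tau_2-\tau_1$, but the recursion structure and the resulting bounds are identical to yours.
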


\begin{proof}
  Assume without loss of generality that $\tau_{0}=0$.
  For all $\tau\in[0,\tau_{2}-\tau_{1}]$, we have
  \begin{align*}
    (\clV_{\tau_{1},\tau_{1}+\tau} & \clG_{\tau_{1}}v_{0}-\clV_{\tau_{1},\tau_{1}+\tau}\clZ_{X}\clG_{\tau_{1}}v_{0})(x)                                                                                                                                            \\
                                   & =\int[G_{\tau_{1}+\tau}(x-y)-G_{\tau_{1}}(X-y)]v_0(y)\,\dif y                                                                                                                                                               \\
                                   & \qquad+\gamma_{\rho}\int_{0}^{\tau}\clG_{\tau+\rho-r}\left(\left[\sigma(\clV_{\tau_{1},\tau_{1}+r}\clG_{\tau_{1}}v_{0})-\sigma(\clV_{\tau_{1},\tau_{1}+r}\clZ_{X}\clG_{\tau_{1}}v_{0})\right]\,\dif W_{\tau_{1}+r}\right)(x).
  \end{align*}
  Taking second moments, we obtain
  \begin{align}
    \EE & |(\clV_{\tau_{1},\tau_{1}+\tau}\clG_{\tau_{1}}v_{0}-\clV_{\tau_{1},\tau_{1}+\tau}\clZ_{X}\clG_{\tau_{1}}v_{0})(x)|^{2}\nonumber                                                                                                                               \\
        & \le\iint\EE \prod_{i=1}^{2}\left(|G_{\tau_{1}+\tau}(x-y_{i})-G_{\tau_{1}}(X-y_{i})||v_{s}(y_{i})|\right)\,\dif y_{1}\,\dif y_{2}\nonumber                                                                                                                    \\
        & \qquad+\frac{4\pi}{\sfL(1/\rho)}\int_{0}^{\tau}\!\!\!\!\int\EE \left\lvert  \sigma\bigl(\clV_{\tau_{1},\tau_{1}+r}\clG_{\tau_{1}}v_{0}(y)\bigr)-\sigma\bigl(\clV_{\tau_{1},\tau_{1}+r}\clZ_{X}\clG_{\tau_{1}}v_{0}(y)\bigr)\right\rvert  _{\Frob}^{2}G_{\tau+\rho-r}^{2}(x-y)\,\dif y\,\dif r\nonumber \\
        & \eqqcolon I_{1}+I_{2}.\label{eq:splitupfreezeintegral}
  \end{align}
  To estimate $I_{1}$, we use the Cauchy--Schwarz inequality on the
  probability space to write
  \begin{align}
    I_{1} & \le\iint\prod_{i=1}^{2}\left(|G_{\tau_{1}+\tau}(x-y_{i})-G_{\tau_{1}}(X-y_{i})|\left(\EE |v_{0}(y_{i})|^{2}\right)^{1/2}\right)\,\dif y_{1}\,\dif y_{2}\nonumber                                  \\
          & \le\vvvert v_{0}\vvvert_{2}^{2}\|G_{\tau_{1}+\tau}(x-\anon)-G_{\tau_{1}}(X-\anon)\|_{L^{1}(\RR^{2})}^{2}\le\frac{\tau+\frac{1}{2}|X-x|^{2}}{\tau_{1}}\vvvert v_{0}\vvvert_{2}^{2},\label{eq:I1bd}
  \end{align}
  with the last inequality by \cref{eq:L1ofgaussians}.

  To estimate $I_{2}$, we use the Lipschitz property of $\sigma$ and
  \cref{eq:GT2} to write
  \begin{equation}
    I_{2}\le\frac{\lambda^{2}}{\sfL(1/\rho)}\int_{0}^{\tau}\!\!\!\!\int\frac{G_{\frac{1}{2}[\tau+\rho-r]}(x-y)f(r,y)}{\tau+\rho-r}\,\dif y\,\dif r,\label{eq:I2bd}
  \end{equation}
  where we have defined
  \[
    f(r,y)\coloneqq\EE |(\clV_{\tau_{1},\tau_{1}+r}\clG_{\tau_{1}}v_{0}-\clV_{\tau_{1},\tau_{1}+r}\clZ_{X}\clG_{\tau_{1}}v_{0})(y)|^{2}.
  \]
  Now we plug \cref{eq:I1bd} and \cref{eq:I2bd} into \cref{eq:splitupfreezeintegral}
  to obtain, for all $\tau\in[\tau_1,\tau_2]$,
  \begin{align*}
    f(\tau,x) & \le\frac{\tau+\frac{1}{2}|X-x|^{2}}{\tau_{1}}\vvvert v_{0}\vvvert_{2}^{2}+\frac{\lambda^{2}}{\sfL(1/\rho)}\int_{0}^{\tau}\!\!\!\!\int\frac{G_{\frac{1}{2}[\tau+\rho-r]}(x-y)f(r,y)}{\tau+\rho-r}\,\dif y\,\dif r              \\
              & \le\frac{\tau_{1}-\tau_{2}+\frac{1}{2}|X-x|^{2}}{\tau_{1}}\vvvert v_{0}\vvvert_{2}^{2}+\frac{\lambda^{2}}{\sfL(1/\rho)}\int_{0}^{\tau}\!\!\!\!\int\frac{G_{\frac{1}{2}[\tau+\rho-r]}(x-y)f(r,y)}{\tau+\rho-r}\,\dif y\,\dif r.
  \end{align*}
  Then we can apply \cref{lem:fspatialbd} below with $T\setto\tau_{2}-\tau_{1}$,
  \[
    A_{1}\setto\frac{\tau_{2}-\tau_{1}}{\tau_{1}}\vvvert v_{0}\vvvert_{2}^{2}\qquad\text{and}\qquad A_{2}\setto\frac{1}{2\tau_{1}}\vvvert v_{0}\vvvert_{2}^{2}
  \]
  to obtain
  \[
    f(\tau_{2}-\tau_{1},x)\le K_{\lambda,\rho,\tau_{2}-\tau_{1}}\vvvert v_{0}\vvvert_{2}^{2}\left[\frac{\tau_{2}-\tau_{1}+\frac{1}{2}|x-X|^{2}}{\tau_{1}}+\frac{\lambda^{2}(\tau_{2}-\tau_{1})K_{\lambda,\rho,\tau_{2}-\tau_{1}}}{2\tau_{1}\sfL(1/\rho)}\right],
  \]
  which implies \cref{eq:spatialbd}.
\end{proof}
\begin{lem}
  \label{lem:fspatialbd}Fix $X\in\RR^{2}$ and $A_{1},A_{2},\lambda,T\in[0,\infty)$
  such that $T<\overline{T}_{\rho}(\lambda)$. Let $f:[0,T]\times\RR^{2}\to[0,\infty)$
  be bounded and satisfy
  \begin{equation}
    f(\tau,x)\le A_{1}+A_{2}|x-X|^{2}+\frac{\lambda^{2}}{\sfL(1/\rho)}\int_{0}^{\tau}\!\!\!\!\int\frac{G_{\frac{1}{2}[\tau+\rho-r]}(x-y)f(r,y)}{\tau+\rho-r}\,\dif y\,\dif r.\label{eq:fspatialrecursion}
  \end{equation}
  for all $\tau \in [0, T]$.
  Then
  \begin{equation}
    f(T,x)\le K_{\lambda,\rho,T}\left[A_{1}+A_{2}|x-X|^{2}+\frac{\lambda^{2}TK_{\lambda,\rho,T}A_{2}}{\sfL(1/\rho)}\right].\label{eq:fspatialbd}
  \end{equation}
\end{lem}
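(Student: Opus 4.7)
The plan is to mimic the proof of \cref{lem:turnoffnoiselemma}, iterating an ansatz of the correct shape. Since the forcing $A_1 + A_2\lvert x-X\rvert^2$ is quadratic in $x-X$ and the Gaussian kernel $G_{\frac12[\tau+\rho-r]}$ preserves quadratics up to a spatially constant correction, the natural ansatz is that $f$ is uniformly controlled on $[0,T]\times\RR^2$ by a bound of the form $B_1 + B_2\lvert x-X\rvert^2$.

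The key computation is the following Gaussian identity: since $G_s$ is a centered 2D Gaussian with variance $s$ per dimension,
\[
\int G_{\frac12[\tau+\rho-r]}(x-y)\lvert y-X\rvert^2 \,\dif y = \lvert x-X\rvert^2 + (\tau+\rho-r).
\]
Assuming $f(r,y)\le B_1 + B_2\lvert y-X\rvert^2$ for all $(r,y)\in [0,\tau]\times\RR^2$, plugging into \cref{eq:fspatialrecursion} and using this identity gives, for any $\tau'\le\tau$,
\[
f(\tau',x)\le A_1 + A_2\lvert x-X\rvert^2 + \lambda^2 \sfS_\rho(\tau)\bigl(B_1 + B_2\lvert x-X\rvert^2\bigr) + \frac{\lambda^2 B_2 \tau}{\sfL(1/\rho)}.
\]
(Monotonicity in $\tau'$ gives the uniform-in-$\tau'$ bound we need to iterate.) Setting $\tau=T$ and $\alpha\coloneqq\lambda^2\sfS_\rho(T)$, this yields the recursion
\[
B_2^{(n+1)} = A_2 + \alpha B_2^{(n)},\qquad B_1^{(n+1)} = A_1 + \alpha B_1^{(n)} + \frac{\lambda^2 T}{\sfL(1/\rho)} B_2^{(n)}.
\]

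By the assumption $T<\overline T_\rho(\lambda)$ (and the definition \cref{eq:mtbardef}), $\alpha<1$, and in fact $(1-\alpha)^{-1}\le K_{\lambda,\rho,T}$, which is the only reason for the slightly larger constant $K_{\lambda,\rho,T}$ in the conclusion rather than $(1-\alpha)^{-1}$. Since $f$ is assumed bounded, we can start the iteration with $B_1^{(0)} = \lVert f\rVert_\infty$ and $B_2^{(0)}=0$; because $\alpha<1$, the initial $\lVert f\rVert_\infty$ contribution is damped out geometrically. Summing the resulting geometric series gives $B_2^{(n)}\to A_2/(1-\alpha)$ and then $B_1^{(n)}\to (1-\alpha)^{-1}\bigl[A_1 + \lambda^2 T A_2/((1-\alpha)\sfL(1/\rho))\bigr]$, which, combined with $(1-\alpha)^{-1}\le K_{\lambda,\rho,T}$, yields \cref{eq:fspatialbd}.

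There is no real obstacle here—the only subtlety is the bookkeeping: one must verify that the one-step estimate at time $\tau'\le \tau$ is dominated by the bound at $\tau$, so that the ansatz propagates uniformly on $[0,T]\times\RR^2$ rather than only pointwise in $\tau$; but this is immediate from monotonicity of $\sfS_\rho$ and of $\tau\mapsto \lambda^2 B_2\tau/\sfL(1/\rho)$.
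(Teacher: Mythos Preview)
Your proposal is correct and follows essentially the same approach as the paper: the same quadratic ansatz $B_1+B_2|x-X|^2$, the same Gaussian second-moment identity, the same recursion for $(B_1^{(n)},B_2^{(n)})$ started from $(\|f\|_\infty,0)$, and the same geometric summation using $\alpha=\lambda^2\sfS_\rho(T)<1$ with $(1-\alpha)^{-1}\le K_{\lambda,\rho,T}$. The only cosmetic difference is that the paper bounds $B_2^{(n)}\le K_{\lambda,\rho,T}A_2$ uniformly in $n$ rather than passing to the exact limit.
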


\begin{proof}
  The proof is the same as that of \cite[Lemma 5.2]{DG22}. We start
  with the \emph{ansatz}
  \begin{equation}
    f(\tau,x)\le B_{1}+B_{2}|x-X|^{2}.\label{eq:fspatialansantz}
  \end{equation}
  We assume for the moment that \cref{eq:fspatialansantz} holds for some
  $B_{1}$ and $B_{2}$. Using \cref{eq:fspatialansantz} in \cref{eq:fspatialrecursion},
  and recalling \cref{eq:Srhodef}, we obtain
  \begin{align}
    f(\tau,x) & \le A_{1}+A_{2}|x-X|^{2}+\frac{\lambda^{2}}{\sfL(1/\rho)}\int_{0}^{\tau}\!\!\!\!\int\frac{G_{\frac{1}{2}[\tau+\rho-r]}(x-y)[B_{1}+B_{2}|y-X|^{2}]}{\tau+\rho-r}\,\dif y\,\dif r\nonumber                                       \\
              & =A_{1}+A_{2}|x-X|^{2}+B_{1}\lambda^{2}\sfS_{\rho}(\tau)+\frac{B_{2}\lambda^{2}}{\sfL(1/\rho)}\int_{0}^{\tau}\frac{\int G_{\frac{1}{2}[\tau+\rho-r]}(x-y)|y-X|^{2}\,\dif y}{\tau+\rho-r}\,\dif r.\label{eq:applyansatz}
  \end{align}
  We can write
  \begin{align*}
    \int G_{\frac{1}{2}[\tau+\rho-r]}(x-y)|y-X|^{2}\,\dif y & =|x-X|^{2}+\int G_{\frac{1}{2}[\tau+\rho-r]}(x-y)|y-x|^{2}\,\dif y \\
                                                            & =|x-X|^{2}+\tau+\rho-r,
  \end{align*}
  where in the second identity we recalled that we are working in spatial
  dimension $2$. Substituting this into \cref{eq:applyansatz} and again
  recalling \cref{eq:Srhodef}, we get
  \begin{align*}
    f(\tau,x) & \le A_{1}+B_{1}\lambda^{2}\sfS_{\rho}(\tau)+\frac{B_{2}\lambda^{2}\tau}{\sfL(1/\rho)}+ [A_{2}+\lambda^{2}\sfS_{\rho}(\tau)B_{2}]|x-X|^{2} \\
              & \le A_{1}+B_{1}\lambda^{2}\sfS_{\rho}(T)+\frac{B_{2}\lambda^{2}T}{\sfL(1/\rho)}+[A_{2}+\lambda^{2}\sfS_{\rho}(T)B_{2}]|x-X|^{2}.
  \end{align*}

  We have thus shown that if \cref{eq:fspatialansantz} holds with $B_{1}\setto B_{1}^{(n)}$
  and $B_{2}\setto B_{2}^{(n)}$, then it also holds with
  \begin{align}
    B_{1}\setto B_{1}^{(n+1)} & \coloneqq A_{1}+\lambda^{2}\sfS_{\rho}(T)B_{1}^{(n)}+\frac{\lambda^{2}T B_{2}^{(n)}}{\sfL(1/\rho)},\label{eq:B1spatial} \\
    B_{2}\setto B_{2}^{(n+1)} & \coloneqq A_{2}+\lambda^{2}\sfS_{\rho}(T)B_{2}^{(n)}.\label{eq:B2spatial}
  \end{align}
  Since $f$ is bounded above, we can take $B_{2}^{(0)}=0$ and $B_{1}^{(0)}$
  sufficiently large. Then \cref{eq:B2spatial} implies that
  \[
    B_{2}^{(n)}\le K_{\lambda,\rho,T}A_{2}\qquad\text{for all }n\ge0.
  \]
  Plugging this into \cref{eq:B1spatial}, we obtain
  \[
    B_{1}^{(n+1)}\le A_{1}+\lambda^{2}\sfS_{\rho}(T)B_{1}^{(n)}+\frac{K_{\lambda,\rho,T}A_{2}\lambda^{2}T}{\sfL(1/\rho)},
  \]
  which means that
  \[
    B_{1}^{(n)}\le K_{\lambda,\rho,T}\left[A_{1}+\frac{K_{\lambda,\rho,T}A_{2}\lambda^{2}T}{\sfL(1/\rho)}\right]\qquad\text{for all }n\ge0.
  \]
  Using these inequalities in \cref{eq:fansatz}, we get \cref{eq:fspatialbd}.
\end{proof}
Combining \cref{prop:turnoffnoise,prop:spatialbd}, we can collect the effects of turning off the noise and flattening.
\begin{prop}
  \label{prop:flattenic}
  Suppose $\lambda\in(0,\infty)$, $\sigma\in\lipset(\lambda)$,
  $x,X\in\RR^{2}$, and $\tau_{0}\le\tau_{1}<\tau_{2}$ satisfy $\tau_{2}-\tau_{1} < \overline{T}_{\rho}(\lambda)$.
  Then for any $v_{\tau_{0}}\in\scrX_{\tau_{0}}^{2}$, we have
  \begin{align*}
     \EE |\clV_{\tau_{0},\tau_{2}}v_{\tau_{0}}(x)-&\clV_{\tau_{1},\tau_{2}}\clZ_{X}\clG_{\tau_{1}-\tau_{0}}v_{\tau_{0}}(x)|^{2}\\
     & \le4K_{\lambda,\rho,\tau_{2}-\tau_{1}}\Xi_{2;\tau_{0},\tau_{1}}[v_{\tau_{0}}]^{2}\left[\sfL_{\rho}\left(\frac{\tau_{1}-\tau_{0}}{\tau_{2}-\tau_{1}+\rho}\right)+\frac{(1+\lambda^{2}\delta)(\tau_{2}-\tau_{1})+|x-X|^{2}}{\tau_{1}-\tau_{0}}+\delta\right],
  \end{align*}
  where $\delta=K_{\lambda,\rho,\tau_{2}-\tau_{1}}\sfL(1/\rho)^{-1}$.
\end{prop}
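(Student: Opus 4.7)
The strategy is to interpolate between the two operators $\clV_{\tau_0,\tau_2}v_{\tau_0}$ and $\clV_{\tau_1,\tau_2}\clZ_X\clG_{\tau_1-\tau_0}v_{\tau_0}$ by the intermediate object $\clV_{\tau_1,\tau_2}\clG_{\tau_1-\tau_0}v_{\tau_0}$, then combine \cref{prop:turnoffnoise,prop:spatialbd} via the triangle inequality. Concretely, for fixed $x$, set
\[A = \clV_{\tau_0,\tau_2}v_{\tau_0}(x),\quad B = \clV_{\tau_1,\tau_2}\clG_{\tau_1-\tau_0}v_{\tau_0}(x),\quad C = \clV_{\tau_1,\tau_2}\clZ_X\clG_{\tau_1-\tau_0}v_{\tau_0}(x),\]
so that $\EE|A-C|^2 \le 2\EE|A-B|^2 + 2\EE|B-C|^2$. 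The first term $\EE|A-B|^2$ captures the effect of turning off the noise on $[\tau_0,\tau_1]$ and is bounded pointwise in $x$ by the conclusion of \cref{prop:turnoffnoise} (which was stated with $\tau_0 = 0$ but holds for general $\tau_0$ by time-translation), yielding
\[\EE|A-B|^2 \le K_{\lambda,\rho,\tau_2-\tau_1}\,\Xi_{2;\tau_0,\tau_1}[v_{\tau_0}]^2\,\Bigl[\sfL_\rho\bigl(\tfrac{\tau_1-\tau_0}{\tau_2-\tau_1+\rho}\bigr) + \sfL(2)\,\delta\Bigr].\]
The second term $\EE|B-C|^2$ is precisely the quantity estimated in \cref{prop:spatialbd}, which gives
\[\EE|B-C|^2 \le K_{\lambda,\rho,\tau_2-\tau_1}\,\vvvert v_{\tau_0}\vvvert_2^2\,\Bigl[\tfrac{\tau_2-\tau_1 + \tfrac{1}{2}|x-X|^2}{\tau_1-\tau_0} + \tfrac{\lambda^2\delta(\tau_2-\tau_1)}{2(\tau_1-\tau_0)}\Bigr],\]
and $\vvvert v_{\tau_0}\vvvert_2 \le \Xi_{2;\tau_0,\tau_1}[v_{\tau_0}]$ by \cref{def:momentbdnotation}.

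Adding the two estimates with their prefactors of $2$ and pulling out the common constant $K_{\lambda,\rho,\tau_2-\tau_1}\,\Xi_{2;\tau_0,\tau_1}[v_{\tau_0}]^2$, I obtain
\[2\sfL_\rho\bigl(\tfrac{\tau_1-\tau_0}{\tau_2-\tau_1+\rho}\bigr) + 2\sfL(2)\,\delta + \tfrac{2(\tau_2-\tau_1)}{\tau_1-\tau_0} + \tfrac{|x-X|^2}{\tau_1-\tau_0} + \tfrac{\lambda^2\delta(\tau_2-\tau_1)}{\tau_1-\tau_0}\]
inside the bracket. Finally, I use $\sfL(2) = \log 3 < 2$ to absorb $2\sfL(2)\delta \le 4\delta$, and observe that each remaining term is at most $4$ times the corresponding term in the target bracket, which reads
\[\sfL_\rho\bigl(\tfrac{\tau_1-\tau_0}{\tau_2-\tau_1+\rho}\bigr) + \tfrac{(1+\lambda^2\delta)(\tau_2-\tau_1) + |x-X|^2}{\tau_1-\tau_0} + \delta\]
after expanding $(1+\lambda^2\delta)(\tau_2-\tau_1) = (\tau_2-\tau_1) + \lambda^2\delta(\tau_2-\tau_1)$. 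This yields the claimed inequality with the prefactor $4K_{\lambda,\rho,\tau_2-\tau_1}\,\Xi_{2;\tau_0,\tau_1}[v_{\tau_0}]^2$.

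There is no real obstacle here: the work has been done in the two previous propositions, and the present statement is essentially a bookkeeping corollary that packages both error sources in a single convenient form suitable for iteration later in the paper. The only small subtleties are (a) keeping track of which ``moment proxy'' $\Xi$ bounds which term (the Lipschitz control forces both $\vvvert v_{\tau_0}\vvvert_2$ and $\Xi_{2;\tau_0,\tau_1}[v_{\tau_0}]$ to be dominated by the latter) and (b) verifying that the $\sfL(2) = \log 3 < 2$ absorption is genuinely enough to bring the $\delta$-terms into the required form. The general $\tau_0 \neq 0$ case follows from the $\tau_0 = 0$ statements in \cref{prop:turnoffnoise,prop:spatialbd} by a routine time-translation, since the SPDE \cref{eq:v-SPDE} has time-translation-invariant coefficients.
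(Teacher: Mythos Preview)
Your proof is correct and follows exactly the approach the paper indicates: the paper's own proof is the single sentence ``Combining \cref{prop:turnoffnoise} and \cref{prop:spatialbd}, we can collect the effects of turning off the noise and flattening,'' and you have supplied precisely the triangle-inequality bookkeeping this entails. One minor remark: \cref{prop:turnoffnoise,prop:spatialbd} are already stated for general $\tau_0$, so no time-translation argument is needed (the notational slips $\Xi_{2;\tau_1}[v_0]$ and $\vvvert v_0\vvvert_2$ in those statements are evidently typos for $\Xi_{2;\tau_0,\tau_1}[v_{\tau_0}]$ and $\vvvert v_{\tau_0}\vvvert_2$, as their proofs make clear).
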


In the sequel, we will apply \cref{prop:flattenic} in the regime when the right side is small, so when $(\tau_2-\tau_1)/(\tau_1-\tau_0)$ is both $o(1)$ and $\rho^{o(1)}$, and also $|x-X|^2/(\tau_1-\tau_0) =o(1)$. See \cite[§6.1]{DG22}.

\subsection{Local-in-space dependence of the solution on the noise}

In this section we show that a solution to our SPDE at a point only
depends on the noise in a spatial neighborhood of the point. We follow
\cite[Section 9.1]{DG22}. For $\ttB\subseteq\RR^{2}$, recall
\cref{def:turnoffnoise} of $\dif\mathring{W}_{t}^{\ttB}$
and $\mathring{\clV}_{s,t}^{\ttB}$, in which the noise is restricted
to the set $\ttB$. Also, let $\dif W_{t}^{\ttB}$ be a space-time white
noise such that $\dif W_{t}^{\ttB}|_{\ttB}=\dif W_{t}|_{\ttB}$ but $\dif W_{t}^{\ttB}|_{\ttB^{\cc}}$
is independent of everything else (including of $\dif W_{t}^{\widetilde{\ttB}}|_{\widetilde{\ttB}^{\cc}}$
for other subsets $\widetilde{\ttB}$). We let $\clV_{s,t}^{\ttB}$
be identical to the propagator
$\clV_{s,t}$ for the SPDE \cref{eq:SPDE}, but with the noise
$\dif W_{t}$ replaced by $\dif W_{t}^{\ttB}$. %
\nomenclature[V cl B st]{$\clV^{\sigma,\rho,\ttB}_{s,t}$}{propagator for the SPDE with the noise resampled on $\ttB^\cc$ ($\sigma,\rho$ often omitted)}

The main goal of this section is to prove the following proposition.
\begin{prop}
  \label{prop:resamplenoiseoffrectangle}There is an absolute constant
  $C<\infty$ such that the following holds. Let $\lambda\in(0,\infty)$
  and suppose $\sigma\in\lipset(\lambda)$. Let $\ttR\subseteq\RR^{2}$
  be a rectangle. Then for all $x\in\ttR$ and $\tau_{0}< \tau_{1}$
  such that $\tau_{1}-\tau_{0} < \overline{T}_{\rho}(\lambda)$, we have
  \begin{equation}
  \begin{aligned}
    \lambda^{2}\EE & |(\clV_{\tau_{0},\tau_{1}}v_{\tau_{0}}-\clV_{\tau_{0},\tau_{1}}^{\ttR}v_{\tau_{0}})(x)|^{2}                                                                                                                               \\
                   & \le C\Xi_{2;\tau_{0},\tau_{1}}[v_{\tau_{0}}]^{2}K_{\lambda,\rho,\tau_{1}-\tau_{0}}\exp\left\{ -\frac{1}{2}\xi[(\tau_{1}-\tau_{0})\vee\rho]^{-1/2}\left(1\wedge\log\frac{1}{\lambda^{2}\sfS_{\rho}(\tau_{1}-\tau_{0})}\right)\right\},
\end{aligned}\label{eq:resamplenoiseoffrectangle}
\end{equation}
  where $\xi=\dist(x,\ttR^{\cc})$.
\end{prop}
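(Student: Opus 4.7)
The plan is to derive an integral inequality for $f(r,y) \coloneqq \EE|(v_r - v_r^\ttR)(y)|^2$, where I abbreviate $v_r = \clV_{\tau_0,r} v_{\tau_0}$ and $v_r^\ttR = \clV_{\tau_0,r}^\ttR v_{\tau_0}$, and then apply a weighted Grönwall argument to convert the Gaussian decay of the heat kernel away from $\ttR^c$ into the desired exponential-in-$\xi$ bound. First, I would write $w_r \coloneqq v_r - v_r^\ttR$ via the mild formulation~\cref{eq:vmildsoln} and decompose its driving noise using that $\dif W$ and $\dif W^\ttR$ agree on $\ttR$ and are independent on $\ttR^c$:
\begin{equation*}
\sigma(v_r)\dif W_r - \sigma(v_r^\ttR)\dif W_r^\ttR = [\sigma(v_r) - \sigma(v_r^\ttR)]\dif W_r + \sigma(v_r^\ttR)\mathbf{1}_{\ttR^c}(\dif W_r - \dif W_r^\ttR),
\end{equation*}
where on $\ttR^c$ the two summands of the rightmost factor are independent. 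The Itô isometry, the Lipschitz bound $\Lip(\sigma) \leq \lambda$, the moment control $\EE|\sigma(v_r^\ttR)|_\Frob^2 \leq \Xi_{2;\tau_0,\tau_1}[v_{\tau_0}]^2$, and the identity $G_\tau^2 = (4\pi\tau)^{-1} G_{\tau/2}$ then give
\begin{equation*}
f(t,x) \leq \frac{C\lambda^2}{\sfL(1/\rho)} \int_{\tau_0}^t \!\!\int_{\R^2} \frac{G_{(t+\rho-r)/2}(x-y)}{t+\rho-r} f(r,y) \dif y\,\dif r + \frac{C\Xi_{2;\tau_0,\tau_1}[v_{\tau_0}]^2}{\sfL(1/\rho)} \int_{\tau_0}^t \!\!\int_{\ttR^c} \frac{G_{(t+\rho-r)/2}(x-y)}{t+\rho-r}\dif y\,\dif r.
\end{equation*}

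Next, to convert the Gaussian decay in the boundary integral (which exploits $|x-y| \geq \xi$ for $y \in \ttR^c$) into a pointwise bound on $f(t,x)$ at a point $x$ deep in $\ttR$, I would introduce an exponential weight. Fix $\mu > 0$, set $\psi(y) = \dist(y, \ttR^c)$ for $y \in \ttR$ and $\psi(y) = 0$ otherwise, and define $F(t) = \sup_{y \in \R^2} e^{\mu\psi(y)} f(t,y)$. The Lipschitz property $|\psi(x) - \psi(y)| \leq |x-y|$ (for the bulk integral), the pointwise estimate $\psi(x) \leq |x-y|$ for $y \in \ttR^c$ (for the boundary integral), and the Gaussian moment bound $\int_{\R^2} e^{\mu|z|}G_\tau(z)\dif z \leq C e^{C\mu^2\tau}$ together yield, after multiplying the display above by $e^{\mu\psi(x)}$ and taking the supremum over $x$,
\begin{equation*}
F(t) \leq \frac{C\lambda^2 A_\mu}{\sfL(1/\rho)} \int_{\tau_0}^t \frac{F(r)}{t+\rho-r}\dif r + C\Xi_{2;\tau_0,\tau_1}[v_{\tau_0}]^2 A_\mu \sfS_\rho(t-\tau_0), \qquad A_\mu \coloneqq e^{C\mu^2 (t-\tau_0+\rho)/2}.
\end{equation*}

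Finally, a Grönwall argument paralleling~\cref{lem:turnoffnoiselemma} gives $F(t) \leq C\Xi_{2;\tau_0,\tau_1}[v_{\tau_0}]^2 A_\mu \sfS_\rho(t-\tau_0) K_{\lambda,\rho,t-\tau_0}$ provided $C\lambda^2 A_\mu \sfS_\rho(t-\tau_0) < 1$, and unwinding the definition of $F$ yields $f(t,x) \leq F(t) e^{-\mu\xi(x)}$ for $x \in \ttR$. Choosing $\mu \sim [(\tau_1-\tau_0)\vee\rho]^{-1/2}\bigl(1 \wedge \log[1/(\lambda^2\sfS_\rho(\tau_1-\tau_0))]\bigr)$ balances keeping $A_\mu$ controlled against ensuring the Grönwall denominator stays positive, which produces the exponent in~\cref{eq:resamplenoiseoffrectangle}; the prefactor $\lambda^2$ on the left side absorbs a leftover factor from this optimization. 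The main obstacle will be executing this final step cleanly enough to produce the stated $1 \wedge \log$ form rather than a weaker $\sqrt{\log}$, which may require iterating the bound or using a sharper weight $\psi$; a straightforward but necessary geometric input is that covering $\ttR^c$ by four half-planes yields $\int_{\ttR^c} G_\tau(x-y)\dif y \leq 4 e^{-\xi^2/(2\tau)}$, so the rectangular shape of $\ttR$ is used only here.
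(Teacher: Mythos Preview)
Your strategy is genuinely different from the paper's and can be made to work, but the sketch is too casual with constants at exactly the point where precision matters. The paper does not use a weighted Gr\"onwall at all: it first reduces the rectangle to four half-planes via the triangle inequality (this is the entire proof of the proposition from \cref{prop:turnoffhyperplane}), and then for a single half-plane $\ttH$ it iterates the ansatz $f_t(x)\le B_1^{(n)}+\sum_{k=1}^n B_2^{(k)}\clG_{\frac12(t+k\rho)}\mathbf 1_\ttH(x)$. The point is that the heat semigroup property $\clG_{\frac12(t+\rho-r)}\clG_{\frac12(r+k\rho)}=\clG_{\frac12(t+(k+1)\rho)}$ makes each iteration \emph{exact}, so the recursive constant stays equal to $\alpha=\lambda^2\sfS_\rho(\tau_1-\tau_0)$ and one simply sums a geometric series in $\alpha$.

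Your weighted-sup argument loses this exactness in two places, and as written the Gr\"onwall does not close when $\lambda^2\sfS_\rho(\tau_1-\tau_0)$ is near $1$. First, the recursive coefficient must be \emph{exactly} $\lambda^2/\sfL(1/\rho)$, not $C\lambda^2/\sfL(1/\rho)$; you get this by computing the It\^o isometry separately on $\ttR$ (where the noises coincide, giving $\EE|\sigma(v_r)-\sigma(v_r^\ttR)|_\Frob^2\le\lambda^2 f$) and on $\ttR^c$ (where they are independent, giving $\EE|\sigma(v_r)|_\Frob^2+\EE|\sigma(v_r^\ttR)|_\Frob^2\le 2\Xi^2$), rather than by a triangle inequality on your displayed decomposition. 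Second, the bound $\int e^{\mu|z|}G_\tau(z)\,\dif z\le Ce^{C\mu^2\tau}$ with a constant $C>1$ in front is fatal for the same reason; you need the sharper form $1+O(\mu\sqrt\tau\,e^{\mu^2\tau/2})$, which is $1+o(1)$ as $\mu\to0$. With both fixes, taking $\mu=c\,[(\tau_1-\tau_0)\vee\rho]^{-1/2}\bigl(1\wedge\log\tfrac1{\lambda^2\sfS_\rho}\bigr)$ for a sufficiently small absolute $c$ does keep the recursive factor below $1$ and produces the stated $1\wedge\log$ exponent (so your worry about $\sqrt{\log}$ is misplaced---the real danger was the constants, not the form). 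You may end up with $c$ in place of the paper's $\tfrac12$ in the exponent, which is harmless for the applications. The paper's semigroup ansatz buys you freedom from all of this bookkeeping; your approach buys a more direct treatment of the rectangle without passing through $\mathring{\clV}$.
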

The reason for the $\lambda^{2}$ on the left side is that, by a happy
coincidence, this factor simplifies both the proof and the application.
We note that the right side of \cref{eq:resamplenoiseoffrectangle} is small when $\xi^2\gg \rho\vee(\tau_1-\tau_0)$ and $\sfS_\rho(\tau_1-\tau_0)<\lambda^{-2}$.
The key step in the proof of \cref{prop:resamplenoiseoffrectangle} is the following.
\begin{prop}
  \label{prop:turnoffhyperplane}There is an absolute constant $C<\infty$
  so that the following holds. Let $\lambda\in(0,\infty)$ and
  that $\sigma\in\lipset(\lambda)$. Let $\ttB\subseteq\RR^{2}$ and let
  $\ttH$ be a half-plane in $\RR^{2}$. 
  Let $\tau_{0} < \tau_{1}$ satisfy
  \begin{equation}
    \tau_{1}-\tau_{0} < \overline{T}_{\rho}(\lambda).\label{eq:tau1tau0smallenough}
  \end{equation}
  Then for all $x\in \ttH^{\cc}$ and $v_{0}\in\scrX_{\tau_{0}}^{2}$, we have
  \begin{equation}
    \begin{aligned}\lambda^{2}\EE  |(\mathring{\clV}_{\tau_{0},\tau_{1}}^{\ttB}&v_{\tau_{0}}-\mathring{\clV}_{\tau_{0},\tau_{1}}^{\ttB\setminus \ttH}v_{\tau_{0}})(x)|^{2}                                                                                                           \\
                              & \le C\Xi_{2;\tau_{0},\tau_{1}}[v_{\tau_{0}}]^{2}K_{\lambda,\rho,\tau_{1}-\tau_{0}}\exp\left\{ -\frac{1}{2}\xi[(\tau_{1}-\tau_{0})\vee\rho]^{-1/2}\left(1\wedge\log\frac{1}{\lambda^{2}\sfS_{\rho}(\tau_{1}-\tau_{0})}\right)\right\} ,
    \end{aligned}
    \label{eq:turnoffspatialnoise}
  \end{equation}
  where $\xi=\dist(x,\ttH)$.
\end{prop}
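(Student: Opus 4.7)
\emph{Plan of proof.} Let $u_t \coloneqq \mathring{\clV}_{\tau_0,t}^{\sigma,\rho,\ttB} v_{\tau_0} - \mathring{\clV}_{\tau_0,t}^{\sigma,\rho,\ttB \setminus \ttH} v_{\tau_0}$. Subtracting the two mild formulations yields
\[
  u_t = \gamma_\rho \int_{\tau_0}^t \clG_{t+\rho-r}\bigl[ \bigl(\sigma(\mathring{\clV}^\ttB_{\tau_0,r} v_{\tau_0}) - \sigma(\mathring{\clV}^{\ttB\setminus\ttH}_{\tau_0,r} v_{\tau_0})\bigr) \tbf{1}_{\ttB\setminus\ttH} \dif W_r + \sigma(\mathring{\clV}^\ttB_{\tau_0,r}v_{\tau_0}) \tbf{1}_{\ttB\cap\ttH} \dif W_r\bigr],
\]
in which only the final integrand involves noise supported on $\ttH$. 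Setting $f(t,x)=\EE|u_t(x)|^2$, applying the It\^o isometry, the identity $G_s^2=(4\pi s)^{-1}G_{s/2}$, and the Lipschitz bound $\sigma\in\lipset(\lambda)$ gives the integral inequality
\[
  f(t,x) \le S(t,x) + \frac{\lambda^2}{\sfL(1/\rho)} \int_{\tau_0}^t \frac{[\clG_{(t+\rho-r)/2} f(r,\cdot)](x)}{t+\rho-r} \dif r,
\]
with the source bounded using the definition of $\Xi_{2;\tau_{0},\tau_{1}}$ by
\[
  S(t,x) \le \frac{\Xi_{2;\tau_0,\tau_1}[v_{\tau_0}]^2}{\sfL(1/\rho)} \int_{\tau_0}^t \int_\ttH \frac{G_{(t+\rho-r)/2}(x-y)}{t+\rho-r} \dif y \dif r.
\]

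The next step is to introduce an exponential weight $\phi(x)=\exp[-\mu\,\dist(x,\ttH)]$ and establish two facts by reducing to one dimension and completing the square. First, the tail of the half-plane Gaussian integral,
\[
  \int_{\ttH} G_{(t+\rho-r)/2}(x-y)\dif y \le \exp\bigl[-\xi(x,\ttH)^2/(t+\rho-r)\bigr],
\]
which bounds $S(t,x)$ by $C\Xi_{2;\tau_0,\tau_1}[v_{\tau_0}]^2 \sfS_\rho(\tau_1-\tau_0)\,\phi(x)$ in the regime $\xi\gtrsim \mu(\tau_1-\tau_0+\rho)$, after evaluating the time integral via a change of variable. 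Second, the convolution estimate
\[
  \int_{\RR^2} G_s(x-y) \phi(y) \dif y \le 2\phi(x) \exp(\mu^2 s/2)
\]
valid whenever $\mu s\le\xi(x,\ttH)$, which turns the ansatz $f(r,y)\le A\phi(y)$ into the reproduced bound $A\phi(x)$ times a ``contraction factor'' $\lambda^2 \sfS_\rho(\tau_1-\tau_0)\exp[\mu^2(\tau_1-\tau_0+\rho)/4]$ after the time integration (with the usual logarithmic integral handled as in \cref{lem:turnoffnoiselemma}).

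The choice of $\mu$ is tailored to make this contraction factor bounded away from $1$ in both extremes of $\alpha\coloneqq\lambda^2\sfS_\rho(\tau_1-\tau_0)$. I would pick
\[
  \mu=\frac{1}{2\sqrt{(\tau_1-\tau_0)\vee\rho}} \Bigl(1\wedge \log\frac{1}{\lambda^2\sfS_\rho(\tau_1-\tau_0)}\Bigr),
\]
so that $\mu^2(\tau_1-\tau_0+\rho)/4$ is at most a small constant when $\log(1/\alpha)\ge 1$, and at most $\log(1/\alpha)/16$ when $\log(1/\alpha)<1$. In either case the contraction is at worst a fixed power of $\alpha$ away from $1$, and iterating (or equivalently summing the Neumann series) yields a bound of the form $A\phi(x)$ with $A\lesssim \Xi_{2;\tau_0,\tau_1}[v_{\tau_0}]^2 \sfS_\rho(\tau_1-\tau_0)\,K_{\lambda,\rho,\tau_1-\tau_0}$. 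Because $\sfS_\rho(\tau_1-\tau_0)\le\lambda^{-2}$ by \cref{eq:mtbardef}, multiplying by $\lambda^2$ absorbs this factor into the $\Xi^2 K$ combination on the right of \cref{eq:turnoffspatialnoise}, which is exactly the form of the prefactor claimed there.

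\emph{Main obstacle.} The delicate point is the case distinction in the definition of $\mu$ and, correspondingly, ensuring the convolution estimate applies uniformly down to $\xi\gtrsim\mu[(\tau_1-\tau_0)\vee\rho]$. For smaller $\xi$ (the ``near'' regime) the stated exponential weight is of order unity, so one can simply read off the weightless bound $f(t,x)\le C\Xi^2 \sfS_\rho K_{\lambda,\rho,\tau_1-\tau_0}$ obtained by iterating the integral inequality without the weight; again the factor $\lambda^2$ on the left of \cref{eq:turnoffspatialnoise} absorbs the $\sfS_\rho$. Stitching together the ``near'' and ``far'' regimes and tracking constants through the case split on $\log(1/\alpha)\lessgtr 1$ is the chief bookkeeping challenge, but introduces no fundamentally new technique beyond what appears in the analogous argument in Section~9 of \cite{DG22}.
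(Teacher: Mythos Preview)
Your approach is correct and constitutes a genuinely different argument from the paper's. Both proofs begin identically: derive the recursive inequality
\[
  f(t,x) \le S(t,x) + \frac{\lambda^2}{\sfL(1/\rho)} \int_{\tau_0}^t \frac{\clG_{(t+\rho-r)/2} f(r,\cdot)(x)}{t+\rho-r}\,\dif r,
\]
and then iterate against a spatial weight. The difference lies entirely in the choice of weight. The paper uses the ansatz $f_t(x)\le B_1^{(n)}+\sum_{k=1}^{n}\alpha^k\Xi^2\,\clG_{\frac12(t+k\rho)}\mathbf{1}_{\ttH}(x)$, exploiting the \emph{exact} semigroup identity $\clG_{\frac12(t+\rho-r)}\clG_{\frac12(r+k\rho)}=\clG_{\frac12(t+(k+1)\rho)}$ so that each iteration simply shifts the variance by $\rho$ and multiplies by $\alpha$. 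This defers all the analytic work to the end, where one must analyze the sum $\sum_k \alpha^k e^{-\xi^2/(t+k\rho)}$ by splitting at $N\approx \xi\rho^{-1/2}$. Your exponential weight $\phi(x)=e^{-\mu\,\dist(x,\ttH)}$ instead front-loads the work into the choice of $\mu$: the convolution inequality $\clG_s\phi\le \phi\, e^{\mu^2 s/2}$ (which in fact holds with constant $1$ rather than $2$, via $\phi(y)\le e^{-\mu y_1}$ in coordinates adapted to $\ttH$) yields a single contraction factor $\alpha e^{\mu^2(\tau_1-\tau_0+\rho)/4}$, and your case split on $\log(1/\alpha)\gtrless 1$ verifies it stays bounded by $\alpha^{7/8}$, giving $1/(1-\kappa)\lesssim K_{\lambda,\rho,\tau_1-\tau_0}$ directly.

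Two minor remarks. First, your near/far split is unnecessary: the elementary inequality $e^{-\xi^2/s}\le e^{-\mu\xi+\mu^2 s/4}$ (completing the square) holds for all $\xi\ge 0$, so the source bound $S\lesssim \Xi^2\sfS_\rho\,\phi$ is global. Second, the reference to \cref{lem:turnoffnoiselemma} is a bit loose---that lemma treats a different recursion---but the time integral you need is just $\int_\rho^{t+\rho}u^{-1}e^{\mu^2 u/4}\,\dif u\le e^{c}\sfL(t/\rho)$, which is elementary.
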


Before we prove \cref{prop:turnoffhyperplane}, we show how to use it
to prove \cref{prop:resamplenoiseoffrectangle}.
\begin{proof}[Proof of \cref{prop:resamplenoiseoffrectangle}.]
  We can write the complement of $\ttR$ as the union of four half-spaces
  $\ttH_{1}$, $\ttH_{2}$, $\ttH_{3}$, and $\ttH_{4}$. Then $\xi\coloneqq\dist(x,\ttR^{c})=\min_i \dist(x,\ttH_{i})$.
  Applying \cref{prop:turnoffhyperplane} four times and using the triangle
  inequality, we can bound $\EE |(\clV_{\tau_{0},\tau_{1}}v_{\tau_{0}}-\mathring{\clV}_{\tau_{0},\tau_{1}}^{\ttR}v_{\tau_{0}})(x)|^{2}$
  by $16$ times the right side of \cref{eq:turnoffspatialnoise}. In
  the same way, we can bound $\EE |(\clV_{\tau_{0},\tau_{1}}^{\ttR}v_{\tau_{0}}-\mathring{\clV}_{\tau_{0},\tau_{1}}^{\ttR}v_{\tau_{0}})(x)|^{2}$
  by the same quantity. Then we conclude by the triangle inequality again.
\end{proof}
\begin{proof}[Proof of \cref{prop:turnoffhyperplane}.]
  The proof uses arguments similar to those used in the proofs of \cite[Lemmas 9.2 and 9.3]{DG22},
  with adjustments because here we consider the vector-valued case and do not assume that $\sigma(0)=0$.
  Assume without loss of generality that $\tau_{0}=0$. We divide the
  proof into two steps.
  \begin{thmstepnv}
    \item \emph{A recursive estimate on the difference.} Subtracting the respective
    mild solution formulas, we have for all $t\in[0,\tau_{1}]$ that
    \begin{align*}
      (\mathring{\clV}_{t}^{\ttB}v_{0}-\mathring{\clV}_{t}^{\ttB\setminus \ttH}v_{0})(x) & =\gamma_{\rho}\int_{0}^{t}\!\!\!\!\int_{\ttB\cap \ttH}G_{t+\rho-r}(x-y)\sigma\bigl(\mathring{\clV}_{r}^{\ttB}v_{0}(y)\bigr)\,\dif W_{r}(y)                                                                             \\
                                                                                & \qquad+\gamma_{\rho}\int_{0}^{t}\!\!\!\!\int_{\ttB\setminus \ttH}G_{t+\rho-r}(x-y)\left[\sigma\bigl(\mathring{\clV}_{r}^{\ttB}v_{0}(y)\bigr)-\sigma\bigl(\mathring{\clV}_{r}^{\ttB\setminus \ttH}v_{0}(y)\bigr)\right]\,\dif W_{r}(y).
    \end{align*}
    We can take second moments to obtain
    \begin{align}
      \EE |( & \mathring{\clV}_{t}^{\ttB}v_{0}-\mathring{\clV}_{t}^{\ttB\setminus \ttH}v_{0})(x)|^{2}\nonumber                                                                                                                                                           \\
             & =\frac{4\pi}{\sfL(1/\rho)}\int_{0}^{t}\!\!\!\!\int_{\ttB\cap \ttH}G_{t+\rho-r}^{2}(x-y)\EE \bigl\lvert \sigma\bigl(\mathring{\clV}_{r}^{\ttB}v_{0}(y)\bigr)\bigr\rvert _{\Frob}^{2}\,\dif y\,\dif r\nonumber                                                                                      \\
             & \qquad+\frac{4\pi}{\sfL(1/\rho)}\int_{0}^{t}\!\!\!\!\int_{\ttB\setminus \ttH}G_{t+\rho-r}^{2}(x-y)\EE \bigl\lvert  \sigma\bigl(\mathring{\clV}_{r}^{\ttB}v_{0}(y)\bigr)-\sigma\bigl(\mathring{\clV}_{r}^{\ttB\setminus \ttH}v_{0}(y)\bigr)\bigr\rvert  _{\Frob}^{2}\,\dif y\,\dif r\nonumber             \\
             & \le\frac{\Xi_{2;t}[v_{0}]^{2}}{\sfL(1/\rho)}\int_{0}^{t}\frac{\clG_{\frac{1}{2}[t+\rho-r]}\mathbf{1}_{\ttH}(x)}{t+\rho-r}\,\dif r\nonumber                                                                                                          \\
             & \qquad+\frac{\lambda^{2}}{\sfL(1/\rho)}\int_{0}^{t}\frac{\int G_{\frac{1}{2}[t+\rho-r]}(x-y)\EE |\mathring{\clV}_{r}^{\ttB}v_{0}(y)-\mathring{\clV}_{r}^{\ttB\setminus \ttH}v_{0}(y)|^{2}\,\dif y}{t+\rho-r}\,\dif r.\label{eq:takesecondmonents-spatial}
    \end{align}
    Now by \cite[(9.10)]{DG22} we have, for $x\in \ttH^{\cc}$ and
    $0\le r\le t$, that
    \[
      \clG_{\frac{1}{2}[t+\rho-r]}\mathbf{1}_{\ttH}(x)\le\clG_{\frac{1}{2}[t+\rho]}\mathbf{1}_{\ttH}(x).
    \]
    This reflects the fact that a Gaussian with smaller variance puts
    its mass closer to the origin. Using this in \cref{eq:takesecondmonents-spatial},
    we obtain, for all $x\in \ttH^{\cc}$,
    \begin{equation}
      f_{t}(x)\le\clG_{\frac{1}{2}[t+\rho]}\mathbf{1}_{\ttH}(x)\lambda^{2}\sfS_{\rho}(t)\Xi_{2;t}[v_{0}]^{2}+\frac{\lambda^{2}}{\sfL(1/\rho)}\int_{0}^{t}\frac{\clG_{\frac{1}{2}[t+\rho-r]}f_{r}(x)}{t+\rho-r}\,\dif r,\label{eq:fturnoffrecursive}
    \end{equation}
    where we have defined
    \begin{equation}
      f_{t}(x)\coloneqq\lambda^{2}\EE |\mathring{\clV}_{t}^{\ttB}v_{0}(x)-\mathring{\clV}_{t}^{\ttB\setminus \ttH}v_{0}(x)|^{2}.\label{eq:fturnoffdef}
    \end{equation}
  \item \emph{Analysis of the recursive estimate.} We now seek to derive a
    non-recursive bound from \cref{eq:fturnoffrecursive}. Define
    \begin{equation}
      \alpha\coloneqq\lambda^{2}\sfS_{\rho}(\tau_{1})\overset{\cref{eq:tau1tau0smallenough}}{<}1\label{eq:alphadef-1}
    \end{equation}
    and, for $k\ge1$,
    \begin{equation}
      B_{2}^{(k)}\coloneqq\alpha^{k}\Xi_{2;\tau_{1}}[v_{0}]^{2}.\label{eq:B2kdef}
    \end{equation}
    Now suppose that, for all $t\in[0,\tau_{1}]$, we have
    \begin{equation}
      f_{t}(x)\le B_{1}^{(n)}+\sum_{k=1}^{n}B_{2}^{(k)}\clG_{\frac{1}{2}(t+k\rho)}\mathbf{1}_{\ttH}(x)\qquad\text{for all }x\in \ttH^{\cc}.\label{eq:fturnoffansatz}
    \end{equation}
    For $n=0$ this can be achieved by choosing $B_{1}^{(0)}$ sufficiently
    large. Now we plug \cref{eq:fturnoffansatz} into \cref{eq:fturnoffrecursive}
    to obtain
    \begin{align*}
      f_{t}(x) & \le\clG_{\frac{1}{2}[t+\rho]}\mathbf{1}_{\ttH}(x)\alpha\Xi_{2;\tau_{1}}[v_{0}]^{2}+\frac{\lambda^{2}}{\sfL(1/\rho)}\int_{0}^{t}\frac{\clG_{\frac{1}{2}[t+\rho-r]}\left(B_{1}^{(n)}+\sum\limits _{k=1}^{n}B_{2}^{(k)}\clG_{\frac{1}{2}(r+k\rho)}\mathbf{1}_{\ttH}\right)(x)}{t+\rho-r}\,\dif r \\
               & \le\clG_{\frac{1}{2}[t+\rho]}\mathbf{1}_{\ttH}(x)\alpha\Xi_{2;\tau_{1}}[v_{0}]^{2}+\lambda^{2}\sfS_{\rho}(\tau_{1})B_{1}^{(n)}+\lambda^{2}\sfS_{\rho}(\tau_{1})\sum\limits _{k=1}^{n}B_{2}^{(k)}\clG_{\frac{1}{2}[t+(k+1)\rho]}\mathbf{1}_{\ttH}(x)                                           \\
               & =\alpha B_{1}^{(n)}+\sum\limits _{k=1}^{n+1}B_{2}^{(k)}\clG_{\frac{1}{2}[t+k\rho]}\mathbf{1}_{\ttH}(x),
    \end{align*}
    where in the last identity we used \cref{eq:alphadef-1} and \cref{eq:B2kdef}.
    Hence \cref{eq:fturnoffansatz} holds for $n+1$ with $B_{1}^{(n+1)}\coloneqq\alpha B_{1}^{(n)}$.
    Since $\alpha<1$, we can take $n\to\infty$ to obtain
    \[
      f_{t}(x)\le\sum\limits _{k=1}^{\infty}B_{2}^{(k)}\clG_{\frac{1}{2}[t+k\rho]}\mathbf{1}_{\ttH}(x).
    \]
    Recalling that $\xi=\dist(x,\ttH)$, we can use standard Gaussian tail
    bounds (see, e.g., \cite[(2.7)]{Wai19}) to write
    \[
      \clG_{\frac{1}{2}(t+k\rho)}\mathbf{1}_{\ttH}(x)\le4\e^{-\xi^{2}/(t+k\rho)},
    \]
    so we conclude that
    \begin{equation}
      f_{t}(x)\le4\sum\limits _{k=1}^{\infty}B_{2}^{(k)}\e^{-\xi^{2}/(t+k\rho)}=4\Xi_{2;\tau_{1}}[v_{0}]^{2}\sum_{k=1}^{\infty}\alpha^{k}\e^{-\xi^{2}/(t+k\rho)}.\label{eq:ftbdfinalsum}
    \end{equation}
    Fix $N=\lfloor\xi\rho^{-1/2}\rfloor$, so $\xi\rho^{-1/2}-1\le N\le\xi\rho^{-1/2}$
    and thus we have
    \begin{multline}
      \sum_{k=1}^{\infty}\alpha^{k}\e^{-\xi^{2}/(t+k\rho)}=\sum_{k=1}^{N}\alpha^{k}\e^{-\xi^{2}/(t+k\rho)}+\sum_{k=N+1}^{\infty}\alpha^{k}\e^{-\xi^{2}/(t+k\rho)}\le\e^{-\xi^{2}/(t+N\rho)}\sum_{k=1}^{N}\alpha^{k}+\sum_{k=N+1}^{\infty}\alpha^{k}\\
      \le\frac{\e^{-\xi^{2}/(t+N\rho)}+\alpha^{N+1}}{1-\alpha}\le K_{\lambda,\rho,\tau_{1}}\left(\e^{-\xi^{2}/(t+\xi\rho^{1/2})}+\alpha^{\xi\rho^{-1/2}}\right).\label{eq:sortoutthesum}
    \end{multline}
    Now if we let $\zeta=(t\vee\rho)^{1/2}/\xi$, then
    \[
      \e^{-\xi^{2}/(t+\xi\rho^{1/2})}\le\e^{-1/(\zeta+\zeta^{2})}\le C\e^{-1/(2\zeta)}
    \]
    for an absolute constant $C<\infty$.
    Hence
    \begin{align}
      \e^{-\xi^{2}/(t+\xi\rho^{1/2})}+\alpha^{\xi\rho^{-1/2}} & \le C\e^{-1/(2\zeta)}+\e^{\zeta^{-1}\log\alpha}\le2C\exp\left\{ -\frac{1\wedge\log\frac{1}{\alpha}}{2\zeta}\right\} .\label{eq:dealwithzeta}
    \end{align}
    Combining \cref{eq:fturnoffdef}, \cref{eq:ftbdfinalsum}, \cref{eq:sortoutthesum},
    and \cref{eq:dealwithzeta}, and recalling the definitions of $\alpha$
    and $\zeta$, we obtain \cref{eq:turnoffspatialnoise}.\qedhere
  \end{thmstepnv}
\end{proof}

\subsection{The approximate root decoupling function \texorpdfstring{$J_{\sigma,\rho}$}{J\_{σ,ρ}}: regularity}

For $a\in\RR^m$ and $q\ge0$, we define, recalling the definition \cref{eq:Tdef} of $\sfT_\rho(q)$,
\nomenclature[J sigma rho]{$J_{\sigma,\rho}$}{approximate root decoupling function, \cref{eq:Jdef-expscale}}
\begin{equation}
  J_{\sigma,\rho}(q,a)=\bigl[\EE \sigma^{2}\bigl(\clV_{\sfT_{\rho}(q)}^{\sigma,\rho}a(x)\bigr)\bigr]^{1/2}.\label{eq:Jdef-expscale}
\end{equation}
Note that by the spatial stationarity of the noise, this definition
is independent of $x$. We call $J_{\sigma,\rho}$ the ``approximate
root decoupling function'' because we will later show (in \cref{prop:Japprox}
below) that it converges to the root decoupling function $J_{\sigma}$
in the FBSDE \zcref[range]{eq:FBSDE,eq:Jeqn} as $\rho\searrow0$. To
do this, we will set up an approximate version of the FBSDE at the
level of the SPDE, for which $J_{\sigma,\rho}$ will be the analogue
of the root decoupling function.
We will conclude using the convergence
of the approximate problem to the FBSDE problem \zcref[range]{eq:FBSDE,eq:Jeqn} and using the well-posedness
of the FBSDE.

In our approximation schemes, we will need estimates on the regularity of $J_{\sigma,\rho}$.
First we consider its spatial regularity.
\begin{prop}
  \label{prop:approxJlipschitz}Let $\lambda\in(0,\infty)$ and
  $\sigma\in\lipset(\lambda)$.
  Then for any $q\in(0,\lambda^{-2})$ and $a_{1},a_{2}\in\RR^m$, we have
  \begin{equation}
    |J_{\sigma,\rho}(q,a_{2})-J_{\sigma,\rho}(q,a_{1})|_{\Frob}\le\frac{|a_{2}-a_{1}|}{(\lambda^{-2}-q)^{1/2}}.\label{eq:approxJlipschitz}
  \end{equation}
\end{prop}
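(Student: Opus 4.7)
The plan is to mirror the \emph{a priori} Lipschitz bound of \cref{prop:apriorilipschitzbd} at the SPDE level, using spatial stationarity to bypass the usual Grönwall step. Fix $q\in(0,\lambda^{-2})$, set $T=\sfT_{\rho}(q)$ so that $\sfS_{\rho}(T)=q$, and abbreviate $v^{(i)}_s = \clV^{\sigma,\rho}_s a_i$ for $i=1,2$. Because the initial conditions $a_i$ are spatially constant and the noise is spatially homogeneous, the quantity $D(s)\coloneqq\EE|v^{(2)}_s(x)-v^{(1)}_s(x)|^2$ is independent of $x$.

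First I would subtract the mild formulations \cref{eq:vmildsoln} for $v^{(1)}$ and $v^{(2)}$, noting $\clG_s(a_2-a_1)\equiv a_2-a_1$, and apply the Itô isometry, the Lipschitz bound on $\sigma$, and the identity $\int G_{s+\rho-r}^2(x-y)\,\dif y = [4\pi(s+\rho-r)]^{-1}$ (cf.\ \cref{eq:GT2}), along with the definition $\gamma_\rho^2=4\pi/\sfL(1/\rho)$, to obtain the recursive estimate
\[
  D(s)\le |a_2-a_1|^2 + \frac{\lambda^2}{\sfL(1/\rho)}\int_0^s \frac{D(r)}{s+\rho-r}\,\dif r \qquad \text{for all }s\in[0,T].
\]

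Next I would set $E(T)\coloneqq\sup_{s\in[0,T]}D(s)$. Using that $\sfS_\rho$ is increasing and $\int_0^s \dif r/(s+\rho-r) = \sfL(1/\rho)\sfS_{\rho}(s)\le \sfL(1/\rho) q$, take the supremum over $s\in[0,T]$ to get $E(T)\le|a_2-a_1|^2 + \lambda^2 q\, E(T)$, then rearrange (using $\lambda^2 q<1$) to conclude $E(T)\le|a_2-a_1|^2/(1-\lambda^2 q)$. The \emph{a priori} finiteness of $E(T)$, needed to justify this rearrangement, follows from \cref{prop:momentbd}: since $\sigma\in\lipset(\lambda)$ implies $\sigma\in\quadset(M_\eps,\lambda\sqrt{1+\eps})$ for every $\eps>0$ and some $M_\eps<\infty$, we can choose $\eps$ small enough that $q<(\lambda\sqrt{1+\eps})^{-2}$ and obtain uniform second moments for $v^{(i)}$ on $[0,T]$.

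Finally, combining the matrix-valued reverse triangle inequality \cref{prop:matrixvaluedreversetriangleinequality} with the Lipschitz property of $\sigma$ and the definition \cref{eq:Jdef-expscale} of $J_{\sigma,\rho}$, I would conclude
\[
  |J_{\sigma,\rho}(q,a_2)-J_{\sigma,\rho}(q,a_1)|_\Frob^2 \le \EE\bigl|\sigma(v^{(2)}_T(x))-\sigma(v^{(1)}_T(x))\bigr|_\Frob^2 \le \lambda^2 D(T)\le \frac{|a_2-a_1|^2}{\lambda^{-2}-q},
\]
which gives \cref{eq:approxJlipschitz} after taking square roots. There is no serious obstacle: the only subtle point is invoking \cref{prop:momentbd} for the a priori finiteness of $E(T)$, and once this is in hand the spatial stationarity eliminates the need for the nested Grönwall argument used in the SDE setting of \cref{prop:apriorilipschitzbd}.
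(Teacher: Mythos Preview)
Your argument is essentially identical to the paper's: subtract the mild solutions, apply the It\^o isometry and the Lipschitz bound on $\sigma$, take a supremum in time to get $A\le|a_2-a_1|^2+\lambda^2 q\,A$, rearrange, and then use the matrix-valued reverse triangle inequality (the paper puts that step first rather than last, and works with the $\vvvert\cdot\vvvert_2$ norm rather than invoking spatial stationarity, but these are cosmetic differences). Your explicit justification of the \emph{a priori} finiteness of $E(T)$ via \cref{prop:momentbd} is a detail the paper leaves implicit.
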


\begin{proof}
  Let $a_{1},a_{2}\in\RR^m$. We have by \cref{prop:matrixvaluedreversetriangleinequality}
  that
  \begin{align}
    |J_{\sigma,\rho}(q,a_{2})-J_{\sigma,\rho}(q,a_{1})|_{\Frob}^{2} &\le\sup_{x\in\RR^{2}} \E \bigl\lvert \sigma\bigl(\clV_{\sfT_{\rho}(q)}a_{2}(x)\bigr)-\sigma\bigl(\clV_{\sfT_{\rho}(q)}a_{1}(x)\bigr)\bigr\rvert _{\Frob}^{2}\nonumber\\
                                                                    &\le\lambda^{2}\vvvert\clV_{\sfT_{\rho}(q)}a_{2}-\clV_{\sfT_{\rho}(q)}a_{1}\vvvert_{2}^{2},\label{eq:approxJlipschitzfirststep}
  \end{align}
  with the last inequality by the Lipschitz bound on $\sigma$. Then
  we can write
  \[
    (\clV_{\sfT_{\rho}(q)}a_{2}-\clV_{\sfT_{\rho}(q)}a_{1})(x)=a_{2}-a_{1}+\gamma_{\rho}\int_{0}^{\sfT_{\rho}(q)}\clG_{\sfT_{\rho}(q)+\rho-r}\bigl[\bigl(\sigma(\clV_{r}a_{2})-\sigma(\clV_{r}a_{1})\bigr)\,\dif W_{r}\bigr](x),
  \]
  so
  \begin{align*}
    \EE & |(\clV_{\sfT_{\rho}(q)}a_{2}-\clV_{\sfT_{\rho}(q)}a_{1})(x)|^{2}                                                                                                                                \\
        & =|a_{2}-a_{1}|^{2}+\frac{4\pi}{\sfL(1/\rho)}\int_{0}^{\sfT_{\rho}(q)}\!\!\!\!\int G_{\sfT_{\rho}(q)+\rho-r}^{2}(x-y)\EE \bigl\lvert \sigma\bigl(\clV_{r}a_{2}(y)\bigr)-\sigma\bigl(\clV_{r}a_{1}(y)\bigr)\bigr\rvert _{\Frob}^{2}\,\dif y\,\dif r \\
        & \le|a_{2}-a_{1}|^{2}+\frac{\lambda^{2}}{\sfL(1/\rho)}\int_{0}^{\sfT_{\rho}(q)}\frac{\vvvert\clV_{r}a_{2}-\clV_{r}a_{1}\vvvert_{2}^{2}}{\sfT_{\rho}(q)+\rho-r}\,\dif r.
  \end{align*}
  Therefore, if we define $A\coloneqq\sup\limits _{r\in[0,\sfT_{\rho}(q)]}\vvvert\clV_{r}a_{2}-\clV_{r}a_{1}\vvvert_{2}^{2}$,
  we have
  \[
    A\le|a_{2}-a_{1}|^{2}+\lambda^{2}\sfS_{\rho}\bigl(\sfT_{\rho}(q)\bigr)A=|a_{2}-a_{1}|^{2}+\lambda^{2}qA.
  \]
  Rearranging, we see that $A\le\frac{|a_{2}-a_{1}|^{2}}{1-\lambda^{2}q}$.
  Together with \cref{eq:approxJlipschitzfirststep}, this implies \cref{eq:approxJlipschitz}.
\end{proof}
We next consider the temporal regularity of $J_{\sigma,\rho}$.
\begin{prop}
  \label{prop:approxJtemporalregularity}Let $\lambda\in(0,\infty)$ and $\sigma \in \lipset(\lambda)$.
  For any $q_{1},q_{2}\ge0$ such that $q_{1}\wedge q_{2}<\lambda^{-2}-2\sfL(1/\rho)^{-1}$, and any $a\in\RR^m$, we have
    \begin{equation*}
    | J_{\sigma,\rho}(q_{2},a)-J_{\sigma,\rho}(q_{1},a)|_{\Frob} \le\lambda K_{\lambda,\rho,\sfT_{\rho}(q_{1}\wedge q_{2})}^{1/2}\Xi_{2;|\sfT_{\rho}(q_{2})-\sfT_{\rho}(q_{1})|}[a]\left(|q_{2}-q_{1}|^{1/2}+\frac{\sfL(2)^{1/2}K_{\lambda,\rho,\sfT_{\rho}(q_{1}\wedge q_{2})}^{1/2}}{\sfL(1/\rho)^{1/2}}\right).
  \end{equation*}
\end{prop}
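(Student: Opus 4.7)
Without loss of generality, assume $q_1 \le q_2$, and abbreviate $t_i = \sfT_\rho(q_i)$ and $\Delta = t_2 - t_1 \ge 0$. The plan is to reduce the time-regularity of $J_{\sigma,\rho}$ to the ``turn-off-the-noise'' estimate of \cref{prop:turnoffnoise}. The essential observation is that for the constant initial condition $a\in\RR^m$ we have $\clG_\Delta a = a$, so the deterministic heat flow is trivial. By the time-translation invariance of the driving noise,
\begin{equation*}
  \clV^{\sigma,\rho}_{\Delta,t_2} a \overset{\mathrm{law}}{=} \clV^{\sigma,\rho}_{0,t_1} a,
\end{equation*}
which allows us to rewrite $J_{\sigma,\rho}(q_1,a) = \bigl[\EE \sigma^2\bigl(\clV^{\sigma,\rho}_{\Delta,t_2} a(x)\bigr)\bigr]^{1/2}$. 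Under this rewriting, $\clV_{t_2}a$ and $\clV_{\Delta,t_2}a$ are both defined on the same probability space with the same driving white noise $W$.

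With the two solutions coupled in this way, the matrix-valued reverse triangle inequality \cref{prop:matrixvaluedreversetriangleinequality} combined with the Lipschitz property of $\sigma$ yields
\begin{equation*}
  |J_{\sigma,\rho}(q_2,a) - J_{\sigma,\rho}(q_1,a)|_\Frob^2 \le \EE\bigl|\sigma(\clV_{t_2}a(x)) - \sigma(\clV_{\Delta,t_2}a(x))\bigr|_\Frob^2 \le \lambda^2 \vvvert \clV_{t_2}a - \clV_{\Delta,t_2}\clG_\Delta a \vvvert_2^2,
\end{equation*}
where in the last step we have inserted $\clG_\Delta a = a$. The right-hand side is now in the exact form of \cref{prop:turnoffnoise}, which we apply with $\tau_0 \setto 0$, $\tau_1 \setto \Delta$, $\tau_2 \setto t_2$, and $v_{\tau_0} \setto a$. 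Its hypothesis $\tau_2 - \tau_1 = t_1 < \overline T_\rho(\lambda)$ is equivalent to $q_1 < \lambda^{-2} - 2\sfL(1/\rho)^{-1}$, which follows from the assumed inequality $q_1 \wedge q_2 < \lambda^{-2} - 2\sfL(1/\rho)^{-1}$.

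The resulting upper bound features the term $\sfL_\rho\bigl(\Delta/(t_1+\rho)\bigr)$, which equals $q_2 - q_1$ by the exponential-scale identity \cref{eq:Lbdq1q2}. Taking square roots of the overall estimate and applying $\sqrt{A+B} \le \sqrt A + \sqrt B$ to the two additive contributions produces the stated bound, with $K_{\lambda,\rho,t_1} = K_{\lambda,\rho,\sfT_\rho(q_1\wedge q_2)}$ and $\Xi_{2;\Delta}[a] = \Xi_{2;|\sfT_\rho(q_2)-\sfT_\rho(q_1)|}[a]$. The argument is a short concatenation of previously established facts. The only mildly subtle step is the initial rewriting, which is only possible because the constant initial condition makes the heat flow trivial; once this is in place, the quantitative estimate is already packaged in \cref{prop:turnoffnoise}, and no step poses a serious obstacle.
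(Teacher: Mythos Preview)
The proposal is correct and follows essentially the same approach as the paper: rewrite $J_{\sigma,\rho}(q_1,a)$ via time-stationarity as $\bigl[\EE\sigma^2\bigl(\clV_{\Delta,t_2}a(x)\bigr)\bigr]^{1/2}$, apply \cref{prop:matrixvaluedreversetriangleinequality} and the Lipschitz bound on $\sigma$, then invoke \cref{prop:turnoffnoise} and the identity \cref{eq:Lbdq1q2}. Your explanation of the time-translation step and the use of $\clG_\Delta a = a$ is slightly more explicit than the paper's, but the argument is the same.
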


\begin{proof}
  Assume without loss of generality that $q_{2}\ge q_{1}$. From the
  definition \cref{eq:Jdef-expscale}, we write
  \begin{align*}
    |  J_{\sigma,\rho}(q_{2},a)-J_{\sigma,\rho}&(q_{1},a)|_{\Frob}^{2}\le \sup_{x\in\RR^{2}}\E \bigl\lvert \sigma\bigl(\clV_{\sfT_{\rho}(q_{2})}a(x)\bigr)-\sigma\bigl(\clV_{\sfT_{\rho}(q_{2})-\sfT_{\rho}(q_{1}),\sfT_{\rho}(q_{2})}a(x)\bigr)\bigr\rvert _{\Frob}^{2}                                            \\
      & \le\lambda^{2}\vvvert\clV_{\sfT_{\rho}(q_{2})}a-\clV_{\sfT_{\rho}(q_{2})-\sfT_{\rho}(q_{1}),\sfT_{\rho}(q_{2})}a\vvvert_{2}^{2}                                                                                                                               \\
      & \le\lambda^{2}K_{\lambda,\rho,\sfT_{\rho}(q_{1})}\Xi_{2;\sfT_{\rho}(q_{2})-\sfT_{\rho}(q_{1})}[a]^{2}\left(\sfL_{\rho}\left(\frac{\sfT_{\rho}(q_{2})-\sfT_{\rho}(q_{1})}{\sfT_{\rho}(q_{1})+\rho}\right)+\frac{\sfL(2)K_{\lambda,\rho,\sfT_{\rho}(q_{1})}}{\sfL(1/\rho)}\right) \\
      & =\lambda^{2}K_{\lambda,\rho,\sfT_{\rho}(q_{1})}\Xi_{2;\sfT_{\rho}(q_{2})-\sfT_{\rho}(q_{1})}[a]^{2}\left(q_{2}-q_{1}+\frac{\sfL(2)K_{\lambda,\rho,\sfT_{\rho}(q_{1})}}{\sfL(1/\rho)}\right),
  \end{align*}
  with the first inequality by \cref{prop:matrixvaluedreversetriangleinequality},
  the second by the Lipschitz bound on $\sigma$, the third by \cref{prop:turnoffnoise},
  and the final identity by \cref{eq:Lbdq1q2}.
\end{proof}
For convenience later on, we combine the previous two results into
a single statement.
\begin{cor}
  \label{cor:approxJregularity}Let $\lambda\in(0,\infty)$ and 
  $\sigma\in\lipset(\lambda)$. For any $q_{1},q_{2}\in(0,\lambda^{-2})$ and $a_{1},a_{2}\in\RR^m$, we have
  \begin{align*}
    | J_{\sigma,\rho}&(q_{2},a_{2})-J_{\sigma,\rho}(q_{1},a_{1})|_{\Frob}                                                                                                                                                                                                                \\
      & \le\lambda K_{\lambda,\rho,\sfT_{\rho}(q_{1}\wedge q_{2})}^{1/2}\left[\Xi_{2;|\sfT_{\rho}(q_{2})-\sfT_{\rho}(q_{1})|}[a_{2}]\left(|q_{2}-q_{1}|^{1/2}+\frac{\sfL(2)^{1/2}K_{\lambda,\rho,\sfT_{\rho}(q_{1}\wedge q_{2})}^{1/2}}{\sfL(1/\rho)^{1/2}}\right)+|a_{2}-a_{1}|\right].
  \end{align*}
\end{cor}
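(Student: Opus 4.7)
The plan is to combine \cref{prop:approxJlipschitz,prop:approxJtemporalregularity} via the triangle inequality, with the splitting chosen so that the spatial term is evaluated at $q_1 \wedge q_2$ and the temporal term involves $a_2$.

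Without loss of generality, assume $q_1 \le q_2$, so $q_1 \wedge q_2 = q_1$. I would write
\[
    |J_{\sigma,\rho}(q_2,a_2)-J_{\sigma,\rho}(q_1,a_1)|_\Frob \le |J_{\sigma,\rho}(q_2,a_2)-J_{\sigma,\rho}(q_1,a_2)|_\Frob + |J_{\sigma,\rho}(q_1,a_2)-J_{\sigma,\rho}(q_1,a_1)|_\Frob.
\]
\cref{prop:approxJtemporalregularity} applied to the first term (with $a \leftsquigarrow a_2$) yields the expression
\[
    \lambda K_{\lambda,\rho,\sfT_\rho(q_1)}^{1/2}\Xi_{2;|\sfT_\rho(q_2)-\sfT_\rho(q_1)|}[a_2]\left(|q_2-q_1|^{1/2} + \frac{\sfL(2)^{1/2}K_{\lambda,\rho,\sfT_\rho(q_1)}^{1/2}}{\sfL(1/\rho)^{1/2}}\right),
\]
which matches the desired expression since $q_1 = q_1 \wedge q_2$. \cref{prop:approxJlipschitz} applied to the second term (at $q \leftsquigarrow q_1$) gives $(\lambda^{-2}-q_1)^{-1/2}|a_2-a_1|$.

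The only remaining step is to check that $(\lambda^{-2}-q_1)^{-1/2} \le \lambda K_{\lambda,\rho,\sfT_\rho(q_1\wedge q_2)}^{1/2}$. Since $\sfS_\rho(\sfT_\rho(q_1)) = q_1$ by the definition \cref{eq:Tdef}, we have
\[
    K_{\lambda,\rho,\sfT_\rho(q_1)} = (1-\lambda^2 q_1 - 2\lambda^2\sfL(1/\rho)^{-1})^{-1} \ge (1-\lambda^2 q_1)^{-1} = \lambda^2(\lambda^{-2}-q_1)^{-1},
\]
so $(\lambda^{-2}-q_1)^{-1/2} \le \lambda K_{\lambda,\rho,\sfT_\rho(q_1)}^{1/2}$. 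Adding the two estimates yields the claim. The argument is entirely mechanical once the correct splitting point is chosen; there is no real obstacle beyond tracking which of $q_1,q_2$ is smaller so that the $K$ constants line up.
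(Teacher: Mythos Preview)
Your proof is correct and follows exactly the paper's approach: the same triangle-inequality splitting at $(q_1,a_2)$, the same invocations of \cref{prop:approxJtemporalregularity} and \cref{prop:approxJlipschitz}, and the same absorption of $(\lambda^{-2}-q_1)^{-1/2}$ into $\lambda K_{\lambda,\rho,\sfT_\rho(q_1)}^{1/2}$ via the definition \cref{eq:Kdef-1}. One small slip: in your displayed chain you wrote $(1-\lambda^2 q_1)^{-1} = \lambda^2(\lambda^{-2}-q_1)^{-1}$, but in fact $1-\lambda^2 q_1 = \lambda^2(\lambda^{-2}-q_1)$, so the correct identity is $\lambda^2(1-\lambda^2 q_1)^{-1} = (\lambda^{-2}-q_1)^{-1}$, which is precisely what you need for the final inequality.
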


\begin{proof}
  Assume without loss of generality that $q_{2}\ge q_{1}$. We have
  \begin{align*}
    | J_{\sigma,\rho}&(q_{2},a_{2})-J_{\sigma,\rho}(q_{1},a_{1})|_{\Frob}                                                                                                                                                                                                                                     \\
      & \le|J_{\sigma,\rho}(q_{2},a_{2})-J_{\sigma,\rho}(q_{1},a_{2})|_{\Frob}+|J_{\sigma,\rho}(q_{1},a_{2})-J_{\sigma,\rho}(q_{1},a_{1})|_{\Frob}                                                                                                                                                             \\
      & \le\lambda K_{\lambda,\rho,\sfT_{\rho}(q_{1})}^{1/2}\Xi_{2;|\sfT_{\rho}(q_{2})-\sfT_{\rho}(q_{1})|}[a_{2}]\left(|q_{2}-q_{1}|^{1/2}+\frac{\sfL(2)^{1/2}K_{\lambda,\rho,\sfT_{\rho}(q_{1})}^{1/2}}{\sfL(1/\rho)^{1/2}}\right)+\frac{|a_{2}-a_{1}|}{(\lambda^{-2}-q_{1})^{1/2}} \\
      & \le\lambda K_{\lambda,\rho,\sfT_{\rho}(q_{1})}^{1/2}\left[\Xi_{2;|\sfT_{\rho}(q_{2})-\sfT_{\rho}(q_{1})|}[a_{2}]\left(|q_{2}-q_{1}|^{1/2}+\frac{\sfL(2)^{1/2}K_{\lambda,\rho,\sfT_{\rho}(q_{1})}^{1/2}}{\sfL(1/\rho)^{1/2}}\right)+|a_{2}-a_{1}|\right],
  \end{align*}
  where the first inequality is by the triangle inequality, the second
  is by \cref{prop:approxJlipschitz,prop:approxJtemporalregularity},
  and the third is by the definition \cref{eq:Kdef-1} of $K_{\lambda,\rho,\sfT_{\rho}(q_{1})}$.
\end{proof}

\subsection{Concentration of averages of the \texorpdfstring{$\sigma^{2}$}{σ²} field\label{subsec:concentrate-averages}}

Let $\ttR$ be a square in $\RR^{2}$ of side length $\xi$.
Given deterministic $a\in\RR^m$ and $\tau_{0}\le\tau_{1}$, \cref{eq:Jdef-expscale} yields
\begin{equation}
  \EE \left(\fint_{\ttR}\sigma^{2}\bigl(\clV_{\tau_{0},\tau_{1}}^{\sigma,\rho}a(x)\bigr)\,\dif x\right)=J_{\sigma,\rho}^{2}(\sfS_{\rho}(\tau_{1}-\tau_{0}),a).\label{eq:J2istheexpectation}
\end{equation}
Here we use the shorthand $\fint_{\ttR} f = \abs{\ttR}^{-1} \int_{\ttR}f$ for a spatial average.

In this section we will show that if $t\ll\xi^{2}$, then the term
inside the expectation in \cref{eq:J2istheexpectation} is in fact concentrated
about its mean.
To see this, we express the average over $\ttR$ as an average of sub-averages taken over many squares of side-length $\approx \sqrt{t}$.
Most of the sub-averages are approximately independent of one other, so a form of the law of large numbers holds.
\begin{prop}
  \label{prop:LLNbd}Let $\ell\in(2,4]$.
  There is a constant $C=C(\ell,m)<\infty$ such that the following holds.
  Let $\lambda\in(0,\infty)$ and $\sigma\in\lipset(\lambda)$.
  For any $a\in\RR^m$, $\xi>0$, $s < t$ such that
  $t-s<\xi^{2}\wedge\overline{T}_{\rho}(\lambda)$, $\ell\in(2,4]$,
  and any square $\ttR\subset\RR^{2}$ with side length $\xi$, we have
  \begin{equation}
    \begin{aligned}\EE \Biggl\lvert  \left(\fint_{\ttR}\sigma^{2}\bigl(\clV_{s,t}a(x)\bigr)\,\dif x\right)^{1/2}&-J_{\sigma,\rho}(\sfS_{\rho}(t-s),a)\Biggr\rvert  _{\Frob}^{2}                                                               \\
                   & \le C\Xi_{\ell;s,t}[a]^{2}\left(\frac{2-4/\ell}{1\wedge\log\frac{1}{\lambda^{2}\sfS_{\rho}(t-s)}}\cdot\frac{\log\bigl(\xi[(t-s)\vee\rho]^{-1/2}\bigr)}{\xi[(t-s)\vee\rho]^{-1/2}}\right)^{2-4/\ell}.
    \end{aligned}
    \label{eq:LLNbd}
  \end{equation}
\end{prop}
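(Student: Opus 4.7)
The plan is to combine a matrix inequality reducing the square-root comparison to a comparison of second-moment matrices with a law-of-large-numbers argument across sub-squares of $\ttR$. First, I would apply the Powers--Størmer inequality $|A^{1/2}-B^{1/2}|_\Frob^2 \le |A-B|_*$ for $A,B\in\clH^m_+$, together with $|M|_*\le\sqrt{m}\,|M|_\Frob$, to reduce \cref{eq:LLNbd} to bounding
\[
    \EE\left|\fint_\ttR\bigl[\sigma^{2}\bigl(\clV_{s,t}a(x)\bigr)-J^2_{\sigma,\rho}(\sfS_{\rho}(t-s),a)\bigr]\dif x\right|_\Frob,
\]
noting that by spatial stationarity of the noise and \cref{eq:Jdef-expscale}, $\EE\sigma^{2}(\clV_{s,t}a(x))$ is independent of $x$ and equals $J^2_{\sigma,\rho}(\sfS_{\rho}(t-s),a)$.

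Next I would set $\delta=[(t-s)\vee\rho]^{1/2}$, pick an integer scale $L\ge 1$ (to be optimized at the end), and partition $\ttR$ into $N^2$ congruent sub-squares $\ttR_i$ of side length $\asymp L\delta$, so $N\asymp\xi/(L\delta)$. Let $\widetilde\ttR_i$ denote the concentric dilate of $\ttR_i$ by a factor $3$, and let $\widetilde\clV_i=\clV_{s,t}^{\widetilde\ttR_i}a$ denote the propagator with noise resampled outside $\widetilde\ttR_i$. Writing $Z_i=\fint_{\ttR_i}\sigma^{2}(\clV_{s,t}a)-J^2$ and $\widetilde Z_i$ for the analogue with $\widetilde\clV_i$ replacing $\clV_{s,t}a$, I would decompose
\[
    \fint_\ttR\sigma^{2}(\clV_{s,t}a)-J^2=\frac{1}{N^2}\sum_i(Z_i-\widetilde Z_i)+\frac{1}{N^2}\sum_i\widetilde Z_i.
\]
For the coupling error $Z_i-\widetilde Z_i$, I would use the factorization $\sigma^2(u)-\sigma^2(v)=\sigma(u)[\sigma(u)-\sigma(v)]+[\sigma(u)-\sigma(v)]\sigma(v)$, Cauchy--Schwarz in $\Omega$, the Lipschitz bound on $\sigma$, the moment control $\Xi_{\ell;s,t}$ from \cref{def:momentbdnotation}, and \cref{prop:resamplenoiseoffrectangle} applied with boundary distance $\asymp L\delta$. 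These combine to give a per-sub-square bound of order $\Xi_{\ell;s,t}^2 K_{\lambda,\rho,t-s}^{1/2}\exp\bigl\{-cL(1\wedge\log\tfrac{1}{\lambda^2\sfS_\rho(t-s)})\bigr\}$.

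For the LLN term $N^{-2}\sum_i\widetilde Z_i$, the key observation is that $\widetilde Z_i$ and $\widetilde Z_j$ are independent whenever $\widetilde\ttR_i\cap\widetilde\ttR_j=\emptyset$. I would partition the $N^2$ indices into a bounded number (say $9$) of color classes such that the dilated rectangles within each class are pairwise disjoint, so that within each class the $\widetilde Z_i$ are genuinely independent mean-zero $\clH^m$-valued random variables. On each class, the von~Bahr--Esseen inequality (applied entrywise in $\clH^m$) with exponent $p=\ell/2\in(1,2]$ and bound $\|\widetilde Z_i\|_{L^{\ell/2}(\Omega)}\lesssim\Xi_{\ell;s,t}^2$ gives
\[
    \EE\Bigl|\frac{1}{N^2}\sum_i\widetilde Z_i\Bigr|_\Frob^{\ell/2}\lesssim N^{-\ell}\cdot N^2\cdot\Xi_{\ell;s,t}^{\ell},
\]
and Jensen's inequality then yields $\EE|N^{-2}\sum_i\widetilde Z_i|_\Frob\lesssim\Xi_{\ell;s,t}^2 N^{-(2-4/\ell)}\asymp\Xi_{\ell;s,t}^2(L\delta/\xi)^{2-4/\ell}$.

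Summing the two contributions and choosing $L\asymp(1\wedge\log\tfrac{1}{\lambda^2\sfS_\rho(t-s)})^{-1}\log(\xi/\delta)$ forces the exponentially small coupling error to be dominated by the LLN term, reproducing the bound in \cref{eq:LLNbd}. The main obstacle I anticipate is careful bookkeeping of constants: verifying that the product $\sigma(u)[\sigma(u)-\sigma(v)]$ can be controlled by $\Xi_{\ell;s,t}$ without accumulating extra powers of $\lambda$, that the $K_{\lambda,\rho,t-s}$ factor from \cref{prop:resamplenoiseoffrectangle} is harmless after the final balance, and that the matrix-valued (entrywise) von~Bahr--Esseen step produces both the claimed $(2-4/\ell)$ exponent and the factor $(2-4/\ell)$ in the numerator of the stated constant, which should emerge from the Marcinkiewicz--Zygmund-type constant for $p\in(1,2]$ after the optimization in $L$.
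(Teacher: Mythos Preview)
Your proposal is correct and follows the same overall architecture as the paper's proof: partition $\ttR$ into $N^2$ sub-squares, resample the noise outside concentric dilates to manufacture independence, control the coupling error via \cref{prop:resamplenoiseoffrectangle}, apply a von~Bahr--Esseen inequality for the LLN term, and balance. Your parametrization via $L$ (sub-square side $\asymp L\delta$) is just the paper's parametrization via $N$ in disguise, and your final choice of $L$ matches the paper's choice of $N$.

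The one organizational difference worth noting is \emph{where} the square-root reduction is performed. You apply Powers--St\o rmer once at the outset and then work with $\sigma^{2}(u)-\sigma^{2}(v)$; this forces you to use the factorization $\sigma(u)[\sigma(u)-\sigma(v)]+[\sigma(u)-\sigma(v)]\sigma(v)$ together with Cauchy--Schwarz to control the coupling error. The paper instead keeps the square roots and handles the two pieces separately: for the coupling step it invokes \cref{prop:matrixvaluedreversetriangleinequality} (the Lieb-concavity-based reverse triangle inequality) on the spatial probability space $(\ttR,|\ttR|^{-1}\dif x)$, which directly gives $\EE|\sigma(u)-\sigma(v)|_{\Frob}^2$ without the factorization; it then applies Powers--St\o rmer only for the LLN step, in the nuclear norm. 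This buys a slightly cleaner exponent in the exponential (no halving from a square root) and avoids the extra Cauchy--Schwarz, but the difference is cosmetic. Your concern about the $K_{\lambda,\rho,t-s}$ factor is harmless, as in the paper it is absorbed into the constant after the balance; and the explicit $(2-4/\ell)$ in the numerator appears in the paper precisely because $N$ is chosen with a factor $(2-4/\ell)^{-1}$, which your $\asymp$ can accommodate.
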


\begin{rem}
  \label{rem:interpretLLNbd}
  In interpreting \cref{eq:LLNbd}, we recall
  that the exponent $2-4/\ell$ is positive because of the assumption
  $\ell>2$. Also, the assumption that $t-s<\overline{T}_{\rho}(\lambda)$
  means that $\frac{1}{\lambda^{2}\sfS_{\rho}(t-s)}>1$.
  We note
  that if $c<\lambda^{-2}$, then there is a constant $C=C(\ell,\lambda,c)$
  such that
  \begin{equation}
    \left(\frac{2-4/\ell}{1\wedge\log\frac{1}{\lambda^{2}\sfS_{\rho}(t-s)}}\right)^{2-4/\ell}\le C(\lambda,\ell,c)\qquad\text{whenever }\sfS_{\rho}(t-s)\le c.\label{eq:boundthelogthing}
  \end{equation}
\end{rem}

\begin{proof}
  Assume without loss of generality that $s=0$. Fix $N\in\NN$,
  to be chosen later. Write $\ttR=\bigsqcup\limits _{i\in\clI}\ttR_{i}$,
  where the $\ttR_{i}$s are disjoint translates of $[0,\xi/N)^{2}$ and
  $\clI$ is an index set with $|\clI|=N^{2}$.
  For $i\in\clI$, let $\hat{\ttR}_{i}$ be a box with the same
  center as $\ttR_{i}$, but with side length $3\xi/N$. Let $(\clI_{j})_{j\in\clJ}$
  be a partition of $\clI$ such that $|\clJ|\le16$ and,
  for each $j\in\clJ$, we have $\hat{\ttR}_{i_{1}}\cap\hat{\ttR}_{i_{2}}=\emptyset$
  for all $i_{1},i_{2}\in\clI_{j}$.
  Given $x\in \ttR$, let $i(x)$
  denote the index such that $x\in \ttR_{i(x)}$.

  In this proof we use the notation $\clV_{s,t}^{\ttB}$ introduced
  prior to \cref{prop:resamplenoiseoffrectangle}. Applying
  \cref{prop:matrixvaluedreversetriangleinequality} with the probability
  space $(\Omega,\PP)$ replaced by $(\ttR,|\ttR|^{-1}\dif x)$, we find
  \begin{align}
    \EE & \left\lvert  \left(\fint_{\ttR}\sigma^{2}\bigl(\clV_{t}^{\hat{\ttR}_{i}(x)}a(x)\bigr)\,\dif x\right)^{1/2}-\left(\fint_{\ttR}\sigma^{2}\bigl(\clV_{t}a(x)\bigr)\,\dif x\right)^{1/2}\right\rvert  _{\Frob}^{2}\nonumber                         \\
        & \le\fint_{\ttR}\EE \bigl\lvert \sigma\bigl(\clV_{t}^{\hat{\ttR}_{i}(x)}a(x)\bigr)-\sigma\bigl(\clV_{t}a(x)\bigr)\bigr\rvert _{\Frob}^{2}\,\dif x\le\lambda^{2}\fint_{\ttR}\EE \bigl\lvert \clV_{t}^{\hat{\ttR}_{i}(x)}a(x)-\clV_{t}a(x)\bigr\rvert _{\Frob}^{2}\,\dif x\nonumber \\
        & \le CK_{\lambda,\rho,t}\Xi_{2;t}[a]^{2}\exp\left\{ -\frac{\xi}{2N(t\vee\rho)^{1/2}}\left(1\wedge\log\frac{1}{\lambda^{2}\sfS_{\rho}(t)}\right)\right\} \label{eq:applyresampling}
  \end{align}
  The final inequality follows from \cref{prop:resamplenoiseoffrectangle} (yielding the absolute constant $C$) and
  the fact that $\inf\limits _{x\in \ttR}\dist(x,\hat{\ttR}_{i(x)})=\xi/N$.
  Now define
  \[
    X_{i}=\fint_{\ttR_{i}}\sigma^{2}\bigl(\clV_{t}^{\hat{\ttR}_{i}}a(x)\bigr)\,\dif x.
  \]
  Note that the $X_{i}$s are identically distributed random variables
  taking values in $\clH_{+}^{m}$. Also, for each $j\in\clJ$,
  the family of random variables $(X_{i})_{i\in\clI_{j}}$ is
  independent.
  Indeed, $X_{i}$ depends on the original white noise
  only inside $\hat{\ttR}_{i}$ (along with a new white noise that is independent
  of all the other white noises outside of $\hat{\ttR}_{i}$), and the
  $\hat{\ttR}_{i}$s for $i\in\clI_{j}$ do not overlap by construction.
  Furthermore, we have
  \begin{align}
    \EE |X_{i}-\EE X_{i}|_{*}^{\ell/2}\le C(\ell)\EE |X_{i}|_{*}^{\ell/2}\le C(\ell)\EE \bigl\lvert \sigma^{2}&\bigl(\clV_{t}^{\hat{\ttR}_{i}}a(x)\bigr)\bigr\rvert _{*}^{\ell/2}\nonumber\\
                                                                                                       &=C(\ell)\EE \bigl\lvert \sigma\bigl(\clV_{t}^{\hat{\ttR}_{i}}a(x)\bigr)\bigr\rvert _{\Frob}^{\ell}\le C(\ell)\Xi_{\ell;t}[a]^{2},\label{eq:normofXminusEx}
  \end{align}
  by Jensen's inequality, where $C(\ell)$ is a constant depending only
  on $\ell$.

  We now use a Hilbert space-valued version of the quantitative Law of
  Large Numbers proved by von Bahr and Esseen in \cite{vBE65}, e.g.\ 
  \cite[Corollary 11]{Nag87}, which tells us that
  \[
    \EE \Biggl|\sum_{i\in\clI_{j}}(X_{i}-\EE X_{i})\Biggr|_{\Frob}^{\ell/2}\le C(\ell)|\clI_{j}|\EE \left\lvert  X_{i}-\EE X_{i}\right\rvert  _{\Frob}^{\ell/2}
  \]
  for a (new) constant $C(\ell)$ depending only on $\ell$. By the equivalence
  of norms on finite-dimensional spaces, this implies that
  \begin{equation}
    \EE \Biggl|\sum_{i\in\clI_{j}}(X_{i}-\EE X_{i})\Biggr|_{*}^{\ell/2}\le C(\ell,m)|\clI_{j}|\EE |X_{i}-\EE X_{i}|_{*}^{\ell/2}\overset{\cref{eq:normofXminusEx}}{\le}C(\ell,m)|\clI_{j}|\Xi_{\ell;t}[a]^{2}.\label{eq:useequivalenceofnorms}
  \end{equation}
  We can thus estimate
  \begin{align*}
    \EE & \left\lvert  \left(N^{-2}\sum_{i\in\clI}X_{i}\right)^{1/2}-(\EE X_{i})^{1/2}\right\rvert  _{\Frob}^{2}\overset{\cref{eq:powerstormer}}{\le}\frac{1}{N^{2}}\EE \left\lvert  \sum_{i\in\clI}(X_{i}-\EE X_{i})\right\rvert  _{*}\le\frac{1}{N^{2}}\sum_{j\in\clJ}\EE \left\lvert  \sum_{i\in\clI_{j}}(X_{i}-\EE X_{i})\right\rvert  _{*}      \\
        & \le\frac{1}{N^{2}}\sum_{j\in\clJ}\left(\EE \left\lvert  \sum_{i\in\clI_{i}}(X_{i}-\EE X_{i})\right\rvert  _{*}^{\ell/2}\right)^{2/\ell}\overset{\cref{eq:useequivalenceofnorms}}{\le}C(\ell,m)\frac{\Xi_{\ell;t}[a]^{2}}{N^{2}}\sum_{j\in\clJ}|\clI_{j}|^{2/\ell}\le C(\ell,m)\Xi_{\ell;t}[a]^{2}N^{4/\ell-2},
  \end{align*}
  where the constant $C(\ell,m)$ has been allowed to change from
  expression to expression but always depends only on $\ell$ and $m$.
  Now we note that
  \[
    N^{-2}\sum_{i\in\clI}X_{i}=\fint_{\ttR}\sigma^{2}\bigl(\clV_{t}^{\hat{\ttR}_{i(x)}}a(x)\bigr)\,\dif x
  \]
  and that
  \[
    \EE X_{i}=J_{\sigma,\rho}^{2}(\sfS_{\rho}(t),a).
  \]
  Combining the last three displays, we see that
  \begin{equation}
    \EE \left\lvert  \left(\fint_{\ttR}\sigma^{2}\bigl(\clV_{t}^{\hat{\ttR}_{i(x)}}a(x)\bigr)\,\dif x\right)^{1/2}-J_{\sigma,\rho}(\sfS_{\rho}(t),a)\right\rvert  ^{2}_\Frob\le C(\ell,m)\Xi_{\ell;t}[a]^{2}N^{4/\ell-2}.\label{eq:applyLLN}
  \end{equation}

  Combining \cref{eq:applyresampling} and \cref{eq:applyLLN} and using
  the fact that $\Xi_{2;t}[a]\le\Xi_{\ell;t}[a]$ by Jensen's inequality,
  we see that
  \begin{align*}
    \EE & \left\lvert  \left(\fint_{\ttR}\sigma^{2}\bigl(\clV_{t}a(x)\bigr)\,\dif x\right)^{1/2}-J_{\sigma,\rho}(\sfS_{\rho}(t),a)\right\rvert  _{\Frob}^{2}\nonumber                                                               \\
        & \le C\Xi_{\ell;t}[a]^{2}\left[\exp\left\{ -\frac{\xi}{2N(t\vee\rho)^{1/2}}\left(1\wedge\log\frac{1}{\lambda^{2}\sfS_{\rho}(t)}\right)\right\} +N^{4/\ell-2}\right],%
  \end{align*}
  for a possibly larger constant $C$. Now we take
  \[
    N=\left\lceil \frac{\xi\left(1\wedge\log\frac{1}{\lambda^{2}\sfS_{\rho}(t)}\right)}{4(2-4/\ell)(t\vee\rho)^{1/2}\log\frac{\xi}{(t\vee\rho)^{1/2}}}\right\rceil,
  \]
  so the inequality becomes
  \begin{align*}
    \EE & \left\lvert  \left(\fint_{\ttR}\sigma^{2}(\clV_{t}a(x))\,\dif x\right)^{1/2}-J_{\sigma,\rho}(\sfS_{\rho}(t),a)\right\rvert  _{\Frob}^{2}\nonumber                                                                               \\
        & \le C\Xi_{\ell;t}[a]^{2}\left(\frac{1\wedge\log\frac{1}{\lambda^{2}\sfS_{\rho}(t)}}{2-4/\ell}\cdot\frac{\xi(t\vee\rho)^{-1/2}}{\log[\xi(t\vee\rho)^{-1/2}]}\right)^{-(2-4/\ell)},%
  \end{align*}
  again for a possibly larger constant $C$.
\end{proof}
To complement \cref{prop:LLNbd}, we prove the following simple bound
on the same quantity, which is useful for very short times $(\sfS_{\rho}(t-s)\ll1$).
\begin{prop}
  \label{prop:smalltimebd}Let $\lambda\in(0,\infty)$ and  $\sigma\in\lipset(\lambda)$.
  For any $a\in\RR^m$, $\ttR\subseteq\RR^{2}$, and $s\le t$, we have
  \begin{equation*}
    \EE \left\lvert  \left(\fint_{\ttR}\sigma^{2}\bigl(\clV_{s,t}a(x)\bigr)\,\dif x\right)^{1/2}-J_{\sigma,\rho}(\sfS_{\rho}(t-s),a)\right\rvert  _{\Frob}^{2}\le4\Xi_{2;t-s}[a]^{2}\lambda^{2}\sfS_{\rho}(t-s).
  \end{equation*}
\end{prop}

\begin{proof}
  Assume without loss of generality that $s=0$. We can estimate, using
  \cref{prop:matrixvaluedreversetriangleinequality},
  \begin{align*}
    \EE \left\lvert  \left(\fint_{\ttR}\sigma^{2}\bigl(\clV_{t}a(x)\bigr)\,\dif x\right)^{1/2}-\sigma(a)\right\rvert  _{\Frob}^{2}\le\fint_{\ttR}\EE \bigl\lvert \sigma\bigl(\clV_{t}a(x)\bigr)-\sigma(a)\bigr\rvert _{\Frob}^{2}\,\dif x & \le\lambda^{2}\vvvert\clV_{t}a-a\vvvert_{2}^{2}.
  \end{align*}
  Also, we have
  \begin{align*}
    \left\lvert  J_{\sigma,\rho}(\sfS_{\rho}(t),a)-\sigma(a)\right\rvert  _{\Frob}^{2}=\left\lvert  [\EE \sigma^{2}\bigl(\clV_{t}a(x)\bigr)]^{1/2}-\sigma(a)\right\rvert  ^{2} & \le\sup_{x\in\RR^{2}}\bigl\lvert \sigma\bigl(\clV_{t}a(x)\bigr)-\sigma(a)\bigr\rvert _{\Frob}^{2} \\
                                                                                                                                                    & \le\lambda^{2}\vvvert\clV_{t}a-a\vvvert_{2}^{2}.
  \end{align*}
  We can bound the right side of each of the last two displays by noting
  that
  \[
    \EE |\clV_{t}a(x)-a|^{2}=\frac{4\pi}{\sfL(1/\rho)}\int_{0}^{t}\!\!\!\!\int G_{t+\rho-r}^{2}(x-y)\EE \bigl\lvert \sigma\bigl(\clV_{r}a(y)\bigr)\bigr\rvert _{\Frob}^{2}\,\dif y\,\dif r\le\Xi_{2;t}[a]^{2}\sfS_{\rho}(t),
  \]
  and the conclusion follows from the triangle inequality.
\end{proof}

\subsection{Approximating averages of the \texorpdfstring{$\sigma^{2}$}{σ²} field in terms of averages
  of the solution}

In this section (namely in \cref{thm:mainapproxthm} below),
we relate spatial averages of $\sigma^{2}\circ v_{t}$
to spatial averages of $v_{t}$. For technical reasons (mainly to do with the proof of \cref{prop:coupling} below),
the former averages will be uniform averages over squares, while the
latter will be averages against Gaussians. We do this by first using the results of \cref{subsec:noise-flatten} to replace the field at a slightly earlier time $s$ by a flattened version, and then using the results of \cref{subsec:concentrate-averages} to replace the spatial average at time $t$ by an expectation conditional on the spatial average at time $s$. The spatial average at time $s$ is then seen to be close to the spatial average at time $t$ by the results of \cref{subsec:regularitygaussianaverages}.

We begin with some notation.
Given $\zeta\in(0,\infty)$ and $k\in\RR^{2}$, we define the
square
\nomenclature[Bzetak]{$\ttB_{\zeta,k}$}{box with side length $\zeta$ and lower left corner $k\in\RR^2$, \cref{eq:boxdef}}
\begin{equation}
  \ttB_{\zeta,k}\coloneqq\zeta\cdot(k+[0,1)^{2}).\label{eq:boxdef}
\end{equation}
Given $x,z\in\RR^{2}$, let $k_{\zeta,z}(x)$
be the unique integral vector $k \in \Z^2$ such that $\ttB_{\zeta,k + z}\ni x$.\nomenclature[kzetaz]{$k_{\zeta,z}(x)$}{$k$ such that $\ttB_{\zeta,k+z}\ni x$}
Given a field $f$ on
$\RR^{2}$, $\zeta\in(0,\infty)$, and $z\in\RR^{2}$, define
\nomenclature[S cal]{$\clS_{\zeta,z}$}{averages a field over $\ttB_{\zeta,z}$, \cref{eq:Scaldef}}
\nomenclature[S calzero]{$\clS_{\zeta}$}{$\clS_{\zeta,0}$, \cref{eq:Scaldef}}
\begin{equation}
  \clS_{\zeta,z}f(x)\coloneqq\fint_{\ttB_{\zeta,k_{\zeta,z}(x)+z}}f(y)\,\dif y=\frac{1}{\zeta^{2}}\int_{\ttB_{\zeta,k_{\zeta,z}(x)+z}}f(y)\,\dif y;\qquad\qquad\clS_{\zeta}\coloneqq\clS_{\zeta,0}.\label{eq:Scaldef}
\end{equation}
Finally, given $\ell>2$, define
\nomenclature[zzzgreek κ]{$\kappa_{\ell,\rho}$}{small parameter, equal to $\sfL(1/\rho)^{-1/(1-2/\ell)}$, see \cref{eq:kappadef}}
\begin{equation}
  \kappa_{\ell,\rho}=\sfL(1/\rho)^{-1/(1-2/\ell)}.\label{eq:kappadef}
\end{equation}
We note that
\begin{equation*}
  \kappa_{\ell,\rho}\le\sfL(1/\rho)^{-1}\qquad\text{for all \ensuremath{\rho>0}\text{ and all }\ensuremath{\ell>2}}.
\end{equation*}
We think of $\ell$ as slightly greater than $2$, in which case the
exponent $-1/(1-2/\ell)$ in \cref{eq:kappadef} is a large negative constant, and thus $\kappa_{\ell,\rho}\ll 1$ for small $\rho$.
We use $\kappa$ as a fudge-factor to modestly expand or contract certain time or space scales.
The exponent in \cref{eq:kappadef} is $-2$ times the inverse of the exponent in
\cref{eq:LLNbd}. The precise value of the exponent is important only to obtain the estimate \cref{eq:E2firstbound} below.

We recall the parameter $\eta(\beta,\ell)$ from \cref{eq:etadef}.
\begin{thm}
  \label{thm:mainapproxthm}Fix $M,\beta,\lambda\in(0,\infty)$, $\ell\in(2,4]$,
  $\overline{\beta}>\eta(\beta,\ell)$, and $\overline{q}\in(0,\lambda^{-2})$.
  There is a constant $C=C(M,\beta,\lambda,\ell,\overline{\beta},\overline{q})\in(0,\infty)$
  such that the following holds. Suppose that $\rho\in(0,C^{-1})$.
  Let $\sigma\in\quadset(M,\beta)$ satisfy $\Lip(\sigma)\le\lambda$
  and let $(v_{t})_{t\ge0}$ solve \cref{eq:mildsoln} with initial condition
  $v_{0}\in\scrX_{0}^{\ell}$. Let $\xi\in[0,\overline{T}_{\rho}(\overline{q}^{-1/2})]$,
  \begin{equation}
    t\in\bigl[\bigl([1+\sfL(1/\rho)]\kappa_{\ell,\rho}^{-1}+\kappa_{\ell,\rho}\bigr)\xi^{2},\sfT_{\rho}(\overline{\beta}^{-2})\bigr],\label{eq:mainapproxthmtcond}
  \end{equation}
  $x,z\in\RR^{2}$, and $\iota_{1},\iota_{2}\in[0,1]$. Then we have
  \begin{equation}
    \EE \left\lvert  \left(\clS_{\xi,z}[\sigma^{2}\circ v_{t}](x)\right)^{1/2}-J_{\sigma,\rho}\bigl(\sfS_{\rho}(\sfL(1/\rho)^{\iota_{1}}\xi^{2}),\clG_{\sfL(1/\rho)^{\iota_{2}}\xi^{2}}v_{t}(x)\bigr)\right\rvert  _{\Frob}^{2}\le C\langle\vvvert v_{0}\vvvert_{\ell}\rangle^{2}\frac{\sfL(\sfL(1/\rho))}{\sfL(1/\rho)}.\label{eq:mainapproxthm-conclusion}
  \end{equation}
\end{thm}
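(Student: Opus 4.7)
The approach combines three tools: flattening (\cref{prop:flattenic}), LLN-type concentration (\cref{prop:LLNbd}), and regularity of $J_{\sigma,\rho}$ (\cref{cor:approxJregularity}) together with space-time continuity of Gaussian averages of the solution (\cref{prop:continuitybd}); moments are controlled throughout by \cref{prop:momentbd}, whose applicability up to time $\sfT_\rho(\overline{\beta}^{-2})$ is secured by the hypothesis $\overline{\beta} > \eta(\beta,\ell)$.

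\emph{Flattening and concentration.} I choose intermediate times $\tau_1 = t - \kappa_{\ell,\rho}\xi^2$ and $\tau_0 = \tau_1 - (1+\sfL(1/\rho))\kappa_{\ell,\rho}^{-1}\xi^2$; the lower bound on $t$ in \cref{eq:mainapproxthmtcond} guarantees $\tau_0 \geq 0$. Applying \cref{prop:flattenic} with $X\setto x$ and $(\tau_0,\tau_1,\tau_2)\setto (\tau_0,\tau_1,t)$, the field $v_t(x')$ for $x' \in \ttB_{\xi,k_{\xi,z}(x)+z}$ is approximated in $L^2$ by $\clV_{\tau_1,t}\clZ_x\clG_{\tau_1-\tau_0}v_{\tau_0}(x')$, with each of the three error terms in \cref{prop:flattenic} being $O(\sfL(\sfL(1/\rho))/\sfL(1/\rho))$ under these scale choices. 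Set $a:=\clG_{\tau_1-\tau_0}v_{\tau_0}(x)$; this is $\scrF_{\tau_0}$-measurable, so conditional on $\scrF_{\tau_0}$, the process $\clV_{\tau_1,t}\clZ_x\clG_{\tau_1-\tau_0}v_{\tau_0}$ is simply $\clV_{\tau_1,t}a$, the SPDE started from the constant value $a$ and driven by $\scrF_{\tau_0}$-independent noise on $[\tau_1,t]$. Applying \cref{prop:LLNbd} conditionally on $\scrF_{\tau_0}$ (the hypothesis $t-\tau_1 = \kappa_{\ell,\rho}\xi^2 < \xi^2\wedge\overline{T}_\rho(\lambda)$ holds), the ratio $\xi[(t-\tau_1)\vee\rho]^{-1/2}$ equals $\kappa_{\ell,\rho}^{-1/2}$; by \cref{eq:kappadef} this is tuned so that $\kappa_{\ell,\rho}^{1-2/\ell} = 1/\sfL(1/\rho)$, and combined with the log-factor $\log(\kappa_{\ell,\rho}^{-1/2})^{2-4/\ell} \le C\sfL(\sfL(1/\rho))$ this yields the target rate in a concentration bound around $J_{\sigma,\rho}(\sfS_\rho(\kappa_{\ell,\rho}\xi^2), a)$ after integrating over $\scrF_{\tau_0}$ using \cref{prop:momentbd} to bound $\EE\Xi_{\ell;\tau_1,t}^{\sigma,\rho}[a]^2 \le C\langle\vvvert v_0\vvvert_\ell\rangle^2$. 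The transfer from pointwise $L^2$ approximation of $v_t$ to $L^2$ approximation of $(\clS_{\xi,z}[\sigma^2\circ v_t](x))^{1/2}$ uses the Lipschitz bound on $\sigma$ and \cref{prop:matrixvaluedreversetriangleinequality}.

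\emph{Regularity corrections.} It remains to replace $J_{\sigma,\rho}(\sfS_\rho(\kappa_{\ell,\rho}\xi^2), a)$ by the target $J_{\sigma,\rho}(\sfS_\rho(\sfL(1/\rho)^{\iota_1}\xi^2), \clG_{\sfL(1/\rho)^{\iota_2}\xi^2}v_t(x))$ via \cref{cor:approxJregularity}. The time-argument gap $|\sfS_\rho(\kappa_{\ell,\rho}\xi^2) - \sfS_\rho(\sfL(1/\rho)^{\iota_1}\xi^2)| = \sfL_\rho(\kappa_{\ell,\rho}^{-1}\sfL(1/\rho)^{\iota_1})$ is $O(\sfL(\sfL(1/\rho))/\sfL(1/\rho))$, so the temporal H\"older part of \cref{cor:approxJregularity} contributes an error of the correct order. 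The spatial-argument gap $a - \clG_{\sfL(1/\rho)^{\iota_2}\xi^2}v_t(x) = V_{\tau_0}^{\rho,\tau_1}(x) - V_t^{\rho, t+\sfL(1/\rho)^{\iota_2}\xi^2}(x)$ is bounded in $L^2$ via \cref{prop:continuitybd} by $C\langle\vvvert v_0\vvvert_\ell\rangle\sqrt{\sfL(\sfL(1/\rho))/\sfL(1/\rho)}$, and the spatial Lipschitz portion of \cref{cor:approxJregularity} then converts this to the desired correction. Assembling all pieces by the triangle inequality yields \cref{eq:mainapproxthm-conclusion}.

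\emph{Main obstacle.} The principal difficulty is quantitative: every error source (flattening, concentration, and the two regularity corrections) must contribute at most $O(\sqrt{\sfL(\sfL(1/\rho))/\sfL(1/\rho)})$ to the final bound, and the specific exponent in \cref{eq:kappadef} is chosen precisely so that the LLN rate $\kappa_{\ell,\rho}^{1-2/\ell}$ matches $1/\sfL(1/\rho)$; with any other choice the balancing fails. A secondary subtlety is the handling of the random initial condition $a$ in invoking \cref{prop:LLNbd}: the bound is applied conditionally on $\scrF_{\tau_0}$ and then integrated. Finally, the hypothesis $\overline{q} < \lambda^{-2}$ ensures the spatial Lipschitz constant of $J_{\sigma,\rho}(q,\cdot)$ stays bounded for the relevant $q$, avoiding the blow-up in \cref{eq:approxJlipschitz}.
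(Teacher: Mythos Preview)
Your overall approach matches the paper's (flatten via \cref{prop:flattenic}, concentrate via \cref{prop:LLNbd}, then adjust the arguments of $J_{\sigma,\rho}$ via \cref{cor:approxJregularity} and \cref{prop:continuitybd}, with moments controlled by \cref{prop:momentbd}), but there is a gap in the concentration step. Your claim that the ratio $\xi[(t-\tau_1)\vee\rho]^{-1/2}$ equals $\kappa_{\ell,\rho}^{-1/2}$ is only correct when $t-\tau_1=\kappa_{\ell,\rho}\xi^2\ge\rho$. When $\kappa_{\ell,\rho}\xi^2<\rho$ (which occurs for small $\xi$, and $\xi$ is allowed down to $0$), the maximum is $\rho$ and the ratio becomes $\xi\rho^{-1/2}$, which can be of order~$1$; in that regime \cref{eq:LLNbd} does not yield the target rate. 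The paper handles this by a case split: if $\tau_2-\tau_1<\rho$, invoke \cref{prop:smalltimebd} instead, which gives $E_2\le 4\Xi_{2}[a]^2\lambda^2\sfS_\rho(\rho)=O(1/\sfL(1/\rho))$ directly.

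A secondary issue is your parameter choice $\tau_1-\tau_0=(1+\sfL(1/\rho))\kappa_{\ell,\rho}^{-1}\xi^2$, which forces $\tau_0=0$ at the left endpoint of \cref{eq:mainapproxthmtcond}. The application of \cref{prop:continuitybd} in the regularity-correction step needs a strictly positive lower bound on $\tau_0$ (the term $(|s_1-s_2|+|t_1-t_2|)/\tau_0$ blows up otherwise). The paper instead takes $\tau_1-\tau_0=\kappa_{\ell,\rho}^{-1}\xi^2$, so that $\tau_0\ge\sfL(1/\rho)\kappa_{\ell,\rho}^{-1}\xi^2$ by \cref{eq:tauestimates}; this slack is precisely what makes that term $O(1/\sfL(1/\rho))$ in \cref{eq:EGdiff}.
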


\begin{proof}
  The proof combines the earlier results of this section
  with specific choices of parameters. We proceed in several steps.
  \begin{thmstepnv}
    \item \emph{Choosing the parameters.} Let
    \begin{equation}
      \ttR=\ttB_{\xi,k_{\xi,z}(x) + z},\quad\tau_{0}=t-(\kappa_{\ell,\rho}^{-1}+\kappa_{\ell,\rho})\xi^{2},\quad\tau_{1}=\tau_{0}+\kappa_{\ell,\rho}^{-1}\xi^{2},\quad\tau_{2}=\tau_{1}+\kappa_{\ell,\rho}\xi^{2}=t\label{eq:choosexietc}
    \end{equation}
    and let $X \in \R^d$ denote the center of $\ttR$.
    Note that
    \begin{equation}
      \tau_{2}\ge\tau_{1}\ge\tau_{0}\ge\sfL(1/\rho)\kappa_{\ell,\rho}^{-1}\xi^{2}\ge0\label{eq:tauestimates}
    \end{equation}
    by the assumption \cref{eq:mainapproxthmtcond} on $t$.

    \item \emph{Flattening the solution.} We have by \cref{prop:matrixvaluedreversetriangleinequality}
    and \cref{prop:flattenic} that there is an absolute constant $C<\infty$
    such that (as long as $\rho$ is small enough that $\overline{q}<\lambda^{-2}-2\sfL(1/\rho)^{-1}$)
    \begin{align*}
      E_{1} & \coloneqq\EE \left\lvert  \left(\fint_{\ttR}\sigma^{2}\bigl(v_{t}(y)\bigr)\,\dif y\right)^{1/2}-\left(\fint\sigma^{2}\bigl(\clV_{\tau_{1},\tau_{2}}\clZ_{X}\clG_{\tau_{1}-\tau_{0}}v_{\tau_{0}}(y)\bigr)\,\dif y\right)^{1/2}\right\rvert  _{\Frob}^{2}                                        \\
            & \le\fint_{\ttR}\EE \left\lvert  \sigma\bigl(\clV_{\tau_{0},\tau_{2}}v_{\tau_{0}}(y)\bigr)-\sigma\bigl(\clV_{\tau_{1},\tau_{2}}\clZ_{X}\clG_{\tau_{1}-\tau_{0}}v_{\tau_{0}}(y)\bigr)\right\rvert  _{\Frob}^{2}\,\dif y                                                                          \\
            & \le\lambda^{2}\fint_{\ttR}\EE \left\lvert  \clV_{\tau_{0},\tau_{2}}v_{\tau_{0}}(y)-\clV_{\tau_{1},\tau_{2}}\clZ_{X}\clG_{\tau_{1}-\tau_{0}}v_{\tau_{0}}(y)\right\rvert  _{\Frob}^{2}\,\dif y                                                                               \\
            & \le CK_{\lambda,\rho,\tau_{2}-\tau_{1}}\Xi_{2;\tau_{0},\tau_{1}}[v_{\tau_{0}}]^{2}\left[\sfL_{\rho}\left(\frac{\tau_{1}-\tau_{0}}{\tau_{2}-\tau_{1}+\rho}\right)+\frac{(1+\lambda^{2}\delta)(\tau_{2}-\tau_{1})+\xi^{2}}{\tau_{1}-\tau_{0}}+\delta\right]
    \end{align*}
    where $\delta=K_{\lambda,\rho,\tau_{2}-\tau_{1}}\sfL(1/\rho)^{-1}.$
    Substituting in the parameters \cref{eq:choosexietc}, we get
    \[
      \delta=K_{\lambda,\rho,\kappa_{\ell,\rho}\xi^{2}}\sfL(1/\rho)^{-1}\le K_{\lambda,\rho,\xi^{2}}\sfL(1/\rho)^{-1},
    \]
    and so
    \begin{align}
      E_{1} & \le CK_{\lambda,\rho,\xi^{2}}\Xi_{2;\tau_{0},\tau_{1}}[v_{\tau_{0}}]^{2}\left[\sfL_{\rho}(\kappa_{\ell,\rho}^{-2})+\kappa_{\ell,\rho}[(1+\lambda^{2}\delta)\kappa_{\ell,\rho}+1]+\frac{K_{\lambda,\rho,\xi^{2}}}{\sfL(1/\rho)}\right].\label{eq:E1bd}
    \end{align}
    From \cref{eq:E1bd} and the definitions \cref{eq:kappadef} of $\kappa_{\ell,\rho}$
    and \cref{eq:Kdef-1} of $K_{\lambda,\rho,\xi^{2}}$, it is clear that
    there if $\rho$ is sufficiently small (depending on $\overline{q}$
    and $\lambda$), then we have
    \begin{equation}
      E_{1}\le C\Xi_{2;\tau_{0},\tau_{1}}[v_{\tau_{0}}]^{2}\cdot\frac{\sfL(\sfL(1/\rho))}{\sfL(1/\rho)}\label{eq:E1OK}
    \end{equation}
    for a constant $C$ depending only on $\overline{q}$ and $\lambda$.
    \item \emph{Concentration.} Define
    \[
      E_{2}\coloneqq\EE \left\lvert  \left(\fint_{\ttR}\sigma^{2}\bigl(\clV_{\tau_{1},\tau_{2}}\clZ_{X}\clG_{\tau_{1}-\tau_{0}}v_{\tau_{0}}(y)\bigr)\,\dif y\right)^{1/2}-J_{\sigma,\rho}\bigl(\sfS_{\rho}(\tau_{2}-\tau_{1}),\clG_{\tau_{1}-\tau_{0}}v_{\tau_{0}}(x)\bigr)\right\rvert  _{\Frob}^{2}.
    \]
    We consider two cases.
    \begin{casenv}
      \item If $\tau_{2}-\tau_{1}=\kappa_{\ell,\rho}\xi^{2}\ge\rho$, then we
      use \cref{prop:LLNbd}, applied with $s\setto\tau_{1}$, $t\setto\tau_{2}$,
      and $a\setto\clG_{\tau_{1}-\tau_{0}}v_{\tau_{0}}(X)$.
      Because $\tau_2 - \tau_1 > \rho$, the term $(\tau_{2}-\tau_{1})\vee\rho$ becomes simply $\tau_{2}-\tau_{1}$.
      Using \cref{eq:boundthelogthing} of \cref{rem:interpretLLNbd}, we find %
      \begin{align*}
        E_{2} & \le C\Xi_{\ell;\tau_{1},\tau_{2}}[\clG_{\tau_{1}-\tau_{0}}v_{\tau_{0}}(X)]^{2}\left(\frac{\log[\xi(\tau_{2}-\tau_{1})^{-1/2}]}{\xi(\tau_{2}-\tau_{1})^{-1/2}}\right)^{2-4/\ell} \\
              & =C\Xi_{\ell;\kappa_{\ell,\rho}\xi^{2}}[\clG_{\tau_{1}-\tau_{0}}v_{\tau_{0}}(X)]^{2}\left(\kappa_{\ell,\rho}^{1/2}\log[\kappa_{\ell,\rho}^{-1/2}]\right)^{2-4/\ell}.
      \end{align*}
      Using the definition \cref{eq:kappadef} of $\kappa_{\ell,\rho}$, we
      see that there is a constant $C=C(\ell)<\infty$ such that if $\rho$ is sufficiently small (depending only
      on $\overline{q}$ and $\lambda$), then
      \begin{equation}
        E_{2}\le C\Xi_{\ell;\kappa_{\ell,\rho}\xi^{2}}[\clG_{\tau_{1}-\tau_{0}}v_{\tau_{0}}(x)]^{2}\frac{\sfL(\sfL(1/\rho))^{2-4/\ell}}{\sfL(1/\rho)}\le C\Xi_{\ell;\kappa_{\ell,\rho}\xi^{2}}[\clG_{\tau_{1}-\tau_{0}}v_{\tau_{0}}(x)]^{2}\frac{\sfL(\sfL(1/\rho))}{\sfL(1/\rho)},\label{eq:E2firstbound}
      \end{equation}
      where in the last inequality we used the fact that $\ell\le4$.
      \item If $\tau_{2}-\tau_{1}=\kappa_{\ell,\rho}\xi^{2}<\rho$, then we use
      \cref{prop:smalltimebd}, applied with $s\setto\tau_{1}$,
      $t\setto\tau_{2}$, and $a\setto\clG_{\tau_{1}-\tau_{0}}v_{\tau_{0}}(X)$,
      to obtain
      \begin{equation*}
        E_{2}\le4\Xi_{2;\tau_{1},\tau_{2}}[\clG_{\tau_{1}-\tau_{0}}v_{\tau_{0}}(X)]^{2}\lambda^{2}\sfS_{\rho}(\rho)=4\Xi_{2;\kappa_{\ell,\rho}\xi^{2}}[\clG_{\tau_{1}-\tau_{0}}v_{\tau_{0}}(X)]^{2}\lambda^{2}\frac{\sfL(2)}{\sfL(1/\rho)}.
      \end{equation*}
    \end{casenv}
    In either case, we therefore have
    \begin{equation}
      E_{2}\le C\Xi_{\ell;\tau_{1},\tau_{2}}[\clG_{\tau_{1}-\tau_{0}}v_{\tau_{0}}(X)]^{2}\frac{\sfL(\sfL(1/\rho))}{\sfL(1/\rho)}\label{eq:E2finalbd}
    \end{equation}
    for a constant $C<\infty$ depending only on $\ell$ and $\lambda$.
    \item \emph{\label{step:adjustJsigmarho}Adjusting the arguments of $J_{\sigma,\rho}$.}
    Let $\iota_{1},\iota_{2}\in[0,1]$. By \cref{cor:approxJregularity} and the triangle inequality on $L^2(\Omega,\P)$, we have
    \begin{align}
       & \left(\EE \left\lvert  J_{\sigma,\rho}\bigl(\sfS_{\rho}(\kappa_{\ell,\rho}\xi^{2}),\clG_{\kappa_{\ell,\rho}^{-1}\xi^{2}}v_{\tau_{0}}(X)\bigr)-J_{\sigma,\rho}\bigl(\sfS_{\rho}(\sfL(1/\rho)^{\iota_{1}}\xi^{2}),\clG_{\sfL(1/\rho)^{\iota_{2}}\xi^{2}}v_{t}(X)\bigr)\right\rvert  _{\Frob}^{2}\right)^{1/2}\nonumber                                             \\
       & \quad\le\lambda K_{\lambda,\rho,\xi^{2}}^{1/2}\Biggl[\Xi_{2;\sfL(1/\rho)\xi^{2}}[\clG_{\kappa_{\ell,\rho}^{-1}\xi^{2}}v_{\tau_{0}}(X)]^{2}\Biggl(|\sfS_{\rho}(\sfL(1/\rho)^{\iota_{1}}\xi^2)-\sfS_{\rho}(\kappa_{\ell,\rho}\xi^2)|^{1/2}+\frac{\sfL(2)^{1/2}K_{\lambda,\rho,\xi^{2}}^{1/2}}{\sfL(1/\rho)^{1/2}}\Biggr)\nonumber \\
       & \hspace{18em}+(\EE |\clG_{\kappa_{\ell,\rho}^{-1}\xi^{2}}v_{\tau_{0}}(X)-\clG_{\sfL(1/\rho)^{\iota_{2}}\xi^{2}}v_{t}(X)|^{2})^{1/2}\Biggr]\label{eq:applytimeregularity}
    \end{align}
    Recalling \cref{eq:Srhodef}, for $\rho$ sufficiently small we have
    \begin{multline}
      0\le\sfS_{\rho}(\sfL(1/\rho)^{\iota_{1}}\xi^{2})-\sfS_{\rho}(\kappa_{\ell,\rho}\xi^{2})\le\sfS_{\rho}(\sfL(1/\rho)\xi^{2})-\sfS_{\rho}(\kappa_{\ell,\rho}\xi^{2})=\frac{1}{\sfL(1/\rho)}\log\frac{1+\sfL(1/\rho)\rho^{-1}\xi^{2}}{1+\kappa_{\ell,\rho}\rho^{-1}\xi^{2}}\\
      \le\frac{\log[\sfL(1/\rho)\kappa_{\ell,\rho}^{-1}]}{\sfL(1/\rho)}\le\frac{C\sfL(\sfL(1/\rho))}{\sfL(1/\rho)}\label{eq:Srhodiff}
    \end{multline}
    for a constant $C$ depending only on $\ell$.
    The penultimate
    inequality holds because $\kappa_{\ell,\rho}<\sfL(1/\rho)$ when $\rho$ is sufficiently small.
    Also, using \cref{prop:continuitybd} and $\kappa_{\ell,\rho}^{-1} > \sfL(1/\rho)$, there is an absolute constant
    $C<\infty$ such that (for sufficiently small $\rho$)
    \begin{align}
      ( & \EE |\clG_{\kappa_{\ell,\rho}^{-1}\xi^{2}}v_{\tau_{0}}(X)-\clG_{\sfL(1/\rho)^{\iota_{2}}\xi^{2}}v_{t}(X)|^{2})^{1/2}\nonumber                                                                                                                                                                                                                                \\
        & \le C\Xi_{2,t}[v_0]\Biggl[\sfL_{\rho}\left(\frac{\kappa_{\ell,\rho}^{-1}-\sfL(1/\rho)^{\iota_{2}} + \kappa_{\ell,\rho}^{-1} + \kappa_{\ell,\rho}}{\sfL(1/\rho)^{\iota_{2}}}\right)^{1/2}+\left(\frac{[\kappa_{\ell,\rho}^{-1}-\sfL(1/\rho)^{\iota_{2}} + \kappa_{\ell,\rho}^{-1} + \kappa_{\ell,\rho}]\xi^{2}}{\tau_{0}}\right)^{1/2}\nonumber\\
      &\hspace{10cm}+\left(\frac{\sfL(\sfL(1/\rho))}{\sfL(1/\rho)}\right)^{1/2}\Biggr]\nonumber \\
        \overset{\cref{eq:tauestimates}}&{\le}
      C\Xi_{2,t}[v_0]\left[\sfL_{\rho}(2\kappa_{\ell,\rho}^{-1})^{1/2}+\frac{1}{\sfL(1/\rho)^{1/2}}+\left(\frac{\sfL(\sfL(1/\rho))}{\sfL(1/\rho)}\right)^{1/2}\right]\nonumber                                                                    \\
        & \le C\Xi_{2,t}[v_0]\left[\frac{\sfL(\sfL(1/\rho))}{\sfL(1/\rho)}\right]^{1/2}\label{eq:EGdiff}
    \end{align}
    Here $C$ has been allowed to change from expression to expression.
    Using \cref{eq:Srhodiff} and \cref{eq:EGdiff} in \cref{eq:applytimeregularity},
    we obtain
    \begin{equation}
      \begin{aligned}\bigl(\EE \bigl\lvert J_{\sigma,\rho} & \bigl(\sfS_{\rho}(\kappa_{\ell,\rho}\xi^{2}),\clG_{\kappa_{\ell,\rho}^{-1}\xi^{2}}v_{\tau_{0}}(X)\bigr)-J_{\sigma,\rho}\bigl(\sfS_{\rho}(\sfL(1/\rho)^{\iota_{1}}\xi^{2}),\clG_{\sfL(1/\rho)^{\iota_{2}}\xi^{2}}v_{\tau_{0}}(X)\bigr)\bigr\rvert _{\Frob}^{2}\bigr)^{1/2} \\
                                     & \le C\left(\Xi_{2;\sfL(1/\rho)\xi^2}[\clG_{\tau_{0}-\tau_{1}}v_{\tau_{0}}(X)]+\Xi_{2;t}[v_0]\right)\left[\frac{\sfL(\sfL(1/\rho))}{\sfL(1/\rho)}\right]^{1/2}
      \end{aligned}
      \label{eq:Jcontbd}
    \end{equation}
    for a constant $C$ depending on $\ell$, $\lambda$, $\overline{q}$.
  \item \emph{Conclusion. }
    Recall that $0 < \tau_0 < \tau_1 < t \leq \sfT_\rho(\bar{\beta}^{-2})$.
    Thus by \cref{prop:momentbd} and the hypothesis $\bar{\beta} > \eta(\beta, \ell)$, we have
    \begin{equation}
      \Xi_{2;\tau_{0},\tau_{1}}[v_{\tau_{0}}]^{2}+\Xi_{\ell;\tau_{1},\tau_{2}}[\clG_{\tau_{1}-\tau_{0}}v_{\tau_{0}}(X)]^{2} + \Xi_{2;\sfL(1/\rho)\xi^2}[\clG_{\tau_{0}-\tau_{1}}v_{\tau_{0}}(X)] + \Xi_{2;t}[v_0] \le C\langle\vvvert v_{0}\vvvert_{\ell}\rangle^{2},\label{eq:applymomentbd}
    \end{equation}
    where the constant $C$ here depends on $M,\beta,\overline{\beta},\ell$.
    Combining \cref{eq:E1OK}, \cref{eq:E2finalbd}, and \cref{eq:Jcontbd}
    using the triangle inequality, Cauchy--Schwarz, and \cref{eq:applymomentbd},
    we see that
    \begin{align*}
      \EE & \left\lvert  \left(\fint_{\ttR}\sigma^{2}\bigl(v_{t}(y)\bigr)\,\dif y\right)^{1/2}-J_{\sigma,\rho}\bigl(\sfS_{\rho}(\sfL(1/\rho)^{\iota_{1}}\xi^{2}),\clG_{\sfL(1/\rho)^{\iota_{2}}\xi^{2}}v_{\tau_{0}}(x)\bigr)\right\rvert  _{\Frob}^{2}\le C\langle\vvvert v_{0}\vvvert_{\ell}\rangle^{2}\frac{\sfL(\sfL(1/\rho))}{\sfL(1/\rho)}
    \end{align*}
    for a constant $C<\infty$ depending only on $q,M,\beta,\lambda,\ell$.
    Recalling the definition of the square $\ttR$, we see that this implies
    \cref{eq:mainapproxthm-conclusion}.\qedhere
  \end{thmstepnv}
\end{proof}

\subsection{The approximate root decoupling function approximates the root decoupling function}

The goal of this subsection is to prove (in \cref{prop:Japprox} below) that the approximate root decoupling function $J_{\sigma,\rho}$ really is an approximation of the root decoupling function $J_\sigma$.

First we introduce some notation. We define, for $T_{0}<T_{1}$, the quantities
\nomenclature[R sf]{$\sfR_{T_0,T_1,\rho}$}{change of variables, inverse to $\sfU_{T_0,T_1,\rho}$, see \cref{eq:sfRdef}}
\begin{equation}
  \sfR_{T_{0},T_{1},\rho}(q)\coloneqq T_{0}+(1-\e^{-q\sfL(1/\rho)})(T_{1}+\rho-T_{0})=T_{1}-\sfT_{\rho}(\sfS_{\rho}(T_{1}-T_{0})-q)\label{eq:sfRdef}
\end{equation}
and
\nomenclature[U sf]{$\sfU_{T_0,T_1,\rho}$}{change of variables, inverse to $\sfR_{T_0,T_1,\rho}$, see \cref{eq:sfUdef}}
\begin{equation}
  \sfU_{T_{0},T_{1},\rho}(t)\coloneqq\frac{1}{\sfL(1/\rho)}\log\frac{T_{1}-T_{0}+\rho}{T_{1}-t+\rho}=\sfS_{\rho}(T_{1}-T_{0})-\sfS_{\rho}(T_{1}-t),\label{eq:sfUdef}
\end{equation}
The latter is the exponential time scale introduced in \cref{eq:introchgvar} and the former is its inverse:
\begin{equation*}
  \sfR_{T_{0},T_{1},\rho}\bigl(\sfU_{T_{0},T_{1},\rho}(t)\bigr)=t\qquad\text{for all }t\in[T_{0},T_{1}].
\end{equation*}
We note that
\begin{equation}
  \sfR_{T_{0},T_{1},\rho}(0)=T_{0}\qquad\text{and}\qquad\sfR_{T_{0},T_{1},\rho}(\sfS_{\rho}(T_{1}-T_{0}))=T_{1}.\label{eq:Rendpoints}
\end{equation}
We recall the definition \cref{eq:Xnorm} of the norm $\|\anon\|_{\clX}$.
\begin{prop}
  \label{prop:Japprox}Let $M,\lambda\in(0,\infty)$, $\sigma \in \lipset(M, \lambda)$, and $Q_{0}\in(0,\lambda^{-2})$.
  There exists a constant $C=C(M,\lambda,Q_{0})<\infty$ such
  that
  \begin{equation}
    \sup_{Q\in[0,Q_{0}]}\|(J_{\sigma,\rho}-J_{\sigma})(Q,\cdot)\|_{\clX}\le C\left[\frac{\sfL(\sfL(1/\rho))}{\sfL(1/\rho)}\right]^{1/2}.\label{eq:Japprox-concl}
  \end{equation}
\end{prop}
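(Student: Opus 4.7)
The strategy is to cast $J_{\sigma,\rho}$ as an approximate fixed point of the operator $\clQ_{\sigma}$ from \cref{eq:Qdef} and then invoke the stability estimate \cref{eq:onestepclose} of \cref{prop:localwellposed}. First I check that $J_{\sigma,\rho}\in\clZ_{Q_{0},M,\lambda}$. The Lipschitz bound $\Lip(J_{\sigma,\rho}(q,\anon))\le(\lambda^{-2}-q)^{-1/2}$ is exactly \cref{prop:approxJlipschitz}. Applying \cref{prop:momentbd} with $\ell=2$ to the SPDE started at the spatially constant initial condition $a$, combined with the inclusion $\lipset(M,\lambda)\subseteq\quadset(M,\lambda)$, yields the required growth $|J_{\sigma,\rho}(q,a)|_{\Frob}^{2}\le(1-\lambda^{2}q)^{-2}M+(\lambda^{-2}-q)^{-1}|a|^{2}$. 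Joint continuity in $(q,a)$ comes from \cref{cor:approxJregularity}. By \cref{eq:onestepclose}, the theorem thus reduces to showing
\[
    \sup_{Q\in[0,Q_{0}]}\|(J_{\sigma,\rho}-\clQ_{\sigma}J_{\sigma,\rho})(Q,\anon)\|_{\clX}\le C\bigl[\sfL(\sfL(1/\rho))/\sfL(1/\rho)\bigr]^{1/2}.
\]

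Fix $a\in\RR^{m}$, $x\in\RR^{2}$, $Q\in[0,Q_{0}]$, and set $T=\sfT_{\rho}(Q)$, $v_{t}\coloneqq\clV_{t}^{\sigma,\rho}a$. The martingale $V_{t}^{\rho,T}(x)$ from \cref{eq:Vdef} satisfies $V_{0}=a$ and $V_{T}=v_{T}(x)$. Passing to the exponential timescale \crefrange{eq:sfRdef}{eq:sfUdef}, I set $\Upsilon(q)\coloneqq V_{\sfR_{0,T,\rho}(q)}^{\rho,T}(x)$ for $q\in[0,Q]$. A direct computation from \cref{eq:VQV} together with the identity $\sfT_{\rho}'(p)=\sfL(1/\rho)(\sfT_{\rho}(p)+\rho)$ shows that $\Upsilon$ is a continuous $\RR^{m}$-valued martingale with
\[
    \frac{\dif[\Upsilon]_{q}}{\dif q}=\clG_{\frac{1}{2}(\sfT_{\rho}(Q-q)+\rho)}[\sigma^{2}\circ v_{\sfR(q)}](x)\quad\text{and}\quad\Upsilon(q)=\clG_{\sfT_{\rho}(Q-q)}v_{\sfR(q)}(x).
\]
By martingale representation (on an enlarged probability space), write $\Upsilon(q)=a+\int_{0}^{q}H_{p}\dif B(p)$, with $B$ a standard Brownian motion and $H_{p}$ the nonnegative-definite square root of $\dif[\Upsilon]_{p}/\dif q$. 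Let $\tilde{\Theta}(q)\coloneqq\Theta_{a,Q}^{J_{\sigma,\rho}}(q)$ be the SDE solution driven by the \emph{same} $B$; both processes start at $a$. Itô isometry, the uniform Lipschitz bound $\Lip(J_{\sigma,\rho}(Q-p,\anon))\le(\lambda^{-2}-Q_{0})^{-1/2}$, and Grönwall's inequality (as in the proof of \cref{prop:SDEwellposed}) give
\[
    \EE|\Upsilon(Q)-\tilde{\Theta}(Q)|^{2}\le C(Q_{0},\lambda)\int_{0}^{Q}\EE|H_{p}-J_{\sigma,\rho}(Q-p,\Upsilon(p))|_{\Frob}^{2}\,\dif p.
\]

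To bound the integrand, a matrix square-root inequality in the spirit of \cref{prop:matrixvaluedreversetriangleinequality} reduces the problem to controlling $|\clG_{\frac{1}{2}(\sfT_{\rho}(Q-p)+\rho)}[\sigma^{2}\circ v_{\sfR(p)}](x)-J_{\sigma,\rho}^{2}(Q-p,\clG_{\sfT_{\rho}(Q-p)}v_{\sfR(p)}(x))|_{*}$ in expectation. This is precisely the content of \cref{thm:mainapproxthm}, except that the latter is formulated with box averages $\clS_{\xi,z}$ rather than the Gaussian averages $\clG_{s}$ appearing here. A Gaussian-weighted analogue of \cref{thm:mainapproxthm}---obtained by rerunning the flattening (\cref{prop:flattenic}) and concentration (\cref{prop:LLNbd}) steps with a Gaussian weight of comparable spatial scale in place of the uniform box weight---yields the bound $C\langle a\rangle^{2}\sfL(\sfL(1/\rho))/\sfL(1/\rho)$ for appropriate parameter choices, uniformly in $p\in[0,Q_{0}]$. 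Chaining the estimates and applying \cref{prop:matrixvaluedreversetriangleinequality} with $\Lip(\sigma)\le\lambda$ gives $|J_{\sigma,\rho}(Q,a)-\clQ_{\sigma}J_{\sigma,\rho}(Q,a)|_{\Frob}^{2}\le\lambda^{2}\EE|v_{T}(x)-\tilde{\Theta}(Q)|^{2}\le C\langle a\rangle^{2}\sfL(\sfL(1/\rho))/\sfL(1/\rho)$, and dividing by $\langle a\rangle^{2}$ closes the reduction.

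The main obstacle is justifying the Gaussian-weighted analogue of \cref{thm:mainapproxthm} invoked above. Conceptually the adaptation is routine: flattening still centers on the point $x$ rather than a box center, and concentration via approximately independent subregions is equally valid when one averages against a Gaussian kernel of spatial scale $\xi$ instead of uniformly over a box of side length $\xi$. However, reverifying the intricate parameter choices made in \cref{sec:approx-ave}---in particular the interplay of $\kappa_{\ell,\rho}$, $\xi$, and the three timescales $\tau_{0}<\tau_{1}<\tau_{2}$---against a Gaussian weight is where the majority of the technical work will lie.
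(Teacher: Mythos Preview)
Your overall strategy---show $J_{\sigma,\rho}$ is nearly a fixed point of $\clQ_\sigma$ and invoke \cref{eq:onestepclose}---is the same as the paper's, and your verification that $J_{\sigma,\rho}\in\clZ_{Q_0,M,\lambda}$ is correct. The divergence is in how you produce the driving Brownian motion and how you compare the diffusion coefficient $H_p$ to $J_{\sigma,\rho}$.

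The paper does \emph{not} pass through an abstract martingale representation and does \emph{not} need a Gaussian-weighted analogue of \cref{thm:mainapproxthm}. Instead it constructs the Brownian motion explicitly via the coupling of \cref{prop:coupling}: this produces a new white noise $\dif\widetilde W$ for which $\clG_{T_1+\rho-r}[\sigma(v_r)\,\dif W_r]$ is close to $\clG_{T_1+\rho-r}[(\clS_{\zeta_r}[\sigma^2\circ v_r])^{1/2}\,\dif\widetilde W_r]$, with $\zeta_r^2=\sfL(1/\rho)^{-1}(T_1-r)$. The point is that the integrand now contains the \emph{box} average $(\clS_{\zeta_r}[\sigma^2\circ v_r])^{1/2}$, so \cref{thm:mainapproxthm} applies verbatim. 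The Brownian motion $\hat B$ is then the spatially integrated version of $\dif\widetilde W$ (see \cref{eq:Bhat}). This is precisely why the paper chose box averages in \cref{thm:mainapproxthm} in the first place: the orthonormal-basis construction underlying \cref{prop:coupling} lives on boxes. Your plan to rederive \cref{thm:mainapproxthm} with Gaussian weights is plausible but is genuine extra work, and the coupling route bypasses it entirely.

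There is also a gap you did not flag: taking $T_0=0$ means that for $p$ near $0$ the running time $\sfR(p)$ is far too small relative to the spatial scale $\sfT_\rho(Q-p)$ for any version of \cref{thm:mainapproxthm} to apply (its hypothesis \cref{eq:mainapproxthmtcond} requires $t$ to exceed a factor $\sim\sfL(1/\rho)\kappa_{\ell,\rho}^{-1}$ times the scale). The paper deals with this by inflating the starting time to $T_0=\sfL(1/\rho)\bigl([1+\sfL(1/\rho)]\kappa_{\ell,\rho}^{-1}+\kappa_{\ell,\rho}\bigr)\sfT_\rho(Q)$ and then checking separately (\cref{eq:fudgethebeginning}) that $V_{T_0}^{\rho,T_1}(x)\approx a$; this contributes only $\sfL(\sfL(1/\rho))/\sfL(1/\rho)$ to the error. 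You would need either this device or an explicit splitting of the $p$-integral near $0$. Finally, your detour through Powers--St{\o}rmer is unnecessary: once you have a Gaussian analogue of \cref{thm:mainapproxthm}, it directly bounds $\EE|H_p-J_{\sigma,\rho}(Q-p,\Upsilon(p))|_\Frob^2$ without passing through the nuclear norm of the squares.
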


\begin{proof}
  We proceed in several steps.
  \begin{thmstepnv}
    \item \emph{Choosing the parameters. }Let $\beta=\lambda$ and select $\overline{\beta}>\beta$
    such that $Q_{0}<\overline{\beta}^{-2}$. Choose $\ell\in(2,4]$ such
    that $\eta(\beta,\ell)<\overline{\beta}$, where $\eta(\beta,\ell)$
    is defined in \cref{eq:etadef}. Let $a\in\RR^m$ and
    let $(v_{t})_{t\ge0}$ solve \cref{eq:SPDE} with constant initial
    condition $v_{0}\equiv a$. Let $Q\in[0,Q_{0}]$ and define
    \begin{equation}
      T_{0}=\sfL(1/\rho)\bigl([1+\sfL(1/\rho)]\kappa_{\ell,\rho}^{-1}+\kappa_{\ell,\rho}\bigr)\sfT_{\rho}(Q)\qquad\text{and}\qquad T_{1}=T_{0}+\sfT_{\rho}(Q).\label{eq:T1T0defs}
    \end{equation}
    Recalling \cref{eq:Rendpoints}, we note that $\sfR_{T_{0},T_{1},\rho}(0)=T_{0}$
    and $\sfR_{T_{0},T_{1},\rho}(Q)=T_{1}$. Define $V_{t}^{\rho,T_{1}}$
    as in \cref{eq:Vdef}. Fix $x\in\RR^{2}$ arbitrarily. We note by \cref{eq:Vtintegral}
    that, for $t\in[T_{0},T_{1}]$, we have
    \begin{equation}
      V_{t}^{\rho,T_{1}}(x)=\clG_{T_{1}-t}v_{t}(x)=V_{T_{0}}^{\rho,T_{1}}(x)+\gamma_{\rho}\int_{T_{0}}^{t}\clG_{T_{1}+\rho-r}[\sigma(v_{r})\,\dif W_{r}](x),\label{eq:VTintegralrestate}
    \end{equation}
    and we recall that
    \begin{equation}
      V_{T_{1}}^{\rho,T_{1}}=v_{T_{1}}.\label{eq:atthefinaltime}
    \end{equation}
    \item \emph{The initial condition.}\label{step:twiddleic} Our choice of $T_{0}$ in \cref{eq:T1T0defs}
    will allow us to satisfy the hypothesis \cref{eq:mainapproxthmtcond}
    of \cref{thm:mainapproxthm}.
    (Absent this restriction, we would have preferred to take $T_{0}=0$.)
    Here we show that the difference is not important.
    Using \cref{lem:VTcontinuity}, we have
    \begin{equation}
      \begin{aligned}\EE & |V_{T_{0}}^{\rho,T_{1}}(x)-a|^{2}=\EE |V_{T_{0}}^{\rho,T_{1}}(x)-V_{0}^{\rho,T_{1}}(x)|^{2}\le\Xi_{2;0,T_{0}}[v_{0}]^{2}\sfL_{\rho}\left(\frac{T_{0}}{T_{1}+\rho-T_{0}}\right)                               \\
                   & \overset{\cref{eq:T1T0defs}}{\le}\Xi_{2;0,T_{0}}[v_{0}]^{2}\sfL_{\rho}\left(\sfL(1/\rho)\bigl([1+\sfL(1/\rho)]\kappa_{\ell,\rho}^{-1}+\kappa_{\ell,\rho}\bigr)\right)\le C\Xi_{2;0,T_{0}}[v_{0}]^{2}\frac{\sfL(\sfL(1/\rho))}{\sfL(1/\rho)}
      \end{aligned}
      \label{eq:fudgethebeginning}
    \end{equation}
    for sufficiently small $\rho$, with a constant $C$ depending only
    on $\ell$.
    \item \emph{The new noise.} Define
    \begin{equation}
      \zeta_{r}=[\sfL(1/\rho)^{-1}(T_{1}-r)]^{1/2}.\label{eq:zetarappldef}
    \end{equation}
    Applying \cref{prop:coupling} with $\eta_{r}=T_{1}+\rho-r$, there exists a space-time
    white noise $\dif\widetilde{W}_{t}$ such that for all $t\in[T_{0},T_{1}]$,
    we have
    \begin{align}
       & \EE \left\lvert  \gamma_{\rho}\int_{\widetilde{T}_{0}}^{t}\clG_{T_{1}+\rho-r}[\sigma(v_{r})\,\dif W_{r}-(\clS_{\zeta_{r}}[\sigma^{2}\circ v_{r}])^{1/2}\,\dif\widetilde{W}_{r}](x)\right\rvert  ^{2}\le\frac{C\Xi_{2;T_{0},T_{1}}[v_{T_{0}}]^{2}}{\sfL(1/\rho)^{2}}\int_{T_{0}}^{t}\frac{\dif r}{T_{1}+\rho-r}\nonumber \\
       & \qquad\qquad\le\frac{C\Xi_{2;T_{0},T_{1}}[v_{T_{0}}]^{2}}{\sfL(1/\rho)}\sfS_{\rho}(T_{1}-T_{0})\overset{\cref{eq:T1T0defs}}{=}\frac{CQ\Xi_{2;T_{0},T_{1}}[v_{T_{0}}]^{2}}{\sfL(1/\rho)},\label{eq:applycoupling}
    \end{align}
    for an absolute constant $C$.
    \item \emph{The closed equation.} We would like to replace
    $(\clS_{\zeta_{r}}[\sigma^{2}\circ v_{r}])^{1/2}$ on the left
    side of \cref{eq:applycoupling} by a quantity only depending on $V_{r}^{\rho,T_{1}}(x)$.
    This will allow us to create a closed equation for $(V_{t}^{\rho,T_{1}}(x))_{t}$.
    \begin{thmstepnv}
    \item \emph{Concentration.}
      Let $Q_1 = (Q_0 + \lambda^{-2})/2,$ so that $Q_1 \in (Q_0, \lambda^{-2})$.
      We apply \cref{thm:mainapproxthm} with $\xi\setto\zeta_{r}$
      (defined in \cref{eq:zetarappldef}), $\overline{q}\setto Q_1$,
      $t\setto r$, $\iota_{1},\iota_{2}\setto1$, and $z = 0$.
      Using also the definition \cref{eq:Vdef} of $V_{r}^{\rho,T_{1}}(x)$,
      we obtain for all $y\in\RR^{2}$, all $r\in[T_{0},T_{1}]$, and all
      sufficiently small $\rho$ that
      \begin{align}
        \EE & \left\lvert  \left(\clS_{\zeta_{r}}[\sigma^{2}\circ v_{r}](y)\right)^{1/2}-J_{\sigma,\rho}\bigl(\sfS_{\rho}(T_{1}-r),V_{r}^{\rho,T_{1}}(y)\bigr)\right\rvert  _{\Frob}^{2}\le C\langle\vvvert v_{0}\vvvert_{\ell}\rangle^{2}\frac{\sfL(\sfL(1/\rho))}{\sfL(1/\rho)}\label{eq:applyconcentration}
      \end{align}
      with a constant $C=C(M,\lambda,\beta,\ell,\overline{\beta},Q_0)<\infty$.
      The condition \cref{eq:mainapproxthmtcond} to apply \cref{thm:mainapproxthm}
      holds since $T_{0}\ge([1+\sfL(1/\rho)]\kappa_{\ell,\rho}^{-1}+\kappa_{\ell,\rho})\zeta_{r}^{2}$
      for all $r\in[T_{0},T_{1}]$ by the definitions \cref{eq:T1T0defs}
      and \cref{eq:zetarappldef}.
      Likewise, the condition $\xi \leq \bar{T}_\rho(\bar{q}^{-1/2})$ holds for sufficiently small $\rho$ because $\bar{q} = Q_1 > Q_0 \geq Q$, and the resulting constant depends on $Q_0$ and $\lambda$ through $Q_1$.

      \item \emph{Removing the spatial dependence.} The quantity $J_{\sigma,\rho}\bigl(\sfS_{\rho}(T_{1}-r),V_{r}^{\rho,T_{1}}(y)\bigr)$
      appearing on the left side of \cref{eq:applyconcentration} depends
      on $y$. Since we ultimately seek a one-dimensional problem, we would
      like to eliminate this dependence. By \cref{prop:approxJlipschitz,prop:continuitybd,prop:momentbd},
      we have, still
      for all $y\in\RR^2$ and all $r\in[T_{0},T_{1}]$, that
      \begin{equation}
        \begin{aligned}\EE & \left\lvert  J_{\sigma,\rho}\bigl(\sfS_{\rho}(T_{1}-r),V_{r}^{\rho,T_{1}}(y)\bigr)-J_{\sigma,\rho}\bigl(\sfS_{\rho}(T_{1}-r),V_{r}^{\rho,T_{1}}(x)\bigr)\right\rvert  _{\Frob}^{2}                                                                                                 \\
                   & \le\frac{\EE \left[\clG_{T_{1}-r}v_{r}(y)-\clG_{T_{1}-r}v_{r}(x)\right]^{2}}{\lambda^{-2}-\sfS_{\rho}(T_{1}-r)}\le C\langle\vvvert v_{0}\vvvert_{\ell}\rangle^{2}\left[\sfL_{\rho}\left(\frac{|x-y|^{2}}{2(T_{1} + \rho -r)}\right)+\frac{\abs{x - y}^2}{r}+\sfL_{\rho}\bigl(1+\sfL(1/\rho)\bigr)\right]
        \end{aligned}
        \label{eq:eliminatespatialdependence}
      \end{equation}
      for a (possibly larger) constant $C=C(M,\lambda,\beta,Q)<\infty$.
  
      \item \emph{Creating the closed equation.} Combining \cref{eq:applyconcentration}
      and \cref{eq:eliminatespatialdependence} via the triangle inequality,
      we obtain %
      \begin{align}
        \EE & \left\lvert  \left(\clS_{\zeta_{r}}[\sigma^{2}\circ v_{r}](y)\right)^{1/2}-J_{\sigma,\rho}\bigl(\sfS_{\rho}(T_{1}-r),V_{r}^{\rho,T_{1}}(x)\bigr)\right\rvert  _{\Frob}^{2}\nonumber                                                                                                                                                       \\
            & \le\frac{C\langle\vvvert v_{0}\vvvert_{\ell}\rangle^{2}}{\sfL(1/\rho)}\left[\sfL\left(\frac{|x-y|^{2}}{2(T_{1}+\rho-r)}\right) + \frac{\sfL(1/\rho)\abs{x - y}^2}{r} +\sfL(\sfL(1/\rho))\right]\nonumber\\
            & \le\frac{C\langle\vvvert v_{0}\vvvert_{\ell}\rangle^{2}}{\sfL(1/\rho)}\left[\frac{|x-y|^{2}}{T_{1}+\rho-r} + \frac{\sfL(1/\rho)\abs{x - y}^2}{r} + \sfL(\sfL(1/\rho))\right]\label{eq:concentrationdone}
      \end{align}
      for sufficiently small $\rho$. We wish to continue from \cref{eq:applycoupling},
      so we write
      \begin{align}
        \EE & \left\lvert  \gamma_{\rho}\int_{T_{0}}^{t}\clG_{T_{1}+\rho-r}\left[\left((\clS_{\zeta_{r}}[\sigma^{2}\circ v_{r}])^{1/2}-J_{\sigma,\rho}\bigl(\sfS_{\rho}(T_{1}-r),V_{r}^{\rho,T_{1}}(x)\bigr)\right)\,\dif\widetilde{W}_{r}\right](x)\right\rvert  ^{2}\nonumber      \\
            & \le\frac{4\pi}{\sfL(1/\rho)}\int_{T_{0}}^{t}\!\!\int G_{T_{1}+\rho-r}^{2}(x-y)\EE \left\lvert  (\clS_{\zeta_{r}}[\sigma^{2}\circ v_{r}](y))^{1/2}-J_{\sigma,\rho}\bigl(\sfS_{\rho}(T_{1}-r),V_{r}^{\rho,T_{1}}(x)\bigr)\right\rvert  _{\Frob}^{2}\,\dif y\,\dif r\nonumber \\
            & \le\frac{C\langle\vvvert v_{0}\vvvert_{\ell}\rangle^{2}}{\sfL(1/\rho)^{2}}\int_{T_{0}}^{t}\!\!\int\frac{G_{\frac{1}{2}[T_{1}+\rho-r]}(x-y)}{T_{1}+\rho-r}\left[\frac{|x-y|^{2}}{T_{1}+\rho-r}+ \frac{\sfL(1/\rho)\abs{x - y}^2}{r}+\sfL(\sfL(1/\rho))\right]\,\dif y\,\dif r\nonumber          \\
            & =\frac{C\langle\vvvert v_{0}\vvvert_{\ell}\rangle^{2}[1+\sfL(\sfL(1/\rho))]}{\sfL(1/\rho)^{2}}\int_{T_{0}}^{t}\frac{\dif r}{T_{1}+\rho-r} + \frac{C\langle\vvvert v_{0}\vvvert_{\ell}\rangle^{2}}{\sfL(1/\rho)} \int_{T_0}^t \frac{\dif r}{r}\nonumber                                                                                                 \\
            & =\frac{C\langle\vvvert v_{0}\vvvert_{\ell}\rangle^{2}}{\sfL(1/\rho)}\left([1+\sfL(\sfL(1/\rho))]\sfS_{\rho}(t-T_{0}) + \log \frac{T_1}{T_0}\right)\le C\langle\vvvert v_{0}\vvvert_{\ell}\rangle^{2}\frac{\sfL(\sfL(1/\rho))}{\sfL(1/\rho)}\label{eq:comparetoclosedthing}
      \end{align}
      with the second inequality by \cref{eq:concentrationdone} and \cref{eq:GT2}
      and the last inequality by \cref{eq:T1T0defs} (since the constant $C$
      is allowed to depend on $Q_{0}$).
      We emphasize that in the quantity
      $J_{\sigma,\rho}\bigl(\sfS_{\rho}(T_{1}-r),V_{r}^{\rho,T_{1}}(x)\bigr)$
      appearing on the left side, the point $x\in\RR^{2}$ is \emph{fixed},
      so this quantity is treated as a constant from the perspective of
      the operator $\clG_{T_{1}+\rho-r}$. In particular we have
      \begin{equation}
        \gamma_{\rho}\int_{T_{0}}^{t}\clG_{T_{1}+\rho-r}\left[J_{\sigma,\rho}\bigl(\sfS_{\rho}(T_{1}-r),V_{r}^{\rho,T_{1}}(x)\bigr)\,\dif\widetilde{W}_{r}\right](x)=\int_{T_{0}}^{t}J_{\sigma,\rho}\bigl(\sfS_{\rho}(T_{1}-r),V_{r}^{\rho,T_{1}}(x)\bigr)\,\dif\hat{B}(r),\label{eq:Jatxisaconstant}
      \end{equation}
      where we have defined
      \begin{equation}
        \hat{B}(t)=\gamma_{\rho}\int_{T_{0}}^{t}(\clG_{T_{1}+\rho-r}\,\dif\widetilde{W}_{r})(x)\qquad\text{for }t\ge T_{0}.\label{eq:Bhat}
      \end{equation}
      Combining \cref{eq:VTintegralrestate}, \cref{eq:fudgethebeginning},
      \cref{eq:applycoupling}, \cref{eq:comparetoclosedthing}, and \cref{eq:Jatxisaconstant},
      we see that
      \begin{equation}
        \begin{aligned}\EE  & \left\lvert  V_{t}^{\rho,T_{1}}(x)-a-\int_{T_{0}}^{t}J_{\sigma,\rho}\bigl(\sfS_{\rho}(T_{1}-r),V_{r}^{\rho,T_{1}}(x)\bigr)\,\dif\hat{B}(r)\right\rvert  ^{2}\le C\langle\vvvert v_{0}\vvvert_{\ell}\rangle^{2}\frac{\sfL(\sfL(1/\rho))}{\sfL(1/\rho)},\end{aligned}
        \label{eq:concludestep2}
      \end{equation}
      with a new constant $C=C(M,\lambda,Q)$. This holds
      for all $t\in[T_{0},T_{1}]$, and we note that the left side only
      involves the one-dimensional process $\bigl(V_{t}^{\rho,T_{1}}(x)\bigr)_{t\in[T_{0},T_{1}]}$
      and the one-dimensional noise $\hat{B}$. Thus we have established
      an approximate closed equation for this one-dimensional stochastic
      process.
    \end{thmstepnv}
    \item \emph{\label{step:chgvar}The change of variables.} The approximate
    equation \cref{eq:concludestep2} for $\bigl(V_{t}^{\rho,T_{1}}(x)\bigr)_{t\in[T_{0},T_{1}]}$
    is closed, but it depends on $\rho$ in a singular way as $\rho\searrow0$.
    We now change variables so that as $\rho\searrow0$,
    we see a limiting SDE.
    As a first step, we compute the quadratic
    variation of the process $\hat{B}(r)$.
    Directly from the definition
    \cref{eq:Bhat}, we have
    \begin{align}
        [\hat{B}]_{t}=\frac{4\pi\Id_m}{\sfL(1/\rho)}\int_{T_{0}}^{t}\|G_{T_{1}+\rho-r}\|_{L^{2}(\RR^{2})}^{2}\,\dif r\overset{\cref{eq:GtauL2norm}}{=}\frac{\Id_m}{\sfL(1/\rho)}\int_{T_{0}}^{t}\frac{\dif r}{T_{1}+\rho-r} & =\frac{\Id_m}{\sfL(1/\rho)}\log\frac{T_{1}-T_{0}+\rho}{T_{1}-t+\rho}\nonumber \\
             \overset{\cref{eq:sfUdef}}                                                                                                                                                                                 & {=}\Id_m\sfU_{T_{0},T_{1},\rho}(t).\label{eq:BhatQV}
    \end{align}
    This means that there is a \emph{standard} $\RR^m$-valued Brownian motion $B$ such
    that
    \begin{equation*}
      \hat{B}(t)=B(\sfU_{T_{0},T_{1},\rho}(t)).
    \end{equation*}
    Making a change of variables $r=\sfR_{T_{0},T_{1},\rho}(q)$
    in the integral appearing in \cref{eq:concludestep2}, we obtain, for
    all $t\in[T_{0},T_{1}]$, that
    \begin{align}
      \int_{T_{0}}^{t}J_{\sigma,\rho}\bigl(\sfS_{\rho}(T_{1}-r),V_{r}^{\rho,T_{1}}(x)\bigr)\,\dif\hat{B}(r) & =\int_{0}^{\sfU_{T_{0},T_{1},\rho}(t)}J_{\sigma,\rho}\bigl(\sfS_{\rho}\bigl(T_{1}-\sfR_{T_{0},T_{1},\rho}(q)\bigr),V_{\sfR_{T_{0},T_{1},\rho}(q)}^{\rho,T_{1}}(x)\bigr)\,\dif B(q)\nonumber \\
                                                                                                         & =\int_{0}^{\sfU_{T_{0},T_{1},\rho}(t)}J_{\sigma,\rho}\bigl(Q-q,V_{\sfR_{T_{0},T_{1},\rho}(q)}^{\rho,T_{1}}(x)\bigr)\,\dif B(q),\label{eq:dothechgvar}
    \end{align}
    with the last identity since $\sfS_{\rho}(T_{1}-\sfR_{T_{0},T_{1},\rho}(q))=Q-q$
    by \cref{eq:sfRdef} and \cref{eq:T1T0defs}.
    Let
    \begin{equation}
      \Upsilon(p)=V_{\sfR_{T_{0},T_{1},\rho}(p)}^{\rho,T_{1}}(x),\label{eq:Upsilondef-1-2}
    \end{equation}
    Using \cref{eq:dothechgvar}
    in \cref{eq:concludestep2} with $t=\sfR_{T_{0},T_{1},\rho}(p)$, we obtain
    \begin{equation}
      \sup_{p\in[0,Q]}\EE \left[\Upsilon(p)-a-\int_{0}^{p}J_{\sigma,\rho}\bigl(Q-q,\Upsilon(q)\bigr)\,\dif B(q)\right]^{2}\le C\langle\vvvert v_{0}\vvvert_{\ell}\rangle^{2}\frac{\sfL(\sfL(1/\rho))}{\sfL(1/\rho)}.\label{eq:makeVeqn-conclusion-2}
    \end{equation}
    \item \emph{Using the SDE solution theory.}
    The estimate \cref{eq:makeVeqn-conclusion-2} says that $\Upsilon$
    approximately satisfies an SDE. We will now use the SDE solution theory
    of \cref{subsec:SDE-solution-theory} to show that this means that $\Upsilon$
    is close to a solution of that SDE. Recalling the definition \cref{eq:Rdef},
    we have
    \[
      \clR_{a,Q}^{J_{\sigma,\rho}}\Upsilon(p)=a+\int_{0}^{p}J_{\sigma,\rho}\bigl(Q-q,\Upsilon(q)\bigr)\,\dif B(q).
    \]
    Because $v_0 \equiv a$, \cref{eq:makeVeqn-conclusion-2} becomes
    \begin{equation}
      \EE \left\lvert  \Upsilon(p)-\clR_{a,Q}^{J_{\sigma,\rho}}\Upsilon(p)\right\rvert  ^{2}\le C\langle a\rangle^{2}\frac{\sfL(\sfL(1/\rho))}{\sfL(1/\rho)},\label{eq:makeVeqn-conclusion-1-1}
    \end{equation}
    with a constant $C=C(M,\lambda,Q_{0})<\infty$. Now by the bound
    \cref{eq:SDEclose} of \cref{prop:SDEwellposed} (recalling that $\Lip(J_{\sigma,\rho}(q,\cdot))\le(\lambda^{-2}-q)^{-1/2}$
    for all $q\in[0,\lambda^{-2})$ by \cref{prop:approxJlipschitz}) in
    conjunction with \cref{eq:makeVeqn-conclusion-1-1}, we obtain
    \[
      \sup_{p\in[0,Q]}\EE |\Upsilon(p)-\Theta_{a,Q}^{J_{\sigma,\rho}}(p)|^{2}\le C\langle a\rangle^{2}\frac{\sfL(\sfL(1/\rho))}{\sfL(1/\rho)}
    \]
    for a new constant $C$, depending on the same parameters. Here, $\Theta_{a,Q}^{J_{\sigma,\rho}}$
    is as in \zcref[range]{eq:thetaSDE,eq:thetaIC}. Taking $p=Q$ and
    noting that
    \[
      \Upsilon(Q)\overset{\cref{eq:Upsilondef-1-2}}{=}V_{\sfR_{T_{0},T_{1},\rho}(Q)}^{\rho,T_{1}}\overset{\cref{eq:Rendpoints}}{=}V_{T_{1}}^{\rho,T_{1}}\overset{\cref{eq:atthefinaltime}}{=}v_{T_{1}}(x),
    \]
    we obtain in particular that
    \begin{equation}
      \EE |v_{T_{1}}(x)-\Theta_{a,Q}^{J_{\sigma,\rho}}(Q)|^{2}\le C\langle a\rangle^{2}\frac{\sfL(\sfL(1/\rho))}{\sfL(1/\rho)}.\label{eq:vclosetoTheta-1}
    \end{equation}
    \item \emph{Using the FBSDE solution theory}. We recall the definition \cref{eq:Qdef}
    of the operator $\clQ_{\sigma}$ and the definition \cref{eq:Jdef-expscale}
    of the approximate root decoupling function $J_{\sigma,\rho}$. Using these definitions along
    with \cref{prop:matrixvaluedreversetriangleinequality}, we obtain,
    for all $Q\in[0,Q_{0}]$, that
    \begin{align*}
      | & J_{\sigma,\rho}(\sfS_{\rho}(T_{1}),a)-\clQ_{\sigma}J_{\sigma,\rho}(Q,a)|_{\Frob}^{2}=\bigl\lvert \bigl[\EE \sigma^{2}\bigl(v_{T_{1}}(x)\bigr)\bigr]^{1/2}-\bigl[\EE \sigma^{2}\bigl(\Theta_{a,Q}^{J_{\sigma,\rho}}(Q)\bigr)\bigr]^{1/2}\bigr\rvert _{\Frob}^{2}                                                             \\
        & \le\EE \bigl\lvert \sigma\bigl(v_{T_{1}}(x)\bigr)-\sigma\bigl(\Theta_{a,Q}^{J_{\sigma,\rho}}(Q)\bigr)\bigr\rvert _{\Frob}^{2}\le\lambda^{2}\EE |v_{T_{1}}(x)-\Theta_{a,Q}^{J_{\sigma,\rho}}(Q)|^{2}\overset{\cref{eq:vclosetoTheta-1}}{\le}C\langle a\rangle^{2}\frac{\sfL(\sfL(1/\rho))}{\sfL(1/\rho)}
    \end{align*}
    for a new constant $C=C(M,\lambda,Q)<\infty$. On the other
    hand, we have by \cref{prop:approxJtemporalregularity} that
    \[
      |J_{\sigma,\rho}(\sfS_{\rho}(T_{1}),a)-J_{\sigma,\rho}(Q,a)|_{\Frob}\le\lambda K_{\lambda,\rho,T_{1}}^{1/2}\Xi_{2;T_{1}}[a]\left(|\sfS_{\rho}(T_{1})-Q|^{1/2}+\frac{\sfL(2)^{1/2}K_{\lambda,\rho,T_{1}}^{1/2}}{\sfL(1/\rho)^{1/2}}\right)
    \]
    and
    \[
      0\le\sfS_{\rho}(T_{1})-Q\overset{
        \cref{eq:Lmult},\\
        \cref{eq:T1T0defs}
      }{\le}\sfL_{\rho}\bigl(\sfL(1/\rho)\bigl([1+\sfL(1/\rho)]\kappa_{\ell, \rho}^{-1} + \kappa_{\ell, \rho}\bigr)+1\bigr)\le\frac{C\sfL(\sfL(1/\rho))}{\sfL(1/\rho)}.
    \]
    Combining the last three displays (along with \cref{prop:momentbd}
    to control the moment) we see that
    \[
      |J_{\sigma,\rho}(Q,a)-\clQ_{\sigma}J_{\sigma,\rho}(Q,a)|_{\Frob}^{2}\le C\langle a\rangle^{2}\frac{\sfL(\sfL(1/\rho))}{\sfL(1/\rho)}.
    \]
    Now we apply \cref{eq:onestepclose} of \cref{prop:localwellposed}, with
    $g\setto J_{\sigma,\rho}$, to see that
    \[
      \sup_{Q\in[0,Q_{0}]}\|(J_{\sigma,\rho}-J_{\sigma})(Q,\cdot)\|_{\clX}\le C\sup_{\substack{Q\in[0,Q_0]\\
          a\in\RR
        }
      }\frac{|J_{\sigma,\rho}(Q,a)-\clQ_{\sigma}J_{\sigma,\rho}(Q,a)|_{\Frob}}{\langle a\rangle}\le C\left[\frac{\sfL(\sfL(1/\rho))}{\sfL(1/\rho)}\right]^{1/2},
    \]
    which is \cref{eq:Japprox-concl}.\qedhere
  \end{thmstepnv}
\end{proof}

\section{Induction on scales\label{sec:extendtheestimates}}

The results of \cref{sec:local-SPDE-estimates} were ``local'' in
the sense that they required the time scale of interest to be shorter
than $\overline{T}_{\rho}(\lambda)\approx\sfT_{\rho}(\lambda^{-2})\approx\rho^{1-\lambda^{-2}}$,
where $\lambda$ is the Lipschitz constant of the nonlinearity $\sigma$.
The goal of this section is to extend these results to longer time
scales, much as \cref{subsec:extendingFBSDE} extended
the results of \cref{subsec:FBSDE-local}. A typical
instance of the phenomenology, which we will use as the
hypothesis in an inductive argument, is that the conclusion of \cref{thm:mainapproxthm}
may hold with $\overline{q}\ge\lambda^{-2}$, even though \cref{thm:mainapproxthm}
only asserts that it is true for $\overline{q}<\lambda^{-2}$. This
is roughly what we encode the following definition.

We recall $\Qbar_{\FBSDE}(\sigma)$ from \cref{def:QbarFBSDE} and let\nomenclature[zzzgreek β star]{$\beta_*(\sigma)$}{infimal $\beta$ such that $\sigma\in\quadset(M,\beta)$ for some $M$}%
\begin{equation}
  \label{eq:minimal-beta}
  \beta_*(\sigma) \coloneqq \inf\bigl\{\beta \in \R_+ \suchthat \sigma \in \quadset(M, \beta) \text{ for some } M \in \R_+\bigr\}.
\end{equation}
\begin{defn}
  \label{def:QSPDE}Given $\sigma\in\Lip(\RR^m; \clH_+^m)$, we let $\Qbar_{\SPDE}(\sigma)$ denote the supremum of all
  \begin{equation*}
    \overline{q} \in \bigl(0, \Qbar_{\FBSDE}(\sigma) \wedge \beta_*(\sigma)^{-2}\bigr)
  \end{equation*}
  such that
  for all $\overline{\beta} \in \bigl(\beta_*(\sigma),\, \bar{q}^{-1/2}\bigr)$ and $\ell \in (2, 4],$
  there is a constant $C=C(\sigma,\overline{q},\overline{\beta},\ell)\in(0,\infty)$
  and a function $\omega=\omega_{\sigma,\overline{q},\overline{\beta},\ell}\colon(0,C^{-1})\to[0,\infty)$ such that the following conditions hold.
  \begin{enumerate}[label = (\roman*)]
  \item For all $\rho \in (0, C^{-1})$, we have
    \begin{equation}
      \label{eq:omegaissmallenough}
      \sfL\bigl(\omega(\rho)\bigr)\le C\sfL\bigl(\sfL(1/\rho)\bigr)
    \end{equation}
    and
    \begin{equation}
      \label{eq:omegaisbigenough}
      \omega(\rho)\ge C^{-1}\sfL(1/\rho)^{2}
    \end{equation}
    as well as $\omega(\rho)\sfT_{\rho}(\bar q) \leq \sfT_{\rho}\bigl(\overline{\beta}^{-2}\bigr)$.

  \item
	  If $\rho\in(0,C^{-1})$ and $(v_{t})_{t\ge0}$ solves \cref{eq:SPDE} with initial condition $v_{0}\in\scrX_{0}^{\ell}$, then for all $q\in[0,\overline{q}]$, \begin{equation}t \in \bigl[\omega(\rho)\sfT_{\rho}(q), \sfT_{\rho}\bigl(\overline{\beta}^{-2}\bigr)\bigr],\label{eq:QSPDEdefinterval}\end{equation} and $x,z\in\RR^{2}$, we have
    \begin{equation}
      \label{eq:QSPDEapprox-3}
      \EE \left\lvert  \bigl(\clS_{\sfT_{\rho}(q)^{1/2},z}[\sigma^{2}\circ v_{t}](x)\bigr)^{1/2}-J_{\sigma}\bigl(q,\clG_{\sfT_{\rho}(q)}v_{t}(x)\bigr)\right\rvert  _{\Frob}^{2}\le\frac{C\langle\vvvert v_{0}\vvvert_{\ell}\rangle^{2}\sfL(\sfL(1/\rho))}{\sfL(1/\rho)}.
    \end{equation}
  \end{enumerate}
\end{defn}
\nomenclature[QbarSPDE]{$\Qbar_\SPDE(\sigma)$}{supremal scale at which certain estimates on the approximation of the SPDE hold, see \cref{def:QSPDE}}
The bound \cref{eq:QSPDEapprox-3}, which we term the ``key approximation,'' is the heart of \cref{def:QSPDE}.
It parallels the conclusion \cref{eq:mainapproxthm-conclusion} of \cref{thm:mainapproxthm}, and essentially says that the root decoupling function $J_\sigma$ tracks the differential quadratic variation of the martingale $V_t^{\sigma,\rho,T}$ on the exponential scale.
For technical reasons (originating in the proof of \cref{prop:coupling}), we average $\sigma^2\circ v$ over a square rather than with respect to a Gaussian.

The function $\omega$ in \cref{def:QSPDE} plays the role of the factor $(1+\sfL(1/\rho))\kappa^{-1}_{\ell,\rho}+\kappa_{\ell,\rho}$ appearing in the interval of validity \cref{eq:mainapproxthmtcond} of \cref{thm:mainapproxthm}. As we inductively prove that \cref{def:QSPDE} holds for successively larger values of $\overline q$, the size of $\omega$ will grow somewhat: see \cref{eq:omegabardef} in the proof of \cref{prop:extendQSPDE-1} below. The bound \cref{eq:omegaissmallenough} requires that the factor $\omega$ appearing in \cref{eq:QSPDEdefinterval} is negligible in terms of the exponential time scale.
\begin{rem}
    To establish a lower bound $\bar{q}$ on $\Qbar_\SPDE(\sigma)$, it suffices to find, for every $\overline{\beta}\in \bigl(\beta_*(\sigma),\overline{q}^{-1/2}\bigr)$, an $\ell_0=\ell_0(\overline\beta)\in (2,4]$ such that the conditions in \cref{def:QSPDE} hold for all $\ell\in (2,\ell_0]$.
    This is because the spaces $\clX^{\ell}_0$ only get smaller (and the norms $\vvvert \anon\vvvert_\ell$ get larger) as $\ell$ increases.\label{rem:considerjustellsmall}
\end{rem}
We note that
\begin{equation*}
  \Qbar_{\SPDE}(\sigma)\le\Qbar_{\FBSDE}(\sigma)
\end{equation*}
by definition. The goal of this section will be to prove the following
theorem.
\begin{thm}
  \label{thm:QSPDEgreater}Let $M\in(0,\infty)$, $\beta\in(0,1)$,
  and $\sigma\in\quadset(M,\beta)$. If $\Qbar_{\FBSDE}(\sigma)>1$,
  then $\Qbar_{\SPDE}(\sigma)>1$.
\end{thm}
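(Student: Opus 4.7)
The plan is to prove the stronger statement that $\Qbar_{\SPDE}(\sigma) > Q$ for every $Q < \Qbar_{\FBSDE}(\sigma)$; since $\Qbar_{\FBSDE}(\sigma) > 1$ by hypothesis, this yields $\Qbar_{\SPDE}(\sigma) > 1$. The argument proceeds by induction on scales, starting from a base case and then iterating an inductive extension. For the base case, I will combine \cref{thm:mainapproxthm} (which supplies the key approximation \cref{eq:QSPDEapprox-3} but with $J_{\sigma,\rho}$ in place of $J_\sigma$) with \cref{prop:Japprox} (which bounds $\sup_{Q\in[0,Q_0]}\|J_{\sigma,\rho}(Q,\anon) - J_\sigma(Q,\anon)\|_{\clX}$ by $C[\sfL(\sfL(1/\rho))/\sfL(1/\rho)]^{1/2}$). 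A triangle inequality then yields $\Qbar_{\SPDE}(\sigma) \ge \Lip(\sigma)^{-2}\wedge \Qbar_{\FBSDE}(\sigma)$. The hypothesis $\beta < 1$, combined with \cref{prop:momentbd} and a choice of $\ell \in (2,4]$ close to $2$ (so that $\eta(\beta,\ell) < 1$), ensures that the moment bounds implicit in \cref{def:QSPDE} hold on a time interval compatible with $\bar q > 1$.

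The heart of the argument is the inductive extension: if $Q_1 < \Qbar_{\SPDE}(\sigma) \wedge \Qbar_{\FBSDE}(\sigma)$, then $\Qbar_{\SPDE}(\sigma) > Q_2$ whenever $Q_2 - Q_1 < \Lip(J_\sigma(Q_1,\anon))^{-2}$. Following the heuristic of \cref{subsec:MGexptimescale}, I will set $w_t^\rho \coloneqq \clG_{\sfT_\rho(Q_1)} v_t^\rho$, for which the mild formulation of \cref{eq:v-SPDE} features the noise $\gamma_\rho \clG_{\sfT_\rho(Q_1)}[\sigma(v_t^\rho)\,\dif W_t]$. The inductive hypothesis at scale $Q_1$ asserts that spatial averages of $\sigma^2 \circ v_t^\rho$ at scale $\sfT_\rho(Q_1)^{1/2}$ are close to $J_\sigma^2(Q_1, w_t^\rho)$; invoking the general Gaussian coupling of \cref{appendix:coupling} (in the same spirit as \cref{step:chgvar} of the proof of \cref{prop:Japprox}), I will replace the above noise by $(1-Q_1)^{1/2}\gamma_{\sfT_\rho(Q_1)}\clG_{\sfT_\rho(Q_1)}[J_\sigma(Q_1, w_t^\rho)\,\dif\widetilde W_t]$ with controlled $L^2$ error, using the asymptotic $\gamma_\rho \approx (1-Q_1)^{1/2}\gamma_{\sfT_\rho(Q_1)}$ derived from \cref{eq:gammarhodef}. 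The resulting closed SPDE is precisely \cref{eq:v-SPDE} with $\rho \setto \sfT_\rho(Q_1)$ and $\sigma \setto \tilde\sigma \coloneqq (1-Q_1)^{1/2} J_\sigma(Q_1,\anon)$; by \cref{prop:Jsigmaub} and $\beta < 1$, $\tilde\sigma \in \quadset(M',\beta')$ with $\beta' < 1$. Applying the base case to $\tilde\sigma$ produces the key approximation for $w^\rho$ at all rescaled scales $q' < \Lip(\tilde\sigma)^{-2} = (1-Q_1)^{-1}\Lip(J_\sigma(Q_1,\anon))^{-2}$. Via the scaling identity $J_{\tilde\sigma}(q',\anon) = (1-Q_1)^{1/2} J_\sigma(Q_1 + (1-Q_1)q', \anon)$, which combines \cref{eq:Jrescale} with \cref{cor:chainJs-nohyp}, together with the asymptotic $\sfT_{\sfT_\rho(Q_1)}(q') \approx \sfT_\rho(Q_1 + (1-Q_1)q')$, this translates back into the key approximation for $v^\rho$ at scale $Q_2 = Q_1 + (1-Q_1)q'$, valid for $Q_2 - Q_1 < \Lip(J_\sigma(Q_1,\anon))^{-2}$.

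To close the iteration, observe that by \cref{thm:FBSDEwellposed}, for every $Q_1 \in [0, 1]$ we have $\Lip(J_\sigma(Q_1,\anon))^{-2} \ge \Qbar_{\FBSDE}(\sigma) - Q_1 \ge \Qbar_{\FBSDE}(\sigma) - 1 \eqqcolon \eps_0 > 0$. Consequently, fixing any target $Q^\dagger \in (1, \Qbar_{\FBSDE}(\sigma))$, each application of the inductive extension advances the current scale by at least $\eps_0/2$, and after at most $\lceil 2/\eps_0\rceil$ iterations the scale surpasses $Q^\dagger > 1$. The hard part will be the inductive extension step, specifically promoting the covariance-level approximation provided by the inductive hypothesis (an $L^2$ bound on averaged $\sigma^2\circ v^\rho$) to a pathwise coupling of the driving noises with quantitative error of the size appearing on the right-hand side of \cref{eq:QSPDEapprox-3}. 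A secondary difficulty is the bookkeeping of the error function $\omega$ from \cref{def:QSPDE}: each rescaling $\rho \mapsto \sfT_\rho(Q_1)$ enlarges $\omega$ by a multiplicative factor depending on $Q_1$ and $\Lip(J_\sigma(Q_1,\anon))$, but since only finitely many iterations are required, the growth bound \cref{eq:omegaissmallenough} is preserved throughout.
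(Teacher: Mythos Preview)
Your approach is essentially the paper's: the base case is \cref{prop:QSPDEbasecase}, the inductive extension is \cref{prop:extendQSPDE-1}, and the coupling/rescaling mechanism you describe mirrors its proof closely.

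There is, however, a concrete error in your closing iteration. \cref{thm:FBSDEwellposed} gives $\Qbar_{\FBSDE}(\sigma) \ge Q_1 + \Lip(J_\sigma(Q_1,\anon))^{-2}$, i.e.\ $\Lip(J_\sigma(Q_1,\anon))^{-2} \le \Qbar_{\FBSDE}(\sigma) - Q_1$, the \emph{opposite} of the inequality you assert; so it does not by itself furnish a uniform positive lower bound on the step size. The correct argument is that, since $1 < \Qbar_{\FBSDE}(\sigma)$, \cref{def:QbarFBSDE} (via \cref{eq:Juniflipschitz}) guarantees $L \coloneqq \sup_{q\in[0,1]}\Lip(J_\sigma(q,\anon)) < \infty$, so every step advances by at least $L^{-2}$. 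This is precisely how the paper closes the induction (phrased as a contradiction, taking $Q \nearrow \Qbar_{\SPDE}(\sigma)$). A related minor overreach: the ``stronger statement'' $\Qbar_{\SPDE}(\sigma) > Q$ for all $Q < \Qbar_{\FBSDE}(\sigma)$ need not hold, since $\Qbar_{\SPDE}(\sigma) \le \beta_*(\sigma)^{-2}$ by \cref{def:QSPDE}; but this is harmless, as you only need to exceed $1 < \beta^{-2} \le \beta_*(\sigma)^{-2}$.
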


We will prove \cref{thm:QSPDEgreater} through an inductive procedure. The
base case, which will also be used in the inductive step, is the following.
\begin{prop}
  \label{prop:QSPDEbasecase}For any $\sigma\in\Lip(\RR^m;\clH_+^m)$,
  we have $\Qbar_{\SPDE}(\sigma)\ge\Lip(\sigma)^{-2}$.
\end{prop}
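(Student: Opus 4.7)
The plan is to combine the local approximation \cref{thm:mainapproxthm} with the estimate \cref{prop:Japprox} that controls $J_{\sigma,\rho}-J_\sigma$. Fix any $\overline q<\Lip(\sigma)^{-2}$ and select $\overline\beta\in(\beta_*(\sigma),\overline q^{-1/2})$; this interval is nonempty because $\beta_*(\sigma)=\Lip(\sigma)$ (the infimum in \cref{eq:minimal-beta} is $\Lip(\sigma)$, approached as $M\to\infty$). By \cref{rem:considerjustellsmall}, it suffices to verify the conditions of \cref{def:QSPDE} for $\ell$ in some small interval $(2,\ell_0]$. I would pick $\beta\in(\Lip(\sigma),\overline\beta)$ with $\sigma\in\lipset(M,\beta)\subset\quadset(M,\beta)$ for some $M<\infty$, and then take $\ell_0>2$ small enough that $\eta(\beta,\ell)<\overline\beta$ for every $\ell\in(2,\ell_0]$, which is possible because $\eta(\beta,2)=\beta<\overline\beta$ and $\eta$ is continuous.

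Set $\omega(\rho)=(1+\sfL(1/\rho))\kappa_{\ell,\rho}^{-1}+\kappa_{\ell,\rho}$, exactly matching the coefficient of $\xi^{2}$ in the lower bound on the admissible range of $t$ in \cref{thm:mainapproxthm}. Since $\kappa_{\ell,\rho}^{-1}=\sfL(1/\rho)^{1/(1-2/\ell)}$ with exponent at least $2$ for $\ell\in(2,4]$, a quick check gives $\omega(\rho)\ge\sfL(1/\rho)^{2}$, $\sfL(\omega(\rho))\le C\sfL(\sfL(1/\rho))$, and $\omega(\rho)\sfT_\rho(\overline q)\le\sfT_\rho(\overline\beta^{-2})$ for small $\rho$ (using that $\overline\beta^{-2}>\overline q$ makes the right side polynomially larger in $\rho$ than the left). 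Given $q\in[0,\overline q]$ and $t\in[\omega(\rho)\sfT_\rho(q),\sfT_\rho(\overline\beta^{-2})]$, I would apply \cref{thm:mainapproxthm} with $\xi=\sfT_\rho(q)^{1/2}$, $\lambda=\Lip(\sigma)$, $\iota_{1}=\iota_{2}=0$, and the theorem's $\overline q$-parameter taken slightly above the definition's but still in $(0,\lambda^{-2})$, so that the spatial constraint on $\xi$ is satisfied for small $\rho$. Because $\sfS_\rho\circ\sfT_\rho$ is the identity, the conclusion \cref{eq:mainapproxthm-conclusion} reduces to
\[
\EE\left\lvert\bigl(\clS_{\sfT_\rho(q)^{1/2},z}[\sigma^{2}\circ v_{t}](x)\bigr)^{1/2}-J_{\sigma,\rho}\bigl(q,\clG_{\sfT_\rho(q)}v_{t}(x)\bigr)\right\rvert_{\Frob}^{2}\le C\langle\vvvert v_{0}\vvvert_{\ell}\rangle^{2}\,\frac{\sfL(\sfL(1/\rho))}{\sfL(1/\rho)}.
\]

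To close, I would swap $J_{\sigma,\rho}$ for $J_\sigma$ using \cref{prop:Japprox} with $Q_{0}=\overline q<\lambda^{-2}$, which yields $|J_{\sigma,\rho}(q,a)-J_\sigma(q,a)|_{\Frob}\le C\langle a\rangle[\sfL(\sfL(1/\rho))/\sfL(1/\rho)]^{1/2}$ uniformly in $q\in[0,\overline q]$. Specializing to $a=\clG_{\sfT_\rho(q)}v_{t}(x)$, squaring, taking expectation, and bounding $\EE|\clG_{\sfT_\rho(q)}v_{t}(x)|^{2}\le\vvvert v_{t}\vvvert_{2}^{2}\le C\langle\vvvert v_{0}\vvvert_{\ell}\rangle^{2}$ via Jensen and \cref{prop:momentbd} (whose hypothesis $t<\sfT_\rho(\eta(\beta,\ell)^{-2})$ follows from $t\le\sfT_\rho(\overline\beta^{-2})$ and $\overline\beta>\eta(\beta,\ell)$), the triangle inequality produces the required estimate \cref{eq:QSPDEapprox-3}. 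The main delicate point is coordinating the chain of parameter choices: the strict inequality $\Lip(\sigma)<\overline q^{-1/2}$ leaves just enough room to place $\beta$ strictly between $\Lip(\sigma)$ and $\overline\beta$, to take $\ell_{0}>2$ with $\eta(\beta,\ell_{0})<\overline\beta$, and to enlarge the $\overline q$-parameter of \cref{thm:mainapproxthm} slightly above the definition's while remaining below $\lambda^{-2}$. Once these choices are fixed, the two approximations combine transparently.
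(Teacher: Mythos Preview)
Your argument is correct and matches the paper's proof: apply \cref{thm:mainapproxthm} with $\xi=\sfT_\rho(q)^{1/2}$ and $\iota_1=\iota_2=0$ (the paper makes your ``slightly above'' explicit as $\hat q=(\overline q+\Lip(\sigma)^{-2})/2$), then use \cref{prop:Japprox} and \cref{prop:momentbd} to pass from $J_{\sigma,\rho}$ to $J_\sigma$, with $\omega(\rho)=[1+\sfL(1/\rho)]\kappa_{\ell,\rho}^{-1}+\kappa_{\ell,\rho}$. One minor point: when invoking \cref{prop:Japprox} you should take its $\lambda$ to be your $\beta$ rather than $\Lip(\sigma)$, since the hypothesis $\sigma\in\lipset(M,\lambda)$ can fail at $\lambda=\Lip(\sigma)$ (the growth bound $|\sigma(u)|_\Frob^2\le M+\Lip(\sigma)^2|u|^2$ need not hold for any finite $M$), whereas $\sigma\in\lipset(M,\beta)$ holds by construction and $\overline q<\beta^{-2}$ follows from $\beta<\overline\beta<\overline q^{-1/2}$.
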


\begin{proof}
  Let $\ell\in (2,4]$ and let $\beta$ and $\overline\beta$ satisfy $\beta_*(\sigma)<\beta<\eta(\beta,\ell)<\overline\beta$.
  (It suffices to consider the case when the last inequality holds by \cref{rem:considerjustellsmall}.)
  Take $M\in(0,\infty)$ such that $\sigma\in\quadset(M,\beta)$ and let $\lambda=\Lip(\sigma)$.
  Fix $\overline{q}\in[0,\lambda^{-2})$ and let $\h{q} = (\bar{q}+\lambda^{-2})/2$, so that $\h{q} \in (\bar{q}, \lambda^{-2})$.
  Take $q \in [0, \bar{q}]$ and let $(v_{t})_{t\ge0}$ solve \cref{eq:SPDE} with initial condition $v_{0}\in\scrX_{0}^{\ell}$.
  Taking $\bar{q} \setto \h{q}$, $\xi\setto\sfT_{\rho}(q)^{1/2}$, and $\iota_{1},\iota_{2}\setto0$ in \cref{thm:mainapproxthm},
  we see that there exists $C=C(M,\lambda,\beta,\overline{\beta},\ell,\overline{q})<\infty$
  such that for sufficiently small $\rho$ and any $x,z\in\RR^{2}$ and $t\in[([1+\sfL(1/\rho)]\kappa_{{\ell},\rho}^{-1}+\kappa_{{\ell},\rho})\sfT_{\rho}(q),\sfT_{\rho}(\overline{\beta}^{-2})]$
  (in accordance with \cref{eq:mainapproxthmtcond}),
  we have
  \begin{align*}
    \sup_{z\in\RR^{2}}\EE \Big\lvert  \bigl(\clS_{\sfT_{\rho}(q)^{1/2},z}[\sigma^{2}\circ v_{t}](x)\bigr)^{1/2}-J_{\sigma,\rho}\bigl(q,\clG_{\sfT_{\rho}(q)}v_{t}(x)\bigr)\Big\rvert  _{\Frob}^{2} & \le C\langle\vvvert v_{0}\vvvert_{{\ell}}\rangle^{2}\frac{\sfL(\sfL(1/\rho))}{\sfL(1/\rho)}, %
  \end{align*}
  On the other hand, \cref{prop:Japprox,prop:momentbd} yield
  \begin{align}
    \EE \bigl\lvert J_{\sigma,\rho}\bigl(q,\clG_{\sfT_{\rho}(q)}v_{t}(x)\bigr)-J_{\sigma}\bigl(q,\clG_{\sfT_{\rho}(q)}v_{t}(x)\bigr)\bigr\rvert _{\Frob}^{2} & \le\frac{C\sfL(\sfL(1/\rho))}{\sfL(1/\rho)}\EE \langle\clG_{\sfT_{\rho}(q)}v_{t}(x)\rangle^{2}\nonumber \\
														   & \le\frac{C\sfL(\sfL(1/\rho))}{\sfL(1/\rho)}\EE \langle\vvvert v_{0}\vvvert_{\ell}\rangle^{2}\label{eq:approxJsigmarhobyJsigma}
  \end{align}
  for a constant $C<\infty$ depending only on $\sigma$ and $\overline{q}$.
  Combining the last two displays yields \cref{eq:QSPDEapprox-3} with
  $\omega(\rho)=[1+\sfL(1/\rho)]\kappa_{{\ell},\rho}^{-1}+\kappa_{{\ell},\rho}$.
  This evidently satisfies \zcref[range]{eq:omegaissmallenough,eq:omegaisbigenough} by the definition \cref{eq:kappadef} of $\kappa_{{\ell},\rho}$, completing the proof.
\end{proof}
The following proposition is the inductive step, and the main result
of this section.
\begin{prop}
  \label{prop:extendQSPDE-1}
  Let $\sigma\in\Lip(\RR^m;\clH_+^m)$ and $Q_{1}\in(0,\Qbar_{\SPDE}(\sigma) \wedge 1).$
  Then we have
  \begin{equation*}
    \Qbar_{\SPDE}(\sigma)\ge\bigl[Q_{1}+\Lip\bigl(J_{\sigma}(Q_{1},\anon)\bigr)^{-2}\bigr]\wedge\beta_*(\sigma)^{-2}.
  \end{equation*}
\end{prop}

Before proving \cref{prop:extendQSPDE-1}, we show how it and \cref{prop:QSPDEbasecase} imply \cref{thm:QSPDEgreater}.
\begin{proof}[Proof of \cref{thm:QSPDEgreater}.]
  Suppose for the sake of contradiction
  that $\Qbar_{\SPDE}(\sigma)\le1<\Qbar_{\FBSDE}(\sigma)\wedge\beta^{-2}$.
  Taking $\Qbar=\Qbar_{\SPDE}(\sigma)$ in \cref{def:QbarFBSDE},
  we find a constant $L\in(0,\infty)$ such that
  \[
    \Lip[J_{\sigma}(Q,\anon)]\le L\qquad\text{for all }Q\in[0,\Qbar_{\SPDE}(\sigma)].
  \]
  \cref{prop:extendQSPDE-1} then tells us that
  \begin{equation}
    \Qbar_{\SPDE}(\sigma)\ge(Q+L^{-2})\wedge\beta^{-2}\qquad\text{for all }Q\in\bigl(0,\Qbar_{\SPDE}(\sigma)\bigr).\label{eq:applyextendQSPDE}
  \end{equation}
  By \cref{prop:QSPDEbasecase}, the interval $\bigl(0,\Qbar_{\SPDE}(\sigma)\bigr)$ is nonempty.
  Therefore, taking $Q\nearrow\Qbar_{\SPDE}(\sigma)$
  in \cref{eq:applyextendQSPDE}, we see that $\Qbar_{\SPDE}(\sigma)\ge[\Qbar_{\SPDE}(\sigma)+L^{-2}]\wedge\beta^{-2}$,
  so either $\Qbar_{\SPDE}(\sigma)\ge\Qbar_{\SPDE}(\sigma)+L^{-2}$,
  which is absurd, or $\Qbar_{\SPDE}(\sigma)\ge\beta^{-2}$,
  contradicting our starting assumption.
\end{proof}
Now we prove \cref{prop:extendQSPDE-1}. Let us first give a brief sketch of the proof. In each stage of the induction, we will apply the key approximation \zcref{eq:QSPDEapprox-3}, which essentially forms the inductive hypothesis, twice. First, we use it ``in the small'', i.e.\ on the original field but on shorter time scales than we are ultimately interested in. This, along with the coupling in \zcref{prop:coupling}, will let us approximate the original problem by a coarse-grained problem with an effective nonlinearity. We then show that the requirements to apply the inductive hypothesis are satisfied with this effective nonlinearity, so that apply the inductive hypothesis to the coarse-grained problem to conclude the overall approximation.
\begin{proof}[Proof of \cref{prop:extendQSPDE-1}.]
  Take $\bar\beta > \beta > \beta_*(\sigma)$ and choose $Q_{2}$ and $Q_{2}'$ such that
  \begin{equation}
    Q_{1}<Q_{2}<Q_{2}'<\bigl[Q_{1}+\Lip\bigl(J_{\sigma}(Q_{1},\anon)\bigr)^{-2}\bigr]\wedge\beta^{-2}.\label{eq:Q2choice-1}
  \end{equation}
  Let $\ell\in(2,4]$.
  As explained in \cref{rem:considerjustellsmall}, we are free to assume that $\ell$ is close enough to $2$ that $\overline{\beta}>\eta(\beta,\ell)$. %
  By \cref{prop:momentbd}, this implies that there is a $C=C(\beta,\overline\beta,\ell,M)<\infty$ such that, if $\widetilde\ell\in [2,\ell]$, then
  \begin{equation}
    \Xi^{\sigma,\rho}_{\widetilde\ell;t}[v_0] %
    \le C\langle\vvvert v_0\vvvert_{\widetilde\ell}\rangle\qquad\text{for all }t\in [0,\sfT_\rho(\overline\beta^{-2})].\label{eq:applymomentbd-again}
  \end{equation}

  Our goal is to show that there is a constant $C\in(0,\infty)$ and
  a function $\overline{\omega}\colon(0,C^{-1})\to[0,\infty)$ satisfying
\zcref[range]{eq:omegaissmallenough,eq:omegaisbigenough} such that,
  if $\rho\in(0,C^{-1})$, $(v_{t})_{t\ge0}$ solves \cref{eq:SPDE}
  with initial condition $v_{0}\in\scrX_{0}^{\ell}$, $q\in[0,Q_{2}]$,
  $t\in\bigl[\overline{\omega}(\rho)\sfT_{\rho}(q),\sfT_{\rho}\bigl(\overline{\beta}^{-2}\bigr)\bigr]$,
  and $x,z\in\RR^{2}$, then
  \begin{equation}
    \EE \left\lvert  \bigl(\mathcal{S}_{\sfT_{\rho}(q)^{1/2},z}[\sigma^{2}\circ v_{t}](x)\bigr)^{1/2}-J_{\sigma}\bigl(q,\clG_{\sfT_{\rho}(q)}v_{t}(x)\bigr)\right\rvert  _{\Frob}^{2}\le\frac{C\langle\vvvert v_{0}\vvvert_{\ell}\rangle^2\sfL(\sfL(1/\rho))}{\sfL(1/\rho)}.\label{eq:thegoal}
  \end{equation}
  We proceed in several steps.
  \begin{thmstepnv}
    \item\emph{Base case.} \label{step:delta} Since $Q_{1}<\Qbar_{\SPDE}(\sigma)$, we can find a
    $\delta>0$ such that the claim holds for $q\in[0,Q_{1}+\delta]$. So we can assume that $q\in[Q_{1}+\delta,Q_{2}]$.
  \item \emph{Defining parameters for the multiscale argument.} The crux of the proof will be to show that a smoothing of the SPDE solution approximately satisfies the same SPDE with different values of $\rho$ and $\sigma$.
    In this step, we introduce the parameters to carry out this analysis. Define
    \begin{equation}
      \widetilde{\sigma}(a)\coloneqq(1-Q_{1})^{1/2}J_{\sigma}(Q_{1},a).\label{eq:sigmatildedef-1}
    \end{equation}
    Also define
    \begin{equation}
      \widetilde{\rho}\coloneqq\frac{\rho}{\sfT_{\rho}(1-Q_{1})}\qquad\text{and}\qquad\tau\coloneqq\widetilde{\rho}-\rho.\label{eq:rhotildeandtaudef-1}
    \end{equation}
    We smooth the solution at scale $\tau$, and the ``effective'' equation will have $(\widetilde\sigma,\widetilde\rho)$ in place of $(\sigma,\rho)$.
    We note that $\widetilde{\rho},\tau\approx\rho^{1-Q_{1}}$ for small
    $\rho$. (For this to be true, we need the assumption that $Q_{1}<1$.)
    In particular, it is not difficult to check that for all $\eps \in (0, 1)$, there is a  $C(\eps)<\infty$ such that
    \begin{equation}
      |\sfS_{\rho}(\tau)-Q_{1}|\le\frac{C(\eps)}{\sfL(1/\rho)}\qquad\text{for all } Q_1\in[0,1-\eps)\text{ and }\rho\in(0,C(\eps)^{-1}).\label{eq:STbarest-1}
    \end{equation}
    Now, we have the identity
    \begin{equation*}
      \sfL(1/\widetilde{\rho})\overset{\substack{\cref{eq:Ldef},\\
          \cref{eq:rhotildeandtaudef-1}
        }
      }{=}\log\left(1+\frac{\sfT_{\rho}(1-Q_{1})}{\rho}\right)\overset{\cref{eq:Tdef}}{=}\log[(1+1/\rho)^{1-Q_{1}}]\overset{\cref{eq:Ldef}}{=}(1-Q_{1})\sfL(1/\rho),
    \end{equation*}
    so \cref{eq:gammarhodef} yields
    \begin{equation*}
      \gamma_{\rho}=(1-Q_{1})^{1/2}\gamma_{\widetilde{\rho}}.
    \end{equation*}
    In particular, this means that
    \begin{equation}
      \gamma_{\rho}J_{\sigma}(Q_{1},a)=\gamma_{\widetilde{\rho}}\widetilde{\sigma}(a)\qquad\text{for all }a\in\RR^m.\label{eq:addtildes}
    \end{equation}
    Put
    \begin{equation}
      T_{0}=\omega(\rho)\tau,\label{eq:altogetherT0def}
    \end{equation}
    where
    \begin{equation*}
      \omega=\omega_{\sigma,Q_{1},\overline{\beta},\ell}
    \end{equation*}
    is as in \cref{def:QSPDE}, and
    \begin{equation}
      T_{1}=\sfT_{\rho}(\overline{\beta}^{-2}).\label{eq:altogetherT1def}
    \end{equation}
    We note that, if $\rho$ is sufficiently small, then
    \begin{equation}
      \sfS_{\widetilde{\rho}}(T_{1}-T_{0})\le\sfS_{\widetilde{\rho}}(T_{1})<\frac{\beta^{-2}-Q_{1}}{1-Q_{1}}\label{eq:SrhoT1T0approx}
    \end{equation}
    because $\bar\beta > \beta$.
    \item \emph{Replacing the noise.} By applying $\clG_{\tau}$ to both
    sides of \cref{eq:mildsoln}, we obtain, for all $t\ge t_{0}\ge T_{0}$,
    \begin{equation}
      \clG_{\tau}v_{t}=\clG_{t-t_{0}}\clG_{\tau}v_{t_{0}}+\gamma_{\rho}\int_{t_{0}}^{t}\clG_{t+\widetilde{\rho}-r}[\sigma(v_{r})\,\dif W_{r}].\label{eq:convolvetheSPDE-1}
    \end{equation}
    We study the integral on the right side. For it to look like an equation of the form \cref{eq:mildsoln} for $\clG_\tau v_t$, we would like to replace $\gamma_\rho\sigma(v_r)$ by $\gamma_{\widetilde\rho}\widetilde\sigma(\clG_\tau v_r)$. This replacement will  be a good approximation in distribution but \emph{not} almost surely, so we need to use a coupling. (See the final approximation in \cref{step:approximatespde} below.) To this end, let $\dif\widetilde{W}_{t}$
    be the space-time white noise constructed in \cref{prop:coupling} applied
    with $\zeta_{r}\setto\tau^{1/2}$ (independent of $r$) and
    $w_{r}\setto\sigma\circ v_{r}$. By the estimate \cref{eq:propcouplingconclusion},
    we have for any $t\ge0$ that
    \begin{align}
      \EE & \left\lvert  \gamma_{\rho}\int_{t_{0}}^{t}\clG_{t+\widetilde{\rho}-r}\left[\sigma(v_{r})\,\dif W_{r}-(\clS_{\tau^{1/2}}[\sigma^{2}\circ v_{r}])^{1/2}\,\dif\widetilde{W}_{r}\right](x)\right\rvert  ^{2}\nonumber                                                                                                                                       \\
          & \le\frac{C\tau\Xi_{2;t}^{\sigma,\rho}[v_{0}]^{2}}{\sfL(1/\rho)}\int_{t_{0}}^{t}(t+\widetilde{\rho}-r)^{-2}\,\dif r\le\frac{C\tau\Xi_{2;t}^{\sigma,\rho}[v_{0}]^{2}}{\widetilde{\rho}\sfL(1/\rho)}\overset{\cref{eq:applymomentbd-again}}{\le}\frac{C\langle\vvvert v_{0}\vvvert_{2}\rangle^{2}}{\sfL(1/\rho)},\label{eq:newnoise-concl-1}
    \end{align}
    where in the last estimate we also used %
    the fact
    that $\tau\le\widetilde{\rho}$ by the definition \cref{eq:rhotildeandtaudef-1}.
  \item \emph{Applying the key approximation in the small.}
    We now use the inductive hypothesis (namely, that $Q_1<\overline{Q}_\SPDE(\sigma)$) to approximate the term $(\clS_{\tau^{1/2}}[\sigma^{2}\circ v_{r}])^{1/2}$ appearing in the left side of \cref{eq:newnoise-concl-1}.
    We call this approximation ``in the small'' because it applies the approximation to the original field $v_r$ but only over short time intervals, i.e.\ those of length $\rho^{Q_1}$.
    We relate it to the root decoupling function $J_\sigma$ via the key approximation \cref{eq:QSPDEapprox-3}.
Whenever $T_{0}\le t_{0}\le t\le T_{1}$
    we have
    \begin{align}
      \EE                     & \left\lvert  \gamma_{\rho}\int_{t_{0}}^{t}\clG_{t+\widetilde{\rho}-r}\left[\left((\clS_{\tau^{1/2}}[\sigma^{2}\circ v_{r}])^{1/2}-J_{\sigma}(Q_{1},\clG_{\tau}v_{r})\right)\,\dif\widetilde{W}_{r}\right](x)\right\rvert  ^{2}\nonumber                                                                \\
                              & =\frac{4\pi}{\sfL(1/\rho)}\int_{t_{0}}^{t}\!\!\!\int\EE \left\lvert  \bigl(\clS_{\tau^{1/2}}[\sigma^{2}\circ v_{r}](y)\bigr)^{1/2}-J_{\sigma}\bigl(Q_{1},\clG_{\tau}v_{r}(y)\bigr)\right\rvert  _{\Frob}^{2}G_{t+\widetilde{\rho}-r}^{2}(x-y)\,\dif y\,\dif r\nonumber                                                           \\
      \overset{\cref{eq:GT2}} & {=}\frac{1}{\sfL(1/\rho)}\int_{t_{0}}^{t}\!\!\!\int\EE \left\lvert  \bigl(\clS_{\tau^{1/2}}[\sigma^{2}\circ v_{r}](y)\bigr)^{1/2}-J_{\sigma}\bigl(Q_{1},\clG_{\tau}v_{r}(y)\bigr)\right\rvert  _{\Frob}^{2}\frac{G_{\frac{1}{2}[t+\widetilde{\rho}-r]}(x-y)}{t+\widetilde{\rho}-r}\,\dif y\,\dif r.\label{eq:starttoapplyindhyp}
    \end{align}
    We now apply \cref{eq:QSPDEapprox-3} with $q\setto\sfS_{\rho}(\tau)$,
    which is allowed as long as $\rho$ is sufficiently small by the assumption
    that $Q_{1}<\Qbar_{\SPDE}(\sigma)$ in light of \cref{eq:STbarest-1}.
    From this we see that, for all
    \[
      r\in[\omega(\rho)\tau,\sfT_{\rho}(\overline{\beta}^{-2})]=[T_{0},T_{1}]
    \]
    (recalling the definitions \zcref[range]{eq:altogetherT0def,eq:altogetherT1def}),
    we have, for all $y\in\RR^{2}$, that
    \begin{equation}
      \EE \left\lvert  \bigl(\clS_{\tau^{1/2}}[\sigma^{2}\circ v_{r}](y)\bigr)^{1/2}-J_{\sigma}\bigl(Q_{1},\clG_{\tau}v_{r}(y)\bigr)\right\rvert  _{\Frob}^{2}\le\frac{C\langle\vvvert v_{0}\vvvert_{\ell}\rangle^{2}\sfL(\sfL(1/\rho))}{\sfL(1/\rho)}.\label{eq:smallinductivehyp}
    \end{equation}
    In deriving \cref{eq:smallinductivehyp}, we also used \cref{eq:STbarest-1},
    \cref{prop:Jsigmatimereg-1}, and \cref{prop:momentbd} to replace $\sfS_{\rho}(\tau)$
    by $Q_{1}$ in the first argument of $J_{\sigma}$. Using \cref{eq:smallinductivehyp}
    in \cref{eq:starttoapplyindhyp}, we get
    \begin{align}
      \EE & \left\lvert  \gamma_{\rho}\int_{t_{0}}^{t}\clG_{t+\widetilde{\rho}-r}\left[\left((\clS_{\tau^{1/2}}[\sigma^{2}\circ v_{r}])^{1/2}-J_{\sigma}(Q_{1},\clG_{\tau}v_{r})\right)\,\dif\widetilde{W}_{r}\right](x)\right\rvert  _{\Frob}^{2}\nonumber                                                                                                       \\
          & \le\frac{C\langle\vvvert v_{0}\vvvert_{\ell}\rangle^{2}\sfL(\sfL(1/\rho))}{\sfL(1/\rho)^{2}}\int_{t_{0}}^{t}\frac{\dif r}{t+\widetilde{\rho}-r}\nonumber                                                                                                                                                                                \\
          & =\frac{C\langle\vvvert v_{0}\vvvert_{\ell}\rangle^{2}\sfL(\sfL(1/\rho))}{\sfL(1/\rho)^{2}}\log\frac{t-t_{0}+\widetilde{\rho}}{\widetilde{\rho}}\overset{\cref{eq:Srhodef}}{=}\frac{C\langle\vvvert v_{0}\vvvert_{\ell}\rangle^{2}\sfL(\sfL(1/\rho))\sfL(1/\widetilde{\rho})}{\sfL(1/\rho)^{2}}\sfS_{\widetilde{\rho}}(t-t_{0})\nonumber \\
      \overset{\substack{\cref{eq:SrhoT1T0approx},                                                                                                                                                                                                                                                                                                  \\
          \cref{eq:altogetherT1def}
        }
      }   & {\le}\frac{C\langle\vvvert v_{0}\vvvert_{\ell}\rangle^{2}\sfL(\sfL(1/\rho))}{\sfL(1/\rho)},\label{eq:applyindhyp}
    \end{align}
    where the constant $C$ depends on $Q_{1}$ and $\beta$.
    \item \emph{The approximate SPDE. }\label{step:approximatespde}Combining \cref{eq:addtildes,eq:newnoise-concl-1,eq:applyindhyp}, we see that, whenever $T_{0}\le t_{0}\le t\le T_{1}$,
    we have
    \begin{equation*}
       \left\vvvert \gamma_{\rho}\int_{t_{0}}^{t}\clG_{t+\widetilde{\rho}-r}\left[\sigma(v_{r})\,\dif W_{r}\right]-\gamma_{\widetilde{\rho}}\int_{t_{0}}^{t}\clG_{t+\widetilde{\rho}-r}\left[\widetilde{\sigma}(\clG_{\tau}v_{r})\,\dif\widetilde{W}_{r}\right]\right\vvvert _{2}^{2} \le\frac{C\langle\vvvert v_{0}\vvvert_{\ell}\rangle^{2}\sfL(\sfL(1/\rho))}{\sfL(1/\rho)}.
    \end{equation*}
    In light of \cref{eq:convolvetheSPDE-1}, this means that
    \begin{equation}
      \left\vvvert \clG_{\tau}v_{t}-\clG_{t-t_{0}}\clG_{\tau}v_{t_{0}}-\gamma_{\widetilde{\rho}}\int_{t_{0}}^{t}\clG_{t+\widetilde{\rho}-r}\left[\widetilde{\sigma}(\clG_{\tau}v_{r})\,\dif\widetilde{W}_{r}\right]\right\vvvert _{2}^{2}\le\frac{C\langle\vvvert v_{0}\vvvert_{\ell}\rangle^{2}\sfL(\sfL(1/\rho))}{\sfL(1/\rho)}.\label{eq:approx}
    \end{equation}
    Now define
    \begin{equation}
      t_{1}=t_{0}+\sfT_{\widetilde{\rho}}\left(\frac{Q_{2}'-Q_{1}}{1-Q_{1}}\right).\label{eq:t1def}
    \end{equation}
    The estimate \cref{eq:approx} means that $(\clG_{\tau}v_{t})_{t\ge t_{0}}$
    is approximately a solution to the SPDE \cref{eq:SPDE} with $\rho\setto\widetilde{\rho}$,
    $\sigma\setto\widetilde{\sigma}$, and $\dif W_{r}\setto\dif\widetilde{W}_{r}$,
    in the sense of the norm $\clM_{t_{0},t_{1}}$ defined in \cref{eq:scrMstdef}.
    We thus seek to apply \cref{prop:solnapprox} with these choices of
    parameters. We note that
    \[
      \Lip(\widetilde{\sigma})\sfS_{\widetilde{\rho}}(t_{1}-t_{0})^{1/2}\overset{\substack{\cref{eq:sigmatildedef-1},\\
          \cref{eq:t1def}
        }
      }{=}(1-Q_{1})^{1/2}\Lip\bigl(J_{\sigma}(Q_{1},\anon)\bigr)\left(\frac{Q_{2}'-Q_{1}}{1-Q_{1}}\right)^{1/2}=(Q_{2}'-Q_{1})^{1/2}\Lip\bigl(J_{\sigma}(Q_{1},\anon)\bigr)\overset{\cref{eq:Q2choice-1}}{<}1.
    \]
    Therefore, \cref{prop:solnapprox} tells us that, for all $t\in[t_{0},t_{1}]$,
    we have
    \begin{equation}
      \vvvert\clG_{\tau}v_{t}-\widetilde\clV_{t_{0},t}^{\widetilde{\sigma},\widetilde{\rho}}\clG_{\tau}v_{t_{0}}\vvvert_{2}^{2}\le\frac{C\langle\vvvert v_{0}\vvvert_{2}\rangle^{2}\sfL(\sfL(1/\rho))}{[1-(Q_{2}'-Q_{1})^{1/2}\Lip(J_{\sigma}(Q_{1},\anon))]\sfL(1/\rho)},\label{eq:swapthings}
    \end{equation}
    where $\widetilde{\m{V}}$ denotes the evolution operator for \cref{eq:SPDE} with noise $\dn \widetilde{W}$ in place of $\dn W$.
  \item \emph{Applying the key approximation in the large. }
    Now that we have approximated $\clG_\tau v_t$ by a solution to the original SPDE with $\rho\setto\widetilde\rho$, we seek to leverage the fact that this ``coarsened'' SPDE will again satisfy the key approximation \cref{eq:QSPDEapprox-3} due to the choice of parameters \cref{eq:Q2choice-1}.
    We continue
    to fix $t_{0}\in[T_{0},T_{1}]$. We wish to apply \cref{eq:QSPDEapprox-3}
    again, this time with time shifted by $t_{0}$ and the choices
    \begin{equation*}
      \overline{q}\setto\frac{Q_{2}'-Q_{1}}{1-Q_{1}},\qquad\rho\setto\widetilde{\rho},\qquad\dif W_{t}\setto\dif\widetilde{W}_{t},\qquad\sigma\setto\widetilde{\sigma}.
    \end{equation*}
    We call this approximation ``in the large'' because it is applied to the coarse-grained field.
    We need to check the hypotheses. By \cref{prop:QSPDEbasecase}, we have
    \[
      Q_{\SPDE}(\widetilde{\sigma})\ge\Lip(\widetilde{\sigma})^{-2}\overset{\cref{eq:sigmatildedef-1}}{=}\frac{\Lip(J_{\sigma}(Q_{1},\anon))^{-2}}{1-Q_{1}}\overset{\cref{eq:Q2choice-1}}{>}\frac{Q_{2}'-Q_{1}}{1-Q_{1}}.
    \]
    We also note that by \cref{prop:Jsigmaub}, we have
    \begin{equation*}
      \widetilde{\sigma}\in\quadset(\widetilde{M},\widetilde{\beta}),\quad\text{with }\enspace\widetilde{M}\coloneqq\frac{1-Q_{1}}{(1-\beta^{2}Q_{1})^{2}}M\enspace\text{ and }\enspace\widetilde{\beta}%
      \coloneqq\left(\frac{1-Q_{1}}{\beta^{-2}-Q_{1}}\right)^{1/2}.
    \end{equation*}
    It follows that $\beta_*(\widetilde\sigma) \leq \widetilde \beta$.
    We choose
    \begin{equation}
        \widebartilde{\beta}\in\left(\widetilde\beta\,%
      ,\left(\frac{1-Q_{1}}{Q_{2}'-Q_{1}}\right)^{1/2}\right)\label{eq:betatildeoverlinechoice}
  \end{equation}
  and take $\overline{\beta}\setto\widebartilde{\beta}$ in \cref{def:QSPDE}.
  Note that the interval in \cref{eq:betatildeoverlinechoice} is nonempty because \cref{eq:Q2choice-1} yields $Q_2'<\beta^{-2}$.
    Then \cref{eq:QSPDEapprox-3} gives us a function $\widetilde{\omega}=\omega_{\widetilde{\sigma},Q_{2}',\widebartilde{\beta},\ell}$
    such that for all $p\in[0,(Q_{2}'-Q_{1})/(1-Q_{1})]$, whenever
    \[
      t-t_{0}\in\Big[\widetilde{\omega}\bigl(\widetilde{\rho}\bigr)\sfT_{\widetilde{\rho}}(p),\sfT_{\widetilde{\rho}}\bigl(\widebartilde{\beta}^{-2}\bigr)\Big]
    \]
    and $x,\widetilde{z}\in\RR^{2}$, we have %
    \begin{align}
       & \EE \left\lvert  \bigl(\clS_{\sfT_{\widetilde{\rho}}(p)^{1/2},\widetilde{z}}\bigl[\widetilde{\sigma}^{2}\circ\widetilde{\clV}_{t_{0},t}^{\widetilde{\sigma},\widetilde{\rho}}\clG_{\tau}v_{t_{0}}\bigr](x)\bigr)^{1/2}-J_{\widetilde{\sigma}}\bigl(p,\clG_{\sfT_{\widetilde{\rho}}(p)}\widetilde{\clV}_{t_{0},t}^{\widetilde{\sigma},\widetilde{\rho}}\clG_{\tau}v_{t_{0}}(x)\bigr)\right\rvert  _{\Frob}^{2} \nonumber \\&\qquad\le\frac{C\sfL(\sfL(1/\widetilde{\rho}))}{\sfL(1/\widetilde{\rho})}\langle\vvvert \clG_\tau v_{t_0}\vvvert_{\ell}\rangle^{2}  \overset{\cref{eq:applymomentbd-again}}{\le}\frac{C\sfL(\sfL(1/\widetilde{\rho}))}{\sfL(1/\widetilde{\rho})}\langle\vvvert v_{0}\vvvert_{\ell}\rangle^{2}
      \overset{\cref{eq:rhotildeandtaudef-1}}{\le}\frac{C\sfL(\sfL(1/\rho))}{\sfL(1/\rho)}\langle\vvvert v_{0}\vvvert_{\ell}\rangle^{2}\label{eq:applyindhypinthelarge}
    \end{align}
    for a new choice of constant $C=C(\sigma,Q_{1},Q_{2}',M,\beta,\widebartilde{\beta},\ell)$.
    \item \emph{Change of variables.} Now we wish
    to change variables in \cref{eq:applyindhypinthelarge} by setting
    \[
      p=\sfS_{\widetilde{\rho}}(\sfT_{\rho}(q)-\tau),
    \]
    in order to return to the original scale based on $\rho$ instead of $\widetilde\rho$.
    It is not difficult to check that
    \begin{equation}
      \left\lvert  p-\frac{q-Q_{1}}{1-Q_{1}}\right\rvert  \le\frac{C}{\sfL(1/\rho)}.\label{eq:pclosetoqminusQ1over1minusQ}
    \end{equation}
    for a constant $C$ depending on $Q_{1}$. 
    For all $a\in\RR^m$, we compute
    \begin{equation*}
      \begin{aligned}J_{\widetilde{\sigma}}\left(\frac{q-Q_{1}}{1-Q_{1}},a\right)\overset{\cref{eq:sigmatildedef-1}}{=}J_{(1-Q_{1})^{1/2}J_{\sigma}(Q_{1},\anon)}\left(\frac{q-Q_{1}}{1-Q_{1}},a\right)  \overset{\cref{eq:Jrescale}} & {=}(1-Q_{1})^{1/2}J_{J_{\sigma}(Q_{1},\anon)}(q-Q_{1},a) \\
               \overset{\cref{eq:Jsubst-nohyp}}                                                                                                                                                                                 & {=}(1-Q_{1})^{1/2}J_{\sigma}(q,a).
      \end{aligned}
    \end{equation*}
    Using this and \cref{eq:sigmatildedef-1} 
    in \cref{eq:applyindhypinthelarge},
    and also using
    \cref{prop:Jsigmatimereg-1,prop:momentbd,eq:pclosetoqminusQ1over1minusQ} to make the substitution $p \setto (q - Q_1)/(1 - Q_1)$ in the first argument of $J_\sigma$,
    we obtain
    \begin{equation}
      \begin{aligned}\EE & \left\lvert  \bigl(\clS_{[\sfT_{\rho}(q)-\tau]^{1/2},\widetilde{z}}\bigl[J_{\sigma}^{2}(Q_{1},\anon)\circ\widetilde{\clV}_{t_{0},t}^{\widetilde{\sigma},\widetilde{\rho}}\clG_{\tau}v_{t_{0}}\bigr](x)\bigr)^{1/2}-J_{\sigma}\bigl(q,\clG_{\sfT_{\rho}(q)-\tau}\widetilde{\clV}_{t_{0},t}^{\widetilde{\sigma},\widetilde{\rho}}\clG_{\tau}v_{t_{0}}(x)\bigr)\right\rvert  _{\Frob}^{2} \\
                   & \hspace{9cm}\le\frac{C\sfL(\sfL(1/\rho))}{\sfL(1/\rho)}\langle\vvvert v_{0}\vvvert_{2}\rangle^{2}
      \end{aligned}
      \label{eq:afterchgvarptoq}
    \end{equation}
    for all $q\in[Q_{1},Q_{2}']$, still whenever $t_{0}\in[T_{0},T_{1}]$
    and
    \begin{align*}
      t-t_{0}\in\left[\widetilde{\omega}\bigl(\widetilde{\rho}\bigr)\bigl[\sfT_{\rho}(q)-\tau\bigr],\sfT_{\widetilde{\rho}}\bigl(\widebartilde{\beta}^{-2}\bigr)\right] & \supseteq\left[\widetilde{\omega}\bigl(\widetilde{\rho}\bigr)\sfT_{\rho}(q),\sfT_{\widetilde{\rho}}\bigl(\widebartilde{\beta}^{-2}\bigr)\right].
    \end{align*}
  \item \emph{Proof of \cref{eq:thegoal}. }
    \label{item:thegoal}
    Take
    $q\in[Q_{1},Q_{2}']$, $t_{0}\in[T_{0},T_{1}]$, and (recalling \cref{eq:t1def})
    \begin{equation}
      t-t_{0}\in\left[\widetilde{\omega}(\widetilde{\rho})\sfT_{\rho}(q),\sfT_{\widetilde{\rho}}\left(\widebartilde{\beta}^{-2}\wedge\frac{Q_{2}'-Q_{1}}{1-Q_{1}}\right)\right]\overset{\cref{eq:betatildeoverlinechoice}}{=}\left[\widetilde{\omega}(\widetilde{\rho})\sfT_{\rho}(q),\sfT_{\widetilde{\rho}}\left(\frac{Q_{2}'-Q_{1}}{1-Q_{1}}\right)\right].\label{eq:tcond-2}
    \end{equation}
    Using \cref{eq:swapthings} twice on the left side of \cref{eq:afterchgvarptoq}, we see that
    \begin{equation}
      \begin{aligned}\EE & \left\lvert  \bigl(\clS_{[\sfT_{\rho}(q)-\tau]^{1/2},\widetilde{z}}[J_{\sigma}^{2}(Q_{1},\anon)\circ\clG_{\tau}v_{t}](x)\bigr)^{1/2}-J_{\sigma}\bigl(q,\clG_{\sfT_{\rho}(q)}v_{t}(x)\bigr)\right\rvert  _{\Frob}^{2}                                                               \\
                   & \le C\langle\vvvert v_{0}\vvvert_{2}\rangle^{2}\frac{\sfL(\sfL(1/\rho))}{\sfL(1/\rho)}\bigl[1+\Lip\bigl(J_{\sigma}(q,\anon)\bigr)\bigr]\le C\langle\vvvert v_{0}\vvvert_{\ell}\rangle^{2}\frac{\sfL(\sfL(1/\rho))}{\sfL(1/\rho)}
      \end{aligned}
      \label{eq:afterchgvarptoq-1}
    \end{equation}
    with a constant $C=C(\sigma,Q_{1},Q_{2}',M,\beta,\widebartilde{\beta},\ell)$.
    Finally, combining this with \cref{eq:smallinductivehyp}, we see that
    \begin{equation}
      \EE \left\lvert  \bigl(\clS_{[\sfT_{\rho}(q)-\tau]^{1/2},\widetilde{z}}\bigl[\clS_{\tau^{1/2}}[\sigma^{2}\circ v_{t}]\bigr](x)\bigr)^{1/2}-J_{\sigma}\bigl(q,\clG_{\sfT_{\rho}(q)}v_{t}(x)\bigr)\right\rvert  _{\Frob}^{2}\le C\langle\vvvert v_{0}\vvvert_{\ell}\rangle^{2}\frac{\sfL(\sfL(1/\rho))}{\sfL(1/\rho)}.\label{eq:afterchgvarptoq-1-1}
    \end{equation}
    Now let $z\in\RR^{2}$. By \cref{lem:composeaverages} (and also
    using \cref{eq:applymomentbd-again}), there is a choice of $\widetilde{z}\in\RR^{2}$
    such that
    \begin{align}
      \EE \left\lvert  \clS_{[\sfT_{\rho}(q)-\tau]^{1/2},\widetilde{z}}\bigl[\clS_{\tau^{1/2}}[\sigma^{2}\circ v_{t}]\bigr](x)-\clS_{\sfT_{\rho}(q)^{1/2},z}[\sigma^{2}\circ v_{t}]\right\rvert  _{\Frob}^{2} & \le\frac{C\langle\vvvert v_{0}\vvvert_{2}\rangle^{2}\tau}{\sfT_{\rho}(q)-\tau}. %
      \label{eq:needdeltanow}
    \end{align}
    Recall the $\delta>0$ fixed in \cref{step:delta}. %
    The %
    right side of
    \cref{eq:needdeltanow} is (quite crudely) bounded above by $C\frac{\langle\vvvert v_{0}\vvvert_{2}\rangle^{2}}{\sfL(1/\rho)}$
    uniformly over $q\in[Q_{1}+\delta,Q_{2}']$, where the constant $C$
    can now depend also on $\delta$. Combining this with \cref{eq:afterchgvarptoq-1-1},
    we see that
    \begin{equation}
      \label{eq:almostthegoal}
      \EE \left\lvert  \clS_{\sfT_{\rho}(q)^{1/2},z}[\sigma^{2}\circ v_{t}]-J_{\sigma}\bigl(q,\clG_{\sfT_{\rho}(q)}v_{t}(x)\bigr)\right\rvert  _{\Frob}^{2}\le C\langle\vvvert v_{0}\vvvert_{\ell}\rangle^{2}\frac{\sfL(\sfL(1/\rho))}{\sfL(1/\rho)}
    \end{equation}
    still uniformly over $q\in[Q_{1}+\delta,Q_{2}']$, and for all $t_{0}\in[T_{0},T_{1}]$ and $t$ satisfying \cref{eq:tcond-2}.
    This is \cref{eq:thegoal} with a constant $C(\sigma,Q_1,Q_2',M,\beta,\widebartilde{\beta},\ell,\delta)$.
    We can easily define $Q_2'$, $M$, $\widebartilde{\beta}$, and $\delta$ as functions of $(\sigma,Q_1,Q_2)$, so it suffices to choose $Q_1$ as a function of $(\sigma,Q_2)$.
    To this end, define $h(q;\sigma,Q_2) \coloneqq q + \Lip[J_\sigma(q, \anon)]^{-2} - Q_2$ on the interval $I \coloneqq [0,Q_2 \wedge \Qbar_\SPDE(\sigma) \wedge 1)$.
    By the choice of $Q_2$, $\sup h > 0$.
    Because $J_\sigma$ is continuous, $q \mapsto \Lip[J_\sigma(q, \anon)]$ is lower semicontinuous.
    It follows that $h$ is upper semicontinuous, and hence $\{q \suchthat h(q) \geq \sup h/2\}$ is closed in $I$.
    If we choose $Q_1$ to be the minimum of this set, it is a function of $\sigma$ and $Q_2$ and satisfies the required properties.

    So in fact \cref{eq:almostthegoal} is \cref{eq:thegoal}.
    It only remains to check that it holds for an appropriate range of $t$.
    
    \item \emph{The range of $t$.} We have proved that \cref{eq:thegoal} holds
    whenever $t_{0}\in[T_{0},T_{1}]$, $q\in[Q_{1}+\delta,Q_{2}']$, and
    $t$ satisfies \cref{eq:tcond-2}. Let
    \begin{equation}
      \overline{\omega}(\rho)=\omega(\rho)+\widetilde{\omega}(\widetilde{\rho}).\label{eq:omegabardef}
    \end{equation}
    It is clear that this choice of $\overline{\omega}$ satisfies \zcref[range]{eq:omegaissmallenough,eq:omegaisbigenough}.
    Suppose that $q\in[Q_{1}+\delta,Q_{2}]$ and
    \begin{equation}
      t\in[\overline{\omega}(\rho)\sfT_{\rho}(q),\sfT_{\rho}(\overline{\beta}^{-2})].\label{eq:trange}
    \end{equation}
    The proof will be complete if we can find a $t_{0}\in[T_{0},T_{1}]$
    such that \cref{eq:tcond-2} holds. We note that \cref{eq:trange}, \cref{eq:omegabardef},
    and the assumption on $q$ imply that
    \begin{equation}
      t\ge\widetilde{\omega}(\widetilde{\rho})\sfT_{\rho}(q)+\omega(\rho)\sfT_{\rho}(q)\ge\widetilde{\omega}(\widetilde{\rho})\sfT_{\rho}(q)+\omega(\rho)\sfT_{\rho}(Q_{1}+\delta).\label{eq:tlb}
    \end{equation}
    Therefore, we can define
    \begin{equation*}
      t_{0}=t-\widetilde{\omega}(\widetilde{\rho})\sfT_{\rho}(q),%
    \end{equation*}
    and note from \cref{eq:tlb} that, at least as long as $\rho$ is sufficiently
    small, we have
    \[
      t_{0}\ge\omega(\rho)\tau\overset{\cref{eq:altogetherT0def}}{=}T_{0}.
    \]
    Since we also know that $t_{0}\le t\le\sfT_{\rho}(\overline{\beta}^{-2})$,
    this implies that $t_{0}\in[T_{0},T_{1}]$. Now since $q\le Q_{2}$,
    we have
    \begin{equation}
      t-t_{0}=\widetilde{\omega}(\widetilde{\rho})\sfT_{\rho}(q)<\sfT_{\widetilde{\rho}}\left(\frac{Q_{2}'-Q_{1}}{1-Q_{1}}\right)\label{eq:tminust0nottoobig}
    \end{equation}
    when $\rho$ is sufficiently small, where the required smallness is
    uniform over all $q\in[Q_{1}+\delta,Q_{2}]$ since $Q_{2}<Q_{2}'$.
    Together, \cref{eq:tlb} and \cref{eq:tminust0nottoobig} imply that \cref{eq:tcond-2}
    holds, and the proof is complete.\qedhere
  \end{thmstepnv}
\end{proof}
We next slightly adjust the parameters appearing in \cref{def:QSPDE}. This is analogous
to what was done in \cref{step:adjustJsigmarho} in the proof of \cref{thm:mainapproxthm}.
The proof is the same except that since we now work with $J_{\sigma}$
rather than $J_{\sigma,\rho}$, we use \cref{prop:Jsigmatimereg-1}
and the Lipschitz bound on $J_{\sigma}$ instead of \cref{cor:approxJregularity}.
\begin{lem}
  \label{lem:applyIH}
  Suppose $\sigma \in \Lip(\R^m; \m{H}_+^m)$, $\overline{q}\in\bigl(0,\Qbar_{\SPDE}(\sigma)\bigr)$, $\overline\beta \in \bigl(\beta_*(\sigma), \bar{q}^{-1/2}\bigr)$, and $\ell\in(2,4]$.
  Let $\omega=\omega_{\sigma,\overline{q},\overline{\beta},\ell}$ be as in \cref{def:QSPDE}.
  There is a constant $C=C(\sigma,\overline{q},\overline{\beta},\ell)\in(0,\infty)$
  such that, whenever $\rho\in(0,C^{-1})$, if $(v_{t})_{t\ge0}$ solves
  \cref{eq:SPDE} with initial condition $v_{0}\in\scrX_{0}^{\ell}$,
  then for any $q\in[0,\overline{q}]$, any
  \begin{equation}
    t\in[\omega(\rho)\sfT_{\rho}(q),\sfT_{\rho}(\overline{\beta}^{-2})],\label{eq:tconddefn-1-1}
  \end{equation}
  and any $x,z\in\RR^{2}$, we have
  \begin{equation*}
    \begin{aligned}\EE & \left\lvert  \bigl(\clS_{\sfT_{\rho}(q)^{1/2},z}[\sigma^{2}\circ v_{t}](x)\bigr)^{1/2}-J_{\sigma}\bigl(\sfS_{\rho}\bigl(\sfL(1/\rho)\sfT_{\rho}(q)\bigr), \, \clG_{\sfL(1/\rho)\sfT_{\rho}(q)}v_{t}(x)\bigr)\right\rvert  _{\Frob}^{2} \\
                   & \hspace{8cm}\le\frac{C\langle\vvvert v_{0}\vvvert_{\ell}\rangle^{2}\sfL(\sfL(1/\rho))}{\sfL(1/\rho)}.
    \end{aligned}
  \end{equation*}
\end{lem}

\begin{proof}
  By \cref{eq:QSPDEapprox-3} and the triangle inequality, it suffices
  to prove that
  \begin{equation}
    \EE \left\lvert  J_{\sigma}\bigl(\sfS_{\rho}\bigl(\sfL(1/\rho)\sfT_{\rho}(q)\bigr),\clG_{\sfL(1/\rho)\sfT_{\rho}(q)}v_{t}(x)\bigr)-J_{\sigma}\bigl(q,\clG_{\sfT_{\rho}(q)}v_{t}(x)\bigr)\right\rvert  _{\Frob}^{2}\le\frac{C\langle\vvvert v_{0}\vvvert_{\ell}\rangle^{2}\sfL(\sfL(1/\rho))}{\sfL(1/\rho)}.\label{eq:Jsigmasclose}
  \end{equation}
  We prove this in two steps. First we note that, by \cref{eq:Jtimecontinuity}
  of \cref{prop:Jsigmatimereg-1}, we have
  \begin{multline*}
    \left\lvert  J_{\sigma}\bigl(\sfS_{\rho}\bigl(\sfL(1/\rho)\sfT_{\rho}(q)\bigr),\clG_{\sfT_{\rho}(q)}v_{t}(x)\bigr)-J_{\sigma}\bigl(q,\clG_{\sfT_{\rho}(q)}v_{t}(x)\bigr)\right\rvert  _{\Frob}\\
    \le C\langle\clG_{\sfT_{\rho}(q)}v_{t}(x)\rangle\bigl\lvert \sfS_{\rho}\bigl(\sfL(1/\rho)\sfT_{\rho}(q)\bigr)-q\bigr\rvert ^{1/2}\le C\langle\clG_{\sfT_{\rho}(q)}v_{t}(x)\rangle\left(\frac{\sfL(\sfL(1/\rho))}{\sfL(1/\rho)}\right)^{1/2}.
  \end{multline*}
  The second inequality follows from an elementary estimate on $|\sfS_{\rho}(\sfL(1/\rho)\sfT_{\rho}(q))-q|$ using \cref{eq:Srhodef} and \cref{eq:Tdef}. 
  Taking second moments and using \cref{prop:momentbd}, 
  we see that
  \begin{equation}
    \EE \left\lvert  J_{\sigma}\bigl(\sfS_{\rho}\bigl(\sfL(1/\rho)\sfT_{\rho}(q)\bigr),\clG_{\sfT_{\rho}(q)}v_{t}(x)\bigr)-J_{\sigma}\bigl(q,\clG_{\sfT_{\rho}(q)}v_{t}(x)\bigr)\right\rvert  _{\Frob}^{2}\le\frac{C\langle\vvvert v_{0}\vvvert_{\ell}\rangle^{2}\sfL(\sfL(1/\rho))}{\sfL(1/\rho)}.\label{eq:firstpiece}
  \end{equation}
  Second, we can estimate
  \begin{align}
     & \left[\EE \left\lvert  J_{\sigma}\bigl(\sfS_{\rho}\bigl(\sfL(1/\rho)\sfT_{\rho}(q)\bigr),\clG_{\sfL(1/\rho)\sfT_{\rho}(q)}v_{t}(x)\bigr)-J_{\sigma}\bigl(\sfS_{\rho}\bigl(\sfL(1/\rho)\sfT_{\rho}(q)\bigr),\clG_{\sfT_{\rho}(q)}v_{t}(x)\bigr)\right\rvert  _{\Frob}^{2}\right]^{1/2}\nonumber \\
     & \qquad\overset{\cref{eq:Juniflipschitz}}{\le}C\left[\EE \left\lvert  \clG_{\sfL(1/\rho)\sfT_{\rho}(q)}v_{t}(x)-\clG_{\sfT_{\rho}(q)}v_{t}(x)\right\rvert  ^{2}\right]^{1/2}\nonumber                                                                   \\
     & \qquad\overset{\substack{\cref{eq:continuitybd},                                                                                                                                                                                         \\
        \cref{eq:momentbd}
      }
    }{\le}C\langle\vvvert v_{0}\vvvert_{\ell}\rangle\left[\left(\frac{|\sfL(1/\rho)-1|\sfT_{\rho}(q)}{t}\right)^{1/2}+\sfL_{\rho}(1+\sfL(1/\rho))^{1/2}\right]\nonumber                                                                     \\
     & \qquad\overset{\cref{eq:omegaisbigenough}}{\le}C\langle\vvvert v_{0}\vvvert_{\ell}\rangle\left(\frac{\sfL(\sfL(1/\rho))}{\sfL(1/\rho)}\right)^{1/2}\label{eq:secondpiece}
  \end{align}
  for $\rho$ sufficiently small. Combining \cref{eq:firstpiece} and
  \cref{eq:secondpiece} using the triangle inequality, we obtain \cref{eq:Jsigmasclose}.
\end{proof}
To close the section, we observe that \cref{eq:SPDE} is approximately closed under renormalization, up to a change of noise and nonlinearity.
This is the rigorous statement behind the informal \cref{thm:renorm}.
\begin{cor}
  \label{cor:approx-SPDE}
  Fix $\sigma\in\Lip(\RR^m;\clH_+^m)$ and $Q \in(0,\Qbar_{\SPDE}(\sigma) \wedge 1).$
  Let\vspace{2pt} $\widetilde{\sigma} \coloneqq (1 - Q)^{1/2} J_\sigma(Q,\anon)$ and $\widetilde\rho \coloneqq \rho^{1 - Q}$.
  Then there exists a white noise $\dn \widetilde{W}$ with mild solution operator $\widetilde{\m{T}}$ such that for each $\beta < \beta_*(\sigma)$ and $\ell \in (2, 4]$, there exists a constant $C(\sigma,Q,\ell,\beta) > 0$ such that for all $v_0 \in \s{X}_0^\ell$,
  \begin{equation}
    \label{eq:approx-SPDE}
    \m{M}_{0,\sfT_\rho(\beta^{-2})}\big(\widetilde{\m{T}}_0^{\widetilde{\rho},\widetilde{\sigma}} \m{G}_{\widetilde{\rho}} v - \m{G}_{\widetilde{\rho}} v\big)^2 \leq C \langle \vvvert v_0 \vvvert_\ell\rangle^2 \frac{\sfL\bigl(\sfL(1/\rho)\bigr)}{\sfL(1/\rho)}.
  \end{equation}
\end{cor}
\begin{proof}
  This is \cref{eq:approx} with a few adjustments.
  First, we have taken $\widetilde{\rho} = \rho^{1 - Q}$ rather than $\rho/\sfT_\rho(1 - Q)$ and we have smoothed to scale $\widetilde{\rho}$ rather than $\widetilde{\rho} - \rho$.
  Using $Q \in (0, 1)$, \cref{eq:Jtimecontinuity}, and \cref{lem:VTcontinuity}, it is straightforward to check that the error from these adjustments is no larger than that in \cref{eq:approx-SPDE}.
  
  Second, we can only begin the mild expression in \cref{eq:approx} at $T_0 = \omega(\rho) \widetilde{\rho}$ rather than $0$.
  Suppose $t \geq T_0$.
  We can adjust $\m{G}_{t - t_0} \m{G}_{\widetilde{\rho}} v_{t_0}$ to $\m{G}_t \m{G}_{\widetilde\rho} v_0$ through \cref{prop:continuitybd}.
  For the integral in \cref{eq:approx}, we independently extend $\dn \widetilde{W}$ to the time interval $[0, T_0]$.
  Using \cref{prop:Jsigmaub,prop:momentbd}, we can bound the integral over $[0,T_0]$ by
  \begin{align*}
    \E \abs{\gamma_{\widetilde\rho} \int_0^{T_0} \m{G}_{t + \widetilde\rho - r}[\widetilde\sigma(\m{G}_{\widetilde\rho}v_r) \ds \widetilde{W}_r](x)}^2 \leq C \langle \vvvert v_0 \vvvert_2\rangle^2 \gamma_{\widetilde\rho}^2 \int_0^{T_0} \frac{\dn r}{T_0 + \widetilde\rho - r}
    &\leq C \langle \vvvert v_0 \vvvert_2\rangle^2 \frac{1}{\sfL(1/\rho)} \log \omega(\rho)\\
    \overset{\cref{eq:omegaissmallenough}}&\leq C \langle \vvvert v_0 \vvvert_2\rangle^2 \frac{\sfL\bigl(\sfL(1/\rho)\bigr)}{\sfL(1/\rho)}.
  \end{align*}
  Similar calculations hold when $t \in [0, T_0)$; we omit the details.
\end{proof}

\section{Proof of the main result\label{sec:proofofmainresults}}

The proof of \cref{thm:QSPDEgreater} was the most technical step of
the paper. With this theorem in hand, we can prove our main results.
\subsection{The approximate stochastic differential equation}
The first result of this section shows that the martingale $V^{\rho,T_1}(x)$, defined in \zcref{eq:Vdef}, approximately satisfies a stochastic differential equation. This is similar to \cref{eq:concludestep2} in the proof of \cref{prop:Japprox}, but with two key upgrades. First, the approximate root decoupling function $J_{\sigma,\rho}$ is replaced by the true root decoupling function $J_\sigma$; in fact this was done in the proof of \cref{prop:QSPDEbasecase} (see \cref{eq:approxJsigmarhobyJsigma}) using the conclusion of \cref{prop:Japprox} itself. Second, the statement \cref{eq:concludestep2} is extended to larger time scales. This  upgrade relies on the multiscale analysis carried out in \cref{sec:extendtheestimates}.

We recall the definition \cref{eq:minimal-beta} of $\beta_*(\sigma)$. %
\begin{prop}
  \label{prop:laydowntheSDE}Let $\sigma \in \Lip(\R^m; \m H_+^m)$, $Q_{0}\in(0,\Qbar_{\SPDE}(\sigma))$, $\overline{\beta}>\beta_*(\sigma)$, and  $\ell\in(2,4]$.
  There is a constant $C=C(\sigma,Q_0,\overline{\beta},\ell)<\infty$
  such that the following holds. Let $\omega=\omega_{\sigma,Q_{0},\overline{\beta},\ell}$
  be as in \cref{def:QSPDE}. Let $(v_{t})_{t}$ solve \cref{eq:SPDE}
  with initial condition $v_{0}\in\scrX_{0}^{\ell}$. Let $T_{0},T_{1}$ satisfy
  \begin{equation}
    \bigl[\sfL(1/\rho)\omega(\rho)^{-1}+1\bigr]^{-1}T_{1}\le T_{0}<T_{1}\le\sfT_{\rho}\bigl(\overline{\beta}^{-2}\bigr)\label{eq:T0T1nottoosmallorbig}
  \end{equation}
  and
  \begin{equation}
    \sfS_{\rho}(T_{1}-T_{0})\le Q_{0}.\label{eq:T1T0nottoofar}
  \end{equation}
  As in \cref{eq:Bhat}, we define, for $t\ge T_{0}$,
  \begin{equation}
    \hat{B}(t)=\gamma_{\rho}\int_{T_{0}}^{t}[\clG_{T_{1}+\rho-r}\,\dif\widetilde{W}_{r}](x),\label{eq:Bhatdef}
  \end{equation}
  where $\dif\widetilde{W}$ is the space-time white noise constructed
  in \cref{prop:coupling}, with the choices $w_{r}\setto\sigma\circ v_{r}$
  and $\zeta_{r}=[\sfL(1/\rho)^{-1}(T_{1}-r)]^{1/2}$. Then we
  have, for all $t\in[T_{0},T_{1}]$, that
  \begin{equation}
    \EE \left\lvert  V_{t}^{\rho,T_{1}}(x)-V_{T_{0}}^{\rho,T_{1}}(x)-\int_{T_{0}}^{t}J_{\sigma}\bigl(\sfS_{\rho}(T_{1}-r),V_{r}^{\rho,T_{1}}(x)\bigr)\,\dif\hat{B}(r)\right\rvert  ^{2}\le C\langle\vvvert v_{0}\vvvert_{\ell}\rangle^{2}\frac{\sfL(\sfL(1/\rho))}{\sfL(1/\rho)}.\label{eq:oneguyclosetoSDE}
  \end{equation}
\end{prop}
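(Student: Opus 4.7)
The proof will closely mirror \cref{step:chgvar} (and the steps leading up to it) of the proof of \cref{prop:Japprox}, but it will use \cref{lem:applyIH}---whose validity has been extended by \cref{thm:QSPDEgreater} to $\overline{q}$ potentially larger than $\Lip(\sigma)^{-2}$---in place of \cref{thm:mainapproxthm}. I would proceed in four main steps.

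First, I would use the mild solution formula \cref{eq:Vtintegral} to write
\begin{equation*}
    V_t^{\rho,T_1}(x) - V_{T_0}^{\rho,T_1}(x) = \gamma_\rho \int_{T_0}^t \clG_{T_1+\rho-r}[\sigma(v_r)\,\dif W_r](x),
\end{equation*}
and then apply \cref{prop:coupling} (with the parameters $w_r,\zeta_r$ as prescribed in the statement, and $\eta_r\setto T_1+\rho-r$) to find the new white noise $\dif\widetilde W_r$ approximating the original noise in distribution. Carrying out the computation exactly as in \cref{eq:applycoupling} and using the moment bound \cref{prop:momentbd} (applicable since $T_1\le\sfT_\rho(\overline\beta^{-2})$ and $\overline\beta>\beta_*(\sigma)$), this contributes an error of order $\langle\vvvert v_0\vvvert_\ell\rangle^2/\sfL(1/\rho)$, absorbed into the right-hand side of \cref{eq:oneguyclosetoSDE}.

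Second, I would approximate $(\clS_{\zeta_r}[\sigma^2\circ v_r](y))^{1/2}$ by $J_\sigma\bigl(\sfS_\rho(T_1-r),V_r^{\rho,T_1}(y)\bigr)$ using \cref{lem:applyIH} with $q=\sfS_\rho(\sfL(1/\rho)^{-1}(T_1-r))$, so that $\sfT_\rho(q)^{1/2}=\zeta_r$ and $\sfL(1/\rho)\sfT_\rho(q)=T_1-r$ (yielding $\clG_{\sfL(1/\rho)\sfT_\rho(q)}v_r = V_r^{\rho,T_1}$). The hypothesis $t=r\in[\omega(\rho)\sfT_\rho(q),\sfT_\rho(\overline\beta^{-2})]$ translates after an elementary manipulation into $T_1\le(\sfL(1/\rho)\omega(\rho)^{-1}+1)r$, which is guaranteed for all $r\in[T_0,T_1]$ by \cref{eq:T0T1nottoosmallorbig}; the constraint $q\le Q_0$ follows from the monotonicity of $\sfS_\rho$ together with \cref{eq:T1T0nottoofar} (since $\sfL(1/\rho)^{-1}(T_1-r)\le T_1-T_0$ for $\rho$ small). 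Plugging into the $r$-integral against $G_{T_1+\rho-r}^2$ and using $4\pi G^2_{T+\rho-r} = (T+\rho-r)^{-1} G_{(T+\rho-r)/2}$ (cf.\ \cref{eq:concentrationdone,eq:comparetoclosedthing}), this produces a contribution of order $\sfL(\sfL(1/\rho))/\sfL(1/\rho)$ after integrating $\int_{T_0}^{T_1}(T_1+\rho-r)^{-1}\dif r\le\sfL(1/\rho)\sfS_\rho(T_1-T_0)\le\sfL(1/\rho)Q_0$.

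Third, to remove the spatial dependence in the second argument of $J_\sigma$, I would use the uniform Lipschitz bound on $J_\sigma(q,\anon)$ for $q\in[0,Q_0]$ (valid since $Q_0<\Qbar_\SPDE(\sigma)\le\Qbar_\FBSDE(\sigma)$, cf.\ \cref{eq:Juniflipschitz,eq:QSPDEltQFBSDE}) to write
\begin{equation*}
    \EE\bigl|J_\sigma\bigl(\sfS_\rho(T_1-r),V_r^{\rho,T_1}(y)\bigr)-J_\sigma\bigl(\sfS_\rho(T_1-r),V_r^{\rho,T_1}(x)\bigr)\bigr|_\Frob^2 \le C\,\EE|V_r^{\rho,T_1}(y)-V_r^{\rho,T_1}(x)|^2,
\end{equation*}
and then estimate the right-hand side via \cref{prop:continuitybd,prop:momentbd} exactly as in \cref{eq:eliminatespatialdependence}. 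Integrating against the heat kernel as in \cref{eq:comparetoclosedthing} (and using the lower bound on $r\ge T_0$ from \cref{eq:T0T1nottoosmallorbig} to control the $|x-y|^2/r$ term), this again contributes $C\langle\vvvert v_0\vvvert_\ell\rangle^2\sfL(\sfL(1/\rho))/\sfL(1/\rho)$.

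Finally, since $V_r^{\rho,T_1}(x)$ is $y$-independent from the point of view of the integral $\clG_{T_1+\rho-r}[\,\cdot\,\dif\widetilde W_r](x)$, the identity \cref{eq:Jatxisaconstant} applies verbatim and yields
\begin{equation*}
    \gamma_\rho\int_{T_0}^t\clG_{T_1+\rho-r}\bigl[J_\sigma\bigl(\sfS_\rho(T_1-r),V_r^{\rho,T_1}(x)\bigr)\,\dif\widetilde W_r\bigr](x)=\int_{T_0}^t J_\sigma\bigl(\sfS_\rho(T_1-r),V_r^{\rho,T_1}(x)\bigr)\,\dif\hat B(r).
\end{equation*}
Assembling the three error contributions via the triangle inequality gives \cref{eq:oneguyclosetoSDE}.

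The main obstacle is bookkeeping: one must verify that the hypotheses of \cref{lem:applyIH} hold uniformly in $r\in[T_0,T_1]$, which is precisely what the (at first sight peculiar) condition \cref{eq:T0T1nottoosmallorbig} guarantees; all remaining estimates follow the established template of \cref{prop:Japprox}.
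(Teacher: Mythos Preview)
Your proposal is correct and follows essentially the same route as the paper's proof: start from the mild formula \cref{eq:Vmild}, apply the coupling \cref{prop:coupling} as in \cref{eq:usecoupling}, invoke \cref{lem:applyIH} (in place of \cref{thm:mainapproxthm}) with $q=\sfS_\rho(\zeta_r^2)$ to approximate the square-averaged $\sigma^2$ by $J_\sigma$, then remove the spatial dependence via the Lipschitz bound on $J_\sigma$ and \cref{prop:continuitybd}, and finally use the identity \cref{eq:Jatxisaconstant}. Your verification that \cref{eq:T0T1nottoosmallorbig} is exactly what is needed to satisfy the hypothesis \cref{eq:tconddefn-1-1} uniformly in $r\in[T_0,T_1]$ is also the paper's observation.
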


\begin{proof}
  The proof follows many of the steps of the proof of \cref{prop:Japprox}.
  However, we now have at our disposal \cref{lem:applyIH}, which we use
  in place of \cref{thm:mainapproxthm}. Since \cref{lem:applyIH} concerns
  $J_{\sigma}$ rather than the approximation $J_{\sigma,\rho}$, much
  of the work has already been done. That is, since we can approximate
  the SPDE solution directly in terms of the root decoupling function $J_{\sigma}$,
  we effectively need to show that the solution approximately solves
  an SDE rather than an FBSDE.
  Throughout, we allow the constant $C$ to change from line to line.

  We recall from \cref{eq:Vtintegral} (as we did in \cref{eq:VTintegralrestate})
  that, for all $t\in[T_{0},T_{1}]$,
  \begin{equation}
    V_{t}^{\rho,T_{1}}(x)=V_{T_{0}}^{\rho,T_{1}}(x)+\gamma_{\rho}\int_{T_{0}}^{t}\clG_{T_{1}+\rho-r}[\sigma(v_{r})\,\dif W_{r}](x).\label{eq:Vmild}
  \end{equation}
  Let $\zeta_{r}=[\sfL(1/\rho)^{-1}(T_{1}-r)]^{1/2}$. We apply
  \cref{prop:coupling} with $w_{r}\setto\sigma\circ v_{r}$,
  with this choice of $\zeta_{r}$, and with $\eta_{r}\setto T_{1}+\rho-r$
  (and also use \cref{prop:momentbd} to obtain a moment bound) to find
  a space-time white noise $\dif\widetilde{W}$ such that, for
  any $x\in\RR^{2}$,
  \begin{align}
    \EE & \left\lvert  \gamma_{\rho}\int_{T_{0}}^{t}\clG_{T_{1}+\rho-r}\left[\sigma(v_{r})\,\dif W_{r}-(\clS_{\zeta_{r}}[\sigma^{2}\circ v_{r}])^{1/2}\,\dif\widetilde{W}_{r}\right](x)\right\rvert  ^{2}\le\frac{C\langle\vvvert v_{0}\vvvert_{2}\rangle^{2}}{\sfL(1/\rho)^{2}}\int_{T_{0}}^{t}\frac{T_{1}-r}{(T_{1}+\rho-r)^{2}}\,\dif r\nonumber \\
        & \le\frac{C\langle\vvvert v_{0}\vvvert_{2}\rangle^{2}\sfS_{\rho}(T_{1}-T_{0})}{\sfL(1/\rho)}\overset{\cref{eq:T1T0nottoofar}}{\le}\frac{C\langle\vvvert v_{0}\vvvert_{2}\rangle^{2}}{\sfL(1/\rho)}.\label{eq:usecoupling}
  \end{align}

  Now we apply \cref{lem:applyIH}, with $t\setto r$ and $q\setto\sfS_{\rho}(\zeta_{r}^{2})=\sfS_{\rho}(\sfL(1/\rho)^{-1}(T_{1}-r))$.
  We note that as long as $r\in[T_{0},T_{1}]$, then \cref{eq:tconddefn-1-1}
  is satisfied by \cref{eq:T0T1nottoosmallorbig}. So for any $r\in[T_{0},T_{1}]$
  and any $y\in\RR^{2}$ we have
  \[
    \EE \left\lvert  \bigl(\clS_{\zeta_{r}}[\sigma^{2}\circ v_{r}](y)\bigr)^{1/2}-J_{\sigma}\bigl(\sfS_{\rho}(T_{1}-r),V_{r}^{\rho,T_{1}}(y)\bigr)\right\rvert  _{\Frob}^{2}\le\frac{C\langle\vvvert v_{0}\vvvert_{\ell}\rangle^{2}\sfL(\sfL(1/\rho))}{\sfL(1/\rho)}.
  \]
  We note that there is a constant $C=C(\sigma,Q_{0})<\infty$ such
  that $\Lip\bigl(J_{\sigma}(q,\anon)\bigr)\le C^{1/2}$ for all $q\in[0,Q_{0}]$,
  and for all $r\in[T_{0},T_{1}]$ we have $0\le\sfS_{\rho}(T_{1}-r)\le\sfS_{\rho}(T_{1}-T_{0})\le Q_{0}$
  by \cref{eq:T1T0nottoofar}. From these observations we can conclude
  that, for all $x,y\in\RR^{2}$, we have
  \begin{align*}
    \EE & \left\lvert  J_{\sigma}\bigl(\sfS_{\rho}(T_{1}-r),V_{r}^{\rho,T_{1}}(y)\bigr)-J_{\sigma}\bigl(\sfS_{\rho}(T_{1}-r),V_{r}^{\rho,T_{1}}(x)\bigr)\right\rvert  _{\Frob}^{2}\le C\EE \left\lvert  V_{r}^{\rho,T_{1}}(y)-V_{r}^{\rho,T_{1}}(x)\right\rvert  ^{2} \\
        & \hspace{4cm}\le\frac{C\langle\vvvert v_{0}\vvvert_{2}\rangle^{2}}{\sfL(1/\rho)}\left[\sfL\left(\frac{|y-x|^{2}}{T_{1}+\rho-r}\right)+\sfL(1/\rho)\frac{|y-x|^{2}}{2r}+\sfL\bigl(1+\sfL(1/\rho)\bigr)\right]
  \end{align*}
  with the second inequality by \cref{prop:continuitybd,prop:momentbd}.
  Combining the last two displays, we see that
  \begin{equation}
    \begin{aligned}\EE & \left\lvert  \bigl(\clS_{\zeta_{r}}[\sigma^{2}\circ v_{r}](y)\bigr)^{1/2}-J_{\sigma}\bigl(\sfS_{\rho}(T_{1}-r),V_{r}^{\rho,T_{1}}(x)\bigr)\right\rvert  _{\Frob}^{2}                                            \\
                   &\hspace{2cm} \le\frac{C\langle\vvvert v_{0}\vvvert_{\ell}\rangle^{2}}{\sfL(1/\rho)}\left[\sfL\left(\frac{|x-y|^{2}}{T_{1}+\rho-r}\right)+\sfL(1/\rho)\frac{|y-x|^{2}}{2r}+\sfL\bigl(1+\sfL(1/\rho)\bigr)\right].
    \end{aligned}
    \label{eq:Ss2xJy}
  \end{equation}
  Now, proceeding as in \cref{eq:comparetoclosedthing} and recalling
  the definition \cref{eq:Bhatdef} of $\hat B(t)$, we estimate
  \begin{align}
    \EE & \left\lvert  \gamma_{\rho}\int_{T_{0}}^{t}\clG_{T_{1}+\rho-r}\left[\bigl(\clS_{\zeta_{r},z}[\sigma^{2}\circ v_{r}]\bigr)^{1/2}\,\dif\widetilde{W}_{r}\right](x)-\int_{T_{0}}^{t}J_{\sigma}\bigl(\sfS_{\rho}(T_{1}-r),V_{r}^{\rho,T_{1}}(x)\bigr)\,\dif\hat{B}(t)\right\rvert  ^{2}\nonumber                     \\
        & =\frac{4\pi}{\sfL(1/\rho)}\int_{T_{0}}^{t}\!\!\int G_{T_{1}+\rho-r}(x-y)^{2}\EE \left\lvert  \bigl(\clS_{\zeta_{r},z}[\sigma^{2}\circ v_{r}](y)\bigr)^{1/2}-J_{\sigma}\bigl(\sfS_{\rho}(T_{1}-r),V_{r}^{\rho,T_{1}}(x)\bigr)\right\rvert  _{\Frob}^{2}\,\dif y\,\dif s\nonumber                                      \\
        & \overset{\substack{\cref{eq:Ss2xJy},                                                                                                                                                                                                                                                  \\
        \cref{eq:GT2}
      }
    }{\le}\frac{C\langle\vvvert v_{0}\vvvert_{\ell}\rangle^{2}}{\sfL(1/\rho)^{2}}\int_{T_{0}}^{t}\!\!\int\frac{G_{\frac{T_{1}+\rho-r}{2}}(x-y)}{T_{1}+\rho-r}\left[\sfL\left(\frac{|x-y|^{2}}{T_{1}+\rho-r}\right)+\sfL(1/\rho)\frac{|y-x|^{2}}{2r}+\sfL\bigl(1+\sfL(1/\rho)\bigr)\right]\,\dif y\,\dif s\nonumber \\
        & \le\frac{C\langle\vvvert v_{0}\vvvert_{\ell}\rangle^{2}}{\sfL(1/\rho)}\left[(\sfL\bigl(1+\sfL(1/\rho)\bigr)\sfS_{\rho}(T_{1}-T_{0})+\log \frac{T_1}{T_0}\right]\overset{\cref{eq:T1T0nottoofar}}{\le}C\langle\vvvert v_{0}\vvvert_{\ell}\rangle^{2}\frac{\sfL(\sfL(1/\rho))}{\sfL(1/\rho)}.\label{eq:dotheapproximation}
  \end{align}
  The last inequality holds as long as $\rho$ is sufficiently small.
  Combining \cref{eq:Vmild,eq:usecoupling,eq:dotheapproximation}, %
  we obtain \cref{eq:oneguyclosetoSDE}.
\end{proof}

\subsection{Statement of the main theorem}

The following theorem is the precise general statement that encapsulates \cref{thm:mainthm-singlepoint,thm:mainthm-multipoint}.
On first reading, the reader may prefer to consider only the $N=1$
case (which is \cref{thm:mainthm-singlepoint}). Define
\nomenclature[zzzgreek νrho]{$\nu_\rho$}{large parameter equal to $2\sfL(\sfL(1/\rho))$}
\begin{equation}
  \nu_{\rho}=2\sfL\bigl(\sfL(1/\rho)\bigr).\label{eq:nurhodef}
\end{equation}
\begin{thm}
  \label{thm:thebigtheorem}Fix $\overline{T}>1$, $\beta\in(0,1)$,
  $M\in(0,\infty)$, $\ell\in(2,4]$, and $N\in\NN$. Suppose $\sigma\in\quadset(M,\beta)$
  satisfies $\Qbar_{\FBSDE}(\sigma)>1$. Let $\omega=\omega_{\sigma,1,(1+\beta)/2,\ell}$ be as in \cref{def:QSPDE}. There
  exists $C=C(\sigma,M,\beta,\ell,\overline{T},N)\in(0,\infty)$ such
  that the following holds. Let $(t^{(i)},R^{(i)},x^{(i)})\in[\overline{T}^{-1},\overline{T}]\times[0,\overline{T}]\times\RR^{2}$ for each $i\in\{1,\ldots,N\}$.
  For each $i\in\{1,\ldots,N\}$, define %
  \begin{gather}
    T_{1}^{(i)}\coloneqq t^{(i)}+R^{(i)},\qquad T_{0}^{(i)}\coloneqq \bigl[\sfL(1/\rho)\omega(\rho)^{-1}+1\bigr]^{-1}T_{1}^{(i)},\label{eq:T1idef}\\
    \sfR_{\rho}^{(i)}\coloneqq\sfR_{T_{0}^{(i)},T_{1}^{(i)},\rho},\qquad\sfU_{\rho}^{(i)}\coloneqq\sfU_{T_{0}^{(i)},T_{1}^{(i)},\rho},\text{ and}\label{eq:RiUidefs}\\
    \hat{Q}^{(i)}\coloneqq\sfS_{\rho}(T_{1}^{(i)}-T_{0}^{(i)})=\sfU_{\rho}^{(i)}(T_{1}^{(i)}),\label{eq:Qidef-1}
  \end{gather}
  recalling the definitions \textup{\zcref[range]{eq:sfRdef,eq:sfUdef}}. For each
  $i,j\in\{1,\ldots,N\}$, define
  \begin{align}
    D^{(i,j)}     & \coloneqq |T_{1}^{(i)}-T_{1}^{(j)}|\vee\frac{|x^{(i)}-x^{(j)}|^{2}}{32\nu_{p}},\label{eq:Dijdef}     \\
    T_{*}^{(i,j)} & \coloneqq\begin{cases}
                               \max\left\{ T_{1}^{(i)}\wedge T_{1}^{(j)}-D^{(i,j)},T_{0}^{(i)}\right\} & \text{if }i\ne j; \\
                               T_{0}^{(i)}                                                             & \text{if }i=j.
                             \end{cases}\label{eq:T0ij-1}
  \end{align}
  Let
  \begin{equation}
	  j_{*}(i)\coloneqq\argmax_{j\le i}T_{*}^{(i,j)},\qquad T_{*}^{(i)}\coloneqq T_{*}^{(i,j_{*}(i))}=\max_{j\le i}T_{*}^{(i,j)},\qquad\text{and}\qquad \label{eq:jstaridef}
    q_{*}^{(i)}\coloneqq\sfU_{\rho}^{(i)}\bigl(T_{*}^{(i)}\bigr).%
  \end{equation}
  (If the $\argmax$ contains multiple elements,
  choose one arbitrarily.)
  Let $v_0\in\scrX^\ell_0$ and let $\Upsilon^{(1)},\ldots,\Upsilon^{(N)}$
  solve the system of SDEs
  \begin{align}
	  \dif\Upsilon^{(i)}(q)       & =J_{\sigma}\bigl(\hat{Q}^{(i)}-q,\Upsilon^{(i)}(q)\bigr)\dif B^{(i)}(q)&&\text{for }q \in \bigl(q_{*}^{(i)}, \h{Q}^{(i)}\bigr);\label{eq:dUpsiloni} \\
    \Upsilon^{(i)}(q) & =\begin{cases}
	    \Upsilon^{(j_*(i))}\bigl(\sfU_\rho^{(j_*(i))}\bigl(\sfR_\rho^{(i)}(q)\bigr)\bigr)%
	    & \text{if }j_{*}(i)<i; \\
                                     \clG_{T_{1}^{(i)}}v_{0}(x^{(i)})       & \text{if }j_{*}(i)=i,
			     \end{cases}&&\text{for }q\in [0,q_*^{(i)}], \label{eq:Upsiloniic}
  \end{align}
  where $B^{(1)},\ldots,B^{(N)}$ are independent Brownian motions (also independent of $v_0$).
  If $(v_t)_t$ satisfies \cref{eq:SPDE} with initial condition $v_0$ at time $0$, then
  \begin{equation}
    \clW_{2}\bigl(\bigl(\clG_{R^{(i)}}v_{t^{(i)}}(x^{(i)})\bigr)_{i\in\{1,\ldots,N\}}, \, \bigl(\Upsilon^{(i)}\bigl(\sfU_{\rho}^{(i)}(t^{(i)})\bigr)\bigr)_{i\in\{1,\ldots,N\}}\bigr)\le C\langle\vvvert v_{0}\vvvert_{\ell}\rangle\sqrt{\frac{\sfL(\sfL(1/\rho))}{\sfL(1/\rho)}}.\label{eq:thebigthm-conclusion}
  \end{equation}
\end{thm}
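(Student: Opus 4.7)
The plan is to realize each target $\clG_{R^{(i)}}v_{t^{(i)}}(x^{(i)})$ as a value of the martingale $V_t^{\rho,T_1^{(i)}}(x^{(i)}) = \clG_{T_1^{(i)}-t}v_t(x^{(i)})$ from \cref{eq:Vdef}. After transporting to the exponential scale via $\Upsilon_\rho^{(i)}(q) \coloneqq V_{\sfR_\rho^{(i)}(q)}^{\rho,T_1^{(i)}}(x^{(i)})$, the left-hand side of \cref{eq:thebigthm-conclusion} becomes $\bigl(\Upsilon_\rho^{(i)}(\sfU_\rho^{(i)}(t^{(i)}))\bigr)_{i=1}^N$, so the task reduces to coupling $(\Upsilon_\rho^{(i)})_{i=1}^N$ to $(\Upsilon^{(i)})_{i=1}^N$ in $L^2$.

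First I would invoke \cref{prop:laydowntheSDE} for each $i$ with $Q_0 \in (1, \Qbar_{\SPDE}(\sigma))$, $\overline\beta = (1+\beta)/2$, $T_0 = T_0^{(i)}$, $T_1 = T_1^{(i)}$. This produces a white noise $\widetilde W^{(i)}$ and an associated Brownian motion $\hat B^{(i)}$ satisfying \cref{eq:oneguyclosetoSDE}. The quadratic-variation computation of \cref{eq:BhatQV,eq:Btildechgvar-1} yields $\hat B^{(i)}\circ\sfR_\rho^{(i)} = \widetilde B^{(i)}$ for a standard Brownian motion $\widetilde B^{(i)}$ on $[0, \hat Q^{(i)}]$, and \cref{eq:SDEclose} of \cref{prop:SDEwellposed} converts \cref{eq:oneguyclosetoSDE} into an $L^2$-closeness between $\Upsilon_\rho^{(i)}$ and the SDE solution $\Theta^{(i)}$ to $\dif\Theta^{(i)}(q) = J_\sigma(\hat Q^{(i)}-q, \Theta^{(i)}(q))\dif\widetilde B^{(i)}(q)$ started at $\Theta^{(i)}(0) = V_{T_0^{(i)}}^{\rho,T_1^{(i)}}(x^{(i)})$. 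The martingale continuity bound \cref{lem:VTcontinuity} then matches this starting value to $\clG_{T_1^{(i)}}v_0(x^{(i)})$ modulo the target error, since $T_1^{(i)} - T_0^{(i)} \sim T_1^{(i)}\sfL(1/\rho)/\omega(\rho)$ gives $\sfL_\rho\bigl(T_0^{(i)}/(T_1^{(i)} - T_0^{(i)} + \rho)\bigr) = O(\sfL(\sfL(1/\rho))/\sfL(1/\rho))$ by \cref{eq:omegaissmallenough}.

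The crux is matching the joint law of $(\widetilde B^{(i)})_{i=1}^N$ to the ultrametric tree structure of \cref{eq:Upsiloniic}. Since all $\widetilde W^{(i)}$ are constructed from the same driving white noise $\dif W$ via \cref{prop:coupling}, each $\hat B^{(i)}$ is $L^2$-close to its canonical Gaussian average $B_{\mathrm{can}}^{(i)}(t) \coloneqq \gamma_\rho\int_{T_0^{(i)}}^t\clG_{T_1^{(i)}+\rho-r}\dif W_r(x^{(i)})$. Using the semigroup identity $G_{s_1}*G_{s_2} = G_{s_1+s_2}$, a direct computation yields
\begin{equation*}
  \frac{\dif[B_{\mathrm{can}}^{(i)}, B_{\mathrm{can}}^{(j)}]_t}{\dif t} = \frac{\Id_m}{\sfL(1/\rho)(T_1^{(i)} + T_1^{(j)} + 2\rho - 2t)}\exp\!\Big(\!-\frac{|x^{(i)} - x^{(j)}|^2}{2(T_1^{(i)} + T_1^{(j)} + 2\rho - 2t)}\Big).
\end{equation*}
For $t$ well below $T_*^{(i,j)}$, the denominator exceeds $2D^{(i,j)}$, while the calibration $32\nu_\rho$ in \cref{eq:Dijdef} ensures $|x^{(i)} - x^{(j)}|^2 \le 32\nu_\rho D^{(i,j)}$; comparing against the auto-covariations $\dif[B_{\mathrm{can}}^{(i)}]_t/\dif t = \Id_m/[\sfL(1/\rho)(T_1^{(i)} + \rho - t)]$ then shows that $\widetilde B^{(i)}$ and $\widetilde B^{(j)}$ have correlation $1 - O(\sqrt{\sfL(\sfL(1/\rho))/\sfL(1/\rho)})$ on $[0, q_*^{(i)}]$. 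For $t$ above $T_*^{(i,j)}$, the denominator drops below $|x^{(i)} - x^{(j)}|^2/\nu_\rho$, making the exponential $\exp(-\Omega(\nu_\rho))$-small. Combining these estimates with the ultrametric property of \cref{rem:itsatree} permits constructing, on a suitable extended probability space, truly independent standard Brownian motions $B^{(i)}$ such that $\widetilde B^{(i)} \approx \widetilde B^{(j_*(i))}\circ\sfU_\rho^{(j_*(i))}\circ\sfR_\rho^{(i)}$ before $q_*^{(i)}$ and $\widetilde B^{(i)}$ coincides in law with an independent copy $B^{(i)}$ after $q_*^{(i)}$, all with errors of the target order.

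I would then conclude by induction on $i$. For $i = 1$ the preceding steps give $\Upsilon_\rho^{(1)} \approx \Upsilon^{(1)}$ in $L^2$ directly. For the inductive step at index $i$ with $j_*(i) < i$, on $[0, q_*^{(i)}]$ the coalescent coupling of the previous paragraph, together with \cref{prop:continuitybd} applied to the parabolic separation $|T_1^{(i)} - T_1^{(j_*(i))}| + \tfrac12|x^{(i)} - x^{(j_*(i))}|^2 \lesssim D^{(i,j_*(i))}$, controls $\Upsilon_\rho^{(i)} - \Upsilon_\rho^{(j_*(i))}\circ\sfU_\rho^{(j_*(i))}\circ\sfR_\rho^{(i)}$ in $L^2$, reducing to the inductive hypothesis at index $j_*(i)$; on $[q_*^{(i)}, \hat Q^{(i)}]$, \cref{prop:SDEwellposed} drives $\Upsilon_\rho^{(i)}$ to $\Upsilon^{(i)}$ once the initial values are matched at $q_*^{(i)}$. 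The main obstacle will be this coupling step: producing a single quantitative coupling consistent with the entire ultrametric tree $(p^{(i,j)})_{i,j}$ while preserving the exponential-scale approximation with the uniform rate $\sqrt{\sfL(\sfL(1/\rho))/\sfL(1/\rho)}$, in particular verifying that the calibration constant $32\nu_\rho$ in \cref{eq:Dijdef} is sharp enough for the cross-covariation transition to align with the coalescent tree structure.
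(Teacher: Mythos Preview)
Your overall architecture---invoke \cref{prop:laydowntheSDE} for each index, pass to the exponential time scale, match the initial value to $\clG_{T_1^{(i)}}v_0(x^{(i)})$ via \cref{lem:VTcontinuity}, and close by induction on $i$ using \cref{prop:SDEwellposed}---is exactly the paper's, and your use of \cref{prop:continuitybd} for the pre-split comparison of $\hat\Upsilon^{(i)}$ and $\hat\Upsilon^{(j_*(i))}$ is what the paper packages as \cref{lem:approxtheVijs}.

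The gap is in your post-split coupling. Your claim that ``each $\hat B^{(i)}$ is $L^2$-close to its canonical Gaussian average $B_{\mathrm{can}}^{(i)}$'' does not follow from \cref{prop:coupling}: that result bounds $\int\clG_{\eta_r}\bigl[w_r\,\dif W_r-(\clS_{\zeta_r}w_r^2)^{1/2}\,\dif\widetilde W_r\bigr]$, which concerns the random field $w_r=\sigma(v_r)$, not $\int\clG_{\eta_r}[\dif W_r-\dif\widetilde W_r]$. Inspecting the construction \cref{eq:Wttilde}, the $\ell=0$ part of $\dif\widetilde W^{(i)}$ is $\chi_{k,0,0;r}^{-1/2}\,\dif Z_{k,0;r}$ with $\dif Z_{k,0;r}=\int\varphi_{\zeta_r,k,0}(y)w_r(y)\,\dif W_r(y)$; this depends nonlinearly on $w_r$ and is \emph{not} close to $\int\varphi_{\zeta_r,k,0}(y)\,\dif W_r(y)$ in $L^2$ when $w_r$ varies. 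So your cross-covariation formula for $B_{\mathrm{can}}^{(i)}$, while correct, does not transfer to $\hat B^{(i)}$. And even granting the covariance picture, the sentence ``permits constructing, on a suitable extended probability space, truly independent $B^{(i)}$'' hides the entire difficulty: producing such a coupling with a quantitative $L^2$ error from soft covariance estimates is exactly the step that needs work.

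The paper bypasses both issues by constructing the independent increments \emph{explicitly}. It introduces parabolic cones $\ttE^{(i)}$ and pruned sets $\ttS^{(i)}\subseteq\ttE^{(i)}$, proves that the time-slices $\ttS_t^{(i)}$ and $\ttS_t^{(j)}$ are separated by at least $\sqrt2(\zeta_r^{(i)}+\zeta_r^{(j)})$ once $t\ge T_*^{(i,j)}$ (\cref{lem:Sisseparated}), and then defines $\widetilde B^{(i)}$ by restricting the $\dif\widetilde W^{(i)}$-integral to $\ttS^{(i)}$ and filling in $\ttXi^{(i)}=\RR^2\setminus\ttS^{(i)}$ with fresh independent white noises $\dif\overline W^{(i)}$. \Cref{prop:dBQVub} then gives $\dif[\widetilde B^{(i)},\widetilde B^{(j)}]_t=0$ for $t\ge T_*^{(i,j)}$ exactly, and \cref{lem:QVbd} shows $\bigl|[\widetilde B^{(i)}-\hat B^{(i)}]_{T_1^{(i)}}\bigr|_{\op}\le C/\sfL(1/\rho)$ by a direct Gaussian-tail and strip-length computation; this is where the calibration $32\nu_\rho$ in \cref{eq:Dijdef} is actually used. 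The pre-split part then needs no Brownian-motion correlation argument at all: \cref{lem:approxtheVijs} handles it directly at the level of the martingales.
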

Let us explain the parameters introduced the statement of \cref{thm:thebigtheorem}. In the final statement \cref{eq:thebigthm-conclusion}, we are concerned with $\clG_{R^{(i)}}v_{t^{(i)}}(x^{(i)}) = V^{\rho,T^{(i)}_1}_{t^{(i)}}(x^{(i)})$, so $T^{(i)}_1$ defined in \cref{eq:T1idef} is the final time of the martingale we are interested in. We will think of starting this martingale at time $T^{(i)}_0$: from the perspective of the martingale, this would be the same as starting it at time $0$, but starting it later allows us to satisfy the hypothesis \cref{eq:tconddefn-1-1} of \cref{lem:applyIH} (completely analogously to the considerations in \cref{step:twiddleic} in the proof of \cref{prop:Japprox}).

The key to the phenomenology of the multipoint statistics is that the martingales $(V^{\rho,T^{(i)}_1}_{t}(x^{(i)}))_t$ and $(V^{\rho,T^{(j)}_1}_{t}(x^{(j)}))_t$ will be approximately equal to one another until $t$ reaches a time such that $T^{(i)}\wedge T^{(j)}-t$ is comparable to the parabolic distance between $(t^{(i)},x^{(i)})$ and $(t^{(j)},x^{(j)})$. This is because they are averages of approximately the same field (see \zcref{subsec:regularitygaussianaverages}). After this time, the martingale increments will be approximately independent, since they will be subject to approximately independent pieces of the noise field. We represent the parabolic distance by $D^{(i,j)}$. (The adjustment factor $\nu_\rho$ in the definition is for essentially technical reasons.) This distance is then used in the definition of $T^{(i,j)}_*$, which is the time at which the increments of the two martingales switch from being essentially identical to essentially independent.

The functions $\sfR_{\rho}^{(i)}\coloneqq\sfR_{T_{0}^{(i)},T_{1}^{(i)},\rho}$ and $\sfU_{\rho}^{(i)}\coloneqq\sfU_{T_{0}^{(i)},T_{1}^{(i)},\rho}$ are used to perform changes of variables exactly as in \cref{step:chgvar} of the proof of \cref{prop:Japprox}. This system is set up with a triangular structure, depending on the ordering of the indices, although the order is of course arbitrary. For each $i$, the index $j_*(i)$ is the last preceding index from which the martingale $(V^{\rho,T^{(i)}_1}_{t}(x^{(i)}))_t$ ``separates,'' and $q^{(i)}_*$, $\overline{q}^{(i)}_*$ are the corresponding times after the changes of variables for $i$ and for $j_*(i)$, respectively.
The final system \zcref[range]{eq:dUpsiloni,eq:Upsiloniic} of SDE problems is then based on  this triangular structure: the problem for $i$ starts at the time $\overline{q}_*^{(i)}$, with initial condition taken from the problem for $j_*(i)$ at time $q_*^{(i)}$.

Before we prove \cref{thm:thebigtheorem}, we show how it implies \cref{thm:mainthm-singlepoint,thm:mainthm-multipoint}.
\begin{proof}[Proof of \cref{thm:mainthm-singlepoint}.]
  We apply \cref{thm:thebigtheorem}
  in the case $N=1$ and $R^{(1)}=0$. Since $N=1$, we omit all ``$^{(1)}$'' superscripts in what follows. We have
  \[
    q_{*}\overset{\cref{eq:jstaridef}}{=}\sfU_{\rho}(T_{*})\overset{\text{\zcref[range]{eq:RiUidefs,eq:jstaridef}}}{=}\sfU_{T_{0},T_{1},\rho}(T_{0})\overset{\cref{eq:sfUdef}}{=}0.
  \]
  Therefore, $\Upsilon$ solves the SDE
  \begin{align*}
    \dif\Upsilon(q) & =J_{\sigma}\bigl(\hat{Q}-q,\Upsilon(q)\bigr)\dif B(q)\qquad\text{for }q\in \bigl[0, \h Q\bigr]; \\
    \Upsilon(0)     & =\clG_{T_{1}}v_{0}(x).
  \end{align*}
  Moreover, since
  \[
    \sfU_{\rho}(t)\overset{\cref{eq:T1idef}}{=}\sfU_{\rho}(T_{1})\overset{\cref{eq:Qidef-1}}{=}\hat{Q},
  \]
  the estimate \cref{eq:thebigthm-conclusion} becomes
  \begin{equation}
    \clW_{2}\bigl(v_{t}(x),\Upsilon(\hat{Q})\bigr)\le C\langle\vvvert v_{0}\vvvert_{\ell}\rangle\sqrt{\frac{\sfL(\sfL(1/\rho))}{\sfL(1/\rho)}}.\label{eq:usethemainthm}
  \end{equation}
  To conclude, we need to use the fact that $\hat{Q}$ is very
  close to $1$. Indeed, we have
  \begin{align*}
    |\hat{Q}-1|= |\sfS_{\rho}(T_{1}-T_{0})-1|&=\bigl\lvert \sfS_{\rho}\bigl(\sfL(1/\rho)\omega(\rho)^{-1}T_{0}\bigr)-1\bigr\rvert \\
                                                            &= \sfL(1/\rho)^{-1}\left\lvert  \log\frac{\sfL(1/\rho)\omega(\rho)^{-1}T_{0}/\rho+1}{1/\rho+1}\right\rvert   \\
                                                            &= \sfL(1/\rho)^{-1}\left\lvert  \log\frac{\sfL(1/\rho)\omega(\rho)^{-1}T_{0}+\rho}{1+\rho}\right\rvert  .
  \end{align*}
  Now $T_0 \geq t/2$ for $\rho$ sufficiently small, so $(2\overline{T})^{-1}\le T_{0}\le\overline{T}$ then.
  We can thus use \cref{eq:omegaissmallenough} to write
  \[
    \left\lvert  \log\frac{\sfL(1/\rho)\omega(\rho)^{-1}T_{0}+\rho}{1+\rho}\right\rvert  \le C\sfL(\sfL(1/\rho))
  \]
  for small $\rho$ and a constant $C$ depending on $\omega$ and $\overline{T}$.
  Using \cref{eq:changetheendtime} of \cref{prop:Jsigmatimereg-1}, we
  can thus conclude that
  \begin{equation}
    \clW_{2}\bigl(\Upsilon\bigl(\hat{Q}),\Gamma_{\clG_{T_{1}}v_{0}(x),1}^{\sigma}(1)\bigr)\le C\langle\vvvert v_{0}\vvvert_{\ell}\rangle\sqrt{\frac{\sfL(\sfL(1/\rho))}{\sfL(1/\rho)}}.\label{eq:fixtheendtime}
  \end{equation}
  Combining \cref{eq:usethemainthm} and \cref{eq:fixtheendtime} (using
  the triangle inequality for $\clW_{2}$) completes the proof.
\end{proof}
\begin{proof}[Proof of \cref{thm:mainthm-multipoint}.]
  \pagetarget{proofofmainthmmultipoint}
  We apply \cref{thm:thebigtheorem}
  with $t^{(i)}\setto t_{\rho}^{(i)}$, $R^{(i)}\setto R_{\rho}^{(i)}$,
  and $x^{(i)}\setto x_{\rho}^{(i)}$.
  We similarly add a subscript $\rho$ to the objects appearing in the statement of \cref{thm:thebigtheorem}, namely
  $\hat{Q}_\rho^{(i)}$, $q_{*,\rho}^{(i)}$, and $\Upsilon^{(i)}_\rho$.
  From the assumption that $\overline{T}^{-1}\le t_{\rho}^{(i)}\le\overline{T}$,
  we see that
  \begin{equation}
    \lim_{\rho\searrow0}\hat{Q}_{\rho}^{(i)}=1.\label{eq:Qhatito1}
  \end{equation}
  We can also compute
  \begin{equation*}
	  \lim_{\rho\to 0} \sup_{q\in [0,q_{*,\rho}^{(i)}]}\bigl\lvert \sfU_\rho^{(j_*(i))}\bigl(\sfR_\rho^{(i)}(q)\bigr)-q\bigr\rvert  =0,
  \end{equation*}
  and if we define (with $p^{(i,j)}$ as in \cref{eq:dijqilimits}) 
  \begin{equation*}
    p_*^{(i)} \coloneqq 0\vee \max_{j<i}p^{(i,j)},
  \end{equation*}
  then we have
  \begin{equation*}
	  \lim_{\rho\to 0} q^{(i)}_{*,\rho} = p_*^{(i)}.
  \end{equation*}
  Furthermore, if $p^{(i,j)}>0$, then
  \[
    \lim_{\rho\searrow0}|t_{\rho}^{(i)}+R_{\rho}^{(i)}-(t_{\rho}^{(j)}+R_{\rho}^{(j)})|=0\qquad\text{and}\qquad\lim_{\rho\searrow0}|x_{\rho}^{(i)}-x_{\rho}^{(j)}|=0,
  \]
  which means that
  \begin{equation}
	  \lim_{\rho\searrow0}|\clG_{t_{\rho}^{(i)}+R_{\rho}^{(i)}}v_{0}(x_{\rho}^{(i)})-\clG_{t_{\rho}^{(j)}+R_{\rho}^{(j)}}v_{0}(x_{\rho}^{(j)})|=0.\label{eq:icsconverging}
\end{equation}

  We wish to compare solutions $\Psi^{(i)}$ of the problem \zcref[range]{eq:upsiloniintroeqn,eq:upsiloniintroic} to solutions $\Upsilon^{(i)}$ of the problem \zcref[range]{eq:dUpsiloni,eq:Upsiloniic}. We define \begin{equation}j_*(i) \coloneqq\begin{cases}i&\text{if }p^{(i)}_* = 0;\\ \argmax\limits_{j< i} p^{(i,j)}&\text{otherwise.}\end{cases} \label{eq:jstardef}\end{equation}%
  Then we claim that \zcref[range]{eq:upsiloniintroeqn,eq:upsiloniintroic} is equivalent to the system of SDEs
  \begin{align}
	  \dif \Psi^{(i)}(q) &= J_\sigma\bigl(1-q,\Psi^{(i)}(q)\bigr)\dif B^{(i)}(q),&&\text{for }q\in (p_*^{(i)},1);\label{eq:newupsiloneqn}\\
	  \Psi^{(i)}(q) &= \begin{cases}
	  		\Psi^{(j_*(i))}(q)&\text{if }j_*^{(i)}<i;\\
	  		\mathcal{G}_{T^{(i)}}v_0(x^{(i)})&\text{otherwise.}
		\end{cases}
			&&\text{for }q\in [0,p^{(i)}_*].\label{eq:newupsilonic}
  \end{align}
  This is simply another way of representing the tree structure discussed after \cref{thm:mainthm-multipoint } and depicted in \cref{fig:treestructure}.
  Rather than representing the tree structure through the correlations of the driving Brownian motions, we represent coincident paths by the path with the least index, starting the new path $\Psi^{(i)}$ at the time $p^{(i)}_*$ that it diverges from the last path with a lower index.

  To see the equivalence formally, note that if $j_*(i)<i$, then $x^{(i)} = x^{(j_*(i))}$ and $T^{(i)} = T^{(j_*(i))}$ by \zcref[range]{eq:dparabolicdistancedef,eq:xiandtilimits}, so $\clG_{T^{(i)}}v_0(x^{(i)}) = \clG_{T^{(j_*(i))}}v_0(x^{(j_*(i))})$. Therefore, by the correlation structure \cref{eq:Bijcorrelations}, in a solution to \zcref[range]{eq:upsiloniintroeqn,eq:upsiloniintroic}, we have $\Psi^{(i)}(q) = \Psi^{(j_*(i))}(q)$ whenever $q\le p^{(i,j)} = p_*^{(i)}$, and so \cref{eq:newupsilonic} is satisfied. On the other hand, if $j_*(i)=i$, then $p_*^{(i)} = 0$ by \cref{eq:jstardef}, and so \cref{eq:newupsilonic} matches the initial condition \cref{eq:upsiloniintroic}.

The limits %
\zcref[range]{eq:Qhatito1,eq:icsconverging} show that the SDE problem \zcref[range]{eq:dUpsiloni,eq:Upsiloniic} converges to the problem \zcref[range]{eq:newupsiloneqn,eq:newupsilonic}.
Thus, by standard well-posedness/continuity
  results for SDEs, we see that the law of $(\Upsilon_{\rho}^{(1)},\ldots,\Upsilon_{\rho}^{(N)})$
  converges to that of $(\Psi^{(1)},\ldots,\Psi^{(N)})$
  (defined in \zcref[range]{eq:upsiloniintroeqn,eq:upsiloniintroic})
  as $\rho\searrow0$.

  Finally, we note that
  \begin{align*}
    \lim_{\rho\searrow0}\sfU_{\rho}^{(i)}(t_{\rho}^{(i)}) & \overset{\substack{\cref{eq:sfUdef},                                                     \\
        \cref{eq:Qidef-1}
      }
    }{=}\lim_{\rho\searrow0}\bigl[\hat{Q}_{\rho}^{(i)}-\sfS_{\rho}(R_{\rho}^{(i)})\bigr]\overset{\substack{\cref{eq:dijqilimits},\cref{eq:Srhodef}, \\
        \cref{eq:Qhatito1}
      }
    }{=}q^{(i)}.
  \end{align*}
  Given this, and again using \cref{prop:Jsigmatimereg-1}, we see that
  the estimate \cref{eq:thebigthm-conclusion} yields the asymptotic statement
  \cref{eq:mainthm-multipoint-conclusion}.
\end{proof}
The remainder of this section will be devoted to the proof of \cref{thm:thebigtheorem}.
We let $\overline{T},\beta,M,N,\sigma,\omega$ be as in the statement.
By \cref{thm:QSPDEgreater}, $\Qbar_{\SPDE}(\sigma) > 1$.
We can therefore take %
$\overline{q}\in\bigl(1,\Qbar_{\SPDE}(\sigma)\bigr)$.
Throughout, $C$ will denote a finite positive
constant depending on $\overline{T},\beta,M,N,$ and $\sigma$ that may change from line to line. %
The next three sections, \zcref[range]{sec:parameters,sec:agreement}, are concerned with the interactions of multiple points. The reader who is only interested in the case $N=1$ may proceed directly to the proof of \cref{thm:thebigtheorem} in \cref{sec:bigtheoremproof}.

\subsection{Multipoint parameters\label{sec:parameters}}
To begin, we introduce additional notation and study the relationships between the various space-time points. %
Recalling the definition \cref{eq:nurhodef} of $\nu_{\rho}$, define
the parabolic cone
\begin{equation}
  \ttE^{(i)}= \big\{(r,y)\in(-\infty,T_{1}^{(i)})\times\RR^{2}\suchthat|x^{(i)}-y|^{2}\le\nu_{\rho}(T_1^{(i)}-r)\big\}.\label{eq:Eidef}
\end{equation}
We will treat pairs of indices $i,j$ differently depending on whether the space-time points
$(T_1^{(i)},x^{(i)})$ and $(T_1^{(j)},x^{(j)})$
are ``time-separated'' or ``space-separated.'' For each $i$,
define
\begin{equation}
  P^{\mathrm{time}}(i)=\left\{ j\in\{1,\ldots,n\}\suchthat 32\nu_{\rho}|T_1^{(i)}-T_1^{(j)}|>|x^{(i)}-x^{(j)}|^{2}\right\} \label{eq:Ptimdef}
\end{equation}
and
\begin{equation}
  P^{\mathrm{space}}(i)=\{1,\ldots,n\}\setminus P^{\mathrm{time}}(i)=\left\{ j\in\{1,\ldots,n\}\suchthat 32\nu_{\rho}|T_1^{(i)}-T_1^{(j)}|\le|x^{(i)}-x^{(j)}|^{2}\right\} .\label{eq:Pspacei}
\end{equation}
This means that \begin{equation}D^{(i,j)} = \begin{cases}|T_1^{(i)}-T_1^{(j)}|&\text{if }j\in P^{\mathrm{time}}(i);\\ %
\frac1{32\nu_\rho}|x^{(i)}-x^{(j)}|^2&\text{if }j\in P^{\mathrm{space}}(i).\label{eq:PcasesDij}\end{cases}\end{equation}
We also define
\begin{equation}
  P_{-}^{\mathrm{time}}(i)=\{j\in P^{\mathrm{time}}(i)\suchthat T_1^{(j)}\le T_1^{(i)}\}.\label{eq:Ptimeminus}
\end{equation}
and
\begin{equation}
  \ttS^{(i)}=\ttE^{(i)}\setminus\bigcup_{j\in P^{\mathrm{time}}_-(i)}\bigl([T_{*}^{(i,j)},T_{1}^{(j)}]\times\RR^{2}\bigr).\label{eq:Sidef}
\end{equation}
(See \cref{fig:Sis} for an illustration.)
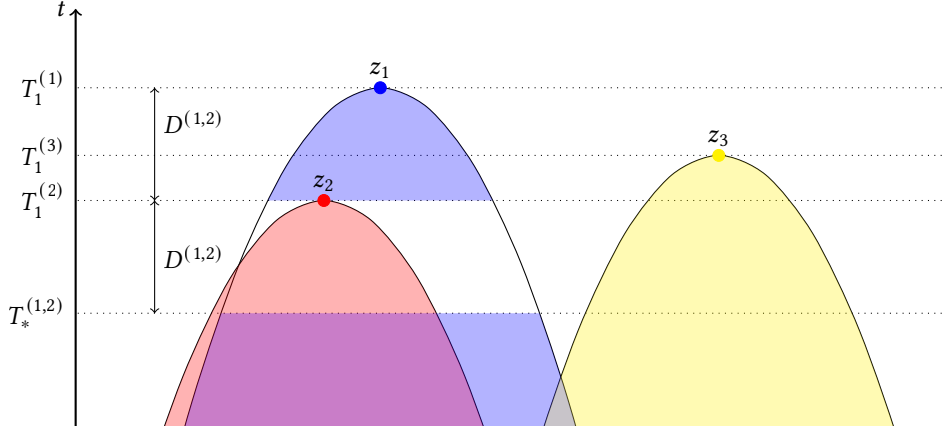
\begin{figure}
	\centering
	\tikzset{ %
    clip even odd rule/.code={\pgfseteorule}, %
    invclip/.style={
        clip,insert path=
            [clip even odd rule]{
                [reset cm](-\maxdimen,-\maxdimen)rectangle(\maxdimen,\maxdimen)
            }
    }
}
	\begin{tikzpicture}[scale=1.5]
		\begin{scope}[blend group=lighten]
		\clip (-3.5,1)--(5,1)--(5,5)--(-3.5,5)--cycle;
		\draw[->,thick] (-2.7,-1)--(-2.7,4.7) node[left] {$t$};
	\draw[black] plot[smooth,domain=-5:5] (\x-0.5,{3-(\x)^2});
	\fill[red,opacity=0.3] plot[smooth,domain=-5:5] (\x-0.5,{3-(\x)^2})--cycle;
	\draw[black] plot[smooth,domain=-5:5] (\x,{4-(\x)^2});
	\begin{scope}
		\path[invclip] (-3.5,2) -- (7,2) -- (7,3) -- (-3.5,3)--cycle;
	\fill[blue,opacity=0.3] plot[smooth,domain=-5:5] (\x,{4-(\x)^2})--cycle;
\end{scope}
	\draw[black] plot[smooth,domain=-5:5] (\x+3,{3.4-(\x)^2});
	\fill[yellow,opacity=0.3] plot[smooth,domain=-5:5] (\x+3,{3.4-(\x)^2})--cycle;
	\draw[dotted,thin] (7,3)--(-2.7,3) node[left] {$T^{(2)}_1$};
	\draw[dotted,thin] (7,4)--(-2.7,4) node[left] {$T^{(1)}_1$};
	\draw[dotted,thin] (7,3.4)--(-2.7,3.4) node[left] {$T^{(3)}_1$};
	\draw[dotted,thin] (7,2)--(-2.7,2) node[left] {$T_*^{(1,2)}$};
	\draw[<->,thin] (-2,3)--(-2,3.7) node[right] {$D^{(1,2)}$} -- (-2,4);
	\draw[<->,thin] (-2,2)--(-2,2.5) node[right] {$D^{(1,2)}$} -- (-2,3);
\end{scope}
		\filldraw[color=red] (-0.5,3) circle (1.5pt) node[anchor=south,color=black] {$z_2$};
		\filldraw[color=blue] (0,4) circle (1.5pt) node[anchor=south,color=black] {$z_1$};
		\filldraw[color=yellow] (3,3.4) circle (1.5pt) node[anchor=south,color=black] {$z_3$};

	\end{tikzpicture}
	\caption{\label{fig:Sis}Schematic illustration of the sets $\ttS^{(i)}$. The points $z^{(i)} = (T^{(i)}_1,x^{(i)})$ are drawn with colored dots, and the corresponding parabolic cones $\ttE^{(i)}$ are the regions under the parabolas. In this example, we have $P^{\mathrm{time}}(1) = \{2\}$, $P^{\mathrm{time}}(2) = \{1\}$, and $P^{\mathrm{time}}(3)=\emptyset$, so $P^{\mathrm{time}}_-(1) =\{2\}$ and $P^{\mathrm{time}}_-(2) = P^{\mathrm{time}}_-(3)=\emptyset$. Therefore, to form the set $\ttS^{(1)}$ (shaded in blue), we remove the time interval $[T_*^{(1,2)},T_1^{2}]$ from $\ttE^{(1)}$. The sets $\ttS^{(2)}$ and $\ttS^{(3)}$ (shaded in red and yellow, respectively) are equal to all of $\ttE^{(2)}$ and $\ttE^{(3)}$, respectively.} 
\end{figure}
Finally, let
\begin{equation*}
  \ttE_{t}^{(i)}=\{y\in\RR^{2}\suchthat (t,y)\in \ttE^{(i)}\}\qquad\text{and}\qquad \ttS_{t}^{(i)}=\{y\in\RR^{2}\suchthat (t,y)\in \ttS^{(i)}\}
\end{equation*}
be the projections onto the time slice $t$, and let \begin{equation}\ttXi_{t}^{(i)}=\RR^{2}\setminus \ttS_{t}^{(i)}.\label{eq:ttXidef}\end{equation}
\begin{lem}
  \label{lem:Sisseparated}Let $i\ne j\in\{1,\ldots,n\}$. Whenever
  $t\in[T_{*}^{(i,j)},T_{1}^{(i)}]$, we have
  \begin{equation}
    \dist(\ttS_{t}^{(i)},\ttS_{t}^{(j)})\ge\nu_{\rho}^{1/2}\left((T_{1}^{(i)}-t)^{1/2}+(T_{1}^{(j)}-t)^{1/2}\right).\label{eq:slicesseparated}
  \end{equation}
\end{lem}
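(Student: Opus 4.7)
The plan is to do a case analysis based on whether $j \in P^{\mathrm{space}}(i)$ or $j \in P^{\mathrm{time}}(i)$, corresponding to the two branches in \cref{eq:PcasesDij}. In both cases, we also adopt the convention $\dist(\emptyset,\anon)=+\infty$, so that whenever one of the slices $\ttS_t^{(i)}, \ttS_t^{(j)}$ is empty, the inequality \cref{eq:slicesseparated} holds trivially.

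\textbf{Case 1: $j \in P^{\mathrm{space}}(i)$.} Here neither time slice need be empty, and we obtain the separation purely from spatial considerations. Since $\ttS_t^{(i)}\subseteq \ttE_t^{(i)}$ and similarly for $j$, the definition \cref{eq:Eidef} yields that any $y \in \ttS_t^{(i)}$ satisfies $|y - x^{(i)}| \le \nu_\rho^{1/2}(T_1^{(i)}-t)^{1/2}$, and symmetrically for $\ttS_t^{(j)}$. So by the triangle inequality,
\[
\dist(\ttS_t^{(i)},\ttS_t^{(j)})\ge |x^{(i)}-x^{(j)}|-\nu_\rho^{1/2}(T_1^{(i)}-t)^{1/2}-\nu_\rho^{1/2}(T_1^{(j)}-t)^{1/2}.
\]
The hypothesis $t\ge T_*^{(i,j)}$ combined with \cref{eq:T0ij-1} gives $T_1^{(i)}\wedge T_1^{(j)}-t\le D^{(i,j)}$, and combining with the space-separated definition of $D^{(i,j)}$ in \cref{eq:PcasesDij} along with $|T_1^{(i)}-T_1^{(j)}|\le D^{(i,j)}$ (valid in Case 1 by \cref{eq:Pspacei}) yields $T_1^{(i)}-t,\,T_1^{(j)}-t\le 2D^{(i,j)} = |x^{(i)}-x^{(j)}|^2/(16\nu_\rho)$. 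This gives $\nu_\rho^{1/2}(T_1^{(i)}-t)^{1/2}\le |x^{(i)}-x^{(j)}|/4$ and likewise for $j$, so the right side above is at least $|x^{(i)}-x^{(j)}|/2\ge \nu_\rho^{1/2}((T_1^{(i)}-t)^{1/2}+(T_1^{(j)}-t)^{1/2})$.

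\textbf{Case 2: $j \in P^{\mathrm{time}}(i)$.} Without loss of generality assume $T_1^{(j)}\le T_1^{(i)}$, so $j \in P^{\mathrm{time}}_-(i)$ by \cref{eq:Ptimeminus}. I claim one of the slices is always empty for $t \in [T_*^{(i,j)}, T_1^{(i)}]$. If $t \ge T_1^{(j)}$ then $\ttE_t^{(j)}=\emptyset$ directly from \cref{eq:Eidef}, so $\ttS_t^{(j)}=\emptyset$. If instead $t<T_1^{(j)}$, then (since $T_*^{(i,j)}\le t < T_1^{(j)}$) the interval $[T_*^{(i,j)},T_1^{(j)}]$ is non-degenerate, and this interval is removed from $\ttE^{(i)}$ in the definition \cref{eq:Sidef} of $\ttS^{(i)}$, so $\ttS_t^{(i)}=\emptyset$. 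Either way \cref{eq:slicesseparated} holds trivially.

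The bookkeeping in Case 1 is mildly intricate — one must carefully track the two possible values of $T_*^{(i,j)}$ in \cref{eq:T0ij-1} — but no step should present any real difficulty. The main conceptual point is simply that the definitions of $D^{(i,j)}$, $T_*^{(i,j)}$ and $\ttS^{(i)}$ have been arranged precisely so that this separation estimate holds automatically, with the time-separated points handled by excision and the space-separated points by the triangle inequality.
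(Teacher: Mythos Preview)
Your approach—splitting into space-separated and time-separated cases, using the triangle inequality for the former and emptiness of a slice for the latter—is exactly the paper's strategy, and your Case~1 is correct (with slightly tidier bookkeeping than the paper's version).

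However, the ``without loss of generality'' in Case~2 is not justified. The hypothesis $t\in[T_*^{(i,j)},T_1^{(i)}]$ is \emph{not} symmetric in $i,j$: by \cref{eq:T0ij-1}, $T_*^{(i,j)}=\max\{T_1^{(i)}\wedge T_1^{(j)}-D^{(i,j)},\,T_0^{(i)}\}$ involves $T_0^{(i)}$ and not $T_0^{(j)}$, so swapping $i\leftrightarrow j$ changes the interval. Concretely, if $T_1^{(j)}>T_1^{(i)}$ then $j\notin P^{\mathrm{time}}_-(i)$, so nothing is excised from $\ttS^{(i)}$ on account of $j$; instead $i\in P^{\mathrm{time}}_-(j)$, and it is $[T_*^{(j,i)},T_1^{(i)}]$ that is removed from $\ttS^{(j)}$. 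But $T_*^{(j,i)}\ge T_*^{(i,j)}$ (since $T_0^{(j)}>T_0^{(i)}$), with possible strict inequality, and for $t\in[T_*^{(i,j)},T_*^{(j,i)})$ neither slice need be empty. One can even construct configurations (e.g.\ $n=2$, $x^{(1)}=x^{(2)}$, $T_1^{(1)}<T_1^{(2)}$, $t$ between $T_0^{(1)}$ and $T_0^{(2)}$) where both slices contain the common center, so the inequality \cref{eq:slicesseparated} fails outright.

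It is worth noting that the paper's own proof glosses over exactly this point: it asserts $\ttS_t^{(i)}=\emptyset$ via \cref{eq:Sidef}, which requires $j\in P^{\mathrm{time}}_-(i)$, i.e.\ $T_1^{(j)}\le T_1^{(i)}$. For the downstream application to \cref{eq:independent-1}, only the lemma together with its $(i\leftrightarrow j)$ counterpart is used on the relevant overlap, so the issue is harmless there; but as a proof of \cref{lem:Sisseparated} as stated, both your argument and the paper's share the same gap.
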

In interpreting \cref{eq:slicesseparated}, we use the usual convention
that
\begin{equation}
  \dist(\emptyset,\ttA)=\dist(\ttA,\emptyset)=\infty\qquad\text{for any }\ttA\subseteq\RR^{2}.\label{eq:emptysetdist}
\end{equation}

\begin{proof}
  Since $i\ne j$, we have by the definition \cref{eq:T0ij-1} that
  \begin{equation}\label{eq:Tstarijdefifinej}
	  T_{*}^{(i,j)}=\max\{ T_{1}^{(i)}\wedge T_{1}^{(j)}-D^{(i,j)},T_0^{(i)}\}.%
  \end{equation}
  We consider several cases.
 If $t\ge T_1^{(j)}$, then $\ttS_t^{(j)}\subseteq \ttE_t^{(j)}=\emptyset$ by the definitions \cref{eq:Sidef,eq:Eidef}, %
	  so $\dist(\ttS_{t}^{(i)},\ttS_{t}^{(j)})=\infty$
    by the convention \cref{eq:emptysetdist}, and thus \cref{eq:slicesseparated}
    holds vacuously. So we can assume that 
$t\le T_1^{(j)}$. %
Under this assumption, if  $j\in P^{\mathrm{time}}(i)$, %
	then $\ttS_{t}^{(i)}=\emptyset$ by \cref{eq:Sidef} since $t\in [T_*^{(i,j)},T_1^{(j)}]$, so again we have $\dist(\ttS_{t}^{(i)},\ttS_{t}^{(j)})=\infty$
    and \cref{eq:slicesseparated} holds.
So for the rest of the proof, we assume that $t\le T_1^{(j)}$ and $j\in P^{\mathrm{space}}(i)$.

Under these assumptions, we see that %
$D^{(i,j)} = \frac1{32\nu_\rho}|x^{(i)}-x^{(j)}|^2$ by \cref{eq:PcasesDij},  so
\[T_{1}^{(j)}-\frac1{32\nu_\rho}|x^{(i)}-x^{(j)}|^{2}\overset{\cref{eq:Tstarijdefifinej}}\le T_{*}^{(i,j)}\le t\le T_{1}^{(i)}\wedge T_1^{(j)}.\] Rearranging, we see that
    \begin{align*}
	    |x^{(i)}-x^{(j)}|^{2}        \ge32\nu_{\rho}[T_{1}^{(i)}\wedge T_1^{(j)}-t]                                                                      
					 &=16\nu_{\rho}\sum_{n\in\{i,j\}}[T_{1}^{(n)}-t]-16\nu_{\rho}|T_{1}^{(i)}-T_{1}^{(j)}|            \\
	    \overset{\cref{eq:Pspacei}} & {\ge}16\nu_{\rho}\sum_{n\in\{i,j\}}[T_{1}^{(n)}-t]-\frac{1}{2}|x^{(i)}-x^{(j)}|^{2}.
    \end{align*}
    This means that
    \begin{equation}
    |x^{(i)}-x^{(j)}|      >2\nu_\rho^{1/2}\sqrt{\sum_{n\in{i,j}}[T^{(n)}-t]}
			  \ge2\nu_{\rho}^{1/2}\sum_{n\in\{i,j\}}[T^{(i)}-t]^{1/2},\label{eq:ourbound}
    \end{equation}
    so
    \begin{align*}
      \dist(\ttS_{t}^{(i)},\ttS_{t}^{(j)})  \overset{\cref{eq:Sidef}}  {\ge}\dist(\ttE_{t}^{(i)},\ttE_{t}^{(j)})                                                      %
      \overset{\cref{eq:Eidef}}                                 & {\ge}|x^{(i)}-x^{(j)}|-[\nu_{\rho}(T^{(i)}-t)]^{1/2}-[\nu_{\rho}(T^{(j)}-t)]^{1/2} \\
      \overset{\cref{eq:ourbound}}                              & {\ge}\nu_{\rho}^{1/2}\left([T^{(i)}-t]^{1/2}+[T^{(j)}-t]^{1/2}\right),
    \end{align*}
    which is \cref{eq:slicesseparated}.
\end{proof}

\begin{lem}
  \label{lem:ifthereisspacethenthereisenoughspace}There is a constant
  $C<\infty$ such that for all $i,j\in\{1,\ldots,n\}$, if $T_{0}^{(i)}\vee T_{0}^{(j)}<T_{*}^{(i,j)}$,
  then
  \begin{equation}
    |T_{1}^{(i)}-T_{1}^{(j)}|\vee|x^{(i)}-x^{(j)}|^{2}/2\le \frac{C\sfL(\sfL(1/\rho))(T_{0}^{(i)}\vee T_{0}^{(j)})}{\sfL(1/\rho)}.\label{eq:ifthereisspacethenthereisenoughspace}
  \end{equation}
\end{lem}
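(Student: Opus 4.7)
The plan is to unpack the hypothesis $T_0^{(i)}\vee T_0^{(j)}<T_{*}^{(i,j)}$ in the definition \cref{eq:T0ij-1} of $T_*^{(i,j)}$, and then use the quantitative relation \cref{eq:T1idef} between $T_0^{(i)}$ and $T_1^{(i)}$, together with the lower bound \cref{eq:omegaisbigenough} on $\omega(\rho)$ and the definition \cref{eq:nurhodef} of $\nu_\rho$.

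First I dispense with the diagonal. If $i=j$ then $T_*^{(i,i)}=T_0^{(i)}$ by \cref{eq:T0ij-1}, so the hypothesis fails and there is nothing to prove. Hence we may assume $i\neq j$, and, by symmetry in $(i,j)$, that $T_1^{(i)}\ge T_1^{(j)}$. Since the proportionality constant in \cref{eq:T1idef} is the same for every index, this is equivalent to $T_0^{(i)}\ge T_0^{(j)}$, so $T_0^{(i)}\vee T_0^{(j)}=T_0^{(i)}$ and $T_1^{(i)}\wedge T_1^{(j)}=T_1^{(j)}$. The hypothesis together with \cref{eq:T0ij-1} then forces
\[
    T_*^{(i,j)} = T_1^{(j)} - D^{(i,j)} > T_0^{(i)},\qquad\text{i.e.}\qquad D^{(i,j)} < T_1^{(j)}-T_0^{(i)}\le T_1^{(i)}-T_0^{(i)}.
\]

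Next I estimate $T_1^{(i)}-T_0^{(i)}$ explicitly. By \cref{eq:T1idef} we have $T_0^{(i)}=[\sfL(1/\rho)\omega(\rho)^{-1}+1]^{-1}T_1^{(i)}$, so
\[
    T_1^{(i)}-T_0^{(i)} = T_0^{(i)}\,\sfL(1/\rho)\,\omega(\rho)^{-1}.
\]
Invoking the lower bound \cref{eq:omegaisbigenough}, namely $\omega(\rho)\ge C^{-1}\sfL(1/\rho)^2$, this yields $T_1^{(i)}-T_0^{(i)}\le C\,T_0^{(i)}/\sfL(1/\rho)$, and hence
\begin{equation*}
    D^{(i,j)}\le \frac{C\,T_0^{(i)}}{\sfL(1/\rho)}.
\end{equation*}

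Finally I unpack $D^{(i,j)}$ using its definition \cref{eq:Dijdef}. Since $D^{(i,j)}=|T_1^{(i)}-T_1^{(j)}|\vee\frac{|x^{(i)}-x^{(j)}|^2}{32\nu_\rho}$, the previous display gives both $|T_1^{(i)}-T_1^{(j)}|\le C\,T_0^{(i)}/\sfL(1/\rho)$ and $|x^{(i)}-x^{(j)}|^2\le 32\,\nu_\rho\cdot C\,T_0^{(i)}/\sfL(1/\rho)$. Recalling $\nu_\rho=2\sfL(\sfL(1/\rho))$ from \cref{eq:nurhodef}, dividing the second estimate by $2$, and taking the maximum, we arrive at \cref{eq:ifthereisspacethenthereisenoughspace} (with a larger constant $C$). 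There is no real obstacle here; the proof is essentially bookkeeping, and the only substantive input from earlier in the paper is the lower bound $\omega(\rho)\ge C^{-1}\sfL(1/\rho)^2$ built into \cref{def:QSPDE}, which is precisely what converts the ``margin'' between $T_0^{(i)}$ and $T_1^{(i)}$ into the factor $1/\sfL(1/\rho)$.
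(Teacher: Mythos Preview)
Your proof is correct and follows essentially the same route as the paper's: unpack the max in the definition of $T_*^{(i,j)}$ to obtain $D^{(i,j)} < T_1^{(\cdot)} - T_0^{(\cdot)}$ for the index with the larger $T_0$, then use \cref{eq:T1idef} and \cref{eq:omegaisbigenough} to convert this into a factor of $\sfL(1/\rho)^{-1}$, and finally absorb the $\nu_\rho$ from \cref{eq:Dijdef}. The only cosmetic difference is your choice of the convention $T_1^{(i)}\ge T_1^{(j)}$ rather than the paper's $T_1^{(i)}\le T_1^{(j)}$.

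One small point of presentation: your appeal to ``symmetry in $(i,j)$'' is invoked before you have extracted a symmetric condition, and the definition \cref{eq:T0ij-1} of $T_*^{(i,j)}$ is \emph{not} symmetric in $i,j$ (it involves $T_0^{(i)}$ specifically). The paper is more careful here: it first observes that the hypothesis forces $T_*^{(i,j)}=T_1^{(i)}\wedge T_1^{(j)}-D^{(i,j)}$, so that the working assumption becomes the manifestly symmetric inequality $T_0^{(i)}\vee T_0^{(j)}<T_1^{(i)}\wedge T_1^{(j)}-D^{(i,j)}$, and only then makes the WLOG reduction. Your argument is not actually wrong---under your convention $T_0^{(i)}\vee T_0^{(j)}=T_0^{(i)}$, so the hypothesis immediately gives $T_0^{(i)}<T_*^{(i,j)}$ and hence the max in \cref{eq:T0ij-1} is achieved at the symmetric branch---but it would be cleaner to reorder those two sentences.
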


\begin{proof}
  The assumption that $T_{0}^{(i)}\vee T_{0}^{(j)}<T_{*}^{(i,j)}$ implies
  that in particular $T_{0}^{(i)}\ne T_{*}^{(i,j)}$, so by \cref{eq:T0ij-1}
  we must have
  \[
    T_{*}^{(i,j)}=T_{1}^{(i)}\wedge T_{1}^{(j)}-D^{(i,j)}.
  \]
  The assumption then becomes
  \begin{equation}
    T_{0}^{(i)}\vee T_{0}^{(j)}\le T_{1}^{(i)}\wedge T_{1}^{(j)}-D^{(i,j)}.\label{eq:newassumption}
  \end{equation}

  For the remainder of the proof, we will only rely on the assumption
  \cref{eq:newassumption}, which is symmetric in $i$ and $j$. So we
  can assume without loss of generality that $T_{1}^{(i)}\le T_{1}^{(j)}$;
  this implies in particular that $T_{0}^{(i)}\le T_{0}^{(j)}$ as well
  (recalling the definition \cref{eq:T1idef}). The inequality \cref{eq:newassumption}
  then becomes
  \[
    T_{0}^{(j)}\le T_{*}^{(i,j)}\overset{\cref{eq:T0ij-1}}{=}T_{1}^{(i)}-D^{(i,j)}.
  \]
  Recalling the definition \cref{eq:Dijdef} of $D^{(i,j)}$ and rearranging, we obtain
  \begin{align*}
    |T_{1}^{(i)}-T_{1}^{(j)}|\vee\frac{|x^{(i)}-x^{(j)}|^{2}}{32\nu_{\rho}}\le T_{1}^{(i)}-T_{0}^{(j)}\le T_{1}^{(j)}-T_{0}^{(j)} & \overset{\cref{eq:T1idef}}{=}\frac{T_{0}^{(j)}\sfL(1/\rho)}{\omega(\rho)}\overset{\cref{eq:omegaisbigenough}}{\le}\frac{CT_{0}^{(j)}}{\sfL(1/\rho)}.
  \end{align*}
  and then \cref{eq:ifthereisspacethenthereisenoughspace} follows when we recall the definition \cref{eq:nurhodef} of $\nu_\rho$.
\end{proof}
\subsection{Independence after the split time\label{sec:independence}}
For each $i\in\{1,\ldots,n\}$, let $\bigl(\hat{B}^{(i)}(t)\bigr)_{t\in[T_{0}^{(i)},T_{1}^{(i)}]}$
be the process $\bigl(\hat{B}(t)\bigr)_{t\in[T_{0}^{(i)},T_{1}^{(i)}]}$ defined
in \cref{eq:Bhatdef} (in the statement of \cref{prop:laydowntheSDE}),
with the choices $x\setto x^{(i)}$, $T_{1}\setto T_{1}^{(i)}$,
and $T_{0}\setto T_{0}^{(i)}$. We extend this definition
to all $t$ by constants outside of $[T_{0}^{(i)},T_{1}^{(i)}]$:
let $\hat{B}^{(i)}(t)=0$ for $t\le T_{0}^{(i)}$ and let $\hat{B}^{(i)}(t)=\hat{B}^{(i)}(T_{1}^{(i)})$
for all $t\ge T_{1}^{(i)}$. Let $\dif \widetilde W^{(i)}$ and $\zeta_r^{(i)}=[\sfL(1/\rho)^{-1}(T_1^{(i)}-r)]^{1/2}$ also be as in the statement of \cref{prop:laydowntheSDE}, with the $\dif \widetilde W^{(i)}$s coupled as in the hypothesis of \cref{prop:dBQVub} below.

Our goal in this section is to find another family of Brownian motions $\widetilde{B}^{(i)}$ such that each $\widetilde{B}^{(i)}$ individually has the same quadratic variation process as $\hat{B}^{(i)}$ (\cref{eq:QVsagree} below), the increments of $\widetilde{B}^{(i)}$ and $\widetilde{B}^{(i)}$ are independent after time $T^{(i,j)}_*$ (\cref{eq:noQCV} below), and $\widetilde{B}^{(i)}$ and $\hat{B}^{(i)}$ are close to each other (\cref{lem:QVbd} below). In other words, we seek to replace the ``approximate independence'' of increments by true independence after time $T_*^{(i,j)}$.

We define
\[
  g_{r}^{(i)}(y)=G_{T_{1}^{(i)}+\rho-r}(x^{(i)}-y)\mathbf{1}_{\ttS^{(i)}}(r,y)
\]
and, for $t\in[T_{0}^{(i)},T_{1}^{(i)}]$,
\begin{align*}
  \widetilde{B}^{(i)}(t) & =\gamma_{\rho}\int_{T_{0}^{(i)}}^{t}\!\!\int g_{r}^{(i)}(y)\,\dif\widetilde{W}_{r}^{(i)}(y)+\gamma_{\rho}\int_{T_{0}^{(i)}}^{t}\!\!\int\left(G_{T_{1}^{(i)}+\rho-r}(x^{(i)}-y)-g_{r}^{(i)}(y)\right)\,\dif\overline{W}_{r}^{(i)}(y)                      \\
                         & =\gamma_{\rho}\int_{T_{0}^{(i)}}^{t}\!\!\int_{\ttS_{r}^{(i)}}G_{T_{1}^{(i)}+\rho-r}(x^{(i)}-y)\,\dif\widetilde{W}_{r}^{(i)}(y)+\gamma_{\rho}\int_{T_{0}^{(i)}}^{t}\!\!\int_{\ttXi_{r}^{(i)}}G_{T_{1}^{(i)}+\rho-r}(x^{(i)}-y)\,\dif\overline{W}_{r}^{(i)}(y),
\end{align*}
where $\dif\overline{W}^{(1)},\ldots,\dif\overline{W}^{(n)}$
are new independent $\RR^{m}$-valued space-time white noises,
also independent of everything else.

We note the agreement of the
quadratic variations
\begin{equation}
    [\widetilde{B}^{(i)}](t)=[\hat{B}^{(i)}](t)=\Id_m\sfU^{(i)}_\rho(t)\overset{\cref{eq:sfUdef}}= \frac{\Id_m}{\sfL(1/\rho)}\log\frac{T_1^{(i)}-T_0^{(i)}+\rho}{T_1^{(i)}-t+\rho}\label{eq:QVsagree}
\end{equation}
for each $t\in[T_{0}^{(i)},T_{1}^{(i)}]$, by the same computation as in \cref{eq:BhatQV}.
Also, \cref{lem:Sisseparated} implies that, whenever $i\ne j$ and $t\in [T_*^{(i,j)},T_1^{(i)}]$, we have
\[\dist(\supp g_r^{(i)},\supp g_r^{(j)})=\dist(\ttS^{(i)}_r,\ttS^{(j)}_r)\ge \nu_\rho^{1/2}\sum_{n\in\{i,j\}} (T_1^{(n)}-t)^{1/2}\ge \sqrt{2}(\zeta_r^{(i)}+\zeta_r^{(j)})\]
(at least for $\rho$ less than an absolute constant),
so the hypothesis \cref{eq:suppsfarenough} is satisfied and \cref{prop:dBQVub}
tells us that
\begin{equation}\label{eq:noQCV}
  \dif[\widetilde{B}^{(i)},\widetilde{B}^{(j)}](t)=0\qquad\text{whenever }t\in[T_{*}^{(i,j)},T_{1}^{(i)}]. %
\end{equation}
This implies that (recalling the definitions \cref{eq:jstaridef})
\begin{equation}
	\left\{ \left(\widetilde{B}^{(i)}(t)-\widetilde{B}^{(i)}(T_*^{(i)})\right)_{t\in[T_{*}^{(i)},T_{1}^{(i)}]}\right\} _{i\in\{1,\ldots,n\}}\text{ is an independent family of random variables}.\label{eq:independent-1}
\end{equation}
The independence property \cref{eq:independent-1} is the reason we wish to work with $\widetilde{B}^{(i)}$ instead of $\hat{B}^{(i)}$.
The next lemma shows that $\widetilde{B}^{(i)}$ can replace $\hat{B}^{(i)}$ without incurring too much error.

\begin{lem}\label{lem:QVbd}
  We have
  \begin{equation}
    \left\lvert  \int_{T_{0}^{(i)}}^{T_{1}^{(i)}}\dif[\widetilde{B}^{(i)}-\hat{B}^{(i)}](t)\right\rvert  _{\op}\le\frac{C}{\sfL(1/\rho)}.\label{eq:QVbd}
  \end{equation}
\end{lem}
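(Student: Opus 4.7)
The plan is to compute the matrix-valued quadratic variation $[\widetilde{B}^{(i)} - \hat{B}^{(i)}]$ directly, exploiting the independence of $\dif\widetilde{W}^{(i)}$ and $\dif\overline{W}^{(i)}$, and then reduce the problem to bounding a double integral by a constant (independent of $\rho$). Writing the three pieces explicitly and canceling the common contribution from the restriction $g_r^{(i)} = G_{T_1^{(i)}+\rho-r}(x^{(i)}-\cdot)\mathbf{1}_{\ttS^{(i)}}$ yields
\[
    \widetilde{B}^{(i)}(t) - \hat{B}^{(i)}(t) = \gamma_\rho \int_{T_0^{(i)}}^t\!\!\int_{\ttXi_r^{(i)}} G_{T_1^{(i)}+\rho-r}(x^{(i)} - y)\bigl(\dif\overline{W}_r^{(i)}(y) - \dif\widetilde{W}_r^{(i)}(y)\bigr).
\]
Because $\dif\overline{W}^{(i)}$ and $\dif\widetilde{W}^{(i)}$ are independent, their contributions to the quadratic variation add, giving
\[
    \int_{T_0^{(i)}}^{T_1^{(i)}}\dif[\widetilde{B}^{(i)} - \hat{B}^{(i)}](t) = 2\gamma_\rho^2\,\Id_m\int_{T_0^{(i)}}^{T_1^{(i)}}\!\!\int_{\ttXi_t^{(i)}} G_{T_1^{(i)}+\rho-t}^2(x^{(i)} - y)\,\dif y\,\dif t.
\]
Since $\gamma_\rho^2 = 4\pi/\sfL(1/\rho)$, it suffices to show that the double integral is $O(1)$.

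To estimate this, I would partition $\ttXi_t^{(i)}$ according to the definition \cref{eq:Sidef} of $\ttS^{(i)}$. For $t\notin\bigcup_{j\in P^{\mathrm{time}}_-(i)}[T_*^{(i,j)},T_1^{(j)}]$, we have $\ttXi_t^{(i)} = \RR^2\setminus \ttE_t^{(i)}$, so combining \cref{eq:GT2} with a standard Gaussian tail bound gives
\[
    \int_{\RR^2\setminus\ttE_t^{(i)}} G_{T_1^{(i)}+\rho-t}^2 \,\dif y \le \frac{C}{T_1^{(i)}+\rho-t}\exp\left(-\frac{\nu_\rho(T_1^{(i)}-t)}{T_1^{(i)}+\rho-t}\right).
\]
The carefully calibrated choice $\nu_\rho = 2\sfL(\sfL(1/\rho))$ is what makes everything work: for $T_1^{(i)} - t \ge \rho$ the exponential is at most $C/\sfL(1/\rho)$, and the remaining $\dif t/(T_1^{(i)}+\rho-t)$ integrates to $O(\sfL(1/\rho))$, yielding $O(1)$. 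The residual window $t \in [T_1^{(i)}-\rho,T_1^{(i)}]$ is handled trivially via $\int_{\RR^2} G_\tau^2 = [4\pi\tau]^{-1}$, which integrates to $\log 2$.

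For $t$ in a removed interval $[T_*^{(i,j)},T_1^{(j)}]$ with $j \in P^{\mathrm{time}}_-(i)$, we have $\ttXi_t^{(i)} = \RR^2$ and the time integral becomes $(4\pi)^{-1}\log\frac{T_1^{(i)}+\rho-T_*^{(i,j)}}{T_1^{(i)}+\rho-T_1^{(j)}}$. In the generic case $T_*^{(i,j)} = 2T_1^{(j)} - T_1^{(i)}$ this logarithm is at most $\log 2$ by direct computation. In the edge case $T_*^{(i,j)} = T_0^{(i)}$, the defining inequality $2T_1^{(j)} - T_1^{(i)} \le T_0^{(i)}$ yields $T_1^{(i)} - T_1^{(j)} \ge (T_1^{(i)}-T_0^{(i)})/2$, which gives a bound of $\log 4$ regardless of the relative sizes of $\rho$ and $T_1^{(i)}-T_0^{(i)}$. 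Summing over at most $N$ values of $j$, the total contribution is $O(1)$. The main delicate point is that the definition of $\nu_\rho$ must thread a needle: too small and the Gaussian tail fails to cancel the time integral, too large and the parabolic cones $\ttE^{(i)}$ grow enough to spoil the separation in \cref{lem:Sisseparated} that underpins the independence \cref{eq:independent-1}.
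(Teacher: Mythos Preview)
Your proposal is correct and follows essentially the same approach as the paper: compute the quadratic variation as $2\gamma_\rho^2\Id_m\int\!\!\int_{\ttXi_t^{(i)}}G^2$, split according to whether $t$ lies in a removed interval $[T_*^{(i,j)},T_1^{(j)}]$, use the Gaussian tail on $\RR^2\setminus\ttE_t^{(i)}$ together with $\nu_\rho=2\sfL(\sfL(1/\rho))$, and bound the log ratio over each removed interval by an absolute constant. Your treatment is slightly cleaner in two places: you partition the time interval rather than using the paper's union bound (which mildly overcounts the $\RR^2\setminus\ttE_t^{(i)}$ contribution on removed intervals), and you handle the edge case $T_*^{(i,j)}=T_0^{(i)}$ explicitly, whereas the paper simply uses $T_*^{(i,j)}\ge T_1^{(j)}-|T_1^{(i)}-T_1^{(j)}|$ as a lower bound without comment.
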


\begin{proof}
    First we deal with the interval $[T_1^{(i)}-\rho,T_1^{(i)}]$. For this interval, we have
    \begin{equation}
        \left\lvert\int_{T_1^{(i)}-\rho}^{T_1^{(i)}} \dif[\widetilde{B}^{(i)}](t)\right\rvert_{\op} \overset{\cref{eq:QVsagree}}=  \left\lvert\int_{T_1^{(i)}-\rho}^{T_1^{(i)}} \dif[\widetilde{B}^{(i)}](t)\right\rvert_{\op} \overset{\cref{eq:QVsagree}}= \frac{\log 2}{\sfL(1/\rho)}.\label{eq:finalinterval}
    \end{equation}

    Now suppose that \begin{equation}T_{0}^{(i)}\le t\le T_1^{(i)}-\rho.\label{eq:tnotatend}\end{equation} We have
  \begin{align*}
    \dif & [\widetilde{B}^{(i)}-\hat{B}^{(i)}](t)/\dif t=2\gamma_{\rho}^{2}\Id_{m}\int_{\ttXi_{t}^{(i)}}G_{T_{1}^{(i)}+\rho-t}^{2}(x^{(i)}-y)\,\dif y,
  \end{align*}
  so
  \begin{align}
    |\dif & [\widetilde{B}^{(i)}-\hat{B}^{(i)}](t)/\dif t|_{\op}\le2\gamma_{\rho}^{2}\int_{\ttXi_{t}^{(i)}}G_{T_{1}^{(i)}+\rho-t}^{2}(x^{(i)}-y)\,\dif y\nonumber                                                                                                                    \\
          & \le2\gamma_{\rho}^{2}\int_{\RR^{2}\setminus \ttE_{t}^{(i)}}G_{T_{1}^{(i)}+\rho-t}^{2}(x^{(i)}-y)\,\dif y+\frac{8\pi}{\sfL(1/\rho)(T_{1}^{(i)}+\rho-t)}\sum_{j\in P_{-}^{\mathrm{time}}(i)}\mathbf{1}_{[T_{*}^{(i,j)},T_{1}^{(j)}]}(t),\label{eq:Btildebhatdiff}
  \end{align}
where in the last inequality we used \zcref[range]{eq:Sidef,eq:ttXidef} and~\cref{eq:GtauL2norm}.
  We can estimate the integral as
  \begin{align*}
      &\int_{\RR^{2}\setminus \ttE_{t}^{(i)}}G_{T_{1}^{(i)}+\rho-t}^{2}(x^{(i)}-y)\,\dif y  \overset{\cref{eq:GT2}}{=}\frac{1}{4\pi(T_{1}^{(i)}+\rho-t)}\int_{\RR^{2}\setminus \ttE_{t}^{(i)}}G_{\frac{1}{2}[T_{1}^{(i)}+\rho-t]}(x^{(i)}-y)\,\dif y         \\
                                                                                        &\qquad \overset{\cref{eq:Eidef}}{=}\frac{1}{4\pi\exp\left(\nu_{\rho}\frac{T^{(i)}_1-t}{T^{(i)}_1+\rho-t}\right)(T_{1}^{(i)}+\rho-t)}\overset{\cref{eq:tnotatend}}\le\frac1{4\pi\e^{\nu_\rho/2}(T_1^{(i)}+\rho-t)}
                                                                                    \overset{\cref{eq:nurhodef}}{\le}\frac{1}{4\pi\sfL(1/\rho)(T_{1}^{(i)}+\rho-t)}.
  \end{align*}
  Using this in \cref{eq:Btildebhatdiff}, we see that, for $t\ge T_{*}^{(i,j)}$,
  we have
  \[
    |\dif[\widetilde{B}^{(i)}-\hat{B}^{(i)}](t)/\dif t|_{\op}\le\frac{2}{\sfL(1/\rho)(T_{1}^{(i)}+\rho-t)}\left(\frac{1}{\sfL(1/\rho)}+4\pi\sum_{j\in P_{-}^{\mathrm{time}}(i)}\mathbf{1}_{[T_{*}^{(i,j)},T_{1}^{(j)}]}(t)\right),
  \]
  and hence
  \begin{align}
    \left\lvert  \int_{T_{0}^{(i)}}^{T_{1}^{(i)}-\rho}\dif[\widetilde{B}^{(i)}-\hat{B}^{(i)}](t)\right\rvert  _{\op} & \le\frac{2}{\sfL(1/\rho)}\int_{T_{0}^{(i)}}^{T_{1}^{(i)}}\frac{1}{T_{1}^{(i)}+\rho-t}\left(\frac{1}{\sfL(1/\rho)}+4\pi\sum_{j\in P^{\mathrm{time}}(i)}\mathbf{1}_{[T_{*}^{(i,j)},T_{1}^{(j)}]}(t)\right)\,\dif t\nonumber \\
                                                                                                  & \le\frac{C}{\sfL(1/\rho)}+\frac{8\pi}{\sfL(1/\rho)}\sum_{j\in P_{-}^{\mathrm{time}}(i)}\int_{T_{*}^{(i,j)}}^{T_{1}^{(j)}}\frac{\dif t}{T_{1}^{(i)}+\rho-t}\nonumber                                                       \\
                                                                                                  & =\frac{C}{\sfL(1/\rho)}+\frac{8\pi}{\sfL(1/\rho)}\sum_{j\in P_{-}^{\mathrm{time}}(i)}\log\frac{T_{1}^{(i)}-T_{*}^{(i,j)}+\rho}{T_{1}^{(i)}-T_{1}^{(j)}+\rho}.\label{eq:boundtheapproxerror}
  \end{align}
  Now if $j\in P_{-}^{\mathrm{time}}(i)$, then we have (recalling the
  definitions \zcref[range]{eq:Ptimdef,eq:Ptimeminus}) $T^{(i)}\ge T^{(j)}$
  and $T_{*}^{(i,j)}=T_{1}^{(j)}-|T_{1}^{(i)}-T_{1}^{(j)}|$, which
  means that
  \[
    \frac{T_{1}^{(i)}-T_{*}^{(i,j)}+\rho}{T_{1}^{(i)}-T_{1}^{(j)}+\rho}=\frac{T_{1}^{(i)}-(T_{1}^{(j)}-|T_{1}^{(i)}-T_{1}^{(j)}|)+\rho}{T_{1}^{(i)}-T_{1}^{(j)}+\rho}=\frac{2|T_{1}^{(i)}-T_{1}^{(j)}|+\rho}{T_{1}^{(i)}-T_{1}^{(j)}+\rho}\le2.
  \]
  Using this in \cref{eq:boundtheapproxerror}, and then combining the result with \cref{eq:finalinterval}, we get \cref{eq:QVbd}.
\end{proof}
\subsection{Agreement before the split time\label{sec:agreement}}
Now we show that, up to the time $T_*^{(i,j)}$, the $i$th and $j$th martingales stay close together.
\begin{lem}
  \label{lem:approxtheVijs}There exists $C<\infty$ such that
  for all $i,j\in\{1,\ldots,n\}$ and $t\in[T_{0}^{(i)}\vee T_{0}^{(j)},T_{*}^{(i,j)})$,
  we have
  \begin{equation}
    \EE \bigl\lvert V_{t}^{\rho,T_{1}^{(i)}}(x^{(i)})-V_{t}^{\rho,T_{1}^{(j)}}(x^{(j)})\bigr\rvert ^{2}\le C\langle\vvvert v_{0}\vvvert_{\ell}\rangle^{2}\frac{\sfL(\sfL(1/\rho))}{\sfL(1/\rho)}.\label{eq:approxtheVijs}
  \end{equation}
\end{lem}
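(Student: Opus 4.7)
The plan is to reduce the claim to an application of \cref{prop:continuitybd} combined with the geometric input from \cref{lem:ifthereisspacethenthereisenoughspace}, then bound the three resulting terms one by one using the definitions of $D^{(i,j)}$, $T_*^{(i,j)}$, and $\nu_\rho$.

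First I would unfold $V_t^{\rho,T_1^{(i)}}(x^{(i)})=\clG_{T_1^{(i)}-t}v_t(x^{(i)})$ and apply \cref{prop:continuitybd} with $s_0\setto 0$, $s_1,s_2\setto t$, $t_k\setto T_1^{(k)}-t$, and $x_k\setto x^{(k)}$ for $k\in\{i,j\}$. Also, by \cref{prop:momentbd} and the hypothesis $\sigma\in\quadset(M,\beta)$ with $\beta<1$, the prefactor $\Xi_{2;0,t}^{\sigma,\rho}[v_0]$ is bounded by $C\langle\vvvert v_0\vvvert_\ell\rangle$ (since $t\le T_1^{(i)}\le \sfT_\rho(\overline\beta^{-2})$ with $\overline\beta=(1+\beta)/2>\beta$), and $\kappa:=|T_1^{(i)}-T_1^{(j)}|+\tfrac{1}{2}|x^{(i)}-x^{(j)}|^2$ is then the central quantity to control.

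Next I need to use $t<T_*^{(i,j)}$. Since $t\ge T_0^{(i)}\vee T_0^{(j)}\ge T_0^{(i)}$, having $t<T_*^{(i,j)}$ forces $T_*^{(i,j)}>T_0^{(i)}$, and hence by definition \cref{eq:T0ij-1} we must have $T_*^{(i,j)}=T_1^{(i)}\wedge T_1^{(j)}-D^{(i,j)}$. Thus $T_1^{(i)}\wedge T_1^{(j)}-t>D^{(i,j)}$, and from \cref{eq:Dijdef} we have
\begin{equation*}
  \kappa\le D^{(i,j)}+16\nu_\rho D^{(i,j)}\le 17\nu_\rho D^{(i,j)}.
\end{equation*}
For the first term of \cref{eq:continuitybd}, this gives $\sfL_\rho\!\bigl(\kappa/[(T_1^{(i)}\wedge T_1^{(j)}-t)+\rho]\bigr)\le\sfL_\rho(17\nu_\rho)$, and since $\nu_\rho=2\sfL(\sfL(1/\rho))$, using \cref{eq:Lmult} yields $\sfL_\rho(17\nu_\rho)\le C\sfL(\sfL(1/\rho))/\sfL(1/\rho)$. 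For the third term, $\sfL_\rho(1+\sfL(1/\rho))\le C\sfL(\sfL(1/\rho))/\sfL(1/\rho)$ directly from the definitions.

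For the second (and most delicate) term, I apply \cref{lem:ifthereisspacethenthereisenoughspace}: the hypothesis $t<T_*^{(i,j)}$ gives $T_0^{(i)}\vee T_0^{(j)}<T_*^{(i,j)}$, so
\begin{equation*}
  \kappa\le \frac{C\sfL(\sfL(1/\rho))(T_0^{(i)}\vee T_0^{(j)})}{\sfL(1/\rho)}.
\end{equation*}
Since $t\ge T_0^{(i)}\vee T_0^{(j)}$, the ratio $\kappa/t$ is bounded by $C\sfL(\sfL(1/\rho))/\sfL(1/\rho)$. Squaring the sum of the three contributions and multiplying by the $L^2(\Omega)$-moment bound yields the desired estimate. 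The main obstacle is simply making sure the prefactors line up, which is why the three separate bounds above all must reduce to the same scale $\sfL(\sfL(1/\rho))/\sfL(1/\rho)$; this is the role played by the specific choice $\nu_\rho=2\sfL(\sfL(1/\rho))$ in the definition \cref{eq:Dijdef} of $D^{(i,j)}$.
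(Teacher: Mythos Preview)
Your proposal is correct and follows essentially the same approach as the paper: apply \cref{prop:continuitybd} with the moment bound from \cref{prop:momentbd}, then control the three resulting terms using the definition of $D^{(i,j)}$ (together with the choice of $\nu_\rho$) for the logarithmic term and \cref{lem:ifthereisspacethenthereisenoughspace} for the ratio $\kappa/t$. The only cosmetic difference is that the paper bounds the argument of $\sfL$ directly by $(1+16\nu_\rho)$ via the denominator $D^{(i,j)}+\rho$, whereas you first bound $\kappa\le 17\nu_\rho D^{(i,j)}$ and divide by $T_1^{(i)}\wedge T_1^{(j)}-t>D^{(i,j)}$; these are equivalent.
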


\begin{proof}
  If the interval $[T_{0}^{(i)}\vee T_{0}^{(j)},T_{*}^{(i,j)})$ is
  empty (and in particular if $i=j$) then there is nothing to prove.
  Thus, we assume that $T_{0}^{(i)}\vee T_{0}^{(j)}<T_{*}^{(i,j)}$,
  so \cref{eq:T0ij-1} becomes
  \[
    T_{*}^{(i,j)}=T_{1}^{(i)}\wedge T_{1}^{(j)}-D^{i,j}%
  \]
  and \cref{lem:ifthereisspacethenthereisenoughspace} applies. We have
  by \cref{prop:continuitybd} (and \cref{prop:momentbd} to control the
  moment) that, for sufficiently small $\rho$,
  \begin{align}
    \EE & \bigl\lvert V_{t}^{\rho,T_{1}^{(i)}}(x^{(i)})-V_{t}^{\rho,T_{1}^{(j)}}(x^{(j)})\bigr\rvert ^{2}\overset{\cref{eq:Vdef}}{=}\EE |\clG_{T_{1}^{(i)}-t}v_{t}(x^{(i)})-\clG_{T_{1}^{(j)}-t}v_{t}(x^{(j)})|^{2}\nonumber \\
        & \le\frac{C\langle\vvvert v_{0}\vvvert_{\ell}\rangle^{2}}{\sfL(1/\rho)}\left[\sfL\left(\frac{|T_{1}^{(i)}-T_{1}^{(j)}|+|x^{(i)}-x^{(j)}|^{2}/2}{T_{1}^{(i)}\wedge T_{1}^{(j)}-t+\rho}\right)+\sfL(\sfL(1/\rho))\right]\nonumber         \\
        & \qquad+C\langle\vvvert v_{0}\vvvert_{\ell}\rangle^{2}\left(\frac{|T_{1}^{(i)}-T_{1}^{(j)}|+|x^{(i)}-x^{(j)}|^{2}/2}{t}\right).\label{eq:applycontinuityandmomentbounds}
  \end{align}
  We note that
  \begin{equation}
    \sfL\left(\frac{|T_{1}^{(i)}-T_{1}^{(j)}|+|x^{(i)}-x^{(j)}|^{2}/2}{T_{1}^{(i)}\wedge T_{1}^{(j)}-t+\rho}\right)\overset{\text{\zcref[range]{eq:Dijdef,eq:T0ij-1}}}{\le}\sfL\left(\frac{|T_{1}^{(i)}-T_{1}^{(j)}|+|x^{(i)}-x^{(j)}|^{2}/2}{|T_{1}^{(i)}-T_{1}^{(j)}|\vee\frac{|x^{(i)}-x^{(j)}|^{2}}{32\nu_{p}}+\rho}\right)\overset{\cref{eq:nurhodef}}{\le}\sfL(\sfL(1/\rho))\label{eq:boundfirstthing}
  \end{equation}
  and that
  \begin{equation}
    \frac{|T_{1}^{(i)}-T_{1}^{(j)}|+|x^{(i)}-x^{(j)}|^{2}/2}{t}\le\frac{|T_{1}^{(i)}-T_{1}^{(j)}|+|x^{(i)}-x^{(j)}|^{2}/2}{T_{0}^{(i)}\vee T_{0}^{(j)}}\overset{\cref{eq:ifthereisspacethenthereisenoughspace}}{\le}\frac{C\sfL(\sfL(1/\rho))}{\sfL(1/\rho)}.\label{eq:boundsecondthing}
  \end{equation}
  Using \cref{eq:boundfirstthing} and \cref{eq:boundsecondthing} in \cref{eq:applycontinuityandmomentbounds},
  we obtain \cref{eq:approxtheVijs}.
\end{proof}
\subsection{Proof of \texorpdfstring{\cref{thm:thebigtheorem}}{Theorem~\ref{thm:thebigtheorem}}\label{sec:bigtheoremproof}}
Now we can complete the proof of \cref{thm:thebigtheorem}.
Taking $T_{0}\setto T_{*}^{(i)}$, $T_{1}\setto T_{1}^{(i)}$,
and $x\setto x^{(i)}$ in \cref{prop:laydowntheSDE}, we obtain
\begin{align*}
  \EE & \left\lvert  V_{t}^{\rho,T_{1}^{(i)}}(x^{(i)})-V_{T_{*}^{(i)}}^{\rho,T_{1}^{(i)}}(x^{(i)})-\int_{T_{*}^{(i)}}^{t}J_{\sigma}\bigl(\sfS_{\rho}(T_{1}^{(i)}-r),V_{r}^{\rho,T_{1}^{(i)}}(x^{(i)})\bigr)\,\dif\hat{B}^{(i)}(r)\right\rvert  ^{2} \\
      &\hspace{9cm}\le C\langle\vvvert v_{0}\vvvert_{\ell}\rangle^{2}\frac{\sfL(\sfL(1/\rho))}{\sfL(1/\rho)}.
\end{align*}
Using the triangle inequality, we make two modifications to the left side.
First, we apply \cref{lem:approxtheVijs} with $j\setto j_{*}(i)$ to adjust the index in the middle term.
Second, we use \cref{eq:Juniflipschitz}, \cref{prop:momentbd}, and \cref{eq:QVbd} to replace $\h{B}$ by $\widetilde{B}$.
We find:
\begin{equation}
  \begin{aligned}\EE & \left\lvert  V_{t}^{\rho,T_{1}^{(i)}}(x^{(i)})-V_{T_{*}^{(i)}}^{\rho,T_{1}^{(j_{*}(i))}}(x^{(j_{*}(i))})-\int_{T_{*}^{(i)}}^{t}J_{\sigma}\bigl(\sfS_{\rho}(T_{1}^{(i)}-r),V_{r}^{\rho,T_{1}^{(i)}}(x^{(i)})\bigr)\,\dif\widetilde{B}_{r}^{(i)}\right\rvert  ^{2} \\
                     &\hspace{8.5cm}\le C\langle\vvvert v_{0}\vvvert_{\ell}\rangle^{2}\frac{\sfL(\sfL(1/\rho))}{\sfL(1/\rho)}.
  \end{aligned}
  \label{eq:usetriangle}
\end{equation}
Using \cref{lem:VTcontinuity} (and \cref{prop:momentbd}
to bound the moment), we also have
\begin{equation}
  \begin{aligned}\EE \bigl\lvert  & V_{T_{0}^{(i)}}^{\rho,T_{1}^{(i)}}(x^{(i)})-\clG_{T_{1}^{(i)}}v_{0}(x^{(i)})\bigr\rvert ^{2}=\EE \bigl\lvert V_{T_{0}^{(i)}}^{\rho,T_{1}^{(i)}}(x^{(i)})-V_{0}^{\rho,T_{1}^{(i)}}(x^{(i)})\bigr\rvert ^{2}                                                                                                                                                                                                   \\
                                  & \le\frac{C\langle\vvvert v_{0}\vvvert_{\ell}\rangle^{2}}{\sfL(1/\rho)}\sfL\left(\frac{T_{0}^{(i)}}{T_{1}^{(i)}+\rho-T_{0}^{(i)}}\right)\le\frac{C\langle\vvvert v_{0}\vvvert_{\ell}\rangle^{2}}{\sfL(1/\rho)}\sfL\left(\frac{\omega(\rho)}{\sfL(1/\rho)}\right)\overset{\cref{eq:omegaissmallenough}}{\le}C\langle\vvvert v_{0}\vvvert_{\ell}\rangle^{2}\frac{\sfL(\sfL(1/\rho))}{\sfL(1/\rho)}.
  \end{aligned}
  \label{eq:dealwiththezero}
\end{equation}

We next follow the change of variables procedure used in \cref{step:chgvar}
of the proof of \cref{prop:Japprox}. For each $i=1,\ldots,n$ and $q\in[0,\hat{Q}^{(i)}]$, we define
\begin{equation}
  B^{(i)}(q)=\widetilde{B}^{(i)}\bigl(\sfR_{\rho}^{(i)}(q)\bigr).\label{eq:Aidef-2}
\end{equation}
(Note by \cref{eq:RiUidefs,eq:Qidef-1,eq:Rendpoints} that $\sfR_{\rho}^{(i)}(0)=T_{0}^{(i)}$
and $\sfR_{\rho}^{(i)}(\hat{Q}^{(i)})=T_{1}^{(i)}$.)
Mimicking \cref{eq:BhatQV}, we compute
\begin{equation*}
  \dif[B^{(i)}](q)/\dif q=\Id_{m},
\end{equation*}
so each $B^{(i)}$ is a standard $\RR^{m}$-valued Brownian motion. We can then
make the change of variables $t=\sfR_{\rho}^{(i)}(q)$, $r=\sfR_{\rho}^{(i)}(p)$
in \cref{eq:usetriangle}. We define
\begin{equation}
  \hat{\Upsilon}^{(i)}(q)\coloneqq V_{\sfR_{\rho}^{(i)}(q)}^{\rho,T_{1}^{(i)}}(x^{(i)}),\qquad q_{*}^{(i)}\coloneqq\sfU_{\rho}^{(i)}(T_{*}^{(i)}),\qquad\overline{q}_{*}^{(i)}=\sfU_{\rho}^{(j_{*}(i))}(T_{*}^{(i)}),\label{eq:upsilonhatdef}
\end{equation}
and note that $\sfS_{\rho}(T_{1}^{(i)}-\sfR_{\rho}^{(i)}(p))=\hat{Q}^{(i)}-p$
by \cref{eq:sfRdef} and \cref{eq:Qidef-1} to get
\[
  \sup_{q\in[q_{*}^{(i)},\hat{Q}^{(i)}]}\EE \left\lvert  \hat{\Upsilon}^{(i)}(q)-\hat{\Upsilon}^{(j_*(i))}(\overline{q}_{*}^{(i)})-\int_{q_{*}^{(i)}}^{q}J_{\sigma}\bigl(\hat{Q}^{(i)}-p,\hat{\Upsilon}^{(i)}(p)\bigr)\,\dif B^{(i)}(p)\right\rvert  ^{2}\le C\langle\vvvert v_{0}\vvvert_{\ell}\rangle^{2}\frac{\sfL(\sfL(1/\rho))}{\sfL(1/\rho)}.
\]
By \cref{prop:SDEwellposed}, \cref{eq:dealwiththezero}, and induction,
this implies that, if $\Upsilon^{(i)}$ solves the SDE
\begin{align}
  \dif\Upsilon^{(i)}(q)       & =J_{\sigma}\bigl(\hat{Q}^{(i)}-q,\Upsilon^{(i)}(q)\bigr)\dif B^{(i)}(q)\qquad\text{for }q\in[q_{*}^{(i)},\hat{Q}^{(i)}];\label{eq:upsilonSDE-proof} \\
  \Upsilon^{(i)}(q_{*}^{(i)}) & =\begin{cases}
                                   \Upsilon^{(j_*(i))}(\overline{q}_{*}^{(i)}) & \text{if }j_{*}(i)<i; \\
                                   \clG_{T_{1}^{(i)}}v_{0}(x^{(i)})       & \text{if }j_{*}(i)=i,
                                 \end{cases}\label{eq:upsilonic-proof}
\end{align}
then
\begin{equation}
  \sup_{q \in [0, \h{Q}^{(i)}]} \EE |\hat{\Upsilon}^{(i)}(q)-\Upsilon^{(i)}(q)|^{2}\le C\langle\vvvert v_{0}\vvvert_{\ell}\rangle^{2}\frac{\sfL(\sfL(1/\rho))}{\sfL(1/\rho)}.\label{eq:multipointSDEclose}
\end{equation}

Recalling the definition \cref{eq:jstaridef} and the independence
result \cref{eq:independent-1}, and applying the change of variables
\cref{eq:Aidef-2}, we notice that $\bigl\{\bigl(B^{(i)}(q) - B^{(i)}(q_*^{(i)})\bigr)_{q\in[q_{*}^{(i)},\hat{Q}^{(i)}]}\bigr\}_{i\in\{1,\ldots,n\}}$
is an independent family of random variables.
Since the SDE \cref{eq:upsilonSDE-proof}
only depends on $\dif B^{(i)}(q)$ for $q\ge q_{*}^{(i)}$, the problem
\zcref[range]{eq:upsilonSDE-proof,eq:upsilonic-proof} is in fact the
same as the problem \zcref[range]{eq:dUpsiloni,eq:Upsiloniic} in the
theorem statement (in which the driving Brownian motions were assumed
to be independent by fiat).

Finally, we recall that we are ultimately interested in
\[
  \clG_{R^{(i)}}v_{t^{(i)}}(x^{(i)})\overset{\substack{\cref{eq:Vdef},\\
      \cref{eq:T1idef}
    }
  }{=}V_{t^{(i)}}^{\rho,T_{1}^{(i)}}(x^{(i)})\overset{\cref{eq:upsilonhatdef}}{=}\hat{\Upsilon}^{(i)}(\sfU_{\rho}^{(i)}(t^{(i)})).
\]
Using this in \cref{eq:multipointSDEclose}, we see that
\[
  \EE |\clG_{R^{(i)}}v_{t^{(i)}}(x^{(i)})-\Upsilon^{(i)}(\sfU_{\rho}^{(i)}(t^{(i)}))|^{2}\le C\langle\vvvert v_{0}\vvvert_{\ell}\rangle^{2}\frac{\sfL(\sfL(1/\rho))}{\sfL(1/\rho)}.
\]
Since this holds for each $i$, we get \cref{eq:thebigthm-conclusion}.\qed

\section{Universality}
\label{sec:universal}

We now show that \cref{thm:mainthm-multipoint} is stable in the sense that approximate mild solutions of \cref{eq:SPDE} are themselves close to $v_t^\rho$.
This leads to a form of universality: variants of \cref{eq:SPDE} with similar noise structure exhibit the same large-scale statistics.
The main result of this section is \zcref{thm:stable}, a precise version of \zcref{thm:universal-informal}.

\subsection{Stability}

Similar to previous arguments in the paper, we show stability through renormalization/multiscale analysis.
To begin, we introduce notation for the maximal scale on which \cref{eq:SPDE} is stable.
Recall $\beta_*(\sigma)$ from \cref{eq:minimal-beta}.
\begin{defn}
  \label{defn:stable}
  Given $\sigma \in \Lip(\R^m;\m{H}_+^m)$, let $\bar{Q}_\stab(\sigma)$ denote the supremum of all $Q \leq \bar{Q}_\SPDE(\sigma)$ such that the following holds.
  For each $\beta \in (\beta_*(\sigma), Q^{-1/2})$ and $\ell \in (2, 4]$, there exists $C(\sigma,Q,\beta,\ell) < \infty$ such that, if $\rho \in (0, C^{-1}]$, $\bigl(u_t(x)\bigr)_{t \geq 0, \, x \in \R^2}$ is a random field adapted to $(\s{F}_t)$, $0 \leq s \leq t \leq \sfT_\rho(\beta^{-2})$, and $t - s \leq \sfT_\rho(Q)$, we have
  \begin{equation}
    \label{eq:stable}
    \m{M}_{s, t}(u - \m{V}_{s,\anon}^{\sigma,\rho} u_s) \leq C \m{M}_{0, \sfT_\rho(\beta^{-2})}(\m{T}_0^{\sigma,\rho} u - u) + C \hspace*{-10pt}\sup_{s \in [0, \sfT_\rho(\beta^{-2})]} \vvvert u_s \vvvert_\ell \left[\frac{\sfL\bigl(\sfL(1/\rho)\bigr)}{\sfL(1/\rho)}\right]^{1/2}.
  \end{equation}
\end{defn}
Thus up to scale $\bar{Q}_\stab$, if $u$ approximately solves \cref{eq:SPDE} in the sense that it is an approximate fixed point of $\m{T}$, then $u$ is close to the solution $v$ of \cref{eq:SPDE}.
We have already shown this stability up to the scale determined by $\Lip(\sigma)$:
\begin{prop}
  \label{prop:stab-Lip}
  For all $\sigma \in \Lip(\R^m;\m{H}_+^m)$, $\bar{Q}_\stab(\sigma) \geq \Lip(\sigma)^{-2}.$
\end{prop}
\begin{proof}
  Since $\bar Q_\SPDE(\sigma) \geq \Lip(\sigma)^{-2}$ by \cref{prop:QSPDEbasecase}, fix $Q \in \bigl(0, \Lip(\sigma)^{-2}\bigr)$ and $\beta \in (\beta_*(\sigma), Q^{-1/2})$.
  Take $0 \leq s \leq t \leq \sfT_\rho(\beta^{-2})$ such that $t - s \leq \sfT_\rho(Q)$.
  Then by \cref{prop:solnapprox}, we have
  \begin{equation*}
    \m{M}_{s,t}(u - \m{V}_{0,\anon}^{\sigma,\rho}u_s) \leq \frac{\m{M}_{s, t}(\m{T}_s^{\sigma,\rho} u - u)}{1 - \Lip(\sigma)Q^{1/2}}.\qedhere
  \end{equation*}
\end{proof}
To extend stability to longer timescales, we reduce it to closeness of the coarse-grained field.
\begin{prop}
  \label{prop:stab-coarse}
  Let $\sigma \in \Lip(\R^m; \m{H}_+^m)$, $Q_0 \in (0, \Qbar_\SPDE(\sigma) \wedge 1)$, $0 \leq T \leq \sfT_\rho(Q_0)$, and $\ell \in (2, 4]$.
  Suppose $v_t^i$ solve \cref{eq:SPDE} for $i \in \{1,2\}$.
  Then there exists $C = C(\sigma, Q_0, \ell) \geq 1$ such that
  \begin{equation}
    \label{eq:stab-coarse}
    \vvvert v_T^1 - v_T^2 \vvvert_2 \leq C \vvvert \m{G}_T v_0^1 - \m{G}_T v_0^2 \vvvert_2 + C \max_i \langle\vvvert v_0^i\vvvert_\ell\rangle \left[\frac{\sfL\bigl(\sfL(1/\rho)\bigr)}{\sfL(1/\rho)}\right]^{1/2}.
  \end{equation}
\end{prop}
\begin{proof}
  Let $\h v \coloneqq (v^1,v^2)$, which (given that the $\RR^m$ and $\RR^{2m}$-valued noises are coupled in the obvious way, with the first $m$ components of the latter noise equal to the former noise) solves \cref{eq:SPDE} with block-matrix nonlinearity
  \begin{equation*}
    \h\sigma_0(b_1,b_2) \coloneqq
    \begin{pmatrix}
      \sigma(b_1) & 0\\
      \sigma(b_2) & 0
    \end{pmatrix}.
  \end{equation*}
  Symmetrizing, let $\h \sigma \coloneqq (\h \sigma_0 \h \sigma_0^{\mathrm{T}})^{1/2}.$
  Then the solution of \cref{eq:SPDE} has the same law with nonlinearity $\h\sigma_0$ as it has with nonlinearity $\h\sigma$, so it suffices to study $\h v$ driven by the nonlinearity $\h \sigma$.
  
  \cref{prop:multigrowth,prop:multivarwellposedforsametime} ensure that $\beta_*(\h \sigma) = \beta_*(\sigma)$ and $\Qbar_\FBSDE(\h \sigma) = \Qbar_{\FBSDE}(\sigma)$.
  Now if we have ${\beta_*(\sigma)^{-2} \wedge \Qbar_\FBSDE(\sigma) > 1}$, \cref{thm:QSPDEgreater} implies $\Qbar_\SPDE(\h \sigma) > 1$.
  And if $\beta_*(\sigma)^{-2} \wedge \Qbar_\FBSDE(\sigma) \leq 1$, then \cref{prop:extendQSPDE-1} yields $\Qbar_\SPDE(\h \sigma) = \beta_*(\sigma)^{-2} \wedge \Qbar_\FBSDE(\sigma) = \Qbar_\SPDE(\sigma)$.
  In either case, ${Q_0 < \Qbar_\SPDE(\h \sigma)}$.

  For the sake of brevity, let $K \coloneqq \max_i \langle\vvvert v_0^i\vvvert_\ell\rangle$.
  Throughout, we allow constants $C < \infty$ to depend on $\sigma,Q_0$, and $\ell$ and vary from line to line.
  Fix $\beta \in (\beta_*(\sigma),Q_0^{-1/2})$ and let $\omega_{\h\sigma,Q_0,\beta,\ell}$ be the function provided by \cref{def:QSPDE}.
  We then define
  \begin{equation}
    \label{eq:T0-choice}
    T_0 \coloneqq \frac{T}{\sfL(1/\rho) \omega(\rho) + 1}.
  \end{equation}
  For all $x \in \R^2$ and $t \in [T_0, T]$, \cref{prop:laydowntheSDE} yields
  \begin{equation}
    \label{eq:pair-SDE}
    \E \abs{\h v_t(x) - \m{G}_{T - T_0} \h v_{T_0}(x) - \int_{T_0}^t J_{\h \sigma}\big(\sfS_\rho(T - s), \m{G}_{T - s} \h v_s(x)\big) \d \h B(s)}^2 \leq C K^2 \frac{\sfL\big(\sfL(1/\rho)\big)}{\sfL(1/\rho)},
  \end{equation}
  where the martingale $\h B$ has differential quadratic variation
  \begin{equation}
    \label{eq:BQV}
    \dn [\h B](t) = \frac{\mathrm{Id}_{2m}}{\sfL(1/\rho)}\cdot \frac{1}{T - t + \rho}.
  \end{equation}
  We adjust parameters so that \cref{eq:pair-SDE} runs from $s = 0$.
  As usual, we can assume $\ell$ is sufficiently close to $2$ that $\eta(\beta,\ell)Q_0 < 1$.
  Then by \cref{prop:momentbd}, $\Xi_{\ell,0,T}^{\sigma,\rho}(v_0^i) \leq C K$.
  Hence \cref{lem:VTcontinuity} yields
  \begin{equation*}
    \E \abs{\m{G}_{T - T_0} \h v_{T_0}(x) - \m{G}_{T} \h v_0(x)}^2 \leq C K^2 \sfL_\rho\left(\frac{T_0}{T + \rho - T_0}\right) \overset{\cref{eq:T0-choice}}\leq C K^2 \sfL_\rho\big(C \omega(\rho)\big) \overset{\cref{eq:omegaissmallenough}}\leq C K^2 \frac{\sfL\big(\sfL(1/\rho)\big)}{\sfL(1/\rho)}.
  \end{equation*}
  Now extend $\h B$ independently to the interval $[0, T_0]$ with the same differential quadratic variation formula \cref{eq:BQV}.
  Recalling that $Q_0 < \Qbar_\SPDE(\h \sigma) \leq \Qbar_\FBSDE(\h \sigma)$, $J_{\h \sigma}$ is uniformly Lipschitz in $b$.
  Thus \cref{prop:momentbd} and \cref{eq:BQV} yield
  \begin{equation*}
    \E \abs{\int_0^{T_0} J_{\h \sigma}\big(\sfS_\rho(T - s), \m{G}_{T - s} \h v_s(x)\big) \d \h B(s)}^2 \leq \frac{C K^2}{\sfL(1/\rho)} \log \frac{T + \rho}{T - T_0 + \rho} = C K^2 \sfL_\rho\left(\frac{T_0}{T + \rho - T_0}\right).
  \end{equation*}
  By the same reasoning, we obtain the same error term.
  Therefore \cref{eq:pair-SDE} yields
  \begin{equation}
    \label{eq:pair-clean}
    \E \abs{\h v_t(x) - \m{G}_T \h v_0(x) - \int_0^t J_{\h \sigma}\big(\sfS_\rho(T - s), \m{G}_{T - s} \h v_s(x)\big) \d \h B(s)}^2 \leq C K^2 \frac{\sfL\big(\sfL(1/\rho)\big)}{\sfL(1/\rho)}.
  \end{equation}
  
  Given $q \in [0, Q_0]$, let $t(q) \coloneqq \sfR_{0,T,\rho}(q) = T - \sfT_\rho(Q_0 - q)$ from \cref{eq:sfRdef} and $\Theta_q \coloneqq \m{G}_{T - t(q)} \h v_{t(q)}(x)$.
  Then we can alternatively write \cref{eq:pair-clean} as
  \begin{equation*}
    \Theta_q = \Theta_0 + \int_0^q J_{\h \sigma}(Q_0 - p, \Theta_p) \d B(p) + g_q
  \end{equation*}
  where $B$ is a standard Brownian motion in $\R^{2m}$ and
  \begin{equation*}
    \sup_{q \in [0, Q_0]} \E \abss{g_q}^2 \leq C K^2 \frac{\sfL\big(\sfL(1/\rho)\big)}{\sfL(1/\rho)}.
  \end{equation*}
  Now let $\Theta^\Delta$ solve the SDE
  \begin{equation*}
    \dn \Theta_q^\Delta = J_{\h \sigma}(Q_0 - q, \Theta_q^\Delta) \ds B(q), \quad \Theta_0^\Delta = (\Theta_0^1, \Theta_0^1).
  \end{equation*}
  On the diagonal $b_1 = b_2$, \cref{lem:diagonal} ensures that $J_{\h \sigma}(p, b_1, b_2)$ projects to the diagonal for all $p$.
  It follows that $\Theta_q^\Delta$ remains on the diagonal, so $\Theta_q^{\Delta,1} = \Theta_q^{\Delta,2}$ for all $q$.
  Taking the difference with $\Theta_q$, we can write
  \begin{equation*}
    \Theta_q - \Theta_q^\Delta = \Theta_0 - \Theta_0^\Delta + \int_0^q [J_{\h \sigma}(Q_0 - p, \Theta_p) - J_{\h \sigma}(Q_0 - p, \Theta_p^\Delta)] \d B(p) + g_p.
  \end{equation*}
  Now $J_{\h \sigma}$ is uniformly Lipschitz in $b$, so
  \begin{equation*}
    \E \absb{\Theta_q - \Theta_q^\Delta}^2 \leq C \E \absb{\Theta_0 - \Theta_0^\Delta}^2 + C \int_0^q \E \absb{\Theta_p - \Theta_p^\Delta}^2 \d p + C K^2 \frac{\sfL\big(\sfL(1/\rho)\big)}{\sfL(1/\rho)}.
  \end{equation*}
  By Gr\"onwall,
  \begin{equation*}
    \E \absb{\Theta_{Q_0} - \Theta_{Q_0}^\Delta}^2 \leq C \E \absb{\Theta_0 - \Theta_0^\Delta}^2 + C K^2 \frac{\sfL\big(\sfL(1/\rho)\big)}{\sfL(1/\rho)}.
  \end{equation*}
  In particular, the distance from $\Theta_{Q_0}$ to the diagonal is bounded as above, so
  \begin{equation*}
    \E \absb{\Theta_{Q_0}^1 - \Theta_{Q_0}^2}^2 \leq C \E \absb{\Theta_0 - \Theta_0^\Delta}^2 + C K^2 \frac{\sfL\big(\sfL(1/\rho)\big)}{\sfL(1/\rho)}.
  \end{equation*}
  In terms of $\h v$, this becomes
  \begin{equation*}
    \E \abs{v_T^1(x) - v_T^2(x)}^2 \leq C \E \abs{\m{G}_T v_0^1(x) - \m{G}_T v_0^2(x)}^2 + C K^2 \frac{\sfL\big(\sfL(1/\rho)\big)}{\sfL(1/\rho)}.
  \end{equation*}
  Taking the supremum over $x \in \R^2$, the proposition follows.
\end{proof}
We now show that we can extend stability so long as the decoupling function remains Lipschitz.
\begin{prop}
  \label{prop:stab-extend}
  Let $\sigma \in \Lip(\R^m;\m{H}_+^m)$ and $Q_0 \in \big[0, \bar{Q}_\stab(\sigma) \wedge 1\bigr)$.
  Then
  \begin{equation*}
    \bar{Q}_\stab(\sigma) \geq \big[Q_0 + \Lip\bigl(J_\sigma(Q_0, \anon)\bigr)^{-2}\big] \wedge \bar{Q}_\SPDE(\sigma).
  \end{equation*}
\end{prop}
As a consequence, we obtain the following theorem.
\begin{thm}
  \label{thm:stable}
  Let $M\in(0,\infty)$, $\beta\in(0,1)$, and $\sigma\in\quadset(M,\beta)$.
  If $\Qbar_{\FBSDE}(\sigma)>1$, then $\Qbar_{\stab}(\sigma)>1$.
\end{thm}
The proof mimics that of \cref{thm:QSPDEgreater} above.
\begin{proof}[Proof of \cref{prop:stab-extend}]
  Fix
  \begin{equation*}
    Q_1 \in \bigl(Q_0, [Q_0 + \Lip\bigl(J_\sigma(Q_0, \anon)\bigr)^{-2}] \wedge \bar{Q}_\SPDE(\sigma)\bigr).
  \end{equation*}
  Because $Q_1 < \Qbar_\SPDE(\sigma) \leq \beta_*(\sigma)^{-2}$, we can take
  \begin{equation}
    \label{eq:beta-cond}
    \beta \in \big(\beta_*(\sigma) \vee [Q_0 + \Lip\bigl(J_\sigma(Q_0, \anon)\bigr)^{-2}]^{-1/2}, Q_1^{-1/2}\big).
  \end{equation}
  Then fix $\ell \in (2, 4]$.
  Let $u$ be a random field adapted to $(\s{F}_t)$ and define
  \begin{equation*}
    K \coloneqq \sup_{s \in [0, \sfT_\rho(\beta^{-2})]} \vvvert u_s \vvvert_\ell \And \delta \coloneqq \m{M}_{0,\sfT_\rho(\beta^{-2})}(\m{T}^{\sigma,\rho} u - u).
  \end{equation*}
  Then for all $t \in [0, \sfT_\rho(\beta^{-2})]$, we have
  \begin{equation}
    \label{eq:residue}
    u_t(x) = \m{G}_t u_0(x) + \gamma \int_0^t \m{G}_{t - r + \rho}[\sigma(u_r) \ds W_r](x) + g_{t}(x)
  \end{equation}
  for an adapted random field $g_{t}$ satisfying
  \begin{equation*}
    \sup_{x \in \R^2} \E \abss{g_{t}(x)}^2 \leq \delta^2.
  \end{equation*}
  We now renormalize.
  Let $\tau \coloneqq \sfT_\rho(Q_0)$.
  Applying $\m{G}_\tau$ to \cref{eq:residue}, we find
  \begin{equation}
    \label{eq:residue-renorm}
    \m{G}_\tau u_t(x) = \m{G}_t \m{G}_\tau u_0(x) + \gamma \int_0^t \m{G}_{t + \tau - r + \rho}[\sigma(u_r) \ds W_r](x) + \m{G}_\tau g_{t}(x),
  \end{equation}
  and Jensen yields
  \begin{equation*}
    \sup_{x \in \R^2} \E \abss{\m{G}_\tau g_{t}(x)}^2 \leq \delta^2.
  \end{equation*}
  Now let $\omega = \omega_{\sigma,Q_0,\beta,\ell}$ be the function provided by \cref{def:QSPDE}.
  We apply \cref{prop:coupling} with $\eta_r \mapsfrom t + \tau - r + \rho$ and $\zeta_r \mapsfrom [\tau/\omega(\rho)]^{1/2}$ to obtain a new space-time white noise $(\dn \widetilde{W}_r)$ such that
  \begin{equation}
    \label{eq:new-noise}
    \hspace*{-2mm}\E \abs{\int_0^t \m{G}_{t + \tau - r + \rho}\big[\sigma(u_r) \ds W_r - \m{S}_{(\tau/\omega)^{1/2}}[\sigma^2 \circ u_r]^{1/2} \ds \widetilde{W}_r](x)}^2
    \leq CK^2 \int_0^t \frac{\tau \d r}{(t + \tau - r + \rho)^2} \leq CK^2,
  \end{equation}
  for an absolute constant $C = C(\sigma,Q_0,Q_1,\beta,\ell) < \infty$ that may change from instance to instance.

  We now turn to the quantity $\m{S}_{(\tau/\omega)^{1/2}}[\sigma^2 \circ u_r]$.
  Let
  \begin{equation*}
    v_r^{(\tau)} \coloneqq \m{V}_{r - \tau,r}^{\sigma,\rho} u_{r - \tau}.
  \end{equation*}
  Because $Q_0 < \bar{Q}_\stab$ and $\tau < \sfT_\rho(Q_0)$, for all $r \in [\tau, \sfT_\rho(\beta^{-2})]$, \cref{eq:stable} yields
  \begin{equation}
    \label{eq:stable-small}
    \vvvert v_r^{(\tau)} - u_r\vvvert_2^2 \leq C \delta^2 + C K^2 \frac{\sfL\big(\sfL(1/\rho)\big)}{\sfL(1/\rho)}.
  \end{equation}
  In particular,
  \begin{align*}
    \sup_{x \in \R^2} \E \Big|\m{S}_{(\tau/\omega)^{1/2}}[\sigma^2 \circ v_r^{(\tau)}]^{1/2}(x) - &\m{S}_{(\tau/\omega)^{1/2}}[\sigma^2 \circ u_r]^{1/2}(x)\Big|_\Frob^2\\
    &\!\overset{\cref{eq:matrixvaluedreversetriangleinequality}}{\leq}\! \sup_{x \in \R^2} \E \big|\sigma \circ v_r^{(\tau)}(x) - \sigma \circ u_r(x)\big|_\Frob^2
    \overset{\cref{eq:stable-small}}{\leq} C \delta^2 + C K^2 \frac{\sfL\big(\sfL(1/\rho)\big)}{\sfL(1/\rho)}.
  \end{align*}
  Next, define $q \coloneqq \sfS_\rho\bigl(\tau/\omega(\rho)\bigr) < Q_0$.
  Because $q < Q_0 < \bar{Q}_\stab \leq \bar{Q}_\SPDE$, the key approximation \cref{eq:QSPDEapprox-3} ensures that
  \begin{equation*}
    \sup_{x \in \R^2} \E \Big|\m{S}_{(\tau/\omega)^{1/2}}[\sigma^2 \circ v_r^{(\tau)}]^{1/2}(x) - J_\sigma\bigl(q, \m{G}_{\tau/\omega} v_r^{(\tau)}(x)\bigr)\Big|_\Frob^2
    \leq C K^2 \frac{\sfL\bigl(\sfL(1/\rho)\bigr)}{\sfL(1/\rho)}.
  \end{equation*}
  Recalling that $\tau = \sfT_\rho(Q_0)$, one can readily check that $\abs{q - Q_0} \leq C \sfL\bigl(\sfL(1/\rho)\bigr)/\sfL(1/\rho)$.
  Following the proof of \cref{lem:applyIH}, we can thus adjust the arguments of $J_\sigma$ to obtain
  \begin{equation*}
    \sup_{x \in \R^2} \E \Big|\m{S}_{\tau^{1/2}}[\sigma^2 \circ v_r^{(\tau)}]^{1/2}(x) - J_\sigma\bigl(Q_0, \m{G}_\tau v_r^{(\tau)}(x)\bigr)\Big|_\Frob^2
    \leq C K^2\frac{\sfL\bigl(\sfL(1/\rho)\bigr)}{\sfL(1/\rho)}.
  \end{equation*}
  This adjustment rests on \cref{prop:Jsigmatimereg-1} and \cref{prop:continuitybd}; we omit the repeated details.

  Next, $Q_0 < \Qbar_\SPDE(\sigma) \leq \Qbar_\FBSDE(\sigma)$ and \cref{eq:stable-small} yield
  \begin{equation*}
    \sup_{x \in \R^2} \E \Big|J_\sigma\bigl(Q_0, \m{G}_\tau v_r^{(\tau)}(x)\bigr) - J_\sigma\bigl(Q_0, \m{G}_\tau u_r(x)\bigr)\Big|_\Frob^2 \leq C \delta^2 + C K^2 \frac{\sfL\big(\sfL(1/\rho)\big)}{\sfL(1/\rho)}.
  \end{equation*}
  Combining the above observations, the triangle inequality yields
  \begin{equation*}
    \sup_{x \in \R^2} \E \Big|\m{S}_{\tau^{1/2}}[\sigma^2 \circ u_r]^{1/2}(x) - J_\sigma\bigl(Q_0, \m{G}_\tau u_r(x)\bigr)\Big|_\Frob^2 \leq C \delta^2 + C K^2 \frac{\sfL\bigl(\sfL(1/\rho)\bigr)}{\sfL(1/\rho)}.
  \end{equation*}
  It follows that
  \begin{align}
    \E \bigg|\gamma \int_{\tau}^t &\m{G}_{t + \tau - r + \rho}\bigl(\big[\m{S}_{\tau^{1/2}}[\sigma^2 \circ u_r]^{1/2} - J_\sigma\bigl(Q_0, \m{G}_\tau u_r(x)\bigr)\big] \ds \widetilde{W}_r\bigr)(x)\bigg|^2\nonumber\\
    &= \E \bigg| \gamma \int_{\tau}^t \!\!\!\int_{\R^2} G_{t + \tau - r + \rho}(x - y)\m{G}_{t + \tau - r + \rho}\bigl(\big[\m{S}_{\tau^{1/2}}[\sigma^2 \circ u_r]^{1/2} - J_\sigma\bigl(Q_0, \m{G}_\tau u_r(x)\bigr)\big] \ds \widetilde{W}_r\bigr)(y)\bigg|^2\nonumber\\
    &= \gamma^2 \int_{\tau}^t \!\!\!\int_{\R^2} G_{t + \tau - r + \rho}^2(x - y) \E \Big|\m{S}_{\tau^{1/2}}[\sigma^2 \circ u_r]^{1/2}(y) - J_\sigma\bigl(Q_0, \m{G}_\tau u_r(y)\bigr)\Big|_\Frob^2 \d y \ds r\label{eq:stable-Ito}\\
    &\leq C \delta^2 + C K^2 \frac{\sfL\bigl(\sfL(1/\rho)\bigr)}{\sfL(1/\rho)}.\nonumber
  \end{align}
  To treat the early times $r \in [0, \tau]$, we recall that $\abs{J_\sigma(Q_0, b)} \leq C \langle b \rangle$ by \cref{prop:Jsigmaub}.
  Hence a variation on \cref{eq:stable-Ito} yields
  \begin{equation*}
    \E \bigg|\gamma \int_0^{\tau} \m{G}_{t + \tau - r + \rho}\bigl(\big[\m{S}_{\tau^{1/2}}[\sigma^2 \circ u_r]^{1/2} - J_\sigma\bigl(Q_0, \m{G}_\tau u_r(x)\bigr)\big] \ds \widetilde{W}_r\bigr)(x)\bigg|^2\leq C K^2 \frac{\sfL\bigl(\omega(\rho)\bigr)}{\sfL(1/\rho)} \overset{\cref{eq:omegaissmallenough}}{\leq} C K^2 \frac{\sfL\bigl(\sfL(1/\rho)\bigr)}{\sfL(1/\rho)}.
  \end{equation*}
  Combining these integral estimates, we find
  \begin{equation*}
    \E \bigg|\gamma \int_0^t \m{G}_{t + \tau - r + \rho}\bigl(\big[\m{S}_{\tau^{1/2}}[\sigma^2 \circ u_r]^{1/2} - J_\sigma\bigl(Q_0, \m{G}_\tau u_r(x)\bigr)\big] \ds \widetilde{W}_r\bigr)(x)\bigg|^2
    \leq C \delta^2 + C K^2 \frac{\sfL\bigl(\sfL(1/\rho)\bigr)}{\sfL(1/\rho)}.
  \end{equation*}
  Recalling \cref{eq:new-noise}, this implies that
  \begin{equation*}
    \E \bigg|\gamma \int_0^t \m{G}_{t + \tau - r + \rho}\big[\sigma(u_r) \ds W_r - J_\sigma\bigl(Q_0, \m{G}_\tau u_r(x)\bigr) \ds \widetilde{W}_r\big](x)\bigg|^2 \leq C \delta^2 + C K^2 \frac{\sfL\bigl(\sfL(1/\rho)\bigr)}{\sfL(1/\rho)}.
  \end{equation*}
  Using this in \cref{eq:residue-renorm}, we thus obtain
  \begin{equation*}
    \E \bigg|\m{G}_\tau u_t(x) - \m{G}_t \m{G}_\tau u_0(x) - \gamma \int_0^t \m{G}_{t + \tau - r + \rho}[J_\sigma\bigl(Q_0, \m{G}_\tau u_r(x)\bigr) \ds \widetilde{W}_r](x)\bigg|^2 \leq C \delta^2 + C K^2 \frac{\sfL\bigl(\sfL(1/\rho)\bigr)}{\sfL(1/\rho)}.
  \end{equation*}
  Let $\ti{\m{T}}$ denote the analogue of the mild operator $\m{T}_0^{\sigma,\rho}$ with $\rho \mapsfrom \ti\rho \coloneqq \tau + \rho = \sfT_\rho(Q_0) + \rho$, $W \mapsfrom \widetilde{W}$, and $\sigma \mapsfrom (1 - Q_0)^{1/2}J_\sigma(Q_0, \anon)$ as in \cref{eq:sigmatildedef-1}.
  This is the mild operator corresponding to the $Q_0$-renormalized equation.
  Then we have shown that
  \begin{equation}
    \label{eq:renorm-error}
    \E \absb{\tilde{\m{T}} \m{G}_\tau u - u}^2 \leq C \delta^2 + C K^2 \frac{\sfL\bigl(\sfL(1/\rho)\bigr)}{\sfL(1/\rho)}.
  \end{equation}
  We intend to apply \cref{defn:stable} to the renormalized equation.
  To this end, choose
  \begin{equation*}
    \ti\beta \in \Bigl(\Lip(\ti \sigma),\Bigr(\frac{\beta^{-2} - Q_0}{1 - Q_0}\Bigl)^{-1/2}\Bigr),
  \end{equation*}
  which is possible due to \cref{eq:beta-cond}.
  In particular, $\ti \beta > \Lip(\ti \sigma) \geq \beta_*(\ti\sigma)$.
  Next choose
  \begin{equation*}
    \ti{Q} \in \Bigl(\frac{Q_1 - Q_0}{1 - Q_0}, \ti \beta^{-2}\Bigr),
  \end{equation*}
  which is possible because $\beta < Q_1^{-1/2}$ by \cref{eq:beta-cond}.
  Now $\ti Q < \ti \beta^{-2} < \Lip(\ti \sigma)^{-2}$, so by \cref{prop:stab-Lip}, $\ti Q < \Qbar_\stab(\ti \sigma)$.
  Since $\ti \beta \in (\beta_*(\ti \sigma), \ti{Q}^{-1/2})$, we can apply \cref{defn:stable} with data $(\ti \sigma,\ti Q,\ti \beta,\ell)$.
  By \cref{eq:renorm-error},
  \begin{equation}
    \label{eq:renorm-stable}
    \m{M}_{s,t}(\m{G}_\tau u - \widetilde{\m{V}}_{s,\anon}^{\ti \sigma,\ti \rho} \m{G}_\tau u_s)^2 \leq C \delta^2 + C K^2 \frac{\sfL\bigl(\sfL(1/\rho)\bigr)}{\sfL(1/\rho)}
  \end{equation}
  whenever $0 \leq s \leq t \leq \sfT_{\ti \rho}(\ti \beta^{-2})$ and $t - s \leq \sfT_{\ti \rho}(\ti Q)$.
  Here we let $\widetilde{\m{V}}$ denote the evolution operator corresponding to \cref{eq:SPDE} with noise $\dn \widetilde{W}$ in place of $\dn W$.
  Strictly speaking, the constant $C$ now additionally depends on $(\ti \sigma,\ti Q,\ti \beta)$.
  We can reduce this to the earlier dependence by observing that $\ti \sigma$ is a function of $(\sigma, Q_0)$ and we can choose $\ti \beta$ and $\ti Q$ to be functions of $(\beta,Q_0,Q_1)$.

  In the following, we assume that $0 \leq s \leq t \leq \sfT_\rho(\beta^{-2})$ and $t - s \leq \sfT_\rho(Q_1)$.
  By \cref{eq:Sapprox} and \cref{eq:Tapprox},
  \begin{equation*}
    \sfS_{\ti{\rho}}\bigl(\sfT_\rho(Q_1)\bigr) \to \frac{Q_1 - Q_0}{1 - Q_0} < \ti Q \And \sfS_{\ti{\rho}}\bigl(\sfT_\rho(\beta^{-2})\bigr) \to \frac{\beta^{-2} - Q_0}{1 - Q_0} < \ti \beta^{-2} \quad \text{as } \rho \to 0.
  \end{equation*}
  (Here we use $Q_0 < 1$ to ensure that $\ti \rho \ll 1$, so \cref{eq:Sapprox} is valid.)
  It follows that $\sfT_\rho(Q_1) < \sfT_{\ti \rho}(\ti Q)$ and $\sfT_\rho(\beta^{-2}) < \sfT_{\ti \rho}(\ti \beta^{-2})$ once $\rho$ is sufficiently small, so \cref{eq:renorm-stable} holds for the times $(s,t)$ we consider.

  Now let $v_r \coloneqq \m{V}_{s,r}^{\sigma,\rho}u_s$ for $r \in [s, t]$.
  We can repeat the above analysis with $v$ in place of $u$ to find
  \begin{equation*}
    \m{M}_{s,t}(\m{G}_\tau v - \widetilde{\m{V}}_{s,\anon}^{\ti \sigma,\ti \rho} \m{G}_\tau u_s)^2 \leq C K^2 \frac{\sfL\bigl(\sfL(1/\rho)\bigr)}{\sfL(1/\rho)}.
  \end{equation*}
  (This is simply a restatement of \cref{eq:swapthings} from the proof of the key approximation.)
  Together with \cref{eq:renorm-stable}, we see that
  \begin{equation}
    \label{eq:large-scale}
    \m{M}_{s,t}(\m{G}_\tau u - \m{G}_\tau v)^2 \leq C \delta^2 + C K^2 \frac{\sfL\bigl(\sfL(1/\rho)\bigr)}{\sfL(1/\rho)}.
  \end{equation}
  That is, $u$ and $v$ are close at scale $\tau$.

  Given $r \in [s + \tau, t]$, recall $v_r^{(\tau)} = \m{V}_{r - \tau, r}^{\sigma,\rho}u_{r - \tau}$.
  For $h \in [0, \tau]$, take $v_h^1 \coloneqq \m{V}_{r - \tau, r - \tau + h}^{\sigma,\rho}u_{r - \tau}$ and $v_h^2 \coloneqq \m{V}_{r - \tau, r - \tau + h}^{\sigma,\rho}v_{r - \tau}$ in \cref{prop:stab-coarse}.
  Then \cref{eq:stab-coarse,eq:large-scale} imply that
   \begin{equation*}
    \vvvert v_r^{(\tau)} - v_r\vvvert^2 \leq C \delta^2 + C K^2 \frac{\sfL\bigl(\sfL(1/\rho)\bigr)}{\sfL(1/\rho)}.
  \end{equation*}
  Using \cref{eq:stable-small}, we further have
  \begin{equation*}
    \vvvert u_r - v_r \vvvert_2^2 \leq C \delta^2 + C K^2 \frac{\sfL\bigl(\sfL(1/\rho)\bigr)}{\sfL(1/\rho)}
  \end{equation*}
  for all $r \in [s + \tau, t]$.
  When $r \in [s, s + \tau]$, we can use \cref{eq:stable} directly to see that $u_r$ and $v_r$ are similarly close.
  Therefore
  \begin{equation*}
    \m{M}_{s,t}(u - \m{V}_{s,\anon}^{\sigma,\rho} u_s)^2 \leq C \delta^2 + C K^2 \frac{\sfL\bigl(\sfL(1/\rho)\bigr)}{\sfL(1/\rho)}.
  \end{equation*}
  Then we have satisfied \cref{eq:stable} with a constant $C(\sigma,Q_0,Q_1,\beta,\ell)$.
  Finally, we can choose $Q_0$ as a function of $\sigma$ and $Q_1$ much as in \cref{item:thegoal} of the proof of \cref{prop:extendQSPDE-1}.
  Omitting the repeated details, we see that $C$ has the desired dependence on $(\sigma,Q_1,\beta,\ell)$.
  Therefore $\Qbar_\stab(\sigma) \geq Q_1$.
\end{proof}

\subsection{Pre-smoothed noise}

Recall the solution $u_t(x)$ of the problem \cref{eq:u-SPDE} with pre-smoothed noise.
We now show that $u$ approximately solves the problem \cref{eq:SPDE} with post-smoothed noise.

To do so, we first show that $u$ satisfies the same moment bounds as $v$.
Let $\clU$ denote the random propagator for the pre-smoothed problem \cref{eq:u-SPDE}, in analogy with $\m{V}$ (defined after \cref{eq:triplenormdef}) for \cref{eq:SPDE}.
\begin{lem}
  \label{lem:u-moment}
  Fix $\ell \geq 2$, $\beta \in (0, \infty)$, $s\in \R$, $u_s \in \s{X}_s^\ell$, and $\sigma \in \Sigma(M,\beta)$.
  For all $T \in \big[s, s + \sfT_\rho(\eta(\beta,\ell)^{-2})\bigr)$,
  \begin{equation*}
    \vvvert\clU_{s,T}^{\sigma,\rho} u_s\vvvert_\ell^\ell \leq \frac{\vvvert u_{s}\vvvert_{\ell}^{\ell}+2^{1-2/\ell}(\ell-1)\sfS_{\rho}(T-s)M^{\ell/2}}{1-\eta(\beta,\ell)^{2}\sfS_{\rho}(T-s)}.
  \end{equation*}
\end{lem}
\begin{proof}
  This recapitulates \cref{prop:momentbd}, and the proof is very similar.
  Here, we demonstrate how to treat the quadratic variation of $\m{G}_{T - t} u_t$, which is the only novelty.
  Let $U_t \coloneqq \m{G}_{T - t} u_t(x)$, which is a martingale.
  Using \cref{eq:u-SPDE} and properties of white noise, we can compute
  \begin{equation}
    \label{eq:u-QV}
    \der{[U]_r}{t} = \gamma^2 \int_{(\R^2)^2} G_{T - r}(x - y_1)G_{T - r}(x - y_2) (\sigma \circ u_r)(y_1) (\sigma \circ u_r)(y_2) G_{2 \rho}(y_1 - y_2) \ds y_1 \ds y_2.
  \end{equation}
  Following the proof of \cref{prop:momentbd}, we must control the nuclear (or trace) norm of this quadratic variation.
  Using the Schatten--Cauchy--Schwarz inequality and Young's inequality, we can write
  \begin{equation*}
    \tr (\sigma \circ u_r)(y_1) (\sigma \circ u_r)(y_2) \leq (\abs{\sigma}_{\Frob} \circ u_r)(y_1) (\abs{\sigma}_{\Frob} \circ u_r)(y_2) \leq \frac{1}{2}\abs{\sigma}_{\Frob}^2 \circ u_r(y_1) + \frac{1}{2}\abs{\sigma}_{\Frob}^2 \circ u_r(y_2).
  \end{equation*}
  The integrand in \cref{eq:u-QV} is symmetric in $y_1$ and $y_2$, so we find
  \begin{align}
    \tr \der{[U]_r}{t} &\leq \gamma^2 \int_{\R^2} G_{T - r}(x - y_1) \abs{\sigma}_{\Frob}^2 \circ u_s(y_1) \int_{\R^2} G_{T - r}(x - y_2) G_{2\rho}(y_2 - y_1) \d y_2 \ds y_1\nonumber\\
                       &\leq \gamma^2 \int_{\R^2} G_{T - r}(x - y) G_{T - r + 2\rho}(x - y) \abs{\sigma}_{\Frob}^2 \circ u_s(y) \ds y\nonumber\\
                       &\leq \frac{1}{\sfL(1/\rho)(T - r + \rho)} \m{G}_{\frac{(T - r)(T - r + 2 \rho)}{2(T - r + \rho)}} \abs{\sigma}_\Frob^2 \circ u_r(x).\label{eq:u-QV-bd}
  \end{align}
  This allows one to apply Jensen as in \cref{eq:Jensenparty-pre}, and the remainder of the proof of \cref{prop:momentbd} is unchanged.
\end{proof}
Let $\h\Xi$ denote the analogue of $\Xi$ from \cref{eq:momentbdnotation} with the evolution operator $\clU$ in place of $\clV$.
\begin{lem}
  \label{lem:u-continuity}
  \cref{prop:continuitybd} holds for $u$ as well.
\end{lem}
\begin{proof}
  The proof is almost identical.
  To compare at different times, we integrate the quadratic variation of $U_t = \m{G}_{T - t}u_t$ and use \cref{eq:u-QV-bd}.
  We omit the details.
\end{proof}
We can now show that the solution $u$ of the pre-smoothed problem \cref{eq:u-SPDE} approximately solves the post-smoothed problem \cref{eq:SPDE} in a mild sense.
Recall the operator $\clT$ from \cref{eq:clTdef} and the norm $\clM$ from \cref{eq:scrMstdef}.
\begin{prop}
  \label{prop:approx-soln}
  There is an absolute constant $C < \infty$ such that for all $\sigma \in \Lip(\R^m; \clH_+^m)$, $t > s$, and $u_s \in \scrX_s^2$, we have
  \begin{equation}
    \label{eq:approx-soln}
    \clM_{s,t}(\clT_{s}^{\sigma,\rho}u_s - u) \leq C \Lip(\sigma) \h\Xi_{2;s,t}^{\sigma,\rho}(u_s) \sfS_\rho(t - s + \rho)^{1/2} \left[\frac{\sfL\bigl(\sfL(1/\rho)\bigr)}{\sfL(1/\rho)}\right]^{1/2}.
  \end{equation}
\end{prop}
\begin{proof}
  We recall that
  \begin{equation}
    \label{eq:mild-post}
    (\m{T}_s^{\sigma,\rho} u_s)_r = \m{G}_r u_s + \gamma \int_s^r \m{G}_{r - \tau + \rho}(\sigma \circ u_\tau \d W_\tau)
  \end{equation}
  while the mild formulation of \cref{eq:u-SPDE} reads
  \begin{equation}
    \label{eq:mild-pre}
    u_r = \m{G}_r u_s + \gamma \int_s^r \m{G}_{r - \tau}(\sigma \circ u_\tau \, \m{G}_\rho \dn W_\tau).
  \end{equation}
  These differ only in the placement of the operator $\m{G}_\rho$, and we wish to show that the effect is minor.
  Taking the difference and applying the It\^o isometry, we find
  \begin{equation}
    \label{eq:u-diff-L2-prelim}
    \begin{aligned}
      \E |(\m{T}_s^{\sigma,\rho} u_s)_r(x) - u_r(x)&|^2 = \gamma^2 \int_s^r \!\!\!\int_{(\R^2)^2} G_{r-\tau}(x - y_1) G_{r - \tau}(x - y_2) \int_{\R^2} G_\rho(y_1 - z) G_\rho(y_2 - z)\\
                                                             &\cdot \E \tr \bigl([\sigma \circ u_\tau(z) - \sigma \circ u_\tau(y_1)] [\sigma \circ u_\tau(z) - \sigma \circ u_\tau(y_2)]\bigr) \d z \ds y_1 \ds y_2 \ds \tau.      
    \end{aligned}
  \end{equation}
  In the following, we abbreviate $\h\Xi_{2;s,t}^{\sigma,\rho}(u_s)$ as $\h \Xi$.
  Combining Cauchy--Schwarz, Young, and \cref{lem:u-continuity}, \cref{eq:continuitybd} yields
  \begin{equation}
    \label{eq:three-parts}
    \E \tr \prod_{i=1,2} [\sigma \circ u_\tau(z) - \sigma \circ u_\tau(y_i)] \leq C [\Lip(\sigma)]^2 \h \Xi^2 \sum_i \left[\sfL_\rho\left(\frac{\abs{z - y_i}^2}{2\rho}\right) + \frac{\abs{z - y_i}^2}{2(\tau - s)} + \frac{\sfL\bigl(\sfL(1/\rho)\bigr)}{\sfL(1/\rho)}\right].
  \end{equation}
  The terms in the sum contribute three terms to the right of \cref{eq:u-diff-L2-prelim}, which we label $E_1$, $E_2$, and $E_3$.

  To treat $E_1$, we observe that
  \begin{equation*}
    \sfL_\rho\left(\frac{\abs{z - y_i}^2}{2\rho}\right) \leq C \frac{\sfL\bigl(\sfL(1/\rho)\bigr)}{\sfL(1/\rho)} + C\sfL(1 + \abs{z - y_i}^2) \tbf{1}_{\abs{z - y_i}^2 \geq \rho \sfL(1/\rho)^2}.
  \end{equation*}
  Multiplying by $G_\rho(z - y_i)$, we can write
  \begin{equation*}
    G_\rho(z - y_i) \sfL_\rho\left(\frac{\abs{z - y_i}^2}{2\rho}\right) \leq C \frac{\sfL\bigl(\sfL(1/\rho)\bigr)}{\sfL(1/\rho)} G_\rho(z - y_i) + C \rho^{100} G_{\rho/2}(z - y_i).
  \end{equation*}
  It is straightforward to check that the second term on the right contributes negligibly.
  The first term has the same form as $E_3$, so we absorb it there and treat $E_3$ below.

  For $E_2$, we apply \cref{eq:three-parts} when $\tau \geq s + \rho^{3/2}$ and control \cref{eq:u-diff-L2-prelim} differently when $\tau \in [s, s + \rho^{3/2}]$.
  Let $E_2 = E_{2,1} + E_{2,2}$ denote these quantities.
  To treat $E_{21}$, we observe that
  \begin{equation*}
    \abs{z - y_i}^2 G_\rho(z - y_i) \leq C \rho^2 G_{\rho/2}(z - y_i).
  \end{equation*}
  We can thus write
  \begin{equation*}
    E_{2,1} \leq C [\Lip(\sigma)]^2 \h \Xi^2 \rho^2 \gamma^2 \int_{s + \rho^{3/2}}^r \frac{\dn \tau}{\tau - s} \int_{(\R^2)^2} \prod_i G_{r - \tau}(x - y_i) \int_{\R^2} \prod_i G_{\rho/2}(z - y_i) \d z \ds y_1 \ds y_2.
  \end{equation*}
  Using the identity
  \begin{equation*}
    \prod_i G_{\rho/2}(z - y_i) = G_{\rho}(y_1 - y_2) G_{\rho/4}(z - \bar y)
  \end{equation*}
  with $\bar y \coloneqq \frac{y_1 + y_2}{2}$ and similarly for $\prod_i G_{r - \tau}(x - y_i)$, we integrate and find
  \begin{align*}
    E_{2,1} &\leq C [\Lip(\sigma)]^2 \h \Xi^2 \rho^2 \gamma^2 \int_{s + \rho^{3/2}}^r \frac{\dn \tau}{\tau - s} \int_{\R^2} G_{2(r - \tau)}(y) G_\rho(y) \d y\\
    &\leq C [\Lip(\sigma)]^2 \h \Xi^2 \int_{s + \rho^{3/2}}^r \frac{\rho^2 \gamma^2 \d \tau}{(\tau - s)(r - \tau + \rho)}
    \leq C [\Lip(\sigma)]^2 \h \Xi^2 \sfS_\rho(r - s) \rho^{1/2},
  \end{align*}
  which is sufficiently small.
  Next, we extract the very early portion of \cref{eq:u-diff-L2-prelim} to find
  \begin{align*}
    E_{2,2} &\leq C [\Lip(\sigma)]^2 \h \Xi^2 \gamma^2 \int_s^{(s + \rho^{3/2}) \wedge r} \!\!\! \int_{(\R^2)^2} \prod_i G_{r - \tau}(x - y_i) \int_{\R^2} \prod_i G_\rho(z - y_i) \d z \ds y_1 \ds y_2 \ds \tau\\
            &\leq C [\Lip(\sigma)]^2 \h \Xi^2 \gamma^2 \int_s^{(s+\rho^{3/2}) \wedge r} \frac{\dn \tau}{r - \tau + \rho} \leq C [\Lip(\sigma)]^2 \h \Xi^2 \min\{\sfS_\rho(r - s), \rho^{1/2}\}.
  \end{align*}
  We crudely control this by $\sfS_\rho(r - s + \rho) \sfL\bigl(\sfL(1/\rho)\bigr)/\sfL(1/\rho)$ in \cref{eq:approx-soln}.
  So \cref{eq:approx-soln} is lax when $t \leq s + \rho$, but this will not matter in our applications.
  
  Finally, we come to $E_3$.
  Mimicking the above estimates, we find
  \begin{equation*}
    E_3 \leq  C [\Lip(\sigma)]^2 \h \Xi^2 \frac{\sfL\bigl(\sfL(1/\rho)\bigr)}{\sfL(1/\rho)} \gamma^2 \int_s^r \frac{\dn \tau}{r - \tau + \rho} = C [\Lip(\sigma)]^2 \h \Xi^2 \sfS_\rho(r - s) \frac{\sfL\bigl(\sfL(1/\rho)\bigr)}{\sfL(1/\rho)}.
  \end{equation*}
  Taking the supremum over $x \in \R^2$ and $r \in [s,t]$, \cref{eq:approx-soln} follows.
\end{proof}
Modifying this proof, one can show that an approximate mild solution of \cref{eq:u-SPDE} is an approximate mild solution of \cref{eq:SPDE} and vice versa, with suitable quantitative bounds.
So there is a certain symmetry between the formulations \cref{eq:u-SPDE} and \cref{eq:SPDE}.

Since $u$ is an approximate mild solution of \cref{eq:SPDE}, \cref{thm:stable} implies that $u$ remains close to $v$.
\begin{thm}
  Fix $\bar{T} > 0$, $\beta \in (0, 1)$, $M \in (0, \infty)$, $\ell \in (2, 4]$, and $\sigma \in \Sigma(M,\beta)$.
  Suppose $\Qbar_\FBSDE(\sigma) > 1$.
  Then for all $u_0 \in \s{X}_0^\ell$, there exists a constant $C(\sigma,M,\beta,\ell,\bar{T}) < \infty$ such that
  \begin{equation}
    \label{eq:pre-smoothed}
    \m{M}_{0,\bar{T}}(\m{U}_{0,\anon}^{\sigma,\rho}u_0 - \m{V}_{0,\anon}^{\sigma,\rho}u_0) \leq C \langle\vvvert u_0\vvvert_\ell\rangle \frac{\sfL\bigl(\sfL(1/\rho)\bigr)}{\sfL(1/\rho)}.
  \end{equation}
\end{thm}
\begin{proof}
  Combining \cref{lem:u-moment} and \cref{prop:approx-soln}, we find
  \begin{equation*}
    \m{M}_{0,\sfT_\rho(\beta^{-2})}(\m{T}_0^{\sigma,\rho}u_0 - \m{U}_{0,\anon}^{\sigma,\rho}u_0) \leq C \langle\vvvert u_0\vvvert_\ell\rangle \frac{\sfL\bigl(\sfL(1/\rho)\bigr)}{\sfL(1/\rho)}
  \end{equation*}
  for a constant $C(\sigma,M,\beta,\ell) < \infty$.
  \cref{thm:stable} ensures that $\Qbar_\stab(\sigma) > 1$.
  Hence \cref{eq:pre-smoothed} follows from \cref{defn:stable} and \cref{lem:u-moment}.
\end{proof}
\begin{cor}
  \cref{thm:thebigtheorem} holds for $u$ as well as $v$, i.e.\ for solutions of \cref{eq:u-SPDE} as well as \cref{eq:SPDE}.
\end{cor}
\noindent
These results immediately imply \cref{cor:pre-post}.

\appendix

\section{Auxiliary results}\label{appendix:auxiliary}

In this first appendix we establish some basic auxiliary results that
are used throughout the paper.

\subsection{The two-dimensional Gaussian density}

We collect some basic results on the two-dimensional Gaussian density
\[
  G_{\tau}(x)=\frac{1}{2\pi\tau}\exp\left(-\frac{|x|^{2}}{2\tau}\right) ,\qquad\tau>0,\,x\in\RR^{2}.
\]
We note that
\begin{equation}
  G_{\tau}(x)^{2}=\frac{1}{4\pi\tau}G_{\tau/2}(x);\label{eq:GT2}
\end{equation}
in particular, this means that
\begin{equation}
  \|G_{\tau}\|_{L^{2}(\RR)^{2}}^{2}=\frac{1}{4\pi\tau}.\label{eq:GtauL2norm}
\end{equation}
We can also estimate the $L^{1}$ distance between two Gaussians
using Pinsker's inequality; this was done in \cite[(5.6)\textendash (5.7)]{DG22}.
Let $D_{\mathrm{KL}}$ denote the KL-divergence (relative entropy).
Then for $t_{2}>t_{1}$, we have
\begin{equation}
  \begin{aligned}\|G_{t_{2}}(x_{2}-\anon)-G_{t_{1}}(x_{1}-\anon)\|_{L^{1}}^{2} & \le2D_{\mathrm{KL}}\bigl(G_{t_{2}}(x_{2}-\anon)\mathbin{\|}G_{t_{1}}(x_{1}-\anon)\bigr)                                                                          \\
                                                                               & =\log\left(\frac{t_{1}}{t_{2}}\right)+\frac{t_{2}}{t_{1}}-1+\frac{|x_{2}-x_{1}|^{2}}{2t_{1}}\le\frac{t_{2}-t_{1}+\frac{1}{2}|x_{2}-x_{1}|^{2}}{t_{1}}.
  \end{aligned}
  \label{eq:L1ofgaussians}
\end{equation}
The first inequality is by Pinsker's inequality (see, e.g., \cite[Theorem 1.5.4 and Lemma 1.5.3]{Iha93}),
the second is by a computation done in \cite[Theorem 1.8.2]{Iha93},
and the third is since $t_{2}>t_{1}$ so the logarithm is negative.

\subsection{SDE solution theory\label{appendix:SDE-solution-theory}}

Here we provide the proof of \cref{prop:SDEwellposed}.
\begin{proof}[Proof of \cref{prop:SDEwellposed}.]
  Let $Q'\in[0,Q]$. For adapted $\RR^{m}$-valued processes
  $\Gamma,\widetilde{\Gamma}$ on $[0,Q']$, we see by the Itô isometry
  and \cref{eq:gLip} that, for any $q\in[0,Q']$,
  \begin{align}
    \EE |\clR_{a,Q}^{g}\Gamma(q)-\clR_{a,Q}^{g}\widetilde{\Gamma}(q)|^{2} & =\int_{0}^{q}\EE |g\bigl(Q-p,\Gamma(p)\bigr)-g\bigl(Q-p,\widetilde{\Gamma}(p)\bigr)|_{\Frob}^{2}\,\dif p\nonumber   \\
                                                                          & \le L^{2}\int_{0}^{q}\EE |\Gamma(p)-\widetilde{\Gamma}(p)|^{2}\,\dif p.\label{eq:useitoeasiest}
  \end{align}
  If we define the norm $\clK_{L,Q'}$ on adapted processes on
  $[0,Q']$ by
  \[
    \clK_{L,Q'}(\Gamma)\coloneqq\sup_{q\in[0,Q']}\e^{-L^{2}q}\bigl(\EE |\Gamma(q)|^{2}\bigr)^{1/2},
  \]
  then we get from \cref{eq:useitoeasiest} that
  \begin{align*}
    \clK_{L,Q'}^{2}(\clR_{a,Q}^{g}\Gamma-\clR_{a,Q}^{g}\widetilde{\Gamma}) & \le L^{2}\clK_{L,Q'}^{2}(\Gamma-\widetilde{\Gamma})\sup_{q\in[0,Q']}\e^{-2L^{2}q}\int_{0}^{q}\e^{2L^{2}p}\,\dif p                                             \\
                                                                           & =\frac{1}{2}\clK_{L,Q'}^{2}(\Gamma-\widetilde{\Gamma})\sup_{q\in[0,Q']}\e^{-2L^{2}q}[\e^{2L^{2}q}-1]\le\frac{1}{2}\clK_{L,Q'}^{2}(\Gamma-\widetilde{\Gamma}).
  \end{align*}
  Therefore, $\clR_{a,Q}^{g}$ is a contraction with respect
  to the norm $\clK_{L,Q'}$. It is straightforward to check
  that the corresponding metric space is complete, so $\clR_{a,Q}^{g}$
  has a unique fixed point $\Theta_{a,Q}^{g}$, and it is also not difficult
  to check that $\Theta_{a,Q}^{g}$ can be chosen to be adapted and
  have continuous sample paths. But a fixed point of $\clR_{a,Q}^{g}$
  is exactly a solution to \zcref[range]{eq:thetaSDE,eq:thetaIC}. Since
  the norm $\clK_{L,Q'}$ is equivalent to the norm
  \[
    \Gamma\mapsto\sup_{q\in[0,Q']}\bigl(\EE |\Gamma(q)|^{2}\bigr)^{1/2},
  \]
  with the constants in the equivalence depending only on $Q$ and $L$,
  we can conclude that \cref{eq:SDEclose} holds as well.

  We now check that $\Theta_{a,Q}^{g}$ has moments of all orders to
  prove \cref{eq:finite-moment}. By Hölder's inequality, it suffices
  to treat the case $\ell\ge2$. For $N\ge|a|$, we define the stopping
  time
  \[
    \tau_{N}\coloneqq\inf\bigl\{q\in[0,Q]\suchthat |\Theta_{a,Q}^{g}(q)|\ge N\bigr\},
  \]
  under the temporary convention that $\inf\emptyset=Q$. Let $\Theta_{a,Q,N}^{g}(q)\coloneqq\Theta_{a,Q}^{g}(q\wedge\tau_{N})$
  We note that
  the gradient of the map $|\anon|^{2}\colon\RR^{m}\to\RR$
  at $X\in\RR^m$ is given by $\nabla|\anon|^2(X)=2X$, and the Hessian by $\nabla^2|\anon|^2(X)=2\Id_m$.
  By the chain rule, the gradient of $|\anon|^{\ell}=(\anon)^{\ell/2}\circ|\anon|^2\colon\RR^{m}\to\RR$
  is given by
  \begin{equation}
    \nabla|\anon|^{\ell}(X)=2(\ell/2)|X|^{2(\ell/2-1)}X=\ell|X|^{\ell-2}X\label{eq:lpowergradient}
  \end{equation}
  and the Hessian by
  \begin{align}
    \nabla^{2}|\cdot|^{\ell}(X) & =4(\ell/2)(\ell/2-1)|X|^{2(\ell/2-2)}X^{\otimes 2}+2(\ell/2)|X|^{2(\ell/2-1)}\Id_m \notag\\
                                & =\ell(\ell-2)|X|^{\ell-4}X^{\otimes2}+\ell|X|^{\ell-2}\Id_m.\label{eq:lpowerHessian}
  \end{align}
  Thus by Itô's formula and \zcref[range]{eq:thetaSDE,eq:thetaIC} we obtain
  \begin{align*}
    \dif &|\Theta^g_{a,Q,N}(r)|^\ell \\&= \ell |\Theta^g_{a,Q,N}(r)|^{\ell-2}\Theta^g_{a,Q,N}(r)\mathbf{1}_{r\le \tau_N}\cdot \dif\Theta^g_{a,Q,N}(r) \\&\quad+ \frac12\tr\left\{\left(\ell(\ell-2)|\Theta^g_{a,Q,N}(r)|^{\ell-4}\Theta^g_{a,Q,N}(r)^{\otimes 2}  +\ell|\Theta^g_{a,Q,N}|^{\ell-2}\Id_m  \right)\mathbf{1}_{r\le \tau_N}g^2\bigl(Q-r,\Theta^g_{a,Q,N}(r)\bigr) \right\}. %
  \end{align*}
  Because $\Theta_{a,Q,N}^{g}$ is bounded, we can take expectations
  to conclude that
  \begin{align*}
    \EE& |\Theta_{a,Q,N}^{g}(q)|^{\ell}-|a|^\ell\\&=
    \frac\ell 2\EE\left[\int_0^{q\wedge\tau_N}\tr\left\{\left((\ell-2)|\Theta^g_{a,Q,N}(r)|^{\ell-4}\Theta^g_{a,Q,N}(r)^{\otimes 2}  +|\Theta^g_{a,Q,N}(r)|^{\ell-2}\Id_m  \right)g^2\bigl(Q-r,\Theta^g_{a,Q,N}(r)\bigr) \right\}\,\dif r\right]%
  \end{align*}
  for all $q\in[0,Q]$.
  Because $g\in\clA_{Q}$, there exists a constant
  $C<\infty$ such that
  \[
    |g(q,b)|_\Frob\le C\langle b\rangle\qquad\text{for all }(q,b)\in[0,Q]\times\RR^m.
  \]
  Together, the last two displays imply that
  \[
    \EE |\Theta^g_{a,Q,N}(q)|^\ell \le C + C\int_0^q \EE|\Theta^g_{a,Q,N}(r)|^\ell \,\dif r,
  \]
  with a new constant $C=C(g,\ell,a,Q)<\infty$ independent of $N$.
  By Grönwall's inequality,
  there exists yet another constant $C$, still independent of $N$, such that %
  \[
    \sup_{q\in[0,Q]}\EE |\Theta_{a,Q,N}^{g}(q)|^{\ell}\le C
  \]
  for all $N\ge|a|$. Taking $N\to\infty$, \cref{eq:finite-moment} follows
  from the monotone convergence theorem.

  Finally, we prove \cref{eq:SDEctsininitialconditions}. We note that,
  for all $q\in[0,Q]$, we have
  \begin{align*}
    \EE |\Theta_{a,Q}^{g}(q)-\Theta_{\widetilde{a},Q}^{g}(q)|^{2} & =|a-\widetilde{a}|^{2}+\int_{0}^{q}\bigl\lvert g\bigl(Q-p,\Theta_{a,Q}^{g}(p)\bigr)-g\bigl(Q-p,\Theta_{\widetilde{a},Q}^{g}(p)\bigr)\bigr\rvert _{\Frob}^{2}\,\dif p \\
                                                                  & \le|a-\widetilde{a}|^{2}+L^{2}\int_{0}^{q}|\Theta_{a,Q}^{g}(p)-\Theta_{\widetilde{a},Q}^{g}(p)|^{2}\,\dif p.
  \end{align*}
  By Grönwall's inequality, this implies that
  \[
    \EE |\Theta_{a,Q}^{g}(q)-\Theta_{\widetilde{a},Q}^{g}(q)|^{2}\le\e^{L^{2}q}|a-\widetilde{a}|^{2},
  \]
  which is \cref{eq:SDEctsininitialconditions} with $C=\e^{L^{2}Q}$.
\end{proof}

\subsection{Compositions of averages of random fields over boxes }

We define the squares $\ttB_{\zeta,k}$, the functions $k_{\zeta, z}$,
and the operators $\clS_{\zeta, z}$ as in \zcref[range]{eq:boxdef,eq:Scaldef}.
It is of course a standard fact of Gaussian convolution that $\clG_{\tau_{1}}\clG_{\tau_{2}}=\clG_{\tau_{1}+\tau_{2}}$.
The following lemma is an analogous estimate for the operators $\clS_{\zeta,z}$.
\begin{lem}
  \label{lem:composeaverages}Let $w$ be a random field on $\RR^{2}$
  satisfying $\vvvert w\vvvert_{2}<\infty$. Let $\zeta_{1}>\zeta_{2}>0$
  and let $\zeta_{3}=(\zeta_{1}^{2}+\zeta_{2}^{2})^{1/2}$. For all
  deterministic $x,z_{2},z_{3}\in\RR^{2}$, there is a deterministic
  $z_{1}\in\RR^{2}$ such that
  \begin{equation*}
    \EE |\clS_{\zeta_{1},z_{1}}[\clS_{\zeta_{2},z_{2}}w](x)-\clS_{\zeta_{3},z_{3}}w(x)|^{2}\le64(\zeta_{2}/\zeta_{1})^{2}\vvvert w\vvvert_{2}.
  \end{equation*}
\end{lem}

\begin{proof}
  Fix a deterministic point $z_{1}\in\RR^{2}$ to be chosen later.
  Given $i \in \{1,2,3\}$ and $y \in \R^d$, we use the shorthand $\ttB_i(y) = \ttB_{\zeta_i, k_{\zeta_i, z_i}(y) + z_i}$.
  Then we have
  \begin{align*}
    \clS_{\zeta_{1},z_{1}}[\clS_{\zeta_{2},z_{2}}w](x) & =\zeta_{1}^{-2}\int_{\ttB_1(x)}\clS_{\zeta_{2},z_{2}}w(y')\,\dif y'=\zeta_{1}^{-2}\zeta_{2}^{-2}\int_{\ttB_1(x)}\!\int_{\ttB_2(y')}w(y)\,\dif y\,\dif y' \\
                                                       & =\zeta_{1}^{-2}\zeta_{2}^{-2}\int w(y)A(y)\,\dif y,
  \end{align*}
  where
  \[
    A(y)\coloneqq|\ttB_1(x)\cap \ttB_2(y)|
  \]
  is the area (Lebesgue measure) of $\ttB_1(x) \cap \ttB_2(y)$.
  This implies that
  \begin{equation}
    \clS_{\zeta_{1},z_{1}}[\clS_{\zeta_{2},z_{2}}w](x)-\clS_{\zeta_{3},z_{3}}w(x)=\zeta_{1}^{-2}\int w(y)[\zeta_{2}^{-2}A(y)-\mathbf{1}_{\ttB_3(x)}(y)]\,\dif y.\label{eq:SSvsScompare}
  \end{equation}

  We note that $0\le A(y)\le\zeta_{2}^{2}$,
  so the term in parentheses inside the integrand on the right side
  of \cref{eq:SSvsScompare} is bounded by $1$ in absolute value. Now
  choose $z_{1}$ such that the squares $\ttB_1(x)$
  and $\ttB_3(x)$ have the same center,
  say $\overline{z}$. Let $\ttB_{\mathrm{inner}}$ and $\ttB_{\mathrm{outer}}$
  be squares centered at $\overline{z}$, with side lengths $\zeta_{1}-2\zeta_{2}$
  and $\zeta_{1}+2\zeta_{2}$, respectively. This means that
  \[
    \ttB_{\mathrm{inner}}\subseteq \ttB_1(x) \subseteq \ttB_3(x)\subseteq \ttB_{\mathrm{outer}}.
  \]
  We note that if $y\in \ttB_{\mathrm{inner}}$, then $\ttB_2(y) \subseteq \ttB_1(x)$,
  so $\zeta_{2}^{-2}A(y)=1=\mathbf{1}_{\ttB_3(x)}(y)$.
  On the other hand, if $y\in\RR^{2}\setminus \ttB_{\mathrm{outer}}$,
  then $\ttB_2(y)\cap \ttB_1(x) = \emptyset$,
  and also $y\not\in \ttB_3(x)$, so $\zeta_{2}^{-2}A(y)=0=\mathbf{1}_{\ttB_3(x)}(y)$.
  Therefore, the support of the integrand on the right side of \cref{eq:SSvsScompare}
  is contained in $\ttB_{\mathrm{outer}}\setminus \ttB_{\mathrm{inner}}$.

  Using these observations in \cref{eq:SSvsScompare}, we see that
  \[
    \left\lvert  \clS_{\zeta_{1},z_{1}}[\clS_{\zeta_{2},z_{2}}w](x)-\clS_{\zeta_{3},z_{3}}w(x)\right\rvert  \le\zeta_{1}^{-2}\int_{\ttB_{\mathrm{outer}}\setminus \ttB_{\mathrm{inner}}}|w(z)|\,\dif y,
  \]
  so
  \begin{align*}
    \EE & \left\lvert  \clS_{\zeta_{1},z_{1}}[\clS_{\zeta_{2},z_{2}}w](x)-\clS_{\zeta_{3},z_{3}}w(x)\right\rvert  ^{2}\le\zeta_{1}^{-4}\EE \left[\int_{\ttB_{\mathrm{outer}}\setminus \ttB_{\mathrm{inner}}}|w(z)|\,\dif y\right]^{2}                                                                                          \\
        & \le\zeta_{1}^{-4}|\ttB_{\mathrm{outer}}\setminus \ttB_{\mathrm{inner}}|\int_{\ttB_{\mathrm{outer}}\setminus \ttB_{\mathrm{inner}}}\EE |w(z)|^{2}\,\dif y\le\zeta_{1}^{-4}|\ttB_{\mathrm{outer}}\setminus \ttB_{\mathrm{inner}}|^{2}\vvvert w\vvvert_{2}^{2}=64\zeta_{1}^{-2}\zeta_{2}^{2}\vvvert w\vvvert_{2}^{2},
  \end{align*}
  and the proof is complete.
\end{proof}

\subsection{Matrix norms\label{subsec:matrix-norms}}

In this section we state several inequalities involving the matrix
norms $|\anon|_{\Frob}$ (Frobenius norm), $|\anon|_{*}$ (nuclear
or trace norm), and $|\anon|_{\op}$ (operator norm) that
we use throughout the paper. We recall that, if $\sigma$ is a matrix
with vector of singular values $\nu_{1},\ldots,\nu_{m}$, then
\[
  |\sigma|_{\Frob}=\sqrt{\nu_{1}^{2}+\cdots+\nu_{m}^{2}},\qquad|\sigma|_{*}=\nu_{1}+\cdots+\nu_{m},\qquad\text{and}\qquad|\sigma|_{\op}=\max\{\nu_{1},\ldots,\nu_{m}\}.
\]
(That is, these norms are the Schatten $2$-, $1$-, and $\infty$-norms,
respectively). We note that
\[
  |\sigma|_{*}=\tr\sigma\qquad\text{for all }\sigma\in\clH_{+}^{m}.
\]
Also, by the Hölder inequality for Schatten norms we have the inequalities
\begin{equation}
  \tr[\sigma_{1}\sigma_{2}^{\mathrm{T}}]\le|\sigma_{1}\sigma_{2}^{\mathrm{T}}|_{*}\le|\sigma_{1}|_{\op}|\sigma_{2}|_{*};\label{eq:simpleholderschatten}
\end{equation}
and
\begin{equation*}
  |\sigma_1\sigma_2|_{\Frob}\le |\sigma_1|_{\op}|\sigma_2|_{\Frob}\le |\sigma_1|_{\Frob}|\sigma_2|_{\Frob}.
\end{equation*}
We also recall the Powers--Størmer
inequality \cite[Lemma 4.1]{PS70}:
\begin{equation}
  |\sigma_{1}-\sigma_{2}|_{\Frob}^{2}\le|\sigma_{1}^{2}-\sigma_{2}^{2}|_{*}\qquad\text{for all }\sigma_{1},\sigma_{2}\in\clH_{+}^{m}.\label{eq:powerstormer}
\end{equation}

The final result of this section is a ``matrix-valued reverse triangle
inequality.'' For $m>1$, the statement is far from trivial: it is
a consequence of Lieb's concavity theorem \cite[Corollary 1.1]{Lie73}.
\begin{prop}
  \label{prop:matrixvaluedreversetriangleinequality}For all random
  matrices $\sigma_{1},\sigma_{2}\in\clH_{+}^{m}$, we have
  \begin{equation}
    |(\EE \sigma_{1}^{2})^{1/2}-(\EE \sigma_{2}^{2})^{1/2}|_{\Frob}\le(\EE |\sigma_{1}-\sigma_{2}|_{\Frob}^{2})^{1/2}.\label{eq:matrixvaluedreversetriangleinequality}
  \end{equation}
\end{prop}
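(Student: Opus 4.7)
My plan is to reduce the desired inequality to Jensen's inequality for a suitable jointly concave functional on $\clH_+^m \times \clH_+^m$, with concavity supplied by Lieb's theorem.

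I would first square both sides of \cref{eq:matrixvaluedreversetriangleinequality} (valid since both are nonnegative) and expand each Frobenius norm squared via $|A-B|_\Frob^2 = \tr(A^2) - 2\tr(AB) + \tr(B^2)$ for symmetric $A,B$. The left-hand side becomes
\[
\tr(\EE\sigma_1^2) - 2\tr\bigl[(\EE\sigma_1^2)^{1/2}(\EE\sigma_2^2)^{1/2}\bigr] + \tr(\EE\sigma_2^2),
\]
using that $[(\EE\sigma_i^2)^{1/2}]^2 = \EE\sigma_i^2$ since $\EE\sigma_i^2 \in \clH_+^m$. The right-hand side expands (interchanging expectation and trace) as $\EE\tr(\sigma_1^2) + \EE\tr(\sigma_2^2) - 2\EE\tr(\sigma_1\sigma_2)$. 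The quadratic terms $\tr(\EE\sigma_i^2) = \EE\tr(\sigma_i^2)$ cancel, and the inequality reduces to
\[
\EE\tr(\sigma_1\sigma_2) \le \tr\bigl[(\EE\sigma_1^2)^{1/2}(\EE\sigma_2^2)^{1/2}\bigr].
\]

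Next, using $\sigma_i = (\sigma_i^2)^{1/2}$ (valid because $\sigma_i \in \clH_+^m$) to rewrite the left-hand side as $\EE\tr\bigl[(\sigma_1^2)^{1/2}(\sigma_2^2)^{1/2}\bigr]$, and setting
\[
F \colon \clH_+^m \times \clH_+^m \to \R, \qquad F(A,B) \coloneqq \tr(A^{1/2} B^{1/2}),
\]
the inequality becomes precisely the Jensen-type bound
\[
\EE F(\sigma_1^2, \sigma_2^2) \le F(\EE\sigma_1^2, \EE\sigma_2^2)
\]
applied to the $\clH_+^m \times \clH_+^m$-valued random variable $(\sigma_1^2, \sigma_2^2)$.

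The entire argument therefore hinges on the joint concavity of $F$ on $\clH_+^m \times \clH_+^m$, which is the main nontrivial step and the reason for invoking deep operator theory. This concavity is an immediate consequence of Lieb's concavity theorem \cite[Corollary~1.1]{Lie73}, which asserts that for $p, q \ge 0$ with $p + q \le 1$, the map $(A,B) \mapsto \tr(A^p B^q)$ is jointly concave on $\clH_+^m \times \clH_+^m$; specializing to $p = q = 1/2$ gives the required property of $F$ and completes the proof. Once concavity is in hand, the rest of the argument is just algebraic bookkeeping.
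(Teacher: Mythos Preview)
Your proof is correct and follows essentially the same approach as the paper: both reduce the inequality to $\EE\tr(\sigma_1\sigma_2) \le \tr\bigl[(\EE\sigma_1^2)^{1/2}(\EE\sigma_2^2)^{1/2}\bigr]$ and invoke Lieb's concavity theorem for $(A,B)\mapsto\tr(A^{1/2}B^{1/2})$. The only cosmetic difference is that the paper applies Jensen's inequality separately in each variable, whereas you apply it once using joint concavity.
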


\begin{proof}
  We have
  \begin{align}
    |(\EE \sigma_1^2)^{1/2}-(\EE \sigma_{2}^{2})^{1/2}|_{\Frob}^{2} & =\tr\left[\left((\EE \sigma_1^2)^{1/2}-(\EE \sigma_{2}^{2})^{1/2}\right)^{2}\right]\nonumber                                \\
                                                                    & =\tr\EE \sigma_1^2+\tr\EE \sigma_{2}^{2}-2\tr[(\EE \sigma_1^2)^{1/2}(\EE \sigma_{2}^{2})^{1/2}].\label{eq:bustitup}
  \end{align}
  Now by \cite[Corollary 1.1]{Lie73}, the map $G\colon\clH_{+}^{m}\times\clH_{+}^{m}\to\RR$
  given by
  \[
    G(X,Y)=\tr[X^{1/2}Y^{1/2}]
  \]
  is concave in both $X$ and $Y$. So using Jensen's inequality twice,
  we have
  \begin{equation}
    \tr[(\EE \sigma_1^2)^{1/2}(\EE \sigma_{2}^{2})^{1/2}]=G(\EE \sigma_1^2,\EE \sigma_{2}^{2})\ge\EE [G(\sigma_{1}^{2},\EE \sigma_{2}^{2})]\ge\EE [G(\sigma_{1}^{2},\sigma_{2}^{2})]=\EE \tr[\sigma_{1}\sigma_{2}].\label{eq:applyJensen}
  \end{equation}
  Using \cref{eq:applyJensen} in \cref{eq:bustitup}, we obtain
  \begin{align*}
    |(\EE \sigma_1^2)^{1/2}-(\EE \sigma_{2}^{2})^{1/2}|_{\Frob}^{2} & \le\tr\EE \sigma_1^2+\tr\EE \sigma_{2}^{2}-2\tr \EE(\sigma_{1}\sigma_{2})=\tr\EE (\sigma_{1}-\sigma_{2})^{2}=\EE |\sigma_{1}-\sigma_{2}|_{\Frob}^{2}.
  \end{align*}
  Taking square roots completes the proof.
\end{proof}

We equip $\clH_{+}^{m}$
with the metric induced by the Frobenius norm because if $\sigma_{1},\sigma_{2}\in\clH_{+}^{m}$
and $Z\sim N(0,\Id_{m})$, then
\[
  (\EE |\sigma_{1}Z-\sigma_{2}Z|^{2})^{1/2}=|\sigma_{1}-\sigma_{2}|_{\Frob}.
\]
We frequently use the ``differential'' version of this identity
in the form of Itô's isometry. We note in passing that $(\sigma_{1}Z,\sigma_{2}Z)$
is \emph{not} the $L^{2}$-best coupling of two Gaussian random variables
with covariance matrices $\sigma_{1}^{2}$ and $\sigma_{2}^{2}$.
In fact, according to the results of \cite{DL82,GS84},
\begin{equation}
  \clW_{2}(\sigma_{1}Z,\sigma_{2}Z)= \bigl(\tr[\sigma_{1}^{2}+\sigma_{2}^{2}-2(\sigma_{1}^{2}\sigma_{2}^{2})^{1/2})]\bigr)^{1/2},\label{eq:W2ofgaussianmetriconmatrices}
\end{equation}
which can (if $m>1$) be strictly less than
\[
  |\sigma_{1}-\sigma_{2}|_{\Frob}=\bigl(\tr[\sigma_{1}^{2}+\sigma_{2}^{2}-2\sigma_{1}\sigma_{2}]\bigr)^{1/2}.
\]
Since this paper is ultimately concerned with Wasserstein distances
rather than with particular couplings, it is plausible that similar
results could be obtained if the metric induced by the Frobenius norm
on $\clH_{+}^{m}$ were replaced by the metric given by the
right side of \cref{eq:W2ofgaussianmetriconmatrices}, at the cost of
an explosion in the number of couplings needing to be constructed
throughout the argument. %

If, instead of using the distance induced by the Frobenius norm, we use the optimal coupling distance
\[
  \tilde{d}(\sigma_{1},\sigma_{2})=\clW_{2}(\sigma_{1}Z,\sigma_{2}Z)=\bigl(\tr[\sigma_{1}^{2}+\sigma_{2}^{2}-2(\sigma_{1}^{2}\sigma_{2}^{2})^{1/2}]\bigr)^{1/2},
\]
where $Z\sim N(0,\Id_{m})$, then the analogous statement to \cref{eq:matrixvaluedreversetriangleinequality}
is probabilistically very natural. Indeed, let $\sigma_{1}$ and $\sigma_{2}$
be random elements of $\clH_{+}^{m}$. Assume without loss
of generality that the probability space is given by $[0,1]$ equipped
with the Lebesgue measure.
There is a measurable $\R^m\otimes\R^m$-valued random variable
$U$ on $[0,1]$ such that $UU^{\mathrm{T}}=\Id_m$ almost surely and, if $Z\sim N(0,\Id_{m})$ is independent
of everything else (now extending the probability space to $[0, 1] \times \Omega'$ for some space $\Omega'$), then
\[
  \EE [|U\sigma_{1}Z-\sigma_{2}Z|^{2}\mid\sigma_{1},\sigma_{2}]=\clW_{2}(\sigma_{1}Z,\sigma_{2}Z\mid\sigma_{1},\sigma_{2})
\]
almost surely.
Let $B$ be a standard $\RR^{m}$-valued Brownian motion on $[0,1]$ with probability space $\Omega'$.
Then we have
\begin{align*}
  \EE \tilde{d}(\sigma_{1},\sigma_{2}) & =\int_{0}^{1}\tilde{d}\bigl(\sigma_{1}(t),\sigma_{2}(t)\bigr)\,\dif t=\int_{0}^{1}\clW_{2}(\sigma_{1}(t)Z,\sigma_{2}(t)Z)\,\dif t                                             \\
                                       &= \int_{0}^{1 }\EE^{\Omega'} |U(t)\sigma_{1}(t)Z-\sigma_{2}(t)Z|^{2}\,\dif t= \int_{0}^{1}|U(t)\sigma_{1}(t)-\sigma_{2}(t)|_{\Frob}^{2}\,\dif t\\
                                       &= \EE^{\Omega'} \left\lvert  \int_{0}^{1}U(t)\sigma_{1}(t)\,\dif B(t)-\int_{0}^{1}\sigma_{2}(t)\,\dif B(t)\right\rvert  ^{2} \\
                                       & \ge\clW_{2}^{\Omega'}\bigl((\EE \sigma_{1}^{2})^{1/2}Z,(\EE \sigma_{2}^{2})^{1/2}Z\bigr)=\tilde{d}\bigl((\EE \sigma_{1}^{2})^{1/2},(\EE \sigma_{2}^{2})^{1/2}\bigr),
\end{align*}
with the inequality since $\int_0^1 U(t)\sigma_1(t)\,\dif B(t)\overset{\mathrm{law}}= (\E\sigma_1^2)^{1/2}Z$ and $\int_0^1\sigma_2(t)\,\dif B(t)\overset{\mathrm{law}}= (\EE\sigma_2^2)^{1/2}Z$.
Probabilistically, therefore, we interpret the subtlety in \cref{prop:matrixvaluedreversetriangleinequality}
as coming from the fact that the left side of \cref{eq:matrixvaluedreversetriangleinequality}
is $|(\EE \sigma_{1}^{2})^{1/2}-(\EE \sigma_{2}^{2})^{1/2}|_{\Frob}^{2}=\EE^{\Omega'} |(\EE \sigma_{1}^{2})^{1/2}Z-(\EE \sigma_{2}^{2})^{1/2}Z|^{2}$,
corresponding to a non-optimal coupling. The proposition says that
this coupling is nonetheless superior to the one corresponding to
the right side of \cref{eq:matrixvaluedreversetriangleinequality}.

\section{Coupling of space-time white noises\label{appendix:coupling}}

In this appendix we prove a coupling result for space-time white noises that is used in several essential places throughout the paper. To understand the following proposition, the reader may wish to consider a simpler situation in which we have i.i.d.\ Gaussian random variables $Z_1,\ldots,Z_n\sim N(0,1)$ and coefficients $a_1,\ldots,a_n$. Then the random variables $\sum a_i Z_i$ and $\left(\frac1n\sum a_i^2\right)^{1/2}\sum Z_i$ are not typically close to one another, but they do have the same distribution and hence can be coupled in such a way that they are identical. The following proposition considers a similar situation with a space-time white noise, in which we show that given a white noise $\dif W_r$ and a field of ``coefficients'' $(w_r)$, we can build a new white noise $\dif\widetilde W_r$ such that, in a weak sense, $w_r\dif W_r$ is close to $\dif \tilde W_r$ modulated by a quadratic average of $w_r$ on an intermediate scale.
\begin{prop}
  \label{prop:coupling}Fix $t_{0}\in\RR$. Let $w=w_{t}(x)$
  be a spatially homogeneous $\clH_{+}^{m}$-valued space-time
  random field, adapted to the filtration $(\scrF_{t})$, with
  $\vvvert|w_{t}|_{\Frob}\vvvert_{2}<\infty$ for each $t$, and
  let $(\zeta_{r})_{r\ge t_{0}}$ be a deterministic measurable positive
  real-valued path. There is a space-time $\RR^{m}$-valued white
  noise $(\dif\widetilde{W}_{t})$, adapted to the filtration $(\scrF_{t})$,
  such that, for any deterministic measurable positive real-valued path
  $(\eta_{r})_{r\ge t_{0}}$ and any $t\ge t_0$, we have
  \begin{equation}
    \left\vvvert \int_{t_{0}}^{t}\clG_{\eta_{r}}[w_{r}\,\dif W_{r}-(\clS_{\zeta_{r}}w_{r}^{2})^{1/2}\,\dif\widetilde{W}_{r}]\right\vvvert _{2}^{2}\le\frac{1}{2\pi^{3}}\int_{t_{0}}^{t}\zeta_{r}^{2}\eta_{r}^{-2}\vvvert w_{r}\vvvert_{2}^{2}\,\dif r.\label{eq:propcouplingconclusion}
  \end{equation}
\end{prop}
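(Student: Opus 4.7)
The strategy is to build $\widetilde W$ box-by-box so that on each box of side $\zeta_r$ at time $r$, the integral of $\dif\widetilde W_r$ over the box is coupled to the integral of $w_r\,\dif W_r$, while the mean-zero components within each box come from an independent auxiliary noise. For each $r\ge t_0$ and $k\in\Z^2$, set $\ttB = \ttB_{\zeta_r, k}$ and $M_r^{\ttB} = (\clS_{\zeta_r}w_r^2)^{1/2}|_{\ttB}\in\clH_+^m$, so $\zeta_r^2(M_r^{\ttB})^2 = \int_{\ttB}w_r(y)^2\,\dif y$. The stochastic integral $\xi_{\ttB} := \int_{\ttB}w_r(y)\,\dif W_r(y)$ lies almost surely in $\Image M_r^{\ttB}$: any $v\in\Ker M_r^{\ttB}$ yields $0 = v^{\mathrm{T}}(M_r^{\ttB})^2v = \zeta_r^{-2}\int_{\ttB}|w_r(y)v|^2\,\dif y$, whence $w_r(y)v = 0$ a.e.\ in $\ttB$ and $v^{\mathrm{T}}\xi_{\ttB} = 0$.

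Using the Moore--Penrose pseudoinverse $M_r^{\ttB,+}$ and the orthogonal projection $P_r^{\ttB} = M_r^{\ttB}M_r^{\ttB,+}$ onto $\Image M_r^{\ttB}$, define
\[
\eta_{\ttB} := M_r^{\ttB,+}\xi_{\ttB} + (I_m - P_r^{\ttB})U_{\ttB},
\]
where $U_\ttB$ is drawn, independently of everything else, from an auxiliary $\R^m$-valued space-time white noise $\dif Z$ (as its $\ttB$-average at time $r$, so with covariance $\zeta_r^2\,\dif r\,I_m$). A direct covariance check gives $\eta_\ttB$ the correct isotropic law, while $M_r^{\ttB}\eta_{\ttB} = \xi_{\ttB}$ since $P_r^{\ttB}\xi_{\ttB}=\xi_{\ttB}$ and $M_r^{\ttB}(I_m - P_r^{\ttB}) = 0$. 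Specify $\dif\widetilde W_r|_{\ttB}$ by declaring that $\int_{\ttB}\dif\widetilde W_r = \eta_{\ttB}$, with the components orthogonal to constants on $\ttB$ drawn from $\dif Z$ restricted to $\ttB$. Assembling these pieces across boxes and times (and enlarging the filtration to include $\dif Z$) produces an $\R^m$-valued space-time white noise adapted to $(\scrF_t)$.

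By construction, $M_r^{\ttB}\int_{\ttB}\dif\widetilde W_r = \xi_{\ttB}$ for every box at every time. Define the piecewise-constant field $G_*^r(y) := |\ttB_{\zeta_r, k}|^{-1}\int_{\ttB_{\zeta_r, k}} G_{\eta_r}(x-y')\,\dif y'$ for $y\in\ttB_{\zeta_r, k}$. Splitting $G_{\eta_r}(x-y) = G_*^r(y) + [G_{\eta_r}(x-y) - G_*^r(y)]$ in both noise integrals, the $G_*^r$ pieces cancel box-by-box via the identity above, leaving
\[
\int_{t_0}^t\clG_{\eta_r}[w_r\,\dif W_r - M_r\,\dif\widetilde W_r](x) = A(x) - B(x),
\]
where $A,B$ are the space-time stochastic integrals of $[G_{\eta_r}(x-y) - G_*^r(y)]$ against $w_r(y)\,\dif W_r(y)$ and $M_r(y)\,\dif\widetilde W_r(y)$ respectively, with $M_r(y) = M_r^{\ttB}$ for $y\in\ttB$. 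Because $G_{\eta_r}(x-\cdot) - G_*^r$ has zero mean on each box, the $B$-integral only sees the mean-zero modes of $\dif\widetilde W_r|_{\ttB}$, which come from $\dif Z$ and are independent of $\dif W$ conditional on $w$. Thus $A$ and $B$ are conditionally independent given $w$, giving $\EE|A-B|^2 = \EE|A|^2 + \EE|B|^2$.

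It\^o's isometry combined with $\EE|w_r(y)|_{\Frob}^2 \le \vvvert w_r\vvvert_2^2$ and $\EE|M_r(y)|_{\Frob}^2 = \EE\tr\clS_{\zeta_r}w_r^2(y) \le \vvvert w_r\vvvert_2^2$ then gives
\[
\EE|A(x) - B(x)|^2 \le 2\int_{t_0}^t\vvvert w_r\vvvert_2^2 \int_{\R^2}[G_{\eta_r}(x-y) - G_*^r(y)]^2\,\dif y\,\dif r.
\]
Since $G_*^r|_{\ttB_{\zeta_r, k}}$ is the box average of $G_{\eta_r}(x-\cdot)$, the sharp Poincar\'e inequality on a square of side $\zeta_r$ (first nonzero Neumann eigenvalue $\pi^2/\zeta_r^2$) yields $\int_{\ttB_{\zeta_r, k}}[G_{\eta_r}(x-y) - G_*^r(y)]^2\,\dif y \le (\zeta_r^2/\pi^2)\int_{\ttB_{\zeta_r, k}}|\nabla G_{\eta_r}(x-y)|^2\,\dif y$. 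Summing in $k$ and computing $\int_{\R^2}|\nabla G_{\eta_r}|^2\,\dif y = (4\pi\eta_r^2)^{-1}$ (from $\nabla G_{\tau}(z) = -\tau^{-1}z G_{\tau}(z)$, \cref{eq:GT2}, and the fact that the second moment of $G_{\tau/2}$ equals $\tau$) gives $\int[G_{\eta_r}(x-y) - G_*^r(y)]^2\,\dif y \le \zeta_r^2/(4\pi^3\eta_r^2)$, producing exactly the constant $(2\pi^3)^{-1}$. The principal obstacle is the construction of $\widetilde W$ in the possibly-degenerate regime where $M_r^{\ttB}$ has nontrivial kernel: the pseudoinverse-plus-orthogonal-complement trick handles degeneracy neatly, but verifying that the resulting object assembled across boxes and times is a bona fide space-time white noise (adapted to the enlarged filtration, with Gaussian rather than merely second-order-correct joint laws) requires a careful covariance check.
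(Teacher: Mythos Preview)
Your proof is correct and follows essentially the same route as the paper: couple the box-average mode of $\widetilde W$ to $w\,\dif W$ (so that $M_r^{\ttB}\int_\ttB\dif\widetilde W_r=\int_\ttB w_r\,\dif W_r$), take the mean-zero modes from an independent auxiliary noise, split $G_{\eta_r}$ into its box average plus a remainder so that the box-average pieces cancel, and control the remainder by the Poincar\'e inequality together with $\|\nabla G_{\eta_r}\|_{L^2}^2=(4\pi\eta_r^2)^{-1}$. The paper packages the same construction via an orthonormal basis $\{\varphi_{\zeta_r,k,\ell}\}$ of each box with $\ell=0$ the constant mode; your pseudoinverse-plus-kernel treatment of the degenerate case is actually more explicit than the paper's, and your ``conditional independence'' step is really the observation that $[A,B]=0$ pathwise (orthogonal driving noises), which is how the paper's quadratic-covariation computation collapses as well.
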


To prove \cref{prop:coupling}, first we will construct the space-time
white noise $\dif\widetilde{W}_{t}$. Then we will prove that it has
the desired properties.
Let $\NN_{1}=\{1,2,3,\ldots\}$ and $\NN_{0}=\{0,1,2,\ldots\}$.
Let $\{\psi_{\ell}\}_{\ell\in\NN_{0}}$ be an orthonormal basis
for $L^{2}([0,1]^{2})$ such that
\begin{equation}
  \psi_{0}=\mathbf{1}_{[0,1]^{2}}.\label{eq:zeroethorderisflat}
\end{equation}
We consider each $\psi_{\ell}$ as a function on $\RR^{2}$ be extending
it by zero outside of $[0,1]^{2}$. Now we define, for $\zeta\in(0,\infty)$,
$k\in\ZZ^{2}$, and $\ell\in\NN_{0}$, the function
\begin{equation}
  \varphi_{\zeta,k,\ell}(x)\coloneqq\zeta^{-1}\psi_\ell(\zeta^{-1}x-k).\label{eq:rescalephi}
\end{equation}
This means that, for each $\zeta\in(0,\infty)$ and $k\in\ZZ^{2}$,
the family $\{\varphi_{\zeta,k,\ell}\}_{\ell\in\NN_{0}}$ is
an orthonormal basis for $L^{2}(\ttB_{\zeta,k})$.

Given $k\in\ZZ^{2}$ and $\ell\in\NN_{0}$, define the $\RR^{m}$-valued process (for $t\ge t_{0}$)
\begin{equation}
  Z_{k,\ell;t}=\int_{t_{0}}^{t}\!\!\!\int\varphi_{\zeta_{r},k,\ell}(x)w_{r}(x)\,\dif W_{r}(x).\label{eq:Zdef}
\end{equation}
Given an $\RR^{m}$-valued test function $F$
on $[t_{0},\infty)\times\RR^{2}$, this means that
\begin{align}
  \sum_{\substack{k\in\ZZ^{2}                                                                                                       \\
  \ell\in\NN_{0}
  }
  } & \int_{t_{0}}^{t}\left(\int F(r,y)\varphi_{\zeta_{r},k,\ell}(y)\,\dif y\right)\cdot\dif Z_{k,\ell;r} \nonumber                 \\& =\int_{t_{0}}^{t}\!\!\!\int\sum_{\substack{k\in\ZZ^{2}                                \\
  \ell\in\NN_{0}
  }
  }\varphi_{\zeta_{r},k,\ell}(x)\left(\int F(r,y)\varphi_{\zeta_{r},k,\ell}(y)\,\dif y\right)\cdot w_{r}(x)\,\dif W_{r}(x)\nonumber \\
    & =\int_{t_{0}}^{t}\!\!\!\int F(r,x)\cdot w_{r}(x)\,\dif W_{r}(x).\label{eq:implicationofZdef}
\end{align}

We note that $(Z_{k,\ell;t})_{t\ge0}$ is an $\RR^{m}$-valued
$(\scrF_{t})$-martingale.
Given $k,k'\in\ZZ^{2}$ and $\ell,\ell'\in\NN_{0}$, it has quadratic covariation
\begin{equation}
  [Z_{k,\ell},Z_{k',\ell'}]_{t}=\delta_{k,k'}\int_{t_0}^{t}\chi_{k,\ell,\ell';r}\,\dif r,\label{eq:ZQV}
\end{equation}
where we have used the Kronecker $\delta$ symbol and also defined
\begin{equation}
  \chi_{k,\ell,\ell';r}\coloneqq\int\varphi_{\zeta_{r},k,\ell}(x)\varphi_{\zeta_{r},k,\ell'}(x)w_{r}^{2}(x)\,\dif x.\label{eq:chidef}
\end{equation}
Note in particular that, by \cref{eq:zeroethorderisflat} and \cref{eq:rescalephi},
\begin{equation}
  \chi_{k,0,0;r}=\zeta_{r}^{-2}\int_{\ttB_{\zeta_{r},k}}w_{r}^{2}(x)\,\dif x.\label{eq:chikzerozero}
\end{equation}

We will now define a new set of $\RR^{m}$-valued $(\scrF_{t})$-martingales
$\{(\widetilde{Z}_{k,\ell;t})_{t\ge t_{0}}\}_{k\in\ZZ^{2},\ell\in\NN_{0}}$.
For each $k\in\ZZ^{2}$ and $t\ge t_{0}$, we put
\begin{equation}
  \widetilde{Z}_{k,0;t}=Z_{k,0;t}.\label{eq:Ztildek0}
\end{equation}
To define $\widetilde{Z}_{k,\ell;t}$ for $\ell\in\NN_{1}$,
we first let $(X_{k,\ell})_{k\in\ZZ^{2},\ell\in\NN_{1}}$be
a family of iid standard $\RR^{m}$-valued Brownian motions,
each adapted to the filtration $(\scrF_{t})_{t}$ but independent
of all other random variables. For each $\ell\in\NN_{1}$,
define
\begin{equation}
  \widetilde{Z}_{k,\ell;t}=\int_{t_{0}}^{t}\chi_{k,0,0;r}^{1/2}\,\dif X_{k,\ell;r}.\label{eq:Ztildelgt1def}
\end{equation}
Let us now compute the quadratic covariation matrices of the martingales
we have defined. First we note that, for $k,k'\in\ZZ^{2}$
and $\ell,\ell\in\NN_{0}$, we have
\begin{equation}
  [\widetilde{Z}_{k,\ell},\widetilde{Z}_{k',\ell'}]_{t}=\delta_{k,k'}\delta_{\ell,\ell'}\int_{t_{0}}^{t}\chi_{k,0,0;r}\,\dif r,\label{eq:ZtildeQCV}
\end{equation}
which follows from \cref{eq:ZQV}, \cref{eq:Ztildelgt1def}, and the independence
of the $X_{k,\ell}$s. Moreover, for $k,k'\in\ZZ^{2}$ and
$\ell,\ell'\in\NN_{0}$, we have
\begin{equation}
  [Z_{k,\ell},\widetilde{Z}_{k',\ell'}]_{t}=\delta_{k,k'}\delta_{\ell',0}\int_{t_{0}}^{t}\chi_{k,\ell,0;r}\,\dif r.\label{eq:ZZtildeQCV}
\end{equation}

We now define a field $\widetilde{W}_{t}$ by, formally, %
\begin{equation}
  \widetilde{W}_{t}(x)=\sum_{\substack{k\in\ZZ^{2}\\
      \ell\in\NN_{0}
    }
  }\int_{t_{0}}^{t}\varphi_{\zeta_{r},k,\ell}(x)[\clS_{\zeta_{r}}w_{r}^{2}(x)]^{-1/2}\,\dif\widetilde{Z}_{k,\ell;r}.\label{eq:Wttilde}
\end{equation}
Precisely, this means that for any $\RR^{m}$-valued test function $f$ on $\RR^{2}$, we have
\begin{equation}
  \langle\widetilde{W}_{t},f\rangle=\sum_{\substack{k\in\ZZ^{2}\\
      \ell\in\NN_{0}
    }
  }\int_{t_{0}}^{t}\left(\int\varphi_{\zeta_{r},k,\ell}(x)f(x)\cdot[\clS_{\zeta_{r}}w_{r}^{2}(x)]^{-1/2}\,\dif x\right)\dif\widetilde{Z}_{k,\ell;r},\label{eq:testagainstWtilde}
\end{equation}
where here and henceforth we use $\langle\cdot,\cdot\rangle$ to denote
the usual inner product on $L^{2}(\RR^2 ; \RR^{m})$. We note that
\begin{align*}
  \int & \varphi_{\zeta_{r},k,\ell}(x)f(x)\cdot[\clS_{\zeta_{r}}w_{r}^{2}(x)]^{-1/2}\,\dif x=\int_{\ttB_{\zeta_{r},k}}\varphi_{\zeta_{r},k,\ell}(x)f(x)\cdot[\clS_{\zeta_{r}}w_{r}^{2}(x)]^{-1/2}\,\dif x                                                                       \\
       & =\left(\int\varphi_{\zeta_{r},k,\ell}(x)f(x)\,\dif x\right)\cdot\left(\zeta_{r}^{-2}\int_{\ttB_{\zeta_{r},k}}w_{r}^{2}(x)\,\dif x\right)^{-1/2}\overset{\cref{eq:chikzerozero}}{=}\left(\int\varphi_{\zeta_{r},k,\ell}(x)f(x)\,\dif x\right)\cdot\chi_{k,0,0;r}^{-1/2}
\end{align*}
since $\varphi_{\zeta_{r},k,\ell}$ is supported on $\ttB_{\zeta_{r},k}$.
Hence \cref{eq:testagainstWtilde} can be rewritten as
\begin{equation}
  \langle\widetilde{W}_{t},f\rangle=\sum_{\substack{k\in\ZZ^{2}\\
      \ell\in\NN_{0}
    }
  }\int_{t_{0}}^{t}\left(\int\varphi_{\zeta_{r},k,\ell}(x)f(x)\,\dif x\right)\cdot\chi_{k,0,0;r}^{-1/2}\,\dif\widetilde{Z}_{k,\ell;r}.\label{eq:testagainstWtilde-1}
\end{equation}
We note also that for any test function $F\colon[t_{0},\infty)\times\RR^{2}\to\RR^{m}$
, we have
\begin{equation}
  \int_{t_{0}}^{t}\!\!\!\int F(r,x)\cdot[\clS_{\zeta_{r}}w_{r}^{2}(x)]^{1/2}\,\dif\widetilde{W}_{r}(x)=\sum_{\substack{k\in\ZZ^{2}\\
      \ell\in\NN_{0}
    }
  }\int_{t_{0}}^{t}\!\!\!\int\varphi_{\zeta_{r},k,\ell}(x)F(r,x)\cdot\dif\widetilde{Z}_{k,\ell;r}.\label{eq:testagainstSdWtilde}
\end{equation}

\begin{lem}
  The field $\dif\widetilde{W}$ is a standard $\RR^{m}$-valued
  space-time white noise.
\end{lem}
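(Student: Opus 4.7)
My plan is to verify that $\widetilde{W}$ satisfies the defining properties of a space-time white noise by computing its quadratic covariation structure directly from the construction, and then invoking Lévy's characterization of Brownian motion.

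First, I would fix deterministic test functions $f_{1},f_{2}\in L^{2}(\RR^{2};\RR^{m})$ and use \cref{eq:testagainstWtilde-1} to write
\[
\langle\widetilde{W}_{t},f_{i}\rangle=\sum_{k\in\ZZ^{2},\,\ell\in\NN_{0}}\int_{t_{0}}^{t}\left(\int\varphi_{\zeta_{r},k,\ell}(x)f_{i}(x)\,\dif x\right)\cdot\chi_{k,0,0;r}^{-1/2}\,\dif\widetilde{Z}_{k,\ell;r}.
\]
Since $\widetilde{Z}_{k,\ell;r}$ is a continuous $\RR^{m}$-valued $(\scrF_{t})$-martingale, $\langle\widetilde{W}_{t},f_{i}\rangle$ is automatically a continuous $(\scrF_{t})$-martingale.

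Next, I would compute the cross quadratic variation using \cref{eq:ZtildeQCV}. The Kronecker deltas collapse the double sum over $k,k'$ and $\ell,\ell'$ to a single sum, and the factors $\chi_{k,0,0;r}^{-1/2}\chi_{k,0,0;r}\chi_{k,0,0;r}^{-1/2}=\Id_{m}$ cancel, yielding
\[
\bigl[\langle\widetilde{W},f_{1}\rangle,\langle\widetilde{W},f_{2}\rangle\bigr]_{t}=\sum_{k,\ell}\int_{t_{0}}^{t}\left(\int\varphi_{\zeta_{r},k,\ell}(x)f_{1}(x)\,\dif x\right)\cdot\left(\int\varphi_{\zeta_{r},k,\ell}(y)f_{2}(y)\,\dif y\right)\dif r.
\]
Because $\{\varphi_{\zeta_{r},k,\ell}\}_{k\in\ZZ^{2},\ell\in\NN_{0}}$ is an orthonormal basis of $L^{2}(\RR^{2})$ for each fixed $r$, applying Parseval's identity componentwise in $\RR^{m}$ collapses the sum to $\int f_{1}(x)\cdot f_{2}(x)\,\dif x$. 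Therefore
\[
\bigl[\langle\widetilde{W},f_{1}\rangle,\langle\widetilde{W},f_{2}\rangle\bigr]_{t}=(t-t_{0})\,\langle f_{1},f_{2}\rangle_{L^{2}(\RR^{2};\RR^{m})}.
\]

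With this deterministic QCV in hand, I would apply Lévy's characterization (e.g. \cite[IV.3.6]{RY99}) to conclude that for each fixed $f$, the process $\bigl(\langle\widetilde{W}_{t},f\rangle\bigr)_{t\ge t_{0}}$ is a Brownian motion with variance $\|f\|_{L^{2}}^{2}$. Polarizing and considering arbitrary finite linear combinations of test functions yields that the collection of pairings is jointly Gaussian with the correct white-noise covariance structure. Taking $f=g\,e_{i}$ with scalar $g$ then confirms that the components $\widetilde{W}^{(i)}$ are mutually independent scalar space-time white noises, completing the proof.

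The main obstacle is the degeneracy of $\chi_{k,0,0;r}^{-1/2}$ where $\clS_{\zeta_{r}}w_{r}^{2}$ is singular (in particular where $w_{r}$ vanishes), since \cref{eq:Wttilde} is ill-defined there. I would handle this in the standard way by interpreting the matrix inverses as Moore--Penrose pseudoinverses and using the independent Brownian motions $X_{k,\ell}$ (or an additional independent space-time white noise on $[t_{0},\infty)\times\RR^{2}$) to supply the missing noise in the kernel directions. Because the \emph{only} role of the $\chi^{-1/2}$ factor is to cancel with the $\chi^{1/2}$ appearing in \cref{eq:ZtildeQCV} via the identity $P_{\mathrm{range}}=\chi^{1/2}\chi^{+1/2}=\Id_{m}-P_{\ker}$, and since the supplementary independent noise contributes exactly the identity on $P_{\ker}$, the algebraic cancellation in the QCV computation still produces $\Id_{m}$. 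Thus the Parseval identity step goes through verbatim, and the argument above is robust to this degeneracy.
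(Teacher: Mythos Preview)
Your proposal is correct and follows essentially the same approach as the paper: compute the quadratic variation of $\langle\widetilde{W}_{t},f\rangle$ via \cref{eq:testagainstWtilde-1} and \cref{eq:ZtildeQCV}, observe that the $\chi_{k,0,0;r}^{\pm1/2}$ factors cancel, and then use that $\{\varphi_{\zeta_{r},k,\ell}\}$ is an orthonormal basis to obtain $(t-t_{0})\|f\|_{L^{2}}^{2}$. The paper's proof is terser (it computes only the single-$f$ quadratic variation and stops there, without explicitly invoking L\'evy or discussing the pseudoinverse issue), but the argument is the same.
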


\begin{proof}
  Let $f\in L^{2}(\RR^2;\RR^{m})$ and define $\widetilde{Y}_{t}=\langle\widetilde{W}_{t},f\rangle$.
  Then for any $t\ge t_{0}$, we have
  \begin{align*}
    [\widetilde{Y}]_{t} & \overset{\cref{eq:testagainstWtilde-1}}{=}\int_{t_{0}}^{t}\sum_{\substack{k,k'\in\ZZ^{2}                                                                                                                   \\
    \ell,\ell'\in\NN_{0}
    }
    }\left(\int\varphi_{\zeta_{r},k,\ell}(x)f(x)\,\dif x\right)\cdot\chi_{k,0,0;r}^{-1/2}\dif[\widetilde{Z}_{k,\ell},\widetilde{Z}_{k',\ell'}]_{r}\chi_{k',0,0;r}^{-1/2}\left(\int\varphi_{\zeta_{r},k',\ell'}(x)f(x)\,\dif x\right) \\
                        & \overset{\cref{eq:ZtildeQCV}}{=}\int_{t_{0}}^{t}\sum_{\substack{k\in\ZZ^{2}                                                                                                                                \\
    \ell\in\NN_{0}
    }
    }\left\lvert  \int\varphi_{\zeta_{r},k,\ell}(x)f(x)\,\dif x\right\rvert  ^{2}\,\dif r=(t-t_{0})\|f\|_{L^{2}(\RR^{2};\RR^m)}^{2},
  \end{align*}
  with the last identity by the Plancherel theorem since the family
  $\{\varphi_{\zeta_{r},k,\ell}\}_{k\in\ZZ^{2},\ell\in\NN_{0}}$
  is an orthonormal basis for $L^{2}(\RR^{2})$. This implies that $\dif\widetilde{W}$
  is an $\RR^{m}$-valued space-time white noise.
\end{proof}
To complete the proof of \cref{prop:coupling}, it remains to prove
the estimate \cref{eq:propcouplingconclusion}.
\begin{proof}[Proof of \cref{eq:propcouplingconclusion}]
  Let $(\eta_{r})_{r\ge t_{0}}$ be a measurable positive real-valued
  path. Fix $x\in\RR^{2}$ and, for $t\ge t_{0}$, let
  \begin{equation*}
    I_{t}\coloneqq\EE \left\lvert  \int_{t_{0}}^{t}\clG_{\eta_{r}}[w_{r}\,\dif W_{r}-(\clS_{\zeta_{r}}w_{r}^{2})^{1/2}\,\dif\widetilde{W}_{r}](x)\right\rvert  ^{2}.
  \end{equation*}
  To prove \cref{eq:propcouplingconclusion} it suffices to prove the
  same upper bound on $I_{t}$. Define
  \begin{equation}
    Y_{k,\ell;t}\coloneqq Z_{k,\ell;t}-\widetilde{Z}_{k,\ell;t}\label{eq:Ykltdef}
  \end{equation}
  and
  \begin{equation}
    g_{k,\ell;t}\coloneqq\clG_{\eta_{t}}\varphi_{\zeta_{t},k,\ell}(x)=\langle G_{\eta_{t}}(x-\anon),\varphi_{\zeta_{t},k,\ell}\rangle.\label{eq:gkltdef}
  \end{equation}
  By the bilinearity and symmetry of quadratic variation
  as well as \cref{eq:ZQV}, \cref{eq:ZtildeQCV}, and \cref{eq:ZZtildeQCV}, we have
  \begin{align}
    [Y_{k_{1},\ell_{1}},Y_{k_{2},\ell_{2}}]_{t} & =[Z_{k_{1},\ell_{1}},Z_{k_{2},\ell_{2}}]_{t}-[Z_{k_{1},\ell_{1}},\widetilde{Z}_{k_{2},\ell_{2}}]_{t}-[\widetilde{Z}_{k_{1},\ell_{1}},Z_{k_{2},\ell_{2}}]_{t}+[\widetilde{Z}_{k_{1},\ell_{1}},\widetilde{Z}_{k_{2},\ell_{2}}]_{t}.\nonumber \\
                                                & =\delta_{k_{1},k_{2}}\int_{0}^{t}\left(\chi_{k_{1},\ell_{1},\ell_{2};r}-\delta_{\ell_{2},0}\chi_{k_{1},\ell_{1},0;r}-\delta_{\ell_{1},0}\chi_{k_{2},\ell_{2},0;r}+\delta_{\ell_{1},\ell_{2}}\chi_{k_{1},0,0;r}\right)\,\dif r\nonumber     \\
                                                & =\delta_{k_{1},k_{2}}\int_{0}^{t}\left((1-\delta_{\ell_{1},0}-\delta_{\ell_{2},0})\chi_{k_{1},\ell_{1},\ell_{2};r}+\delta_{\ell_{1},\ell_{2}}\chi_{k_{1},0,0;r}\right)\,\dif r\nonumber                                                    \\
                                                & =\begin{cases}
                                                     0                                                                                                              & \text{if }k_{1}\ne k_{2}\text{ or }0\in\{\ell_{1},\ell_{2}\};  \\
                                                     \int_{0}^{t}\left[\chi_{k_{1},\ell_{1},\ell_{2};r}+\delta_{\ell_{1},\ell_{2}}\chi_{k_{1},0,0;r}\right]\,\dif r & \text{if }k_{1}=k_{2}\text{ and }0\not\in\{\ell_{1},\ell_{2}\}.
                                                   \end{cases}\label{eq:YQCV}
  \end{align}
  Now, using \cref{eq:implicationofZdef}, \cref{eq:testagainstSdWtilde},
  \cref{eq:Ykltdef}, and \cref{eq:gkltdef}, we can write
  \begin{align}
    I_{t} & =\EE \Biggl|\sum_{\substack{k\in\ZZ^{2}                                                                                       \\
    \ell\in\NN_{0}
    }
    }\int_{t_{0}}^{t}g_{k,\ell;r}\,\dif Y_{k,\ell;r}\Biggr|^{2}=\sum_{\substack{k_{1},k_{2}\in\ZZ^{2}                             \\
    \ell_{1},\ell_{2}\in\NN_{0}
    }
    }\EE \left(\int_{t_{0}}^{t}g_{k_{1},\ell_{1};r}g_{k_{2},\ell_{2};r}\tr\dif[Y_{k_{1},\ell_{1}},Y_{k_{2},\ell_{2}}]_{r}\right)\nonumber \\
          & =\sum_{\substack{k\in\ZZ^{2}                                                                                                  \\
    \ell_{1},\ell_{2}\in\NN_{1}
    }
    }\int_{t_{0}}^{t}g_{k,\ell_{1};r}g_{k,\ell_{2};r}\EE (\tr\chi_{k,\ell_{1},\ell_{2};r}+\delta_{\ell_{1},\ell_{2}}\tr\chi_{k,0,0;r})\,\dif r,\label{eq:Iexpand}
  \end{align}
  with the last identity by \cref{eq:YQCV}. Taking expectations in \cref{eq:chidef}
  and using the spatial homogeneity of $w$, we have
  \[
    \EE [\tr\chi_{k,\ell_{1},\ell_{2};r}]=\langle\varphi_{\zeta_{r},k,\ell_{1}},\varphi_{\zeta_{r},k,\ell_{2}}\rangle\EE [\tr w_{r}^{2}(x)]=\langle\varphi_{\zeta_{r},k,\ell_{1}},\varphi_{\zeta_{r},k,\ell_{2}}\rangle \EE |w_{r}(x)|_{\Frob}^{2} = \delta_{\ell_1,\ell_2} \EE |w_{r}(x)|_{\Frob}^{2}
  \]
  and
  \[
    \delta_{\ell_{1},\ell_{2}}\EE [\tr\chi_{k,0,0;r}]=\delta_{\ell_{1},\ell_{2}}\langle\varphi_{\zeta_{r},k,0},\varphi_{\zeta_{r},k,0}\rangle\EE [\tr w_{r}^{2}(x)]=\delta_{\ell_{1},\ell_{2}}\EE |w_{r}(x)|_{\Frob}^{2}.
  \]
  Using the last two displays in \cref{eq:Iexpand}, we get
  \begin{equation}
    I_{t}  =2\int_{t_{0}}^{t}\Biggl(\sum_{\substack{k\in\ZZ^{2}                                                                                                                                                    \\
        \ell \in\NN_{1}
      }
    }g_{k,\ell;r}^2\Biggr)\EE |w_{r}(x)|_{\Frob}^{2}\,\dif r =2\int_{t_{0}}^{t}\EE |w_{r}(x)|_{\Frob}^{2}\sum_{k\in\ZZ^{2}}\Biggl\|\sum_{\ell\in\NN_{1}}g_{k,\ell;r}\varphi_{\zeta_{r},k,\ell}\Biggr\|_{L^{2}(\ttB_{\zeta_{r},k})}^{2}\,\dif r,\label{eq:Itdevelopmore}%
  \end{equation}
  with the second identity by the Plancherel theorem since $\{\varphi_{\zeta_{r},k,\ell}\}_{\ell\in\NN_{1}}$
  is an orthonormal family in $L^{2}(\ttB_{\zeta_{r},k})$.
  Now we recall
  that $\{\varphi_{\zeta_{r},k,\ell}\}_{\ell\in\NN_{0}}=\{\varphi_{\zeta_{r},k,0}\}\cup\{\varphi_{\zeta_{r},k,\ell}\}_{\ell\in\NN_{1}}$
  is in fact an orthonormal basis for $L^{2}(\ttB_{\zeta_{r},k})$, so by \cref{eq:gkltdef}, we have
  \begin{align}
    \Biggl\| & \sum_{\ell\in\NN_{1}}g_{k,\ell;r}\varphi_{\zeta_{r},k,\ell}\Biggr\|_{L^{2}(\ttB_{\zeta_{r},k})}^{2}=\Biggl\|\sum_{\ell\in\NN_{0}}g_{k,\ell;r}\varphi_{\zeta_{r},k,\ell}-g_{k,0;r}\varphi_{\zeta_{r},0,\ell}\Biggr\|_{L^{2}(\ttB_{\zeta_{r},k})}^{2}\nonumber \\
             & =\Bigl\| G_{\eta_{r}}(x-\cdot)-\fint_{\ttB_{\zeta_{r},k}}G_{\eta_{r}}(x-y)\,\dif y\Bigr\|_{L^{2}(\ttB_{\zeta_{r},k})}^{2}\le\frac{\zeta_{r}^{2}}{\pi^{2}}\|\nabla G_{\eta_{r}}(x-\cdot)\|_{L^{2}(\ttB_{\zeta_{r},k};\RR^2)}^{2},\label{eq:poincare}
  \end{align}
  with the last inequality by the Poincaré inequality and an elementary
  scaling argument (analogous to the one used in the proof of \cite[Theorem 5.8.2]{Eva10}).
  Using \cref{eq:poincare} in \cref{eq:Itdevelopmore}, we obtain
  \begin{align*}
    I_{t} & \le\frac{2}{\pi^{2}}\int_{t_{0}}^{t}\zeta_{r}^{2}\EE |w_{r}(x)|_{\Frob}^{2}\sum_{k\in\ZZ^{2}}\|\nabla G_{\eta_{r}}(x-\cdot)\|_{L^{2}(\ttB_{\zeta_{r},k};\RR^2)}^{2}\,\dif r=\frac{2}{\pi^{2}}\int_{t_{0}}^{t}\zeta_{r}^{2}\|\nabla G_{\eta_{r}}\|_{L^{2}(\RR^{2};\RR^2)}^{2}\EE |w_{r}(x)|_{\Frob}^{2}\,\dif r \\
          & \le\frac{2}{\pi^{2}}||\nabla G_{1}\|_{L^{2}(\RR^{2};\RR^2)}^{2}\int_{t_{0}}^{t}\zeta_{r}^{2}\eta_{r}^{-2}\vvvert w_{r}\vvvert_{2}^{2}\dif r=\frac{1}{2\pi^{3}}\int_{t_{0}}^{t}\zeta_{r}^{2}\eta_{r}^{-2}\vvvert w_{r}\vvvert_{2}^{2}\dif r.
  \end{align*}
  This yields \cref{eq:propcouplingconclusion}.
\end{proof}
When we consider multipoint statistics, it will also be important
to know something about the dependence structure of multiple instances
of the coupling described in \cref{prop:coupling}. In particular, we
need to know that if we sample far-away points of different space-time
white noises, then we get independent random variables. That is the
content of the following proposition.
\begin{prop}
  \label{prop:dBQVub}Let $\bigl((\zeta_{r}^{(i)})_{r\in[T^{(i)}_{0},T^{(i)}_{1}]}\bigr)_{i=1}^{N}$
  be a family of $N$ deterministic positive measurable $\RR$-valued
  paths. %
  For each $i$, let $\dif\widetilde{W}^{(i)}$
  be the space-time white noise constructed in \cref{prop:coupling} with $t_0 \setto T_0^{(i)}$ and $\zeta\setto\zeta^{(i)}$, extended by $0$ outside of $[T_0^{(i)},T_1^{(i)}]$.
  Let $(\chi_{k,\ell,\ell';r}^{(i)})_{r}$,
  $(X_{k,\ell;r}^{(i)})_{r}$, and $(\widetilde{Z}{}_{k,\ell;r}^{(i)})_{r}$
  be the corresponding objects constructed in \textup{\zcref[range]{eq:chidef,eq:Ztildelgt1def}}.
  We assume that the $X_{k,\ell;r}^{(i)}$s are independent across different
  $i$ as well as across different $k$ and $\ell$. Let $g_{r}^{(1)},\ldots,g_{r}^{(N)}\colon\RR^{2}\to\RR$
  for each $r$. Suppose that
  \begin{equation}
    \dist(\supp g_{r}^{(i)},\supp g_{r}^{(j)})\ge\sqrt{2}(\zeta_{r}^{(i)}+\zeta_{r}^{(j)})\label{eq:suppsfarenough}
  \end{equation}
  for each $i,j,r$, where $\dist$ represents the minimum Euclidean
  distance between two sets. Then if we define
  \[
    B^{(i)}_{t}=\gamma_{\rho}\int_{T_{0}^{(i)}}^{t}\!\!\int g^{(i)}(r,y)\,\dif\widetilde{W}_{r}^{(i)}(y),
  \]
  the $B^{(i)},\ldots,B^{(N)}$ are independent.
\end{prop}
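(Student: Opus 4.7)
The plan is to prove that the $B^{(i)}$ are jointly Gaussian with vanishing cross-covariances, and then invoke joint Gaussianity plus zero cross-covariance to conclude independence. The two structural facts to establish are $(*)$ that each marginal $[B^{(i)}]_t$ is deterministic and $(**)$ that the cross-quadratic variation $[B^{(i)},B^{(j)}]_t$ vanishes for $i\ne j$. Granted both, any finite linear combination of the $B^{(i)}_t$'s is the terminal value of a continuous scalar local martingale whose quadratic variation is a linear combination of the deterministic marginal QVs $[B^{(i)}]$ (all off-diagonal pieces cancelling by $(**)$), so the Dambis--Dubins--Schwarz theorem makes it Gaussian, whence $(B^{(1)},\dots,B^{(N)})$ is jointly Gaussian. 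The standard identity $\EE[B^{(i)}_t(B^{(j)}_s)^{\mathrm T}]=\EE[B^{(i)},B^{(j)}]_{t\wedge s}=0$ then upgrades $(**)$ to zero cross-covariance, forcing independence.

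Both $(*)$ and the first part of $(**)$ will follow by expanding $B^{(i)}$ in the basis built into the construction of $\widetilde W^{(i)}$; by \cref{eq:testagainstSdWtilde},
\begin{equation*}
B^{(i)}_t=\gamma_\rho\sum_{k,\ell}\int_{T_0^{(i)}}^t\langle\varphi_{\zeta^{(i)}_r,k,\ell},g^{(i)}(r,\cdot)\rangle\,(\chi^{(i)}_{k,0,0;r})^{-1/2}\,\dif\widetilde Z^{(i)}_{k,\ell;r}.
\end{equation*}
The orthogonality relation \cref{eq:ZtildeQCV} together with Parseval's identity for the basis $\{\varphi_{\zeta^{(i)}_r,k,\ell}\}$ will then collapse $[B^{(i)}]_t$ to $\gamma_\rho^2\int_{T_0^{(i)}}^t\|g^{(i)}(r,\cdot)\|_{L^2(\RR^2)}^2\,\dif r\cdot\Id_m$, which is deterministic, settling $(*)$. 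In the cross-QV between $B^{(i)}$ and $B^{(j)}$ with $i\ne j$, every contribution with $\ell_i\ge 1$ (or $\ell_j\ge 1$) vanishes automatically because in that range $\widetilde Z^{(i)}_{k,\ell}$ is driven solely by $X^{(i)}_{k,\ell}$, which by hypothesis is independent of every $X^{(j)}_{k',\ell'}$ and of $W$.

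The surviving $\ell_i=\ell_j=0$ terms have $\widetilde Z^{(i)}_{k_i,0}=Z^{(i)}_{k_i,0}$ and $\widetilde Z^{(j)}_{k_j,0}=Z^{(j)}_{k_j,0}$ both integrated against the common $\dif W$, producing the cross-QV density $\int\varphi_{\zeta^{(i)}_r,k_i,0}(x)\varphi_{\zeta^{(j)}_r,k_j,0}(x)w_r^2(x)\,\dif x$. The hard part will be to show that this integrand vanishes almost everywhere whenever both $g$-coefficients $\langle\varphi_{\zeta^{(i)}_r,k_i,0},g^{(i)}(r,\cdot)\rangle$ and $\langle\varphi_{\zeta^{(j)}_r,k_j,0},g^{(j)}(r,\cdot)\rangle$ are nonzero, and this reduces to a purely geometric claim: $\ttB_{\zeta^{(i)}_r,k_i}\cap\ttB_{\zeta^{(j)}_r,k_j}$ has Lebesgue measure zero. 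Non-vanishing of the first coefficient forces $\ttB_{\zeta^{(i)}_r,k_i}\cap\supp g^{(i)}_r\ne\emptyset$ and similarly for $j$; were the intersection of the two boxes to contain an interior point $y$, then for any $x^{(i)}\in\ttB_{\zeta^{(i)}_r,k_i}\cap\supp g^{(i)}_r$ and $x^{(j)}\in\ttB_{\zeta^{(j)}_r,k_j}\cap\supp g^{(j)}_r$ the strict inequality $|x-y|<\sqrt2\,\zeta$ for $x$ in an axis-aligned square of side $\zeta$ about a strictly interior $y$ would give $|x^{(i)}-x^{(j)}|<\sqrt2(\zeta^{(i)}_r+\zeta^{(j)}_r)$, contradicting \cref{eq:suppsfarenough}. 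The constant $\sqrt2$ in \cref{eq:suppsfarenough} is thus tuned exactly to the diameter of a side-$\zeta$ axis-aligned square in $\RR^2$, and this is what guarantees the essential disjointness of the supports of $\varphi_{\zeta^{(i)}_r,k_i,0}$ and $\varphi_{\zeta^{(j)}_r,k_j,0}$.
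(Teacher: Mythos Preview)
Your proposal is correct and follows essentially the same route as the paper: expand $\widetilde W^{(i)}$ in the $\widetilde Z$-basis, kill the $\ell\ge 1$ cross-terms via independence of the auxiliary $X^{(i)}_{k,\ell}$'s, and handle the $\ell_i=\ell_j=0$ survivors by the box-geometry argument that $\sqrt2$-separation of the supports of $g^{(i)},g^{(j)}$ forces the relevant $\ttB_{\zeta_r^{(i)},k_i}$ and $\ttB_{\zeta_r^{(j)},k_j}$ to be essentially disjoint. The one place you go beyond the paper is in spelling out why $[B^{(i)},B^{(j)}]\equiv 0$ actually gives independence: you compute that each $[B^{(i)}]$ is deterministic (the paper never states this, though it follows from $\widetilde W^{(i)}$ being a white noise) and then use a DDS/L\'evy-characterization argument to get joint Gaussianity. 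The paper simply asserts independence at the end, leaving this step implicit; your version makes it explicit and is arguably more complete. One minor quibble: your displayed expansion of $B^{(i)}$ is the formula \cref{eq:testagainstWtilde-1}, not \cref{eq:testagainstSdWtilde} (the latter has an extra $[\clS_{\zeta_r}w_r^2]^{1/2}$ built in), but the formula you wrote down is the correct one.
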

\begin{proof}
  Write $I_i \coloneqq [T_0^{(i)}, T_1^{(i)}]$.
  Recalling \cref{eq:Wttilde}, we have
  \[
    \dif\widetilde{W}_{r}^{(i)}(x)=\sum_{\substack{k\in\ZZ^{2}\\
        \ell\in\NN_{0}
      }
    }\varphi_{\zeta_{r}^{(i)},k,\ell}^{(i)}(x)[\clS_{\zeta_{r}^{(i)}}w_{r}^{2}(x)]^{-1/2}\,\dif\widetilde{Z}_{k,\ell;r}^{(i)},
  \]
  where we also set $\dif \widetilde Z^{(i)} = 0$ outside $I_i$.
  From this we see that, for all $y^{(i)},y^{(j)}\in\RR^{2}$,
  we have
  \begin{equation}
    \begin{aligned}\dif & [\widetilde{W}^{(i)}(y^{(i)}),\widetilde{W}^{(j)}(y^{(j)})]_{r}                                                                                                                                                              \\
                        & =\sum_{\substack{k^{(i)},k^{(j)}\in\ZZ^{2}                                                                                                                                                                                   \\
      \ell^{(i)},\ell^{(j)}\in\NN_{0}
      }
      }\varphi_{\zeta_{r}^{(i)},k^{(i)},\ell^{(i)}}(y^{(i)})\varphi_{\zeta_{r}^{(j)},k^{(j)},\ell^{(j)}}(y^{(j)})                                                                                                                         \\
                        & \qquad\qquad\qquad\qquad\times[\clS_{\zeta_{r}^{(i)}}w_{r}^{2}(y^{(i)})]^{-1/2}\dif[\widetilde{Z}_{k^{(i)},\ell^{(i)}}^{(i)},\widetilde{Z}_{k^{(j)},\ell^{(j)}}^{(j)}]_{r}[\clS_{\zeta_{r}^{(j)}}w_{r}^{2}(y^{(j)})]^{-1/2}.
    \end{aligned}
    \label{eq:dWtildeiWtildej}
  \end{equation}
  Recalling \cref{eq:Ztildek0} and \cref{eq:Zdef}, we have
  \[
    \widetilde{Z}_{k,0;t}^{(i)}=\int_{T_0^{(i)}}^{t}\!\!\!\int\varphi_{\zeta_{r}^{(i)},k,0}(x)w_{r}(x)\,\dif W_{r}(x)
  \]
  for $t \in I_i$, so
  \begin{align}
    \dif[\widetilde{Z}_{k^{(i)},0}^{(i)},\widetilde{Z}_{k^{(j)},0}^{(j)}]_{r} & = \mathbf{1}_{I_i \cap I_j}(r) \left(\int\varphi_{\zeta_{r}^{(i)},k^{(i)},0}(x)\varphi_{\zeta_{r}^{(j)},k^{(j)},0}(x)w_{r}^{2}(x)\,\dif x\right)\dif r.\label{eq:dtildeZij0}
  \end{align}
  On the other hand, then
  \begin{equation}
    \dif[\widetilde{Z}_{k^{(i)},\ell^{(i)}}^{(i)},\widetilde{Z}_{k^{(j)},\ell^{(j)}}^{(j)}]_{r}=0\qquad\text{ if }i\ne j\text{ and }(\ell^{(i)},\ell^{(j)})\ne(0,0)\label{eq:differentls}
  \end{equation}
  by \cref{eq:Ztildelgt1def} and the assumption that the $X_{k,\ell;r}^{(i)}$
  and $X_{k,\ell;r}^{(j)}$ are independent. Using \cref{eq:dtildeZij0}
  and \cref{eq:differentls} in \cref{eq:dWtildeiWtildej}, we see that,
  if $i\ne j$, then we have the series of implications
  \begin{align}   & \dif[\widetilde{W}^{(i)}(y^{(i)}),\widetilde{W}^{(j)}(y^{(j)})]_{r}/\dif r\ne0                                                                                                                                                           \nonumber\\
                  & \quad\implies(\exists k^{(i)},k^{(j)})\ \varphi_{\zeta_{r}^{(i)},k^{(i)},0}(y^{(i)})\varphi_{\zeta_{r}^{(j)},k^{(j)},0}(y^{(j)})\int\varphi_{\zeta_{r}^{(i)},k^{(i)},0}(x)\varphi_{\zeta_{r}^{(j)},k^{(j)},0}(x)w_{r}^{2}(x)\,\dif x\ne0 \nonumber\\
                  & \quad\implies \ttB_{\zeta_{r}^{(i)},k_{\zeta_{r}^{(i)}}(y^{(i)})}\cap \ttB_{\zeta_{r}^{(j)},k_{\zeta_{r}^{(j)}}(y^{(j)})}\ne\emptyset                                                                                                          \nonumber\\
                  & \quad\implies|y^{(i)}-y^{(j)}|\le\sqrt{2}(\zeta_{r}^{(i)}+\zeta_{r}^{(j)}).
                    \label{eq:implications}
  \end{align}
  (By this we mean really that the implications hold, not that the statements hold unconditionally.)
  This means that
  \[
    g^{(i)}(r,y^{(i)})g^{(j)}(r,y^{(j)})\dif[\widetilde{W}^{(i)}(y^{(i)}),\widetilde{W}^{(j)}(y^{(j)})]_{r}/\dif r=0,
  \]
  since in order to have $g^{(i)}(r,y^{(i)})g^{(j)}(r,y^{(j)})\ne0$,
  we must have $|y^{(i)}-y^{(j)}|\ge\sqrt{2}(\zeta_{r}^{(i)}+\zeta_{r}^{(j)})$
  by the assumption \cref{eq:suppsfarenough}, which means that $\dif[\widetilde{W}^{(i)}(y^{(i)}),\widetilde{W}^{(j)}(y^{(j)})]_{r}/\dif r=0$
  by \cref{eq:implications}. So for all $r \in \R$, we have
  \[
    \frac{\dif[B^{(i)},B^{(j)}]_{r}}{\dif r}=\gamma_{\rho}^{2}\iint g^{(i)}(r,y^{(i)})g^{(j)}(r,y^{(j)})\frac{\dif[\widetilde{W}^{(i)}(y^{(i)}),\widetilde{W}^{(j)}(y^{(j)})]_{r}}{\dif r}\,\dif y^{(i)}\,\dif y^{(j)}=0,
  \]
  and from this we conclude that that $B^{(1)},\ldots,B^{(N)}$ are
  independent.
\end{proof}

\printnomenclature[0.55in]

\bibliographystyle{hplain-ajd}
\bibliography{fbsdenl}

\end{document}